\newtheorem{theorem}{Theorem}[section]
\newtheorem{proposition}[theorem]{Proposition}
\newtheorem{corollary}[theorem]{Corollary}
\newtheorem{definition}[theorem]{Definition}
\newtheorem{lemma}[theorem]{Lemma}
\newtheorem{thIntro}{Theorem}
\newtheorem{coroIntro}[thIntro]{Corollary}
\theoremstyle{remark}
\newtheorem{remark}[theorem]{Remark}
\newtheorem{example}[theorem]{Example}
\newcommand{\flecheIso}[4]{                     
            \begin{array}{rcl} #1 & \overset{\sim}{\longrightarrow} & #2 \\   %
                         #3 &\longmapsto & #4          %
            \end{array}}
\newcommand{\fonc}[5]{                     
            \begin{array}{crll}#1 :& #2 & \rightarrow & #3 \\   %
                         &#4 &\mapsto & #5          %
            \end{array}}
\newcommand{\foncIso}[5]{                     
            \begin{array}{crll}#1 :& #2 & \overset{\sim}{\rightarrow} & #3 \\   %
                         &#4 &\mapsto & #5          %
            \end{array}}
\newcommand{\smallsquare}{%
  \vcenter{\hbox{\:\scalebox{0.35}{$\blacksquare$}\:}}%
}
\newcommand{\Nak}{\mathsf{N}}
\newcommand{\cat}{\mathcal{C}}
\newcommand{\zcat}{\mathcal{Z}(\cat)}
\newcommand{\dD}{D}
\newcommand{\one}{\boldsymbol{1}}
\newcommand{\projone}{P_{\boldsymbol{1}}}
\DeclareMathOperator{\Ext}{Ext}
\DeclareMathOperator{\Hom}{Hom}
\title{\bf An adjunction theorem for Davydov--Yetter cohomology and infinitesimal braidings}
\author[1]{M. Faitg}
\author[2]{A.M. Gainutdinov}
\author[3]{C. Schweigert}
\affil[1]{\small \textit{Institut de Mathématiques de Toulouse, Universit\'e Paul Sabatier, 118 route de Narbonne, 
\newline F-31062 Toulouse, France.}}
\affil[2]{\small \textit{Institut Denis Poisson, CNRS, Université de Tours, Universit\'e d’Orl\'eans, Parc de Grandmont, 37200 Tours, France}}
\affil[3]{\small \textit{Fachbereich Mathematik, Universität Hamburg, Bundesstra{\ss}e 55, 20146 Hamburg, Germany}}
\date{}
\begin{document}

\begin{flushright}
\textsf{ZMP-HH/24-26
\\Hamburger Beitr\"age zur Mathematik Nr. 981}
\end{flushright}

{\let\newpage\relax\maketitle}

\maketitle

\vspace{-2em}

\noindent {\small {\em E-mail adresses:} matthieu.faitg@math.univ-toulouse.fr, azat.gainutdinov@cnrs.fr,

\hspace{5.95em}christoph.schweigert@uni-hamburg.de}

\bigskip

\begin{abstract}
Davydov-Yetter cohomology $H_{\mathrm{DY}}^{\bullet}(F)$ is  associated to a monoidal functor $F: \mathcal{C} \to \mathcal{D}$ between $\Bbbk$-linear monoidal categories where $\Bbbk$ is a field, and its second degree classifies the infinitesimal deformations of the monoidal structure of $F$. Our main result states that if $F$ admits a right adjoint $R$, then there is an object $\Gamma$ in the Drinfeld center $\mathcal{Z}(\mathcal{C})$ defined in terms of $R$ such that the Davydov-Yetter cohomology of $F$ can be expressed as the Davydov-Yetter cohomology of the identity functor on $\mathcal{C}$ with the coefficient $\Gamma$.

\indent We apply this result in the case when the product functor $\otimes: \mathcal{C} \boxtimes\mathcal{C} \to\mathcal{C}$ has a monoidal structure given by a braiding $c$ on $\mathcal{C}$ and determine explicitly the coefficient $\Gamma$ as a coend object in $\mathcal{Z}(\mathcal{C}) \boxtimes \mathcal{Z}(\mathcal{C})$. The motivation is that $H^{\bullet}_{\mathrm{DY}}(\otimes)$ contains a ``space of infinitesimal braidings tangent to $c$'' in a way that we describe precisely. For $\mathcal{C} = H\text{-}\mathrm{mod}$, where $H$ is a finite-dimensional Hopf algebra over a field $\Bbbk$, this is the Zariski tangent space to the affine variety of R-matrices for $H$. In the case of perfect $\Bbbk$, we  give a dimension formula for this space as an explicit end involving only (low-degree) relative Ext's of the standard adjunction between $\zcat$ and~$\cat$.
As a further application of the adjunction theorem, we describe deformations of the restriction functor associated to a Hopf subalgebra and a Drinfeld twist. Both applications are illustrated in the example of bosonization of exterior algebras.
\end{abstract}

\newpage

\tableofcontents

\section{Introduction}

We continue to explore  properties of Davydov--Yetter (DY) cohomology of monoidal functors coming from its relation with relative homological algebra discovered in \cite{GHS,FGS}. This is part of a research program whose goal is to derive efficient tools for the computation of DY cohomology from the properties of relative Ext groups. In the present work we investigate how adjoint functors interact with DY cohomology groups and deduce an {\em adjunction theorem for DY cohomology}.

\indent Let $\Bbbk$ be a field. Recall that given a $\Bbbk$-linear monoidal functor $F : \mathcal{C} \to \mathcal{D}$ between $\Bbbk$-linear monoidal categories $\mathcal{C}, \mathcal{D}$ one can define the DY cohomology spaces $H^n_{\mathrm{DY}}(F)$ for all $n \geq 0$ \cite{davydov, CY}. The space $H^2_{\mathrm{DY}}(F)$ classifies infinitesimal deformations of the {\em monoidal structure} $F^{(2)}_{X,Y} : F(X) \otimes F(Y) \overset{\sim}{\to} F(X \otimes Y)$ while $H^3_{\mathrm{DY}}(F)$ is responsible for obstructions of lifting deformations to higher order. We briefly review the deformation of monoidal structures and DY cohomology in \S\ref{subsectionDefMonStruct}. 
Many interesting categorical structures  can be repackaged into monoidal structures of well-chosen functors and their deformation theory is thus controlled by Davydov--Yetter cohomology. For instance a result of Joyal--Street \cite{JS} gives a correspondence between braidings on a monoidal category $\mathcal{C}$ and equivalence classes of monoidal structures for the product functor $\otimes : \mathcal{C} \times \mathcal{C} \to \mathcal{C}$. Hence, as noted by Yetter \cite{yetter1}, the DY cohomology of $\otimes$ is related to infinitesimal deformations of braidings. Another example is the one-to-one correspondence between mixed associators for a $\mathcal{C}$-module category $(\mathcal{M},\rhd)$ and monoidal structures for the action functor $\rho : \mathcal{C} \to \mathrm{Fun}(\mathcal{M})$, $X \mapsto X \rhd -$. Hence the DY cohomology of $\rho$ is related to deformations of mixed associators. 

\smallskip

\indent In this paper, we focus on the application to
deformations of braidings. Using our adjunction theorem we give a precise description of
the vector space of infinitesimal deformations of a given braiding on a  (finite) tensor category $\mathcal{C}$ in terms of
 certain 2nd relative Ext groups involving non-trivial coefficients, as we explain now.

\indent In \cite{GHS} a version of DY cohomology {\em with coefficients} was introduced -- they are pairs of objects in the centralizer of the functor $F$, which is a monoidal category denoted by $\mathcal{Z}(F)$, see \S \ref{relExtGroupsDYCohomology}. For $F = \mathrm{Id}_{\mathcal{C}}$ then $\mathcal{Z}(F)$ is the Drinfeld center $\mathcal{Z}(\mathcal{C})$. 
In this case, the coefficients are very much similar to those in the standard Hochschild theory, and can be realised as internal {\em bimodules} over an algebra in $\mathcal{C}\boxtimes\mathcal{C}$. Indeed, it is known  that $H_{\mathrm{DY}}^{\bullet}(\mathrm{Id}_{\mathcal{C}})$ is isomorphic to the Hochschild cohomology of the algebra object $\mathscr{A} = \int^{X \in \mathcal{C}} X \boxtimes X^{\vee} \in \mathcal{C} \boxtimes \mathcal{C}$, \cite[Prop.\,7.22.7]{EGNO}, and furthermore the category of internal  $\mathscr{A}$-bimodules is equivalent  to $\mathcal{Z}(\mathcal{C})$ by~\cite[\S 3.4]{EO}, \cite[\S 4.4]{shimizu}.

\indent We denote the DY cohomology of $F$ with coefficients $\mathsf{V}, \mathsf{W} \in \mathcal{Z}(F)$ by $H_{\mathrm{DY}}^{\bullet}(F;\mathsf{V},\mathsf{W})$. For the trivial coefficients $\mathsf{V}, \mathsf{W} = \boldsymbol{1}$ we have $H^{\bullet}_{\mathrm{DY}}(F) = H^{\bullet}_{\mathrm{DY}}(F; \boldsymbol{1},\boldsymbol{1})$. Although the infinitesimal deformations of $F^{(2)}$ are classified by the cohomology with trivial coefficients, non-trivial coefficients play an extremely important role in this work. As we explain below, our adjunction theorem allows to reduce the study of any right exact functor $F$ to the study of the identity functor on its source category, at the price of a non-trivial coefficient.

\indent We recall from \cite[Cor.\,4.7]{FGS} that under certain  assumptions on the linear monoidal categories $\mathcal{C}, \mathcal{D}$ and the monoidal functor $F : \mathcal{C} \to \mathcal{D}$, we have an isomorphism between the DY cohomology of $F$ and the relative Ext groups associated to the forgetful functor $\mathcal{Z}(F) \to \mathcal{D}$:
\begin{equation}\label{DYasExtIntro}
H^{\bullet}_{\mathrm{DY}}(F;\mathsf{V},\mathsf{W}) \cong \Ext^{\bullet}_{\mathcal{Z}(F),\mathcal{D}}(\mathsf{V},\mathsf{W}), \text{ and in particular } H^{\bullet}_{\mathrm{DY}}(F) \cong \Ext^{\bullet}_{\mathcal{Z}(F),\mathcal{D}}(\boldsymbol{1},\boldsymbol{1}).
\end{equation}
While \eqref{DYasExtIntro} was proven for exact functors $F$ between finite tensor categories in \cite{FGS}, the more general existence results of coends contained in the Appendix \ref{subsectionNatAndCoendsFiniteCat} allow us to relax these assumptions to right-exact functors (see \S\ref{relExtGroupsDYCohomology}).

\indent The computation of $\Ext_{\mathcal{Z}(F),\mathcal{D}}^{\bullet}(\boldsymbol{1},\boldsymbol{1})$ in \eqref{DYasExtIntro} remains a non-trivial task which requires in particular to have a good understanding of the category $\mathcal{Z}(F)$. The case $F = \mathrm{Id}_{\mathcal{C}}$ is somewhat special because the category $\mathcal{Z}(\mathcal{C})$ is well-known and in \cite{FGS} we have obtained more results for this choice of $F$, like a method to construct DY cocycles explicitly. The application of our adjunction theorem is precisely to {\em replace computations in $\mathcal{Z}(F)$ by computations in the more familiar category $\mathcal{Z}(\mathcal{C})$}, by trading $F$ for a certain coefficient object thanks to an adjunction that we now discuss.

\indent Note first that the monoidal functor $F$ can be ``lifted'' at the level of centralizers:
\begin{equation}\label{morphismAdjunctionZFZCIntro}
\xymatrix@C=4em@R=.7em{
\mathcal{Z}(\mathcal{C}) \ar@/^.7em/[dd]^{\mathcal{U}_{\mathcal{C}}} \ar[r]^{\widetilde{F}} &
\mathcal{Z}(F) \ar@/^.7em/[dd]^{\mathcal{U}_F}\\
\dashv & \dashv\\
\ar@/^.7em/[uu]^{\mathcal{F}_{\mathcal{C}}} \mathcal{C} \ar[r]_F & \ar@/^.7em/[uu]^{\mathcal{F}_F} \mathcal{D}
} \end{equation}
where $\mathcal{U}_{\mathcal{C}}$, $\mathcal{U}_F$ are the forgetful functors and $\mathcal{F}_{\mathcal{C}}$, $\mathcal{F}_F$ are their left adjoints (see \S\ref{relExtGroupsDYCohomology}). Given a half-braiding $\lambda : V \otimes - \Rightarrow - \otimes V$ one obtains from $F(\lambda)$ and the monoidal structure of $F$ a natural isomorphism $F(V) \otimes F(-) \Rightarrow F(-) \otimes F(V)$ which is is a half-braiding relative to $F$, whence defining $\widetilde{F}$. An important point is that the pair of functors $(F,\widetilde{F})$ is compatible with the adjunctions, meaning that apart the equality $\mathcal{U}_F \, \widetilde{F} =F \, \mathcal{U}_{\mathcal{C}}$ we have a natural isomorphism
\[ \widetilde{F} \, \mathcal{F}_{\mathcal{C}} \cong \mathcal{F}_F \, F \]
which is compatible with the adjunction data. The pair $(F,\widetilde{F})$ is an example of what we call a {\em strong morphism of adjunctions}  (Definition \ref{defMorphismOfResolventPairs}).

\indent Strong morphisms of adjunctions are relevant to us for two reasons. First, they preserve relatively projective resolutions (Proposition \ref{propStrongMorphismsPreserveEverything}): in the particular case \eqref{morphismAdjunctionZFZCIntro}, the functor $\widetilde{F}$ transforms a resolution of $\mathsf{V} \in \mathcal{Z}(\mathcal{C})$ into a resolution of $\widetilde{F}(\mathsf{V}) \in \mathcal{Z}(F)$. Second, if $F$ admits a right adjoint $R : \mathcal{D} \to \mathcal{C}$ then a classical theorem on adjoint lifting \cite{johnstone} gives a right adjoint $\widetilde{R}$ of $\widetilde{F}$ (see \S\ref{sectionLiftingAdjunctions} and \S\ref{sectionChangeOfFunctor}). It follows from these two facts that 
$$
\Ext_{\mathcal{Z}(F), \mathcal{D}}(\boldsymbol{1},\boldsymbol{1}) \cong \Ext_{\mathcal{Z}(\mathcal{C}), \mathcal{C}}\bigl(\boldsymbol{1}, \widetilde{R}(\boldsymbol{1}) \bigr)
$$
and by \eqref{DYasExtIntro} we get the {\em adjunction theorem for DY cohomology}:

\begin{thIntro}\label{th1Intro}
Let $\mathcal{C}, \mathcal{D}$ be $\Bbbk$-linear monoidal abelian categories. Assume moreover that $\mathcal{C}$ is finite and rigid and $\otimes_{\mathcal{D}}$ is right-exact in each variable. Then for any $\Bbbk$-linear monoidal functor $F : \mathcal{C} \to \mathcal{D}$ which has a right adjoint $R : \mathcal{D} \to \mathcal{C}$ we have
\begin{equation}\label{isoTh1Intro}
H^{\bullet}_{\mathrm{DY}}(F) \cong H^{\bullet}_{\mathrm{DY}}\bigl(\mathrm{Id}_{\mathcal{C}}; \boldsymbol{1}, \widetilde{R}(\boldsymbol{1}) \bigr)
\end{equation}
where $ \widetilde{R} : \mathcal{Z}(F) \to \mathcal{Z}(\mathcal{C})$ is the lift of $R$.
\end{thIntro}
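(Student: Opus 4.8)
The plan is to express both sides of \eqref{isoTh1Intro} as relative Ext groups via \eqref{DYasExtIntro}, and then to transport a \emph{single} relatively projective resolution across the square \eqref{morphismAdjunctionZFZCIntro} using the fact that $(F,\widetilde F)$ is a strong morphism of adjunctions together with the lifted adjunction $\widetilde F \dashv \widetilde R$.

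First I would invoke \eqref{DYasExtIntro}: since $F$ has a right adjoint it is right-exact, and the standing hypotheses ($\mathcal{C}$ finite and rigid, $\otimes_{\mathcal{D}}$ right-exact in each variable) are precisely those under which \eqref{DYasExtIntro} holds by the coend existence results of Appendix~\ref{subsectionNatAndCoendsFiniteCat}, so $H^{\bullet}_{\mathrm{DY}}(F) = H^{\bullet}_{\mathrm{DY}}(F;\one,\one) \cong \Ext^{\bullet}_{\mathcal{Z}(F),\mathcal{D}}(\one,\one)$. To compute the right-hand side, fix a $(\mathcal{U}_{\mathcal{C}},\mathcal{F}_{\mathcal{C}})$-relatively projective resolution $P_\bullet \twoheadrightarrow \one$ of the unit in $\mathcal{Z}(\mathcal{C})$, which exists under the hypotheses on $\mathcal{C}$; by definition of relative Ext, $H^{\bullet}\bigl(\Hom_{\mathcal{Z}(\mathcal{C})}(P_\bullet,\mathsf{W})\bigr) = \Ext^{\bullet}_{\mathcal{Z}(\mathcal{C}),\mathcal{C}}(\one,\mathsf{W})$ for every $\mathsf{W}\in\mathcal{Z}(\mathcal{C})$.

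Next I would push this resolution through $\widetilde F$. Because $(F,\widetilde F)$ is a strong morphism of adjunctions in the sense of Definition~\ref{defMorphismOfResolventPairs} (the compatibility $\widetilde F\,\mathcal{F}_{\mathcal{C}}\cong\mathcal{F}_F\,F$ with all the adjunction data being established in \S\ref{sectionChangeOfFunctor}), Proposition~\ref{propStrongMorphismsPreserveEverything} guarantees that $\widetilde F(P_\bullet)\twoheadrightarrow\widetilde F(\one)$ is a $(\mathcal{U}_F,\mathcal{F}_F)$-relatively projective resolution in $\mathcal{Z}(F)$: the augmented complex is $\mathcal{U}_{\mathcal{C}}$-split, hence (applying the functor $F$) it is $F\mathcal{U}_{\mathcal{C}}=\mathcal{U}_F\widetilde F$-split, and each $\widetilde F(P_n)$ is $(\mathcal{U}_F,\mathcal{F}_F)$-relatively projective by the Proposition. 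Since $F$ is monoidal we have $\widetilde F(\one)\cong\one$ in $\mathcal{Z}(F)$, so this resolution computes $\Ext^{\bullet}_{\mathcal{Z}(F),\mathcal{D}}(\one,\one) = H^{\bullet}\bigl(\Hom_{\mathcal{Z}(F)}(\widetilde F(P_\bullet),\one)\bigr)$. Now I would use the right adjoint $\widetilde R$ of $\widetilde F$ provided by the adjoint lifting theorem (\S\ref{sectionLiftingAdjunctions}; the required coequalizers are available thanks to finiteness of $\mathcal{C}$ and right-exactness of $\otimes_{\mathcal{D}}$): the adjunction isomorphisms $\Hom_{\mathcal{Z}(F)}(\widetilde F(P_n),\one)\cong\Hom_{\mathcal{Z}(\mathcal{C})}(P_n,\widetilde R(\one))$ are natural in $P_n$, hence compatible with the induced differentials, and assemble into an isomorphism of cochain complexes. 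Taking cohomology and then applying \eqref{DYasExtIntro} to $\mathrm{Id}_{\mathcal{C}}$ gives
\[
\Ext^{\bullet}_{\mathcal{Z}(F),\mathcal{D}}(\one,\one) \;\cong\; \Ext^{\bullet}_{\mathcal{Z}(\mathcal{C}),\mathcal{C}}\bigl(\one,\widetilde R(\one)\bigr) \;\cong\; H^{\bullet}_{\mathrm{DY}}\bigl(\mathrm{Id}_{\mathcal{C}};\one,\widetilde R(\one)\bigr),
\]
which is \eqref{isoTh1Intro}.

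I expect the main obstacle to be structural rather than computational: carefully verifying that the canonical data indeed make $(F,\widetilde F)$ a \emph{strong} morphism of adjunctions, so that Proposition~\ref{propStrongMorphismsPreserveEverything} applies verbatim, and, in parallel, producing the lifted right adjoint $\widetilde R$ via the adjoint lifting theorem, which presupposes that $\mathcal{Z}(F)$ is (co)monadic over $\mathcal{D}$ with the relevant coequalizers. Once these two facts are in place, the homological bookkeeping — transporting one resolution across \eqref{morphismAdjunctionZFZCIntro} and checking that the adjunction isomorphisms commute with the differentials — is routine.
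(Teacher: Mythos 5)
Your proposal is correct and follows essentially the same route as the paper: both sides are rewritten as relative $\mathrm{Ext}$ groups via Theorem~\ref{thmDYCohomRelExt}, a single relatively projective resolution is transported across the square using strongness of the morphism of adjunctions $(F,\widetilde{F})$, and the lifted adjunction $\widetilde{F}\dashv\widetilde{R}$ (from the adjoint lifting theorem) gives the isomorphism of cochain complexes. The only cosmetic difference is that you invoke Proposition~\ref{propStrongMorphismsPreserveEverything} to push an \emph{arbitrary} relatively projective resolution through $\widetilde{F}$, whereas the paper's Theorem~\ref{propExtGroupsForMorphismsOfResolventPairs} works directly with the bar resolution and the chain map $\gamma^{(\bullet)}$ from Lemma~\ref{preservingCounitsImpliesPreservingResolutions}; these are two packagings of the same underlying homological argument.
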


 In the case the category $\mathcal{D}$ is also finite, then the right adjoint of $F : \mathcal{C} \to \mathcal{D}$ exists if and only if $F$ is right exact \cite[Cor.\,1.9]{DSPS}.
 Hence, the DY cohomology of the identity functor {\em with coefficients} determines the DY cohomology of any right-exact functor out of $\mathcal{C}$. Theorem~\ref{th1Intro} is efficient in practice because only the {\em second} coefficient in the right-hand side of \eqref{isoTh1Intro} is non-trivial. Hence to compute $H^{\bullet}_{\mathrm{DY}}\bigl(\mathrm{Id}_{\mathcal{C}}; \boldsymbol{1}, \widetilde{R}(\boldsymbol{1}) \bigr) \cong \Ext_{\mathcal{Z}(\mathcal{C}),\mathcal{C}}\bigl(\boldsymbol{1},\widetilde{R}(\boldsymbol{1})\bigr)$ it is enough to find {\em once and for all} a relatively projective resolution of $\boldsymbol{1} \in \mathcal{Z}(\mathcal{C})$. Then one must determine the object $\widetilde{R}(\boldsymbol{1}) \in \mathcal{Z}(\mathcal{C})$ and compute the resulting cohomology. This last cohomological computation is expected to be less hard than finding a resolution of $\boldsymbol{1} \in \mathcal{Z}(F)$ for any functor $F$ if we were to use formula \eqref{DYasExtIntro}. Moreover the cohomological computation can be replaced by the following dimension formula (Cor.~\ref{coroTangentSpaceChangeOfFunctor}):
\[ \dim H^n_{\mathrm{DY}}(F) = \dim \Hom_{\mathcal{Z}(\mathcal{C})}( \mathsf{K}, \mathsf{M} ) - \dim \Hom_{\mathcal{Z}(\mathcal{C})}( \mathsf{P}, \mathsf{M} ) + \dim \Hom_{\mathcal{Z}(\mathcal{C})}( \boldsymbol{1}, \mathsf{M} ) \]
where $\mathsf{P}$ is  is the relatively projective cover of $\boldsymbol{1} \in \mathcal{Z}(\mathcal{C})$ \cite[\S 2.3]{FGS} (or any relative projective object covering $\boldsymbol{1}$), $\mathsf{K} = \mathrm{ker}(\mathsf{P} \twoheadrightarrow \boldsymbol{1})$ and $\mathsf{M} = \widetilde{R}(\boldsymbol{1}) \otimes (\mathsf{K}^{\vee})^{n-1}$, for $n \geq 2$. Note that everything in the right-hand side is computed in $\mathcal{Z}(\mathcal{C})$.

\indent Theorem \ref{th1Intro} is inspired by \cite[\S 4.3]{GHS} where it was observed that the DY cohomology of the fiber functor $H\text{-}\mathrm{mod} \to \mathrm{vect}_{\Bbbk}$ of the category of modules over a finite-dimensional $\Bbbk$-algebra $H$ can be equivalently described through the DY cohomology of the identity functor at the price of a coefficient $H^*$ with the $D(H)$-module structure given by coregular $H$-action and adjoint $H^*$-action. This is generalized in \S\ref{subsectionDYTwistedFiber} where we consider the DY cohomology of the restriction to a Hopf subalgebra functor endowed with a monoidal structure involving a Drinfeld twist.

\smallskip

\indent Our next results concern a functor whose deformations are related to deformations of braidings. To explain this, let $c = \bigl( c_{X,Y} : X \otimes Y \overset{\sim}{\to} Y \otimes X \bigr)_{X,Y \in \mathcal{C}}$ be a braiding on the $\Bbbk$-linear monoidal category $\mathcal{C}$ and consider the product functor $P = \otimes : \mathcal{C} \boxtimes \mathcal{C} \to \mathcal{C}$ endowed with the following monoidal structure coming from $c$:
\begin{align}
\begin{split}\label{monStructMonProductFromBraidingIntro}
&P(X_1 \boxtimes Y_1) \otimes P(X_2 \boxtimes Y_2) = X_1 \otimes Y_1 \otimes X_2 \otimes Y_2\\
&\xrightarrow{\mathrm{id}_{X_1} \otimes c_{Y_1,X_2} \otimes \mathrm{id}_{Y_2}} P\bigl( (X_1 \boxtimes Y_1) \otimes (X_2 \boxtimes Y_2) \bigr) = X_1 \otimes X_2 \otimes Y_1 \otimes Y_2.
\end{split}
\end{align}
Denote by $P_c$ this monoidal functor. Actually, any monoidal structure on $P$ which satisfies certain unitality conditions is isomorphic to \eqref{monStructMonProductFromBraidingIntro} for some braiding $c$ on $\mathcal{C}$. This correspondence between (equivalence classes of) unital monoidal structures for $P$ and braidings on $\mathcal{C}$ was found by Joyal and Street when they studied multiplications in a monoidal category \cite[\S 5]{JS}. In \S\ref{subsectionDYcohomologyTangentBraidings} we establish an infinitesimal version of this correspondence. Let $\mathbf{T}_c\mathrm{Br}(\mathcal{C})$ be the vector space of {\em infinitesimal braidings tangent to $c$}. As the name and notation indicate, these are natural transformations $t = \bigl( t_{X,Y} : X \otimes Y \to Y \otimes X \bigr)_{X,Y \in \mathcal{C}}$ such that $c + \epsilon t$ is a braiding on the category $\mathcal{C} \otimes_{\Bbbk} \Bbbk[\epsilon]/(\epsilon^2)$ with scalars for Hom spaces extended to the dual numbers $\Bbbk[\epsilon]/(\epsilon^2)$.
\begin{thIntro}\label{thmDYCohomAndTangentBraidingsProductIntro}
Let  $\mathcal{C}$ be a $\Bbbk$-linear monoidal category and for any braiding $c \in \mathrm{Br}(\mathcal{C})$ let $P_c = \otimes : \mathcal{C} \boxtimes \mathcal{C} \to \mathcal{C}$ be the monoidal product functor endowed with the monoidal structure \eqref{monStructMonProductFromBraidingIntro}. We have
\[ H^2_{\mathrm{DY}}(P_c) \cong \mathbf{T}_c\mathrm{Br}(\mathcal{C}) \oplus H^2_{\mathrm{DY}}(\mathrm{Id}_{\mathcal{C}}) \oplus H^2_{\mathrm{DY}}(\mathrm{Id}_{\mathcal{C}}). \]
\end{thIntro}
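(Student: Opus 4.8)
The plan is to unwind what a monoidal structure on $P=\otimes:\mathcal{C}\boxtimes\mathcal{C}\to\mathcal{C}$ is in terms of data on $\mathcal{C}$, and to do the same for its infinitesimal deformations. A monoidal structure on $P$ is a natural isomorphism $P^{(2)}_{(X_1\boxtimes Y_1),(X_2\boxtimes Y_2)}:X_1\otimes Y_1\otimes X_2\otimes Y_2\xrightarrow{\sim} X_1\otimes X_2\otimes Y_1\otimes Y_2$ satisfying the pentagon-type coherence plus unit axioms. First I would show that, because $\mathcal{C}\boxtimes\mathcal{C}$ is generated under $\otimes$ by objects of the form $X\boxtimes\one$ and $\one\boxtimes Y$, the whole datum $P^{(2)}$ is determined by its component on $(\one\boxtimes Y_1)\boxtimes(X_2\boxtimes\one)$, i.e. by a single natural transformation $\gamma_{Y,X}:Y\otimes X\to X\otimes Y$; this is exactly the combinatorial core of the Joyal--Street correspondence \cite[\S5]{JS} recalled before the statement. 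The monoidal-structure axioms for $P^{(2)}$ then translate into: $\gamma$ is a natural transformation satisfying the two hexagon identities together with the unit normalization, i.e. $\gamma\in\mathrm{Br}(\mathcal{C})$; for the present functor $P_c$ one has $\gamma=c$ itself by construction \eqref{monStructMonProductFromBraidingIntro}.

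Next I would differentiate this dictionary. A DY $2$-cochain for $P_c$ is a natural transformation $\tau$ with components $\tau_{(X_1\boxtimes Y_1),(X_2\boxtimes Y_2)}:X_1\otimes Y_1\otimes X_2\otimes Y_2\to X_1\otimes X_2\otimes Y_1\otimes Y_2$, and the cocycle condition $d\tau=0$ says precisely that $P^{(2)}+\epsilon\tau$ still satisfies the coherence axioms over $\Bbbk[\epsilon]/(\epsilon^2)$. Restricting the source objects to the three ``pure'' types $X\boxtimes\one$, $\one\boxtimes Y$, $(\one\boxtimes Y)\otimes(X\boxtimes\one)$ decomposes a $2$-cochain $\tau$ into three pieces: one piece $t$ living on $Y\otimes X$-type inputs (mixing the two tensor factors), and two pieces $s_1,s_2$ that only see the first, resp. second, copy of $\mathcal{C}$ — these are literally $2$-cochains for $\mathrm{Id}_\mathcal{C}$ via the two embeddings $\mathcal{C}\hookrightarrow\mathcal{C}\boxtimes\mathcal{C}$, $X\mapsto X\boxtimes\one$ and $Y\mapsto\one\boxtimes Y$. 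I would check that the cocycle/coboundary conditions are ``block-diagonal'' with respect to this splitting: $d\tau=0$ iff $t$ satisfies the linearized hexagon (equivalently $c+\epsilon t\in\mathrm{Br}(\mathcal{C}\otimes_\Bbbk\Bbbk[\epsilon]/(\epsilon^2))$) and $s_1,s_2$ are DY $2$-cocycles for $\mathrm{Id}_\mathcal{C}$; and $\tau$ is a coboundary iff $t$ is an ``inner'' infinitesimal braiding (the tangent direction coming from conjugating $c$ by an automorphism of $\mathrm{Id}_\mathcal{C}$, which is exactly the coboundary relation hidden in the definition of $\mathbf{T}_c\mathrm{Br}(\mathcal{C})$) and $s_1,s_2$ are DY $2$-coboundaries for $\mathrm{Id}_\mathcal{C}$. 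Passing to cohomology gives the asserted direct sum decomposition.

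The main obstacle I anticipate is bookkeeping: verifying that the splitting of $2$-cochains into the three blocks is genuinely respected by the differential $d$, including the cross-terms. When one plugs a general tensor product of pure objects into the cocycle identity, the coherence axiom for $P^{(2)}+\epsilon\tau$ a priori couples the $t$-block to the $s_i$-blocks through the associator and through $c$; one must show these couplings vanish at first order, i.e. that the linearized pentagon for $P^{(2)}$ forces exactly the uncoupled system ``$t$ solves linearized hexagons'' $\oplus$ ``$s_i$ solve the $\mathrm{Id}_\mathcal{C}$ cocycle condition''. The cleanest way to organize this is to use naturality to reduce every component of $\tau$ to the pure-type components, write $d\tau$ on a product of pure objects, collect the terms that factor through each block, and observe that the would-be mixed terms are precisely the obstruction to $c$ being a braiding, hence zero. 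Once this linear-algebra reduction is in place, identifying the $t$-block with $\mathbf{T}_c\mathrm{Br}(\mathcal{C})$ is immediate from the infinitesimal Joyal--Street correspondence established in \S\ref{subsectionDYcohomologyTangentBraidings}, and identifying each $s_i$-block with $H^2_{\mathrm{DY}}(\mathrm{Id}_\mathcal{C})$ is immediate from the definition of DY cohomology applied to the two inclusions $\mathcal{C}\hookrightarrow\mathcal{C}\boxtimes\mathcal{C}$.
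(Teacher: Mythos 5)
Your approach is essentially the paper's: the proof in \S\ref{sectionLaxMult}--\S\ref{subsectionDYcohomologyTangentBraidings} also runs through the Joyal--Street correspondence (Proposition~\ref{factoLaxMult}), differentiates it at the point $\phi=\mathrm{id}\otimes c\otimes\mathrm{id}$, and reads off the direct-sum decomposition of $H^2_{\mathrm{DY}}$ from the resulting splitting of tangent spaces (Theorem~\ref{thmDYOtimesAndInfBraidings}, Corollary~\ref{coroDimFormulaTangentSpaceBraiding}). Two places where your sketch is imprecise enough that the argument could go wrong if followed literally. First, there are \emph{four} pure-type restrictions of a $2$-cochain $f$, not three: besides $f_{(X_1,\one),(X_2,\one)}$ and $f_{(\one,Y_1),(\one,Y_2)}$ there are both $f_{(\one,X),(Y,\one)}$ and $f_{(Y,\one),(\one,X)}$, and the ``braiding block'' is not a single restriction but the derivative of the map $B$ of~\eqref{braidingAssociatedToLaxMult}, namely $t^f_{X,Y}=f_{(\one,X),(Y,\one)}-f_{(Y,\one),(\one,X)}\circ c_{X,Y}$ (this is $\mathbf{T}_\phi B$ of~\eqref{tangentMapBMonBrC}). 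If you take $t$ to be just $f_{(\one,X),(Y,\one)}$, the map you get into $\mathbf{T}_c\mathrm{Br}(\mathcal{C})\oplus H^2_{\mathrm{DY}}(\mathrm{Id}_\cat)^{\oplus 2}$ need not descend to cohomology, because the fourth restriction $f_{(Y,\one),(\one,X)}$ is a genuine degree of freedom at the cochain level and is only killed by a gauge transformation (this is exactly the role of the natural automorphism $u$ in the proof of item~3 of Proposition~\ref{factoLaxMult}). Second, ``the cocycle/coboundary conditions are block-diagonal with respect to this splitting'' is not true at the cochain level: a general natural transformation $X_1\otimes Y_1\otimes X_2\otimes Y_2\to X_1\otimes X_2\otimes Y_1\otimes Y_2$ is not recoverable from its pure-type restrictions. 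What makes the argument work is that a monoidal structure $\phi$ \emph{is} recoverable, via the explicit bijection $\mathcal{F}^{-1}(\alpha,\beta,\psi)=(\alpha\otimes\beta)\circ\psi$ of~\eqref{decompositionMonOtimesInverse}, and that this bijection (and its infinitesimal version $(\mathbf{T}_\phi\mathcal{F})^{-1}$ after~\eqref{decompositionTangentMonOtimes}) is compatible with the equivalence relations. You should state this reconstruction step explicitly, since it is what replaces the vague ``observe the mixed terms vanish'' and guarantees both injectivity and surjectivity of your map on cohomology. Once these two points are filled in, the remainder goes through as you outline.
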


 Note that we do not require finiteness assumption on $\mathcal{C}$ in this theorem. This generalizes a result of Yetter \cite{yetter1}, who noted that the Joyal--Street correspondence gives at the infinitesimal level a relation between infinitesimal braidings on $\mathcal{C}$ and the 2nd DY cohomology of $\otimes_{\mathcal{C}}$. However he did not describe a complement of $\mathbf{T}_c\mathrm{Br}(\mathcal{C})$ in $H^2_{\mathrm{DY}}(P_c)$.

\indent In the context of Vassiliev invariants and deformations of symmetric categories, a slightly different notion of ``infinitesimal braidings'' has appeared \cite[\S 4]{cartier}, \cite[\S XX.4]{kassel}. More precisely, when the braiding $c$ on $\mathcal{C}$ is symmetric, infinitesimal braidings in this other sense can be seen as a subspace (strict in general) of the vector space $\mathbf{T}_c\mathrm{Br}(\mathcal{C})$ defined above, see Remark~\ref{remarkCartierInfBraidings}.

It follows from Theorem \ref{thmDYCohomAndTangentBraidingsProductIntro} and Ocneanu rigidity \cite[\S 7]{ENO}, \cite[\S 3.5]{GHS} that infinitesimal deformations of a braiding on a finite tensor category $\mathcal{C}$ might exist only in the case  of {\em non}-semisimple $\mathcal{C}$. A deep and active trend in quantum topology is the construction of topological invariants from non-semisimple ribbon categories, like Lyubashenko's representations of mapping class groups \cite{lyuMCG} or renormalized link invariants based on modified traces \cite{GKP}. Since the braiding is one of the main ingredients in these constructions, it could be interesting to see how the non-semisimple invariants deform along the infinitesimal deformations of braidings (or higher order deformations).

 Theorem \ref{thmDYCohomAndTangentBraidingsProductIntro} motivates the computation of the DY cohomology of the monoidal functor $P_c$ whose monoidal structure is defined by the braiding $c$. Our third main result is the application of Theorem \ref{th1Intro} to $P_c$, which amounts to describe the object $\widetilde{R}(\boldsymbol{1}) \in \mathcal{Z}(\mathcal{C} \boxtimes \mathcal{C})$ in this case. Namely, in \S\ref{adjunctionThmMonProduct}, we prove:
\begin{thIntro}\label{thIntroCoeffMonProduct}
1. For the choice $F = P_c$, the object $\widetilde{R}(\boldsymbol{1})$ appearing in Theorem \ref{th1Intro} is equal to 
$\bigl( \mathscr{A}, \lambda^{(+),(-)} \bigr)$
where $\mathscr{A} = \int^{X \in \mathcal{C}} X^{\vee} \boxtimes X \in \mathcal{C} \boxtimes \mathcal{C}$ and $\lambda^{(+),(-)}$ is a half-braiding defined in \eqref{halfBraidingOnBulk} by using the braiding $c$ and its inverse in $\mathcal{C}$.
\\2. If the ground field $\Bbbk$ is perfect, $\bigl( \mathscr{A}, \lambda^{(+),(-)} \bigr)$ can be expressed as the coend
\[ 
\Gamma = \int^{X \in \mathcal{C}} \bigl(X^{\vee}, c_{X^{\vee},-}\bigr) \boxtimes \bigl(X, c_{-,X}^{-1}\bigr) \]
through the equivalence $\mathcal{Z}(\mathcal{C} \boxtimes \mathcal{C}) \cong \mathcal{Z}(\mathcal{C}) \boxtimes \mathcal{Z}(\mathcal{C})$.
\end{thIntro}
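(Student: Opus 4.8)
The plan is to compute $\widetilde{R}(\boldsymbol{1})$ directly from the adjoint lifting theorem, and then transport the answer across the equivalence $\mathcal{Z}(\mathcal{C}\boxtimes\mathcal{C})\simeq\mathcal{Z}(\mathcal{C})\boxtimes\mathcal{Z}(\mathcal{C})$. For Part~1, recall that for $F=P_c$ the right adjoint is $R=\otimes^{\mathrm{r}}\colon\mathcal{C}\to\mathcal{C}\boxtimes\mathcal{C}$, the right adjoint of the tensor product, whose value on $\boldsymbol{1}$ (or on any object) is classically expressed as a coend: $R(\boldsymbol{1})=\int^{X\in\mathcal{C}}X^{\vee}\boxtimes X$, using rigidity of $\mathcal{C}$. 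This is exactly the algebra $\mathscr{A}$. The adjoint lifting theorem of \cite{johnstone} produces $\widetilde{R}$ as the functor sending an object of $\mathcal{Z}(P_c)$ to $R$ of its underlying object equipped with a canonical half-braiding; since the half-braiding of $\boldsymbol{1}\in\mathcal{Z}(P_c)$ is trivial, the half-braiding on $\widetilde{R}(\boldsymbol{1})=\mathscr{A}$ is the one obtained by pushing the trivial half-braiding through the mates of the square \eqref{morphismAdjunctionZFZCIntro}. First I would write out this mate explicitly: it involves $\mathcal{F}_{P_c}$, $\mathcal{U}_{P_c}$ and their unit/counit, together with the monoidal structure \eqref{monStructMonProductFromBraidingIntro} of $P_c$, i.e. the braiding $c$ inserted between the two tensor slots. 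Tracking the central structure through the coend description of $\mathscr{A}$, one gets a half-braiding $\lambda^{(+),(-)}_{Z\boxtimes W}$ which acts on the $X^{\vee}$-leg by a $c$-conjugation and on the $X$-leg by a $c^{-1}$-conjugation, producing the formula \eqref{halfBraidingOnBulk}. I would verify it is a half-braiding by the usual dinaturality-plus-hexagon check; since $\mathscr{A}$ is a coend, it suffices to check compatibility with the universal dinatural transformation, and the hexagon for $\lambda^{(+),(-)}$ then reduces to the hexagon axioms for $c$ in $\mathcal{C}$ together with naturality of the coend structure morphisms.

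For Part~2, the point is to decompose the single half-braided object $(\mathscr{A},\lambda^{(+),(-)})\in\mathcal{Z}(\mathcal{C}\boxtimes\mathcal{C})$ as an (external) coend of pairs living in $\mathcal{Z}(\mathcal{C})\boxtimes\mathcal{Z}(\mathcal{C})$. Under the canonical monoidal equivalence $\mathcal{Z}(\mathcal{C})\boxtimes\mathcal{Z}(\mathcal{C})\xrightarrow{\sim}\mathcal{Z}(\mathcal{C}\boxtimes\mathcal{C})$ — which exists when $\Bbbk$ is perfect so that $\mathcal{C}\boxtimes\mathcal{C}$ behaves well and the Deligne product of the two centers computes the center of the Deligne product — a pair $(V,\gamma)\boxtimes(W,\delta)$ maps to $V\boxtimes W$ with half-braiding built from $\gamma$ on the first leg and $\delta$ on the second. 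I would check that the half-braiding $\lambda^{(+),(-)}$ on $X^{\vee}\boxtimes X$ is precisely of this "separated" form: on the $X^{\vee}$-leg it is $c_{X^{\vee},-}$ (a genuine half-braiding on $X^{\vee}\in\mathcal{C}$, since any object of a braided category is canonically central via $c$), and on the $X$-leg it is $c^{-1}_{-,X}$ (the half-braiding on $X$ coming from the \emph{reverse} braiding $c^{-1}$ — note it must be the inverse braiding here, which is exactly what the asymmetric monoidal structure \eqref{monStructMonProductFromBraidingIntro} forces). Hence $(\mathscr{A},\lambda^{(+),(-)})$ is the image under the equivalence of the coend $\int^{X}(X^{\vee},c_{X^{\vee},-})\boxtimes(X,c^{-1}_{-,X})$; this identification is just the statement that the equivalence, being a $\Bbbk$-linear (even exact) functor, commutes with the coend, combined with the already-verified leg-by-leg description of the half-braiding.

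I expect the main obstacle to be the careful bookkeeping in Part~1: computing the lifted right adjoint via the adjoint lifting theorem means chasing a mate of the commuting square of adjunctions, and the monoidal structure $F^{(2)}=P_c^{(2)}$ enters through the formula for the central structure on $\widetilde{R}(\mathsf{V})$, not merely as a formality. In particular one must get the two slots to carry $c$ versus $c^{-1}$ correctly — a sign-of-braiding error here would be easy to make and fatal — and one must confirm that the answer really is the \emph{coend} $\mathscr{A}$ with its algebra structure (this uses rigidity of $\mathcal{C}$, which is in the hypotheses of Theorem~\ref{th1Intro}). Once the half-braiding \eqref{halfBraidingOnBulk} is pinned down, Part~2 is comparatively soft: it is the observation that $\lambda^{(+),(-)}$ is "separable" into two one-variable half-braidings, plus exactness of the equivalence $\mathcal{Z}(\mathcal{C}\boxtimes\mathcal{C})\simeq\mathcal{Z}(\mathcal{C})\boxtimes\mathcal{Z}(\mathcal{C})$ to move the coend inside. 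A secondary subtlety is that the equivalence of centers requires $\Bbbk$ perfect (so that $\mathcal{C}\boxtimes\mathcal{C}$ is again a finite tensor category with $\mathcal{Z}(\mathcal{C}\boxtimes\mathcal{C})\simeq\mathcal{Z}(\mathcal{C})\boxtimes\mathcal{Z}(\mathcal{C})$), which is exactly why that hypothesis appears only in Part~2.
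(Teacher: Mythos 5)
Your proposal follows essentially the same approach as the paper's proof of Theorem~\ref{thmChangeFunctorMonProduct}: compute $\widetilde{R}(\boldsymbol{1})$ via the adjoint lifting theorem applied to $R(\boldsymbol{1})=\mathscr{A}$, chase the central (module) structure through $\xi_{\boldsymbol{1}}$ -- where the braiding $c$ enters via $P_c^{(2)}$ and $P_c^{(3)}$ appearing in the mate and in $r_{\boldsymbol{1}}$ -- to obtain $\lambda^{(+),(-)}$, and then for Part~2 invoke the equivalence $\mathcal{Z}(\mathcal{C})\boxtimes\mathcal{Z}(\mathcal{C})\simeq\mathcal{Z}(\mathcal{C}\boxtimes\mathcal{C})$ (Lemma~\ref{lemmaDrinfeldCenterOfDeligneProduct}) together with exactness of the lift $L^{(+),(-)}$ to commute the coend through. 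Your outline is sound; the only proviso, which you yourself flag, is that the actual bookkeeping in Part~1 is a substantial diagrammatic calculation, and the precise placement of $c$ on the $X^{\vee}$-leg versus $c^{-1}$ on the $X$-leg is forced by the asymmetric monoidal structure \eqref{monStructMonProductFromBraidingIntro} -- a computation, not a formality, as the paper's proof makes explicit over several pages.
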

\noindent The assumption on $\Bbbk$ in item 2 is used in our proof of the equivalence $\mathcal{Z}(\mathcal{C} \boxtimes \mathcal{C}) \cong \mathcal{Z}(\mathcal{C}) \boxtimes \mathcal{Z}(\mathcal{C})$ in Lemma \ref{lemmaDrinfeldCenterOfDeligneProduct}. It ensures that the Deligne product of two resolvent pairs is again a resolvent pair (App.\,\ref{subsectionDeligneResolventPairs}).

\indent  When $\Bbbk$ is perfect, the combination of Theorems \ref{thIntroCoeffMonProduct} and \ref{thmDYCohomAndTangentBraidingsProductIntro} gives a dimension formula:
\begin{equation}\label{dimensionFormulaTangentSpaceIntro}
\dim \mathbf{T}_c\mathrm{Br}(\mathcal{C}) = \dim \Ext^2_{\mathcal{Z}(\mathcal{C}) \boxtimes \mathcal{Z}(\mathcal{C}), \mathcal{C} \boxtimes \mathcal{C}}\bigl(\boldsymbol{1}, \Gamma \bigr) - 2 \dim \Ext^2_{\mathcal{Z}(\mathcal{C}), \mathcal{C}}(\boldsymbol{1}, \boldsymbol{1})
\end{equation}
A nice feature of this formula, which makes it efficient in practice, is that it is enough to know a relatively projective resolution of $\boldsymbol{1} \in\mathcal{Z}(\mathcal{C})$ to compute these relative Ext's (it is even enough knowing the first four terms of the resolution). Indeed a resolution of $\boldsymbol{1} \boxtimes \boldsymbol{1} \in \mathcal{Z}(\mathcal{C}) \boxtimes \mathcal{Z}(\mathcal{C})$ can be obtained by taking the product of the resolution of $\boldsymbol{1} \in\mathcal{Z}(\mathcal{C})$ with itself, thanks to the general results in Appendix \ref{subsectionDeligneResolventPairs} about Deligne product of resolvent pairs.

\indent Furthermore, in \S\ref{sec:end-formula} and  under the assumption that $\Bbbk$ is perfect, we  apply a K\"unneth formula and rewrite the dimension formula~\eqref{dimensionFormulaTangentSpaceIntro} via an explicit end involving only $\Ext^n_{\zcat,\cat}$ at $n=1,2$.
For example, if $\cat$ is  unimodular with a symmetric braiding $c$, we obtain the following end-formula
\begin{multline}
    \label{eq:dim-Tc-Br-Ext-end-sym}
\dim \mathbf{T}_c\mathrm{Br}(\mathcal{C}) =
\dim \int_{P \in \mathrm{Proj}(\mathcal{C})}  \Ext^1_{\mathcal{Z}(\mathcal{C}),\mathcal{C}}\bigl(\boldsymbol{1},(P^{\vee}, c_{P^{\vee},-})\bigr) 
 \otimes \Ext^1_{\mathcal{Z}(\mathcal{C}),\mathcal{C}}\bigl(\boldsymbol{1},(P,c_{P,-}) \bigr)\\
+ 
2 \dim \Ext^2_{\mathcal{Z}(\mathcal{C}),\mathcal{C}}\bigl(\boldsymbol{1},(P_{\one},c_{P_{\one},-}) \bigr)
- 2 \dim \Ext^2_{\mathcal{Z}(\mathcal{C}),\mathcal{C}}(\boldsymbol{1},\one) \ ,
\end{multline} 
where $P_{\one}$ is the projective cover of $\one$ in $\cat$, and $(P,c_{P,-})$ denotes the corresponding object in $\zcat$,
while for the general case expression see Corollary~\ref{cor:HDY-end-proj}.

\smallskip

\indent In \S\ref{sectionFinDimHopf} we specialize our results to the case $\mathcal{C} = H\text{-}\mathrm{mod}$ where $H$ is a finite-dimensional Hopf algebra over a field $\Bbbk$. In this case it is well-known that a braiding on $H\text{-}\mathrm{mod}$ is equivalent to an $R$-matrix in $H^{\otimes 2}$. When they are expressed in a basis of $H$, the defining conditions of an $R$-matrix are polynomial equations, so we have an affine variety of $R$-matrices for $H$. Similarly, an infinitesimal braiding tangent to $c$ on $H\text{-}\mathrm{mod}$ is equivalent to a vector in the Zariski tangent space of the $R$-matrix associated to $c$. Note that $\mathcal{Z}(\mathcal{C}) \cong D(H)\text{-}\mathrm{mod}$ and $\mathcal{Z}(\mathcal{C} \boxtimes \mathcal{C}) \cong D(H \otimes H)\text{-}\mathrm{mod}$. Through these isomorphisms, the object $\bigl( \mathscr{A}, \lambda^{(+),(-)} \bigr)$ in Theorem~\ref{thIntroCoeffMonProduct} is equal to the dual vector space $H^*$ endowed with a $D(H \otimes H)$-module structure based on the coregular actions of $H$ on $H^*$, \textit{cf}. Proposition \ref{propAdjunctionTheoremForMonProductInAmod} to see how the $R$-matrix enters in this action. Hence for $\mathcal{C} = H\text{-}\mathrm{mod}$, the dimension formula~\eqref{dimensionFormulaTangentSpaceIntro} takes the following form:
\begin{coroIntro} Let $H$ be a (quasi-triangular) Hopf algebra over a field $\Bbbk$ with an R-matrix $R$.
We then have
\begin{equation}\label{dimensionFormulaTangentSpaceRMatIntro}
\dim \mathbf{T}_R\mathrm{RMat}(H) = \dim \Ext^2_{D(H \otimes H),H \otimes H}\bigl(\Bbbk, H^* \bigr) - 2 \dim \Ext^2_{D(H),H}(\Bbbk,\Bbbk)
\end{equation}
where $\mathrm{RMat}(H)$ is the affine variety of $R$-matrices on $H$, $\mathbf{T}_R\mathrm{RMat}(H)$ is its Zariski tangent space at the point $R$, $H^*$ has the $D(H \otimes H)$-module structure \eqref{actionDAAOnADual} and $\Bbbk$ is the ground field with the trivial module structure.
\end{coroIntro}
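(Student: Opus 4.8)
The plan is to derive the Corollary as a direct specialization of the dimension formula~\eqref{dimensionFormulaTangentSpaceIntro} to $\mathcal{C} = H\text{-}\mathrm{mod}$, using the standard dictionary between categorical and Hopf-algebraic data. First I would recall the well-known identifications: a braiding $c$ on $H\text{-}\mathrm{mod}$ is the same as an $R$-matrix $R \in H^{\otimes 2}$, and under this correspondence the Zariski tangent space $\mathbf{T}_R\mathrm{RMat}(H)$ to the affine variety of $R$-matrices is identified with the space of infinitesimal braidings tangent to $c$, i.e. $\mathbf{T}_c\mathrm{Br}(H\text{-}\mathrm{mod})$. Indeed, the defining axioms of an $R$-matrix (the two hexagon-type identities plus invertibility) are polynomial in the matrix entries of $R$ with respect to a basis of $H$, so they cut out a closed subvariety of $H \otimes H$; a tangent vector at $R$ is a $t \in H^{\otimes 2}$ such that $R + \epsilon t$ satisfies these equations modulo $\epsilon^2$, which is exactly the infinitesimal braiding condition on the associated natural transformation $t_{X,Y}$ after extending scalars to $\Bbbk[\epsilon]/(\epsilon^2)$. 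Thus the left-hand sides of~\eqref{dimensionFormulaTangentSpaceIntro} and~\eqref{dimensionFormulaTangentSpaceRMatIntro} agree.

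Next I would translate the right-hand side. By the standard equivalence, $\mathcal{Z}(H\text{-}\mathrm{mod}) \simeq D(H)\text{-}\mathrm{mod}$ and likewise $\mathcal{Z}\bigl((H\text{-}\mathrm{mod})^{\boxtimes 2}\bigr) \simeq \mathcal{Z}\bigl((H\otimes H)\text{-}\mathrm{mod}\bigr) \simeq D(H \otimes H)\text{-}\mathrm{mod}$ (using that $(H\text{-}\mathrm{mod})\boxtimes(H\text{-}\mathrm{mod}) \simeq (H\otimes H)\text{-}\mathrm{mod}$ for finite-dimensional $H$). Under these equivalences, relative Ext groups $\Ext^{\bullet}_{\mathcal{Z}(\mathcal{C}),\mathcal{C}}$ with respect to the standard adjunction become the relative $\Ext$ with respect to the forgetful functor $D(H)\text{-}\mathrm{mod} \to H\text{-}\mathrm{mod}$, written $\Ext^{\bullet}_{D(H),H}$, and similarly with $H$ replaced by $H \otimes H$; here $\boldsymbol{1}$ corresponds to the trivial module $\Bbbk$. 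The only remaining ingredient is to identify $\Gamma = \widetilde{R}(\boldsymbol{1})$ under these equivalences. By Theorem~\ref{thIntroCoeffMonProduct}, $\Gamma = \bigl(\mathscr{A}, \lambda^{(+),(-)}\bigr)$ with $\mathscr{A} = \int^{X} X^{\vee} \boxtimes X$; for $\mathcal{C} = H\text{-}\mathrm{mod}$ the coend $\int^{X} X^{\vee} \boxtimes X$ is the coregular bimodule, which as a vector space is $H^{*}$ with the two coregular $H$-actions coming from the two tensor factors, so $\mathscr{A}$ becomes $H^{*} \in (H\otimes H)\text{-}\mathrm{mod}$. The half-braiding $\lambda^{(+),(-)}$, built from $c$ and $c^{-1}$, then endows $H^{*}$ with the $D(H\otimes H)$-module structure obtained by combining the coregular $H$-actions with the $R$-matrix-dependent coadjoint-type $H^{*}$-actions; this is precisely the module structure~\eqref{actionDAAOnADual} referenced in Proposition~\ref{propAdjunctionTheoremForMonProductInAmod}. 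Substituting $\Gamma \leftrightarrow H^{*}$ into~\eqref{dimensionFormulaTangentSpaceIntro} yields~\eqref{dimensionFormulaTangentSpaceRMatIntro}.

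I should also note that the applicability of~\eqref{dimensionFormulaTangentSpaceIntro} requires $\Bbbk$ to be perfect (it enters through Lemma~\ref{lemmaDrinfeldCenterOfDeligneProduct}, i.e. the equivalence $\mathcal{Z}(\mathcal{C}\boxtimes\mathcal{C}) \simeq \mathcal{Z}(\mathcal{C})\boxtimes\mathcal{Z}(\mathcal{C})$ and the statement that a Deligne product of resolvent pairs is a resolvent pair); alternatively, when one works directly with $D(H\otimes H)\text{-}\mathrm{mod}$, this equivalence is automatic since $(H\text{-}\mathrm{mod})^{\boxtimes 2} \simeq (H\otimes H)\text{-}\mathrm{mod}$ and $D(H)\text{-}\mathrm{mod}^{\boxtimes 2} \simeq D(H\otimes H)\text{-}\mathrm{mod}$ hold over any field for finite-dimensional $H$, so the formula~\eqref{dimensionFormulaTangentSpaceRMatIntro} in fact holds without the perfectness assumption. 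I would phrase the proof so that this point is made explicit.

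The main obstacle, and the part deserving the most care, is the precise identification of the $D(H\otimes H)$-module structure on $H^{*}$ — checking that the half-braiding $\lambda^{(+),(-)}$ of~\eqref{halfBraidingOnBulk}, transported through the chain of equivalences $\mathcal{Z}(\mathcal{C}\boxtimes\mathcal{C}) \simeq \mathcal{Z}((H\otimes H)\text{-}\mathrm{mod}) \simeq D(H\otimes H)\text{-}\mathrm{mod}$, becomes exactly the action~\eqref{actionDAAOnADual}, with the $R$-matrix entering through the way the half-braiding is built from $c$ and $c^{-1}$ on the two tensor factors. This is essentially a (somewhat involved but routine) unwinding of the coend description of $\mathscr{A}$ together with the explicit formula for $\lambda^{(+),(-)}$, and it is carried out in Proposition~\ref{propAdjunctionTheoremForMonProductInAmod}; once that identification is granted, the Corollary is immediate. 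Everything else — the tangent-space interpretation of $\mathbf{T}_R\mathrm{RMat}(H)$ and the translation of relative Ext's into Hopf-algebraic relative Ext's — is standard.
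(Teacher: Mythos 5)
Your proposal is correct and follows essentially the same route as the paper: the identification $\mathbf{T}_R\mathrm{RMat}(H)\cong\mathbf{T}_c\mathrm{Br}(H\text{-}\mathrm{mod})$ is \eqref{isoInfBraidingsInfRMat}, the dimension formula is \eqref{dimensionFormulaForTRMat}, the translation $H^2_{\mathrm{DY}}(H\text{-}\mathrm{mod})\cong\Ext^2_{D(H),H}(\Bbbk,\Bbbk)$ is Theorem~\ref{thmDYCohomRelExt}, and the identification of the coefficient as $H^*$ with the action \eqref{actionDAAOnADual} is Proposition~\ref{propAdjunctionTheoremForMonProductInAmod}, which is exactly the ``unwinding of the coend description of $\mathscr{A}$ and the half-braiding $\lambda^{(+),(-)}$'' you correctly identify as the only non-routine step. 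Your remark that the perfectness hypothesis can be dropped is also right and worth stating: the intermediate formula \eqref{dimensionFormulaForTRMat} comes from Corollary~\ref{coroDimFormulaTangentSpaceBraiding}, and the Ext identification goes through $\mathcal{Z}(\mathcal{C}\boxtimes\mathcal{C})\cong D(H\otimes H)\text{-}\mathrm{mod}$ rather than through Lemma~\ref{lemmaDrinfeldCenterOfDeligneProduct}, so none of the ingredients used for the Hopf-algebraic form \eqref{dimensionFormulaTangentSpaceRMatIntro} require $\Bbbk$ perfect.
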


In \S\ref{subsectionExampleBk} we consider the example of $\mathcal{C} = B_k\text{-}\mathrm{mod}$, where $B_k = \Lambda\mathbb{C}^k \rtimes \mathbb{C}[\mathbb{Z}/2\mathbb{Z}]$ is the bosonization of the exterior algebra seen as a Hopf superalgebra. It admits a triangular $R$-matrix $R_0 \in B_k^{\otimes 2}$ and thus a symmetric braiding in $B_k\text{-}\mathrm{mod}$. The end formulas in Corollary \ref{cor:HDY-end-proj}  reveal that the Zariski tangent space to $R_0$ has dimension $k^2$ and also allow for a quick computation of the dimensions of the DY cohomology spaces of $\otimes$ endowed with the monoidal structure coming from the symmetric braiding. For completeness we also obtain these results by using the formula \eqref{dimensionFormulaTangentSpaceRMatIntro}, which requires the analysis of the action of $D(B_k \otimes B_k)$ on the coefficient $\widetilde{R}(\mathbb{C}) = B_k^*$. Moreover we provide an explicit basis of $\mathbf{T}_{R_0}\mathrm{RMat}(B_k)$ and promote this space to a $k^2$-parameter family of genuine $R$-matrices for $B_k$, which is in agreement with \cite{PV}.

\indent Finally, in \S\ref{subsectionDYTwistedFiber} we apply Theorem \ref{th1Intro} to restriction functors. Let $J \in H^{\otimes 2}$ be a Drinfeld twist. Suppose that $K$ is a Hopf subalgebra of $H_J$, which has the product of $H$ but its coproduct is altered by $J$. Then $J$ induces a monoidal structure on the restriction functor $F = \mathrm{Res}^H_K : H\text{-}\mathrm{mod} \to K\text{-}\mathrm{mod}$. We describe the centralizer category $\mathcal{Z}(\mathrm{Res}^H_K)$ as a category of finite-dimensional modules over a ``twisted Drinfeld double'' $D(H_J,K)$, while $\mathcal{Z}(\mathcal{C}) = \mathcal{Z}(H\text{-}\mathrm{mod})$ is of course isomorphic to $D(H)\text{-}\mathrm{mod}$. The object $\widetilde{R}(\boldsymbol{1}) \in D(H)\text{-}\mathrm{mod}$ is $\Hom_K(H,\Bbbk)$ as a vector space. We show that the subalgebra $H \subset D(H)$ acts by coregular action on $\widetilde{R}(\boldsymbol{1})$ while $H^* \subset D(H)$ acts by an adjoint action twisted by $J$; see Corollary \ref{coroChangeFunctorThmForResFunctor}. In the case of quasi-triangular $H$, restriction functor $\mathrm{Res}^{H \otimes H}_H$ induced by the coproduct $\Delta : H \to H \otimes H$ and Drinfeld twist defined from the $R$-matrix, then the corresponding coefficient $\widetilde{R}(\boldsymbol{1})$ recovers the coefficient of Theorem \ref{thIntroCoeffMonProduct}, see Example \ref{exampleComputingCoeffPullbackCoproduct}. An example is provided in \S\ref{subsectionResFunctorBk} for restriction functors $B_n\text{-}\mathrm{mod} \to B_k\text{-}\mathrm{mod}$ with $k \leq n$.

\medskip

\noindent \textbf{Acknowledgements.} We thank A. Davydov for stimulating discussions. This work was done when M.F. was supported by the ANR LabEx CIMI within the French State funding ``Investissements
d’Avenir''. A.M.G is grateful to
Hamburg University for its kind hospitality in 2023 and also thanks the Humboldt Foundation for their financial support during the visit. C.S. is supported in part by the Collaborative Research Centre CRC 1624 ``Higher structures, moduli spaces and integrability'' - 506632645, and the Excellence Cluster EXC 2121 ``Quantum Universe'' - 390833306.

\section{Adjunction theorem for relative Ext groups}\label{sectionAdjThmRelExt}
This section contains  general facts on resolvent pairs which will later be applied to the specific resolvent pair \eqref{morphismAdjunctionZFZCIntro} whose relative Ext groups give the DY cohomology of tensor functors. We first show in \S\ref{sectionRelatingResolventPairs} that appropriate pairs of functors $(F,\Phi)$ connecting two resolvent pairs as displayed in \eqref{diagramRelatingResolventPairs} are compatible with relatively projective resolutions. If the functor $\Phi$ has a right adjoint, we deduce an adjunction formula for the relative Ext groups associated to these resolvent pairs. Then in \S\ref{sectionLiftingAdjunctions} we note from Beck's theorem \cite[Th.\,1]{beck} that any resolvent pair is monadic. We recall the theorem of adjoint lifting along monads \cite{johnstone} which allows one to construct a right adjoint of the top functor $\Phi$ from a right adjoint of the bottom functor $F$. This is relevant in practice because $F$ is easier to manipulate than $\Phi$.

\subsection{Resolvent pairs and relative Ext groups}
Let $\mathcal{A}, \mathcal{B}$ be abelian categories and
\begin{equation}\label{adjunction}
\xymatrix@R=.7em{
\mathcal{A}\ar@/^.7em/[dd]^{\mathcal{U}}\\
\dashv\\
\ar@/^.7em/[uu]^{\mathcal{F}}\mathcal{B}
}
\end{equation}
be a pair of adjoint functors where $\mathcal{F}$ is left adjoint to $\mathcal{U}$. The adjunction \eqref{adjunction} is called a {\em resolvent pair} if the functor $\mathcal{U}$ is additive, exact and faithful. Then $\mathcal{F}$ is automatically an additive functor \cite[\S IV.1, Th.\,3]{MLCat}. Recall from \cite[Chap. IX]{macLane} (also see \cite[\S 2.1]{FGS} for a quick but complete review) that under these assumptions we have the relative Ext groups
\[ \Ext^n_{\mathcal{A},\mathcal{B}}(V,W) \]
for all $n \geq 0$ and $V,W \in \mathcal{A}$. Despite the notation they depend on the adjunction $\mathcal{F} \dashv \mathcal{U}$ and not just on the categories $\mathcal{A}, \mathcal{B}$. They are computed by applying the functor $\Hom_{\mathcal{A}}(-,W)$ to a {\em relatively projective resolution} $0 \leftarrow V \leftarrow P_0 \leftarrow P_1 \leftarrow \ldots$ and taking the cohomology of the resulting cochain complex. The {\em bar resolution} of $V \in \mathcal{A}$ is the relatively projective resolution given by
\begin{equation}\label{generalBarResolution}
\mathrm{Bar}_{\mathcal{A},\mathcal{B}}^{\bullet}(V) = \left(0 \longleftarrow V \xleftarrow{\:\varepsilon_V\:} G(V) \xleftarrow{\:d_1^V\:} G^2(V) \xleftarrow{\:d_2^V\:} \ldots \right) 
\end{equation}
where $G = \mathcal{F}\mathcal{U} : \mathcal{A} \to \mathcal{A}$ is the comonad associated to the adjunction $\mathcal{F} \dashv \mathcal{U}$, $\varepsilon : G \Rightarrow \mathrm{Id}_{\mathcal{A}}$ is its counit and
\begin{equation}\label{generalBarDiff}
d_n^V = \sum_{i=0}^n (-1)^i \,\partial^V_{n,i} \qquad \text{with } \:\partial^V_{n,i} = G^{n-i}\bigl( \varepsilon_{G^i(V)} \bigr).
\end{equation}
\indent Note that if the categories $\mathcal{A}$, $\mathcal{B}$ are $\Bbbk$-linear, where $\Bbbk$ is a field, then the relative Ext groups $\Ext^n_{\mathcal{A},\mathcal{B}}(V,W)$ are actually $\Bbbk$-vector spaces.

\subsection{Relating resolvent pairs and their relative $\mathrm{Ext}$ groups}\label{sectionRelatingResolventPairs}
Let $\mathcal{A}, \mathcal{A}'$ be abelian categories with enough projectives and $\Phi : \mathcal{A} \to \mathcal{A}'$ be an exact functor which has a right adjoint $\Psi : \mathcal{A}' \to \mathcal{A}$. Recall that for usual Ext groups it is in general {\em not} true that
\begin{equation}\label{isoAdjunctionForUsualExt}
\Ext^{\bullet}_{\mathcal{A}'}(\Phi(V),V') \cong \Ext^{\bullet}_{\mathcal{A}}(V, \Psi(V'))
\end{equation}
where $V \in \mathcal{A}$ and $V' \in \mathcal{A}'$. This isomorphism of abelian groups holds true if and only if $\Phi$ preserves projective objects. Indeed, if $\Phi$ preserves projectives and $0 \longleftarrow V \overset{d_0}{\longleftarrow} P_1 \overset{d_1}{\longleftarrow} P_2 \overset{d_2}{\longleftarrow}~\ldots$ is a projective resolution of $V$ in $\mathcal{A}$ then $0 \longleftarrow \Phi(V) \xleftarrow{\Phi(d_0)} \Phi(P_1) \xleftarrow{\Phi(d_1)} \Phi(P_2) \xleftarrow{\Phi(d_2)}~\ldots$ is a projective resolution of $\Phi(V)$ in $\mathcal{A}'$ by exactness of $\Phi$. Thus by adjunction we have a commutative diagram
\begin{equation}\label{adjunctionGivesIsoOfCochainComplexes}
\xymatrix@C=3em{
0 \ar[r] & \Hom_{\mathcal{A}'}(\Phi(P_1), V') \ar[d]^-{\cong} \ar[r]^-{\Phi(d_1)^*} & \Hom_{\mathcal{A}'}(\Phi(P_2), V') \ar[d]^-{\cong} \ar[r]^-{\Phi(d_2)^*} &\ldots\\
0 \ar[r] & \Hom_{\mathcal{A}}(P_1, \Psi(V')) \ar[r]_-{d_1^*} & \Hom_{\mathcal{A}}(P_2,\Psi(V')) \ar[r]_-{d_2^*} &\ldots
} \end{equation}
which gives the isomorphism \eqref{isoAdjunctionForUsualExt}. Conversely if \eqref{isoAdjunctionForUsualExt} holds and $P \in \mathcal{A}$ is a projective object then for all $V' \in \mathcal{A}'$ we have $\Ext^1_{\mathcal{A}'}(\Phi(P),V') \cong \Ext^1_{\mathcal{A}}(P, \Psi(V')) = 0$, proving that $\Phi(P)$ is projective. 

\smallskip

In this section we prove an isomorphism of the form \eqref{isoAdjunctionForUsualExt} for {\em relative} Ext groups of two resolvent pairs. The point is that if these resolvent pairs are related by a well-behaved pair of functors then we can show that their relative cohomologies are related as well. More precisely:

\begin{definition}\label{defMorphismOfResolventPairs}
Consider the following diagram of categories and functors:
\begin{equation}\label{diagramRelatingResolventPairs}
\xymatrix@C=4em@R=.7em{
\mathcal{A}\ar@/^.7em/[dd]^{\mathcal{U}} \ar[r]^{\Phi} &
\mathcal{A}' \ar@/^.7em/[dd]^{\mathcal{U}'}\\
\dashv & \dashv\\
\ar@/^.7em/[uu]^{\mathcal{F}}\mathcal{B} \ar[r]_F & \ar@/^.7em/[uu]^{\mathcal{F}'}\mathcal{B}'
}
\end{equation}
where the columns are adjunctions.
\\1. If there is a natural isomorphism $\alpha : F \mathcal{U} \overset{\sim}{\implies} \mathcal{U}'  \Phi$ then we say that $(F, \Phi,\alpha) : (\mathcal{F} \dashv \mathcal{U}) \to (\mathcal{F}' \dashv \mathcal{U}')$ is a morphism of adjunctions.
\\2. For such $(F, \Phi,\alpha)$ let $\beta^{\alpha} : \mathcal{F}' F \Rightarrow \Phi \mathcal{F}$ be the natural transformation defined by
\begin{equation}\label{defBetaStrongMorphism}
\forall \, X \in \mathcal{B}, \quad \beta_X^{\alpha} : \mathcal{F}'F(X) \xrightarrow{\mathcal{F}'F(\eta_X)} \mathcal{F}'F\mathcal{U}\mathcal{F}(X) \xrightarrow{\mathcal{F}'(\alpha_{\mathcal{F}(X)})} \mathcal{F}'\mathcal{U}'\Phi\mathcal{F}(X) \xrightarrow{\varepsilon'_{\Phi \mathcal{F}(X)}}  \Phi\mathcal{F}(X)
\end{equation}
where $\varepsilon$ is the counit of $\mathcal{F} \dashv \mathcal{U}$ and $\eta'$ is the unit of $\mathcal{F}' \dashv \mathcal{U}'$. If $\beta^{\alpha}$ is an isomorphism then we say that the morphism of adjunctions $(F, \Phi,\alpha)$ is strong.
\\3. If the columns in the diagram \eqref{diagramRelatingResolventPairs} are resolvent pairs and the functors $F$, $\Phi$ are additive then we say that a triple $(F, \Phi,\alpha)$ as in item 1 is a morphism of resolvent pairs.
\end{definition}

The next lemma characterizes the formula defining $\beta^{\alpha}$ in \eqref{defBetaStrongMorphism}. It follows from naturality and the defining properties of units and counits (given e.g. in \cite[\S IV.1]{MLCat}).
\begin{lemma}\label{lemmaCompatibleAdjunctionData}
Let $(F,\Phi,\alpha)$ be a morphism of adjunctions as in Definition \ref{defMorphismOfResolventPairs} and let $\beta : \mathcal{F}'F \Rightarrow \Phi\mathcal{F}$ be any natural transformation. Consider the following diagrams $(D_1)$ and $(D_2)$
\[ \xymatrix@C=3em@R=.2em{
&F(X) \ar[r]^-{\eta'_{F(X)}} \ar[dd]_-{F(\eta_X)} & \mathcal{U}'\mathcal{F}'F(X) \ar[dd]^-{\mathcal{U}'(\beta_X)} \\
(D_1)\!\!\!\!\!\!\!\!\!\!\!\!\!\!\!\!\!\!&&\\
&F\mathcal{U} \mathcal{F}(X) \ar[r]_-{\alpha_{\mathcal{F}(X)}} & \mathcal{U}' \Phi  \mathcal{F}(X)
}
\qquad\qquad
\xymatrix@C=3em@R=.2em{
&\mathcal{F}'F\mathcal{U}(V) \ar[r]^-{\beta_{\mathcal{U}(V)}} \ar[dd]_-{\mathcal{F}'(\alpha_V)}& \Phi\mathcal{F}\mathcal{U}(V) \ar[dd]^-{\Phi(\varepsilon_V)} \\
(D_2)\!\!\!\!\!\!\!\!\!\!\!\!\!\!\!\!\!\!&&\\
&\mathcal{F}' \mathcal{U}' \Phi(V) \ar[r]_-{\varepsilon'_{\Phi(V)}} & \Phi(V)
} \]
where $\eta$ and $\varepsilon$ (resp. $\eta'$ and $\varepsilon'$) are the unit and counit of the adjunction $\mathcal{F} \dashv \mathcal{U}$ (resp. $\mathcal{F}' \dashv \mathcal{U}'$). The following statements are equivalent:
\begin{enumerate}[itemsep=-.1em,topsep=.1em]
\item The diagram $(D_1)$ commutes
for all $X \in \mathcal{B}$.
\item The diagram $(D_2)$ commutes for all $V \in \mathcal{A}$.
\item The natural transformation $\beta$ is equal to $\beta^{\alpha}$ defined in \eqref{defBetaStrongMorphism}.
\end{enumerate}
\end{lemma}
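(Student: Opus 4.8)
The statement is a standard kind of ``mate correspondence'' result: it says that the natural transformation $\beta^\alpha$ defined by the formula \eqref{defBetaStrongMorphism} is the unique one compatible with the adjunction data in either of the two equivalent senses $(D_1)$, $(D_2)$. The proof is entirely formal, using only naturality of $\alpha,\eta,\eta',\varepsilon,\varepsilon'$ together with the triangle identities for the two adjunctions; there is nothing deep here, only a careful bookkeeping of pasting diagrams. The plan is to prove $(3)\Rightarrow(1)$, then $(1)\Rightarrow(3)$, then $(3)\Leftrightarrow(2)$ by the same style of argument, which establishes all the equivalences.

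\smallskip

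\emph{First}, I would show $(3)\Rightarrow(1)$: assume $\beta=\beta^\alpha$ and verify $(D_1)$ directly. Substitute the definition \eqref{defBetaStrongMorphism} of $\beta^\alpha_X$ into $\mathcal{U}'(\beta_X)$ and precompose with $\eta'_{F(X)}$; thus the right-then-bottom composite of $(D_1)$ becomes
\[
\mathcal{U}'(\varepsilon'_{\Phi\mathcal{F}(X)})\circ \mathcal{U}'\mathcal{F}'(\alpha_{\mathcal{F}(X)})\circ \mathcal{U}'\mathcal{F}'F(\eta_X)\circ \eta'_{F(X)}.
\]
By naturality of $\eta'$ applied to the morphism $\mathcal{F}'(\alpha_{\mathcal{F}(X)})\circ \mathcal{F}'F(\eta_X): \mathcal{F}'F(X)\to \mathcal{F}'\mathcal{U}'\Phi\mathcal{F}(X)$, this equals $\mathcal{U}'(\varepsilon'_{\Phi\mathcal{F}(X)})\circ \eta'_{\mathcal{U}'\Phi\mathcal{F}(X)}\circ \alpha_{\mathcal{F}(X)}\circ F(\eta_X)$, and then the triangle identity $\mathcal{U}'(\varepsilon')\circ\eta'_{\mathcal{U}'}=\mathrm{id}$ collapses this to $\alpha_{\mathcal{F}(X)}\circ F(\eta_X)$, which is exactly the bottom-then-left composite of $(D_1)$.

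\smallskip

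\emph{Second}, for $(1)\Rightarrow(3)$ I would run the argument in reverse to solve for $\beta$. From $(D_1)$ we get $\mathcal{U}'(\beta_X)\circ\eta'_{F(X)}=\alpha_{\mathcal{F}(X)}\circ F(\eta_X)$; taking adjoint mates (i.e.\ applying the adjunction bijection $\Hom_{\mathcal{A}'}(\mathcal{F}'F(X),-)\cong\Hom_{\mathcal{B}'}(F(X),\mathcal{U}'(-))$, whose inverse sends $g$ to $\varepsilon'\circ\mathcal{F}'(g)$) yields $\beta_X=\varepsilon'_{\Phi\mathcal{F}(X)}\circ\mathcal{F}'(\alpha_{\mathcal{F}(X)})\circ\mathcal{F}'F(\eta_X)=\beta^\alpha_X$. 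Then $(3)\Leftrightarrow(2)$ is the symmetric statement: $(D_2)$ is obtained from $(D_1)$ by taking mates with respect to the \emph{other} adjunction, or directly one substitutes \eqref{defBetaStrongMorphism} into $\beta_{\mathcal{U}(V)}$ and uses naturality of $\varepsilon'$ plus the triangle identity $\varepsilon'_{\Phi}\circ\mathcal{F}'\mathcal{U}'(\varepsilon'_{\Phi})=\varepsilon'_\Phi\circ\varepsilon'_{\mathcal{F}'\mathcal{U}'\Phi}$ together with naturality of $\alpha$ and the triangle identity $\varepsilon_{?}\circ\mathcal{F}(\eta_{?})=\mathrm{id}$ for $\mathcal{F}\dashv\mathcal{U}$; for the converse one again reads off $\beta$ as the unique solution of the equation expressed by $(D_2)$. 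In each case uniqueness is guaranteed because the relevant composite with $\eta'$ (resp.\ with $\varepsilon$-type data) is a bijection on morphisms.

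\smallskip

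\emph{Main obstacle.} There is no conceptual obstacle; the only risk is getting turned around in the pasting/whiskering when matching the formula \eqref{defBetaStrongMorphism} to $(D_2)$, since there one must use the triangle identities for \emph{both} adjunctions in the correct order and invoke naturality of $\alpha$ at the morphism $\varepsilon_V$. I would therefore present the $(D_1)$ computation in full (it is the cleanest) and then obtain $(D_2)$ by the abstract ``mate'' principle, remarking that $(D_1)$ and $(D_2)$ are the two transposes of the same 2-cell equation across the square of adjunctions, so that their equivalence is automatic once $(3)$ is seen to be equivalent to either one.
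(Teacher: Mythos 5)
Your proposal is essentially correct and follows the approach the paper gestures at: the paper gives no proof of this lemma, merely stating that ``it follows from naturality and the defining properties of units and counits.'' Your computations for $(3)\Rightarrow(1)$ and $(1)\Rightarrow(3)$ are exactly right, as is the observation that the whole statement is an instance of the mate correspondence.

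There is one small subtlety that your write-up glosses over, in the direction $(2)\Rightarrow(3)$. You say one ``reads off $\beta$ as the unique solution of the equation expressed by $(D_2)$,'' but $(D_2)$ only constrains the components $\beta_{\mathcal{U}(V)}$ for $V\in\mathcal{A}$, i.e.\ $\beta$ evaluated on objects in the essential image of $\mathcal{U}$; it does not \emph{directly} pin down $\beta_X$ for arbitrary $X\in\mathcal{B}$, and contrary to your closing remark, post-composition with $\Phi(\varepsilon_V)$ is \emph{not} a bijection on Hom sets. The missing ingredient is naturality of $\beta$ itself: take $(D_2)$ at $V=\mathcal{F}(X)$ to get $\Phi(\varepsilon_{\mathcal{F}(X)})\circ\beta_{\mathcal{U}\mathcal{F}(X)}=\varepsilon'_{\Phi\mathcal{F}(X)}\circ\mathcal{F}'(\alpha_{\mathcal{F}(X)})$, precompose both sides with $\mathcal{F}'F(\eta_X)$, and on the left use naturality of $\beta$ in the form $\beta_{\mathcal{U}\mathcal{F}(X)}\circ\mathcal{F}'F(\eta_X)=\Phi\mathcal{F}(\eta_X)\circ\beta_X$ followed by the triangle identity $\varepsilon_{\mathcal{F}(X)}\circ\mathcal{F}(\eta_X)=\mathrm{id}$; the left side collapses to $\beta_X$ and the right side is $\beta^\alpha_X$ by definition. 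With this filled in, the argument is complete.
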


\begin{remark}
1. Lemma \ref{lemmaCompatibleAdjunctionData} is actually true even if $\alpha$ is not an isomorphism, but just any natural transformation  $F \mathcal{U} \Rightarrow \mathcal{U}' \Phi$. Yet another property equivalent to the statements in Lemma \ref{lemmaCompatibleAdjunctionData} is $\alpha_V = \mathcal{U}'\Phi(\varepsilon_V) \circ \mathcal{U}'(\beta_{\mathcal{U}(V)}) \circ \eta'_{F\mathcal{U}(V)}$ for all $V \in \mathcal{A}$.
\\2. In \cite[\S IV.7]{MLCat} a ``map of adjunctions'' is defined to be a pair of functors $(F,\Phi)$ as in \eqref{diagramRelatingResolventPairs} such that $F(\eta_X) = \eta'_{F(X)}$ for all $X \in \mathcal{B}$ or equivalently such that $\Phi(\varepsilon_V) = \varepsilon'_{\Phi(V)}$ for all $V \in \mathcal{A}$. By Lemma \ref{lemmaCompatibleAdjunctionData} a ``map of adjunctions'' is a morphism of adjunctions $(F,\Phi,\alpha)$ such that $\alpha = \mathrm{id}$ and $\beta^{\alpha} = \mathrm{id}$.
\\3. Morphisms of adjunctions of the form $(F,\Phi,\mathrm{id})$ are discussed e.g. in \cite{SS} and \cite{zaganidis}.
\\4. Item 1 in Definition \ref{defMorphismOfResolventPairs} yields a category whose objects are adjunctions (\textit{i.e.} pairs of functors together with a unit and counit). The composition of morphisms is $(F_2,\Phi_2,\alpha^2) \circ (F_1,\Phi_1,\alpha^1) = (F_2 F_1, \,\Phi_2 \Phi_1, \,\alpha^{12} )$ with $\alpha^{12} = \alpha^2_{\Phi_1} \circ F_2(\alpha^1)$. One shows easily that $\beta^{\alpha^{12}} = \Phi_2(\beta^{\alpha^1}) \circ \beta^{\alpha^2}_{F_1}$ (it suffices to check that any one of the diagrams in Lemma \ref{lemmaCompatibleAdjunctionData} commute with this choice of $\beta$). In particular, if $\beta^{\alpha^1}$ and $\beta^{\alpha^2}$ are isomorphisms then so is $\beta^{\alpha^{12}}$. Hence the composition of two strong morphisms of adjunctions is again a strong morphism of adjunctions.
\end{remark}

\indent A morphism of resolvent pair $(F,\Phi,\alpha)$ as in item 3 of Definition \ref{defMorphismOfResolventPairs} allows us to relate the comonads $G = \mathcal{F}\mathcal{U}$ and $G' = \mathcal{F}'\mathcal{U}'$ on $\mathcal{A}$ and $\mathcal{A}'$, from which the bar resolutions \eqref{generalBarResolution} are defined. Indeed consider the natural transformation $\gamma : G' \Phi \Rightarrow \Phi G$ with components
\begin{equation}\label{morphismGammaOfComonads}
\gamma_V : G' \Phi(V) = \mathcal{F}' \mathcal{U}' \Phi(V) \xrightarrow{\mathcal{F}'(\alpha_V^{-1})} \mathcal{F}' F \mathcal{U}(V) \xrightarrow{\beta^{\alpha}_{\mathcal{U}(V)}} \Phi \mathcal{F} \mathcal{U}(V) = \Phi G(V)
\end{equation}
for all $V \in \mathcal{A}$, where $\beta^{\alpha}$ is defined in \eqref{defBetaStrongMorphism}. By diagram $(D_2)$ in Lemma \ref{lemmaCompatibleAdjunctionData} it satisfies
\begin{equation}\label{compatibilityGammaCounit}
\forall \, V \in \mathcal{A}, \quad \varepsilon'_{\Phi(V)} = \Phi(\varepsilon_V) \circ \gamma_V.
\end{equation}
For $n \in \mathbb{N}$ define $\gamma^{(n)} : G'^{\,n} \Phi \Rightarrow \Phi G^n$ by $\gamma^{(0)}_V = \mathrm{id}_{\Phi(V)}$ and
then by induction
\begin{equation}\label{morphismGammaOfComonadsIterated}
\gamma^{(n+1)}_V : G'^{\,n+1} \Phi(V) \xrightarrow{G'(\gamma^{(n)}_V)} G' \Phi G^n(V) \xrightarrow{\gamma_{G^n(V)}} \Phi G^{n+1}(V).
\end{equation}
In particular $\gamma^{(1)}_V = \gamma_V$.

\begin{lemma}\label{preservingCounitsImpliesPreservingResolutions}Let $(F,\Phi,\alpha)$ be a morphism of resolvent pairs as in Def.~\ref{defMorphismOfResolventPairs}(3). For all $V \in \mathcal{A}$, the morphisms $\gamma^{(n)}_V$ provide a morphism of chain complexes
\[ \gamma^{(\bullet)}_V : \mathrm{Bar}_{\mathcal{A}',\mathcal{B}'}^{\bullet}\bigl( \Phi(V) \bigr) \to \Phi\bigl( \mathrm{Bar}_{\mathcal{A},\mathcal{B}}^{\bullet}(V) \bigr). \]
\end{lemma}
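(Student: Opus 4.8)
\textbf{Proof plan for Lemma \ref{preservingCounitsImpliesPreservingResolutions}.}

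The plan is to verify directly that the family $\bigl(\gamma^{(n)}_V\bigr)_{n\ge 0}$ intertwines the bar differentials, i.e. that for every $n\ge 1$ the square
\[
\xymatrix@C=4em{
G'^{\,n}\Phi(V) \ar[r]^-{d_n^{\Phi(V)}} \ar[d]_-{\gamma^{(n)}_V} & G'^{\,n-1}\Phi(V) \ar[d]^-{\gamma^{(n-1)}_V} \\
\Phi G^n(V) \ar[r]_-{\Phi(d_n^V)} & \Phi G^{n-1}(V)
}
\]
commutes, where by \eqref{generalBarDiff} the differentials are alternating sums $d_n = \sum_{i=0}^n (-1)^i \partial_{n,i}$ with $\partial_{n,i} = G^{n-i}(\varepsilon_{G^i(-)})$. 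Since both differentials are alternating sums over the same index set, it suffices to show the compatibility term-by-term: for each $i\in\{0,\dots,n\}$,
\[
\gamma^{(n-1)}_V \circ \partial^{\Phi(V)}_{n,i} = \Phi\bigl(\partial^V_{n,i}\bigr) \circ \gamma^{(n)}_V .
\]
The key input is the relation \eqref{compatibilityGammaCounit}, namely $\varepsilon'_{\Phi(W)} = \Phi(\varepsilon_W)\circ\gamma_W$ for all $W\in\mathcal{A}$, which says exactly that $\gamma^{(1)}=\gamma$ is compatible with the ``outermost'' counit application. Combined with naturality of $\gamma$ (and of $\varepsilon$, $\varepsilon'$) this should propagate to all the $\partial_{n,i}$.

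First I would set up the bookkeeping for the iterated transformations $\gamma^{(n)}$. The recursive definition \eqref{morphismGammaOfComonadsIterated} gives $\gamma^{(n)}_V = \gamma_{G^{n-1}(V)}\circ G'(\gamma_{G^{n-2}(V)})\circ\cdots\circ G'^{\,n-1}(\gamma_V)$, so $\gamma^{(n)}$ is an ``$n$-fold whiskered composite'' of $\gamma$; it is convenient to record the naturality statement that for any morphism $f\colon V\to W$ in $\mathcal{A}$ one has $\gamma^{(n)}_W\circ G'^{\,n}\Phi(f) = \Phi G^n(f)\circ\gamma^{(n)}_V$, proved by an easy induction on $n$ using naturality of $\gamma$. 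Next I would prove the central identity
\[
\gamma^{(n)}_V = \Phi G^{n-1}(\varepsilon^{-1}?)\ \ \text{--- rather: }\ \
\gamma_{G^{n-1}(V)}\circ G'(\gamma^{(n-1)}_V),
\]
i.e. that the recursion can equally be ``peeled from the left'', $\gamma^{(n)}_V = \Phi(?)\circ\cdots$; more precisely the only structural fact I need is the compatibility of $\gamma^{(n)}$ with counits in each of the $n$ ``slots'', which one obtains by combining \eqref{compatibilityGammaCounit} applied at the objects $G^{j}(V)$ (or $G'^{\,j}\Phi(V)$) with naturality. Concretely: applying $G'^{\,n-i-1}$ to \eqref{compatibilityGammaCounit} at the object $G^i(V)$ gives
\[
G'^{\,n-i-1}\bigl(\varepsilon'_{\Phi G^i(V)}\bigr)
= G'^{\,n-i-1}\Phi\bigl(\varepsilon_{G^i(V)}\bigr)\circ G'^{\,n-i-1}\bigl(\gamma_{G^i(V)}\bigr),
\]
and then sliding this through the remaining factors of $\gamma^{(n)}$ resp. $\gamma^{(n-1)}$ using naturality of $\gamma$ yields precisely the term-wise identity $\gamma^{(n-1)}_V\circ\partial^{\Phi(V)}_{n,i} = \Phi(\partial^V_{n,i})\circ\gamma^{(n)}_V$. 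I would do the case $i=n$ (the outermost counit, which is \eqref{compatibilityGammaCounit} itself whiskered nowhere) and $i=0$ (the innermost one) explicitly and indicate that the intermediate $i$ are handled by the same pattern with $G'^{\,n-i-1}$ whiskered on the outside and the naturality squares for the inner $\gamma$-factors. Finally, summing over $i$ with signs $(-1)^i$ gives commutativity of the square above, and together with $\gamma^{(0)}_V = \mathrm{id}_{\Phi(V)}$ and compatibility with the augmentations $\varepsilon'_{\Phi(V)} = \Phi(\varepsilon_V)\circ\gamma_V$ (which is again \eqref{compatibilityGammaCounit}) this establishes that $\gamma^{(\bullet)}_V$ is a morphism of augmented chain complexes $\mathrm{Bar}^{\bullet}_{\mathcal{A}',\mathcal{B}'}(\Phi(V)) \to \Phi\bigl(\mathrm{Bar}^{\bullet}_{\mathcal{A},\mathcal{B}}(V)\bigr)$.

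The main obstacle I anticipate is purely organizational rather than conceptual: keeping track of which power of $G'$ or $\Phi$ is whiskered onto $\gamma$ or $\varepsilon$ in each of the $n+1$ summands, and checking that the naturality squares used to commute $\gamma$-factors past $\varepsilon$-factors really do line up. A clean way to avoid index-chasing is to observe that the bar complex is the image of the standard simplicial object of the comonad $G'$ under $\Phi$ versus the standard simplicial object of $G$, and that $(\gamma,\gamma^{(n)})$ is exactly a morphism of simplicial objects induced by the lax comonad morphism structure on $\gamma$ coming from \eqref{compatibilityGammaCounit}; but since the paper works with the explicit bar differential \eqref{generalBarDiff}, carrying out the term-wise check as above is the most self-contained route.
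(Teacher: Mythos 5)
Your proposal is correct and matches the paper's proof in essence: both reduce to the term-wise coface identity $\gamma^{(n)}_V\circ\partial^{\Phi(V)}_{n,i}=\Phi(\partial^V_{n,i})\circ\gamma^{(n+1)}_V$, which is established from \eqref{compatibilityGammaCounit}, naturality of $\gamma$ and $\varepsilon'$, and the recursive definition \eqref{morphismGammaOfComonadsIterated}. The only cosmetic difference is organizational: the paper runs an induction on $n$, using $\partial_{n+1,i}=G'(\partial_{n,i})$ for $i\le n$ and handling the outermost face $i=n+1$ separately via naturality of $\varepsilon'$ and \eqref{compatibilityGammaCounit}, whereas you propose to unfold the whiskered composite $\gamma^{(n)}_V$ directly and slide naturality squares through each slot; both amount to the same computation.
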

\begin{proof}
Let $\partial^V_{n,i}$ and $\partial^{\Phi(V)}_{n,i}$ be the coface morphisms of the two bar complexes, see \eqref{generalBarDiff}. We prove by induction on $n$ that
\[ \forall\, i \in \{ 0, \ldots, n \}, \quad \gamma^{(n)}_V \circ \partial^{\Phi(V)}_{n,i} = \Phi(\partial^V_{n,i}) \circ \gamma^{(n+1)}_V. \]
The case $n=0$ is \eqref{compatibilityGammaCounit}. Assume that this is true for some $n \geq 0$. Note by definition of the coface maps that $\partial^{\Phi(V)}_{n+1,i} = G'(\partial^{\Phi(V)}_{n,i})$ and $\partial^V_{n+1,i} = G (\partial^V_{n,i})$. Then for any $i \in \{ 0, \ldots, n \}$ we have by \eqref{morphismGammaOfComonadsIterated}, naturality of $\gamma$ and the induction hypothesis
\begin{align*}
\gamma^{(n+1)}_V \circ \partial^{\Phi(V)}_{n+1,i} &= \gamma_{G^{n}(V)} \circ G'(\gamma^{(n)}_V) \circ G'(\partial^{\Phi(V)}_{n,i}) = \gamma_{G^{n}(V)} \circ G'\Phi(\partial^V_{n,i}) \circ G'\bigl( \gamma^{(n+1)}_V \bigr)\\
&= \Phi G (\partial^V_{n,i}) \circ \gamma_{G^{n+1}(V)} \circ G'\bigl( \gamma^{(n+1)}_V \bigr) = \Phi(\partial^V_{n+1,i}) \circ \gamma^{(n+2)}_V.
\end{align*}
The case $i = n+1$ is treated separately without using the induction hypothesis. Indeed, by naturality of $\varepsilon'$, \eqref{compatibilityGammaCounit} and \eqref{morphismGammaOfComonadsIterated} we have
\begin{align*}
\gamma^{(n+1)}_V \circ \partial^{\Phi(V)}_{n+1,n+1} &= \gamma^{(n+1)}_V \circ \varepsilon'_{G'^{\, n+1}\Phi(V)} = \varepsilon'_{\Phi G^{n+1}(V)} \circ G'(\gamma^{(n+1)}_V)\\
&= \Phi\bigl( \varepsilon_{G^{n+1}(V)} \bigr) \circ \gamma_{G^{n+1}(V)} \circ G'(\gamma^{(n+1)}_V) = \Phi(\partial^V_{n+1,n+1}) \circ \gamma^{(n+2)}_V.
\end{align*}
Since $\Phi$ is additive it follows that $\gamma^{(n)}_V \circ \Phi(d^V_{n}) = d^{\Phi(V)}_{n} \circ \gamma^{(n+1)}_V$ for all $n$, where $d^V$ and $d^{\Phi(V)}$ are the differentials \eqref{generalBarDiff} of $\mathrm{Bar}_{\mathcal{A},\mathcal{B}}^{\bullet}(V)$ and $ \mathrm{Bar}_{\mathcal{A}',\mathcal{B}'}^{\bullet}\bigl( \Phi(V) \bigr)$.
\end{proof}

\indent Recall the notion of a strong morphism of adjunctions (item 2 in Definition \ref{defMorphismOfResolventPairs}).
\begin{theorem}\label{propExtGroupsForMorphismsOfResolventPairs}
Let $(F, \Phi,\alpha)$ be a morphism of resolvent pairs as in Def.~\ref{defMorphismOfResolventPairs}(3) and assume that $\Phi : \mathcal{A} \to \mathcal{A}'$ has a right adjoint $\Psi$. Then there are morphisms of abelian groups
\[ \forall \, n \geq 0, \quad \Ext^n_{\mathcal{A}, \mathcal{B}}(V, \Psi(V')) \to \Ext^n_{\mathcal{A}', \mathcal{B}'}(\Phi(V), V') \]
for all $V \in \mathcal{A}$, $V' \in \mathcal{A}'$. If $(F,\Phi,\alpha)$ is strong then they are isomorphisms of abelian groups.
\end{theorem}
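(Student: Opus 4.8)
The plan is to produce the maps on relative Ext groups by applying $\Hom_{\mathcal{A}'}(-,V')$ together with the adjunction $\Phi \dashv \Psi$ to the chain map $\gamma^{(\bullet)}_V$ built in Lemma~\ref{preservingCounitsImpliesPreservingResolutions}, and then to observe that, in the strong case, $\gamma^{(\bullet)}_V$ is a degreewise isomorphism, so that the induced maps on cohomology are isomorphisms.

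\emph{Construction of the maps.} By Lemma~\ref{preservingCounitsImpliesPreservingResolutions} the family $\bigl(\gamma^{(n)}_V\bigr)_{n\geq 0}$ assembles into a morphism of chain complexes $\gamma^{(\bullet)}_V : \mathrm{Bar}^{\bullet}_{\mathcal{A}',\mathcal{B}'}\bigl(\Phi(V)\bigr)\to\Phi\bigl(\mathrm{Bar}^{\bullet}_{\mathcal{A},\mathcal{B}}(V)\bigr)$; since $\gamma^{(0)}_V=\mathrm{id}_{\Phi(V)}$ and $\varepsilon'_{\Phi(V)}=\Phi(\varepsilon_V)\circ\gamma_V$ by \eqref{compatibilityGammaCounit}, this chain map is compatible with the augmentations, i.e.\ it lifts $\mathrm{id}_{\Phi(V)}$. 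Applying the contravariant functor $\Hom_{\mathcal{A}'}(-,V')$ in each degree gives a morphism of cochain complexes
\[ (\gamma^{(\bullet)}_V)^{*}:\Hom_{\mathcal{A}'}\!\bigl(\Phi(\mathrm{Bar}^{\bullet}_{\mathcal{A},\mathcal{B}}(V)),V'\bigr)\longrightarrow\Hom_{\mathcal{A}'}\!\bigl(\mathrm{Bar}^{\bullet}_{\mathcal{A}',\mathcal{B}'}(\Phi(V)),V'\bigr). \]
The adjunction isomorphism $\Hom_{\mathcal{A}'}(\Phi(-),V')\cong\Hom_{\mathcal{A}}(-,\Psi(V'))$ is natural, so (exactly as in the diagram \eqref{adjunctionGivesIsoOfCochainComplexes}) it identifies the source above with the cochain complex $\Hom_{\mathcal{A}}(\mathrm{Bar}^{\bullet}_{\mathcal{A},\mathcal{B}}(V),\Psi(V'))$, differentials included. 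Since $\mathrm{Bar}^{\bullet}_{\mathcal{A},\mathcal{B}}(V)$ is a relatively projective resolution of $V$ and $\mathrm{Bar}^{\bullet}_{\mathcal{A}',\mathcal{B}'}(\Phi(V))$ one of $\Phi(V)$, the standard theory of relative homological algebra \cite[Chap.~IX]{macLane} identifies the $n$-th cohomology of these two complexes with $\Ext^n_{\mathcal{A},\mathcal{B}}(V,\Psi(V'))$ and $\Ext^n_{\mathcal{A}',\mathcal{B}'}(\Phi(V),V')$ respectively. Applying $H^n$ to $(\gamma^{(\bullet)}_V)^{*}$ then yields the announced morphisms $\Ext^n_{\mathcal{A},\mathcal{B}}(V,\Psi(V'))\to\Ext^n_{\mathcal{A}',\mathcal{B}'}(\Phi(V),V')$ for all $n\geq 0$.

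\emph{The strong case.} Here I would show that $\gamma^{(\bullet)}_V$ is an isomorphism in each degree, whence $(\gamma^{(\bullet)}_V)^{*}$ is a degreewise isomorphism of cochain complexes and therefore induces isomorphisms on cohomology. Since $\alpha$ is a natural isomorphism by the definition of a morphism of adjunctions and $\beta^{\alpha}$ is an isomorphism by strongness, each component $\gamma_V=\beta^{\alpha}_{\mathcal{U}(V)}\circ\mathcal{F}'(\alpha_V^{-1})$ of \eqref{morphismGammaOfComonads} is an isomorphism; an induction on $n$ using the recursion $\gamma^{(n+1)}_V=\gamma_{G^n(V)}\circ G'\bigl(\gamma^{(n)}_V\bigr)$ of \eqref{morphismGammaOfComonadsIterated}, with base case $\gamma^{(0)}_V=\mathrm{id}$, shows that every $\gamma^{(n)}_V$ is an isomorphism.

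I do not anticipate a genuine obstacle: the delicate homological points were already handled, namely that $\gamma^{(\bullet)}_V$ is a chain map over the augmentation (Lemma~\ref{preservingCounitsImpliesPreservingResolutions}, which uses additivity of $\Phi$) and that the adjunction $\Phi\dashv\Psi$ intertwines the differentials (the argument around \eqref{adjunctionGivesIsoOfCochainComplexes}). The points that still require some care are (i) confirming that relative Ext may be computed from the bar resolution on both sides, so that the cohomology groups obtained are the named relative Ext groups and not merely the cohomology of auxiliary complexes, and (ii) naturality of the constructed maps in $V$ and $V'$, which follows from naturality of $\gamma^{(\bullet)}$ and of the $\Phi\dashv\Psi$ adjunction isomorphism.
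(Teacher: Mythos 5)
Your proposal is correct and follows essentially the same route as the paper's proof: both construct the map on $\Ext$ groups by precomposing the adjunction isomorphism $\Hom_{\mathcal{A}}(\mathrm{Bar}^{\bullet}(V),\Psi(V'))\cong\Hom_{\mathcal{A}'}(\Phi\,\mathrm{Bar}^{\bullet}(V),V')$ with $(\gamma^{(\bullet)}_V)^*$, and both handle the strong case by the same induction showing that each $\gamma^{(n)}_V$ is an isomorphism. The paper merely displays this as a single three-row commuting diagram of cochain complexes (eq.~\eqref{diagramProofAdjThm}), whereas you factor the composition into two steps, but the ingredients and the argument are identical.
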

\begin{proof}
By Lemma \ref{preservingCounitsImpliesPreservingResolutions} and adjunction $\Phi \dashv \Psi$ we have a commutative diagram
\begin{equation}\label{diagramProofAdjThm}
\xymatrix@C=4em{
\ldots \ar[r]^-{(d^V_{n-1})^*} & \Hom_{\mathcal{A}}(G^n(V), \Psi(V')) \ar[r]^-{(d^V_n)^*} \ar[d]^-{\cong} & \Hom_{\mathcal{A}}(G^{n+1}(V),\Psi(V')) \ar[r]^-{(d^V_{n+1})*} \ar[d]^-{\cong} &\ldots\\
\ldots \ar[r]^-{\Phi(d^V_{n-1})^*} & \Hom_{\mathcal{A}'}(\Phi G^n(V), V') \ar[d]^-{(\gamma^{(n)})^*} \ar[r]^-{\Phi(d^V_n)^*} & \Hom_{\mathcal{A}'}(\Phi G^{n+1}(V), V') \ar[d]^-{(\gamma^{(n+1)})^*} \ar[r]^-{\Phi(d^V_{n+1})^*} &\ldots\\
\ldots \ar[r]^-{(d^{\Phi(V)}_{n-1})^*} & \Hom_{\mathcal{A}'}(G'^{\, n}\Phi(V), V') \ar[r]^-{(d^{\Phi(V)}_n)^*} & \Hom_{\mathcal{A}'}(G'^{\, n+1}\Phi(V), V') \ar[r]^-{(d^{\Phi(V)}_{n+1})^*} &\ldots
} \end{equation}
The cohomology of the first row is $\Ext^{\bullet}_{\mathcal{A}, \mathcal{B}}(V, \Psi(V'))$ while the cohomology of the third row is $\Ext^n_{\mathcal{A}', \mathcal{B}'}(\Phi(V), V')$, so we get the desired morphisms of abelian groups. For the second claim note that if $(F,\Phi,\alpha)$ is strong then the natural transformation $\gamma : G'\Phi \Rightarrow \Phi G$ is an isomorphism whose inverse is given by $\gamma_V^{-1} = \mathcal{F}'(\alpha_V) \circ (\beta^{\alpha}_{\mathcal{U}(V)})^{-1}$. It follows by induction \eqref{morphismGammaOfComonadsIterated} that $\gamma^{(n)}$ is an isomorphism for all $n$. Hence the columns in the diagram above are isomorphisms.
\end{proof}

The next proposition means that strong morphisms of resolvent pairs are compatible with all the definitions for relative cohomology. Compare with the introductory discussion at the beginning of this section and note that exactness assumptions are not required in the relative case.

\begin{proposition}\label{propStrongMorphismsPreserveEverything}
Let $(F,\Phi,\alpha)$ be a {\em strong} morphism of resolvent pairs as in Def.~\ref{defMorphismOfResolventPairs}(3).
\\1. If a morphism $f$ in $\mathcal{A}$ is allowable for $\mathcal{F} \dashv \mathcal{U}$ then $\Phi(f)$ is allowable for $\mathcal{F}' \dashv \mathcal{U}'$.
\\2. If $P \in \mathcal{A}$ is relatively projective for  $\mathcal{F} \dashv \mathcal{U}$ then $\Phi(P)$ is relatively projective for $\mathcal{F}' \dashv \mathcal{U}'$.
\\3. If $0 \longleftarrow V \overset{\delta_0}{\longleftarrow} P_1 \overset{\delta_1}{\longleftarrow} P_2 \overset{\delta_2}{\longleftarrow} \ldots$ is a relatively projective resolution of $V \in \mathcal{A}$ then $0 \longleftarrow \Phi(V) \xleftarrow{\Phi(\delta_0)} \Phi(P_1) \xleftarrow{\Phi(\delta_1)} \Phi(P_2) \xleftarrow{\Phi(\delta_2)} \ldots$ is a relatively projective resolution of $\Phi(V) \in \mathcal{A}'$.
\end{proposition}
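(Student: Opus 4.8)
The plan is to deduce all three statements from a single principle: an additive functor preserves every piece of purely equational (diagrammatic) data --- split monomorphisms, split epimorphisms, contracting homotopies --- so one merely has to transport such data along the natural isomorphism $\alpha\colon F\mathcal U\overset{\sim}{\Rightarrow}\mathcal U'\Phi$ and, in item~2, along $\gamma$. Recall the relevant characterizations from relative homological algebra (\cite[Chap.~IX]{macLane}, \cite[\S 2.1]{FGS}): a morphism $f$ in $\mathcal A$ is allowable for $\mathcal F\dashv\mathcal U$ iff $\mathcal U(f)$ factors in $\mathcal B$ as a split epimorphism followed by a split monomorphism; an object $P\in\mathcal A$ is relatively projective for $\mathcal F\dashv\mathcal U$ iff the counit $\varepsilon_P\colon G(P)=\mathcal F\mathcal U(P)\to P$ is a split epimorphism (equivalently, $P$ is a retract of $\mathcal F(B)$ for some $B\in\mathcal B$); and a relatively projective resolution of $V$ is an exact complex $0\leftarrow V\xleftarrow{\delta_0}P_1\xleftarrow{\delta_1}P_2\leftarrow\cdots$ of relatively projective objects such that $0\leftarrow\mathcal U(V)\leftarrow\mathcal U(P_1)\leftarrow\mathcal U(P_2)\leftarrow\cdots$ is contractible in $\mathcal B$. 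Finally recall from the proof of Theorem~\ref{propExtGroupsForMorphismsOfResolventPairs} that strength of $(F,\Phi,\alpha)$ makes the natural transformation $\gamma\colon G'\Phi\Rightarrow\Phi G$ of \eqref{morphismGammaOfComonads} an isomorphism, and that $\varepsilon'_{\Phi(V)}=\Phi(\varepsilon_V)\circ\gamma_V$ by \eqref{compatibilityGammaCounit}.

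For item~1, write $\mathcal U(f)=j\circ q$ in $\mathcal B$ with $q$ a split epimorphism and $j$ a split monomorphism. The additive functor $F$ preserves both splittings, so $F(\mathcal U(f))=F(j)\circ F(q)$ is again a split epimorphism followed by a split monomorphism in $\mathcal B'$; since $\alpha$ is a natural isomorphism it identifies $F(\mathcal U(f))$ with $\mathcal U'(\Phi(f))$ up to composition with isomorphisms at source and target, hence $\mathcal U'(\Phi(f))$ also admits such a factorization. Therefore $\Phi(f)$ is allowable for $\mathcal F'\dashv\mathcal U'$. (Only invertibility of $\alpha$ is used here, not strength.)

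For item~2, let $\sigma\colon P\to G(P)$ be a section of $\varepsilon_P$. Since $\gamma_P$ is invertible, the composite $\gamma_P^{-1}\circ\Phi(\sigma)\colon\Phi(P)\to G'\Phi(P)$ is a section of $\varepsilon'_{\Phi(P)}$, because $\varepsilon'_{\Phi(P)}\circ\gamma_P^{-1}\circ\Phi(\sigma)=\Phi(\varepsilon_P)\circ\gamma_P\circ\gamma_P^{-1}\circ\Phi(\sigma)=\Phi(\varepsilon_P\circ\sigma)=\mathrm{id}_{\Phi(P)}$. Thus $\varepsilon'_{\Phi(P)}$ is a split epimorphism and $\Phi(P)$ is relatively projective for $\mathcal F'\dashv\mathcal U'$. (Equivalently: if $P$ is a retract of $\mathcal F(B)$ then $\Phi(P)$ is a retract of $\Phi\mathcal F(B)\cong\mathcal F'F(B)$ via the isomorphism $\beta^{\alpha}$, and $\mathcal F'F(B)$ is relatively projective.)

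For item~3, by item~2 each $\Phi(P_i)$ is relatively projective for $\mathcal F'\dashv\mathcal U'$, so it remains to check that $0\leftarrow\mathcal U'\Phi(V)\xleftarrow{\mathcal U'\Phi(\delta_0)}\mathcal U'\Phi(P_1)\leftarrow\cdots$ is contractible in $\mathcal B'$ (which also yields exactness of $0\leftarrow\Phi(V)\leftarrow\Phi(P_\bullet)$ in $\mathcal A'$, since $\mathcal U'$ is exact and faithful and hence reflects exactness). By hypothesis $0\leftarrow\mathcal U(V)\leftarrow\mathcal U(P_\bullet)$ is contractible in $\mathcal B$; the additive functor $F$ preserves the contracting homotopy, so $F$ applied to it is contractible in $\mathcal B'$, and the natural isomorphism $\alpha$ yields an isomorphism of complexes between $F\mathcal U$ applied to the augmented complex and $\mathcal U'\Phi$ applied to it. Hence the latter is contractible, as required. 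The only point needing care is that one must produce the allowability (the $\mathcal U'$-splitting) of the image resolution and not merely its exactness --- which is why the whole argument is phrased in terms of split epimorphisms, split monomorphisms and contracting homotopies rather than invoking exactness of $\Phi$ (which is not assumed); the strength hypothesis is used only in item~2, through the invertibility of $\gamma$.
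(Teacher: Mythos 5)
Your proof is correct, but it follows a genuinely different route than the paper's, most notably for item~3. For items~1 and~2 you use equivalent characterizations of allowability (a split-epi/split-mono factorization of $\mathcal U(f)$, instead of the Mac Lane relation $\mathcal U(f)\circ s\circ\mathcal U(f)=\mathcal U(f)$ used by the paper) and of relative projectivity (splitness of $\varepsilon_P$, via \eqref{compatibilityGammaCounit} and $\gamma_P^{-1}$, instead of the paper's ``direct summand of $G(V)$'' through the isomorphism $\gamma_V$); these are straightforward translations of the same ideas. The decisive divergence is item~3. The paper deduces exactness of $\Phi(V)\leftarrow\Phi(P_\bullet)$ by invoking the comparison theorem to produce homotopy equivalences between $P_\bullet$ and the bar complex $\mathrm{Bar}^\bullet_{\mathcal A,\mathcal B}(V)$, applying $\Phi$, and then using Lemma~\ref{preservingCounitsImpliesPreservingResolutions} together with strength to identify $\Phi(\mathrm{Bar}^\bullet_{\mathcal A,\mathcal B}(V))$ with $\mathrm{Bar}^\bullet_{\mathcal A',\mathcal B'}(\Phi(V))$, which is exact. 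You instead unpack ``relatively projective resolution'' into ``relatively projective chain objects plus $\mathcal U$-contractibility of the augmented complex,'' transport the contracting homotopy across the additive functor $F$, and conjugate by the natural isomorphism $\alpha$. This is more elementary — it avoids the comparison theorem and the bar-complex detour — and it makes transparent that the strength hypothesis enters only through item~2, whereas the paper's proof of item~3 invokes strength once more. Both arguments are valid; yours is arguably tighter, while the paper's has the side benefit of building the very homotopy equivalence with the bar complex that Theorem~\ref{propExtGroupsForMorphismsOfResolventPairs} otherwise uses.
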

\begin{proof}
1. We recall that $f \in \Hom_{\mathcal{A}}(V,W)$ is called {\em allowable} if there exists $s \in \Hom_{\mathcal{B}}\bigl(\mathcal{U}(W), \mathcal{U}(V)\bigr)$ such that $\mathcal{U}(f) \circ s \circ \mathcal{U}(f) = \mathcal{U}(f)$. Define
\[ s' : \mathcal{U}'(\Phi(W)) \xrightarrow{\alpha^{-1}_W} F(\mathcal{U}(W)) \xrightarrow{F(s)} F(\mathcal{U}(V)) \xrightarrow{\alpha_V} \mathcal{U}'(\Phi(V)). \]
By naturality of $\alpha$ we have $\mathcal{U}'(\Phi(f)) \circ s' \circ \mathcal{U}'(\Phi(f)) = \mathcal{U}'(\Phi(f))$ and thus $\Phi(f)$ is allowable.
\\2. The object $P$ is relatively projective if and only if it is a direct summand of $G(V)$ for some $V \in \mathcal{A}$ \cite[Prop.~2.17(1)]{FGS}. Note that by the strongness assumption $\gamma_V : \Phi G(V) \to G' \Phi(V)$ from \eqref{morphismGammaOfComonads} is an iso. Hence, since $\Phi$ is additive, $\Phi(P)$ is a direct summand of $\Phi G(V) \cong G'\Phi(V)$ and is thus relatively projective.
\\3. By items 1 and 2 we know that $\Phi(P_n)$ is a relatively projective object and that $\Phi(\delta_n)$ is allowable for each $n$. It remains to prove that the image under $\Phi$ of the resolution is an exact sequence. This is based on Lemma \ref{preservingCounitsImpliesPreservingResolutions} which here gives an isomorphism of cochain complexes due to the strongness assumption. The comparison theorem of relatively projective resolutions \cite[Th.\,IX.4.3]{macLane} ensures that $\mathrm{id}_V$ can be lifted to morphisms of chain complexes in two ways:
\[ \xymatrix{
0 & \ar[l] V \ar@{=}[d] & \ar[l]_-{\delta_0} \ar[d]^{f_1} P_1 & \ar[l]_-{\delta_1} \ar[d]^{f_2} P_2 & \ar[l]_-{\delta_2} \ldots\\
0 & \ar[l] V & \ar[l]^-{\varepsilon_V} G(V) & \ar[l]^-{d_1^V} G^2(V) & \ar[l]^-{d^2_V} \ldots
} \qquad\quad \xymatrix{
0 & \ar[l] V \ar@{=}[d] & \ar[l]_-{\varepsilon_V} \ar[d]^{g_1} G(V) & \ar[l]_-{d^V_1} \ar[d]^{g_2} G^2(V) & \ar[l]_-{d^V_2} \ldots\\
0 & \ar[l] V & \ar[l]^-{\delta_0} P_1 & \ar[l]^-{\delta_1} P_2 & \ar[l]^-{\delta_2} \ldots
} \]
Note that $g_{\bullet} \circ f_{\bullet}$ and $\mathrm{id}_{P_{\bullet}}$ are two lifts of $\mathrm{id}_V$ along the resolution $V \leftarrow P_{\bullet}$. Hence, still by the comparison theorem \cite[Th.\,IX.4.3]{macLane}, these two lifts are chain homotopic. Applying $\Phi$ to the homotopy it follows that $\Phi(g_{\bullet}) \circ \Phi(f_{\bullet})$ and $\mathrm{id}_{\Phi(P_{\bullet})}$ are homotopic. The same argument gives that $\Phi(f_{\bullet}) \circ \Phi(g_{\bullet})$ and $\mathrm{id}_{\Phi G^{\bullet}(V)}$ are homotopic. Hence $\Phi(f_{\bullet}) : \Phi(P_{\bullet}) \to \Phi G^{\bullet}(V)$ and $\Phi(g_{\bullet}) : \Phi G^{\bullet}(V) \to \Phi(P_{\bullet})$ are inverse to each other up to homotopy. We thus have quasi-isomorphisms
\[ \Phi(P_{\bullet}) \cong \Phi\bigl( \mathrm{Bar}_{\mathcal{A},\mathcal{B}}^{\bullet}(V) \bigr) \cong \mathrm{Bar}_{\mathcal{A}',\mathcal{B}'}^{\bullet}\bigl(\Phi(V)\bigr) \]
where $P_0 = V$. Since the bar resolution is exact, the complex $\Phi(V) \leftarrow \Phi(P_{\bullet})$ is exact as well.
\end{proof}

\subsection{Lifting adjunctions along resolvent pairs}\label{sectionLiftingAdjunctions}
Recall diagram \eqref{diagramRelatingResolventPairs}. In order to apply Theorem \ref{propExtGroupsForMorphismsOfResolventPairs}, one would like to deduce the existence of a right adjoint of $\Phi$ from the existence of a right adjoint to $F$. We will see that this is indeed possible, by first showing that every resolvent pair is monadic and then by using the technology of lifting right adjoints along categories of modules over monads which is reviewed below from \cite{johnstone} (and also \cite[\S 3.5]{BLV}). We start with a few preliminaries.

\smallskip

\indent Let $\mathcal{C}$ be a category and $\mathbb{T} = (T, \mu, \eta)$ be a monad on $\mathcal{C}$, with underlying functor $T : \mathcal{C} \to \mathcal{C}$, multiplication $\mu : T T \Rightarrow T$ and unit $\eta : \mathrm{Id}_{\mathcal{C}} \Rightarrow T$ \cite[Chap.\,VI]{MLCat}. Let $\mathbb{T}\text{-}\mathrm{mod}$ be the category of $\mathbb{T}$-modules (a.k.a.\,$\mathbb{T}$-algebras, a.k.a.\,Eilenberg--Moore category), whose objects are pairs $(V, r)$ where $V \in \mathcal{C}$ and $r \in \Hom_{\mathcal{C}}(T(V), V)$ satisfies $r \circ T(r) = r \circ \mu_V$ and $r \circ \eta_V = \mathrm{id}_V$. A morphism $f : (V,r) \to (W,u)$ in $\mathbb{T}\text{-}\mathrm{mod}$ is $f \in \Hom_{\mathcal{C}}(V,W)$ such that $f \circ r = u \circ T(f)$. This yields the adjunction
\begin{equation}\label{standardAdjunctionMonad}
\xymatrix@R=.7em{
\mathbb{T}\text{-}\mathrm{mod} \ar@/^.7em/[dd]^{\mathcal{U}_{\mathbb{T}}} & \qquad \mathcal{F}_{\mathbb{T}}(X) = (T(X), \mu_X), \quad \mathcal{F}_{\mathbb{T}}(f) = T(f)\\
\dashv  &\\
\ar@/^.7em/[uu]^{\mathcal{F}_{\mathbb{T}}}\mathcal{C} & \qquad \mathcal{U}_{\mathbb{T}}(V,r) = V, \quad \mathcal{U}_{\mathbb{T}}(f) = f
}
\end{equation}
such that $T = \mathcal{U}_{\mathbb{T}} \circ \mathcal{F}_{\mathbb{T}}$. The unit of this adjunction is just the unit $\eta$ of the monad while its counit $\varepsilon$ is given by
\begin{equation}\label{counitGivesTheAction}
\forall \, (V,r) \in \mathbb{T}\text{-}\mathrm{mod}, \quad \varepsilon_{(V,r)} = r \in \Hom_{\mathbb{T}\text{-}\mathrm{mod}}\bigl( \mathcal{F}_{\mathbb{T}}(V), (V,r) \bigr).
\end{equation}

\smallskip

\indent Let $\mathcal{F} : \mathcal{B} \to \mathcal{A}$ and $\mathcal{U} : \mathcal{A} \to \mathcal{B}$ be a pair of adjoint functors: $\mathcal{F} \dashv \mathcal{U}$. Then we have the monad $\mathbb{T} = \bigl(\mathcal{U}\mathcal{F}, \:\mathcal{U}(\varepsilon_{\mathcal{F}(-)}), \:\eta \bigr)$ on $\mathcal{B}$, where $\eta : \mathrm{Id}_{\mathcal{B}} \Rightarrow \mathcal{U}\mathcal{F}$ and $\varepsilon : \mathcal{F}\mathcal{U} \Rightarrow \mathrm{Id}_{\mathcal{A}}$ are the unit and counit of the adjunction $\mathcal{F} \dashv \mathcal{U}$. There exists a unique functor $K : \mathcal{A} \to \mathbb{T}\text{-}\mathrm{mod}$, called {\em comparison functor} \cite[\S VI.3]{MLCat}, such that $\mathcal{U}_{\mathbb{T}} K = \mathcal{U}$ and $K \mathcal{F} = \mathcal{F}_{\mathbb{T}}$. It is given by
\begin{equation}\label{comparisonFunctor}
K(V) = \bigl( \mathcal{U}(V), \mathcal{U}(\varepsilon_V) \bigr), \qquad K(f) = \mathcal{U}(f)
\end{equation}
on an object $V$ and a morphism $f$ in $\mathcal{A}$ \cite[\S VI.3]{MLCat}. The adjunction $\mathcal{F} \dashv \mathcal{U}$ is called {\em monadic} if $K$ is an equivalence of categories.

\begin{proposition}\label{propResolventPairsAreMonadic}
1. Any resolvent pair is a monadic adjunction.
\\2. Conversely, if $\mathbb{T} = (T,\mu,\eta)$ is a monad on an abelian category $\mathcal{C}$ such that $T : \mathcal{C} \to \mathcal{C}$ is additive then the adjunction \eqref{standardAdjunctionMonad} is a resolvent pair.
\\3. Let $\xymatrix{\mathcal{A} \ar@<5pt>[r]^{\mathcal{U}}_{\text{\rotatebox{90}{$\dashv$}}} & \ar@<5pt>[l]^{\mathcal{F}} \mathcal{B}}$ be a resolvent pair and $\mathbb{T}$ the associated monad on $\mathcal{B}$. Then
\vspace{-.7em}
\[ \Ext^{\bullet}_{\mathcal{A},\mathcal{B}}(V,W) \cong \Ext^{\bullet}_{\mathbb{T}\text{-}\mathrm{mod},\mathcal{B}}\bigl( K(V), K(W) \bigr) \]
for all $V,W \in \mathcal{A}$, where $K : \mathcal{A} \to \mathbb{T}\text{-}\mathrm{mod}$ is the comparison functor.
\end{proposition}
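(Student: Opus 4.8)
The plan is to treat the three items in turn, using Beck's monadicity theorem \cite{beck} for item~1, a direct verification for item~2, and the machinery of \S\ref{sectionRelatingResolventPairs} for item~3. For item~1, let $\mathcal{F}\dashv\mathcal{U}$ be a resolvent pair; I would check the hypotheses of Beck's monadicity theorem for $\mathcal{U}$. The left adjoint $\mathcal{F}$ is given by assumption; $\mathcal{A}$ is abelian, hence has all coequalizers, and $\mathcal{U}$, being exact, preserves them, so $\mathcal{A}$ has and $\mathcal{U}$ preserves coequalizers of $\mathcal{U}$-split pairs. The remaining point is that $\mathcal{U}$ reflects isomorphisms, and this is where faithfulness and exactness combine: a faithful functor reflects zero objects (it reflects zero morphisms, so $\mathcal{U}(X)=0$ forces $\mathrm{id}_X=0$), while an exact functor commutes with kernels and cokernels; hence if $\mathcal{U}(f)$ is invertible then $\mathcal{U}(\ker f)=\ker\mathcal{U}(f)=0$ and $\mathcal{U}(\mathrm{coker}\,f)=\mathrm{coker}\,\mathcal{U}(f)=0$, so $\ker f=0=\mathrm{coker}\,f$ and $f$ is an isomorphism. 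Beck's theorem then gives that the comparison functor $K:\mathcal{A}\to\mathbb{T}\text{-}\mathrm{mod}$ of \eqref{comparisonFunctor} is an equivalence, i.e.\ $\mathcal{F}\dashv\mathcal{U}$ is monadic.

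For item~2, given an additive monad $\mathbb{T}=(T,\mu,\eta)$ on an abelian category $\mathcal{C}$, the forgetful functor $\mathcal{U}_{\mathbb{T}}$ of \eqref{standardAdjunctionMonad} is additive and faithful directly from the definition of $\mathbb{T}\text{-}\mathrm{mod}$. Since $T$ is additive, biproducts of $\mathbb{T}$-modules exist and are computed in $\mathcal{C}$, so $\mathbb{T}\text{-}\mathrm{mod}$ is additive; $\mathcal{U}_{\mathbb{T}}$ creates all limits existing in $\mathcal{C}$, so $\mathbb{T}\text{-}\mathrm{mod}$ has kernels (computed in $\mathcal{C}$) and $\mathcal{U}_{\mathbb{T}}$ preserves them; and cokernels exist since $\mathcal{C}$, being abelian, has reflexive coequalizers. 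This is the classical statement that the Eilenberg--Moore category of an additive monad on an abelian category is again abelian with exact forgetful functor; hence $\mathcal{F}_{\mathbb{T}}\dashv\mathcal{U}_{\mathbb{T}}$ is a resolvent pair.

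For item~3, by item~1 the comparison functor $K:\mathcal{A}\to\mathbb{T}\text{-}\mathrm{mod}$ is an equivalence satisfying $\mathcal{U}_{\mathbb{T}}K=\mathcal{U}$ and $K\mathcal{F}=\mathcal{F}_{\mathbb{T}}$ (these are among the defining properties of $K$, see \eqref{comparisonFunctor}). Hence $(\mathrm{Id}_{\mathcal{B}},K,\mathrm{id})$ is a morphism of resolvent pairs from $\mathcal{F}\dashv\mathcal{U}$ to $\mathcal{F}_{\mathbb{T}}\dashv\mathcal{U}_{\mathbb{T}}$ in the sense of Definition \ref{defMorphismOfResolventPairs}, and it is strong since $\beta^{\mathrm{id}}=\mathrm{id}$ (the unit of $\mathcal{F}_{\mathbb{T}}\dashv\mathcal{U}_{\mathbb{T}}$ is the monad unit, which coincides with the unit of $\mathcal{F}\dashv\mathcal{U}$); moreover $K$ admits a right adjoint, namely a quasi-inverse $K^{-1}$. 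Theorem \ref{propExtGroupsForMorphismsOfResolventPairs} then yields $\Ext^{\bullet}_{\mathcal{A},\mathcal{B}}(V,K^{-1}(V'))\cong\Ext^{\bullet}_{\mathbb{T}\text{-}\mathrm{mod},\mathcal{B}}(K(V),V')$ for all $V\in\mathcal{A}$ and $V'\in\mathbb{T}\text{-}\mathrm{mod}$; taking $V'=K(W)$ and using $K^{-1}K(W)\cong W$ gives the claim. Equivalently and more directly: $K$ intertwines the comonads, $KG=\mathcal{F}_{\mathbb{T}}\mathcal{U}_{\mathbb{T}}K$ (since $KG=K\mathcal{F}\mathcal{U}=\mathcal{F}_{\mathbb{T}}\mathcal{U}=\mathcal{F}_{\mathbb{T}}\mathcal{U}_{\mathbb{T}}K$), so it carries the bar resolution of $V$ to that of $K(V)$, and full faithfulness of $K$ together with $\mathcal{U}_{\mathbb{T}}K=\mathcal{U}$ identifies the two cochain complexes of $\Hom$'s computing the relative Ext groups.

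The only non-formal input in item~1 is the reflection of isomorphisms, which is standard, and item~3 is essentially formal once item~1 is in hand. I expect the main obstacle to be item~2: establishing that $\mathbb{T}\text{-}\mathrm{mod}$ is genuinely abelian with $\mathcal{U}_{\mathbb{T}}$ exact \emph{without} assuming $T$ right exact, since cokernels in $\mathbb{T}\text{-}\mathrm{mod}$ are not computed naively in $\mathcal{C}$ and must be handled via a reflexive-coequalizer argument (or a direct appeal to the classical result).
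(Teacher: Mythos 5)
Your proposal is correct and follows essentially the same route as the paper's proof: Beck's monadicity theorem for item 1, the classical result on Eilenberg--Moore categories of additive monads (which the paper cites as [EM, Prop.\,5.3]) for item 2, and the morphism-of-resolvent-pairs machinery of Theorem \ref{propExtGroupsForMorphismsOfResolventPairs} applied to the strong morphism $(\mathrm{Id}_{\mathcal{B}},K,\mathrm{id})$ for item 3. The only noteworthy local variant is in item~1, where you establish reflection of isomorphisms by using exactness to kill $\ker f$ and $\mathrm{coker}\,f$ and then faithfulness to conclude these vanish, whereas the paper uses only faithfulness to reflect monos and epis and then invokes the abelian-category fact that a mono-epi is an iso; both are valid.
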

\begin{proof}
1. Take a resolvent pair $\mathcal{F} \dashv \mathcal{U}$ as in \eqref{adjunction}. Note that the functor $\mathcal{U}$ is faithful by definition and recall that any faithful functor reflects epimorphisms and monomorphisms.\footnote{Indeed if $f$ is a morphism in $\mathcal{A}$ such that $\mathcal{U}(f)$ is an epimorphism then for any morphisms $g,g'$ such that $g \circ f = g' \circ f$ we have $\mathcal{U}(g) \circ \mathcal{U}(f) = \mathcal{U}(g') \circ \mathcal{U}(f)$, whence $\mathcal{U}(g) = \mathcal{U}(g')$ and faithfulness of $\mathcal{U}$ gives $g=g'$ and thus $f$ is epi. A similar argument applies to monomorphisms.} Let $f$ be a morphism in $\mathcal{A}$ such that $\mathcal{U}(f)$ is an isomorphism. Then in particular $\mathcal{U}(f)$ is both a monomorphism and an epimorphism. By the preliminary remark, $f$ is both a monomorphism and an epimorphism. Since $\mathcal{A}$ is an abelian category, it follows that $f$ is an isomorphism \cite[\S VIII.3]{MLCat}. Hence:
\begin{itemize}[itemsep=-.1em, topsep=.2em]
\item $\mathcal{U}$ reflects isomorphisms.
\item $\mathcal{A}$ has all coequalizers (by existence of cokernels in an abelian category).
\item $\mathcal{U}$ preserves coequalizers (because it is exact and in particular it preserves cokernels).
\end{itemize}
We can thus apply Beck's monadicity theorem \cite[Th.\,1]{beck} by which $K$ is an equivalence.
\\2. $\mathbb{T}\text{-}\mathrm{mod}$ is an abelian category under these assumptions \cite[Prop.\,5.3]{EM}. The functor $\mathcal{U}_{\mathbb{T}}$ is exact due to the definition of kernels and cokernels in $\mathbb{T}\text{-}\mathrm{mod}$, and it is obviously faithful and additive.
\\3. By item 2 the relative Ext groups on the right-hand side make sense. From the defining properties of the comparison functor $K$ we see that $\bigl( \mathrm{Id}_{\mathcal{B}}, K, \mathrm{id} \bigr)$ is a morphism of resolvent pairs in the sense of Def.~\ref{defMorphismOfResolventPairs}. The diagram (D1) in Lemma \ref{lemmaCompatibleAdjunctionData} commutes with the choice $\beta = \mathrm{id}$ since $F = \mathrm{Id}$ in the present situation. Hence $\beta^{\mathrm{id}} = \mathrm{id}$ by Lemma \ref{lemmaCompatibleAdjunctionData} and in particular the morphism of adjunctions $\bigl( \mathrm{Id}_{\mathcal{B}}, K, \mathrm{id} \bigr)$ is strong. One can choose the quasi-inverse $\bar K : \mathbb{T}\text{-}\mathrm{mod} \to \mathcal{A}$ so that it is a right-adjoint of $K$ \cite[\S IV.4]{MLCat}. Then by Theorem \ref{propExtGroupsForMorphismsOfResolventPairs} we have $\Ext^{\bullet}_{\mathcal{A},\mathcal{B}}(V,W) \cong \Ext^{\bullet}_{\mathcal{A},\mathcal{B}}\bigl( V, \bar K K(W) \bigr) \cong \Ext^{\bullet}_{\mathbb{T}\text{-}\mathrm{mod},\mathcal{B}}\bigl( K(V), K(W) \bigr)$.
\end{proof}
\noindent It follows from Proposition \ref{propResolventPairsAreMonadic} that we can restrict ourselves to resolvent pairs arising from linear monads.

\smallskip

\indent Let $\mathcal{C}$, $\mathcal{D}$ be categories (not necessarily abelian), $\mathbb{T} = (T, \mu, \eta)$ be a monad on $\mathcal{C}$ and $\mathbb{T}' = (T', \mu', \eta')$ be a monad on $\mathcal{D}$. 
\begin{definition}{\em \cite[\S 1]{street} }\label{defMorphismMonads}
A morphism of monads $\mathbb{T} \to \mathbb{T}'$ is a pair $(F,\zeta)$ where $F : \mathcal{C} \to \mathcal{D}$ is a functor and  $\zeta : T' F \Rightarrow F T$ is a natural transformation such that
\begin{equation}\label{conditionsZetaLiftF}
\forall \, X \in \mathcal{C}, \qquad 
\begin{array}{l}
\zeta_X \circ \mu'_{F(X)} = F(\mu_X) \circ \zeta_{T(X)} \circ T'(\zeta_X),\\[.5em]
\zeta_X \circ \eta'_{F(X)} = F(\eta_X).
\end{array}
\end{equation}
We say that a morphism of monads $(F,\zeta)$ is strong if $\zeta$ is an isomorphism.
\end{definition}
\noindent This yields a category $\mathbf{Mnd}$ whose objects are monads. The composition of $(G,\omega) : \mathbb{T}' \to \mathbb{T}''$ with $(F,\zeta) : \mathbb{T} \to \mathbb{T}'$ is $(GF, G(\zeta) \circ \omega_{F(-)}) : \mathbb{T} \to \mathbb{T}''$.

\smallskip

The next lemma relates morphisms of adjunctions (Def. \ref{defMorphismOfResolventPairs}) with morphisms of monads; it is a slight adaptation of \cite[Lem.\,1]{johnstone}, \cite[\S 3.5]{BLV} or \cite[Prop.\,2.2.4]{zaganidis}.
\begin{lemma}\label{liftedFunctor}
1. Let $(F,\zeta)$ be a morphism of monads $\mathbb{T} \to \mathbb{T}'$. Then there is a lifted functor $\widetilde{F}_{\zeta} : \mathbb{T}\text{-}\mathrm{mod} \to \mathbb{T}'\text{-}\mathrm{mod}$ defined by
\begin{equation}\label{liftFunctorAlongMonads}
\widetilde{F}_{\zeta}(V,r) = \bigl( F(V), F(r) \circ \zeta_V \bigr), \qquad \widetilde{F}_{\zeta}(f) = F(f)
\end{equation}
and $(F,\widetilde{F}_{\zeta},\mathrm{id})$ is a morphism between the adjunctions defined by $\mathbb{T}$ and $\mathbb{T}'$ (see \eqref{standardAdjunctionMonad}).  Moreover if $(F,\zeta)$ is strong then $(F,\widetilde{F}_{\zeta},\mathrm{id})$ is strong (item 2 in Def. \ref{defMorphismOfResolventPairs}).
\\2. Conversely, consider the diagram
\[ \xymatrix@C=4em@R=.7em{
\mathbb{T}\text{-}\mathrm{mod}\ar@/^.7em/[dd]^{\mathcal{U}_{\mathbb{T}}} \ar[r]^{\Phi} &
\mathbb{T}'\text{-}\mathrm{mod} \ar@/^.7em/[dd]^{\mathcal{U}_{\mathbb{T}'}}\\
\dashv & \dashv\\
\ar@/^.7em/[uu]^{\mathcal{F}_{\mathbb{T}}}\mathcal{C} \ar[r]_F & \ar@/^.7em/[uu]^{\mathcal{F}_{\mathbb{T}'}}\mathcal{D}
} \]
and assume that $(F,\Phi,\alpha)$ is a morphism of adjunctions. Then there exists a morphism of monads $(F,\zeta) : \mathbb{T} \to \mathbb{T}'$ such that $\Phi \cong \widetilde{F}_{\zeta}$. Moreover if $(F,\Phi,\alpha)$ is strong then $(F,\zeta)$ is strong.
\end{lemma}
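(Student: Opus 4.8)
The plan is to treat the two directions separately. \emph{Part 1} is a direct check that the formulas \eqref{liftFunctorAlongMonads} are well posed; for it, I would first verify that $\widetilde{F}_{\zeta}(V,r)=\bigl(F(V),\,F(r)\circ\zeta_V\bigr)$ is a genuine $\mathbb{T}'$-module. The unit axiom reduces to $F(r)\circ F(\eta_V)=F(r\circ\eta_V)=\mathrm{id}_{F(V)}$ using the second relation of \eqref{conditionsZetaLiftF} and the module axiom $r\circ\eta_V=\mathrm{id}_V$, while the associativity axiom is obtained by chaining naturality of $\zeta$ at $r$, the module axiom $r\circ T(r)=r\circ\mu_V$, and the first relation of \eqref{conditionsZetaLiftF}. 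Naturality of $\zeta$ likewise shows that $\widetilde{F}_{\zeta}(f)=F(f)$ is $\mathbb{T}'$-linear when $f$ is a morphism of $\mathbb{T}$-modules, and functoriality is inherited from $F$. Since $\mathcal{U}_{\mathbb{T}'}\widetilde{F}_{\zeta}=F\mathcal{U}_{\mathbb{T}}$ holds on the nose, $(F,\widetilde{F}_{\zeta},\mathrm{id})$ is a morphism of the standard adjunctions \eqref{standardAdjunctionMonad}. To see it is strong I would unwind $\beta^{\mathrm{id}}$ from \eqref{defBetaStrongMorphism}: with $\alpha=\mathrm{id}$ its only nontrivial constituents are $\mathcal{F}_{\mathbb{T}'}F(\eta_X)$ and the counit $\varepsilon'$ at $\widetilde{F}_{\zeta}\mathcal{F}_{\mathbb{T}}(X)=\bigl(FT(X),\,F(\mu_X)\circ\zeta_{T(X)}\bigr)$, whose underlying morphism is the structure map $F(\mu_X)\circ\zeta_{T(X)}$ by \eqref{counitGivesTheAction}; naturality of $\zeta$ at $\eta_X$ together with the monad law $\mu_X\circ T(\eta_X)=\mathrm{id}_{T(X)}$ collapses $\beta^{\mathrm{id}}_X$ to $\zeta_X$, so invertibility of $\zeta$ gives strongness.

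For \emph{Part 2}, given $(F,\Phi,\alpha)$ I define $\zeta:T'F\Rightarrow FT$ by $\zeta_X=\alpha^{-1}_{\mathcal{F}_{\mathbb{T}}(X)}\circ\mathcal{U}_{\mathbb{T}'}(\beta^{\alpha}_X)$, i.e. by applying $\mathcal{U}_{\mathbb{T}'}$ to $\beta^{\alpha}:\mathcal{F}_{\mathbb{T}'}F\Rightarrow\Phi\mathcal{F}_{\mathbb{T}}$ and correcting the codomain with $\alpha$. The unit relation of \eqref{conditionsZetaLiftF} is then immediate from the commuting square $(D_1)$ of Lemma \ref{lemmaCompatibleAdjunctionData}. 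For the rest I would first record, for every $(V,r)\in\mathbb{T}\text{-}\mathrm{mod}$, the identity
\[ \mathcal{U}_{\mathbb{T}'}\bigl(\varepsilon'_{\Phi(V,r)}\bigr)\circ T'\bigl(\alpha_{(V,r)}\bigr)\;=\;\alpha_{(V,r)}\circ F(r)\circ\zeta_V, \]
which follows by applying $\mathcal{U}_{\mathbb{T}'}$ to the square $(D_2)$ of Lemma \ref{lemmaCompatibleAdjunctionData} at the module $(V,r)$, together with naturality of $\alpha$ at the counit $\varepsilon_{(V,r)}$ (whose underlying morphism is $r$) and the definition of $\zeta$. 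Specialising this to the free module $(V,r)=\mathcal{F}_{\mathbb{T}}(X)=(T(X),\mu_X)$ and reconciling the counit $\varepsilon'$ on $\Phi\mathcal{F}_{\mathbb{T}}(X)$ with the one on $\mathcal{F}_{\mathbb{T}'}F(X)$ via naturality of $\varepsilon'$ along $\beta^{\alpha}_X$, one cancels the isomorphism $\alpha_{\mathcal{F}_{\mathbb{T}}(X)}$ and arrives at the multiplicativity relation $\zeta_X\circ\mu'_{F(X)}=F(\mu_X)\circ\zeta_{T(X)}\circ T'(\zeta_X)$. Hence $(F,\zeta)$ is a morphism of monads, $\widetilde{F}_{\zeta}$ is defined by \eqref{liftFunctorAlongMonads}, and the displayed identity says exactly that each $\alpha^{-1}_{(V,r)}$ underlies a $\mathbb{T}'$-module morphism $\Phi(V,r)\to\widetilde{F}_{\zeta}(V,r)$; naturality of $\alpha$ and the fact that $\mathcal{U}_{\mathbb{T}'}$ reflects isomorphisms then promote this to a natural isomorphism $\Phi\cong\widetilde{F}_{\zeta}$. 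Finally, if $(F,\Phi,\alpha)$ is strong then $\beta^{\alpha}$ is invertible, hence so is $\zeta$, i.e. $(F,\zeta)$ is strong.

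I do not expect a conceptually hard step: the whole argument is a diagram chase across the two standard adjunctions \eqref{standardAdjunctionMonad} and the connecting data $\alpha,\beta^{\alpha}$, and it is a mild variant of the adjoint-lifting lemma of \cite{johnstone} (and \cite[\S 3.5]{BLV}). The one genuinely delicate point is the multiplicativity relation for $\zeta$ in Part 2: one must transport the identity coming from $(D_2)$ from an arbitrary $\mathbb{T}$-module to the free module $\mathcal{F}_{\mathbb{T}}(X)$ and carefully match the two occurrences of the counit $\varepsilon'$, all while keeping $\zeta_X=\alpha^{-1}_{\mathcal{F}_{\mathbb{T}}(X)}\circ\mathcal{U}_{\mathbb{T}'}(\beta^{\alpha}_X)$ in play; Lemma \ref{lemmaCompatibleAdjunctionData} is precisely what makes $\alpha$ and $\beta^{\alpha}$ interchangeable here, so no input beyond it is needed.
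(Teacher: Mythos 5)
Your proposal is correct and follows essentially the same overall strategy as the paper's proof: the same formula for $\zeta_X=\alpha^{-1}_{\mathcal{F}_{\mathbb{T}}(X)}\circ\mathcal{U}_{\mathbb{T}'}(\beta^{\alpha}_X)$, the same use of $(D_1)$ and $(D_2)$ from Lemma~\ref{lemmaCompatibleAdjunctionData}, and the same final step exhibiting $\alpha$ as a $\mathbb{T}'$-module isomorphism $\widetilde{F}_{\zeta}\Rightarrow\Phi$. The one place where you deviate is in the organization of the multiplicativity check for $\zeta$: the paper carries out a direct six-step chain of equalities on $F(\mu_X)\circ\zeta_{T(X)}\circ T'(\zeta_X)$ at the free module, resting on two auxiliary identities it labels \eqref{firstTechnicalFactProofLemmaMorphMon} and \eqref{secondTechnicalFactProofLemmaMorphMon}, whereas you first establish the module-level identity $\alpha_{(V,r)}\circ F(r)\circ\zeta_V=\mathcal{U}_{\mathbb{T}'}(\varepsilon'_{\Phi(V,r)})\circ T'(\alpha_{(V,r)})$ for all $(V,r)\in\mathbb{T}\text{-}\mathrm{mod}$, then specialize to $\mathcal{F}_{\mathbb{T}}(X)$ and invoke naturality of $\varepsilon'$ along $\beta^{\alpha}_X$. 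Your organization is a touch cleaner in two respects: the intermediate identity has independent meaning (it is precisely the statement that $\alpha_{(V,r)}$ is a $\mathbb{T}'$-module map, so you get $\Phi\cong\widetilde{F}_{\zeta}$ with no extra work at the end), and the specialization to the free module is genuinely a special case rather than the seed of a computation. Both routes consume the same ingredients; yours just factors the bookkeeping more reusably. Only a minor wording nit: you speak of ``transporting'' the $(D_2)$ identity to the free module, but in your setup nothing is transported — you simply plug $(V,r)=\mathcal{F}_{\mathbb{T}}(X)$ into an already-proved general fact. The delicate point, if any, is matching the two counit instances via naturality of $\varepsilon'$ at $\beta^{\alpha}_X$ and substituting $\mathcal{U}_{\mathbb{T}'}(\beta^{\alpha}_X)=\alpha_{\mathcal{F}_{\mathbb{T}}(X)}\circ\zeta_X$, which your sketch handles correctly.
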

\begin{proof}
1. The conditions \eqref{conditionsZetaLiftF} ensure precisely that $F(r) \circ \zeta_V$ is a $\mathbb{T}'$-module structure on $F(V)$ and hence $\widetilde{F}_{\zeta}$ is well-defined. It is readily seen that $F \,\mathcal{U}_{\mathbb{T}} = \mathcal{U}_{\mathbb{T}'} \widetilde{F}_{\zeta}$, so $(F,\widetilde{F}_{\zeta},\mathrm{id})$ is a morphism of adjunctions. For the last claim note that $\widetilde{F}_{\zeta}\mathcal{F}_{\mathbb{T}}(X) = \bigl( FT(X), F(\mu_X) \circ \zeta_{T(X)}\bigr)$ so that $\beta^{\mathrm{id}} : \mathcal{F}_{\mathbb{T}'}F \Rightarrow \widetilde{F}_{\zeta} \mathcal{F}_{\mathbb{T}}$ from \eqref{defBetaStrongMorphism} is given by
\begin{equation}\label{betaForMonads}
\beta^{\mathrm{id}}_X = \varepsilon'_{\widetilde{F}_{\zeta}\mathcal{F}_{\mathbb{T}}(X)} \circ \mathcal{F}_{\mathbb{T}'}F(\eta_X) = F(\mu_X) \circ \zeta_{T(X)} \circ T'F(\eta_X) =F(\mu_X) \circ FT(\eta_X) \circ \zeta_X = \zeta_X
\end{equation}
where for the second equality we used \eqref{counitGivesTheAction} and the definition of $\mathcal{F}_{\mathbb{T}'}$ on morphisms, the third equality is by naturality of $\zeta$ and the last equality uses the unit axiom for monads.
\\2. We are given a natural isomorphism $\alpha : F \mathcal{U}_{\mathbb{T}} \overset{\sim}{\implies} \mathcal{U}_{\mathbb{T}'} \Phi$ from which we can define a natural transformation $\beta^{\alpha} : \mathcal{F}_{\mathbb{T}'} F \Rightarrow \Phi \mathcal{F}_{\mathbb{T}}$ as in \eqref{defBetaStrongMorphism}. For all $X \in \mathcal{C}$ let
\begin{equation*}
\zeta_X : T'F(X) = \mathcal{U}_{\mathbb{T}'}\mathcal{F}_{\mathbb{T}'}F(X) \xrightarrow{\mathcal{U}_{\mathbb{T}'}(\beta^{\alpha}_X)} \mathcal{U}_{\mathbb{T}'} \Phi \mathcal{F}_{\mathbb{T}}(X) \xrightarrow{\alpha_{\mathcal{F}_{\mathbb{T}}(X)}^{-1}} F\mathcal{U}_{\mathbb{T}}\mathcal{F}_{\mathbb{T}}(X) = FT(X).
\end{equation*}
We want to show that $\zeta$ satisfies \eqref{conditionsZetaLiftF}. Note first by \eqref{counitGivesTheAction} and by naturality of $\alpha$ that
\begin{equation}\label{firstTechnicalFactProofLemmaMorphMon}
F(\mu_X) \circ \alpha_{\mathcal{F}_{\mathbb{T}}T(X)}^{-1} = F\mathcal{U}_{\mathbb{T}}(\varepsilon_{\mathcal{F}_{\mathbb{T}}(X)}) \circ \alpha_{\mathcal{F}_{\mathbb{T}}\mathcal{U}_{\mathbb{T}}\mathcal{F}_{\mathbb{T}}(X)}^{-1} = \alpha_{\mathcal{F}_{\mathbb{T}}(X)}^{-1} \circ \mathcal{U}_{\mathbb{T}'}\Phi(\varepsilon_{\mathcal{F}_{\mathbb{T}}(X)})
\end{equation}
for all $X \in \mathcal{C}$. Moreover by items 2 and 3 in Lemma \ref{lemmaCompatibleAdjunctionData} we have
\begin{equation}\label{secondTechnicalFactProofLemmaMorphMon}
\Phi(\varepsilon_{\mathcal{F}_{\mathbb{T}}(X)}) \circ \beta^{\alpha}_{T(X)} = \Phi(\varepsilon_{\mathcal{F}_{\mathbb{T}}(X)}) \circ \beta^{\alpha}_{\mathcal{U}_{\mathbb{T}}\mathcal{F}_{\mathbb{T}}(X)} = \varepsilon'_{\Phi\mathcal{F}_{\mathbb{T}}(X)} \circ \mathcal{F}_{\mathbb{T}'}(\alpha_{\mathcal{F}_{\mathbb{T}}(X)}).
\end{equation}
Hence
\begin{align*}
&F(\mu_X) \circ \zeta_{T(X)} \circ T'(\zeta_X) = F(\mu_X) \circ \alpha_{\mathcal{F}_{\mathbb{T}}T(X)}^{-1} \circ \mathcal{U}_{\mathbb{T}'}(\beta^{\alpha}_{T(X)}) \circ T'(\alpha_{\mathcal{F}_{\mathbb{T}}(X)}^{-1}) \circ T'\mathcal{U}_{\mathbb{T}'}(\beta^{\alpha}_X)\\
&\overset{\eqref{firstTechnicalFactProofLemmaMorphMon}}{=} \alpha_{\mathcal{F}_{\mathbb{T}}(X)}^{-1} \circ \mathcal{U}_{\mathbb{T}'}\Phi(\varepsilon_{\mathcal{F}_{\mathbb{T}}(X)}) \circ \mathcal{U}_{\mathbb{T}'}(\beta^{\alpha}_{T(X)}) \circ T'(\alpha_{\mathcal{F}_{\mathbb{T}}(X)}^{-1}) \circ T'\mathcal{U}_{\mathbb{T}'}(\beta^{\alpha}_X)\\
&\overset{\eqref{secondTechnicalFactProofLemmaMorphMon}}{=} \alpha_{\mathcal{F}_{\mathbb{T}}(X)}^{-1} \circ \mathcal{U}_{\mathbb{T}'}(\varepsilon'_{\Phi\mathcal{F}_{\mathbb{T}}(X)}) \circ \mathcal{U}_{\mathbb{T}'}\mathcal{F}_{\mathbb{T}'}(\alpha_{\mathcal{F}_{\mathbb{T}}(X)}) \circ T'(\alpha_{\mathcal{F}_{\mathbb{T}}(X)}^{-1}) \circ T'\mathcal{U}_{\mathbb{T}'}(\beta^{\alpha}_X)\\
&= \alpha_{\mathcal{F}_{\mathbb{T}}(X)}^{-1} \circ \mathcal{U}_{\mathbb{T}'}(\varepsilon'_{\Phi\mathcal{F}_{\mathbb{T}}(X)}) \circ \mathcal{U}_{\mathbb{T}'}\mathcal{F}_{\mathbb{T}'}\mathcal{U}_{\mathbb{T}'}(\beta^{\alpha}_X) = \alpha_{\mathcal{F}_{\mathbb{T}}(X)}^{-1} \circ \mathcal{U}_{\mathbb{T}'}(\beta^{\alpha}_X) \circ \mathcal{U}_{\mathbb{T}'}(\varepsilon'_{\mathcal{F}_{\mathbb{T}'}F(X)}) = \zeta_X \circ \mu'_{F(X)}
\end{align*}
where the second-to-last equality is by naturality of $\varepsilon'$ while the last equality is by \eqref{standardAdjunctionMonad} and \eqref{counitGivesTheAction}. Multiplying the equality (D1) in Lemma \ref{lemmaCompatibleAdjunctionData} by $\alpha^{-1}$ we also have $\zeta_X \circ \eta'_{F(X)} = F(\eta_X)$ for all $X \in~\mathcal{C}$. Hence $\zeta$ satisfies the condition \eqref{conditionsZetaLiftF} and we have the functor $\widetilde{F}_{\zeta}$ defined as in \eqref{liftFunctorAlongMonads} above. We claim that $\alpha$ provides a natural isomorphism $\widetilde{F}_{\zeta} \overset{\sim}{\Rightarrow} \Phi$, \textit{i.e.} $\alpha_{(V,r)} \in \Hom_{\mathbb{T}'\text{-}\mathrm{mod}}\bigl( \widetilde{F}_{\zeta}(V,r), \Phi(V,r) \bigr)$ for all $(V,r) \in \mathbb{T}\text{-}\mathrm{mod}$. Indeed, denote $\Phi(V,r) = (W,s)$ for some $W \in \mathcal{D}$. Then
\begin{align*}
s \circ T'(\alpha_{(V,r)}) &= \mathcal{U}_{\mathbb{T}'}(\varepsilon'_{\Phi(V,r)}) \circ \mathcal{U}_{\mathbb{T}'}\mathcal{F}_{\mathbb{T}'}(\alpha_{(V,r)}) = \mathcal{U}_{\mathbb{T}'}\Phi(\varepsilon_{(V,r)}) \circ \mathcal{U}_{\mathbb{T}'}(\beta^{\alpha}_V)\\
&= \alpha_{(V,r)} \circ F\mathcal{U}_{\mathbb{T}}(\varepsilon_{(V,r)}) \circ \alpha^{-1}_{\mathcal{F}_{\mathbb{T}}(V)} \circ \mathcal{U}_{\mathbb{T}'}(\beta^{\alpha}_V) = \alpha_{(V,r)} \circ F(r) \circ \zeta_V.
\end{align*}
where the first equality is by \eqref{counitGivesTheAction} and \eqref{standardAdjunctionMonad}, the second is by item 2 in Lemma \ref{lemmaCompatibleAdjunctionData}, the third is by naturality of $\alpha$ and the last is by \eqref{counitGivesTheAction} and definition of $\zeta$. Finally if $(F,\Phi,\alpha)$ is strong then $\beta^{\alpha}$ is an isomorphism and we see from its definition that $\zeta$ is also an isomorphism.
\end{proof}

\begin{example}
Let $\mathcal{C},\mathcal{D}$ be monoidal categories, assumed to be strict for simplicity. Let $(A,m_A,1_A)$ be an associative algebra in $\mathcal{C}$, where the multiplication $m_A : A \otimes A \to A$  and the unit $1_A : \boldsymbol{1}_{\mathcal{C}} \to A$ satisfy the obvious axioms. It defines a monad $\mathbb{T}_A = (T_A,\mu^A,\eta^A)$ with $T_A = A \otimes -$, $\mu^A_X = m_A \otimes \mathrm{id}_X$ and $\eta^A_X = 1_A \otimes \mathrm{id}_X$. Similarly let $(B,m_B,1_B)$ be an algebra in $\mathcal{D}$ and $\mathbb{T}_B = (B \otimes -,\mu^B,\eta^B)$ be the associated monad. Let $F : \mathcal{C} \to \mathcal{D}$ be a monoidal functor. Then $F(A)$ is an algebra in $\mathcal{D}$ with product $m_{F(A)} = F(m_A) \circ F^{(2)}_{A,A}$ and unit $1_{F(A)} = F(1_A)$. For $f : B \to F(A)$ define
\[ \zeta^f_X : T_BF(X) = B \otimes F(X) \xrightarrow{f \otimes \mathrm{id}_{F(X)}} F(A) \otimes F(X) \xrightarrow{F^{(2)}_{A,X}} F(A \otimes X) = FT_A(X). \]
Denote by $\Hom_{\mathbf{Mnd}}^{(F,-)}(\mathbb{T}_A, \mathbb{T}_B)$ the set of monad morphisms (Def.~\ref{defMorphismMonads}) whose underlying functor is exactly the monoidal functor $F$. Then we have two maps
\[ \xymatrix{
\Hom_{\mathbf{Mnd}}^{(F,-)}(\mathbb{T}_A, \mathbb{T}_B) \ar@<-2pt>[r]_-{\Omega} & \ar@<-3pt>[l]_-{\Theta} \in \Hom_{\mathrm{alg}}(B,F(A))}, \qquad \Omega(\zeta) = \zeta_{\boldsymbol{1}}, \qquad \Theta(f) = \zeta^f \]
where $\Hom_{\mathrm{alg}}$ means morphisms of algebras in $\mathcal{D}$. Clearly $\Omega \circ \Theta = \mathrm{id}$. But the converse, \textit{i.e.} that $\Theta \circ \Omega = \mathrm{id}$ is not in general true: one checks easily that $\zeta$ is in the image of $\Theta \circ \Omega$ if and only it satisfies $\zeta_{X \otimes Y} \circ (\mathrm{id}_B \otimes F^{(2)}_{X,Y}) = F^{(2)}_{A \otimes X,Y} \circ (\zeta_X \otimes \mathrm{id}_{F(Y)})$ for all $X,Y \in \mathcal{C}$. If we look at $\mathcal{C}$ (resp. $\mathcal{D}$) as a right $\mathcal{C}$-module category by $X \lhd Y = X \otimes Y$ (resp. $V \lhd Y = V \otimes F(Y)$) then this extra condition on $\zeta$ is equivalent to the fact that $\zeta : T_BF \Rightarrow FT_A$ is a morphism of $\mathcal{C}$-module functors \cite[Def.\,7.2.2]{EGNO} from $(T_BF,s)$ to $(FT_A,t)$, where $s_{X,Y} = \mathrm{id}_B \otimes F^{(2)}_{X,Y}$ and $t_{X,Y} = F^{(2)}_{A \otimes X,Y}$ for all $X,Y \in \mathcal{C}$. If we call such monad morphisms $\mathcal{C}$-equivariant, then the conclusion is that we have a bijection between algebra morphisms $B \to F(A)$ in $\mathcal{D}$ and $\mathcal{C}$-equivariant monad morphisms $\mathbb{T}_A \to \mathbb{T}_B$ with underlying functor $F$.
\end{example}

\indent Now we recall the lifting theorem for right adjoints \cite[Th.\,4]{johnstone} (a short summary is also given in \cite[\S 3.5]{BLV}). We keep the notations introduced before Lemma \ref{liftedFunctor}. Assume that $F$ has a right adjoint $R : \mathcal{D} \to \mathcal{C}$. Denote by $e : \mathrm{Id}_{\mathcal{C}} \Rightarrow R F$ and $h : F R \Rightarrow \mathrm{Id}_{\mathcal{D}}$ the unit and counit of the adjunction $F \dashv R$. If $\zeta : T' F \overset{\sim}{\implies} F T$ {\em is an isomorphism} we can define $\xi : T R \Rightarrow R T'$ by
\begin{equation}\label{defIsoNatXi}
\xi_Y : T R(Y) \xrightarrow{e_{TR(Y)}} RFTR(Y) \xrightarrow {R(\zeta_{R(Y)}^{-1})} RT'FR(Y) \xrightarrow{RT'(h_{Y})} RT'(Y).
\end{equation}
A tedious computation using the naturality of $e$, $h$, $\zeta$ and the unit-counit equations for $e,h$ shows that
\[ \forall \, Y \in \mathcal{D}, \qquad \xi_{Y} \circ \mu_{R(Y)} = R(\mu'_{Y}) \circ \xi_{T'(Y)} \circ T(\xi_{Y}) \:\text{ and }\: \xi_{Y} \circ \eta_{R(Y)} = R(\eta'_{Y}). \]
Hence $(R,\xi)$ is a morphism of monads $\mathbb{T}' \to \mathbb{T}$ and by item 1 in Lemma \ref{liftedFunctor} there is a lifted functor $\widetilde{R} = \widetilde{R}_{\xi} : \mathbb{T}'\text{-}\mathrm{mod} \to \mathbb{T}\text{-}\mathrm{mod}$ defined by
\begin{equation}\label{rightAdjointLift}
\widetilde{R}(W,u) = \bigl( R(W), R(u) \circ \xi_W \bigr), \qquad \widetilde{R}(g) = R(g).
\end{equation}

\begin{lemma}{\em \cite[Th.\,4]{johnstone}}\label{liftRightAdjoint}
Let $(F,\zeta) : \mathbb{T} \to \mathbb{T}'$ be a strong morphism of adjunctions (Def.~\ref{defMorphismMonads}) and $\widetilde{F}_{\zeta} : \mathbb{T}\text{-}\mathrm{mod} \to \mathbb{T}'\text{-}\mathrm{mod}$ be the associated lifted functor \eqref{liftFunctorAlongMonads}. If $R$ is a right adjoint of $F$ then $\widetilde{R}$ defined in \eqref{rightAdjointLift} is a right adjoint of $\widetilde{F}_{\zeta}$.
\end{lemma}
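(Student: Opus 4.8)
The plan is to construct explicitly a unit and a counit for the putative adjunction $\widetilde{F}_{\zeta}\dashv\widetilde{R}$, the natural candidates being the liftings of the unit $e:\mathrm{Id}_{\mathcal{C}}\Rightarrow RF$ and counit $h:FR\Rightarrow\mathrm{Id}_{\mathcal{D}}$ of $F\dashv R$: for $(V,r)\in\mathbb{T}\text{-}\mathrm{mod}$ put $\widetilde{e}_{(V,r)}:=e_V$, and for $(W,u)\in\mathbb{T}'\text{-}\mathrm{mod}$ put $\widetilde{h}_{(W,u)}:=h_W$. The verification then breaks into four points: (i) $\widetilde{e}_{(V,r)}$ is a morphism of $\mathbb{T}$-modules $(V,r)\to\widetilde{R}\,\widetilde{F}_{\zeta}(V,r)$; (ii) $\widetilde{h}_{(W,u)}$ is a morphism of $\mathbb{T}'$-modules $\widetilde{F}_{\zeta}\widetilde{R}(W,u)\to(W,u)$; (iii) $\widetilde{e},\widetilde{h}$ are natural; (iv) they satisfy the two triangle identities. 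Note that the hypothesis that $\zeta$ is an isomorphism is used precisely to have $\xi$ from \eqref{defIsoNatXi} at our disposal, hence to define $\widetilde{R}$ in \eqref{rightAdjointLift}.

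For (i), unravelling \eqref{rightAdjointLift} the module structure on $RF(V)$ underlying $\widetilde{R}\,\widetilde{F}_{\zeta}(V,r)$ is $R\bigl(F(r)\circ\zeta_V\bigr)\circ\xi_{F(V)}$, so one must check
\[ R\bigl(F(r)\circ\zeta_V\bigr)\circ\xi_{F(V)}\circ T(e_V)=e_V\circ r \]
as morphisms $T(V)\to RF(V)$. This is a diagram chase: one expands $\xi_{F(V)}$ by its definition \eqref{defIsoNatXi}, then uses naturality of $e$ (to commute $e$ past $r$), naturality of $\zeta$, and the triangle identities $R(h_{F(V)})\circ e_{RF(V)}=\mathrm{id}$ and $h_{F(V)}\circ F(e_V)=\mathrm{id}$ for $F\dashv R$. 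Point (ii) is entirely parallel: the $\mathbb{T}'$-module structure underlying $\widetilde{F}_{\zeta}\widetilde{R}(W,u)$ is $F\bigl(R(u)\circ\xi_W\bigr)\circ\zeta_{R(W)}$, and one checks $u\circ T'(h_W)=h_W\circ F\bigl(R(u)\circ\xi_W\bigr)\circ\zeta_{R(W)}$ again from naturality of $h$, $\zeta$ and the triangle identities.

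Point (iii) is immediate, since a morphism in $\mathbb{T}\text{-}\mathrm{mod}$ (resp.\ $\mathbb{T}'\text{-}\mathrm{mod}$) is a morphism of underlying objects subject to a condition, so the naturality squares for $\widetilde{e}$ (resp.\ $\widetilde{h}$) are, after applying the forgetful functor, exactly the naturality squares for $e$ (resp.\ $h$). For (iv), recall from Lemma \ref{liftedFunctor}(1) and \eqref{rightAdjointLift} that $\mathcal{U}_{\mathbb{T}'}\widetilde{F}_{\zeta}=F\,\mathcal{U}_{\mathbb{T}}$ and $\mathcal{U}_{\mathbb{T}}\widetilde{R}=R\,\mathcal{U}_{\mathbb{T}'}$, and that the forgetful functors send $\widetilde{e},\widetilde{h}$ componentwise to $e,h$. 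The two triangle identities for $\widetilde{F}_{\zeta}\dashv\widetilde{R}$ are equalities of morphisms in $\mathbb{T}'\text{-}\mathrm{mod}$ and in $\mathbb{T}\text{-}\mathrm{mod}$; applying the faithful functors $\mathcal{U}_{\mathbb{T}'}$ and $\mathcal{U}_{\mathbb{T}}$ turns them into $h_{F(V)}\circ F(e_V)=\mathrm{id}_{F(V)}$ and $R(h_W)\circ e_{R(W)}=\mathrm{id}_{R(W)}$, which hold; faithfulness then yields the identities in the module categories, and hence $\widetilde{F}_{\zeta}\dashv\widetilde{R}$.

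The main obstacle is points (i) and (ii): the verification that $e$ and $h$ lift to module morphisms. These are the only computational parts and, while routine (only naturality and the two triangle equations for $F\dashv R$ are used), they require carefully tracking the long composite defining $\xi$ in \eqref{defIsoNatXi}. A slightly more economical packaging avoids naming $\widetilde{e},\widetilde{h}$ altogether: one shows directly that the adjunction bijection $\Hom_{\mathcal{D}}(F(V),W)\cong\Hom_{\mathcal{C}}(V,R(W))$, $g\mapsto R(g)\circ e_V$, restricts to a bijection $\Hom_{\mathbb{T}'\text{-}\mathrm{mod}}\bigl(\widetilde{F}_{\zeta}(V,r),(W,u)\bigr)\cong\Hom_{\mathbb{T}\text{-}\mathrm{mod}}\bigl((V,r),\widetilde{R}(W,u)\bigr)$, i.e.\ that $g$ is a $\mathbb{T}'$-module morphism iff $R(g)\circ e_V$ is a $\mathbb{T}$-module morphism; this consolidates (i)--(iv), but the underlying diagram chase is the same one.
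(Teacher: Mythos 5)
Your proposal is correct and follows essentially the same route as the paper: lift the unit and counit of $F\dashv R$ to $\widetilde e,\widetilde h$, verify these are module morphisms via naturality and the triangle identities wrapped in the definition of $\xi$, and conclude by noting that the triangle identities for $\widetilde F_\zeta\dashv\widetilde R$ reduce, after forgetting, to those for $F\dashv R$. The alternative packaging via the restriction of the adjunction bijection that you sketch at the end is also a legitimate variant, but — as you note yourself — it hides the same diagram chase.
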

\begin{proof}
For convenience here are some details, taken from \cite[\S 3.5]{BLV}. Let $(V,r) \in \mathbb{T}\text{-}\mathrm{mod}$. Define $\widetilde{e}_{(V,r)} = e_V \in \Hom_{\mathcal{C}}(V, RF(V))$. Using the definition of $\xi$ \eqref{defIsoNatXi}, the naturality of $e$, $h$, $\zeta$ and the unit-counit equations for $e,h$, we get
\[ RF(r) \circ R(\zeta_V) \circ \xi_{F(V)} \circ T(e_V) = e_V \circ r \]
which means that $\widetilde{e}_{(V,r)} \in \Hom_{\mathbb{T}\text{-}\mathrm{mod}}\bigl((V,r), \widetilde{R}\widetilde{F}_{\zeta}(V,r)\bigr)$ and thus $\widetilde{e}$ is a natural transformation $\mathrm{Id}_{\mathbb{T}\text{-}\mathrm{mod}} \Rightarrow \widetilde{R}\widetilde{F}_{\zeta}$. Similarly, for $(W,u) \in \mathbb{T}'\text{-}\mathrm{mod}$ let $\widetilde{h}_{(W,u)} = h_W \in \Hom_{\mathcal{D}}(FR(W), W)$. Then actually $\widetilde{h}_{(W,u)} \in \Hom_{\mathbb{T}'\text{-}\mathrm{mod}}\bigl(\widetilde{F}_{\zeta}\widetilde{R}(W,u), (W,u)\bigr)$ and thus $\widetilde{h}$ is a natural transformation $\widetilde{F}_{\zeta}\widetilde{R} \Rightarrow \mathrm{Id}_{\mathbb{T}'\text{-}\mathrm{mod}}$. The pair $(\widetilde{e}, \widetilde{h})$ satisfies the unit-counit equations because so does the pair $(e,h)$, which implies that $\widetilde{R}$ is right adjoint to $\widetilde{F}_{\zeta}$ \cite[\S IV.1, Th. 2]{MLCat}.
\end{proof}

Thanks to the right adjoint construction in \eqref{rightAdjointLift} we are in the following situation when $(F,\zeta)$ is a strong morphism of monads:
\begin{equation}\label{diagramLiftAdjunction}
 \xymatrix@C=4em@R=.7em{
\mathbb{T}\text{-}\mathrm{mod}\ar@/^.7em/[dd]^{\mathcal{U}_{\mathbb{T}}} \ar@/^.7em/[r]^{\widetilde{F}_{\zeta}}_{\text{\normalsize \rotatebox{270}{$\dashv$}}} & \ar@/^.7em/[l]^{\widetilde{R}}
\mathbb{T}'\text{-}\mathrm{mod} \ar@/^.7em/[dd]^{\mathcal{U}_{\mathbb{T}'}}\\
\dashv & \dashv\\
\ar@/^.7em/[uu]^{\mathcal{F}_{\mathbb{T}}}\mathcal{C} \ar@/^.7em/[r]^{F}_{\text{\normalsize \rotatebox{270}{$\dashv$}}} & \ar@/^.7em/[l]^R \ar@/^.7em/[uu]^{\mathcal{F}_{\mathbb{T}'}}\mathcal{D}
}
\end{equation}

\begin{proposition}\label{isoExtForMonadicAdjunctions}
Let $\mathcal{C}$, $\mathcal{D}$ be abelian categories, $\mathbb{T} = (T, \mu,\eta)$ be a monad on $\mathcal{C}$ and $\mathbb{T}' = (T', \mu',\eta')$ be a monad on $\mathcal{D}$ such that $T$, $T'$ are additive functors. Let $(F,\zeta) : \mathbb{T} \to \mathbb{T}'$ be a morphism of monads such that
\begin{itemize}[topsep=.2em, itemsep=-.2em]
\item $F : \mathcal{C} \to \mathcal{D}$ is an additive functor which has a right adjoint $R$,
\item $\zeta : T'F \Rightarrow FT$ is an isomorphism.
\end{itemize}
Then
\[ \forall \, n \geq 0, \quad \Ext^n_{\mathbb{T}'\text{-}\mathrm{mod}, \mathcal{D}}\bigl(\widetilde{F}_{\zeta}(\mathsf{V}), \mathsf{W}\bigr) \cong \Ext^n_{\mathbb{T}\text{-}\mathrm{mod}, \mathcal{C}}\bigl(\mathsf{V}, \widetilde{R}(\mathsf{W})\bigr) \]
for all $\mathsf{V} \in \mathbb{T}\text{-}\mathrm{mod}$ and $\mathsf{W} \in \mathbb{T}'\text{-}\mathrm{mod}$, where $\widetilde{R}$ is the right adjoint of $\widetilde{F}_{\zeta}$ defined in \eqref{rightAdjointLift}.
\end{proposition}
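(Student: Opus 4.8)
The plan is to deduce the statement directly from Theorem~\ref{propExtGroupsForMorphismsOfResolventPairs}: it is precisely the instance of that theorem in which the morphism of resolvent pairs is the one built from the monad morphism $(F,\zeta)$, so the work is just to check that all the hypotheses are in force, using the results already established in this section. First I would put the two adjunctions \eqref{standardAdjunctionMonad} attached to $\mathbb{T}$ and $\mathbb{T}'$ into the resolvent-pair framework: since $T$ and $T'$ are additive endofunctors of abelian categories, Proposition~\ref{propResolventPairsAreMonadic}(2) shows that $\mathcal{F}_{\mathbb{T}} \dashv \mathcal{U}_{\mathbb{T}}$ and $\mathcal{F}_{\mathbb{T}'} \dashv \mathcal{U}_{\mathbb{T}'}$ are resolvent pairs (in particular $\mathbb{T}\text{-}\mathrm{mod}$ and $\mathbb{T}'\text{-}\mathrm{mod}$ are abelian), so both sides of the claimed isomorphism are well-defined relative Ext groups.

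Next I would build the strong morphism of resolvent pairs. Since $\zeta$ is assumed to be an isomorphism, $(F,\zeta)$ is a strong morphism of monads, so Lemma~\ref{liftedFunctor}(1) gives the lifted functor $\widetilde{F}_{\zeta} : \mathbb{T}\text{-}\mathrm{mod} \to \mathbb{T}'\text{-}\mathrm{mod}$ of \eqref{liftFunctorAlongMonads} and tells us that $(F, \widetilde{F}_{\zeta}, \mathrm{id})$ is a \emph{strong} morphism of adjunctions in the sense of Definition~\ref{defMorphismOfResolventPairs}(2). As $\widetilde{F}_{\zeta}$ acts on morphisms exactly as the additive functor $F$, both $F$ and $\widetilde{F}_{\zeta}$ are additive, whence $(F, \widetilde{F}_{\zeta}, \mathrm{id})$ is a strong morphism of resolvent pairs in the sense of Definition~\ref{defMorphismOfResolventPairs}(3). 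Because $F$ has the right adjoint $R$ and $(F,\zeta)$ is strong, Lemma~\ref{liftRightAdjoint} supplies a right adjoint $\widetilde{R}$ of $\widetilde{F}_{\zeta}$, namely the functor \eqref{rightAdjointLift} associated with the monad morphism $(R,\xi) : \mathbb{T}' \to \mathbb{T}$, where $\xi$ is the natural isomorphism \eqref{defIsoNatXi}.

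Finally I would apply Theorem~\ref{propExtGroupsForMorphismsOfResolventPairs} to the strong morphism of resolvent pairs $(F, \widetilde{F}_{\zeta}, \mathrm{id})$, with the ``$\Phi$'' of that theorem being $\widetilde{F}_{\zeta}$ and its right adjoint ``$\Psi$'' being $\widetilde{R}$; evaluating the resulting isomorphism at $V = \mathsf{V} \in \mathbb{T}\text{-}\mathrm{mod}$ and $V' = \mathsf{W} \in \mathbb{T}'\text{-}\mathrm{mod}$ yields $\Ext^n_{\mathbb{T}\text{-}\mathrm{mod},\mathcal{C}}\bigl(\mathsf{V}, \widetilde{R}(\mathsf{W})\bigr) \cong \Ext^n_{\mathbb{T}'\text{-}\mathrm{mod},\mathcal{D}}\bigl(\widetilde{F}_{\zeta}(\mathsf{V}), \mathsf{W}\bigr)$ for all $n \geq 0$, as claimed. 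I do not anticipate a real obstacle here: the only delicate points are the additivity bookkeeping for $\widetilde{F}_{\zeta}$ and the remark that Lemma~\ref{liftRightAdjoint} identifies the right adjoint ``$\Psi$'' of Theorem~\ref{propExtGroupsForMorphismsOfResolventPairs} with $\widetilde{R}$ only up to natural isomorphism --- which is harmless, since relative Ext groups depend on their arguments only up to isomorphism.
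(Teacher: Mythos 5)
Your proof is correct and follows exactly the same route as the paper's: Proposition~\ref{propResolventPairsAreMonadic}(2) to get resolvent pairs, Lemma~\ref{liftedFunctor}(1) to obtain a strong morphism of resolvent pairs from the strong monad morphism $(F,\zeta)$, Lemma~\ref{liftRightAdjoint} for the right adjoint, and Theorem~\ref{propExtGroupsForMorphismsOfResolventPairs} to conclude. The paper's version is just terser, omitting the explicit invocation of Proposition~\ref{propResolventPairsAreMonadic}(2) and the remark about $\widetilde{R}$ being defined up to natural isomorphism.
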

\begin{proof}
It is clear that $\widetilde{F}_{\zeta}$ is additive because it is equal to $F$ on morphisms. Hence $(F, \widetilde{F}_{\zeta},\mathrm{id})$ is a strong morphism of resolvent pairs by item 1 in Lemma \ref{liftedFunctor} and we apply Theorem \ref{propExtGroupsForMorphismsOfResolventPairs}.
\end{proof}

To finish we describe explicitly the isomorphism in Proposition \ref{isoExtForMonadicAdjunctions}. Let $G_{\mathbb{T}} = \mathcal{F}_{\mathbb{T}} \mathcal{U}_{\mathbb{T}}$ (resp. $G_{\mathbb{T}'} = \mathcal{F}_{\mathbb{T}'} \mathcal{U}_{\mathbb{T}'}$) be the comonad on $\mathbb{T}\text{-}\mathrm{mod}$ (resp. on $\mathbb{T}'\text{-}\mathrm{mod}$). The conditions \eqref{conditionsZetaLiftF} on $\zeta$ imply $\zeta_X \in \Hom_{\mathbb{T}'\text{-}\mathrm{mod}}\bigl( \mathcal{F}_{\mathbb{T}'}F(X), \widetilde{F}_{\zeta}\mathcal{F}_{\mathbb{T}}(X) \bigr)$ for all $X \in \mathcal{C}$. Hence for all $\mathsf{V} = (V,r) \in \mathbb{T}\text{-}\mathrm{mod}$ we have
\begin{equation*}
\zeta_V = \zeta_{\mathcal{U}_{\mathbb{T}}(\mathsf{V})} \in \Hom_{\mathbb{T}'\text{-}\mathrm{mod}}\bigl( \mathcal{F}_{\mathbb{T}'}F\mathcal{U}_{\mathbb{T}}(\mathsf{V}), \widetilde{F}_{\zeta}\mathcal{F}_{\mathbb{T}}\mathcal{U}_{\mathbb{T}}(\mathsf{V}) \bigr) = \Hom_{\mathbb{T}'\text{-}\mathrm{mod}}\bigl( G_{\mathbb{T}'}\widetilde{F}_{\zeta}(\mathsf{V}), \widetilde{F}_{\zeta}G_{\mathbb{T}}(\mathsf{V}) \bigr)
\end{equation*}
where we used that $F\mathcal{U}_{\mathbb{T}} = \mathcal{U}_{\mathbb{T}'} \widetilde{F}_{\zeta}$. Since $(F, \widetilde{F}_{\zeta}, \alpha)$ is a morphism of resolvent pairs with $\alpha = \mathrm{id}$ (Def.~\ref{defMorphismOfResolventPairs}), the natural transformation $\gamma : G_{\mathbb{T}'} \widetilde{F}_{\zeta}\Rightarrow \widetilde{F}_{\zeta} G_{\mathbb{T}}$ from \eqref{morphismGammaOfComonads} is given by
\[ \forall \, \mathsf{V} = (V,r) \in \mathbb{T}\text{-}\mathrm{mod}, \quad \gamma_{\mathsf{V}} = \beta^{\alpha}_{\mathcal{U}_{\mathbb{T}}(\mathsf{V})} \circ \mathcal{F}_{\mathbb{T}'}(\alpha_{\mathsf{V}}^{-1}) = \beta^{\mathrm{id}}_V \overset{\eqref{betaForMonads}}{=} \zeta_V \]
The morphism $\gamma^{(n)}_{\mathsf{V}}$ defined inductively in \eqref{morphismGammaOfComonadsIterated} is thus equal to $\zeta^{(n)}_V : G_{\mathbb{T}'}^n\widetilde{F}_{\zeta}(\mathsf{V}) \to \widetilde{F}_{\zeta}G_{\mathbb{T}}^n(\mathsf{V})$ defined inductively by
\begin{equation}\label{isoGammaForFLiftAndComonads}
\zeta^{(n+1)}_V : G_{\mathbb{T}'}^{n+1}\widetilde{F}_{\zeta}(\mathsf{V}) \xrightarrow{G_{\mathbb{T}'}(\zeta^{(n)}_V) = T'(\zeta^{(n)}_V)} G_{\mathbb{T}'}\widetilde{F}_{\zeta}G_{\mathbb{T}}^n(\mathsf{V}) \xrightarrow{\zeta_{\mathcal{U}_{\mathbb{T}}G_{\mathbb{T}}^n(\mathsf{V})} = \zeta_{T^n(V)}} \widetilde{F}_{\zeta}G_{\mathbb{T}}^{n+1}(\mathsf{V}) \end{equation}
and $\zeta^{(0)}_V = \mathrm{id}_{\widetilde{F}_{\zeta}(\mathsf{V})}$ for all $\mathsf{V} = (V,r) \in \mathbb{T}\text{-}\mathrm{mod}$. Hence by \eqref{diagramProofAdjThm}, for all $\mathsf{V} = (V,r) \in \mathbb{T}\text{-}\mathrm{mod}$ and $\mathsf{W} \in \mathbb{T}'\text{-}\mathrm{mod}$, we have an isomorphism
\begin{align}
\begin{split}\label{isoAdjunctionWithBarComplexes}
\Hom_{\mathbb{T}\text{-}\mathrm{mod}}\!\left( G_{\mathbb{T}}^{n+1}(\mathsf{V}), \widetilde{R}(\mathsf{W}) \right) &\overset{\sim}{\longrightarrow} \Hom_{\mathbb{T}'\text{-}\mathrm{mod}}\!\left( \widetilde{F}_{\zeta}G_{\mathbb{T}}^{n+1}(\mathsf{V}), \mathsf{W} \right)\\
&\xrightarrow{(\zeta^{(n+1)}_V)^*} \Hom_{\mathbb{T}'\text{-}\mathrm{mod}}\!\left( G_{\mathbb{T}'}^{n+1}\widetilde{F}_{\zeta}(\mathsf{V}), \mathsf{W} \right)
\end{split}
\end{align}
which descends to the isomorphism of Proposition \ref{isoExtForMonadicAdjunctions}.

\section{Application to deformation of monoidal structures}
In this paper we only consider infinitesimal deformations. This section begins with a brief review of the deformation theory of monoidal structures (\S\ref{subsectionDefMonStruct}). We adopt a geometric language: given a functor $F$ and a monoidal structure $\theta$ for $F$, we look at deformations of $\theta$ as tangent vectors at the point $\theta$, although there is in general no underlying algebraic variety (or manifold) structure on the set of all monoidal structures for $F$. This point of view will be convenient in \S\ref{subsectionDYcohomologyTangentBraidings}. Then we recall the isomorphism between DY cohomology and relative Ext groups (\S\ref{relExtGroupsDYCohomology}), and actually re-prove it under weaker assumptions than in \cite{FGS}. This allows us to apply the general results of \S\ref{sectionAdjThmRelExt} and to obtain an {\em adjunction theorem for DY cohomology} (Theorem~\ref{thmChangeOfCoeffDY} in~\S\ref{sectionChangeOfFunctor}).

\subsection{Tangent spaces and DY cohomology}\label{subsectionDefMonStruct}
Let $\mathcal{C}$, $\mathcal{D}$ be monoidal categories, which we take strict for simplicity. The tensor unit object will be denoted by $\boldsymbol{1}$. Let $F : \mathcal{C} \to \mathcal{D}$ be a functor which satisfies $F(\boldsymbol{1}) = \boldsymbol{1}$. A {\em monoidal structure} for $F$ is a natural isomorphism $\theta : F \otimes F \overset{\sim}{\implies} F(- \otimes -)$ such that
\begin{equation}\label{conditionsMonStruct}
\begin{array}{c}
\theta_{X \otimes Y,Z} \circ (\theta_{X,Y} \otimes \mathrm{id}_{F(Z)}) = \theta_{X,Y\otimes Z} \circ (\mathrm{id}_{F(X)} \otimes \theta_{Y,Z})\\[.3em]
\text{and } \quad \theta_{X,\boldsymbol{1}} = \theta_{\boldsymbol{1},X} = \mathrm{id}_{F(X)}.
\end{array} \qquad\quad (\forall\, X,Y,Z \in \mathcal{C})
\end{equation}
The pair $(F,\theta)$ is called a {\em monoidal functor}.\footnote{In full generality one requires the existence of an isomorphism $F^{(0)}: \boldsymbol{1} \overset{\sim}{\to} F(\boldsymbol{1})$ and the second line in \eqref{conditionsMonStruct} becomes $\theta_{X,\boldsymbol{1}} \circ (\mathrm{id}_{F(X)} \otimes F^{(0)}) = \theta_{\boldsymbol{1},X} \circ (F^{(0)} \otimes \mathrm{id}_{F(X)}) = \mathrm{id}_{F(X)}$. In what follows we always assume that $F^{(0)} = \mathrm{id}_{F(\boldsymbol{1})}$ for simplicity. This is sufficient for our main application in \S\ref{sectionLaxMult} where we take $F = \otimes_{\mathcal{C}}$ with a monoidal structure coming from a braiding in $\mathcal{C}$.} When the monoidal structure $\theta$ of $F$ is fixed it is customary to denote it by $F^{(2)}$. Using $\theta$ repeatedly one obtains for each $n>0$ a natural isomorphism
\begin{equation}\label{higherMonStruct}
\theta^{(n)}_{X_1, \ldots, X_n} : F(X_1) \otimes \ldots \otimes F(X_n) \overset{\sim}{\to} F(X_1 \otimes \ldots \otimes X_n)
\end{equation}
with the convention that $\theta^{(1)} = \mathrm{id}_{F}$ and $\theta^{(2)} = \theta$; when the notation $F^{(2)}$ is used we write $F^{(n)}$ instead of $\theta^{(n)}$. There are many different explicit formulas for $\theta^{(n)}$ in terms of $\theta$, but they all give the same isomorphism in $\mathcal{D}$ due to the coherence theorem for monoidal structures \cite{epstein}.

\smallskip

\indent Given two monoidal functors $(F,\theta), (F',\theta') : \mathcal{C} \to \mathcal{D}$, a {\em monoidal natural transformation} $\omega : (F,\theta) \Rightarrow (F',\theta')$ is a natural transformation $\omega : F \Rightarrow F'$ such that
\[ \theta'_{X,Y} \circ \bigl( \omega_X \otimes \omega_Y \bigr) = \omega_{X \otimes Y} \circ \theta_{X,Y} \]
for all $X,Y \in \mathcal{C}$. Denote by $\mathrm{Mon}(F)$ the set of all monoidal structures for a given functor $F$. We say that $\theta_1,\theta_2 \in \mathrm{Mon}(F)$ are equivalent, denoted by $\theta_1 \sim \theta_2$, if there exists a monoidal natural isomorphism $(F,\theta_1) \overset{\sim}{\implies} (F,\theta_2)$. Consider the group $\mathrm{Aut}(F)$ of {\em all} natural automorphisms $F \Rightarrow F$. For $u \in \mathrm{Aut}(F)$ and $\theta \in \mathrm{Mon}(F)$ let $u\cdot \theta : F \otimes F \Rightarrow F(- \otimes -)$ be the natural isomorphism with components
\begin{equation}\label{actionAutOnMon}
(u \cdot \theta)_{X,Y} = u_{X \otimes Y} \circ \theta_{X,Y} \circ (u^{-1}_X \otimes u^{-1}_Y)
\end{equation}
It is straightforward to check that $u \cdot \theta \in \mathrm{Mon}(F)$. Hence $\mathrm{Aut}(F)$ acts on $\mathrm{Mon}(F)$ and (by definition) the equivalence class $[\theta]$ for the relation $\sim$ is the orbit of $\theta$ under the action of $\mathrm{Aut}(F)$:
\begin{equation}\label{equivalenceClassesAsOrbits}
\mathrm{Mon}(F)/\!\!\sim \:= \mathrm{Mon}(F)/\mathrm{Aut}(F). 
\end{equation}

\smallskip

\indent In the situation when $\mathcal{D}$ (the target category of $F$) is linear, we introduce a vector space which is ``tangent'' to a given point $\theta \in \mathrm{Mon}(F)$. More precisely let $\Bbbk$ be a field and assume that $\mathcal{D}$ is a $\Bbbk$-linear category whose monoidal product is $\Bbbk$-bilinear on morphisms. Consider the category $\mathcal{D}_{\epsilon}$ defined by
\begin{equation}\label{categoryExtendedDualNumbers}
\mathrm{Ob}(\mathcal{D}_{\epsilon}) = \mathrm{Ob}(\mathcal{D}),\qquad \mathrm{Hom}_{\mathcal{D}_{\epsilon}}(X,Y) = \mathrm{Hom}_{\mathcal{D}}(X,Y) \otimes_{\Bbbk} \Bbbk[\epsilon]/(\epsilon^2).
\end{equation}
In short, the objects are unchanged and the Hom spaces have their scalars extended to the ring of dual numbers. Thus, a morphism in $\mathcal{D}_{\epsilon}$ is an expression $a + \epsilon b$ where $a,b$ are morphisms in $\mathcal{D}$. The composition and monoidal product are bilinearly extended in the obvious way:
\begin{align}
\begin{split}\label{compositionExtendedDualNumbers}
(a + \epsilon b) \circ (a' + \epsilon b') &= a \circ a' + \epsilon (a \circ b' + b \circ a'),\\
(a + \epsilon b) \otimes_{\epsilon} (a' + \epsilon b') &= a \otimes a' + \epsilon (a \otimes b' + b \otimes a')
\end{split}
\end{align}
and $(\mathcal{D}_{\epsilon},\otimes_{\epsilon}, \boldsymbol{1})$ becomes a $\Bbbk[\epsilon]/(\epsilon^2)$-linear monoidal category. For any natural transformation $f : F(-) \otimes F(-) \Rightarrow F(- \otimes -)$, not necessarily monoidal, we have the collection of morphisms $\theta + \epsilon f = \bigl( \theta_{X,Y} + \epsilon f_{X,Y} \bigr)_{X,Y \in \mathcal{C}}$ in $\mathcal{D}_{\epsilon}$. We define {\em the tangent space at $\theta$} as
\begin{equation}\label{tangentSpaceMonStruct}
\mathbf{T}_{\theta}\mathrm{Mon}(F) = \bigl\{ f : F(-) \otimes F(-) \Rightarrow F(- \otimes -) \, \big|\, \theta + \epsilon f \text{ satisfies \eqref{conditionsMonStruct}} \bigr\}.
\end{equation}
Explicitly a natural transformation $f$ is in $\mathbf{T}_{\theta}\mathrm{Mon}(F)$ if and only if
\begin{align}
\begin{split}\label{cocycleConditionDefMonStruct}
&\theta_{X_1,X_2\otimes X_3} \circ (\mathrm{id}_{F(X_1)} \otimes f_{X_2,X_3}) - f_{X_1 \otimes X_2,X_3} \circ (\theta_{X_1,X_2} \otimes \mathrm{id}_{F(X_3)})\\
+\:& f_{X_1,X_2\otimes X_3} \circ (\mathrm{id}_{F(X_1)} \otimes \theta_{X_2,X_3}) - \theta_{X_1 \otimes X_2,X_3} \circ (f_{X_1,X_2} \otimes \mathrm{id}_{F(X_3)})= 0
\end{split}
\end{align}
for all $X_1,X_2, X_3 \in \mathcal{C}$ and
\begin{equation}\label{unitConstraintInfinitesimal}
f_{X,\boldsymbol{1}} = f_{\boldsymbol{1},X} = 0
\end{equation}
for all $X \in \mathcal{C}$. Since the conditions \eqref{cocycleConditionDefMonStruct} and \eqref{unitConstraintInfinitesimal} 
are linear, $\mathbf{T}_{\theta}\mathrm{Mon}(F)$ is a $\Bbbk$-vector subspace in $\mathrm{Nat}\bigl( F \otimes F, F(- \otimes -) \bigr)$.

\smallskip

\indent We now want to give a definition for $\mathbf{T}_{[\theta]}\bigl( \mathrm{Mon}(F)/\!\!\sim \bigr)$. As a motivation, recall that when a Lie group $G$ acts on a manifold $M$ then under suitable assumptions the quotient set $M/G$ is a manifold (see e.g. \cite[Chap.\,21]{lee}) and one has
\[ \mathbf{T}_{G \cdot m}(M/G) = (\mathbf{T}_mM)/(\mathbf{T}_{m}(G \cdot m)) \]
where $\mathbf{T}$ denote tangent spaces at the prescribed points. Combining this with \eqref{equivalenceClassesAsOrbits} we would like to define $\mathbf{T}_{[\theta]}\bigl( \mathrm{Mon}(F)/\!\!\sim \bigr)$ as $\mathbf{T}_{\theta}\mathrm{Mon}(F)/\mathbf{T}_{\theta}\bigl( \mathrm{Aut}(F) \cdot \theta \bigr)$ but we must give a sense to the denominator. Note that for all $v \in \mathrm{Nat}(F,F)$, $u=\mathrm{id}_F + \epsilon v$ can be thought as the first term of the Taylor expansion of a curve passing through the point $\mathrm{id} \in \mathrm{Aut}(F)$ and we have
\[ \bigl( (\mathrm{id}_F + \epsilon v) \cdot \theta \bigr)_{X,Y} \overset{\eqref{actionAutOnMon}}{=} \theta_{X,Y} + \epsilon \bigl[ v_{X \otimes Y} \circ \theta_{X,Y} - \theta_{X,Y} \circ (\mathrm{id}_{F(X)} \otimes v_Y) - \theta_{X,Y} \circ (v_X \otimes \mathrm{id}_{F(Y)}) \bigr]. \]
The elements arising as the coefficients of $\epsilon$ in these ``Taylor expansions'' form a reasonable ansatz for $\mathbf{T}_{\theta}\bigl( \mathrm{Aut}(F) \cdot \theta \bigr)$. These heuristic remarks lead to the precise desired definition:
\begin{equation}\label{defTangentSpaceQuotient}
\mathbf{T}_{[\theta]}\bigl( \mathrm{Mon}(F)/\!\!\sim \bigr) = \bigl( \mathbf{T}_{\theta} \mathrm{Mon}(F) \bigr)/\!\equiv_{\theta}
\end{equation}
where we declare that $f \equiv_{\theta} g$ if there exists $v \in \mathrm{Nat}(F,F)$ such that
\begin{equation}\label{defEquivOnTangentSpace}
f_{X,Y} - g_{X,Y} = \theta_{X,Y} \circ (\mathrm{id}_{F(X)} \otimes v_Y) - v_{X \otimes Y} \circ \theta_{X,Y} + \theta_{X,Y} \circ (v_X \otimes \mathrm{id}_{F(Y)})
\end{equation}
for all $X,Y \in \mathcal{C}$. Let us show that, up to isomorphism, this definition does not depend on the representative $\theta$. For $u \in \mathrm{Aut}(F)$ and $f \in \mathbf{T}_{\theta} \mathrm{Mon}(F)$ define $u \cdot f$ as in \eqref{actionAutOnMon}. There is an isomorphism of vector spaces
\begin{equation}\label{canonicalIsoTangentSpaceQuotient}
\ell_u : \mathbf{T}_{\theta} \mathrm{Mon}(F) \to \mathbf{T}_{u \cdot \theta} \mathrm{Mon}(F), \qquad f \mapsto u \cdot f.
\end{equation}
For $v \in \mathrm{Nat}(F,F)$ set $(u \cdot v)_X = u_X \circ v_X \circ u^{-1}_X$, which defines $u \cdot v \in \mathrm{Nat}(F,F)$. If $f$ and $g$ are as in \eqref{defEquivOnTangentSpace} a simple computation reveals that
\begin{align*}
&(u\cdot f)_{X,Y} - (u \cdot g)_{X,Y}\\
=\:&(u \cdot \theta)_{X,Y} \circ \bigl(\mathrm{id}_{F(X)} \otimes (u\cdot v)_Y \bigr) - (u\cdot v)_{X \otimes Y} \circ (u \cdot \theta)_{X,Y} + (u \cdot \theta)_{X,Y} \circ \bigl( (u \cdot v)_X \otimes \mathrm{id}_{F(Y)} \bigr).
\end{align*}
Hence $f \equiv_{\theta} g$ implies $u \cdot f \equiv_{u \cdot \theta} u \cdot g$, so that $\ell_u$ descends into a linear map 
\[ \overline{\ell}_u : \mathbf{T}_{\theta} \mathrm{Mon}(F)/\!\equiv_{\theta} \:\: \to \mathbf{T}_{u \cdot \theta} \mathrm{Mon}(F)/\!\equiv_{u \cdot \theta} \]
whose inverse is $\overline{\ell}_{u^{-1}}$.

Here is a rephrasing of the above definitions for multilinear functors:\footnote{We consider multilinear instead of just linear functors because in \S\ref{subsectionDYcohomologyTangentBraidings} we will apply Lemma \ref{lemmaDeformationMultilinearFunctor} to $F = \otimes$.}
\begin{lemma}\label{lemmaDeformationMultilinearFunctor}
Let $\mathcal{C}^1, \ldots, \mathcal{C}^n, \mathcal{D}$ be $\Bbbk$-linear monoidal categories and $F : \mathcal{C}^1 \times \ldots \times \mathcal{C}^n \to \mathcal{D}$ be a functor which is $\Bbbk$-linear in each variable. Define a $\Bbbk[\epsilon]/(\epsilon^2)$-linear functor $F_{\epsilon} : \mathcal{C}^1_{\epsilon} \times \ldots \times \mathcal{C}^n_{\epsilon} \to \mathcal{D}_{\epsilon}$ by linear extension of $F$ in each variable. Then for any $\theta \in \mathrm{Mon}(F)$:
\\1. We have $f \in \mathbf{T}_{\theta}\mathrm{Mon}(F)$ if and only if $\theta + \epsilon f \in \mathrm{Mon}(F_{\epsilon})$.
\\2. For all $f,g \in \mathbf{T}_{\theta}\mathrm{Mon}(F)$, we have $f \equiv_{\theta} g$ if and only if there exists $v \in \mathrm{Nat}(F,F)$ such that $\mathrm{id}_F + \epsilon v : (F_{\epsilon}, \theta + \epsilon f) \Rightarrow (F_{\epsilon}, \theta + \epsilon g)$ is a monoidal natural isomorphism.
\end{lemma}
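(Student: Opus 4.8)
The plan is to show that the linear conditions defining $\mathbf{T}_{\theta}\mathrm{Mon}(F)$ and the relation $\equiv_{\theta}$ are precisely the $\epsilon$-linear parts of the (non-linear) conditions defining $\mathrm{Mon}(F_{\epsilon})$ and monoidal natural isomorphisms between $(F_{\epsilon},\theta+\epsilon f)$ and $(F_{\epsilon},\theta+\epsilon g)$. Concretely, first I would dispose of the bookkeeping needed to pass from one variable to $n$ variables. Since finite products commute with the functor $-\otimes_{\Bbbk}\Bbbk[\epsilon]/(\epsilon^2)$ on $\Bbbk$-modules, and since all of the monoidal structure of the product category $\mathcal{C}^1\times\dots\times\mathcal{C}^n$ is computed componentwise, there is a canonical identification of $\Bbbk[\epsilon]/(\epsilon^2)$-linear monoidal categories $(\mathcal{C}^1\times\dots\times\mathcal{C}^n)_{\epsilon}\cong\mathcal{C}^1_{\epsilon}\times\dots\times\mathcal{C}^n_{\epsilon}$; in particular the tensor unit of the source of $F_{\epsilon}$ is $\boldsymbol{1}=(\boldsymbol{1},\dots,\boldsymbol{1})$ and $F_{\epsilon}(\boldsymbol{1})=F(\boldsymbol{1})=\boldsymbol{1}$. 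I would then check that $F_{\epsilon}$, defined on a morphism $(a_1+\epsilon b_1,\dots,a_n+\epsilon b_n)$ by the Leibniz-type formula $F(a_1,\dots,a_n)+\epsilon\sum_i F(a_1,\dots,b_i,\dots,a_n)$, is a well-defined $\Bbbk[\epsilon]/(\epsilon^2)$-linear functor: this is exactly where the hypothesis that $F$ is $\Bbbk$-linear in each variable is used, in combination with the bilinear definitions \eqref{compositionExtendedDualNumbers} of composition and tensor product in the extended categories. After this, the $n$-variable situation is reduced to the single-variable discussion of \S\ref{subsectionDefMonStruct}.

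For part 1, given $f\in\mathrm{Nat}\bigl(F\otimes F,F(-\otimes-)\bigr)$, I would first observe that the collection $\theta+\epsilon f=(\theta_{X,Y}+\epsilon f_{X,Y})_{X,Y}$ is automatically a natural isomorphism $F_{\epsilon}\otimes_{\epsilon}F_{\epsilon}\Rightarrow F_{\epsilon}(-\otimes_{\epsilon}-)$: each component $\theta_{X,Y}+\epsilon f_{X,Y}$ is invertible in $\mathcal{D}_{\epsilon}$ with inverse $\theta_{X,Y}^{-1}-\epsilon\,\theta_{X,Y}^{-1}f_{X,Y}\theta_{X,Y}^{-1}$, and naturality with respect to a morphism $(\phi_0+\epsilon\phi_1,\psi_0+\epsilon\psi_1)$ of $\mathcal{C}_{\epsilon}\times\mathcal{C}_{\epsilon}$ splits modulo $\epsilon^2$ into its $\epsilon^0$-part, which is naturality of $\theta$ at $(\phi_0,\psi_0)$, and its $\epsilon^1$-part, which --- using the Leibniz formula for $F_{\epsilon}$ and bilinearity of $\otimes$ on morphisms --- combines naturality of $\theta$ at $(\phi_1,\psi_0)$ and $(\phi_0,\psi_1)$ with naturality of $f$ at $(\phi_0,\psi_0)$, all of which hold. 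Hence $\theta+\epsilon f\in\mathrm{Mon}(F_{\epsilon})$ if and only if it satisfies \eqref{conditionsMonStruct} in $\mathcal{D}_{\epsilon}$; expanding those equations modulo $\epsilon^2$, the $\epsilon^0$-part is \eqref{conditionsMonStruct} for $(F,\theta)$ and holds by hypothesis, while the $\epsilon^1$-part is exactly the cocycle condition \eqref{cocycleConditionDefMonStruct} together with the unit constraint \eqref{unitConstraintInfinitesimal}, i.e.\ the definition \eqref{tangentSpaceMonStruct} of $\mathbf{T}_{\theta}\mathrm{Mon}(F)$. Conversely, if $\theta+\epsilon f\in\mathrm{Mon}(F_{\epsilon})$ then naturality of $\theta+\epsilon f$ at morphisms with vanishing $\epsilon$-part already forces $f\in\mathrm{Nat}\bigl(F\otimes F,F(-\otimes-)\bigr)$, and the $\epsilon^1$-parts of \eqref{conditionsMonStruct} give $f\in\mathbf{T}_{\theta}\mathrm{Mon}(F)$.

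Part 2 is the computation already sketched in the text just before \eqref{defTangentSpaceQuotient}, carried out in $\mathcal{D}_{\epsilon}$: for $v\in\mathrm{Nat}(F,F)$ the collection $\mathrm{id}_F+\epsilon v$ is automatically a natural automorphism of $F_{\epsilon}$ (invertibility is clear, and naturality again splits into an $\epsilon^0$-part, which is trivial, and an $\epsilon^1$-part, which is naturality of $v$), and it is a \emph{monoidal} natural isomorphism $(F_{\epsilon},\theta+\epsilon f)\Rightarrow(F_{\epsilon},\theta+\epsilon g)$ if and only if the monoidal-transformation equation holds in $\mathcal{D}_{\epsilon}$; modulo $\epsilon^2$ its $\epsilon^0$-part is $\theta_{X,Y}=\theta_{X,Y}$ and its $\epsilon^1$-part is precisely \eqref{defEquivOnTangentSpace}, the definition of $f\equiv_{\theta}g$. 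The only genuinely non-formal point of the whole argument is the preliminary verification that the Leibniz-type formula actually respects composition and thus defines the functor $F_{\epsilon}$, and the attendant reduction of the $n$-variable naturality computations to the one-variable ones; but this is a direct consequence of multilinearity of $F$ and presents no real difficulty, the rest being a systematic $\epsilon$-expansion.
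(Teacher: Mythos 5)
Your proposal is correct and follows exactly the paper's approach: the paper treats part 1 as a near-tautological consequence of the definition \eqref{tangentSpaceMonStruct} (the real content being that multilinearity of $F$ makes $F_{\epsilon}$ well-defined, which you verify via the Leibniz formula) and dismisses part 2 as a direct computation; you simply spell out the $\epsilon$-expansion and the automatic-naturality checks that the paper leaves implicit.
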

\begin{proof}
1. Indeed by definition $\theta + \epsilon f \in \mathrm{Mon}(F_{\epsilon})$ if and only if $\theta + \epsilon f$ satisfies \eqref{conditionsMonStruct}, which means that $f \in \mathbb{T}_{\theta}\mathrm{Mon}(F)$ by \eqref{tangentSpaceMonStruct}. The only point of this lemma is that when $F$ and its source category are linear (or more generally $F$ is multilinear), then one can define the extension $F_{\epsilon}$ and talk about monoidal structures for $F_{\epsilon}$.
\\2. Straightforward computation.
\end{proof}

\medskip

\indent We now recall the cohomological interpretation of the above discussion, following \cite{davydov,CY,yetter1}. Let $(F,\theta) : \mathcal{C} \to \mathcal{D}$ be a monoidal functor and assume that $\mathcal{D}$ is $\Bbbk$-linear. For each $n \geq 0$ define a $\Bbbk$-vector space
\[ C^n_{\mathrm{DY}}(F,\theta) = \left\{ \begin{array}{c}
\text{natural transformations of the form}\\ \bigl(f_{X_1, \ldots, X_n} : F(X_1) \otimes \ldots \otimes F(X_n) \to F(X_1 \otimes \ldots \otimes X_n)\bigr)_{X_1,\ldots,X_n \in \mathcal{C}} \end{array} \right\}. \]
For all $n \geq 0$ and $0 \leq i \leq n+1$ let
\begin{equation}\label{cofaceDYNonStrict}
\partial_i^n : C^n_{\mathrm{DY}}(F,\theta) \to C^{n+1}_{\mathrm{DY}}(F,\theta)
\end{equation}
such that $\partial_i^n(f)_{X_1, \ldots, X_{n+1}}$ is equal to 
\[ \begin{cases}
\theta_{X_1, X_2 \otimes \ldots \otimes X_{n+1}} \circ \bigl( \mathrm{id}_{F(X_1)} \otimes f_{X_2, \ldots, X_{n+1}} \bigr) & \text{if } i=0 \\
f_{X_1, \ldots,X_i \otimes X_{i+1}, \ldots, X_{n+1}} \circ \bigl( \mathrm{id}_{F(X_1) \otimes \ldots \otimes F(X_{i-1})} \otimes \theta_{X_i, X_{i+1}} \otimes \mathrm{id}_{F(X_{i+2}) \otimes \ldots \otimes F(X_{n+1})} \bigr) & \text{if } 1 \leq i \leq n\\
\theta_{X_1 \otimes \ldots \otimes X_n, X_{n+1}} \circ \bigl( f_{X_1, \ldots, X_n} \otimes \mathrm{id}_{F(X_{n+1})} \bigr) & \text{if } i=n+1
\end{cases} \]
Also for $0 \leq i \leq n-1$ let $s^n_i : C^n_{\mathrm{DY}}(F,\theta) \to C^{n-1}_{\mathrm{DY}}(F,\theta)$ be given by
\begin{equation}\label{cofaceDY}
s^n_i(f)_{X_1, \ldots, X_{n-1}} = f_{X_1, \ldots, X_i, \boldsymbol{1}, X_{i+1}, \ldots, X_n}.
\end{equation}
These linear maps satisfy the cosimplicial identities recalled in Appendix \ref{appendixNormalization}. In particular we have the differential $\delta^n = \sum_{i=0}^{n+1}(-1)^i\partial^n_i$ and $\bigl(C^{\bullet}_{\mathrm{DY}}(F,\theta), \delta \bigr)$ is called the {\em Davydov--Yetter (DY) complex} of $(F,\theta)$. We write the subspaces of cocycles and coboundaries in degree $n$ as $Z^n_{\mathrm{DY}}(F,\theta)$ and $B^n_{\mathrm{DY}}(F,\theta)$, respectively. The DY cohomology for $F$ in degree $n$ is $H^n_{\mathrm{DY}}(F,\theta) = Z^n_{\mathrm{DY}}(F,\theta)/B^n_{\mathrm{DY}}(F,\theta)$.

\smallskip

\indent An element $f \in C^2_{\mathrm{DY}}(F,\theta)$ is in $Z^2_{\mathrm{DY}}(F,\theta)$ if and only if it satisfies \eqref{cocycleConditionDefMonStruct}. The condition \eqref{unitConstraintInfinitesimal} in the definition of $\mathbf{T}_{\theta}\mathrm{Mon}(F)$ corresponds precisely to the normalization defined in Appendix \ref{appendixNormalization}. Indeed, due to \eqref{cofaceDY}, the normalized complex for DY is
\begin{equation}\label{normalizedSubcompDY}
N\!C^n_{\mathrm{DY}}(F,\theta) = \bigl\{ f \in C^n_{\mathrm{DY}}(F,\theta) \, \big| \, f_{X_1, \ldots, X_n} = 0 \text{ if } X_i = \boldsymbol{1} \text{ for some } i \bigr\}.
\end{equation}
As a result
\[ \mathbf{T}_{\theta}\mathrm{Mon}(F) = N\!Z^2_{\mathrm{DY}}(F,\theta) \]
where $N\!Z^{\bullet}_{\mathrm{DY}}(F,\theta) = Z^{\bullet}_{\mathrm{DY}}(F,\theta) \cap N\!C^{\bullet}_{\mathrm{DY}}(F,\theta)$ is the subspace of cocycles in the normalized DY complex. Moreover \eqref{defEquivOnTangentSpace} is equivalent to $f - g = \delta^1(v)$, which means that $f \equiv_{\theta} g$ if and only if $f$ and $g$ are cohomologous. One can assume that $v \in N\!C^1_{\mathrm{DY}}(F,\theta)$ by item 3 in Proposition \ref{propPropertiesN}. Thus if we denote by  $N\!H^{\bullet}_{\mathrm{DY}}(F,\theta)$ the cohomology of the normalized DY complex \eqref{normalizedSubcompDY}, we have
\begin{equation}\label{tangentSpaceAndCohom}
\mathbf{T}_{[\theta]}\bigl( \mathrm{Mon}(F)/\!\!\sim \bigr) = N\!H^2_{\mathrm{DY}}(F,\theta) \cong H^2_{\mathrm{DY}}(F,\theta)
\end{equation}
where the second isomorphism is due to Corollary \ref{coroProjQuasiIso}, which asserts that the inclusion $N\!C^{\bullet}_{\mathrm{DY}}(F,\theta) \to C^{\bullet}_{\mathrm{DY}}(F,\theta)$ is an isomorphism in cohomology.

\smallskip

\noindent \textbf{Notation.} When a monoidal structure on $F$ is fixed we denote it by $F^{(2)}$. Then we write $H^{\bullet}_{\mathrm{DY}}(F)$ instead of $H^{\bullet}_{\mathrm{DY}}(F,F^{(2)})$.

\smallskip

We finish with some explicit formulas for the normalization projector $\mathcal{N} : C^{\bullet}_{\mathrm{DY}}(F) \to N\!C^{\bullet}_{\mathrm{DY}}(F)$ defined in general in Appendix \ref{appendixNormalization}, which is a quasi-isomorphism. In degree $2$ it is given by
\[ \mathcal{N}^2(f)_{X,Y} = f_{X,Y} - \mathrm{id}_{F(X)} \otimes f_{\boldsymbol{1},Y} - f_{X \otimes Y,\boldsymbol{1}} + \mathrm{id}_{F(X)} \otimes f_{Y,\boldsymbol{1}} \]
thanks to \eqref{normalizationProjLowDegrees}. Moreover if $f$ is a cocycle then $\mathcal{N}^2(f) = f - \delta^1(f_{\boldsymbol{1},-})$ by \eqref{normalizationLowDegrees}. For $f \in C^3_{\mathrm{DY}}(\mathcal{C})$ we have
\begin{align*}
\mathcal{N}^3(f)_{X,Y,Z} = f_{X,Y,Z} &- \mathrm{id}_X \otimes f_{\boldsymbol{1},Y,Z} - f_{X \otimes Y,\boldsymbol{1},Z} - f_{X, Y \otimes Z,\boldsymbol{1}} + \mathrm{id}_X \otimes f_{Y,\boldsymbol{1},Z}\\
&+ f_{X \otimes Y,Z,\boldsymbol{1}} + \mathrm{id}_X \otimes f_{\boldsymbol{1},Y \otimes Z,\boldsymbol{1}} - \mathrm{id}_X \otimes f_{Y,Z,\boldsymbol{1}}
\end{align*}
thanks to \eqref{normalizationProjLowDegrees}. Moreover if $f$ is a cocycle then $\mathcal{N}^3(f) = f + \delta^2(-f_{\boldsymbol{1},-,-} + f_{-,\boldsymbol{1},-} - \mathrm{id} \otimes f_{\boldsymbol{1},\boldsymbol{1},-})$ by \eqref{normalizationLowDegrees}.

\begin{remark}
All this subsection remains valid if we consider {\em lax} monoidal structures (\textit{i.e.} not assumed to be isomorphisms). However all our results in the sequel need monoidal structures which are isomorphisms, and ``monoidal structure'' is always understood in this sense.
\end{remark}

\subsection{Relative Ext groups and DY cohomology with coefficients}\label{relExtGroupsDYCohomology}
\indent Let $\mathcal{C},\mathcal{D}$ be monoidal categories and $F : \mathcal{C} \to \mathcal{D}$ a monoidal functor with a given monoidal structure $F^{(2)}_{X_1,X_2} : F(X_1) \otimes F(X_2) \to F(X_1 \otimes X_2)$. We denote by $\mathcal{Z}(F)$ the {\em centralizer of the functor $F$} \cite[\S 3.1]{shimizuRibbon}, \cite[Def.\,3.1]{GHS}; its objects are pairs $\mathsf{V} = (V,\rho)$ where $\rho : V \otimes F(-) \overset{\sim}{\Rightarrow} F(-) \otimes V$ satisfies
\[ \forall \, X,Y \in \mathcal{C},\quad \rho_{X \otimes Y} \circ (\mathrm{id}_V \otimes F^{(2)}_{X,Y}) = (F^{(2)}_{X,Y} \otimes \mathrm{id}_V) \circ (\mathrm{id}_{F(X)} \otimes \rho_Y) \circ (\rho_X \otimes \mathrm{id}_{F(Y)}). \]
It is a monoidal category with $(V,\rho^V) \otimes (W,\rho^W) = \bigl(V \otimes W, (\rho^V \otimes \mathrm{id}_W) \circ (\mathrm{id}_V \otimes \rho^W) \bigr)$. For $F = \mathrm{Id}_{\mathcal{C}}$ we recover the {\em Drinfeld center} $\mathcal{Z}(\mathcal{C})$.

\smallskip

\indent Assume that $\mathcal{D}$ is $\Bbbk$-linear, for a field $\Bbbk$, with $\Bbbk$-bilinear monoidal product. There is a generalization of DY cohomology {\em with coefficients}, which are objects from $\mathcal{Z}(F)$ \cite[Def.\,3.3]{GHS}. For coefficients $\mathsf{V}, \mathsf{W} \in \mathcal{Z}(F)$, the cochain space of degree $n$, denoted by $C^n_{\mathrm{DY}}(F;\mathsf{V}, \mathsf{W})$, consists of natural transformations of the form
\begin{equation}\label{componentsDYCochains}
f_{X_1,\ldots,X_n} : V \otimes F(X_1) \otimes \ldots \otimes F(X_n) \to F(X_1 \otimes \ldots \otimes X_n) \otimes W.
\end{equation}
Then $C^{\bullet}_{\mathrm{DY}}(F;\mathsf{V}, \mathsf{W})$ is a simplicial vector space. The coface maps with coefficients are given in \cite[Def.\,3.3]{GHS} when $F$ is strict. For non-strict $F$ the coface maps without coefficients (or with $\mathsf{V}=\mathsf{W}=\boldsymbol{1}$) are given below \eqref{cofaceDYNonStrict}. It is straightforward to combine these two definitions for the general case; we will however not need the explicit formulas in the sequel and thus omit them. We denote by $H^{\bullet}_{\mathrm{DY}}(F;\mathsf{V}, \mathsf{W})$ the associated cohomology. Note that {\em trivial coefficients} recover the cohomology from \S\ref{subsectionDefMonStruct}: $H^{\bullet}_{\mathrm{DY}}(F;\boldsymbol{1}, \boldsymbol{1}) = H^{\bullet}_{\mathrm{DY}}(F)$.

\smallskip

\indent Recall that a $\Bbbk$-linear category is called {\em finite} if it is equivalent to the category of finite-dimensional modules over a finite-dimensional $\Bbbk$-algebra \cite[\S 1.8]{EGNO}; such a category is in particular abelian. A monoidal category $\mathcal{C}$ is called {\em rigid} if every object has left and right dual objects $X^{\vee}$ and $^{\vee}\!X$, meaning that there exist morphisms
\[ \mathrm{ev}_X : X^{\vee} \otimes X \to \boldsymbol{1}, \quad \mathrm{coev}_X : \boldsymbol{1} \to X \otimes X^{\vee}, \quad \widetilde{\mathrm{ev}}_X : X \otimes {^{\vee}\!X} \to \boldsymbol{1}, \quad \widetilde{\mathrm{coev}}_X : \boldsymbol{1} \to {^{\vee}\!X} \otimes X \]
satisfying the usual ``zig-zag'' axioms \cite[\S 2.10]{EGNO}. If $\mathcal{C}$ is rigid and $F : \mathcal{C} \to \mathcal{D}$ is a monoidal functor, define for all $X \in \mathcal{C}$
\begin{align}
\begin{split}\label{dualityForImagesOfF}
&\mathrm{ev}_{F(X)} : F(X^{\vee}) \otimes F(X) \xrightarrow{F^{(2)}_{X,X^{\vee}}} F(X^{\vee} \otimes X) \xrightarrow{F(\mathrm{ev}_X)} \boldsymbol{1},\\
&\mathrm{coev}_{F(X)} : \boldsymbol{1} \xrightarrow{F(\mathrm{coev}_X)} F(X \otimes X^{\vee}) \xrightarrow{(F^{(2)}_{X,X^{\vee}})^{-1}} F(X) \otimes F(X^{\vee}).
\end{split}
\end{align}
This turns $F(X^{\vee})$ into the left dual of $F(X)$. One can similarly define $\widetilde{\mathrm{ev}}_{F(X)}$ and $\widetilde{\mathrm{coev}}_{F(X)}$ which turns $F({^{\vee}\!X})$ into the right dual of $F(X)$. Note that there are compatibilities like
\[ \mathrm{ev}_{F(Y)} \circ (\mathrm{id}_{F(Y^{\vee})} \otimes \mathrm{ev}_{F(X)} \otimes \mathrm{id}_{F(Y)}) = \mathrm{ev}_{F(X \otimes Y)} \circ (F^{(2)}_{Y^{\vee}, X^{\vee}} \otimes F^{(2)}_{X,Y}). \]

\indent From now on we make the following assumptions on $\mathcal{C}$, $\mathcal{D}$ and $F$:
\begin{equation}\label{assumptionsFCD}
\left\{ \begin{array}{l}
\mathcal{C}, \mathcal{D} \text{ are } \Bbbk\text{-linear abelian categories,}\\
\mathcal{C}, \mathcal{D} \text{ have } \Bbbk\text{-bilinear monoidal products (strict for simplicity),}\\
\mathcal{C}\text{ is finite and rigid},\\
\otimes_{\mathcal{D}} : \mathcal{D} \times \mathcal{D} \to \mathcal{D}\text{ is right exact in each variable,}\\
F : \mathcal{C} \to \mathcal{D} \text{ is a } \Bbbk\text{-linear monoidal right-exact functor.}
\end{array} \right.
\end{equation}
We will show that these are sufficient conditions to construct a left adjoint $\mathcal{F}_F$ of the forgetful functor $\mathcal{U}_F : \mathcal{Z}(F) \to \mathcal{D}$, thus yielding a resolvent pair $\mathcal{F}_F \dashv \mathcal{U}_F$ between $\mathcal{Z}(F)$ and $\mathcal{D}$. In the special case where $F$ is a strict exact functor between finite tensor categories $\mathcal{C},\mathcal{D}$ we related in \cite[\S 4.2]{FGS} the DY cohomology of $F$ with the corresponding relative Ext groups $\Ext_{\mathcal{Z}(F),\mathcal{D}}^{\bullet}$, building upon the main result of \cite{GHS} on DY cohomology and comonad cohomology. This still holds under the more general assumptions \eqref{assumptionsFCD}:

\begin{theorem}\label{thmDYCohomRelExt}
Under the assumptions \eqref{assumptionsFCD} we have for all $\mathsf{V}, \mathsf{W} \in \mathcal{Z}(F)$ an isomorphism of cochain complexes
\begin{equation}\label{equivalenceDYCochainAndBarRes}
C^{\bullet}_{\mathrm{DY}}(F;\mathsf{V}, \mathsf{W}) \cong \Hom_{\mathcal{Z}(F)}\left( \mathrm{Bar}^{\bullet}_{\mathcal{Z}(F),\mathcal{D}}(\mathsf{V}), \mathsf{W} \right)
\end{equation}
where $\mathrm{Bar}^{\bullet}$ is defined in \eqref{generalBarResolution}--\eqref{generalBarDiff}. Therefore there is an isomorphism of $\Bbbk$-vector spaces
\begin{equation}\label{isoDYRelExt}
H^{\bullet}_{\mathrm{DY}}(F; \mathsf{V}, \mathsf{W}) \cong \Ext_{\mathcal{Z}(F), \mathcal{D}}^{\bullet}(\mathsf{V}, \mathsf{W}).
\end{equation}
\end{theorem}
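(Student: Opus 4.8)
The plan is to adapt the argument of \cite[\S 4.2]{FGS}, replacing the exactness hypotheses used there by the weaker assumptions \eqref{assumptionsFCD} with the help of the coend existence results of Appendix \ref{subsectionNatAndCoendsFiniteCat}. The proof splits into three steps: (i) build the resolvent pair $\mathcal{F}_F \dashv \mathcal{U}_F$; (ii) identify $\Hom_{\mathcal{Z}(F)}$ applied to its bar resolution with the DY cochain complex; (iii) pass to cohomology.

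For step (i), I would define the left adjoint $\mathcal{F}_F : \mathcal{D} \to \mathcal{Z}(F)$ of the forgetful functor on an object $D \in \mathcal{D}$ by the coend
\[ \mathcal{F}_F(D) = \int^{X \in \mathcal{C}} F(X) \otimes D \otimes F(X)^{\vee} , \]
equipped with the half-braiding relative to $F$ coming from the dinatural structure of the coend together with $F^{(2)}$ (this is the relative analogue of the induction functor $\mathcal{D} \to \mathcal{Z}(\mathcal{C})$). Since $\mathcal{C}$ is finite and rigid while $\otimes_{\mathcal{D}}$ and $F$ are right exact, this coend exists by Appendix \ref{subsectionNatAndCoendsFiniteCat}, and the universal property of the coend yields a natural isomorphism $\Hom_{\mathcal{Z}(F)}\bigl(\mathcal{F}_F(D),\mathsf{W}\bigr) \cong \Hom_{\mathcal{D}}\bigl(D,\mathcal{U}_F(\mathsf{W})\bigr)$. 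Since $\mathcal{Z}(F)$ is abelian with kernels and cokernels computed by the faithful exact functor $\mathcal{U}_F$, the pair $\mathcal{F}_F \dashv \mathcal{U}_F$ is a resolvent pair, so $\Ext^{\bullet}_{\mathcal{Z}(F),\mathcal{D}}(\mathsf{V},\mathsf{W})$ is defined and computed by the bar resolution \eqref{generalBarResolution}.

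For step (ii), write $G = \mathcal{F}_F\mathcal{U}_F$ for the associated comonad on $\mathcal{Z}(F)$, so that the degree-$n$ piece of $\Hom_{\mathcal{Z}(F)}\bigl(\mathrm{Bar}^{\bullet}_{\mathcal{Z}(F),\mathcal{D}}(\mathsf{V}),\mathsf{W}\bigr)$ is $\Hom_{\mathcal{Z}(F)}\bigl(G^{n+1}(\mathsf{V}),\mathsf{W}\bigr)$. Using the adjunction once, $\Hom_{\mathcal{Z}(F)}\bigl(G^{n+1}(\mathsf{V}),\mathsf{W}\bigr) \cong \Hom_{\mathcal{D}}\bigl((\mathcal{U}_F\mathcal{F}_F)^n\mathcal{U}_F(\mathsf{V}),\,\mathcal{U}_F(\mathsf{W})\bigr)$; pulling the coends out of the right-exact tensor product, $(\mathcal{U}_F\mathcal{F}_F)^n(V)$ is the iterated coend $\int^{X_1,\ldots,X_n} F(X_1)\otimes\cdots\otimes F(X_n)\otimes V\otimes F(X_n)^{\vee}\otimes\cdots\otimes F(X_1)^{\vee}$. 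Moving $\Hom_{\mathcal{D}}(-,W)$ through this coend turns it into an end over $X_1,\ldots,X_n$; the rigidity adjunctions and the monoidal structure $F^{(2)}$ rewrite the integrand as $\Hom_{\mathcal{D}}\bigl(V\otimes F(X_1)\otimes\cdots\otimes F(X_n),\,F(X_1\otimes\cdots\otimes X_n)\otimes W\bigr)$, and by the very definition of naturality this end is the space of natural transformations \eqref{componentsDYCochains}, i.e.\ $C^n_{\mathrm{DY}}(F;\mathsf{V},\mathsf{W})$. It then remains to verify that the bar coface maps $\partial^V_{n,i}=G^{n-i}(\varepsilon_{G^i(V)})$, with $\varepsilon$ the counit of $\mathcal{F}_F\dashv\mathcal{U}_F$, correspond under this identification to the DY coface maps $\partial^n_i$ of \eqref{cofaceDYNonStrict} (and their coefficient version); since $\varepsilon$ is assembled from the duality morphisms and $F^{(2)}$, unwinding $G^{n-i}(\varepsilon_{G^i(V)})$ reproduces exactly the cases $i=0$, $1\le i\le n$, $i=n+1$, with all bracketings agreeing by the coherence theorem \cite{epstein}. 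This yields the isomorphism of cochain complexes \eqref{equivalenceDYCochainAndBarRes}. For step (iii), the bar complex is a relatively projective resolution of $\mathsf{V}$ for the resolvent pair $\mathcal{F}_F\dashv\mathcal{U}_F$, so applying $\Hom_{\mathcal{Z}(F)}(-,\mathsf{W})$ and taking cohomology computes $\Ext^{\bullet}_{\mathcal{Z}(F),\mathcal{D}}(\mathsf{V},\mathsf{W})$ by definition; combined with \eqref{equivalenceDYCochainAndBarRes} this gives \eqref{isoDYRelExt}.

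The main obstacle is step (i) under the weakened hypotheses: in \cite{FGS} the existence of $\mathcal{F}_F$ and the adjunction were obtained assuming $F$ exact between finite tensor categories, whereas here $\mathcal{D}$ need not be finite and $F$ is only right exact, so one must invoke the refined coend existence criterion of Appendix \ref{subsectionNatAndCoendsFiniteCat} (coends indexed by the finite rigid category $\mathcal{C}$ with a right-exact integrand valued in $\mathcal{D}$). Beyond that, the argument is essentially the bookkeeping already carried out in \cite{FGS}; the only mild subtlety is keeping track of the $F^{(2)}$-coherences in the identification of the iterated coends and in matching the differentials, which is harmless by \cite{epstein}.
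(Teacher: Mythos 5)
Your proposal is correct and follows essentially the same route as the paper: establish the coend existence under the relaxed hypotheses via Appendix \ref{subsectionNatAndCoendsFiniteCat}, build the resolvent pair $\mathcal{F}_F \dashv \mathcal{U}_F$ with $\mathcal{F}_F$ given by a coend (the paper uses $\int^{X} F(X^{\vee}) \otimes W \otimes F(X)$ and identifies $\mathcal{Z}(F)$ with $Z_F\text{-}\mathrm{mod}$, whereas you write the induction in the equivalent form $\int^X F(X) \otimes D \otimes F(X)^{\vee}$ -- a harmless reindexing), then match $\Hom_{\mathcal{Z}(F)}(G_F^{n+1}(\mathsf{V}),\mathsf{W})$ with $C^n_{\mathrm{DY}}(F;\mathsf{V},\mathsf{W})$ by rigidity and the adjunction, checking that the bar cofaces become the DY cofaces once $F^{(2)}$ is inserted. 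The paper realizes the same identification as an explicit dinaturality diagram (the $\Gamma^n$ of equation \eqref{isoDYComplexBarComplex}), while you phrase it as a chain of end/coend manipulations; the content is the same.
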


\noindent The proof of this theorem in \cite[\S 4.2]{FGS} uses the explicit construction of the left adjoint $\mathcal{F}_F$, based on the description of $\mathcal{Z}(F)$ as a category of modules over some monad $Z_F$ (see \S\ref{sectionLiftingAdjunctions} for a short reminder on monads). This last fact is well-known: \cite[Th.\,5.12]{BV} can be used for $F = \mathrm{Id}_{\mathcal{C}}$, \cite[\S 3.3]{shimizuRibbon} considers a related situation and \cite[\S 3.3]{GHS} gives a sketch of proof for general strict $F$. However here we work with weaker assumptions on $\mathcal{C}$, $\mathcal{D}$ and $F$. In particular, $F$ is not assumed to be strict as a monoidal functor, because of the application in \S\ref{sectionTangentSpaceBraiding} for the tensor product functor with the monoidal structure given by a braiding on $\mathcal{C}$. Hence some (rather straightforward) adaptations are required in order to prove Theorem \ref{thmDYCohomRelExt} and we discuss some details for convenience.

\smallskip

\indent Note first that the only reason for the stronger assumptions regarding rigidity, finiteness and exactness in \cite{GHS,FGS} was to ensure for all $W \in \mathcal{D}$ the existence of the following coend by invoking \cite[Cor.\,5.1.8]{KL}: 
\begin{equation}\label{defMonadZF}
Z_F(W) = \int^{X \in \mathcal{C}} F(X^{\vee}) \otimes W \otimes F(X).
\end{equation}
But we can use the more general Corollary \ref{coendsAsCokerAndConsequence}, which proves the existence of $Z_F(W)$ under the assumptions \eqref{assumptionsFCD}. Denote by
\[ i^F_X(W) : F(X^{\vee}) \otimes W \otimes F(X) \to Z_F(W) \]
its universal dinatural transformation. Being a coend with parameters, the assignment $W \mapsto Z_F(W)$ is a functor $Z_F : \mathcal{D} \to \mathcal{D}$ \cite[\S IX.7]{MLCat}. Explicitly, if $f \in \Hom_{\mathcal{D}}(W,W')$ then $Z_F(f)$ is defined by the commutative diagram
\begin{equation}\label{ZFonMorphisms}
\xymatrix@C=9em{
F(X^{\vee}) \otimes W \otimes F(X) \ar[r]^{\mathrm{id}_{F(X^{\vee})} \otimes f \otimes \mathrm{id}_{F(X)}} \ar[d]_{i^F_X(W)} & F(X^{\vee}) \otimes W' \otimes F(X)\ar[d]^{i^F_X(W')}\\
Z_F(W) \ar[r]_{\exists!\, Z_F(f)} &  Z_F(W')
}
\end{equation}
for all $X \in \mathcal{C}$. The functor $Z_F$ has the structure of a monad whose multiplication $\mu^F : Z^2_F \Rightarrow Z_F$ is described as follows. By the theorem for iterated coends \cite[\S IX.8]{MLCat} we have $Z^2_F(W) = \int^{X,Y \in \mathcal{C}} F(Y^{\vee}) \otimes F(X^{\vee}) \otimes W \otimes F(X) \otimes F(Y)$ and let
\begin{equation}\label{defUniversalDinatDoubleCoend}
i^{F,\,(2)}_{X,Y}(W) = i^F_Y\bigl( Z_F(W) \bigr) \circ \bigl(\mathrm{id}_{F(Y^{\vee})} \otimes i^F_X(W) \otimes \mathrm{id}_{F(Y)}\bigr)
\end{equation}
be the associated universal dinatural transformation. Then by universality there is a commutative diagram
\begin{equation}\label{defMuMonadZF}
\xymatrix@C=8em{
F(Y^{\vee}) \otimes F(X^{\vee}) \otimes W \otimes F(X) \otimes F(Y) \ar[r]^-{F^{(2)}_{Y^{\vee},X^{\vee}} \,\otimes\, \mathrm{id}_W \,\otimes\, F^{(2)}_{X,Y}} \ar[d]_{i^{F,(2)}_{X,Y}(W)}& F\bigl((X \otimes Y)^{\vee}\bigr) \otimes W \otimes F(X \otimes Y) \ar[d]^{i^F_{X \otimes Y}(W)}\\
Z^2_F(W) \ar[r]_{\exists!\, \mu^F_W} & Z_F(W)
} \end{equation}
for all $X, Y \in \mathcal{C}$ and $W \in \mathcal{D}$. This defines a natural transformation $\mu^F : Z_F^2 \Rightarrow Z_F$. Also define $\eta^F : \mathrm{Id}_{\mathcal{D}} \Rightarrow Z_F$ by 
$\eta^F_W = i_{\boldsymbol{1}}^F(W)$ for all $W \in \mathcal{D}$. It follows from the axioms of $F^{(2)}$  in \eqref{conditionsMonStruct} that $\mu^F$ is associative and has the unit $\eta^F$. As a result we have the monad $\bigl( Z_F, \mu^F, \eta^F \bigr)$, its category of modules $Z_F\text{-}\mathrm{mod}$ and the adjunction (see \eqref{standardAdjunctionMonad})
\begin{equation}\label{adjunctionZfModD}
\xymatrix@R=.7em{
\bigl( Z_F(W), \mu^F_W \bigr) &Z_F\text{-}\mathrm{mod}\ar@/^.7em/[dd]^{\mathcal{U}_{Z_F}} & (V,r)\ar@{|->}[dd]\\
&\dashv&\\
W \ar@{|->}[uu] &\ar@/^.7em/[uu]^{\mathcal{F}_{Z_F}}\mathcal{D}&V
} \end{equation}
For $F = \mathrm{Id}_{\mathcal{C}}$ we get the standard central monad and write $Z_{\mathcal{C}}$, $i^{\mathcal{C}}_X(V)$, $\mu^{\mathcal{C}}$ and $\eta^{\mathcal{C}}$.

\smallskip

\indent Furthermore, there is an isomorphism of categories $Z_F\text{-}\mathrm{mod} \cong \mathcal{Z}(F)$ constructed as follows. If $(W,r)$ is a $Z_F$-module, we can define for all $X \in \mathcal{C}$
\begin{align}
\begin{split}\label{halfBraidingFromModuleStructure}
\rho(r)_X : W \otimes F(X) &\xrightarrow{\mathrm{coev}_{F(X)} \,\otimes\, \mathrm{id}}  F(X) \otimes F(X^{\vee}) \otimes W \otimes F(X)\\
&\xrightarrow{\mathrm{id} \,\otimes\, i^F_X(W)} F(X) \otimes Z_F(W) \xrightarrow{\mathrm{id} \,\otimes\, r} F(X) \otimes W
\end{split}
\end{align}
with $\mathrm{coev}_{F(X)}$ from \eqref{dualityForImagesOfF}. Long but straightforward computations prove that $\rho(r) : W \otimes F(-) \Rightarrow F(-) \otimes W$ is a half-braiding relative to $F$ and thus $\bigl(W,\rho(r)\bigr) \in \mathcal{Z}(F)$. Conversely if $(W,\rho) \in \mathcal{Z}(F)$ define $r(\rho) : Z_F(W) \to W$ by the commutative diagram
\begin{equation}\label{moduleStructureViaHalfBraiding}
\xymatrix@C=4.5em{
F(X^{\vee}) \otimes W \otimes F(X) \ar[r]^-{\mathrm{id}_{F(X^{\vee})} \otimes \rho_X} \ar[d]_{i_X^F(W)} & F(X^{\vee}) \otimes F(X) \otimes W \ar[d]^{\mathrm{ev}_{F(X)} \otimes \mathrm{id}_W}\\
Z_F(W) \ar[r]_{\exists! \, r(\rho)} & W
} \end{equation}
for all $X \in \mathcal{C}$ and with $\mathrm{ev}_{F(X)}$ from \eqref{dualityForImagesOfF}; then $\bigl( W, r(\rho) \bigr) \in Z_F\text{-}\mathrm{mod}$. Through this isomorphism, the adjunction \eqref{adjunctionZfModD} becomes
\begin{equation}\label{adjunctionZF}
\xymatrix@R=.7em{
\bigl( Z_F(W), \rho^{Z_F(W)} \bigr) &\mathcal{Z}(F)\ar@/^.7em/[dd]^{\mathcal{U}_F} & (V,\rho^V)\ar@{|->}[dd]\\
&\dashv&\\
W \ar@{|->}[uu] &\ar@/^.7em/[uu]^{\mathcal{F}_F}\mathcal{D}&V
}
\end{equation}
with the half braiding $\rho^{Z_F(W)} = \rho(\mu^F_W) : Z_F(W) \otimes F(-) \overset{\sim}{\implies} F(-) \otimes Z_F(W)$ defined by \eqref{halfBraidingFromModuleStructure}. We note that the construction in \eqref{moduleStructureViaHalfBraiding} can be interpreted {\it a posteriori} as the comparison functor $K : \mathcal{Z}(F) \to Z_F\text{-}\mathrm{mod}$ (defined in general in \eqref{comparisonFunctor}) associated to the adjunction \eqref{adjunctionZF}.

\smallskip

\indent Since the functor $Z_F$ is additive the adjunction \eqref{adjunctionZfModD} is a resolvent pair thanks to item 2 in Proposition \ref{propResolventPairsAreMonadic}. Due to the isomorphism  $Z_F\text{-}\mathrm{mod}\cong \mathcal{Z}(F)$ explained above, the same is true for the adjunction \eqref{adjunctionZF} . Hence, under the assumptions \eqref{assumptionsFCD}, we have the relative Ext groups $\mathrm{Ext}^{\bullet}_{\mathcal{Z}(F),\mathcal{D}}$. Of course 
\begin{equation}\label{isoRelExtCentralizerOrModulesCentralMonad}
\forall \, \mathsf{V}, \mathsf{W} \in \mathcal{Z}(F), \quad \mathrm{Ext}^{\bullet}_{\mathcal{Z}(F),\mathcal{D}}(\mathsf{V},\mathsf{W}) \cong \mathrm{Ext}^{\bullet}_{Z_F\text{-}\mathrm{mod},\mathcal{D}}\bigl( K(\mathsf{V}), K(\mathsf{W}) \bigr)
\end{equation}
where $K$ is the isomorphism $\mathcal{Z}(F) \overset{\sim}{\to} Z_F\text{-}\mathrm{mod}$.
\begin{proof}[Proof of Theorem \ref{thmDYCohomRelExt}]
Let $G_F= \mathcal{F}_F \mathcal{U}_F$ be the comonad on $\mathcal{Z}(F)$ and recall from \eqref{generalBarResolution} that $\mathrm{Bar}^n_{\mathcal{Z}(F),\mathcal{D}}(\mathsf{V}) = G_F^{n+1}(\mathsf{V})$. We construct an isomorphism 
\begin{equation*}
\Gamma^n : C^n_{\mathrm{DY}}(F;\mathsf{V}, \mathsf{W}) \cong \Hom_{\mathcal{Z}(F)}\left( G_F^{n+1}(\mathsf{V}), \mathsf{W} \right)
\end{equation*}
by inserting the monoidal structure of $F$ in \cite[eq.\,(48)]{FGS}, where $F$ was assumed to be strict. Write $\mathsf{V} = (V,\rho^V)$ and $\mathsf{W} = (W,\rho^W)$. Using the Fubini theorem for coends \cite[\S IX.8]{MLCat}, note that $G_F^n(\mathsf{V})$ is
\begin{equation}\label{ZFIterated}
Z_F^n(V) = \int^{X_1, \ldots,X_n \in \mathcal{C}} F(X_n^{\vee}) \otimes \ldots \otimes F(X_1^{\vee}) \otimes V \otimes F(X_1) \otimes \ldots \otimes F(X_n)
\end{equation}
as an object in $\mathcal{D}$. Let $i_{X_1,\ldots,X_n}^{F,(n)}(V) : F(X_n^{\vee}) \otimes \ldots \otimes F(X_1^{\vee}) \otimes V \otimes F(X_1) \otimes \ldots \otimes F(X_n) \to Z_F^n(V)$ be the universal dinatural transformation defined inductively by $i^{F,(1)}_X(V) = i^F_X(V)$ and
\begin{align}
\begin{split}\label{dinatTransfoZFIterated}
i^{F,(n+1)}_{X_1,\ldots,X_{n+1}}(V) &= i_{X_{n+1}}^F\bigl( Z_F^n(V) \bigr) \circ \bigl( \mathrm{id}_{F(X_{n+1}^{\vee})} \otimes i^{F,(n)}_{X_1,\ldots,X_n}(V) \otimes \mathrm{id}_{F(X_{n+1})} \bigr)\\
&= i^{F,(n)}_{X_2,\ldots,X_{n+1}}\bigl( Z_F(V) \bigr) \circ \bigl( \mathrm{id}_{F(X_{n+1}^{\vee}) \otimes \ldots \otimes F(X_2^{\vee})} \otimes i^F_{X_1}(V) \otimes \mathrm{id}_{F(X_2) \otimes \ldots \otimes F(X_{n+1})} \bigr).
\end{split}
\end{align}
Take $f \in C^n_{\mathrm{DY}}(F;\mathsf{V}, \mathsf{W})$, which has components  \eqref{componentsDYCochains}. Using the universality of $Z^n_F(V)$, we define $\Gamma^n(f)$ by declaring that (meaning is explained after the picture)
\begin{equation}\label{isoDYComplexBarComplex}
\begingroup%
  \makeatletter%
  \providecommand\color[2][]{%
    \errmessage{(Inkscape) Color is used for the text in Inkscape, but the package 'color.sty' is not loaded}%
    \renewcommand\color[2][]{}%
  }%
  \providecommand\transparent[1]{%
    \errmessage{(Inkscape) Transparency is used (non-zero) for the text in Inkscape, but the package 'transparent.sty' is not loaded}%
    \renewcommand\transparent[1]{}%
  }%
  \providecommand\rotatebox[2]{#2}%
  \newcommand*\fsize{\dimexpr\f@size pt\relax}%
  \newcommand*\lineheight[1]{\fontsize{\fsize}{#1\fsize}\selectfont}%
  \ifx\svgwidth\undefined%
    \setlength{\unitlength}{418.06758392bp}%
    \ifx\svgscale\undefined%
      \relax%
    \else%
      \setlength{\unitlength}{\unitlength * \real{\svgscale}}%
    \fi%
  \else%
    \setlength{\unitlength}{\svgwidth}%
  \fi%
  \global\let\svgwidth\undefined%
  \global\let\svgscale\undefined%
  \makeatother%
  \begin{picture}(1,0.34251224)%
    \lineheight{1}%
    \setlength\tabcolsep{0pt}%
    \put(0,0){\includegraphics[width=\unitlength,page=1]{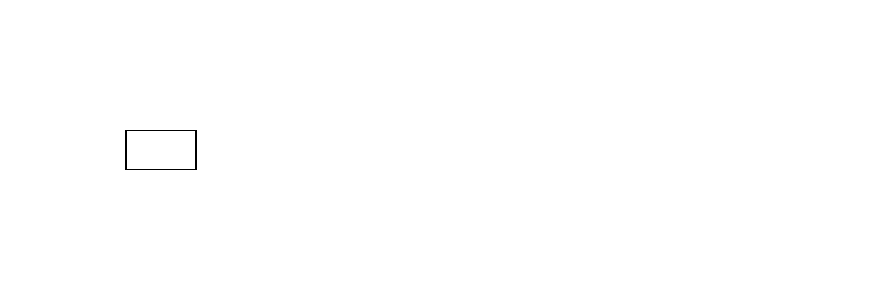}}%
    \put(0.1511211,0.16166111){\color[rgb]{0,0,0}\makebox(0,0)[lt]{\lineheight{1.25}\smash{\begin{tabular}[t]{l}$\Gamma^n(f)$\end{tabular}}}}%
    \put(0,0){\includegraphics[width=\unitlength,page=2]{isoGamma.pdf}}%
    \put(0.1914553,0.12786913){\color[rgb]{0,0,0}\makebox(0,0)[lt]{\lineheight{1.25}\smash{\begin{tabular}[t]{l}$_{Z_F^{n+1}(V)}$\end{tabular}}}}%
    \put(0,0){\includegraphics[width=\unitlength,page=3]{isoGamma.pdf}}%
    \put(0.10541351,0.0680123){\color[rgb]{0,0,0}\makebox(0,0)[lt]{\lineheight{1.25}\smash{\begin{tabular}[t]{l}$i^{(n+1)}_{X_1,\ldots,X_{n+1}}(V)$\end{tabular}}}}%
    \put(0,0){\includegraphics[width=\unitlength,page=4]{isoGamma.pdf}}%
    \put(0.17720549,0.2327843){\color[rgb]{0,0,0}\makebox(0,0)[lt]{\lineheight{1.25}\smash{\begin{tabular}[t]{l}$_{W}$\end{tabular}}}}%
    \put(-0.00041109,0.00354536){\color[rgb]{0,0,0}\makebox(0,0)[lt]{\lineheight{1.25}\smash{\begin{tabular}[t]{l}$_{F(X_{n+1}^{\vee})}$\end{tabular}}}}%
    \put(0.1010823,0.0035126){\color[rgb]{0,0,0}\makebox(0,0)[lt]{\lineheight{1.25}\smash{\begin{tabular}[t]{l}$_{F(X_1^{\vee})}$\end{tabular}}}}%
    \put(0.18032442,0.00373911){\color[rgb]{0,0,0}\makebox(0,0)[lt]{\lineheight{1.25}\smash{\begin{tabular}[t]{l}$_{V}$\end{tabular}}}}%
    \put(0.21085182,0.0035126){\color[rgb]{0,0,0}\makebox(0,0)[lt]{\lineheight{1.25}\smash{\begin{tabular}[t]{l}$_{F(X_1)}$\end{tabular}}}}%
    \put(0.28339222,0.00351262){\color[rgb]{0,0,0}\makebox(0,0)[lt]{\lineheight{1.25}\smash{\begin{tabular}[t]{l}$_{F(X_{n+1})}$\end{tabular}}}}%
    \put(0.25044484,0.02933619){\color[rgb]{0,0,0}\makebox(0,0)[lt]{\lineheight{1.25}\smash{\begin{tabular}[t]{l}$\ldots$\end{tabular}}}}%
    \put(0.08536611,0.0285904){\color[rgb]{0,0,0}\makebox(0,0)[lt]{\lineheight{1.25}\smash{\begin{tabular}[t]{l}$\ldots$\end{tabular}}}}%
    \put(0,0){\includegraphics[width=\unitlength,page=5]{isoGamma.pdf}}%
    \put(0.72867857,0.06775569){\color[rgb]{0,0,0}\makebox(0,0)[lt]{\lineheight{1.25}\smash{\begin{tabular}[t]{l}$f_{X_1,\ldots,X_n}$\end{tabular}}}}%
    \put(0,0){\includegraphics[width=\unitlength,page=6]{isoGamma.pdf}}%
    \put(0.6923271,0.00754433){\color[rgb]{0,0,0}\makebox(0,0)[lt]{\lineheight{1.25}\smash{\begin{tabular}[t]{l}$_{V}$\end{tabular}}}}%
    \put(0.73657912,0.00754433){\color[rgb]{0,0,0}\makebox(0,0)[lt]{\lineheight{1.25}\smash{\begin{tabular}[t]{l}$_{F(X_1)}$\end{tabular}}}}%
    \put(0.85609396,0.00777087){\color[rgb]{0,0,0}\makebox(0,0)[lt]{\lineheight{1.25}\smash{\begin{tabular}[t]{l}$_{F(X_n)}$\end{tabular}}}}%
    \put(0.80005114,0.03310052){\color[rgb]{0,0,0}\makebox(0,0)[lt]{\lineheight{1.25}\smash{\begin{tabular}[t]{l}$\ldots$\end{tabular}}}}%
    \put(0,0){\includegraphics[width=\unitlength,page=7]{isoGamma.pdf}}%
    \put(0.88099063,0.15223564){\color[rgb]{0,0,0}\makebox(0,0)[lt]{\lineheight{1.25}\smash{\begin{tabular}[t]{l}$\rho^V_{X_{n+1}}$\end{tabular}}}}%
    \put(0,0){\includegraphics[width=\unitlength,page=8]{isoGamma.pdf}}%
    \put(0.89513831,0.1152291){\color[rgb]{0,0,0}\makebox(0,0)[lt]{\lineheight{1.25}\smash{\begin{tabular}[t]{l}$_{V}$\end{tabular}}}}%
    \put(0,0){\includegraphics[width=\unitlength,page=9]{isoGamma.pdf}}%
    \put(0.72992655,0.11409813){\color[rgb]{0,0,0}\makebox(0,0)[lt]{\lineheight{1.25}\smash{\begin{tabular}[t]{l}$_{F(X_1 \otimes \ldots \otimes X_n)}$\end{tabular}}}}%
    \put(0,0){\includegraphics[width=\unitlength,page=10]{isoGamma.pdf}}%
    \put(0.92590509,0.00649821){\color[rgb]{0,0,0}\makebox(0,0)[lt]{\lineheight{1.25}\smash{\begin{tabular}[t]{l}$_{F(X_{n+1})}$\end{tabular}}}}%
    \put(0,0){\includegraphics[width=\unitlength,page=11]{isoGamma.pdf}}%
    \put(0.93154376,0.22830847){\color[rgb]{0,0,0}\makebox(0,0)[lt]{\lineheight{1.25}\smash{\begin{tabular}[t]{l}$_{W}$\end{tabular}}}}%
    \put(0,0){\includegraphics[width=\unitlength,page=12]{isoGamma.pdf}}%
    \put(0.58935802,0.00572653){\color[rgb]{0,0,0}\makebox(0,0)[lt]{\lineheight{1.25}\smash{\begin{tabular}[t]{l}$_{F(X_1^{\vee})}$\end{tabular}}}}%
    \put(0,0){\includegraphics[width=\unitlength,page=13]{isoGamma.pdf}}%
    \put(0.5123545,0.00308469){\color[rgb]{0,0,0}\makebox(0,0)[lt]{\lineheight{1.25}\smash{\begin{tabular}[t]{l}$_{F(X_n^{\vee})}$\end{tabular}}}}%
    \put(0.41774362,0.00254567){\color[rgb]{0,0,0}\makebox(0,0)[lt]{\lineheight{1.25}\smash{\begin{tabular}[t]{l}$_{F(X_{n+1}^{\vee})}$\end{tabular}}}}%
    \put(0,0){\includegraphics[width=\unitlength,page=14]{isoGamma.pdf}}%
    \put(0.38504646,0.07162048){\color[rgb]{0,0,0}\makebox(0,0)[lt]{\lineheight{1.25}\smash{\begin{tabular}[t]{l}$=$\end{tabular}}}}%
    \put(0.55916047,0.0335153){\color[rgb]{0,0,0}\makebox(0,0)[lt]{\lineheight{1.25}\smash{\begin{tabular}[t]{l}$\ldots$\end{tabular}}}}%
    \put(0.74733842,0.19456152){\color[rgb]{0,0,0}\makebox(0,0)[lt]{\lineheight{1.25}\smash{\begin{tabular}[t]{l}$\ldots$\end{tabular}}}}%
    \put(0,0){\includegraphics[width=\unitlength,page=15]{isoGamma.pdf}}%
    \put(0.68286601,0.15146548){\color[rgb]{0,0,0}\makebox(0,0)[lt]{\lineheight{1.25}\smash{\begin{tabular}[t]{l}$(F^{(n)}_{X_1,\ldots,X_n})^{-1}$\end{tabular}}}}%
  \end{picture}%
\endgroup%

\end{equation}
for all $X_1,\ldots,X_{n+1} \in \mathcal{C}$. We use the same graphical conventions as in \cite[\S 3]{FGS}: we read diagrams from bottom to top, the caps correspond to the evaluation morphisms $\mathrm{ev}_{F(X)}$ defined in \eqref{dualityForImagesOfF} and $F^{(n)}$ was defined in \eqref{higherMonStruct}. The inverse $\Gamma^{-1}$ is constructed similarly, by inserting the iterated monoidal structure $F^{(n)}$ in \cite[eq.\,(49)]{FGS}. One can check by tedious but straightforward computations that $\Gamma$ commutes with the coface maps.
\end{proof}

\subsection{Adjunction theorem for DY cohomology}\label{sectionChangeOfFunctor}
Let $\mathcal{C}$, $\mathcal{D}$ and $F : \mathcal{C} \to \mathcal{D}$ be as in \eqref{assumptionsFCD}. Recall from \S\ref{relExtGroupsDYCohomology} the monads $Z_{\mathcal{C}} : \mathcal{C} \to \mathcal{C}$ and $Z_F : \mathcal{D} \to \mathcal{D}$. Here we establish a strong morphism (Def.~\ref{defMorphismMonads}) between these monads and deduce from Prop.~\ref{isoExtForMonadicAdjunctions} and Thm.~\ref{thmDYCohomRelExt} our main result Thm.~\ref{thmChangeOfCoeffDY} called {\it Adjunction Theorem for DY cohomology.}

\smallskip

\indent For $V \in \mathcal{C}$, $W \in \mathcal{D}$ and all $X \in \mathcal{C}$ we continue to denote by $i^{\mathcal{C}}_X(V) : X^{\vee} \otimes V \otimes X \to Z_{\mathcal{C}}(V)$ and $i^F_X(W) : F(X^{\vee}) \otimes W \otimes F(X) \to Z_F(W)$ the universal dinatural transformations of the coends $Z_{\mathcal{C}}(V)$ and $\mathcal{Z}_F(W)$. From the monoidal structure $F^{(2)} : F \otimes F \overset{\sim}{\implies} F(- \otimes -)$ we can define $F^{(3)} : F \otimes F \otimes F \overset{\sim}{\implies} F(- \otimes - \otimes -)$; the exact formula of $F^{(3)}$ is irrelevant due to the coherence theorem for monoidal structures \cite{epstein}. By universality of $i^F\bigl(F(V)\bigr)$ we have a commutative diagram
\begin{equation}\label{diagramDefiningZetaForCentralMonads}
\xymatrix@C=4em{
F(X^{\vee}) \otimes F(V) \otimes F(X) \ar[d]_{i^F_X(F(V))}\ar[r]^-{F^{(3)}_{X^{\vee}, V, X}} & F\bigl( X^{\vee} \otimes V \otimes X \bigr) \ar[d]^{F(i^{\mathcal{C}}_X(V))}\\
Z_F  F(V) = \int^{X \in \mathcal{C}} F(X^{\vee}) \otimes F(V) \otimes F(X) \ar[r]_-{\exists!\,\zeta_V} & F Z_{\mathcal{C}}(V) = F\left( \int^{X \in \mathcal{C}} X^{\vee} \otimes V \otimes X \right)
} \end{equation}
It is easily seen from \eqref{ZFonMorphisms} that the collection of isomorphisms $(\zeta_V)_{V \in \mathcal{C}}$ assembles into a natural isomorphism $\zeta : Z_F \, F \Rightarrow F \, Z_{\mathcal{C}}$.

\begin{lemma}\label{lemmaZFFisoFZC}
$(F,\zeta)$ defined in \eqref{diagramDefiningZetaForCentralMonads} is a strong morphism of monads $Z_{\mathcal{C}} \to Z_F$ (Def. \ref{defMorphismMonads}). Hence by Lemma \ref{liftedFunctor} we have the functor
\begin{equation}\label{liftedFunctorCentralMonads}
\widetilde{F} = \widetilde{F}_{\zeta} : Z_{\mathcal{C}}\text{-}\mathrm{mod} \to Z_F\text{-}\mathrm{mod}, \quad (V,r) \mapsto \bigl( F(V), F(r) \circ \zeta_V \bigr), \quad f \mapsto F(f).
\end{equation}
\end{lemma}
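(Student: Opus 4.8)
The statement unpacks to three things: (i) $\zeta\colon Z_F\,F\Rightarrow F\,Z_{\mathcal C}$ is a well-defined natural transformation, (ii) it satisfies the two axioms \eqref{conditionsZetaLiftF} of a monad morphism $Z_{\mathcal C}\to Z_F$, and (iii) it is an isomorphism. Point (i) and the fact that each $\zeta_V$ is invertible were already recorded just before the lemma: $\zeta_V$ exists by the universal property of the coend $Z_F F(V)$ applied to the family $\bigl(F(i^{\mathcal C}_X(V))\circ F^{(3)}_{X^\vee,V,X}\bigr)_{X\in\mathcal C}$ (dinatural in $X$ because $i^{\mathcal C}$ is dinatural and $F^{(3)}$ is compatible with the duality morphisms of \eqref{dualityForImagesOfF}), and each $\zeta_V$ is an isomorphism because $F$ is right-exact and hence, via Corollary~\ref{coendsAsCokerAndConsequence}, sends the coend $Z_{\mathcal C}(V)$ (a cokernel) again to a coend, so that $F(Z_{\mathcal C}(V))$ with this family is itself a universal dinatural cocone for $Z_FF(V)$. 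Granting this, ``strong'' is immediate, and the functor $\widetilde F_\zeta$ of \eqref{liftedFunctorCentralMonads} is then produced by item~1 of Lemma~\ref{liftedFunctor}. So the only real work is to check the two equations.

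\textbf{Unit axiom.} We must show $\zeta_V\circ\eta^F_{F(V)}=F(\eta^{\mathcal C}_V)$ for all $V\in\mathcal C$. Since $\eta^F_{F(V)}=i^F_{\boldsymbol 1}(F(V))$ and $\eta^{\mathcal C}_V=i^{\mathcal C}_{\boldsymbol 1}(V)$, I would simply specialize the defining square \eqref{diagramDefiningZetaForCentralMonads} to $X=\boldsymbol 1$, which gives $\zeta_V\circ i^F_{\boldsymbol 1}(F(V))=F\bigl(i^{\mathcal C}_{\boldsymbol 1}(V)\bigr)\circ F^{(3)}_{\boldsymbol 1,V,\boldsymbol 1}$, and then use $F^{(3)}_{\boldsymbol 1,V,\boldsymbol 1}=\mathrm{id}$, which holds by the unitality of the monoidal structure \eqref{conditionsMonStruct} together with the coherence theorem \cite{epstein}. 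This settles the axiom at once.

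\textbf{Multiplicativity axiom.} We must prove $\zeta_V\circ\mu^F_{F(V)}=F(\mu^{\mathcal C}_V)\circ\zeta_{Z_{\mathcal C}(V)}\circ Z_F(\zeta_V)$ as morphisms $Z_F^2F(V)\to FZ_{\mathcal C}(V)$. Since $Z_F^2F(V)=\int^{X,Y}F(Y^\vee)\otimes F(X^\vee)\otimes F(V)\otimes F(X)\otimes F(Y)$ by Fubini, it suffices to precompose both sides with the universal dinatural transformation $i^{F,(2)}_{X,Y}(F(V))$ and check equality for each $X,Y$; uniqueness in the universal property then gives the identity. For the left-hand side I would use the defining square \eqref{defMuMonadZF} of $\mu^F$ followed by \eqref{diagramDefiningZetaForCentralMonads}, landing on $F\bigl(i^{\mathcal C}_{X\otimes Y}(V)\bigr)\circ F^{(3)}_{(X\otimes Y)^\vee,V,X\otimes Y}\circ\bigl(F^{(2)}_{Y^\vee,X^\vee}\otimes\mathrm{id}_{F(V)}\otimes F^{(2)}_{X,Y}\bigr)$. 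For the right-hand side I would unfold $Z_F(\zeta_V)$ by \eqref{ZFonMorphisms}, then apply \eqref{diagramDefiningZetaForCentralMonads} at $Z_{\mathcal C}(V)$, then use naturality of $F^{(3)}$ to pull $F(i^{\mathcal C}_X(V))$ outside, recognize $i^{\mathcal C,(2)}_{X,Y}(V)=i^{\mathcal C}_Y(Z_{\mathcal C}(V))\circ(\mathrm{id}\otimes i^{\mathcal C}_X(V)\otimes\mathrm{id})$ via the $\mathrm{Id}_{\mathcal C}$-case of \eqref{defUniversalDinatDoubleCoend}, and finally use the $\mathrm{Id}_{\mathcal C}$-case of \eqref{defMuMonadZF} (where every occurrence of the monoidal constraint is an identity and $(X\otimes Y)^\vee=Y^\vee\otimes X^\vee$) to get $F(\mu^{\mathcal C}_V\circ i^{\mathcal C,(2)}_{X,Y}(V))=F(i^{\mathcal C}_{X\otimes Y}(V))$, landing on $F\bigl(i^{\mathcal C}_{X\otimes Y}(V)\bigr)\circ F^{(5)}_{Y^\vee,X^\vee,V,X,Y}$. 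Both composites of monoidal constraints occurring on the left — namely $F^{(3)}_{(X\otimes Y)^\vee,V,X\otimes Y}\circ(F^{(2)}_{Y^\vee,X^\vee}\otimes\mathrm{id}\otimes F^{(2)}_{X,Y})$ — and on the right agree with $F^{(5)}_{Y^\vee,X^\vee,V,X,Y}$ by the coherence theorem \cite{epstein}, since they have the same source and target and are built only from the monoidal structure. Hence the two sides coincide after precomposition, so they coincide.

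\textbf{Main obstacle.} I expect the delicate part to be precisely this coherence bookkeeping in the multiplicativity check: correctly tracking the nested insertions of $F^{(2)}$, $F^{(3)}$, $F^{(5)}$, the identification $(X\otimes Y)^\vee\cong Y^\vee\otimes X^\vee$ of duals, and the dinaturality of the auxiliary cocones — the latter ultimately resting on the compatibility of $F^{(2)}$ with the evaluation and coevaluation morphisms \eqref{dualityForImagesOfF}. Conceptually nothing new happens once everything is phrased at the level of universal dinatural transformations; it is the strict-versus-nonstrict coherence that must be handled with care, which is exactly where keeping the (coherence-unique, otherwise unspecified) higher constraints $F^{(n)}$ is what makes the argument go through cleanly.
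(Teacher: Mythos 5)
Your proposal is correct and follows essentially the same route as the paper's proof: both reduce the multiplicativity axiom to a chain of equalities precomposed with the universal dinatural transformation $i^{F,(2)}_{X,Y}(F(V))$, using \eqref{defMuMonadZF}, \eqref{diagramDefiningZetaForCentralMonads}, \eqref{ZFonMorphisms}, \eqref{defUniversalDinatDoubleCoend}, naturality of $F^{(3)}$ and the coherence theorem, and both dismiss the unit axiom and the invertibility of $\zeta$ (via right-exactness and Corollary~\ref{coendsAsCokerAndConsequence}) as immediate. The only stylistic difference is that you compute the two sides separately and match them against $F^{(5)}$ via coherence, whereas the paper traces a single chain from left to right, and you supply a bit more detail on the unit axiom, which the paper merely calls ``readily true.''
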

\begin{proof}
Since $F : \mathcal{C} \to \mathcal{D}$ is a right exact functor, Corollary \ref{coendsAsCokerAndConsequence} ensures $\zeta_V$ is an isomorphism for any $V \in \mathcal{C}$. To show the first equality in \eqref{conditionsZetaLiftF} we compute
\begin{align*}
&\zeta_V \circ \mu^F_{F(V)} \circ i^{F,(2)}_{X,Y}(F(V)) \overset{\eqref{defMuMonadZF}}{=} \zeta_V \circ i_{X \otimes Y}^F(F(V)) \circ \bigl( F^{(2)}_{Y^{\vee},X^{\vee}} \otimes \mathrm{id}_{F(V)} \otimes F^{(2)}_{X,Y} \bigr)\\
&\overset{\eqref{diagramDefiningZetaForCentralMonads}}{=} F\bigl(i^{\mathcal{C}}_{X \otimes Y}(V)\bigr) \circ F^{(3)}_{Y^{\vee} \otimes X^{\vee},V, X \otimes Y} \circ \bigl( F^{(2)}_{Y^{\vee},X^{\vee}} \otimes \mathrm{id}_{F(V)} \otimes F^{(2)}_{X,Y} \bigr)\\
&\overset{\eqref{defMuMonadZF}}{=} F(\mu^{\mathcal{C}}_V) \circ F\bigl( i^{\mathcal{C}}_Y\bigl( Z_{\mathcal{C}}(V) \bigr) \bigr) \circ F\bigl( \mathrm{id}_{Y^{\vee}} \otimes i^{\mathcal{C}}_X(V) \otimes \mathrm{id}_Y \bigr) \circ F^{(3)}_{Y^{\vee} \otimes X^{\vee},V, X \otimes Y} \circ \bigl( F^{(2)}_{Y^{\vee},X^{\vee}} \otimes \mathrm{id}_{F(V)} \otimes F^{(2)}_{X,Y} \bigr)\\
&= F(\mu^{\mathcal{C}}_V) \circ F\bigl( i^{\mathcal{C}}_Y\bigl( Z_{\mathcal{C}}(V) \bigr) \bigr) \circ F\bigl( \mathrm{id}_{Y^{\vee}} \otimes i^{\mathcal{C}}_X(V) \otimes \mathrm{id}_Y \bigr) \circ F^{(3)}_{Y^{\vee}, X^{\vee} \otimes V \otimes X, Y}\circ \bigl( \mathrm{id}_{F(Y^{\vee})} \otimes F^{(3)}_{X^{\vee},V,X} \otimes \mathrm{id}_{F(Y)} \bigr)\\
&= F(\mu^{\mathcal{C}}_V) \circ F\bigl( i^{\mathcal{C}}_Y\bigl( Z_{\mathcal{C}}(V) \bigr) \bigr) \circ F^{(3)}_{Y^{\vee}, Z_{\mathcal{C}}(V), Y} \circ \bigl( \mathrm{id}_{F(Y^{\vee})} \otimes F\bigl(i^{\mathcal{C}}_X(V)\bigr) \otimes \mathrm{id}_F(Y) \bigr)\\
&\qquad\qquad\qquad\qquad\qquad\qquad\qquad\qquad\qquad\:\:\,\circ \bigl( \mathrm{id}_{F(Y^{\vee})} \otimes F^{(3)}_{X^{\vee},V,X} \otimes \mathrm{id}_{F(Y)} \bigr)\\
&\overset{\eqref{defMuMonadZF}}{=} F(\mu^{\mathcal{C}}_V) \circ \zeta_{Z_{\mathcal{C}}(V)} \circ i^F_Y\bigl( FZ_{\mathcal{C}}(V) \bigr) \circ \left( \mathrm{id}_{F(Y^{\vee})} \otimes \left( \zeta_V \circ i_X^F(F(V)) \right) \otimes \mathrm{id}_{F(Y)} \right)\\
&\overset{\eqref{ZFonMorphisms}}{=}F(\mu^{\mathcal{C}}_V) \circ \zeta_{Z_{\mathcal{C}}(V)} \circ Z_F(\zeta_V) \circ i_Y^F\bigl( Z_FF(V) \bigr) \circ \left( \mathrm{id}_{F(Y^{\vee})} \otimes i_X^F(F(V)) \otimes \mathrm{id}_{F(Y)} \right)\\
&\overset{\eqref{defUniversalDinatDoubleCoend}}{=}F(\mu^{\mathcal{C}}_V) \circ \zeta_{Z_{\mathcal{C}}(V)} \circ Z_F(\zeta_V) \circ i^{F,(2)}_{X,Y}(F(V))
\end{align*}
where the unlabelled equalities are respectively derived from the coherence condition for monoidal structures and from naturality of $F^{(3)}$. The second equality in \eqref{conditionsZetaLiftF} is readily true.
\end{proof}

Now assume that $F$ has a right adjoint $R : \mathcal{D} \to \mathcal{C}$. {\em The existence of $R$ is automatic if $\mathcal{D}$ is finite as a $\Bbbk$-linear category} by \cite[Cor.\,1.9]{DSPS}, which asserts that a right-exact $\Bbbk$-linear functor between finite $\Bbbk$-linear categories admits a right adjoint. Since $\zeta$ is an isomorphism, Lemma \ref{liftRightAdjoint} applies and the adjunction $F \dashv R$ can be lifted to an adjunction $\widetilde{F}_{\zeta} \dashv \widetilde{R}$ between the categories of modules over $Z_{\mathcal{C}}$ and $Z_F$. Using the isomorphisms $Z_{\mathcal{C}}\text{-}\mathrm{mod} \cong \mathcal{Z}(\mathcal{C})$ and $Z_F\text{-}\mathrm{mod} \cong \mathcal{Z}(F)$ explained in \S \ref{relExtGroupsDYCohomology} this can be rephrased as follows
\begin{equation}\label{diagramAdjunctionsBetweenCentralizers}
\xymatrix@C=4em@R=.7em{
\mathcal{Z}(\mathcal{C}) \ar@/^.7em/[dd]^{\mathcal{U}_{\mathcal{C}}} \ar@/^.7em/[r]^{\widetilde{F}}_{\text{\normalsize \rotatebox{270}{$\dashv$}}} & \ar@/^.7em/[l]^{\widetilde{R}}
\mathcal{Z}(F) \ar@/^.7em/[dd]^{\mathcal{U}_F}\\
\dashv & \dashv\\
\ar@/^.7em/[uu]^{\mathcal{F}_{\mathcal{C}}}\mathcal{C} \ar@/^.7em/[r]^{F}_{\text{\normalsize \rotatebox{270}{$\dashv$}}} & \ar@/^.7em/[l]^R \ar@/^.7em/[uu]^{\mathcal{F}_F}\mathcal{D}
} \end{equation}
Using \eqref{moduleStructureViaHalfBraiding}, \eqref{liftedFunctorCentralMonads} and \eqref{halfBraidingFromModuleStructure} one can compute that the functor $\widetilde{F}$ in \eqref{diagramAdjunctionsBetweenCentralizers} is given on objects by $\widetilde{F}(V, \rho) = \bigl( F(V), \rho^F \bigr)$ with
\[ \rho^F_X : F(V) \otimes F(X) \xrightarrow{F^{(2)}_{V,X}} F(V \otimes X) \xrightarrow{F(\rho_X)} F(X \otimes V) \xrightarrow{(F^{(2)}_{X,V})^{-1}} F(X) \otimes F(V) \]
for all $X \in \mathcal{C}$ and on morphisms by $\widetilde{F}(f) = F(f)$. Although we do not need it in the sequel, we note that $\widetilde{F}$ is monoidal. Indeed if $\mathsf{V}_1 = (V_1,\rho_1), \mathsf{V}_2 = (V_2,\rho_2) \in \mathcal{Z}(\mathcal{C})$ then using \eqref{conditionsMonStruct} it is straightforward to check that $F^{(2)}_{V_1,V_2} \in \Hom_{\mathcal{C}}\bigl( F(V_1) \otimes F(V_2), F(V_1 \otimes V_2) \bigr)$ is actually in $\Hom_{\mathcal{Z}(\mathcal{C})}\bigl( \widetilde{F}(\mathsf{V}_1) \otimes \widetilde{F}(\mathsf{V}_2), \widetilde{F}(\mathsf{V}_1 \otimes \mathsf{V}_2) \bigr)$, \textit{i.e.} it commutes with the half-braidings. Hence we can define $\widetilde{F}^{(2)}_{\mathsf{V}_1,\mathsf{V}_2} = F^{(2)}_{V_1,V_2}$.

\begin{theorem}\label{thmChangeOfCoeffDY}
Recall the assumptions on $\mathcal{C}$, $\mathcal{D}$ and $F$ made in \eqref{assumptionsFCD} and assume moreover that $F$ admits a right adjoint $R$ (which is for instance the case when $\mathcal{D}$ is finite as $\Bbbk$-linear category). Then with the notations from \eqref{diagramAdjunctionsBetweenCentralizers} we have
\[ H^{\bullet}_{\mathrm{DY}}\bigl(F; \widetilde{F}(\mathsf{V}), \mathsf{W}\bigr) \cong H^{\bullet}_{\mathrm{DY}}\bigl(\mathcal{C}; \mathsf{V}, \widetilde{R}(\mathsf{W})\bigr). \] 
for all $\mathsf{V} \in \mathcal{Z}(\mathcal{C})$ and $\mathsf{W} \in \mathcal{Z}(F)$. In particular,
\[ H^{\bullet}_{\mathrm{DY}}(F) \cong H^{\bullet}_{\mathrm{DY}}\bigl(\mathcal{C}; \boldsymbol{1}, \widetilde{R}(\boldsymbol{1})\bigr). \]
\end{theorem}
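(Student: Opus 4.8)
The plan is to concatenate three isomorphisms: two that translate DY cohomology into relative Ext groups (Theorem~\ref{thmDYCohomRelExt}), and one purely homological adjunction formula already established at the level of monads (Proposition~\ref{isoExtForMonadicAdjunctions}).

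\emph{Step 1 (pass to relative Ext on both sides).} By Theorem~\ref{thmDYCohomRelExt} applied to $F$ we have $H^{\bullet}_{\mathrm{DY}}\bigl(F;\widetilde{F}(\mathsf{V}),\mathsf{W}\bigr) \cong \Ext^{\bullet}_{\mathcal{Z}(F),\mathcal{D}}\bigl(\widetilde{F}(\mathsf{V}),\mathsf{W}\bigr)$. Applying the same theorem to $\mathrm{Id}_{\mathcal{C}}$ — for which all of \eqref{assumptionsFCD} hold automatically since $\mathcal{C}$ is finite and rigid (so $\otimes_{\mathcal{C}}$ is exact in each variable) and $\mathcal{Z}(\mathrm{Id}_{\mathcal{C}}) = \mathcal{Z}(\mathcal{C})$ — gives $H^{\bullet}_{\mathrm{DY}}\bigl(\mathcal{C};\mathsf{V},\widetilde{R}(\mathsf{W})\bigr) \cong \Ext^{\bullet}_{\mathcal{Z}(\mathcal{C}),\mathcal{C}}\bigl(\mathsf{V},\widetilde{R}(\mathsf{W})\bigr)$. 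Thus it remains to produce an isomorphism $\Ext^{\bullet}_{\mathcal{Z}(F),\mathcal{D}}\bigl(\widetilde{F}(\mathsf{V}),\mathsf{W}\bigr) \cong \Ext^{\bullet}_{\mathcal{Z}(\mathcal{C}),\mathcal{C}}\bigl(\mathsf{V},\widetilde{R}(\mathsf{W})\bigr)$.

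\emph{Step 2 (transport to module categories over the central monads, then apply the monadic adjunction theorem).} Using the isomorphisms of resolvent pairs $\mathcal{Z}(\mathcal{C}) \cong Z_{\mathcal{C}}\text{-}\mathrm{mod}$ and $\mathcal{Z}(F) \cong Z_F\text{-}\mathrm{mod}$ from \S\ref{relExtGroupsDYCohomology} (cf.\ \eqref{isoRelExtCentralizerOrModulesCentralMonad}), together with the identifications recorded in diagram~\eqref{diagramAdjunctionsBetweenCentralizers} — namely that $\widetilde{F}$ corresponds to the lifted functor $\widetilde{F}_{\zeta}$ of Lemma~\ref{lemmaZFFisoFZC} and $\widetilde{R}$ to the lift~\eqref{rightAdjointLift} of $R$ supplied by Lemma~\ref{liftRightAdjoint} — the claim of Step 1 becomes $\Ext^{\bullet}_{Z_F\text{-}\mathrm{mod},\mathcal{D}}\bigl(\widetilde{F}_{\zeta}(\mathsf{V}),\mathsf{W}\bigr) \cong \Ext^{\bullet}_{Z_{\mathcal{C}}\text{-}\mathrm{mod},\mathcal{C}}\bigl(\mathsf{V},\widetilde{R}(\mathsf{W})\bigr)$. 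I would then invoke Proposition~\ref{isoExtForMonadicAdjunctions} with $\mathbb{T} = Z_{\mathcal{C}}$ on $\mathcal{C}$, $\mathbb{T}' = Z_F$ on $\mathcal{D}$, and the morphism of monads $(F,\zeta)$: the functors $Z_{\mathcal{C}},Z_F$ are additive (coends built from the $\Bbbk$-bilinear $\otimes$), $F$ is additive with right adjoint $R$ by hypothesis, and $\zeta$ is an isomorphism by Lemma~\ref{lemmaZFFisoFZC} (via right-exactness of $F$ and Corollary~\ref{coendsAsCokerAndConsequence}). The proposition delivers exactly this isomorphism; concatenating with Step 1 gives the first displayed isomorphism of the theorem. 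For the ``in particular'' statement, specialize $\mathsf{V} = \boldsymbol{1} \in \mathcal{Z}(\mathcal{C})$ and $\mathsf{W} = \boldsymbol{1} \in \mathcal{Z}(F)$: since $\widetilde{F}$ is monoidal (noted before the theorem) we have $\widetilde{F}(\boldsymbol{1}) \cong \boldsymbol{1}$, so $H^{\bullet}_{\mathrm{DY}}\bigl(F;\widetilde{F}(\boldsymbol{1}),\boldsymbol{1}\bigr) = H^{\bullet}_{\mathrm{DY}}(F;\boldsymbol{1},\boldsymbol{1}) = H^{\bullet}_{\mathrm{DY}}(F)$.

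All the substantive work — constructing the left adjoints $\mathcal{F}_F,\mathcal{F}_{\mathcal{C}}$ so that the columns of~\eqref{diagramAdjunctionsBetweenCentralizers} are resolvent pairs, proving $(F,\zeta)$ strong, and lifting $R$ to $\widetilde{R}$ — is already in place, so the proof is essentially bookkeeping. The step deserving the most care is verifying that the isomorphisms $\mathcal{Z}(F) \cong Z_F\text{-}\mathrm{mod}$ and $\mathcal{Z}(\mathcal{C}) \cong Z_{\mathcal{C}}\text{-}\mathrm{mod}$ genuinely intertwine $\widetilde{F}$ with $\widetilde{F}_{\zeta}$ and $\widetilde{R}$ with the monad-level lift of $R$, i.e.\ that the squares in~\eqref{diagramAdjunctionsBetweenCentralizers} commute (strictly for the forgetful functors, up to the coherence isomorphism $\zeta$ for the free functors). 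This is where a coherence slip would hide, and it is essentially subsumed in the explicit description of $\widetilde{F}$ on objects given right after~\eqref{diagramAdjunctionsBetweenCentralizers}.
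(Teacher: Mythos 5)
Your proof is correct and follows exactly the same route as the paper: Theorem~\ref{thmDYCohomRelExt} on both sides, the identification with module categories over the central monads via~\eqref{isoRelExtCentralizerOrModulesCentralMonad}, and then Proposition~\ref{isoExtForMonadicAdjunctions} applied to the strong morphism of monads $(F,\zeta)$. The paper's proof is simply the compressed version of your three steps, recorded as the chain of isomorphisms~\eqref{isosProofThmAdjunction}.
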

\begin{proof}
We have
\begin{equation}\label{isosProofThmAdjunction}
H^{\bullet}_{\mathrm{DY}}\bigl(F; \widetilde{F}(\mathsf{V}), \mathsf{W}\bigr) \cong \Ext_{\mathcal{Z}(F), \mathcal{D}}^{\bullet}\bigl(\widetilde{F}(\mathsf{V}), \mathsf{W}\bigr) \cong \Ext_{\mathcal{Z}(\mathcal{C}), \mathcal{C}}^{\bullet}\bigl(\mathsf{V}, \widetilde{R}(\mathsf{W}) \bigr) \cong H^{\bullet}_{\mathrm{DY}}\bigl(\mathcal{C}; \mathsf{V}, \widetilde{R}(\mathsf{W})\bigr).
\end{equation}
The first isomorphism uses Theorem \ref{thmDYCohomRelExt}, the second uses \eqref{isoRelExtCentralizerOrModulesCentralMonad} and Proposition \ref{isoExtForMonadicAdjunctions} and the third uses again Theorem \ref{thmDYCohomRelExt} but now for the identity functor $\mathrm{Id}_{\mathcal{C}}$. For the last claim in the theorem, take $\mathsf{V} = \mathsf{W} = \boldsymbol{1}$ and note that $\widetilde{F}(\boldsymbol{1}) = \boldsymbol{1}$.
\end{proof}
Recall from \eqref{defTangentSpaceQuotient} the space $\mathbb{T}_{[\theta]}\bigl(\mathrm{Mon}(F)/\!\!\sim \bigr)$ which is ``tangent'' to the equivalence class $[\theta]$ of a monoidal structure $\theta$ for $F$.
\begin{corollary}\label{coroTangentSpaceChangeOfFunctor}
1. Let $\theta = F^{(2)}$ be the given monoidal structure of $F$. Then under the assumptions and notations of Theorem \ref{thmChangeOfCoeffDY} we have
\[ \mathbb{T}_{[\theta]}\bigl(\mathrm{Mon}(F)/\!\!\sim \bigr)  \cong  \mathrm{Ext}^2_{\mathcal{Z}(\mathcal{C}),\mathcal{C}}\bigl( \boldsymbol{1},\widetilde{R}(\boldsymbol{1}) \bigr). \]
2. If $0 \to \mathsf{K} \to \mathsf{P} \to \boldsymbol{1} \to 0$ is an allowable exact sequence in $\mathcal{Z}(\mathcal{C})$ with $\mathsf{P}$ relatively projective then for all $ n \geq 2$ we have
\[ \dim H^n_{\mathrm{DY}}(F) = \dim \Hom_{\mathcal{Z}(\mathcal{C})}( \mathsf{K}, \mathsf{M} ) - \dim \Hom_{\mathcal{Z}(\mathcal{C})}( \mathsf{P}, \mathsf{M} ) + \dim \Hom_{\mathcal{Z}(\mathcal{C})}( \boldsymbol{1}, \mathsf{M} ) \]
 where $\mathsf{M} = \widetilde{R}(\boldsymbol{1}) \otimes (\mathsf{K}^{\vee})^{\otimes (n-1)}$.
\end{corollary}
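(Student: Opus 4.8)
\emph{Part 1} is a direct consequence of the adjunction theorem in cohomological degree two. By \eqref{tangentSpaceAndCohom} we have $\mathbb{T}_{[\theta]}\bigl(\mathrm{Mon}(F)/\!\!\sim\bigr) = N\!H^2_{\mathrm{DY}}(F,\theta) \cong H^2_{\mathrm{DY}}(F,\theta) = H^2_{\mathrm{DY}}(F)$, since $\theta = F^{(2)}$ is the chosen monoidal structure; I would then combine this with the isomorphism $H^{\bullet}_{\mathrm{DY}}(F) \cong \mathrm{Ext}^{\bullet}_{\mathcal{Z}(\mathcal{C}),\mathcal{C}}\bigl(\boldsymbol{1},\widetilde{R}(\boldsymbol{1})\bigr)$ established in the proof of Theorem~\ref{thmChangeOfCoeffDY} (the chain \eqref{isosProofThmAdjunction}) and specialize it to degree $2$.

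\emph{Part 2.} By the same chain of isomorphisms, $\dim H^n_{\mathrm{DY}}(F) = \dim \mathrm{Ext}^n_{\mathcal{Z}(\mathcal{C}),\mathcal{C}}\bigl(\boldsymbol{1},\widetilde{R}(\boldsymbol{1})\bigr)$ for every $n$, so it suffices to compute the latter relative $\mathrm{Ext}$ space; note that, $\mathcal{Z}(\mathcal{C})$ being a finite tensor category, all $\mathrm{Hom}$ and relative $\mathrm{Ext}$ spaces involved are finite-dimensional. The plan is to push the cohomological degree down to $1$ by iterating two elementary moves. First, from the given allowable short exact sequence $0\to\mathsf{K}\to\mathsf{P}\to\boldsymbol{1}\to 0$ with $\mathsf{P}$ relatively projective, the long exact sequence for relative $\mathrm{Ext}$ attached to an allowable short exact sequence \cite[Ch.~IX]{macLane} provides a dimension-shifting isomorphism $\mathrm{Ext}^m_{\mathcal{Z}(\mathcal{C}),\mathcal{C}}(\boldsymbol{1},\mathsf{N}) \cong \mathrm{Ext}^{m-1}_{\mathcal{Z}(\mathcal{C}),\mathcal{C}}(\mathsf{K},\mathsf{N})$ for all $m\ge 2$ and $\mathsf{N}\in\mathcal{Z}(\mathcal{C})$, the connecting map being an isomorphism because it is flanked by $\mathrm{Ext}^{m-1}_{\mathcal{Z}(\mathcal{C}),\mathcal{C}}(\mathsf{P},\mathsf{N}) = 0 = \mathrm{Ext}^m_{\mathcal{Z}(\mathcal{C}),\mathcal{C}}(\mathsf{P},\mathsf{N})$. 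Second, in the rigid category $\mathcal{Z}(\mathcal{C})$ I would use the ``duality shift'' $\mathrm{Ext}^m_{\mathcal{Z}(\mathcal{C}),\mathcal{C}}(\mathsf{K},\mathsf{N}) \cong \mathrm{Ext}^m_{\mathcal{Z}(\mathcal{C}),\mathcal{C}}\bigl(\boldsymbol{1},\mathsf{N}\otimes\mathsf{K}^{\vee}\bigr)$, valid for all $m\ge 0$. Applying the dimension shift and the duality shift alternately, starting from $\mathrm{Ext}^n_{\mathcal{Z}(\mathcal{C}),\mathcal{C}}\bigl(\boldsymbol{1},\widetilde{R}(\boldsymbol{1})\bigr)$ and performing $n-1$ such rounds, telescopes to $\mathrm{Ext}^1_{\mathcal{Z}(\mathcal{C}),\mathcal{C}}\bigl(\boldsymbol{1},\mathsf{M}\bigr)$ with $\mathsf{M} = \widetilde{R}(\boldsymbol{1})\otimes(\mathsf{K}^{\vee})^{\otimes(n-1)}$, exactly as in the statement (for $n=2$ this is a single round and no dimension shift is needed afterwards). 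Finally, feeding $\mathsf{M}$ once more into the long exact sequence of $0\to\mathsf{K}\to\mathsf{P}\to\boldsymbol{1}\to 0$ and using $\mathrm{Ext}^1_{\mathcal{Z}(\mathcal{C}),\mathcal{C}}(\mathsf{P},\mathsf{M}) = 0$ yields the four-term exact sequence $0 \to \mathrm{Hom}_{\mathcal{Z}(\mathcal{C})}(\boldsymbol{1},\mathsf{M}) \to \mathrm{Hom}_{\mathcal{Z}(\mathcal{C})}(\mathsf{P},\mathsf{M}) \to \mathrm{Hom}_{\mathcal{Z}(\mathcal{C})}(\mathsf{K},\mathsf{M}) \to \mathrm{Ext}^1_{\mathcal{Z}(\mathcal{C}),\mathcal{C}}(\boldsymbol{1},\mathsf{M}) \to 0$, whose alternating sum of dimensions gives the announced formula.

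\emph{On the duality shift and the main obstacle.} The duality shift is proved by the standard transfer argument: if $\boldsymbol{1}\leftarrow\mathsf{R}_{\bullet}$ is a relatively projective resolution of $\boldsymbol{1}$ in $\mathcal{Z}(\mathcal{C})$, then $\mathsf{R}_{\bullet}\otimes\mathsf{K}$ is a relatively projective resolution of $\boldsymbol{1}\otimes\mathsf{K} = \mathsf{K}$, and applying $\mathrm{Hom}_{\mathcal{Z}(\mathcal{C})}(-,\mathsf{N})$ together with the tensor--hom adjunction $\mathrm{Hom}_{\mathcal{Z}(\mathcal{C})}(-\otimes\mathsf{K},\mathsf{N}) \cong \mathrm{Hom}_{\mathcal{Z}(\mathcal{C})}(-,\mathsf{N}\otimes\mathsf{K}^{\vee})$ identifies the two cochain complexes. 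This uses that $-\otimes\mathsf{K}$ is exact (true since $\mathcal{Z}(\mathcal{C})$ is rigid) and that it sends allowable morphisms and relatively projective objects again to such: allowability is preserved because $\mathcal{U}_{\mathcal{C}}\colon\mathcal{Z}(\mathcal{C})\to\mathcal{C}$ is monoidal and a split morphism stays split after tensoring, and relative projectivity is preserved because $-\otimes\mathsf{K}$ is left adjoint to $-\otimes\mathsf{K}^{\vee}$, a functor which likewise preserves allowable epimorphisms. I expect the only genuinely delicate point to be the bookkeeping of the telescoping — checking that the cohomological degrees and the tensor powers of $\mathsf{K}^{\vee}$ line up correctly and that the dimension shift is legitimate at each stage ($m\ge 2$) — everything else being routine once the two moves above and Theorem~\ref{thmChangeOfCoeffDY} are in hand.
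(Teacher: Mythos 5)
Your Part~1 is exactly the paper's argument: combine the identification $\mathbb{T}_{[\theta]}(\mathrm{Mon}(F)/\!\!\sim)\cong H^2_{\mathrm{DY}}(F,\theta)$ from \eqref{tangentSpaceAndCohom} with the degree-by-degree chain of isomorphisms \eqref{isosProofThmAdjunction} underlying Theorem~\ref{thmChangeOfCoeffDY}.

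For Part~2 the paper's proof is a one-line delegation to the external dimension formula \cite[Cor.~4.10]{FGS}. You instead reprove that formula internally, and your argument is correct: the long exact sequence of relative $\mathrm{Ext}$ for the allowable sequence $0\to\mathsf{K}\to\mathsf{P}\to\boldsymbol{1}\to 0$ gives the dimension shift $\mathrm{Ext}^m_{\mathcal{Z}(\mathcal{C}),\mathcal{C}}(\boldsymbol{1},\mathsf{N})\cong\mathrm{Ext}^{m-1}_{\mathcal{Z}(\mathcal{C}),\mathcal{C}}(\mathsf{K},\mathsf{N})$ for $m\ge 2$ (the flanking $\mathrm{Ext}$'s of $\mathsf{P}$ vanish), and the duality shift $\mathrm{Ext}^m_{\mathcal{Z}(\mathcal{C}),\mathcal{C}}(\mathsf{K},\mathsf{N})\cong\mathrm{Ext}^m_{\mathcal{Z}(\mathcal{C}),\mathcal{C}}(\boldsymbol{1},\mathsf{N}\otimes\mathsf{K}^{\vee})$ goes through because $-\otimes\mathsf{K}$ preserves exactness (rigidity), allowability (the forgetful $\mathcal{U}_{\mathcal{C}}$ is strict monoidal, so a section transports), and relative projectivity (the left adjoint $-\otimes\mathsf{K}$ sends relative projectives to relative projectives since its right adjoint $-\otimes\mathsf{K}^{\vee}$ preserves allowable epis). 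The degree bookkeeping lands you at $\mathrm{Ext}^1_{\mathcal{Z}(\mathcal{C}),\mathcal{C}}(\boldsymbol{1},\mathsf{M})$ after $n-1$ rounds, and the closing four-term exact sequence yields the alternating dimension formula. What your approach buys is self-containedness — the reader does not need to open \cite{FGS} — at the cost of length; what the paper's citation buys is brevity at the cost of opacity. Both are sound.
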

\begin{proof}
1. Follows from \eqref{tangentSpaceAndCohom} and Theorem \ref{thmChangeOfCoeffDY}.
\\2. Follows from  Theorem \ref{thmChangeOfCoeffDY} and the dimension formula in \cite[Cor.~4.10]{FGS}.
\end{proof}

We can describe rather explicitly the functor $\widetilde{R} : \mathcal{Z}(F) \to \mathcal{Z}(\mathcal{C})$.
\begin{proposition}
If $(W,\lambda) \in \mathcal{Z}(F)$ then $\widetilde{R}(W,\lambda) = \bigl( R(W), \lambda^R \bigr)$ where $\lambda^R_X : R(W) \otimes X \to X \otimes R(W)$ is the image of $\lambda_X : W \otimes F(X) \to F(X) \otimes W$ through the following sequence of linear maps for all $X \in \mathcal{C}$:
\begin{align*}
\Hom_{\mathcal{D}}\bigl(W \otimes F(X), F(X) \otimes W \bigr)
&\overset{\sim}{\longrightarrow} \Hom_{\mathcal{D}}\bigl(F(X^{\vee}) \otimes  W \otimes F(X), W\bigr) \quad \text{(duality \eqref{dualityForImagesOfF})}\\
&\xrightarrow{(\mathrm{id} \otimes h_W \otimes \mathrm{id})^*} \Hom_{\mathcal{D}}\bigl(F(X^{\vee}) \otimes  FR(W) \otimes F(X), W\bigr) \\
&\overset{\sim}{\longrightarrow} \Hom_{\mathcal{D}}\bigl(F(X^{\vee} \otimes  R(W) \otimes X), W\bigr) \quad \text{(monoidality of }F\text{)}\\
&\overset{\sim}{\longrightarrow} \Hom_{\mathcal{C}}\bigl(X^{\vee} \otimes  R(W) \otimes X, R(W) \bigr) \quad \text{(adjunction } F\dashv R \text{)}\\
&\overset{\sim}{\longrightarrow} \Hom_{\mathcal{C}}\bigl(R(W) \otimes X, X \otimes   R(W) \bigr) \quad \text{(duality in }\mathcal{C}\text{)}
\end{align*}
where $h : FR \Rightarrow \mathrm{Id}_{\mathcal{D}}$ is the counit of $F \dashv R$. On morphisms we simply have $\widetilde{R}(f) = R(f)$.
\end{proposition}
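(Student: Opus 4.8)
The plan is to unwind the adjoint‑lifting construction of $\widetilde{R}$ and transport it through the isomorphisms $\mathcal{Z}(F)\cong Z_F\text{-}\mathrm{mod}$ and $\mathcal{Z}(\mathcal{C})\cong Z_{\mathcal{C}}\text{-}\mathrm{mod}$ of \S\ref{relExtGroupsDYCohomology}. By Lemma \ref{lemmaZFFisoFZC} the pair $(F,\zeta)$ is a strong morphism of monads $Z_{\mathcal{C}}\to Z_F$ and $\zeta$ is an isomorphism, so Lemma \ref{liftRightAdjoint} applies; by formula \eqref{rightAdjointLift} the lifted right adjoint $\widetilde{R}=\widetilde{R}_\xi$ sends a $Z_F$‑module $(W,u)$ to $\bigl(R(W),\,R(u)\circ\xi_W\bigr)$ and a morphism $g$ to $R(g)$, where $\xi:Z_{\mathcal{C}}R\Rightarrow RZ_F$ is given by \eqref{defIsoNatXi}. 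The statement about morphisms is thus immediate. For objects, if $(W,\lambda)\in\mathcal{Z}(F)$ corresponds under \eqref{moduleStructureViaHalfBraiding} to the $Z_F$‑module $(W,r(\lambda))$, then $\widetilde{R}(W,\lambda)$ is the object of $\mathcal{Z}(\mathcal{C})$ corresponding under \eqref{halfBraidingFromModuleStructure} to $\bigl(R(W),\,R(r(\lambda))\circ\xi_W\bigr)$, i.e.\ $\lambda^R=\rho\bigl(R(r(\lambda))\circ\xi_W\bigr)$. The whole content is to compute this half‑braiding and to recognise it as the prescribed composite of linear maps.

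Concretely, I would substitute into $\rho(s)_X$ from \eqref{halfBraidingFromModuleStructure} — with $V=R(W)$ and $s=R(r(\lambda))\circ\xi_W$ — the expansions of $\xi_W$ from \eqref{defIsoNatXi}, of $\zeta^{-1}_{R(W)}$ from its universal characterisation \eqref{diagramDefiningZetaForCentralMonads}, and of $r(\lambda)$ from \eqref{moduleStructureViaHalfBraiding}. The key reduction is to precompose everything with the universal dinatural transformation $i^{\mathcal{C}}_X\bigl(R(W)\bigr):X^\vee\otimes R(W)\otimes X\to Z_{\mathcal{C}}\bigl(R(W)\bigr)$: this is precisely what occurs in $\rho(s)_X$ after $\mathrm{coev}_X$, and it is also the wedge over which $\zeta_{R(W)}$ (hence $\zeta^{-1}_{R(W)}$) is pinned down in \eqref{diagramDefiningZetaForCentralMonads}. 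After this precomposition one is manipulating a morphism out of $X^\vee\otimes R(W)\otimes X$ (or its image under $F$), and one successively: (i) uses the triangle identity for $F\dashv R$ to cancel the unit $e$ appearing in $\xi$ against the counit $h$; (ii) uses naturality of $h$ and dinaturality to push $h_W:FR(W)\to W$ into the middle strand; (iii) uses the zig‑zag identities for $\mathrm{coev}_X,\mathrm{ev}_X$ in $\mathcal{C}$ and for $\mathrm{coev}_{F(X)},\mathrm{ev}_{F(X)}$ from \eqref{dualityForImagesOfF} in $\mathcal{D}$; and (iv) uses the coherence theorem for monoidal functors \cite{epstein} to collapse the various $F^{(2)}$'s into $F^{(3)}_{X^\vee,R(W),X}$. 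Reading off the result, the assignment $\lambda_X\mapsto\lambda^R_X$ factors exactly as: duality \eqref{dualityForImagesOfF} in $\mathcal{D}$, then pull‑back along $\mathrm{id}\otimes h_W\otimes\mathrm{id}$, then the monoidality isomorphism of $F$, then the adjunction $F\dashv R$, then duality in $\mathcal{C}$ — which is the claimed chain.

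I expect the main obstacle to be organisational rather than conceptual: $\xi$ is defined through $\zeta^{-1}$, which is only specified by a coend universal property, so $\xi_W$ cannot be handled as a closed formula and one must work throughout after precomposing with the dinatural transformations $i^{\mathcal{C}}_X$ and $i^F_X$, invoking uniqueness only at the end. Keeping track of which wedge one sits over, and of the (harmless, by \cite{epstein}) reassociations hidden between $F^{(2)}$ and $F^{(3)}$, is where care is needed; no hypotheses beyond those already in force are used. As a sanity check, for $F=\mathrm{Id}_{\mathcal{C}}$ one has $R=\mathrm{Id}_{\mathcal{C}}$, $e=h=\mathrm{id}$, $\zeta=\mathrm{id}$, and the five‑step chain collapses — via naturality and the zig‑zag for the duality in $\mathcal{C}$ — to the identity of $\mathcal{Z}(\mathcal{C})$, as it must.
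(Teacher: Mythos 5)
Your plan follows the paper's own route: pass to the monadic picture $\mathcal{Z}(F)\cong Z_F\text{-}\mathrm{mod}$ and $\mathcal{Z}(\mathcal{C})\cong Z_{\mathcal{C}}\text{-}\mathrm{mod}$, apply the adjoint-lifting formula $\widetilde{R}(W,r)=\bigl(R(W),\,R(r)\circ\xi_W\bigr)$ from Lemma~\ref{liftRightAdjoint}, and read off the half-braiding after precomposing everything with the universal dinatural transformation $i^{\mathcal{C}}_X\bigl(R(W)\bigr)$, invoking the coend universal property at the end. The paper condenses this into a single commutative diagram characterizing $\xi_W\circ i^{\mathcal{C}}_X\bigl(R(W)\bigr)$ in terms of $e$, $R\bigl(F^{(3)}\bigr)^{-1}$, $R(\mathrm{id}\otimes h_W\otimes\mathrm{id})$ and $R(i^F_X(W))$ and then leaves the insertion of $R(r(\lambda))$ and the coev identification to the reader; your description of the reduction is essentially the same, and your sanity check at $F=\mathrm{Id}_{\mathcal C}$ is correct.

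The one incorrect move is your step~(i). The unit $e$ appearing in $\xi_W$ is \emph{not} cancelled against the counit $h$ by a triangle identity. After naturality of $e$ turns $e_{Z_{\mathcal{C}}R(W)}\circ i^{\mathcal{C}}_X(R(W))$ into $RF\bigl(i^{\mathcal{C}}_X(R(W))\bigr)\circ e_{X^\vee\otimes R(W)\otimes X}$, this $e$ survives through the whole computation and becomes precisely the fourth arrow of the displayed chain, the adjunction bijection $\Hom_{\mathcal{D}}\bigl(F(X^\vee\otimes R(W)\otimes X),W\bigr)\cong\Hom_{\mathcal{C}}\bigl(X^\vee\otimes R(W)\otimes X,R(W)\bigr)$, $g'\mapsto R(g')\circ e$. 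Independently, $h_W$ enters as the pull-back along $\mathrm{id}\otimes h_W\otimes\mathrm{id}$, coming from the formula~\eqref{ZFonMorphisms} for $Z_F(h_W)$ applied to the middle component. Both $e$ and $h_W$ appear in the final composite, as they must, because the proposition's chain explicitly uses each of them at a different stage; a triangle-identity cancellation would kill one and leave the wrong answer. If you delete step~(i) and keep (ii)--(iv) together with the defining diagram~\eqref{diagramDefiningZetaForCentralMonads} for $\zeta$ and the expansion of $r(\lambda)$ from~\eqref{moduleStructureViaHalfBraiding}, your read-off already gives exactly the stated five-step factorisation.
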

\begin{proof}
Let us first use the categories of modules $Z_{\mathcal{C}}\text{-}\mathrm{mod}$ and $Z_F\text{-}\mathrm{mod}$, which are isomorphic to $\mathcal{Z}(\mathcal{C})$ and $\mathcal{Z}(F)$. Let $(W,r) \in Z_F\text{-}\mathrm{mod}$; then by Lemma \ref{liftRightAdjoint}, $\widetilde{R}(W,r) = \bigl( R(W), R(r) \circ \xi_W \bigr)$ with $\xi_W : Z_{\mathcal{C}} R(W) \to RZ_F(W)$ defined in \eqref{defIsoNatXi}. Let $e : \mathrm{Id}_{\mathcal{C}} \Rightarrow RF$ and $h : FR \Rightarrow \mathrm{Id}_{\mathcal{D}}$ denote the unit and counit of $F \dashv R$. By naturality of $e$, by definition of $\zeta$ in \eqref{diagramDefiningZetaForCentralMonads} and by definition of $Z_F(h_W)$ in \eqref{ZFonMorphisms}, $\xi_W$ is characterized by the following commutative diagram
\begin{equation}\label{descriptionXiForCentralMonads}
\xymatrix@C=3.5em{
X^{\vee} \otimes R(W) \otimes X \ar[rr]^-{e_{X^{\vee} \otimes R(W) \otimes X}} \ar[dd]_{i^{\mathcal{C}}_X(R(W))} && RF\bigl( X^{\vee} \otimes R(W) \otimes X \bigr) \ar[d]^{R(F^{(3)}_{X^{\vee},R(W),X})^{-1}}\\
&& R\bigl( F(X^{\vee}) \otimes FR(W) \otimes F(X) \bigr)\ar[d]^{R(\mathrm{id}_{F(X^{\vee})} \otimes h_W \otimes \mathrm{id}_{F(X)})}\\
Z_{\mathcal{C}}R(W) \ar[r]_{\exists!\,\xi_W} & RZ_F(W) & \ar[l]^-{R(i^F_X(W))} R\bigl( F(X^{\vee}) \otimes W \otimes F(X) \bigr)
} \end{equation}
Now one can check that the functor $\mathcal{Z}(F) \overset{\eqref{moduleStructureViaHalfBraiding}}{\longrightarrow} Z_F\text{-}\mathrm{mod} \overset{\widetilde{R}}{\longrightarrow} Z_{\mathcal{C}}\text{-}\mathrm{mod} \overset{\eqref{halfBraidingFromModuleStructure}}{\longrightarrow} \mathcal{Z}(\mathcal{C})$ is indeed given by the announced formula.
\end{proof}

\indent To finish, we describe the isomorphism in Theorem \ref{thmChangeOfCoeffDY} at the level of the DY cochain complexes. Recall from \eqref{ZFIterated}--\eqref{dinatTransfoZFIterated} the iterated coend $Z_F^n(V)$ and its universal dinatural transformation $i^{F,(n)}_{X_1,\ldots,X_n}(V)$. Let $G_{\mathcal{C}}$ (resp. $G_F$) be the comonad on $\mathcal{Z}(\mathcal{C}) \cong Z_{\mathcal{C}}\text{-}\mathrm{mod}$ (resp. on $\mathcal{Z}(F) \cong Z_F\text{-}\mathrm{mod}$). Write $\mathsf{V} = (V,\rho^V) \in \mathcal{Z}(\mathcal{C})$ and $\mathsf{W} = (W,\rho^W) \in \mathcal{Z}(F)$. Note that the natural isomorphism $\zeta^{(n)}_V : G_F^nF(V) \to FG_{\mathcal{C}}^n(V)$ generally defined in \eqref{isoGammaForFLiftAndComonads} is characterized by the commutative diagram
\[ \xymatrix@C=8em{
F(X_n^{\vee})\ldots F(X_1^{\vee}) F(V) F(X_1) \ldots F(X_n^{\vee})  \ar[r]^-{F^{(2n+1)}_{X_n^{\vee}, \ldots, X_1^{\vee}, V, X_1, \ldots X_n}} \ar[d]^{i^{F,(n)}_{X_1,\ldots,X_n}(F(V))} & F\bigl( X_n^{\vee} \ldots X_1^{\vee} V X_1 \ldots X_n \bigr) \ar[d]_-{F\left(i^{\mathcal{C},(n)}_{X_1,\ldots,X_n}(V)\right)}\\
Z_F^nF(V) \ar[r]_-{\exists!\, \zeta^{(n)}_V} & FZ_{\mathcal{C}}^n(V)
} \]
where on the top row we omit $\otimes$ and on the bottom row we use that $\widetilde{F}_{\zeta}G_{Z_{\mathcal{C}}}^n(\mathsf{V}) = FZ_{\mathcal{C}}^n(V)$ and $G_{Z_F}^n\widetilde{F}_{\zeta}(\mathsf{V}) = Z_F^nF(V)$  as underlying objects in $\mathcal{D}$. 
Then we can define an isomorphism between DY cochain complexes by requiring that the following diagram commutes in all degrees $n \geq 0$:
\[ \xymatrix@C=3em{
\Hom_{\mathcal{Z}(\mathcal{C})}\!\left( G_{\mathcal{C}}^{n+1}(\mathsf{V}), \widetilde{R}(\mathsf{W}) \right) \ar[r]^-{\cong}_{\eqref{isoAdjunctionWithBarComplexes}} \ar[d]^-{\cong}_{\eqref{isoDYComplexBarComplex}} & \Hom_{\mathcal{Z}(F)}\!\left( G_F^{n+1}\widetilde{F}_{\zeta}(\mathsf{V}), \mathsf{W} \right) \ar[d]^-{\cong}_{\eqref{isoDYComplexBarComplex}} \\
C^n_{\mathrm{DY}}\bigl( \mathcal{C}; \mathsf{V},  \widetilde{R}(\mathsf{W}) \bigr) \ar[r]^-{\exists!\,\cong} & C^n_{\mathrm{DY}}\bigl( F; \widetilde{F}(\mathsf{V}), \mathsf{W} \bigr)
} \]
Explicitly, it is given as follows on the components of the natural transformations:
\begin{align*}
&\Hom_{\mathcal{C}}\bigl(V \otimes X_1 \otimes \ldots \otimes X_n, X_1 \otimes \ldots \otimes X_n \otimes R(W) \bigr)\\
\cong\:& \Hom_{\mathcal{C}}\bigl(X_n^{\vee} \otimes \ldots \otimes X_1^{\vee} \otimes V \otimes X_1 \otimes \ldots \otimes X_n, R(W) \bigr)\qquad \text{(duality)}\\
\cong\:& \Hom_{\mathcal{D}}\bigl( F\bigl(X_n^{\vee} \otimes \ldots \otimes X_1^{\vee} \otimes V \otimes X_1 \otimes \ldots \otimes X_n\bigr), W \bigr)\qquad \text{(adjunction } F \dashv R)\\
\cong\:& \Hom_{\mathcal{D}}\bigl( F(X_n^{\vee} \otimes \ldots \otimes X_1^{\vee}) \otimes F(V) \otimes F(X_1) \otimes \ldots \otimes F(X_n), W \bigr) \qquad \text{(monoidality of } F)\\
\cong\:&\Hom_{\mathcal{D}}\bigl( F(V) \otimes F(X_1) \otimes \ldots \otimes F(X_n), F(X_1 \otimes \ldots \otimes X_n) \otimes W \bigr) \qquad \text{(duality)}
\end{align*}
where for the second duality transformation we use \eqref{dualityForImagesOfF}.

\section{Tangent space to a braiding}\label{sectionTangentSpaceBraiding}
Let $\mathcal{C}$ be a monoidal category, which we assume strict for simplicity. Recall that a {\em braiding} on $\mathcal{C}$ is a natural isomorphism $c = \bigl(c_{X,Y} : X \otimes Y \overset{\sim}{\longrightarrow} Y \otimes X\bigr)_{X,Y \in \mathcal{C}}$ which satisfies
\begin{equation}\label{axiomsBraiding}
c_{X \otimes Y,Z} = (c_{X,Z} \otimes \mathrm{id}_Y) \circ (\mathrm{id}_X \otimes c_{Y,Z}), \qquad c_{X, Y \otimes Z} = (\mathrm{id}_Y \otimes c_{X,Z}) \circ (c_{X,Y} \otimes \mathrm{id}_Z)
\end{equation}
for all $X,Y,Z \in \mathcal{C}$. We denote by $\mathrm{Br}(\mathcal{C})$ the set of all braidings on $\mathcal{C}$ (not taken up to equivalence). Note that a braiding automatically satisfies $c_{X,\boldsymbol{1}} = c_{\boldsymbol{1},X} = \mathrm{id}_X$.

\smallskip

\indent Now assume that $\mathcal{C}$ is $\Bbbk$-linear and that $\otimes$ is $\Bbbk$-bilinear on morphisms, where $\Bbbk$ is a field. For $c \in \mathrm{Br}(\mathcal{C})$ there is a space of ``tangent vectors'' to $c$:
\begin{definition}\label{defTangentBraiding}
An infinitesimal braiding tangent to $c$ is a natural transformation $t = \bigl( t_{X,Y} : X \otimes Y \to Y \otimes X \bigr)_{X,Y \in \mathcal{C}}$ such that for all $X,Y,Z \in \mathcal{C}$ we have
\begin{align*}
t_{X \otimes Y, Z} &= (c_{X,Z} \otimes \mathrm{id}_Y) \circ (\mathrm{id}_X \otimes t_{Y,Z}) + (t_{X,Z} \otimes \mathrm{id}_Y) \circ (\mathrm{id}_X \otimes c_{Y,Z})\\
t_{X,Y \otimes Z} &= (\mathrm{id}_Y \otimes c_{X,Z}) \circ (t_{X,Y} \otimes \mathrm{id}_Z) + (\mathrm{id}_Y \otimes t_{X,Z}) \circ (c_{X,Y} \otimes \mathrm{id}_Z).
\end{align*}
We denote by $\mathbf{T}_c\mathrm{Br}(\mathcal{C})$ the $\Bbbk$-vector space of infinitesimal braidings tangent to $c$.
\end{definition}
\noindent As the name and notation suggests, this definition is obtained as follows: $t$ is an infinitesimal braiding on $\mathcal{C}$ tangent to $c$ if and only if $c+\epsilon t$ is a braiding on $\mathcal{C}_{\epsilon}$, where $\mathcal{C}_{\epsilon} = \mathcal{C} \otimes_{\Bbbk} \Bbbk[\epsilon]/(\epsilon^2)$ is defined in \S\ref{subsectionDefMonStruct}.
\begin{remark}\label{remarkCartierInfBraidings}
1. The name ``infinitesimal braiding'' is already used in the context of deformation of {\em symmetric} monoidal categories, in relation with Vassiliev invariants \cite[\S 4]{cartier}, \cite[Def.\, XX.4.1]{kassel}. If $c$ is a symmetric braiding on $\mathcal{C}$ and $\bigl( h_{X,Y} \in \mathrm{End}_{\mathcal{C}}(X \otimes Y) \bigr)_{X,Y \in \mathcal{C}}$ is an infinitesimal braiding in the sense of \cite[\S 4]{cartier}, then $\bigl( c_{X,Y} \circ h_{X,Y} \bigr)_{X,Y \in \mathcal{C}}$ satisfies Definition \ref{defTangentBraiding}. The converse is not true in general because the property $c_{X,Y} \circ h_{X,Y} = h_{Y,X} \circ c_{X,Y}$ required in \cite[\S 4]{cartier} is not implied by our definition.
\\2. A slight variation of Definition \ref{defTangentBraiding} appears in \cite[Def.\,1.1]{ABSW} under the name pre-Cartier braidings: $\bigl( h_{X,Y} \in \mathrm{End}_{\mathcal{C}}(X \otimes Y) \bigr)_{X,Y \in \mathcal{C}}$ is a pre-Cartier braiding if and only if $\bigl( c_{X,Y} \circ h_{X,Y} \bigr)_{X,Y \in \mathcal{C}}$ is an infinitesimal braiding in our sense.
\end{remark}

\smallskip

\indent Our first goal and the topic of \S\ref{sectionLaxMult} and \S\ref{subsectionDYcohomologyTangentBraidings} is to explain the relation between $\mathbf{T}_c\mathrm{Br}(\mathcal{C})$ and the DY cohomology of the monoidal product $\otimes : \mathcal{C} \times \mathcal{C} \to \mathcal{C}$ endowed with a monoidal structure coming from the braiding $c$. This is based on work by Joyal and Street \cite{JS} who established a bijection between braidings and so-called {\em multiplications} on a monoidal category. Yetter already realized in \cite[Th.\,3.9]{yetter1} that deformations of multiplications yield deformations of braidings but did not describe how the space of infinitesimal deformations of a given braiding fits into the DY cohomology of the corresponding multiplication, as we do in Corollary~\ref{coroDimFormulaTangentSpaceBraiding}. Let us mention that categories with several multiplications are explored in \cite{BFSV}.

\smallskip

\indent The second goal, achieved in \S\ref{adjunctionThmMonProduct}, is to apply our general adjunction theorem for DY cohomology (Thm.\,\ref{thmChangeOfCoeffDY}) to the functor $F = \otimes : \mathcal{C} \boxtimes \mathcal{C} \to \mathcal{C}$ endowed with the monoidal structure induced by a braiding. We first compute the corresponding coefficient in Theorem~\ref{thmChangeFunctorMonProduct} and this gives us a formula for the dimension of $\mathbf{T}_c\mathrm{Br}(\mathcal{C})$ in terms of 
 $\Ext^2_{\zcat\boxtimes\zcat,\cat\boxtimes\cat}$, see Corollary \ref{coroDimTcBrCInTermsOfRelExt}, which is computable from a relatively projective resolution of $\boldsymbol{1} \in \mathcal{Z}(\mathcal{C})$. In \S\ref{sec:end-formula}, we furthermore apply a K\"unneth formula to rewrite this dimension formula in terms of the `standard' adjunction between $\zcat$ and $\cat$, \textit{i.e.}\ involving only relative $\Ext^n_{\zcat,\cat}$ at $n=1,2$, into what we call \textit{the end formula} of the tangent space to a braiding, see  Corollary~\ref{cor:HDY-end-proj}.

\subsection{Multiplications and braidings}\label{sectionLaxMult}
\indent In this section we define the notion of a weak-unital multiplication on a monoidal category. This is a slight generalization of the concept of multiplication on a monoidal category introduced by Joyal and Street \cite[\S 5]{JS}. We then explain what the correspondence between multiplications and braidings obtained in \cite[Prop.\, 5.3]{JS} becomes for weak-unital multiplications. The point of introducing weak-unital multiplications is that a DY deformation of a multiplication yields only a weak-unital multiplication on $\mathcal{C}_{\epsilon} = \mathcal{C} \otimes_{\Bbbk} \Bbbk[\epsilon]/(\epsilon^2)$, as we will see in the next section (Remark \ref{remarkDeformationOfMultIsWeak}).

\smallskip

\indent Let $\mathcal{C} = (\mathcal{C},\otimes,\boldsymbol{1})$ be a monoidal category, assumed to be strict for simplicity. Then $\mathcal{C} \times \mathcal{C}$ is a monoidal category, with product $(X_1,Y_1)\otimes (X_2,Y_2) = (X_1 \otimes X_2, Y_1 \otimes Y_2)$ and unit object $(\boldsymbol{1},\boldsymbol{1})$.

\begin{definition}
1. A weak-unital multiplication on $\mathcal{C}$ is a monoidal functor $\Phi : \mathcal{C} \times \mathcal{C} \to \mathcal{C}$ such that $\Phi(-,\boldsymbol{1}) = \Phi(\boldsymbol{1},-) = \mathrm{Id}_{\mathcal{C}}$ as functors (but not necessarily as {\em monoidal} functors).
\\2. A multiplication on $\mathcal{C}$ is a monoidal functor $\Phi : \mathcal{C} \times \mathcal{C} \to \mathcal{C}$ such that $\Phi(-,\boldsymbol{1}) = \Phi(\boldsymbol{1},-) = \mathrm{Id}_{\mathcal{C}}$ as {\em monoidal} functors \cite[\S 5]{JS}, meaning that their monoidal structures are equal.\footnote{Actually Joyal and Street use monoidal natural isomorphisms instead of equalities in these conditions, but this is not the important point. Here we use equalities simply to have shorter formulas.}
\end{definition}
\noindent By definition a weak-unital multiplication $\Phi$ comes with a monoidal structure $\Phi^{(2)}$, which is a natural isomorphism
\[ \Phi^{(2)}_{(X_1,Y_1), (X_2,Y_2)} : \Phi(X_1, Y_1) \otimes \Phi(X_2, Y_2) \overset{\sim}{\longrightarrow} \Phi(X_1 \otimes X_2, Y_1 \otimes Y_2) \]
for all $(X_i,Y_i) \in \mathcal{C} \times \mathcal{C}$ such that
\begin{equation}\label{conditionMonStructTensorProduct}
\begin{array}{l}\displaystyle
\quad\Phi^{(2)}_{(X_1 \otimes X_2,Y_1 \otimes Y_2), (X_3,Y_3)} \circ \bigl( \Phi^{(2)}_{(X_1,Y_1), (X_2,Y_2)} \otimes \mathrm{id}_{X_3 \otimes Y_3} \bigr)\\[.5em]
=\Phi^{(2)}_{(X_1,Y_1), (X_2 \otimes X_3, Y_2 \otimes Y_3)} \circ \bigl( \mathrm{id}_{X_1 \otimes Y_1} \otimes \Phi^{(2)}_{(X_2,Y_2), (X_3,Y_3)} \bigr)
\end{array}
\qquad \forall \, (X_i,Y_i) \in \mathcal{C} \times \mathcal{C}
\end{equation}
and\footnote{Recall from \S\ref{subsectionDefMonStruct} that for simplicity we consider monoidal functors which satisfy $F(\boldsymbol{1}) = \boldsymbol{1}$. Hence here we assume that $\Phi(\boldsymbol{1},\boldsymbol{1}) = \boldsymbol{1}$ and \eqref{normalisationMonStructMult} makes sense. This is enough for our purposes because very soon $\Phi$ will become $\otimes$ as a functor. In full generality a given isomorphism $\Phi^{(0)} : \Phi(\boldsymbol{1},\boldsymbol{1}) \to \boldsymbol{1}$ has to be used.}
\begin{equation}\label{normalisationMonStructMult}
\Phi^{(2)}_{(\boldsymbol{1},\boldsymbol{1}),(X,Y)} = \Phi^{(2)}_{(X,Y),(\boldsymbol{1},\boldsymbol{1})} = \mathrm{id}_{X \otimes Y}.
\end{equation}
Since the monoidal structures of $\Phi(-,\boldsymbol{1})$ and $\Phi(\boldsymbol{1},-)$ are given by $\Phi(-,\boldsymbol{1})^{(2)}_{X_1,X_2} = \Phi^{(2)}_{(X_1,\boldsymbol{1}),(X_2,\boldsymbol{1})}$ and $\Phi(\boldsymbol{1},-)^{(2)}_{Y_1,Y_2} = \Phi^{(2)}_{(\boldsymbol{1},Y_1),(\boldsymbol{1},Y_2)}$, we see that a multiplication is a weak-unital multiplication which moreover satisfies the following {\em unitality condition}:
\begin{equation}\label{conditionUnitMult}
\Phi^{(2)}_{(X_1,\boldsymbol{1}),(X_2,\boldsymbol{1})} = \mathrm{id}_{X_1 \otimes X_2} \quad \text{ and } \quad \Phi^{(2)}_{(\boldsymbol{1},Y_1),(\boldsymbol{1},Y_2)} = \mathrm{id}_{Y_1 \otimes Y_2}.
\end{equation}

\begin{proposition}
Let $\Phi$ be a weak-unital multiplication on $\mathcal{C}$. We have a natural isomorphism $\gamma : \otimes \Rightarrow \Phi$ given by
\[ \gamma_{(X,Y)} : X \otimes Y = \Phi(X,\boldsymbol{1}) \otimes \Phi(\boldsymbol{1},Y) \xrightarrow{\Phi^{(2)}_{(X,\boldsymbol{1}), (\boldsymbol{1},Y)}} \Phi(X,Y). \]
Let
\begin{align*}
\phi_{(X_1,Y_1),(X_2,Y_2)} : X_1 \otimes Y_1 \otimes X_2 \otimes Y_2 &\xrightarrow{\gamma_{(X_1,Y_1)} \otimes \gamma_{(X_2,Y_2)}} \Phi(X_1,Y_1) \otimes \Phi(X_2,Y_2)\\
&\xrightarrow{\Phi^{(2)}_{(X_1,Y_1),(X_2,Y_2)}} \Phi(X_1 \otimes X_2, Y_1 \otimes Y_2)\\
&\xrightarrow{\gamma_{(X_1 \otimes X_2,Y_1 \otimes Y_2)}^{-1}} X_1 \otimes X_2 \otimes Y_1 \otimes Y_2.
\end{align*}
Then $\phi$ is a monoidal structure for $\otimes$ and $\gamma$ is a monoidal natural isomorphism $(\otimes,\phi) \Rightarrow \Phi$. Moreover $\Phi$ is a multiplication if and only if $(\otimes, \phi)$ is a multiplication.
\end{proposition}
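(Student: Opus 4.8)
The plan is a standard transport-of-structure argument: $\gamma$ is an isomorphism and $\phi$ is obtained by conjugating $\Phi^{(2)}$ by $\gamma$, so all the coherence of $\phi$ is inherited from that of $\Phi^{(2)}$, while the monoidality of $\gamma$ becomes a tautology. First I would record that $\gamma$ is a natural isomorphism: each component $\gamma_{(X,Y)} = \Phi^{(2)}_{(X,\boldsymbol{1}),(\boldsymbol{1},Y)}$ is an isomorphism because $\Phi^{(2)}$ is, and naturality of $\gamma$ follows from naturality of $\Phi^{(2)}$ together with $\Phi(-,\boldsymbol{1}) = \Phi(\boldsymbol{1},-) = \mathrm{Id}_{\mathcal{C}}$ (as functors). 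Two normalization facts will be used repeatedly and both come straight from \eqref{normalisationMonStructMult}: $\gamma_{(X,\boldsymbol{1})} = \Phi^{(2)}_{(X,\boldsymbol{1}),(\boldsymbol{1},\boldsymbol{1})} = \mathrm{id}_X$ and $\gamma_{(\boldsymbol{1},Y)} = \Phi^{(2)}_{(\boldsymbol{1},\boldsymbol{1}),(\boldsymbol{1},Y)} = \mathrm{id}_Y$; in particular $\gamma_{(\boldsymbol{1},\boldsymbol{1})} = \mathrm{id}_{\boldsymbol{1}}$.

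Next I would check that $\phi$ satisfies the axioms \eqref{conditionsMonStruct} of a monoidal structure on the functor $\otimes : \mathcal{C}\times\mathcal{C} \to \mathcal{C}$, which for this functor unfold exactly into the analogues of \eqref{conditionMonStructTensorProduct} and \eqref{normalisationMonStructMult} with $\phi$ in place of $\Phi^{(2)}$. The unit normalization $\phi_{(X,Y),(\boldsymbol{1},\boldsymbol{1})} = \phi_{(\boldsymbol{1},\boldsymbol{1}),(X,Y)} = \mathrm{id}$ is immediate from the definition of $\phi$ once one substitutes $\gamma_{(\boldsymbol{1},\boldsymbol{1})} = \mathrm{id}$ together with $\Phi^{(2)}_{(X,Y),(\boldsymbol{1},\boldsymbol{1})} = \Phi^{(2)}_{(\boldsymbol{1},\boldsymbol{1}),(X,Y)} = \mathrm{id}$ from \eqref{normalisationMonStructMult}. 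For the pentagon, writing $\mathbf{X}_i = (X_i,Y_i)$, I would plug $\phi_{\mathbf{X},\mathbf{Y}} = \gamma_{\mathbf{X}\otimes\mathbf{Y}}^{-1} \circ \Phi^{(2)}_{\mathbf{X},\mathbf{Y}} \circ (\gamma_{\mathbf{X}} \otimes \gamma_{\mathbf{Y}})$ into both sides of the associativity equation; using functoriality of $\otimes$ (the interchange law) the inner factors $\gamma_{\mathbf{X}_1\otimes\mathbf{X}_2} \circ \gamma_{\mathbf{X}_1\otimes\mathbf{X}_2}^{-1}$ cancel, and both sides reduce to
\[ \gamma_{\mathbf{X}_1\otimes\mathbf{X}_2\otimes\mathbf{X}_3}^{-1} \circ \bigl(\text{the corresponding side of \eqref{conditionMonStructTensorProduct} for } \Phi^{(2)}\bigr) \circ (\gamma_{\mathbf{X}_1} \otimes \gamma_{\mathbf{X}_2} \otimes \gamma_{\mathbf{X}_3}), \]
so they agree because $\Phi^{(2)}$ satisfies \eqref{conditionMonStructTensorProduct}. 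The monoidality of $\gamma$ is then built into the construction: $\phi_{\mathbf{X},\mathbf{Y}}$ was defined precisely so that $\gamma_{\mathbf{X}\otimes\mathbf{Y}} \circ \phi_{\mathbf{X},\mathbf{Y}} = \Phi^{(2)}_{\mathbf{X},\mathbf{Y}} \circ (\gamma_{\mathbf{X}} \otimes \gamma_{\mathbf{Y}})$, which is exactly the equation expressing that $\gamma : (\otimes,\phi) \Rightarrow \Phi$ is a monoidal natural isomorphism.

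For the last claim I would again invoke $\gamma_{(X,\boldsymbol{1})} = \mathrm{id}$ and $\gamma_{(\boldsymbol{1},Y)} = \mathrm{id}$: substituting these into the definition of $\phi$ yields $\phi_{(X_1,\boldsymbol{1}),(X_2,\boldsymbol{1})} = \Phi^{(2)}_{(X_1,\boldsymbol{1}),(X_2,\boldsymbol{1})}$ and $\phi_{(\boldsymbol{1},Y_1),(\boldsymbol{1},Y_2)} = \Phi^{(2)}_{(\boldsymbol{1},Y_1),(\boldsymbol{1},Y_2)}$, so the unitality condition \eqref{conditionUnitMult} holds for $\phi$ if and only if it holds for $\Phi^{(2)}$, i.e.\ $(\otimes,\phi)$ is a multiplication precisely when $\Phi$ is. I do not expect a genuine obstacle here; the only slightly delicate point is the bookkeeping of the interchange law and of the cancellations in the pentagon computation, which is routine.
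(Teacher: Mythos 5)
Your argument is correct and is the same transport-of-structure route the paper takes: the paper cites the "general easy fact" that a natural isomorphism from any functor to a monoidal functor endows the former with a monoidal structure making the isomorphism monoidal, and then handles the last claim exactly as you do via $\gamma_{(X,\boldsymbol{1})} = \gamma_{(\boldsymbol{1},Y)} = \mathrm{id}$. You merely unpack the general fact (naturality, normalization, associativity via conjugation-cancellation) rather than stating it abstractly, but the substance and the key observations are identical.
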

\begin{proof}
This actually follows from a general easy fact: assume that we have a monoidal functor $(F,\theta) : \mathcal{X} \to \mathcal{Y}$, any functor $G : \mathcal{X} \to \mathcal{Y}$ and a natural isomorphism $\gamma : G \Rightarrow F$. Then $G$ inherits a monoidal structure given by
\[ G(X) \otimes G(X') \xrightarrow{\gamma_X \otimes \gamma_{X'}} F(X) \otimes F(X') \xrightarrow{\theta_{X,X'}} F(X \otimes X') \xrightarrow{\gamma^{-1}_{X \otimes X'}} G(X \otimes X') \]
and $\gamma$ becomes a monoidal natural isomorphism. For the last claim, note that
\[ \phi_{(X_1,\boldsymbol{1}),(X_2,\boldsymbol{1})} = \Phi^{(2)}_{(X_1,\boldsymbol{1}),(X_2,\boldsymbol{1})} \quad \text{and} \quad \phi_{(\boldsymbol{1},Y_1),(\boldsymbol{1},Y_2)} = \Phi^{(2)}_{(\boldsymbol{1},Y_1),(\boldsymbol{1},Y_2)} \]
because $\gamma_{(X,\boldsymbol{1})} = \mathrm{id}_X$ and $\gamma_{(\boldsymbol{1},Y)} = \mathrm{id}_Y$.
\end{proof}
\noindent Hence {\em we can restrict ourselves to weak-unital multiplications for which the underlying functor is $\otimes$}. In this case a weak-unital multiplication is just a monoidal structure for $\otimes$, {\it i.e.} a natural isomorphism
\[ \phi_{(X_1,Y_1),(Y_1,Y_2)} : X_1 \otimes Y_1 \otimes X_2 \otimes Y_2 \to X_1 \otimes X_2 \otimes Y_1 \otimes Y_2 \]
which satisfies \eqref{conditionMonStructTensorProduct} and \eqref{normalisationMonStructMult}. Similarly, with this point of view, a multiplication is just a monoidal structure of $\otimes$ which satisfies \eqref{conditionUnitMult}.

\begin{definition}\label{defUMonOtimes}
We denote by $\mathrm{Mon}(\otimes)$ the set of monoidal structures on $\otimes$. We say that an element in $\mathrm{Mon}(\otimes)$ is unital if it satisfies the unitality condition \eqref{conditionUnitMult} and denote by $\mathrm{UMon}(\otimes)$ the subset of such elements.
\end{definition}

\indent Let us now see more precisely how $\mathrm{UMon}(\otimes)$ fits into $\mathrm{Mon}(\otimes)$. We write $\mathrm{Mon}(\mathrm{Id}_{\mathcal{C}})$ for the {\em group} of monoidal structures on the identity functor; its elements are natural isomorphisms $\alpha_{X_1,X_2} : X_1 \otimes X_2 \to X_1 \otimes X_2$ such that $\alpha_{X_1 \otimes X_2,X_3} \circ (\alpha_{X_1,X_2} \otimes \mathrm{id}_{X_3}) = \alpha_{X_1,X_2 \otimes X_3} \circ (\mathrm{id}_{X_1} \otimes \alpha_{X_2,X_3})$.
\begin{lemma}\label{lemmaInteractionsMonIdWithMonOtimes}
1. For $\phi \in \mathrm{Mon}(\otimes)$ let
\begin{equation}\label{defp1p2}
p_1(\phi)_{X_1,X_2} = \phi_{(X_1,\boldsymbol{1}),(X_2,\boldsymbol{1})}, \qquad p_2(\phi)_{Y_1,Y_2} = \phi_{(\boldsymbol{1},Y_1),(\boldsymbol{1},Y_2)}.
\end{equation}
Then $p_1(\phi) \in \mathrm{Mon}(\mathrm{Id}_{\mathcal{C}})$ and $p_2(\phi) \in \mathrm{Mon}(\mathrm{Id}_{\mathcal{C}})$.
\\2. For $\alpha, \beta \in \mathrm{Mon}(\mathrm{Id}_{\mathcal{C}})$ and $\phi \in \mathrm{Mon}(\otimes)$ let $(\alpha \otimes \beta) \circ \phi$ be the natural transformations whose components $\bigl( (\alpha \otimes \beta) \circ \phi \bigr)_{(X_1,Y_1),(X_2,Y_2)}$ are given by the following composition:
\[ X_1 \otimes Y_1 \otimes X_2 \otimes Y_2 \xrightarrow{\phi_{(X_1,Y_1),(X_2,Y_2)}} X_1 \otimes X_2 \otimes Y_1 \otimes Y_2 \xrightarrow{\alpha_{X_1,X_2} \otimes \beta_{Y_1,Y_2}} X_1 \otimes X_2 \otimes Y_1 \otimes Y_2. \]
Then $(\alpha \otimes \beta) \circ \phi \in \mathrm{Mon}(\otimes)$.
\end{lemma}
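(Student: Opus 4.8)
The plan is to verify both items by a direct unwinding of the axioms: item 1 is a specialization of the associativity constraint \eqref{conditionMonStructTensorProduct} of $\phi$, and item 2 is a diagram chase combining the interchange law for $\otimes$, naturality of $\phi$, and the associativity axioms in $\mathrm{Mon}(\mathrm{Id}_{\mathcal{C}})$. No new idea is needed; the only point requiring care is bookkeeping of the product categories and of which factors of $\phi$ naturality is being applied to.

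For item 1, I would first note that, by strictness, $p_1(\phi)_{X_1,X_2}=\phi_{(X_1,\boldsymbol{1}),(X_2,\boldsymbol{1})}$ is an automorphism of $X_1\otimes X_2$, and that it is natural in $X_1,X_2$ and an isomorphism because $\phi$ is natural in each of its two $(\mathcal{C}\times\mathcal{C})$-valued arguments and invertible (naturality in $X_1$ comes from naturality of $\phi$ in the morphism $(f,\mathrm{id}_{\boldsymbol{1}})$, and similarly in $X_2$). The associativity constraint for $p_1(\phi)$ is then exactly the specialization of \eqref{conditionMonStructTensorProduct} to $Y_1=Y_2=Y_3=\boldsymbol{1}$, after using $\mathrm{id}_{X_i\otimes\boldsymbol{1}}=\mathrm{id}_{X_i}$; the unit constraint $p_1(\phi)_{X,\boldsymbol{1}}=p_1(\phi)_{\boldsymbol{1},X}=\mathrm{id}_X$ follows directly from \eqref{normalisationMonStructMult}. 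Hence $p_1(\phi)\in\mathrm{Mon}(\mathrm{Id}_{\mathcal{C}})$, and the case of $p_2$ is symmetric, specializing instead to $X_1=X_2=X_3=\boldsymbol{1}$.

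For item 2, write $\psi:=(\alpha\otimes\beta)\circ\phi$. Naturality of $\psi$ is immediate, and the unit normalization \eqref{normalisationMonStructMult} for $\psi$ follows from $\alpha_{\boldsymbol{1},X}=\alpha_{X,\boldsymbol{1}}=\mathrm{id}_X$ and the analogous identities for $\beta$ together with \eqref{normalisationMonStructMult} for $\phi$; so the content is the associativity identity \eqref{conditionMonStructTensorProduct} for $\psi$. I would start from $\psi_{(X_1\otimes X_2,Y_1\otimes Y_2),(X_3,Y_3)}\circ(\psi_{(X_1,Y_1),(X_2,Y_2)}\otimes\mathrm{id}_{X_3\otimes Y_3})$, expand both $\psi$'s, and use the interchange law for $\otimes$ plus naturality of $\phi$ — applied to the morphism pair $(\alpha_{X_1,X_2},\beta_{Y_1,Y_2})$ on the first slot and the identity on the second — to pull all instances of $\alpha,\beta$ to the outer left. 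This rewrites the left-hand side as $\big(\alpha_{X_1\otimes X_2,X_3}\circ(\alpha_{X_1,X_2}\otimes\mathrm{id}_{X_3})\big)\otimes\big(\beta_{Y_1\otimes Y_2,Y_3}\circ(\beta_{Y_1,Y_2}\otimes\mathrm{id}_{Y_3})\big)$ post-composed with $\phi_{(X_1\otimes X_2,Y_1\otimes Y_2),(X_3,Y_3)}\circ(\phi_{(X_1,Y_1),(X_2,Y_2)}\otimes\mathrm{id}_{X_3\otimes Y_3})$. Applying the associativity axioms of $\alpha$ and $\beta$ in $\mathrm{Mon}(\mathrm{Id}_{\mathcal{C}})$ and the associativity axiom \eqref{conditionMonStructTensorProduct} of $\phi$, and then redistributing with the interchange law, yields an expression which, after the analogous naturality rewriting on the expanded right-hand side $\psi_{(X_1,Y_1),(X_2\otimes X_3,Y_2\otimes Y_3)}\circ(\mathrm{id}_{X_1\otimes Y_1}\otimes\psi_{(X_2,Y_2),(X_3,Y_3)})$, coincides with it; this proves \eqref{conditionMonStructTensorProduct} for $\psi$, hence $\psi\in\mathrm{Mon}(\otimes)$.

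The only delicate point — and hence the main, if mild, obstacle — is the repeated appeal to naturality of $\phi$: one must keep in mind that $\phi$ is a natural transformation in each of its two $(\mathcal{C}\times\mathcal{C})$-valued arguments, so that a morphism such as $\alpha_{X_1,X_2}\otimes\beta_{Y_1,Y_2}$ (which is $\otimes$ applied to the morphism $(\alpha_{X_1,X_2},\beta_{Y_1,Y_2})$ of $\mathcal{C}\times\mathcal{C}$) can be slid through $\phi$ only at the price of replacing it by $\otimes$ applied to the correspondingly-tensored morphism of $\mathcal{C}\times\mathcal{C}$, which interleaves the $X$- and $Y$-factors differently. Tracking this reshuffling carefully — while using strictness of $\mathcal{C}$ (hence of $\mathcal{C}\times\mathcal{C}$) silently to identify $\mathrm{id}_{X\otimes\boldsymbol{1}}$ with $\mathrm{id}_X$ and to omit parentheses — is the whole of the argument.
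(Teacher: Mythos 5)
Your proof is correct and is exactly the straightforward verification the paper alludes to with ``Straightforward computations.'' Item 1 specializes the associativity axiom \eqref{conditionMonStructTensorProduct} to $Y_i=\boldsymbol{1}$ (resp.\ $X_i=\boldsymbol{1}$) and reads off the unit constraint from \eqref{normalisationMonStructMult}; item 2 expands both sides of \eqref{conditionMonStructTensorProduct} for $(\alpha\otimes\beta)\circ\phi$, uses the interchange law together with one application of naturality of $\phi$ (in the appropriate slot, to the pair $(\alpha_{X_1,X_2},\beta_{Y_1,Y_2})$ on the left-hand side and to $(\alpha_{X_2,X_3},\beta_{Y_2,Y_3})$ on the right-hand side) to pull all $\alpha,\beta$-factors outward, and then invokes the associativity axioms for $\alpha$, $\beta$, and $\phi$ factor-by-factor. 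You correctly identify the only delicate point, namely that naturality of $\phi$ reshuffles the way the $X$- and $Y$-blocks of a morphism of $\mathcal{C}\times\mathcal{C}$ interleave once passed through $\phi$.
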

\begin{proof}
Straightforward computations.
\end{proof}
\noindent With the notations of Lemma \ref{lemmaInteractionsMonIdWithMonOtimes}, $\mathrm{UMon}(\otimes) = \bigl\{ \phi \in \mathrm{Mon}(\otimes) \, \big| \, p_1(\phi) = p_2(\phi) = \mathrm{id} \bigr\}$. Also note that $p_1\bigl( (\alpha \otimes \beta) \circ \phi \bigr) = \alpha \circ p_1(\phi)$ and $p_2\bigl( (\alpha \otimes \beta) \circ \phi \bigr) = \beta \circ p_2(\phi)$. As a result there is a map
\begin{equation}\label{projMonOnUMon}
\mathcal{P} : \mathrm{Mon}(\otimes) \to \mathrm{UMon}(\otimes), \qquad \phi \mapsto \bigl( p_1(\phi)^{-1} \otimes p_2(\phi)^{-1} \bigr) \circ \phi
\end{equation}
which satisfies $\mathcal{P} \circ \mathcal{P} = \mathcal{P}$ and $\mathrm{UMon}(\otimes) = \bigl\{ \phi \in \mathrm{Mon}(\otimes) \, \big| \, \mathcal{P}(\phi) = \phi \bigr\}$. Moreover there is a bijection
\begin{equation}\label{decompositionMonOtimes}
\fonc{\mathcal{F}}{\mathrm{Mon}(\otimes)}{\mathrm{Mon}(\mathrm{Id}_{\mathcal{C}})^{\times 2} \times \mathrm{UMon}(\otimes)}{\phi}{\bigl( p_1(\phi), p_2(\phi), \mathcal{P}(\phi)  \bigr)}
\end{equation}
whose inverse is
\begin{equation}\label{decompositionMonOtimesInverse}
\mathcal{F}^{-1}(\alpha,\beta,\psi) = (\alpha \otimes \beta) \circ \psi.
\end{equation}

\smallskip

\indent Recall from \S\ref{subsectionDefMonStruct} the equivalence relation $\sim$ on $\mathrm{Mon}(F)$ and the description of equivalence classes of $\sim$. In particular for $F = \otimes$ and $F = \mathrm{Id}_{\mathcal{C}}$ we have the quotient spaces $\mathrm{Mon}(\otimes)/\!\!\sim$ and $\mathrm{Mon}(\mathrm{Id}_{\mathcal{C}})/\!\!\sim$. Let us say that a natural isomorphism $u : \otimes \Rightarrow \otimes$ is {\em unital} if
\begin{equation*}
\forall \, X \in \mathcal{C}, \quad u_{(X,\boldsymbol{1})} = u_{(\boldsymbol{1},X)} = \mathrm{id}_X.
\end{equation*}
For $\phi,\phi' \in \mathrm{UMon}(\otimes)$ we declare that $\phi \sim_{\mathrm{U}} \phi'$ if there exists a {\em unital} monoidal natural isomorphism $u : (\otimes,\phi) \Rightarrow (\otimes,\phi')$. This defines an equivalence relation on $\mathrm{UMon}(\otimes)$.

\begin{lemma}\label{lemmaDecompositionMon}
The map $\mathcal{F}$ defined in \eqref{decompositionMonOtimes} descends to a bijection
\[ \overline{\mathcal{F}} : \mathrm{Mon}(\otimes)/{\sim} \: \to \bigl( \mathrm{Mon}(\mathrm{Id}_{\mathcal{C}})/{\sim} \bigr)^{\times 2} \times \mathrm{UMon}(\otimes)/{\sim}_{\mathrm{U}}. \]
\end{lemma}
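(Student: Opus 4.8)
The plan is to promote the set-theoretic bijection $\mathcal{F}$ of \eqref{decompositionMonOtimes} to a bijection of quotients by checking that it is compatible with the equivalence relations \emph{in both directions}:
\begin{equation}
\phi \sim \phi' \iff \bigl( p_1(\phi) \sim p_1(\phi') \ \text{ and }\ p_2(\phi) \sim p_2(\phi') \ \text{ and }\ \mathcal{P}(\phi) \sim_{\mathrm{U}} \mathcal{P}(\phi') \bigr). \tag{$\ast$}
\end{equation}
Once $(\ast)$ is established, the lemma is immediate: a bijection between two sets equipped with equivalence relations which satisfies a biconditional of the form $(\ast)$ always induces a well-defined bijection of the quotients (well-definedness and surjectivity from "$\Rightarrow$" and surjectivity of $\mathcal{F}$, injectivity from "$\Leftarrow$" and injectivity of $\mathcal{F}$), and the quotient of $\mathrm{Mon}(\mathrm{Id}_{\mathcal{C}})^{\times 2} \times \mathrm{UMon}(\otimes)$ by the product relation on the right of $(\ast)$ is exactly $\bigl( \mathrm{Mon}(\mathrm{Id}_{\mathcal{C}})/{\sim} \bigr)^{\times 2} \times \mathrm{UMon}(\otimes)/{\sim}_{\mathrm{U}}$. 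Recall from \eqref{equivalenceClassesAsOrbits} that $\phi \sim \phi'$ means $\phi' = u \cdot \phi$ for some $u \in \mathrm{Aut}(\otimes)$ acting by \eqref{actionAutOnMon}, and likewise for $\sim$ on $\mathrm{Mon}(\mathrm{Id}_{\mathcal{C}})$ and for $\sim_{\mathrm{U}}$, where for the latter $u$ is in addition a \emph{unital} natural automorphism of $\otimes$.

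The key technical ingredient is an explicit formula for $\mathcal{F}(u \cdot \phi)$. Given $u \in \mathrm{Aut}(\otimes)$, set $v_X = u_{(X,\boldsymbol{1})}$ and $w_Y = u_{(\boldsymbol{1},Y)}$, which define natural automorphisms $v,w$ of $\mathrm{Id}_{\mathcal{C}}$ by restricting the naturality square of $u$, and set $\widetilde{u}_{(X,Y)} = (v_X \otimes w_Y)^{-1} \circ u_{(X,Y)}$. Using strictness (together with $u_{(\boldsymbol{1},\boldsymbol{1})} = \mathrm{id}_{\boldsymbol{1}}$, which is forced here because $u\cdot\phi$ must satisfy the normalisation \eqref{normalisationMonStructMult} -- the only $\mathrm{End}(\boldsymbol{1})$ bookkeeping needed), one checks that $\widetilde{u}$ is a \emph{unital} natural automorphism of $\otimes$ and that $u_{(X,Y)} = (v_X \otimes w_Y) \circ \widetilde{u}_{(X,Y)}$. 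Writing $\mathcal{F}(\phi) = (\mu,\nu,\psi)$, so $\phi = (\mu \otimes \nu)\circ\psi$ by \eqref{decompositionMonOtimesInverse}, I would substitute this decomposition of $u$ into the formula \eqref{actionAutOnMon} for $u\cdot\phi$ and simplify to
\[
u \cdot \phi = \bigl( (v \cdot \mu) \otimes (w \cdot \nu) \bigr) \circ \bigl( \widetilde{u} \cdot \psi \bigr), \qquad\text{whence}\qquad \mathcal{F}(u \cdot \phi) = \bigl( v \cdot \mu,\ w \cdot \nu,\ \widetilde{u} \cdot \psi \bigr),
\]
the second identity holding because $v\cdot\mu, w\cdot\nu \in \mathrm{Mon}(\mathrm{Id}_{\mathcal{C}})$ and $\widetilde{u}\cdot\psi \in \mathrm{UMon}(\otimes)$ (the latter since both $\psi$ and $\widetilde{u}$ are unital), so that this triple is sent by $\mathcal{F}^{-1}$ back to $u\cdot\phi$. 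The simplification rests only on three elementary facts: the interchange law; naturality of $\widetilde{u}$ applied to the morphism $(\mu_{X_1,X_2},\nu_{Y_1,Y_2})$ of $\mathcal{C}\times\mathcal{C}$, which commutes $\widetilde{u}$ past $\mu\otimes\nu$; and naturality of the monoidal structure $\widetilde{u}\cdot\psi$ of $\otimes$, regarded as a natural transformation between the two functors $(\mathcal{C}\times\mathcal{C})^{\times 2}\to\mathcal{C}$, applied to the automorphisms $(v_{X_i},w_{Y_i})$, which reorganises the conjugating factors. (Reading off the first two components already yields the simpler equalities $p_1(u\cdot\phi) = v\cdot p_1(\phi)$ and $p_2(u\cdot\phi) = w\cdot p_2(\phi)$ directly from \eqref{actionAutOnMon} and \eqref{defp1p2}.)

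Granting this formula, both directions of $(\ast)$ follow quickly. For "$\Rightarrow$": if $\phi' = u\cdot\phi$ then $p_1(\phi') = v\cdot p_1(\phi)$, $p_2(\phi') = w\cdot p_2(\phi)$ and $\mathcal{P}(\phi') = \widetilde{u}\cdot\mathcal{P}(\phi)$ with $v,w \in \mathrm{Aut}(\mathrm{Id}_{\mathcal{C}})$ and $\widetilde{u}$ unital, which is precisely the right-hand side of $(\ast)$. For "$\Leftarrow$": given $v,w \in \mathrm{Aut}(\mathrm{Id}_{\mathcal{C}})$ and a unital $\widetilde{u} \in \mathrm{Aut}(\otimes)$ realising the three equivalences (note such $v,w$ automatically fix $\boldsymbol{1}$, since they implement monoidal natural isomorphisms of $\mathrm{Id}_{\mathcal{C}}$), set $u_{(X,Y)} = (v_X\otimes w_Y)\circ\widetilde{u}_{(X,Y)} \in \mathrm{Aut}(\otimes)$; then the displayed formula gives $\mathcal{F}(u\cdot\phi) = \mathcal{F}(\phi')$, and since $\mathcal{F}$ is a bijection, $u\cdot\phi = \phi'$, i.e. $\phi\sim\phi'$. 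The main obstacle is the middle step -- verifying the displayed formula for $u\cdot\phi$ -- which is a lengthy but purely mechanical interchange-law/string-diagram computation whose only conceptual content is the three naturality statements above, the manipulations at the tensor unit being handled by strictness and \eqref{normalisationMonStructMult}.
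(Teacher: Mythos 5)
Your proposal is correct and follows essentially the same route as the paper: you decompose a monoidal natural isomorphism $u$ between two monoidal structures on $\otimes$ into its restrictions $v_X = u_{(X,\boldsymbol{1})}$ and $w_Y = u_{(\boldsymbol{1},Y)}$ (the paper's $p'_1(m), p'_2(m)$ of \eqref{degeneraciesForAutOtimes}) and the unital residue $\widetilde{u} = (v \otimes w)^{-1} \circ u$ (the paper's $\mathcal{P}'(m)$ of \eqref{unitalProjectorForAutOtimes}), then verify that this decomposition is compatible with $\mathcal{F}$, $\mathcal{F}^{-1}$ and the three equivalence relations. The only cosmetic difference is that you package the two implications as a single biconditional $(\ast)$ and write $\mathcal{F}(u\cdot\phi) = (v\cdot\mu, w\cdot\nu, \widetilde{u}\cdot\psi)$ as one displayed identity, whereas the paper proves the two directions of well-definedness separately; both reduce to the same interchange-law computation.
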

\begin{proof}
Let $\phi, \omega \in \mathrm{Mon}(\otimes)$ be such that $\phi \sim \omega$. Then there is a monoidal natural isomorphism $m : (\otimes,\phi) \Rightarrow (\otimes,\omega)$. Define natural isomorphisms
\begin{equation}\label{degeneraciesForAutOtimes}
p'_1(m), p'_2(m) : \mathrm{Id}_{\mathcal{C}} \Rightarrow \mathrm{Id}_{\mathcal{C}}, \qquad p'_1(m)_X = m_{(X,\boldsymbol{1})}, \quad p'_2(m)_X = m_{(\boldsymbol{1},X)}.
\end{equation}
It is easily seen that they are monoidal natural isomorphisms $p'_i(m) : \bigl(\mathrm{Id}_{\mathcal{C}}, p_i(\phi) \bigr) \Rightarrow \bigl( \mathrm{Id}_{\mathcal{C}}, p_i(\omega) \bigr)$ and hence $p_i(\phi) \sim p_i(\omega)$ for $i=1,2$. Next define a natural isomorphism
\begin{equation}\label{unitalProjectorForAutOtimes}
\mathcal{P}'(m) : \otimes \Rightarrow \otimes, \qquad \mathcal{P}'(m)_{(X,Y)} = \bigl( p'_1(m)^{-1}_X \otimes p'_2(m)^{-1}_Y \bigr) \circ m_{(X,Y)}.
\end{equation}
A straightforward computation reveals that it is a monoidal natural isomorphism $\mathcal{P}'(m) : \bigl( \otimes, \mathcal{P}(\phi) \bigr) \Rightarrow \bigl( \otimes, \mathcal{P}(\omega) \bigr)$. Moreover it is clearly unital and thus $\mathcal{P}(\phi) \sim_{\mathrm{U}} \mathcal{P}(\omega)$. This shows that the map $\overline{\mathcal{F}} : [\phi] \mapsto \bigl( [p_1(\phi)], [p_2(\phi)], [\mathcal{P}(\phi)]_{\mathrm{U}} \bigr)$ is well-defined.
\\Conversely, let $(\alpha,\beta,\phi), (\alpha',\beta',\phi') \in \mathrm{Mon}(\mathrm{Id}_{\mathcal{C}})^{\times 2} \times \mathrm{UMon}(\otimes)$ and assume that $\alpha \sim \alpha'$, $\beta \sim \beta'$, $\phi \sim_{\mathrm{U}} \phi'$. Then there exist monoidal natural isomorphisms $r : (\mathrm{Id}_{\mathcal{C}},\alpha) \Rightarrow (\mathrm{Id}_{\mathcal{C}}, \alpha')$, $s : (\mathrm{Id}_{\mathcal{C}},\beta) \Rightarrow (\mathrm{Id}_{\mathcal{C}}, \beta')$ and a unital monoidal natural isomorphism $u : (\otimes,\phi) \Rightarrow (\otimes,\phi')$. Consider the natural isomorphism
\[ (r \otimes s) \circ u : \otimes \Rightarrow \otimes, \qquad \bigl[ (r \otimes s) \circ u \bigr]_{(X,Y)} = (r_X \otimes s_Y) \circ u_{(X,Y)}. \]
A straightforward computation reveals that $(r \otimes s) \circ u$ is a monoidal natural isomorphism $\bigl(\otimes, (\alpha \otimes \beta) \circ \phi \bigr) \Rightarrow \bigl(\otimes, (\alpha' \otimes \beta') \circ \phi' \bigr)$, which means that $\mathcal{F}^{-1}(\alpha,\beta,\phi) \sim \mathcal{F}^{-1}(\alpha',\beta',\phi')$. As a result the map $\overline{\mathcal{F}^{-1}} : \bigl( [\alpha], [\beta], [\phi]_{\mathrm{U}} \bigr) \mapsto \bigl[ \mathcal{F}^{-1}(\alpha,\beta,\phi) \bigr]$ is well-defined and is inverse to $\overline{\mathcal{F}}$.
\end{proof}

\indent For $\phi \in \mathrm{Mon}(\otimes)$ let
\begin{equation}\label{braidingAssociatedToLaxMult}
B(\phi)_{X,Y} : X \otimes Y \xrightarrow{\phi_{(\boldsymbol{1},X),(Y,\boldsymbol{1})}} Y \otimes X \xrightarrow{\phi^{-1}_{(Y,\boldsymbol{1}),(\boldsymbol{1},X)}} Y \otimes X
\end{equation}
for all $X,Y \in \mathcal{C}$. This defines a natural isomorphism $B(\phi)$.

\begin{proposition}\label{factoLaxMult}{\em \cite[\S 5]{JS}}
\\1. For all $\phi \in \mathrm{Mon}(\otimes)$, $B(\phi)$ is a braiding on $\mathcal{C}$. Moreover if $\phi \sim \phi'$ then $B(\phi) = B(\phi')$.
\\2. Conversely let $c$ be a braiding on $\mathcal{C}$ and $\mathrm{id} \otimes c \otimes \mathrm{id}$ be the natural isomorphism with components
\begin{equation}\label{MonStructFromBraiding}
(\mathrm{id} \otimes c \otimes \mathrm{id})_{(X_1,Y_1),(X_2,Y_2)} = \mathrm{id}_{X_1} \otimes c_{Y_1,X_2} \otimes \mathrm{id}_{Y_2} : X_1 \otimes Y_1 \otimes X_2 \otimes Y_2 \to X_1 \otimes X_2 \otimes Y_1 \otimes Y_2.
\end{equation}
Then $\mathrm{id} \otimes c \otimes \mathrm{id} \in \mathrm{UMon}(\otimes)$.
\\3. For all $\phi \in \mathrm{UMon}(\otimes)$ we have $\phi \sim_{\mathrm{U}} \mathrm{id} \otimes B(\phi) \otimes \mathrm{id}$.
\\4. As a result there is a bijection
\[ \mathrm{UMon}(\otimes)/{\sim}_{\mathrm{U}} \to \mathrm{Br}(\mathcal{C}), \quad [\phi]_{\mathrm{U}} \mapsto B(\phi). \]
(where $[\:\:]_{\mathrm{U}}$ means the equivalence class with respect to $\sim_{\mathrm{U}}$) whose inverse is
\[ \mathrm{Br}(\mathcal{C}) \to \mathrm{UMon}(\otimes)/{\sim}_{\mathrm{U}}, \quad c \mapsto [\mathrm{id} \otimes c \otimes \mathrm{id}]_{\mathrm{U}} \]
\end{proposition}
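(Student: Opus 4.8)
The statement packages together four claims, and the plan is to establish each by reducing it to the coherence identity~\eqref{conditionMonStructTensorProduct} for~$\phi$ (or to the two hexagon axioms~\eqref{axiomsBraiding} of a braiding). For item~1, I would first observe that $B(\phi)$ depends on $\phi$ only through the components $\phi_{(\boldsymbol{1},X),(Y,\boldsymbol{1})}$ and $\phi_{(Y,\boldsymbol{1}),(\boldsymbol{1},X)}$, and that by the normalization~\eqref{normalisationMonStructMult} these coincide with the corresponding components of $\mathcal{P}(\phi)\in\mathrm{UMon}(\otimes)$ from~\eqref{projMonOnUMon}; hence $B(\phi)=B(\mathcal{P}(\phi))$ and one may assume $\phi$ unital. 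Since $B(\phi)$ is visibly a natural transformation (a composite of components of~$\phi$ and of their inverses), the hexagon identities are then obtained by specializing~\eqref{conditionMonStructTensorProduct} to triples of objects of the form $(\boldsymbol{1},-)$ and $(-,\boldsymbol{1})$ --- e.g. splitting a slot $(\boldsymbol{1},X\otimes Y)$ as $(\boldsymbol{1},X)\otimes(\boldsymbol{1},Y)$: the unitality~\eqref{conditionUnitMult} kills the degenerate factors $\phi_{(\boldsymbol{1},X),(\boldsymbol{1},Y)}$ and $\phi_{(X,\boldsymbol{1}),(Y,\boldsymbol{1})}$, and after cancelling inverses the two resulting equations are precisely~\eqref{axiomsBraiding}. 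For the invariance claim, given a monoidal natural isomorphism $m:(\otimes,\phi)\Rightarrow(\otimes,\phi')$ I would write the monoidality equation of~$m$ at the pairs $((\boldsymbol{1},X),(Y,\boldsymbol{1}))$ and $((Y,\boldsymbol{1}),(\boldsymbol{1},X))$, use them to express $\phi'_{(\boldsymbol{1},X),(Y,\boldsymbol{1})}$ and $(\phi')^{-1}_{(Y,\boldsymbol{1}),(\boldsymbol{1},X)}$ in terms of $\phi$ and~$m$, and then invoke the naturality of $B(\phi)$ with respect to the isomorphisms $m_{(\boldsymbol{1},X)}$ and $m_{(Y,\boldsymbol{1})}$ to cancel the $m$-factors, yielding $B(\phi)=B(\phi')$.

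Item~2 is a direct check: $\mathrm{id}\otimes c\otimes\mathrm{id}$ from~\eqref{MonStructFromBraiding} is a natural isomorphism because $c$ is; the coherence~\eqref{conditionMonStructTensorProduct} for it is just a rewriting of the hexagon axioms~\eqref{axiomsBraiding} applied to $c_{Y_1\otimes Y_2,X_3}$ and $c_{Y_1,X_2\otimes X_3}$; and both~\eqref{normalisationMonStructMult} and~\eqref{conditionUnitMult} follow immediately from $c_{\boldsymbol{1},-}=c_{-,\boldsymbol{1}}=\mathrm{id}$.

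Item~3 is the technical heart. The plan is to construct an explicit \emph{unital} monoidal natural isomorphism $u:(\otimes,\phi)\Rightarrow(\otimes,\mathrm{id}\otimes B(\phi)\otimes\mathrm{id})$ whose component at $(X,Y)$ is the unit-repositioning automorphism $\phi_{(X,\boldsymbol{1}),(\boldsymbol{1},Y)}$ of $X\otimes Y$ (or a suitable composite of such); by~\eqref{normalisationMonStructMult} one has $\phi_{(X,\boldsymbol{1}),(\boldsymbol{1},\boldsymbol{1})}=\phi_{(\boldsymbol{1},\boldsymbol{1}),(\boldsymbol{1},Y)}=\mathrm{id}$, so $u$ is automatically unital, and it is natural because $\phi$ is. The substance is the monoidality equation for~$u$, i.e. $(\mathrm{id}_{X_1}\otimes B(\phi)_{Y_1,X_2}\otimes\mathrm{id}_{Y_2})\circ(u_{(X_1,Y_1)}\otimes u_{(X_2,Y_2)})=u_{(X_1\otimes X_2,Y_1\otimes Y_2)}\circ\phi_{(X_1,Y_1),(X_2,Y_2)}$; I would prove it by decomposing $\phi_{(X_1,Y_1),(X_2,Y_2)}$ through repeated application of~\eqref{conditionMonStructTensorProduct} after splitting each slot as $(X_i,\boldsymbol{1})\otimes(\boldsymbol{1},Y_i)$, simplifying the degenerate factors by~\eqref{conditionUnitMult} and~\eqref{normalisationMonStructMult}, and recognizing $B(\phi)_{Y_1,X_2}$ from its definition~\eqref{braidingAssociatedToLaxMult}. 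This is exactly the computation of Joyal--Street~\cite[\S 5]{JS}, which I would follow while spelling out the key decompositions.

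Finally, item~4 is formal: item~1 shows $B$ descends to a well-defined map $\mathrm{UMon}(\otimes)/{\sim}_{\mathrm{U}}\to\mathrm{Br}(\mathcal{C})$ and item~2 that $c\mapsto[\mathrm{id}\otimes c\otimes\mathrm{id}]_{\mathrm{U}}$ is well defined; one composite is the one-line computation $B(\mathrm{id}\otimes c\otimes\mathrm{id})_{X,Y}=c_{X,Y}$ (the correction factor $\phi_{(Y,\boldsymbol{1}),(\boldsymbol{1},X)}=\mathrm{id}_Y\otimes c_{\boldsymbol{1},\boldsymbol{1}}\otimes\mathrm{id}_X$ being the identity), and the other composite $[\mathrm{id}\otimes B(\phi)\otimes\mathrm{id}]_{\mathrm{U}}=[\phi]_{\mathrm{U}}$ is item~3. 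The main obstacle is item~3 --- identifying the correct unital isomorphism and grinding through the coherence bookkeeping; everything else is a short specialization of~\eqref{conditionMonStructTensorProduct} or a purely formal manipulation.
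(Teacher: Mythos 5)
Your plan follows the same route as the paper: same definition of $B(\phi)$, same unit-repositioning automorphism $u_{(X,Y)}=\phi_{(X,\boldsymbol{1}),(\boldsymbol{1},Y)}$ in item 3, same coherence bookkeeping via \eqref{conditionMonStructTensorProduct}, and the same formal deduction of item 4.

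Two small remarks. First, for item 1 you insert an extra reduction $B(\phi)=B(\mathcal{P}(\phi))$ so as to assume $\phi$ unital before proving the hexagons; this is valid (it is exactly the observation the paper records later, in the proof of Corollary~\ref{coroGeneralizedJSBijection}), but the paper's diagrammatic argument proves the hexagons for arbitrary $\phi\in\mathrm{Mon}(\otimes)$ directly without invoking unitality, so your detour is optional rather than necessary. Your argument for invariance under $\sim$ --- conjugation by $m_{(\boldsymbol{1},-)}$ and $m_{(-,\boldsymbol{1})}$ cancelled by naturality of $B(\phi)$ --- is correct and spells out what the paper leaves as ``a straightforward computation.'' Second, in item 3 you have swapped source and target: with $u_{(X,Y)}=\phi_{(X,\boldsymbol{1}),(\boldsymbol{1},Y)}$ the monoidality equation that actually holds is
$\phi_{(X_1,Y_1),(X_2,Y_2)}\circ(u_{(X_1,Y_1)}\otimes u_{(X_2,Y_2)}) = u_{(X_1\otimes X_2,Y_1\otimes Y_2)}\circ(\mathrm{id}_{X_1}\otimes B(\phi)_{Y_1,X_2}\otimes\mathrm{id}_{Y_2})$,
i.e.\ $u:(\otimes,\mathrm{id}\otimes B(\phi)\otimes\mathrm{id})\Rightarrow(\otimes,\phi)$ as in the paper, not the reversed equation you wrote; the equation you state would require $u^{-1}$ in place of $u$. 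This is a sign/direction slip rather than a gap --- you even flag it with ``(or a suitable composite of such)'' --- but if you carry out the coherence computation with your stated equation and your stated component, it will not close up.
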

\begin{proof}
The details of the computations are not given in \cite{JS}. We thus provide a detailed proof for the convenience of the reader.
\\1. Let us check the first axiom of a braiding recalled in \eqref{axiomsBraiding}. We use the usual diagrammatic calculus for strict monoidal categories, reading diagrams from bottom to top:
\begin{center}
\begingroup%
  \makeatletter%
  \providecommand\color[2][]{%
    \errmessage{(Inkscape) Color is used for the text in Inkscape, but the package 'color.sty' is not loaded}%
    \renewcommand\color[2][]{}%
  }%
  \providecommand\transparent[1]{%
    \errmessage{(Inkscape) Transparency is used (non-zero) for the text in Inkscape, but the package 'transparent.sty' is not loaded}%
    \renewcommand\transparent[1]{}%
  }%
  \providecommand\rotatebox[2]{#2}%
  \newcommand*\fsize{\dimexpr\f@size pt\relax}%
  \newcommand*\lineheight[1]{\fontsize{\fsize}{#1\fsize}\selectfont}%
  \ifx\svgwidth\undefined%
    \setlength{\unitlength}{469.51589477bp}%
    \ifx\svgscale\undefined%
      \relax%
    \else%
      \setlength{\unitlength}{\unitlength * \real{\svgscale}}%
    \fi%
  \else%
    \setlength{\unitlength}{\svgwidth}%
  \fi%
  \global\let\svgwidth\undefined%
  \global\let\svgscale\undefined%
  \makeatother%
  \begin{picture}(1,0.31242827)%
    \lineheight{1}%
    \setlength\tabcolsep{0pt}%
    \put(0,0){\includegraphics[width=\unitlength,page=1]{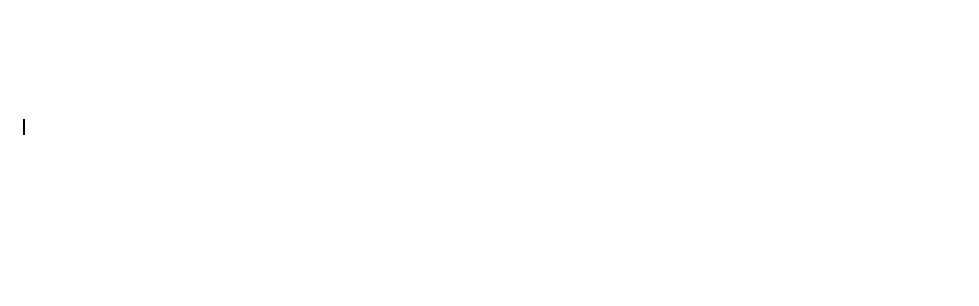}}%
    \put(0.01926555,0.19649477){\color[rgb]{0,0,0}\makebox(0,0)[lt]{\lineheight{1.25}\smash{\begin{tabular}[t]{l}$_Z$\end{tabular}}}}%
    \put(0,0){\includegraphics[width=\unitlength,page=2]{proofBraidingFromMonStruct.pdf}}%
    \put(0.00583141,0.15003467){\color[rgb]{0,0,0}\makebox(0,0)[lt]{\lineheight{1.25}\smash{\begin{tabular}[t]{l}$B(\phi)_{X \otimes Y,Z}$\end{tabular}}}}%
    \put(0,0){\includegraphics[width=\unitlength,page=3]{proofBraidingFromMonStruct.pdf}}%
    \put(0.01542379,0.108193){\color[rgb]{0,0,0}\makebox(0,0)[lt]{\lineheight{1.25}\smash{\begin{tabular}[t]{l}$_X$\end{tabular}}}}%
    \put(0,0){\includegraphics[width=\unitlength,page=4]{proofBraidingFromMonStruct.pdf}}%
    \put(0.05535854,0.10819301){\color[rgb]{0,0,0}\makebox(0,0)[lt]{\lineheight{1.25}\smash{\begin{tabular}[t]{l}$_Y$\end{tabular}}}}%
    \put(0,0){\includegraphics[width=\unitlength,page=5]{proofBraidingFromMonStruct.pdf}}%
    \put(0.0952933,0.10819301){\color[rgb]{0,0,0}\makebox(0,0)[lt]{\lineheight{1.25}\smash{\begin{tabular}[t]{l}$_Z$\end{tabular}}}}%
    \put(0,0){\includegraphics[width=\unitlength,page=6]{proofBraidingFromMonStruct.pdf}}%
    \put(0.0592003,0.19649477){\color[rgb]{0,0,0}\makebox(0,0)[lt]{\lineheight{1.25}\smash{\begin{tabular}[t]{l}$_X$\end{tabular}}}}%
    \put(0,0){\includegraphics[width=\unitlength,page=7]{proofBraidingFromMonStruct.pdf}}%
    \put(0.09913505,0.19649477){\color[rgb]{0,0,0}\makebox(0,0)[lt]{\lineheight{1.25}\smash{\begin{tabular}[t]{l}$_Y$\end{tabular}}}}%
    \put(0,0){\includegraphics[width=\unitlength,page=8]{proofBraidingFromMonStruct.pdf}}%
    \put(0.19497845,0.26837732){\color[rgb]{0,0,0}\makebox(0,0)[lt]{\lineheight{1.25}\smash{\begin{tabular}[t]{l}$_Z$\end{tabular}}}}%
    \put(0,0){\includegraphics[width=\unitlength,page=9]{proofBraidingFromMonStruct.pdf}}%
    \put(0.2001823,0.15321292){\color[rgb]{0,0,0}\makebox(0,0)[lt]{\lineheight{1.25}\smash{\begin{tabular}[t]{l}$B(\phi)_{X \otimes Y,Z}$\end{tabular}}}}%
    \put(0,0){\includegraphics[width=\unitlength,page=10]{proofBraidingFromMonStruct.pdf}}%
    \put(0.19912363,0.0458948){\color[rgb]{0,0,0}\makebox(0,0)[lt]{\lineheight{1.25}\smash{\begin{tabular}[t]{l}$_X$\end{tabular}}}}%
    \put(0,0){\includegraphics[width=\unitlength,page=11]{proofBraidingFromMonStruct.pdf}}%
    \put(0.31094099,0.04589478){\color[rgb]{0,0,0}\makebox(0,0)[lt]{\lineheight{1.25}\smash{\begin{tabular}[t]{l}$_Z$\end{tabular}}}}%
    \put(0,0){\includegraphics[width=\unitlength,page=12]{proofBraidingFromMonStruct.pdf}}%
    \put(0.18101371,0.087746){\color[rgb]{0,0,0}\makebox(0,0)[lt]{\lineheight{1.25}\smash{\begin{tabular}[t]{l}$\phi_{(\boldsymbol{1},X),(\boldsymbol{1},Y)}$\end{tabular}}}}%
    \put(0,0){\includegraphics[width=\unitlength,page=13]{proofBraidingFromMonStruct.pdf}}%
    \put(0.22893541,0.21873197){\color[rgb]{0,0,0}\makebox(0,0)[lt]{\lineheight{1.25}\smash{\begin{tabular}[t]{l}$\phi^{-1}_{(\boldsymbol{1},X),(\boldsymbol{1},Y)}$\end{tabular}}}}%
    \put(0,0){\includegraphics[width=\unitlength,page=14]{proofBraidingFromMonStruct.pdf}}%
    \put(0.25503228,0.0458948){\color[rgb]{0,0,0}\makebox(0,0)[lt]{\lineheight{1.25}\smash{\begin{tabular}[t]{l}$_Y$\end{tabular}}}}%
    \put(0,0){\includegraphics[width=\unitlength,page=15]{proofBraidingFromMonStruct.pdf}}%
    \put(0.25088713,0.26837732){\color[rgb]{0,0,0}\makebox(0,0)[lt]{\lineheight{1.25}\smash{\begin{tabular}[t]{l}$_X$\end{tabular}}}}%
    \put(0,0){\includegraphics[width=\unitlength,page=16]{proofBraidingFromMonStruct.pdf}}%
    \put(0.30679575,0.26837732){\color[rgb]{0,0,0}\makebox(0,0)[lt]{\lineheight{1.25}\smash{\begin{tabular}[t]{l}$_Y$\end{tabular}}}}%
    \put(0,0){\includegraphics[width=\unitlength,page=17]{proofBraidingFromMonStruct.pdf}}%
    \put(0.41062618,0.3019225){\color[rgb]{0,0,0}\makebox(0,0)[lt]{\lineheight{1.25}\smash{\begin{tabular}[t]{l}$_Z$\end{tabular}}}}%
    \put(0,0){\includegraphics[width=\unitlength,page=18]{proofBraidingFromMonStruct.pdf}}%
    \put(0.41477134,0.00915481){\color[rgb]{0,0,0}\makebox(0,0)[lt]{\lineheight{1.25}\smash{\begin{tabular}[t]{l}$_X$\end{tabular}}}}%
    \put(0,0){\includegraphics[width=\unitlength,page=19]{proofBraidingFromMonStruct.pdf}}%
    \put(0.52658867,0.0091548){\color[rgb]{0,0,0}\makebox(0,0)[lt]{\lineheight{1.25}\smash{\begin{tabular}[t]{l}$_Z$\end{tabular}}}}%
    \put(0,0){\includegraphics[width=\unitlength,page=20]{proofBraidingFromMonStruct.pdf}}%
    \put(0.39666136,0.051006){\color[rgb]{0,0,0}\makebox(0,0)[lt]{\lineheight{1.25}\smash{\begin{tabular}[t]{l}$\phi_{(\boldsymbol{1},X),(\boldsymbol{1},Y)}$\end{tabular}}}}%
    \put(0,0){\includegraphics[width=\unitlength,page=21]{proofBraidingFromMonStruct.pdf}}%
    \put(0.44458311,0.25227713){\color[rgb]{0,0,0}\makebox(0,0)[lt]{\lineheight{1.25}\smash{\begin{tabular}[t]{l}$\phi^{-1}_{(\boldsymbol{1},X),(\boldsymbol{1},Y)}$\end{tabular}}}}%
    \put(0,0){\includegraphics[width=\unitlength,page=22]{proofBraidingFromMonStruct.pdf}}%
    \put(0.47067996,0.00915481){\color[rgb]{0,0,0}\makebox(0,0)[lt]{\lineheight{1.25}\smash{\begin{tabular}[t]{l}$_Y$\end{tabular}}}}%
    \put(0,0){\includegraphics[width=\unitlength,page=23]{proofBraidingFromMonStruct.pdf}}%
    \put(0.46653475,0.3019225){\color[rgb]{0,0,0}\makebox(0,0)[lt]{\lineheight{1.25}\smash{\begin{tabular}[t]{l}$_X$\end{tabular}}}}%
    \put(0,0){\includegraphics[width=\unitlength,page=24]{proofBraidingFromMonStruct.pdf}}%
    \put(0.5224054,0.30227876){\color[rgb]{0,0,0}\makebox(0,0)[lt]{\lineheight{1.25}\smash{\begin{tabular}[t]{l}$_Y$\end{tabular}}}}%
    \put(0,0){\includegraphics[width=\unitlength,page=25]{proofBraidingFromMonStruct.pdf}}%
    \put(0.40723896,0.18495108){\color[rgb]{0,0,0}\makebox(0,0)[lt]{\lineheight{1.25}\smash{\begin{tabular}[t]{l}$\phi^{-1}_{(Z,\boldsymbol{1}),(\boldsymbol{1},X \otimes Y)}$\end{tabular}}}}%
    \put(0,0){\includegraphics[width=\unitlength,page=26]{proofBraidingFromMonStruct.pdf}}%
    \put(0.40723891,0.11786072){\color[rgb]{0,0,0}\makebox(0,0)[lt]{\lineheight{1.25}\smash{\begin{tabular}[t]{l}$\phi_{(\boldsymbol{1},X \otimes Y),(Z,\boldsymbol{1})}$\end{tabular}}}}%
    \put(0,0){\includegraphics[width=\unitlength,page=27]{proofBraidingFromMonStruct.pdf}}%
    \put(0.96963321,0.30328262){\color[rgb]{0,0,0}\makebox(0,0)[lt]{\lineheight{1.25}\smash{\begin{tabular}[t]{l}$_Y$\end{tabular}}}}%
    \put(0,0){\includegraphics[width=\unitlength,page=28]{proofBraidingFromMonStruct.pdf}}%
    \put(0.90128916,0.01481775){\color[rgb]{0,0,0}\makebox(0,0)[lt]{\lineheight{1.25}\smash{\begin{tabular}[t]{l}$_Y$\end{tabular}}}}%
    \put(0,0){\includegraphics[width=\unitlength,page=29]{proofBraidingFromMonStruct.pdf}}%
    \put(0.8373935,0.01483302){\color[rgb]{0,0,0}\makebox(0,0)[lt]{\lineheight{1.25}\smash{\begin{tabular}[t]{l}$_X$\end{tabular}}}}%
    \put(0,0){\includegraphics[width=\unitlength,page=30]{proofBraidingFromMonStruct.pdf}}%
    \put(0.88317912,0.05666893){\color[rgb]{0,0,0}\makebox(0,0)[lt]{\lineheight{1.25}\smash{\begin{tabular}[t]{l}$\phi_{(\boldsymbol{1},Y),(Z,\boldsymbol{1})}$\end{tabular}}}}%
    \put(0,0){\includegraphics[width=\unitlength,page=31]{proofBraidingFromMonStruct.pdf}}%
    \put(0.82824333,0.25618706){\color[rgb]{0,0,0}\makebox(0,0)[lt]{\lineheight{1.25}\smash{\begin{tabular}[t]{l}$\phi^{-1}_{(Z,\boldsymbol{1}),(\boldsymbol{1},X)}$\end{tabular}}}}%
    \put(0,0){\includegraphics[width=\unitlength,page=32]{proofBraidingFromMonStruct.pdf}}%
    \put(0.95719781,0.01481775){\color[rgb]{0,0,0}\makebox(0,0)[lt]{\lineheight{1.25}\smash{\begin{tabular}[t]{l}$_Z$\end{tabular}}}}%
    \put(0,0){\includegraphics[width=\unitlength,page=33]{proofBraidingFromMonStruct.pdf}}%
    \put(0.8499084,0.3035199){\color[rgb]{0,0,0}\makebox(0,0)[lt]{\lineheight{1.25}\smash{\begin{tabular}[t]{l}$_Z$\end{tabular}}}}%
    \put(0,0){\includegraphics[width=\unitlength,page=34]{proofBraidingFromMonStruct.pdf}}%
    \put(0.90581711,0.3035199){\color[rgb]{0,0,0}\makebox(0,0)[lt]{\lineheight{1.25}\smash{\begin{tabular}[t]{l}$_X$\end{tabular}}}}%
    \put(0,0){\includegraphics[width=\unitlength,page=35]{proofBraidingFromMonStruct.pdf}}%
    \put(0.82795671,0.18997894){\color[rgb]{0,0,0}\makebox(0,0)[lt]{\lineheight{1.25}\smash{\begin{tabular}[t]{l}$\phi_{(\boldsymbol{1},X),(Z,\boldsymbol{1})}$\end{tabular}}}}%
    \put(0,0){\includegraphics[width=\unitlength,page=36]{proofBraidingFromMonStruct.pdf}}%
    \put(0.88386528,0.12288857){\color[rgb]{0,0,0}\makebox(0,0)[lt]{\lineheight{1.25}\smash{\begin{tabular}[t]{l}$\phi^{-1}_{(Z,\boldsymbol{1}),(\boldsymbol{1},Y)}$\end{tabular}}}}%
    \put(0.14208122,0.15101823){\color[rgb]{0,0,0}\makebox(0,0)[lt]{\lineheight{1.25}\smash{\begin{tabular}[t]{l}$=$\end{tabular}}}}%
    \put(0.35844062,0.15620229){\color[rgb]{0,0,0}\makebox(0,0)[lt]{\lineheight{1.25}\smash{\begin{tabular}[t]{l}$=$\end{tabular}}}}%
    \put(0.78967366,0.16380713){\color[rgb]{0,0,0}\makebox(0,0)[lt]{\lineheight{1.25}\smash{\begin{tabular}[t]{l}$=$\end{tabular}}}}%
    \put(0,0){\includegraphics[width=\unitlength,page=37]{proofBraidingFromMonStruct.pdf}}%
    \put(0.746078,0.29872774){\color[rgb]{0,0,0}\makebox(0,0)[lt]{\lineheight{1.25}\smash{\begin{tabular}[t]{l}$_Y$\end{tabular}}}}%
    \put(0,0){\includegraphics[width=\unitlength,page=38]{proofBraidingFromMonStruct.pdf}}%
    \put(0.67834061,0.00276526){\color[rgb]{0,0,0}\makebox(0,0)[lt]{\lineheight{1.25}\smash{\begin{tabular}[t]{l}$_Y$\end{tabular}}}}%
    \put(0,0){\includegraphics[width=\unitlength,page=39]{proofBraidingFromMonStruct.pdf}}%
    \put(0.62243138,0.00363531){\color[rgb]{0,0,0}\makebox(0,0)[lt]{\lineheight{1.25}\smash{\begin{tabular}[t]{l}$_X$\end{tabular}}}}%
    \put(0,0){\includegraphics[width=\unitlength,page=40]{proofBraidingFromMonStruct.pdf}}%
    \put(0.66023061,0.04461644){\color[rgb]{0,0,0}\makebox(0,0)[lt]{\lineheight{1.25}\smash{\begin{tabular}[t]{l}$\phi_{(\boldsymbol{1},Y),(Z,\boldsymbol{1})}$\end{tabular}}}}%
    \put(0,0){\includegraphics[width=\unitlength,page=41]{proofBraidingFromMonStruct.pdf}}%
    \put(0.61230901,0.24908236){\color[rgb]{0,0,0}\makebox(0,0)[lt]{\lineheight{1.25}\smash{\begin{tabular}[t]{l}$\phi^{-1}_{(Z,\boldsymbol{1}),(\boldsymbol{1},X)}$\end{tabular}}}}%
    \put(0,0){\includegraphics[width=\unitlength,page=42]{proofBraidingFromMonStruct.pdf}}%
    \put(0.73424926,0.00276526){\color[rgb]{0,0,0}\makebox(0,0)[lt]{\lineheight{1.25}\smash{\begin{tabular}[t]{l}$_Z$\end{tabular}}}}%
    \put(0,0){\includegraphics[width=\unitlength,page=43]{proofBraidingFromMonStruct.pdf}}%
    \put(0.62640087,0.29849497){\color[rgb]{0,0,0}\makebox(0,0)[lt]{\lineheight{1.25}\smash{\begin{tabular}[t]{l}$_Z$\end{tabular}}}}%
    \put(0,0){\includegraphics[width=\unitlength,page=44]{proofBraidingFromMonStruct.pdf}}%
    \put(0.69900779,0.2985108){\color[rgb]{0,0,0}\makebox(0,0)[lt]{\lineheight{1.25}\smash{\begin{tabular}[t]{l}$_X$\end{tabular}}}}%
    \put(0,0){\includegraphics[width=\unitlength,page=45]{proofBraidingFromMonStruct.pdf}}%
    \put(0.62288655,0.18175632){\color[rgb]{0,0,0}\makebox(0,0)[lt]{\lineheight{1.25}\smash{\begin{tabular}[t]{l}$\phi^{-1}_{(Z,X),(\boldsymbol{1},Y)}$\end{tabular}}}}%
    \put(0,0){\includegraphics[width=\unitlength,page=46]{proofBraidingFromMonStruct.pdf}}%
    \put(0.6228865,0.11306854){\color[rgb]{0,0,0}\makebox(0,0)[lt]{\lineheight{1.25}\smash{\begin{tabular}[t]{l}$\phi_{(\boldsymbol{1},X),(Z,Y)}$\end{tabular}}}}%
    \put(0.57366236,0.16800185){\color[rgb]{0,0,0}\makebox(0,0)[lt]{\lineheight{1.25}\smash{\begin{tabular}[t]{l}$=$\end{tabular}}}}%
    \put(0,0){\includegraphics[width=\unitlength,page=47]{proofBraidingFromMonStruct.pdf}}%
    \put(0.21422077,0.12600812){\color[rgb]{0,0,0}\makebox(0,0)[lt]{\lineheight{1.25}\smash{\begin{tabular}[t]{l}$_X$\end{tabular}}}}%
    \put(0,0){\includegraphics[width=\unitlength,page=48]{proofBraidingFromMonStruct.pdf}}%
    \put(0.27012939,0.12600812){\color[rgb]{0,0,0}\makebox(0,0)[lt]{\lineheight{1.25}\smash{\begin{tabular}[t]{l}$_Y$\end{tabular}}}}%
    \put(0,0){\includegraphics[width=\unitlength,page=49]{proofBraidingFromMonStruct.pdf}}%
    \put(0.26214247,0.18990371){\color[rgb]{0,0,0}\makebox(0,0)[lt]{\lineheight{1.25}\smash{\begin{tabular}[t]{l}$_X$\end{tabular}}}}%
    \put(0,0){\includegraphics[width=\unitlength,page=50]{proofBraidingFromMonStruct.pdf}}%
    \put(0.31805112,0.18990371){\color[rgb]{0,0,0}\makebox(0,0)[lt]{\lineheight{1.25}\smash{\begin{tabular}[t]{l}$_Y$\end{tabular}}}}%
    \put(0,0){\includegraphics[width=\unitlength,page=51]{proofBraidingFromMonStruct.pdf}}%
    \put(0.42229875,0.08875543){\color[rgb]{0,0,0}\makebox(0,0)[lt]{\lineheight{1.25}\smash{\begin{tabular}[t]{l}$_X$\end{tabular}}}}%
    \put(0,0){\includegraphics[width=\unitlength,page=52]{proofBraidingFromMonStruct.pdf}}%
    \put(0.49387326,0.08932703){\color[rgb]{0,0,0}\makebox(0,0)[lt]{\lineheight{1.25}\smash{\begin{tabular}[t]{l}$_Y$\end{tabular}}}}%
    \put(0,0){\includegraphics[width=\unitlength,page=53]{proofBraidingFromMonStruct.pdf}}%
    \put(0.47788039,0.22334764){\color[rgb]{0,0,0}\makebox(0,0)[lt]{\lineheight{1.25}\smash{\begin{tabular}[t]{l}$_X$\end{tabular}}}}%
    \put(0,0){\includegraphics[width=\unitlength,page=54]{proofBraidingFromMonStruct.pdf}}%
    \put(0.53378904,0.22334764){\color[rgb]{0,0,0}\makebox(0,0)[lt]{\lineheight{1.25}\smash{\begin{tabular}[t]{l}$_Y$\end{tabular}}}}%
    \put(0,0){\includegraphics[width=\unitlength,page=55]{proofBraidingFromMonStruct.pdf}}%
    \put(0.68545089,0.08390272){\color[rgb]{0,0,0}\makebox(0,0)[lt]{\lineheight{1.25}\smash{\begin{tabular}[t]{l}$_Z$\end{tabular}}}}%
    \put(0,0){\includegraphics[width=\unitlength,page=56]{proofBraidingFromMonStruct.pdf}}%
    \put(0.74934649,0.08390272){\color[rgb]{0,0,0}\makebox(0,0)[lt]{\lineheight{1.25}\smash{\begin{tabular}[t]{l}$_Y$\end{tabular}}}}%
    \put(0,0){\includegraphics[width=\unitlength,page=57]{proofBraidingFromMonStruct.pdf}}%
    \put(0.92467074,0.16061322){\color[rgb]{0,0,0}\makebox(0,0)[lt]{\lineheight{1.25}\smash{\begin{tabular}[t]{l}$_Z$\end{tabular}}}}%
    \put(0,0){\includegraphics[width=\unitlength,page=58]{proofBraidingFromMonStruct.pdf}}%
    \put(0.42188148,0.15635851){\color[rgb]{0,0,0}\makebox(0,0)[lt]{\lineheight{1.25}\smash{\begin{tabular}[t]{l}$_Z$\end{tabular}}}}%
    \put(0,0){\includegraphics[width=\unitlength,page=59]{proofBraidingFromMonStruct.pdf}}%
    \put(0.48184753,0.15605925){\color[rgb]{0,0,0}\makebox(0,0)[lt]{\lineheight{1.25}\smash{\begin{tabular}[t]{l}$_X$\end{tabular}}}}%
    \put(0,0){\includegraphics[width=\unitlength,page=60]{proofBraidingFromMonStruct.pdf}}%
    \put(0.54168573,0.15635851){\color[rgb]{0,0,0}\makebox(0,0)[lt]{\lineheight{1.25}\smash{\begin{tabular}[t]{l}$_Y$\end{tabular}}}}%
    \put(0,0){\includegraphics[width=\unitlength,page=61]{proofBraidingFromMonStruct.pdf}}%
    \put(0.63752913,0.15156632){\color[rgb]{0,0,0}\makebox(0,0)[lt]{\lineheight{1.25}\smash{\begin{tabular}[t]{l}$_Z$\end{tabular}}}}%
    \put(0,0){\includegraphics[width=\unitlength,page=62]{proofBraidingFromMonStruct.pdf}}%
    \put(0.69715595,0.15119581){\color[rgb]{0,0,0}\makebox(0,0)[lt]{\lineheight{1.25}\smash{\begin{tabular}[t]{l}$_X$\end{tabular}}}}%
    \put(0,0){\includegraphics[width=\unitlength,page=63]{proofBraidingFromMonStruct.pdf}}%
    \put(0.75733338,0.15156634){\color[rgb]{0,0,0}\makebox(0,0)[lt]{\lineheight{1.25}\smash{\begin{tabular}[t]{l}$_Y$\end{tabular}}}}%
    \put(0,0){\includegraphics[width=\unitlength,page=64]{proofBraidingFromMonStruct.pdf}}%
    \put(0.86116379,0.22824107){\color[rgb]{0,0,0}\makebox(0,0)[lt]{\lineheight{1.25}\smash{\begin{tabular}[t]{l}$_Z$\end{tabular}}}}%
    \put(0,0){\includegraphics[width=\unitlength,page=65]{proofBraidingFromMonStruct.pdf}}%
    \put(0.91707239,0.22824105){\color[rgb]{0,0,0}\makebox(0,0)[lt]{\lineheight{1.25}\smash{\begin{tabular}[t]{l}$_X$\end{tabular}}}}%
    \put(0,0){\includegraphics[width=\unitlength,page=66]{proofBraidingFromMonStruct.pdf}}%
    \put(0.91638616,0.09493107){\color[rgb]{0,0,0}\makebox(0,0)[lt]{\lineheight{1.25}\smash{\begin{tabular}[t]{l}$_Z$\end{tabular}}}}%
    \put(0,0){\includegraphics[width=\unitlength,page=67]{proofBraidingFromMonStruct.pdf}}%
    \put(0.97229475,0.09493107){\color[rgb]{0,0,0}\makebox(0,0)[lt]{\lineheight{1.25}\smash{\begin{tabular}[t]{l}$_Y$\end{tabular}}}}%
    \put(0,0){\includegraphics[width=\unitlength,page=68]{proofBraidingFromMonStruct.pdf}}%
    \put(0.70941168,0.22025411){\color[rgb]{0,0,0}\makebox(0,0)[lt]{\lineheight{1.25}\smash{\begin{tabular}[t]{l}$_X$\end{tabular}}}}%
    \put(0,0){\includegraphics[width=\unitlength,page=69]{proofBraidingFromMonStruct.pdf}}%
    \put(0.63752913,0.22025411){\color[rgb]{0,0,0}\makebox(0,0)[lt]{\lineheight{1.25}\smash{\begin{tabular}[t]{l}$_Z$\end{tabular}}}}%
  \end{picture}%
\endgroup%

\end{center}
The first equality is by naturality of $B(\phi)$, the second equality is the definition of $B(\phi)$, the third equality uses \eqref{conditionMonStructTensorProduct} two times (once for the two lower coupons and once for the two upper coupons) and the fourth equality also uses \eqref{conditionMonStructTensorProduct} with $(X_1,Y_1) = (\boldsymbol{1},X)$, $(X_2,Y_2) = (Z,\boldsymbol{1})$, $(X_3,Y_3) = (\boldsymbol{1},Y)$ applied to the two middle coupons. Finally the last term is equal to $(B(\phi)_{X,Z} \otimes \mathrm{id}_Y) \circ (\mathrm{id}_X \otimes B(\phi)_{Y,Z})$ as desired. The other axiom of a braiding is shown similarly. The second claim is obtained by a straightforward computation.
\\2. It is an easy computation to check that $\mathrm{id} \otimes c \otimes \mathrm{id} \in \mathrm{Mon}(\otimes)$ using \eqref{axiomsBraiding}. Moreover the axioms of a braiding imply $c_{\boldsymbol{1},X} = c_{X,\boldsymbol{1}} = \mathrm{id}_X$, hence $\mathrm{id} \otimes c \otimes \mathrm{id} \in \mathrm{UMon}(\otimes)$.
\\3. Define $u_{(X,Y)} = \phi_{(X,\boldsymbol{1}),(\boldsymbol{1},Y)} : X \otimes Y \to X \otimes Y$. Then $u \in \mathrm{UAut}(\otimes)$. Note from \eqref{conditionMonStructTensorProduct} and the unitality condition \eqref{conditionUnitMult} that
\begin{align}
u_{(X,Y_1 \otimes Y_2)} &= \phi_{(X,Y_1),(\boldsymbol{1},Y_2)} \circ \bigl( u_{(X,Y_1)} \otimes \mathrm{id}_{Y_2} \bigr),\label{firstPropertyMonAutu}\\
u_{(X_1 \otimes X_2,Y)} &= \phi_{(X_1,\boldsymbol{1}),(X_2,Y)} \circ \bigl( \mathrm{id}_{X_1} \otimes u_{(X_2,Y)} \bigr).\label{secondPropertyMonAutu}
\end{align}
We have
\begin{align*}
&u_{(X_1 \otimes X_2,Y_1 \otimes Y_2)} \circ \bigl( \mathrm{id}_{X_1} \otimes B(\phi)_{Y_1,X_2} \otimes \mathrm{id}_{Y_2} \bigr)\\
\overset{\eqref{firstPropertyMonAutu},\eqref{braidingAssociatedToLaxMult}}=\:&\phi_{(X_1 \otimes X_2,Y_1),(\boldsymbol{1},Y_2)} \circ \bigl( u_{(X_1 \otimes X_2,Y_1)} \otimes \mathrm{id}_{Y_2} \bigr) \circ \bigl( \mathrm{id}_{X_1} \otimes u_{(X_2,Y_1)}^{-1} \otimes \mathrm{id}_{Y_2} \bigr)\\
&\qquad\qquad\qquad\qquad\qquad\qquad\qquad\qquad\:\circ \bigl( \mathrm{id}_{X_1} \otimes \phi_{(\boldsymbol{1},Y_1),(X_2,\boldsymbol{1})} \otimes \mathrm{id}_{Y_2} \bigr)\\
\overset{\eqref{secondPropertyMonAutu}}{=}\:& \phi_{(X_1 \otimes X_2,Y_1),(\boldsymbol{1},Y_2)} \circ \bigl( \phi_{(X_1,\boldsymbol{1}),(X_2,Y_1)} \otimes \mathrm{id}_{Y_2} \bigr) \circ \bigl( \mathrm{id}_{X_1} \otimes \phi_{(\boldsymbol{1},Y_1),(X_2,\boldsymbol{1})} \otimes \mathrm{id}_{Y_2} \bigr)\\
\overset{\eqref{conditionMonStructTensorProduct}}{=}\:&\phi_{(X_1 \otimes X_2,Y_1),(\boldsymbol{1},Y_2)} \circ \bigl( \phi_{(X_1,Y_1),(X_2,\boldsymbol{1})} \otimes \mathrm{id}_{Y_2} \bigr) \circ \bigl( u_{(X_1,Y_1)} \otimes \mathrm{id}_{X_2 \otimes Y_2} \bigr)\\
\overset{\eqref{conditionMonStructTensorProduct}}{=}\:&\phi_{(X_1,Y_1),(X_2,Y_2)} \circ \bigl( u_{(X_1,Y_1)} \otimes u_{(X_2,Y_2)} \bigr)
\end{align*}
Hence $u$ is a monoidal isomorphism $(\otimes, \mathrm{id} \otimes B(\phi) \otimes \mathrm{id}) \Rightarrow (\otimes,\phi)$.
\\4. It is readily seen that the composition $\mathrm{Br}(\mathcal{C}) \to \mathrm{UMon}(\otimes)/{\sim}_{\mathrm{U}} \to \mathrm{Br}(\mathcal{C})$ is the identity. Conversely the composition $\mathrm{UMon}(\otimes)/{\sim}_{\mathrm{U}} \to \mathrm{Br}(\mathcal{C}) \to \mathrm{UMon}(\otimes)/{\sim}_{\mathrm{U}}$ is given by $[\phi]_{\mathrm{U}} \mapsto [\mathrm{id} \otimes B(\phi) \otimes \mathrm{id}]_{\mathrm{U}}$, which is the identity by the previous item.
\end{proof}
Note that in item 1 of Proposition \ref{factoLaxMult} we simply assumed $\phi \in \mathrm{Mon}(\otimes)$, \textit{i.e.} $\phi$ is not assumed to be unital. Hence the following result makes sense:
\begin{corollary}\label{coroGeneralizedJSBijection}
There is a bijection $\mathrm{Mon}(\otimes)/{\sim} \to \bigl(\mathrm{Mon}(\mathrm{Id}_{\mathcal{C}})/{\sim}\bigr)^{\times 2} \times \mathrm{Br}(\mathcal{C})$ given by
\[ [\phi] \mapsto \bigl( \,[p_1(\phi)], \,[p_2(\phi)], \,B(\phi) \,\bigr) \]
where $p_1$, $p_2$ are defined in Lemma \ref{lemmaInteractionsMonIdWithMonOtimes}. Its inverse is $\bigl( \, [\alpha],\,[\beta],\ c\, \bigr) \mapsto \bigl[ (\alpha \otimes \beta) \circ (\mathrm{id} \otimes c \otimes \mathrm{id}) \bigr]$.
\end{corollary}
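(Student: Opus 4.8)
The plan is to obtain the bijection simply by composing the two bijections already established in this subsection: the decomposition bijection $\overline{\mathcal{F}}$ of Lemma~\ref{lemmaDecompositionMon} and the Joyal--Street bijection $\overline{B} : \mathrm{UMon}(\otimes)/{\sim}_{\mathrm{U}} \to \mathrm{Br}(\mathcal{C})$, $[\psi]_{\mathrm{U}} \mapsto B(\psi)$, of Proposition~\ref{factoLaxMult}(4). Concretely I would consider
\[ \mathrm{Mon}(\otimes)/{\sim} \xrightarrow{\ \overline{\mathcal{F}}\ } \bigl(\mathrm{Mon}(\mathrm{Id}_{\mathcal{C}})/{\sim}\bigr)^{\times 2} \times \bigl(\mathrm{UMon}(\otimes)/{\sim}_{\mathrm{U}}\bigr) \xrightarrow{\ \mathrm{id} \times \mathrm{id} \times \overline{B}\ } \bigl(\mathrm{Mon}(\mathrm{Id}_{\mathcal{C}})/{\sim}\bigr)^{\times 2} \times \mathrm{Br}(\mathcal{C}), \]
which is a bijection as a composite of bijections. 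By the formula for $\overline{\mathcal{F}}$ recorded in Lemma~\ref{lemmaDecompositionMon}, this composite sends $[\phi]$ to $\bigl([p_1(\phi)], [p_2(\phi)], B(\mathcal{P}(\phi))\bigr)$, so the only remaining point is to identify $B(\mathcal{P}(\phi))$ with $B(\phi)$.

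For that step I would unpack the definition \eqref{projMonOnUMon} of $\mathcal{P}$ together with the definition \eqref{braidingAssociatedToLaxMult} of $B$. Since $p_1(\phi)$ and $p_2(\phi)$ lie in $\mathrm{Mon}(\mathrm{Id}_{\mathcal{C}})$, their components with one entry equal to $\boldsymbol{1}$ are identity morphisms; hence $\mathcal{P}(\phi)_{(\boldsymbol{1},X),(Y,\boldsymbol{1})} = \phi_{(\boldsymbol{1},X),(Y,\boldsymbol{1})}$ and $\mathcal{P}(\phi)_{(Y,\boldsymbol{1}),(\boldsymbol{1},X)} = \phi_{(Y,\boldsymbol{1}),(\boldsymbol{1},X)}$, and substituting these into \eqref{braidingAssociatedToLaxMult} gives $B(\mathcal{P}(\phi)) = B(\phi)$ on the nose. (Alternatively one may invoke the $\sim$-invariance of $B$ on all of $\mathrm{Mon}(\otimes)$ from Proposition~\ref{factoLaxMult}(1), but the direct computation is shorter.) This shows the composite bijection is indeed $[\phi] \mapsto \bigl([p_1(\phi)], [p_2(\phi)], B(\phi)\bigr)$; its well-definedness on $\sim$-classes is already part of Lemma~\ref{lemmaDecompositionMon} (for the first two components) and of Proposition~\ref{factoLaxMult}(1) (for the last).

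For the inverse I would run the two inverses in the other order. The inverse of $\overline{\mathcal{F}}$ is $\bigl([\alpha], [\beta], [\psi]_{\mathrm{U}}\bigr) \mapsto \bigl[(\alpha \otimes \beta) \circ \psi\bigr]$ by Lemma~\ref{lemmaDecompositionMon}, and the inverse of $\overline{B}$ is $c \mapsto [\mathrm{id} \otimes c \otimes \mathrm{id}]_{\mathrm{U}}$ by Proposition~\ref{factoLaxMult}(4), so their composite is $\bigl([\alpha], [\beta], c\bigr) \mapsto \bigl[(\alpha \otimes \beta) \circ (\mathrm{id} \otimes c \otimes \mathrm{id})\bigr]$, exactly as claimed. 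I do not anticipate any genuine obstacle: everything reduces to the two cited bijections, and the only subtlety is that $\overline{\mathcal{F}}$ naturally outputs the class of $\mathcal{P}(\phi)$ rather than of $\phi$, which is reconciled by the elementary identity $B \circ \mathcal{P} = B$ above.
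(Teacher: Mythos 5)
Your proposal is correct and follows essentially the same approach as the paper's proof: compose the bijection of Lemma~\ref{lemmaDecompositionMon} with that of Proposition~\ref{factoLaxMult}(4), then reduce $B(\mathcal{P}(\phi))$ to $B(\phi)$. The only difference is that the paper asserts $B(\mathcal{P}(\phi)) = B(\phi)$ without comment, whereas you supply the short verification; your computation is correct since $p_1(\phi)$ and $p_2(\phi)$ being in $\mathrm{Mon}(\mathrm{Id}_{\mathcal{C}})$ forces their components at $\boldsymbol{1}$ to be identities.
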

\begin{proof}
Combining Lemma \ref{lemmaDecompositionMon} with item 4 in Proposition \ref{factoLaxMult}, we know that the map $[\phi] \mapsto \bigl( \,[p_1(\phi)], \,[p_2(\phi)], \,B(\mathcal{P}(\phi)) \,\bigr)$ is a bijection $\mathrm{Mon}(\otimes)/{\sim} \:\: \to \bigl(\mathrm{Mon}(\mathrm{Id}_{\mathcal{C}})/{\sim}\bigr)^{\times 2} \times \mathrm{Br}(\mathcal{C})$. But it is easily seen that $B(\mathcal{P}(\phi)) = B(\phi)$.
\end{proof}
\noindent In particular any $\phi \in \mathrm{Mon}(\otimes)$ is equivalent to $(p_1(\phi) \otimes p_2(\phi)) \circ (\mathrm{id} \otimes B(\phi) \otimes \mathrm{id})$, which is a sort of canonical decomposition.

\subsection{DY cohomology of multiplications and tangent braidings}\label{subsectionDYcohomologyTangentBraidings}
The goal of this subsection is to establish the  infinitesimal counterpart of the results in \S\ref{sectionLaxMult} and to build the bridge to the DY cohomology of the tensor product functor~$\otimes$ equipped with a monoidal structure. We assume that the monoidal category $\mathcal{C}$ is $\Bbbk$-linear (where $\Bbbk$ is a field) and that its monoidal product $\otimes$ is $\Bbbk$-bilinear on morphisms. We will make heavy use of the notations and definitions from \S\ref{subsectionDefMonStruct} and in particular of the monoidal category $\mathcal{C}_{\epsilon} = \mathcal{C} \otimes_{\Bbbk} \Bbbk[\epsilon]/(\epsilon^2)$ defined in \eqref{categoryExtendedDualNumbers}--\eqref{compositionExtendedDualNumbers}. Recall that $\otimes_{\epsilon}$ denotes the monoidal product of $\mathcal{C}_{\epsilon}$. Also note that $a + \epsilon b$ is an isomorphism in $\mathcal{C}_{\epsilon}$ if and only if $a$ is an isomorphism in $\mathcal{C}$ and then $(a + \epsilon b)^{-1} = a^{-1} - \epsilon (a^{-1} \circ b \circ a^{-1})$.

\smallskip

\indent Recall the sets $\mathrm{Mon}(\otimes)$ and $\mathrm{UMon}(\otimes)$ from Definition \ref{defUMonOtimes}. If $\phi \in \mathrm{Mon}(\otimes)$ and $f$ is a natural transformation with components $f_{(X_1,Y_1),(X_2,Y_2)} : X_1 \otimes Y_1 \otimes X_2 \otimes Y_2 \to X_1 \otimes X_2 \otimes Y_1 \otimes Y_2$ we denote by $\phi + \epsilon f$ the natural isomorphism in $\mathcal{C}_{\epsilon}$ with components $\phi_{(X_1,Y_1),(X_2,Y_2)} + \epsilon f_{(X_1,Y_1),(X_2,Y_2)}$. Then by item 1 in Lemma \ref{lemmaDeformationMultilinearFunctor}
\[ \phi + \epsilon f \in \mathrm{Mon}(\otimes_{\epsilon}) \quad \iff \quad f \in \mathbf{T}_{\phi}\mathrm{Mon}(\otimes). \] 
When $\phi \in \mathrm{UMon}(\otimes) \subset \mathrm{Mon}(\otimes)$ there is a relevant subspace:
\[ \mathbf{T}_{\phi}\mathrm{UMon}(\otimes) = \bigl\{ f \in \mathbf{T}_{\phi}\mathrm{Mon}(\otimes) \,\big|\, \forall \, X,Y \in \mathcal{C}, \:\: f_{(X,\boldsymbol{1}),(Y,\boldsymbol{1})} = f_{(\boldsymbol{1},X),(\boldsymbol{1},Y)} = 0 \bigr\}. \]
It is clear from \eqref{conditionUnitMult} that
\begin{equation*}
\phi + \epsilon f \in \mathrm{UMon}(\otimes_{\epsilon}) \quad \iff \quad f \in \mathbf{T}_{\phi}\mathrm{UMon}(\otimes).
\end{equation*}
\indent Recall from \eqref{defp1p2} the maps $p_1,p_2 : \mathrm{Mon}(\otimes) \to \mathrm{Mon}(\mathrm{Id}_{\mathcal{C}})$. For the category $\mathcal{C}_{\epsilon}$, these are maps $p_1,p_2 : \mathrm{Mon}(\otimes_{\epsilon}) \to \mathrm{Mon}(\mathrm{Id}_{\mathcal{C}_{\epsilon}})$ which we can write under the form $p_i(\phi + \epsilon f) = p_i(\phi) + \epsilon \,\mathbf{T}_{\phi}p_i(f)$ with 
\[ \forall \, X,Y \in \mathcal{C}, \quad \mathbf{T}_{\phi}p_1(f)_{X,Y} = f_{(X,\boldsymbol{1}),(Y,\boldsymbol{1})}, \qquad \mathbf{T}_{\phi}p_2(f)_{X,Y} = f_{(\boldsymbol{1},X),(\boldsymbol{1},Y)} \]
for $i=1,2$. We have $p_i(\phi) + \epsilon \,\mathbf{T}_{\phi}p_i(f) \in \mathrm{Mon}(\mathrm{Id}_{\mathcal{C}_{\epsilon}})$, which is equivalent to $\mathbf{T}_{\phi}p_i(f) \in \mathbf{T}_{p_i(\phi)}\mathrm{Mon}(\mathrm{Id}_{\mathcal{C}})$. In this way we get linear maps $\mathbf{T}_{\phi}p_i : \mathbf{T}_{\phi}\mathrm{Mon}(\otimes) \to \mathbf{T}_{p_i(\phi)}\mathrm{Mon}(\mathrm{Id}_{\mathcal{C}})$ 
which are the infinitesimal versions of $p_1$ and $p_2$ at the point $\phi$. Similarly the projection map $\mathcal{P} : \mathrm{Mon}(\otimes_{\epsilon}) \to \mathrm{UMon}(\otimes_{\epsilon})$ defined as in \eqref{projMonOnUMon} can be written as $\mathcal{P}(\phi + \epsilon f) = \mathcal{P}(\phi) + \epsilon \mathbf{T}_{\phi}\mathcal{P}(f)$ with
\begin{align}
\begin{split}\label{defTphiP}
\mathbf{T}_{\phi}\mathcal{P}(f) = \bigl( p_1(\phi)^{-1} \otimes p_2(\phi)^{-1} \bigr) \circ f &- \bigl[ \bigl( p_1(\phi)^{-1} \circ \mathbf{T}_{\phi}p_1(f) \circ p_1(\phi)^{-1} \bigr) \otimes p_2(\phi)^{-1} \bigr] \circ \phi\\
&- \bigl[ p_1(\phi)^{-1} \otimes \bigl( p_2(\phi)^{-1} \circ \mathbf{T}_{\phi}p_2(f) \circ p_2(\phi)^{-1} \bigr) \bigr] \circ \phi
\end{split}
\end{align}
and we get a linear projection $\mathbf{T}_{\phi}\mathcal{P} : \mathbf{T}_{\phi}\mathrm{Mon}(\otimes) \to \mathbf{T}_{\mathcal{P}(\phi)}\mathrm{UMon}(\otimes)$ which is the infinitesimal version of $\mathcal{P}$ at the point $\phi$. Now by applying \eqref{decompositionMonOtimes} to the category $\mathcal{C}_{\epsilon}$ we obtain a linear isomorphism
\begin{equation}\label{decompositionTangentMonOtimes}
\fonc{\mathbf{T}_{\phi}\mathcal{F}}{\mathbf{T}_{\phi}\mathrm{Mon}(\otimes)}{\mathbf{T}_{p_1(\phi)}\mathrm{Mon}(\mathrm{Id}_{\mathcal{C}}) \oplus \mathbf{T}_{p_2(\phi)}\mathrm{Mon}(\mathrm{Id}_{\mathcal{C}}) \oplus \mathbf{T}_{\mathcal{P}(\phi)}\mathrm{UMon}(\otimes)}{f}{\bigl( \mathbf{T}_{\phi}p_1(f), \mathbf{T}_{\phi}p_2(f), \mathbf{T}_{\phi}\mathcal{P}(f) \bigr)}
\end{equation}
To compute its inverse write $\mathcal{F}^{-1}\bigl( p_1(\phi) + \epsilon a, p_2(\phi) + \epsilon b, \mathcal{P}(\phi) + \epsilon g \bigr) = \phi + \epsilon \mathbf{T}_{\mathcal{F}(\phi)}(\mathcal{F}^{-1})(a,b,g)$ and note that $\mathbf{T}_{\mathcal{F}(\phi)}(\mathcal{F}^{-1}) = (\mathbf{T}_{\phi}\mathcal{F})^{-1}$. Hence by \eqref{decompositionMonOtimesInverse} we find
\[ (\mathbf{T}_{\phi}\mathcal{F})^{-1}(a,b,g) = \bigl( p_1(\phi) \otimes p_2(\phi) \bigr) \circ g + \bigl( p_1(\phi) \otimes b \bigr) \circ \mathcal{P}(\phi) + \bigl( a \otimes p_2(\phi) \bigr) \circ \mathcal{P}(\phi). \]

\begin{remark}\label{remarkDeformationOfMultIsWeak}
If $\phi \in \mathrm{UMon}(\otimes)$ then $p_1(\phi) = p_2(\phi) = \boldsymbol{e}$ with $\boldsymbol{e}_{X,Y} = \mathrm{id}_{X \otimes Y}$ and $\mathcal{P}(\phi) = \phi$. As a result $\mathbf{T}_{\phi}\mathrm{Mon}(\otimes) \cong \mathbf{T}_{\boldsymbol{e}}\mathrm{Mon}(\mathrm{Id}_{\mathcal{C}})^{\oplus 2} \oplus \mathbf{T}_{\phi}\mathrm{UMon}(\otimes)$ and $\mathbf{T}_{\phi}\mathcal{P}$ is the projection onto $\mathbf{T}_{\phi}\mathrm{UMon}(\otimes)$. In general, if $(\otimes,\phi)$ is a multiplication on $\mathcal{C}$ and $f \in \mathbf{T}_{\phi}\mathrm{Mon}(\otimes)$ then $(\otimes_{\epsilon},\phi + \epsilon f)$ is just a weak-unital multiplication on $\mathcal{C}_{\epsilon}$; it is a multiplication if and only if $\mathbf{T}_{\phi}\mathcal{P}(f) = f$. Briefly: the deformation of a multiplication on $\mathcal{C}$ is in general just a weak-unital multiplication on $\mathcal{C}_{\epsilon}$.
\end{remark}

\indent In order to obtain the tangent version of Lemma \ref{lemmaDecompositionMon}, recall from \eqref{defTangentSpaceQuotient} that by definition
\[ \mathbf{T}_{[\phi]}\bigl( \mathrm{Mon}(\otimes)/{\sim} \bigr) = \bigl( \mathbf{T}_{\phi}\mathrm{Mon}(\otimes) \bigr)/{\equiv}_{\phi}, \qquad \mathbf{T}_{[\alpha]}\bigl( \mathrm{Mon}(\mathrm{Id}_{\mathcal{C}})/{\sim} \bigr) = \bigl( \mathbf{T}_{\alpha}\mathrm{Mon}(\mathrm{Id}_{\mathcal{C}}) \bigr)/{\equiv}_{\alpha} \]
for any $\phi \in \mathrm{Mon}(\otimes)$ and $\alpha \in \mathrm{Mon}(\mathrm{Id}_{\mathcal{C}})$, where $\equiv$ is defined in general in \eqref{defEquivOnTangentSpace}. By \eqref{tangentSpaceAndCohom}, these spaces are isomorphic to the DY cohomology spaces $H^2_{\mathrm{DY}}(\otimes,\phi)$ and $H^2_{\mathrm{DY}}(\mathrm{Id}_{\mathcal{C}},\alpha)$, respectively. Since $\otimes$ and $\mathrm{Id}_{\mathcal{C}}$ are $\Bbbk$-linear in each variable, it will be convenient to use the other definition of $\equiv$ given in item 2 of Lemma \ref{lemmaDeformationMultilinearFunctor}. Also recall the equivalence relation $\sim_{\mathrm{U}}$ on $\mathrm{UMon}(\otimes)$ introduced before Lemma \ref{lemmaDecompositionMon}. For $\omega \in \mathrm{UMon}(\otimes)$ we define
\begin{equation}\label{defTangentSpaceQuotientUMon}
\mathbf{T}_{[\omega]_{\mathrm{U}}}\bigl( \mathrm{UMon}(\otimes)/{\sim}_{\mathrm{U}} \bigr) = \bigl( \mathbf{T}_{\omega} \mathrm{UMon}(\otimes) \bigr)/{\equiv}_{\omega}^{\mathrm{U}}
\end{equation}
where $[\omega]_{\mathrm{U}}$ is the equivalence class of $\omega$ for $\sim_{\mathrm{U}}$ and we declare that $f \equiv_{\omega}^{\mathrm{U}} g$ if there exists $v \in \mathrm{Nat}(\otimes,\otimes)$ such that $\mathrm{id}_{\otimes} + \epsilon v : \otimes_{\epsilon} \Rightarrow \otimes_{\epsilon}$ is a monoidal natural isomorphism $(\otimes_{\epsilon}, \omega + \epsilon f) \Rightarrow (\otimes_{\epsilon}, \omega + \epsilon g)$ in $\mathcal{C}_{\epsilon}$ which is {\em unital}, \textit{i.e.} $v_{(\boldsymbol{1},X)} = v_{(X,\boldsymbol{1})} = 0$ for all $X \in \mathcal{C}$.

\begin{lemma}\label{lemmaDecompositionTMon}
The linear map \eqref{decompositionTangentMonOtimes} descends to an isomorphism
\[ \mathbf{T}_{[\phi]}\bigl( \mathrm{Mon}(\otimes)/{\sim} \bigr) \cong \mathbf{T}_{[p_1(\phi)]}\bigl( \mathrm{Mon}(\mathrm{Id}_{\mathcal{C}})/{\sim}\bigr) \oplus \mathbf{T}_{[p_2(\phi)]}\bigl( \mathrm{Mon}(\mathrm{Id}_{\mathcal{C}})/{\sim} \bigr) \oplus \mathbf{T}_{[\mathcal{P}(\phi)]_{\mathrm{U}}}\bigl(\mathrm{UMon}(\otimes)/{\sim}_{\mathrm{U}} \bigr) \]
which can be rewritten as
\begin{equation}\label{isoDecompositionH2Otimes}
H^2_{\mathrm{DY}}(\otimes,\phi) \cong H^2_{\mathrm{DY}}(\mathcal{C}) \oplus H^2_{\mathrm{DY}}(\mathcal{C}) \oplus \mathbf{T}_{[\mathcal{P}(\phi)]_{\mathrm{U}}}\bigl(\mathrm{UMon}(\otimes)/\!\!\sim_{\mathrm{U}} \bigr).
\end{equation}
\end{lemma}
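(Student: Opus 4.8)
The plan is to transport the proof of Lemma~\ref{lemmaDecompositionMon} to the category $\mathcal{C}_\epsilon$ and to linearize it, being careful that the quotients in \eqref{defTangentSpaceQuotient} and \eqref{defTangentSpaceQuotientUMon} are taken modulo gauge transformations of the restricted shape $\mathrm{id}+\epsilon v$, in the sense of Lemma~\ref{lemmaDeformationMultilinearFunctor}(2), rather than modulo arbitrary monoidal natural isomorphisms of $\mathcal{C}_\epsilon$. Since the linear map $\mathbf{T}_\phi\mathcal{F}$ of \eqref{decompositionTangentMonOtimes} and its inverse $(\mathbf{T}_\phi\mathcal{F})^{-1}$ are already mutually inverse isomorphisms of vector spaces, it suffices to check that each of them carries the equivalence relation on its source into the equivalence relation on its target; the descended maps are then automatically mutually inverse linear isomorphisms.

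First I would treat $\mathbf{T}_\phi\mathcal{F}$. Assume $f\equiv_\phi g$; by Lemma~\ref{lemmaDeformationMultilinearFunctor}(2) there is $v\in\mathrm{Nat}(\otimes,\otimes)$ with $m:=\mathrm{id}_\otimes+\epsilon v$ a monoidal natural isomorphism $(\otimes_\epsilon,\phi+\epsilon f)\Rightarrow(\otimes_\epsilon,\phi+\epsilon g)$. Apply to $m$, computed inside $\mathcal{C}_\epsilon$, the three constructions from the proof of Lemma~\ref{lemmaDecompositionMon}: $p'_1(m)$ and $p'_2(m)$ from \eqref{degeneraciesForAutOtimes} and $\mathcal{P}'(m)$ from \eqref{unitalProjectorForAutOtimes}. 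Because $m=\mathrm{id}+\epsilon v$ one reads off directly that $p'_i(m)$ and $\mathcal{P}'(m)$ are again of the form $\mathrm{id}+\epsilon(\,\cdot\,)$ (with first order parts $v_{(-,\boldsymbol{1})}$, $v_{(\boldsymbol{1},-)}$ and $v_{(X,Y)}-v_{(X,\boldsymbol{1})}\otimes\mathrm{id}_Y-\mathrm{id}_X\otimes v_{(\boldsymbol{1},Y)}$ respectively), and that $\mathcal{P}'(m)$ is unital. The monoidality computations of Lemma~\ref{lemmaDecompositionMon}, now read over $\Bbbk[\epsilon]/(\epsilon^2)$, then say that $p'_i(m):(\mathrm{Id}_{\mathcal{C}_\epsilon},p_i(\phi)+\epsilon\mathbf{T}_\phi p_i(f))\Rightarrow(\mathrm{Id}_{\mathcal{C}_\epsilon},p_i(\phi)+\epsilon\mathbf{T}_\phi p_i(g))$ is a monoidal natural isomorphism and that $\mathcal{P}'(m):(\otimes_\epsilon,\mathcal{P}(\phi)+\epsilon\mathbf{T}_\phi\mathcal{P}(f))\Rightarrow(\otimes_\epsilon,\mathcal{P}(\phi)+\epsilon\mathbf{T}_\phi\mathcal{P}(g))$ is a unital one; hence $\mathbf{T}_\phi p_i(f)\equiv_{p_i(\phi)}\mathbf{T}_\phi p_i(g)$ and $\mathbf{T}_\phi\mathcal{P}(f)\equiv^{\mathrm{U}}_{\mathcal{P}(\phi)}\mathbf{T}_\phi\mathcal{P}(g)$. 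For $(\mathbf{T}_\phi\mathcal{F})^{-1}$ one argues dually: given gauge generators $r_0,s_0$ witnessing the two $\equiv$'s and a unital $u_0$ witnessing $\equiv^{\mathrm{U}}$, set $r=\mathrm{id}+\epsilon r_0$, $s=\mathrm{id}+\epsilon s_0$, $u=\mathrm{id}_\otimes+\epsilon u_0$ and form $(r\otimes s)\circ u$ inside $\mathcal{C}_\epsilon$; it is of the form $\mathrm{id}+\epsilon(\,\cdot\,)$ and, by the computation at the end of the proof of Lemma~\ref{lemmaDecompositionMon}, it is a monoidal natural isomorphism witnessing $(\mathbf{T}_\phi\mathcal{F})^{-1}(a,b,g)\equiv_\phi(\mathbf{T}_\phi\mathcal{F})^{-1}(a',b',g')$.

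It remains to rewrite the resulting isomorphism as \eqref{isoDecompositionH2Otimes}. By \eqref{tangentSpaceAndCohom} we have $\mathbf{T}_{[\phi]}(\mathrm{Mon}(\otimes)/{\sim})\cong H^2_{\mathrm{DY}}(\otimes,\phi)$ and $\mathbf{T}_{[p_i(\phi)]}(\mathrm{Mon}(\mathrm{Id}_\mathcal{C})/{\sim})\cong H^2_{\mathrm{DY}}(\mathrm{Id}_\mathcal{C},p_i(\phi))$, so the only remaining point is $H^2_{\mathrm{DY}}(\mathrm{Id}_\mathcal{C},p_i(\phi))\cong H^2_{\mathrm{DY}}(\mathrm{Id}_\mathcal{C})=H^2_{\mathrm{DY}}(\mathcal{C})$. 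For this I would use that $\mathrm{Mon}(\mathrm{Id}_\mathcal{C})$ is a group under componentwise composition $(\alpha\cdot\beta)_{X,Y}=\alpha_{X,Y}\circ\beta_{X,Y}$ — the cocycle condition is stable under this operation by naturality of the components in each tensor factor — that this operation descends to $\mathrm{Mon}(\mathrm{Id}_\mathcal{C})/{\sim}$ and to the corresponding tangent spaces, and that left translation by $p_i(\phi)^{-1}$ therefore induces a linear isomorphism $\mathbf{T}_{[p_i(\phi)]}(\mathrm{Mon}(\mathrm{Id}_\mathcal{C})/{\sim})\cong\mathbf{T}_{[\boldsymbol{e}]}(\mathrm{Mon}(\mathrm{Id}_\mathcal{C})/{\sim})$, the latter being $H^2_{\mathrm{DY}}(\mathcal{C})$ by \eqref{tangentSpaceAndCohom} since $\boldsymbol{e}_{X,Y}=\mathrm{id}_{X\otimes Y}$ is the (trivial) monoidal structure defining $H^2_{\mathrm{DY}}(\mathcal{C})$. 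Substituting these identifications into the first isomorphism gives \eqref{isoDecompositionH2Otimes}.

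The work is essentially bookkeeping; the one point deserving attention is the verification that the constructions $p'_i$, $\mathcal{P}'$ and $(r\otimes s)\circ u$ stay of the form $\mathrm{id}+\epsilon(\,\cdot\,)$ and respect unitality when carried out over $\Bbbk[\epsilon]/(\epsilon^2)$, so that they genuinely witness the relations $\equiv$ and $\equiv^{\mathrm{U}}$, which are by definition witnessed only by such transformations; all the underlying monoidality identities are exactly those already established for Lemma~\ref{lemmaDecompositionMon}.
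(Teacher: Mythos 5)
Your argument for the first isomorphism matches the paper's: apply the machinery of Lemma~\ref{lemmaDecompositionMon} inside $\mathcal{C}_{\epsilon}$, observe (this is the point you rightly flag as needing care) that the constructions $p'_i$, $\mathcal{P}'$ and $(r\otimes s)\circ u$ applied to a gauge of the form $\mathrm{id}+\epsilon(\,\cdot\,)$ are again of this form, and conclude that $\mathbf{T}_\phi\mathcal{F}$ and its inverse descend to the quotients $\equiv_\phi$, $\equiv_{p_i(\phi)}$, $\equiv^{\mathrm{U}}_{\mathcal{P}(\phi)}$. This is exactly the paper's argument.

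Where you diverge is in the identification $\mathbf{T}_{[p_i(\phi)]}\bigl(\mathrm{Mon}(\mathrm{Id}_{\mathcal{C}})/{\sim}\bigr)\cong H^2_{\mathrm{DY}}(\mathcal{C})$. You exploit the group structure of $\mathrm{Mon}(\mathrm{Id}_{\mathcal{C}})$ under componentwise composition and differentiate the left-translation bijection $L_{p_i(\phi)^{-1}}$ at the given point; this amounts to moving the base point $p_i(\phi)$ to $\boldsymbol{e}$ and then applying \eqref{tangentSpaceAndCohom}. The paper instead writes down an explicit isomorphism of \emph{cochain complexes}, $C^n_{\mathrm{DY}}(\mathrm{Id}_{\mathcal{C}},\alpha)\overset{\sim}{\to} C^n_{\mathrm{DY}}(\mathrm{Id}_{\mathcal{C}},\boldsymbol{e})$, $f\mapsto(\alpha^{(n)})^{-1}\circ f$, where $\alpha^{(n)}$ is the iterated monoidal structure; commutation with the DY differential is checked directly. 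Both routes are correct (indeed the paper mentions your group-theoretic argument as an alternative in the remark immediately following the lemma), but they buy slightly different things: your translation argument is arguably more transparent and immediately explains \emph{why} the identification should hold, while the paper's chain-level isomorphism gives the identification in every cohomological degree simultaneously, not only in degree $2$ — which matters if one wants to compare higher DY groups as well.

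One small remark on precision: the claim that componentwise composition preserves the cocycle condition uses naturality of $\beta_{-,Z}$ in the first slot applied to $\alpha_{X,Y}$ and of $\beta_{X,-}$ in the second slot applied to $\alpha_{Y,Z}$; your phrase ``by naturality of the components in each tensor factor'' compresses this but is essentially right, and descent to $\mathrm{Mon}(\mathrm{Id}_{\mathcal{C}})/{\sim}$ works because the gauge transformation $u$ witnessing $\beta\sim\beta'$ also witnesses $\alpha\cdot\beta\sim\alpha\cdot\beta'$ (again by naturality of $u$ applied to $\alpha_{X,Y}$). None of this is a gap; it is just worth spelling out since the whole second step rests on it.
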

\begin{proof}
The first isomorphism follows immediately by applying the proof of Lemma \ref{lemmaDecompositionMon} to the category $\mathcal{C}_{\epsilon}$ and looking at tangent spaces. Here are some details for convenience. Let $f,g \in \mathbf{T}_{\phi} \mathrm{Mon}(\otimes)$ be such that $f \equiv_{\phi} g$. Due to item 2 of Lemma \ref{lemmaDeformationMultilinearFunctor} there is a monoidal natural isomorphism $\mathrm{id}_{\otimes} + \epsilon v : (\otimes_{\epsilon}, \phi + \epsilon f) \Rightarrow (\otimes_{\epsilon}, \phi + \epsilon g)$ in $\mathcal{C}_{\epsilon}$. Using the maps $p'_i$ from \eqref{degeneraciesForAutOtimes} we get monoidal natural isomorphisms $p'_i(\mathrm{id}_{\otimes} + \epsilon v) : \bigl( \mathrm{Id}_{\mathcal{C}_{\epsilon}}, p_i(\phi) + \epsilon \mathbf{T}_{\phi}p_i(f) \bigr) \Rightarrow \bigl( \mathrm{Id}_{\mathcal{C}_{\epsilon}}, p_i(\phi) + \epsilon \mathbf{T}_{\phi}p_i(g) \bigr)$, and hence $\mathbf{T}_{\phi}p_i(f) \equiv_{\phi} \mathbf{T}_{\phi}p_i(g)$ for $i=1,2$ again due to item 2 of Lemma \ref{lemmaDeformationMultilinearFunctor}. Although we do not need them, the explicit expressions are of course 
\[ \forall \,X \in \mathcal{C},\qquad p'_1(\mathrm{id}_{\otimes} + \epsilon v)_X = \mathrm{id}_X + \epsilon v_{(X,\boldsymbol{1})}, \qquad p'_2(\mathrm{id}_{\otimes} + \epsilon v)_X = \mathrm{id}_X + \epsilon v_{(\boldsymbol{1},X)}. \]
Similarly, using the map $\mathcal{P}'$ from \eqref{unitalProjectorForAutOtimes} we get a unital monoidal natural isomorphism
\[ \mathcal{P}'(\mathrm{id}_{\otimes} + \epsilon v) : \bigl( \otimes_{\epsilon}, \mathcal{P}(\phi) + \epsilon \mathbf{T}_{\phi}\mathcal{P}(f) \bigr) \Rightarrow \bigl( \otimes_{\epsilon}, \mathcal{P}(\phi) + \epsilon \mathbf{T}_{\phi}\mathcal{P}(g) \bigr) \]
and hence $\mathbf{T}_{\phi}\mathcal{P}(f) \equiv_{\mathcal{P}(\phi)}^{\mathrm{U}} \mathbf{T}_{\phi}\mathcal{P}(g)$. Although we do not need it, here is its explicit expression:
\[ \forall \, X,Y \in \mathcal{C}, \quad \mathcal{P}'(\mathrm{id}_{\otimes} + \epsilon v)_{(X,Y)} =\mathrm{id}_{X \otimes Y} + \epsilon\bigl( v_{(X,Y)} -  v_{(X,\boldsymbol{1})} \otimes \mathrm{id}_Y  - \mathrm{id}_X \otimes v_{(\boldsymbol{1},Y)} \bigr). \]
It follows that $\mathbf{T}_{\phi}\mathcal{F}$ from \eqref{decompositionTangentMonOtimes} descends to a linear map 
\[ \overline{\mathbf{T}_{\phi}\mathcal{F}} : [f]_{\phi} \mapsto \left( [\mathbf{T}_{\phi}p_1(f)]_{p_1(\phi)}, [\mathbf{T}_{\phi}p_2(f)]_{p_2(\phi)}, [\mathbf{T}_{\phi}\mathcal{P}(f)]_{\mathcal{P}(\phi)}^{\mathrm{U}} \right) \]
where $[\:\:]_{\phi}$ (resp. $[\:\:]_{p_i(\phi)}$) denotes the equivalence classes for $\equiv_{\phi}$ (resp. $\equiv_{p_i(\phi)}$) and $[\:\:]_{\mathcal{P}(\phi)}^{\mathrm{U}}$ is the equivalence class for the relation $\equiv_{\mathcal{P}(\phi)}^{\mathrm{U}}$ on the subspace $\mathbf{T}_{\mathcal{P}(\phi)}\mathrm{UMon}(\otimes)$. 
Its inverse is constructed similarly from $(\mathbf{T}_{\phi}\mathcal{F})^{-1}$. This proves the first isomorphism.
\\To prove \eqref{isoDecompositionH2Otimes}, note first that for all $\alpha \in \mathrm{Mon}(\mathrm{Id}_{\mathcal{C}})$ we have an isomorphism of cochain spaces
\begin{equation}\label{translationDYIdentity}
\forall \, n \geq 0, \qquad C^n_{\mathrm{DY}}(\mathrm{Id}_{\mathcal{C}}, \alpha) \overset{\sim}{\to} C^n_{\mathrm{DY}}(\mathrm{Id}_{\mathcal{C}},\boldsymbol{e}), \quad f \mapsto (\alpha^{(n)})^{-1} \circ f
\end{equation}
where $\alpha^{(n)} = \bigl( \alpha^{(n)}_{X_1,\ldots,X_n} \in \mathrm{Aut}_{\mathcal{C}}(X_1 \otimes \ldots \otimes X_n) \bigr)_{X_i \in \mathcal{C}}$ is the iterated monoidal structure as in \eqref{higherMonStruct} with in particular $\alpha^{(2)} = \alpha$ and $\boldsymbol{e}$ is the trivial monoidal structure given by $\boldsymbol{e}_{X_1,X_2} = \mathrm{id}_{X_1 \otimes X_2}$. It is left to the reader to check that \eqref{translationDYIdentity} commutes with the DY coface maps. Combining \eqref{translationDYIdentity} with \eqref{tangentSpaceAndCohom} we get
\[ \mathbf{T}_{[\alpha]}\bigl( \mathrm{Mon}(\mathrm{Id}_{\mathcal{C}})/\!\!\sim \bigr) \cong H^2_{\mathrm{DY}}(F,\alpha) \overset{\sim}{\longrightarrow} H^2_{\mathrm{DY}}(\mathrm{Id}_{\mathcal{C}},\boldsymbol{e}) = H^2_{\mathrm{DY}}(\mathcal{C}). \]
Now \eqref{isoDecompositionH2Otimes} follows from the first claim of the lemma by taking $\alpha = p_1(\phi)$ and $\alpha = p_2(\phi)$.
\end{proof}

\begin{remark}
For $n=2$, the isomorphism \eqref{translationDYIdentity} can be deduced from the fact {\em $\mathrm{Mon}(\mathrm{Id}_{\mathcal{C}})$ has the structure of a group} if we define the composition by $(\alpha \circ \beta)_{X,Y} = \alpha_{X,Y} \circ \beta_{X,Y}$. Hence for any $\alpha \in \mathrm{Mon}(\mathrm{Id}_{\mathcal{C}})$ there is a bijection $L_{\alpha} : \mathrm{Mon}(\mathrm{Id}_{\mathcal{C}}) \overset{\sim}{\to} \mathrm{Mon}(\mathrm{Id}_{\mathcal{C}})$ given by $L_{\alpha}(\beta) = \alpha \circ \beta$. Moreover it is easily seen that if $\beta \sim \beta'$ then $L_{\alpha}(\beta) \sim L_{\alpha}(\beta')$ and thus $L_{\alpha}$ descends to a bijection $\mathrm{Mon}(\mathrm{Id}_{\mathcal{C}})/{\sim} \: \to \mathrm{Mon}(\mathrm{Id}_{\mathcal{C}})/{\sim}$. Taking infinitesimal versions in $\mathcal{C}_{\epsilon}$ at the point $\boldsymbol{e}$ we deduce that $\mathbf{T}_{\boldsymbol{e}}\mathrm{Mon}(\mathrm{Id}_{\mathcal{C}}) \cong \mathbf{T}_{\alpha}\mathrm{Mon}(\mathrm{Id}_{\mathcal{C}})$ and $\mathbf{T}_{[\boldsymbol{e}]}\bigl( \mathrm{Mon}(\mathrm{Id}_{\mathcal{C}})/{\sim} \bigr) \cong \mathbf{T}_{[\alpha]}\bigl( \mathrm{Mon}(\mathrm{Id}_{\mathcal{C}})/{\sim} \bigr)$.
\end{remark}

\indent By \eqref{braidingAssociatedToLaxMult} and Proposition \ref{factoLaxMult} we have a map $B : \mathrm{Mon}(\otimes) \to \mathrm{Br}(\mathcal{C})$. In the category $\mathcal{C}_{\epsilon}$, write as usual $B(\phi + \epsilon f) = B(\phi) + \epsilon \mathbf{T}_{\phi}B(f)$. Then $B(\phi) + \epsilon \mathbf{T}_{\phi}B(f)\in \mathrm{Br}(\mathcal{C}_{\epsilon})$, which is equivalent to $\mathbf{T}_{\phi}B(f) \in \mathbf{T}_{B(\phi)}\mathrm{Br}(\mathcal{C})$. As a result there is a linear map $\mathbf{T}_{\phi}B : \mathbf{T}_{\phi}\mathrm{Mon}(\otimes) \to \mathbf{T}_{B(\phi)}\mathrm{Br}(\mathcal{C})$ given by
\begin{equation}\label{tangentMapBMonBrC}
\forall\, X,Y \in \mathcal{C}, \quad \mathbf{T}_{\phi}B(f)_{X,Y} = \phi^{-1}_{(Y,\boldsymbol{1}),(\boldsymbol{1},X)} \circ \bigl( f_{(\boldsymbol{1},X),(Y,\boldsymbol{1})} - f_{(Y,\boldsymbol{1}),(\boldsymbol{1},X)} \circ B(\phi)_{X,Y} \bigr).
\end{equation}

\begin{lemma}\label{lemmaRelatingUMonAndInfBraidings}
1. Let $\phi \in \mathrm{Mon}(\otimes)$ and $f,g \in \mathbf{T}_{\phi}\mathrm{Mon}(\otimes)$. If $f \equiv_{\phi} g$ then $\mathbf{T}_{\phi}B(f) = \mathbf{T}_{\phi}B(g)$.
\\2. Let $c \in \mathrm{Br}(\mathcal{C})$. For $t \in \mathbf{T}_c\mathrm{Br}(\mathcal{C})$ let $\mathrm{id} \otimes t \otimes \mathrm{id}$ be the natural isomorphism with components $(\mathrm{id} \otimes t \otimes \mathrm{id})_{(X_1,Y_1),(X_2,Y_2)} = \mathrm{id}_{X_1} \otimes t_{Y_1,X_2} \otimes \mathrm{id}_{Y_2}$. Then $\mathrm{id} \otimes t \otimes \mathrm{id} \in \mathbf{T}_{\mathrm{id} \otimes c \otimes \mathrm{id}}\mathrm{Mon}(\otimes)$.
\\3. Let $c \in \mathrm{Br}(\mathcal{C})$ and take $\phi = \mathrm{id} \otimes c \otimes \mathrm{id} \in \mathrm{UMon}(\otimes)$. Then for all $f \in \mathbf{T}_{\phi}\mathrm{UMon}(\otimes)$ we have $\mathrm{id} \otimes \mathbf{T}_{\phi}B(f) \otimes \mathrm{id} \equiv_{\phi}^{\mathrm{U}} f$.
\\4. As a result if $\phi \in \mathrm{UMon}(\otimes)$ there is an isomorphism of vector spaces
\begin{equation}\label{infBraidingsFromUnitalCocycles}
\mathbf{T}_{[\phi]_{\mathrm{U}}}\bigl( \mathrm{UMon}(\otimes)/{\sim}_{\mathrm{U}} \bigr) \to \mathbf{T}_{B(\phi)}\mathrm{Br}(\mathcal{C}), \qquad [f]_{\phi}^{\mathrm{U}} \mapsto \mathbf{T}_{\phi}B(f)
\end{equation}
where $[\:\:]_{\phi}^{\mathrm{U}}$ is the equivalence class for the relation $\equiv_{\phi}^{\mathrm{U}}$ from \eqref{defTangentSpaceQuotientUMon} and $\mathbf{T}_{\phi}B(f)$ is defined in \eqref{tangentMapBMonBrC}.
\end{lemma}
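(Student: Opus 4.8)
The plan is to obtain the whole lemma by ``linearizing'' Proposition~\ref{factoLaxMult}, i.e.\ by applying Proposition~\ref{factoLaxMult} and the constructions in its proof to the monoidal category $\mathcal{C}_{\epsilon} = \mathcal{C}\otimes_{\Bbbk}\Bbbk[\epsilon]/(\epsilon^2)$ and reading off the coefficient of $\epsilon$. The dictionary translating ``tangent vectors'' into honest elements over the dual numbers is Lemma~\ref{lemmaDeformationMultilinearFunctor} applied to the bilinear functor $\otimes$: one has $\phi+\epsilon f\in\mathrm{Mon}(\otimes_{\epsilon})$ iff $f\in\mathbf{T}_{\phi}\mathrm{Mon}(\otimes)$, and $f\equiv_{\phi}g$ (resp. $f\equiv_{\phi}^{\mathrm{U}}g$) iff there is a (resp. unital) monoidal natural isomorphism $\mathrm{id}_{\otimes}+\epsilon v:(\otimes_{\epsilon},\phi+\epsilon f)\Rightarrow(\otimes_{\epsilon},\phi+\epsilon g)$; moreover $B(\phi+\epsilon f)=B(\phi)+\epsilon\,\mathbf{T}_{\phi}B(f)$ in $\mathrm{Br}(\mathcal{C}_{\epsilon})$ by \eqref{tangentMapBMonBrC}, and $B(\mathrm{id}\otimes c\otimes\mathrm{id})=c$ by Proposition~\ref{factoLaxMult}(4).

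For item~1 I would note that $f\equiv_{\phi}g$ gives $\phi+\epsilon f\sim\phi+\epsilon g$ in $\mathrm{Mon}(\otimes_{\epsilon})$, so Proposition~\ref{factoLaxMult}(1) applied to $\mathcal{C}_{\epsilon}$ yields $B(\phi+\epsilon f)=B(\phi+\epsilon g)$, and comparing $\epsilon$-coefficients gives $\mathbf{T}_{\phi}B(f)=\mathbf{T}_{\phi}B(g)$. For item~2 the hypothesis $t\in\mathbf{T}_c\mathrm{Br}(\mathcal{C})$ says exactly $c+\epsilon t\in\mathrm{Br}(\mathcal{C}_{\epsilon})$ (Definition~\ref{defTangentBraiding}); then $(\mathrm{id}\otimes c\otimes\mathrm{id})+\epsilon(\mathrm{id}\otimes t\otimes\mathrm{id})=\mathrm{id}\otimes(c+\epsilon t)\otimes\mathrm{id}\in\mathrm{UMon}(\otimes_{\epsilon})$ by Proposition~\ref{factoLaxMult}(2) over $\mathcal{C}_{\epsilon}$, and Lemma~\ref{lemmaDeformationMultilinearFunctor}(1) (together with the linearized unitality condition) gives $\mathrm{id}\otimes t\otimes\mathrm{id}\in\mathbf{T}_{\mathrm{id}\otimes c\otimes\mathrm{id}}\mathrm{UMon}(\otimes)$, in particular the weaker statement claimed. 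For item~3 I would take $\phi=\mathrm{id}\otimes c\otimes\mathrm{id}$ and $f\in\mathbf{T}_{\phi}\mathrm{UMon}(\otimes)$, so $\phi+\epsilon f\in\mathrm{UMon}(\otimes_{\epsilon})$, and apply Proposition~\ref{factoLaxMult}(3) over $\mathcal{C}_{\epsilon}$: its witnessing unital monoidal natural isomorphism is $u_{(X,Y)}=(\phi+\epsilon f)_{(X,\boldsymbol{1}),(\boldsymbol{1},Y)}$. Because $\phi_{(X,\boldsymbol{1}),(\boldsymbol{1},Y)}=\mathrm{id}_{X\otimes Y}$, this $u$ has the normalized form $\mathrm{id}_{\otimes}+\epsilon v$ with $v_{(X,Y)}=f_{(X,\boldsymbol{1}),(\boldsymbol{1},Y)}$, and $v$ is unital since a cocycle in $\mathbf{T}_{\phi}\mathrm{Mon}(\otimes)$ satisfies $f_{(X,\boldsymbol{1}),(\boldsymbol{1},\boldsymbol{1})}=f_{(\boldsymbol{1},\boldsymbol{1}),(\boldsymbol{1},Y)}=0$. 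Since $\mathrm{id}\otimes B(\phi+\epsilon f)\otimes\mathrm{id}=\phi+\epsilon(\mathrm{id}\otimes\mathbf{T}_{\phi}B(f)\otimes\mathrm{id})$, this $u$ witnesses $f\equiv_{\phi}^{\mathrm{U}}\mathrm{id}\otimes\mathbf{T}_{\phi}B(f)\otimes\mathrm{id}$, and symmetry of $\equiv_{\phi}^{\mathrm{U}}$ gives the claim.

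Item~4 then just assembles these. For $\phi=\mathrm{id}\otimes c\otimes\mathrm{id}$ the map $[f]_{\phi}^{\mathrm{U}}\mapsto\mathbf{T}_{\phi}B(f)$ is well defined by item~1 (as $\equiv_{\phi}^{\mathrm{U}}$ refines $\equiv_{\phi}$), linear by \eqref{tangentMapBMonBrC}, surjective because item~2 provides $\mathrm{id}\otimes t\otimes\mathrm{id}$ with $\mathbf{T}_{\phi}B(\mathrm{id}\otimes t\otimes\mathrm{id})=t$ (apply $B$ to $\mathrm{id}\otimes(c+\epsilon t)\otimes\mathrm{id}$ and use Proposition~\ref{factoLaxMult}(4) over $\mathcal{C}_{\epsilon}$), and injective by item~3. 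For a general $\phi\in\mathrm{UMon}(\otimes)$ I would reduce to this case: Proposition~\ref{factoLaxMult}(3) gives a unital monoidal natural isomorphism $u:(\otimes,\mathrm{id}\otimes B(\phi)\otimes\mathrm{id})\Rightarrow(\otimes,\phi)$ with $u\cdot(\mathrm{id}\otimes B(\phi)\otimes\mathrm{id})=\phi$, and the assignment $f\mapsto u\cdot f$ as in \eqref{canonicalIsoTangentSpaceQuotient}, applied over $\mathcal{C}_{\epsilon}$ and restricted to the unital subspaces, descends to an isomorphism of the two tangent quotients which is compatible with $\mathbf{T}_{\bullet}B$ because $B$ is $\sim$-invariant (so $\mathbf{T}_{u\cdot\psi}B(u\cdot f)=\mathbf{T}_{\psi}B(f)$). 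The main obstacle is entirely the bookkeeping of the four equivalence relations ($\sim$, $\sim_{\mathrm{U}}$, $\equiv_{\phi}$, $\equiv_{\phi}^{\mathrm{U}}$): one must check that the monoidal natural isomorphisms produced by Proposition~\ref{factoLaxMult} over $\mathcal{C}_{\epsilon}$ are of the normalized form $\mathrm{id}_{\otimes}+\epsilon v$ and, where needed, that $v$ is unital --- which in item~3 hinges precisely on the identity $\phi_{(X,\boldsymbol{1}),(\boldsymbol{1},Y)}=\mathrm{id}$, valid only for $\phi=\mathrm{id}\otimes c\otimes\mathrm{id}$, and is the reason item~3 is stated for that $\phi$ only.
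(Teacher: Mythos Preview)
Your proposal is correct and follows essentially the same approach as the paper's own proof: each item is obtained by applying the corresponding item of Proposition~\ref{factoLaxMult} (or its proof) to $\mathcal{C}_{\epsilon}$ and reading off the $\epsilon$-coefficient, with item~4 reduced to the case $\phi=\mathrm{id}\otimes c\otimes\mathrm{id}$ via the canonical isomorphism of tangent quotients. Your bookkeeping is slightly more explicit in places (e.g.\ the unitality of $v$ in item~3 and the compatibility of $\ell_u$ with $\mathbf{T}_{\bullet}B$ in item~4), but the arguments coincide.
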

\begin{proof}
1. If $f \equiv_{\phi} g$ then $\phi + \epsilon f \sim \phi + \epsilon g$ in $\mathrm{Mon}(\otimes_{\epsilon})$. Hence by item 1 in Proposition \ref{factoLaxMult} we have $B(\phi + \epsilon f)= B(\phi + \epsilon g)$, which by definition implies $B(\phi) + \epsilon \mathbf{T}_{\phi}B(f) = B(\phi) + \epsilon \mathbf{T}_{\phi}B(g)$.
\\2. By item 2 in Proposition \ref{factoLaxMult} there is a map $M : \mathrm{Br}(\mathcal{C}) \to \mathrm{Mon}(\otimes)$ given by $M(c) = \mathrm{id} \otimes c \otimes \mathrm{id}$. The result follows by applying this map to the category $\mathcal{C}_{\epsilon}$. Namely, if $t \in \mathbf{T}_c\mathrm{Br}(\mathcal{C})$ then $c + \epsilon t \in \mathrm{Br}(\mathcal{C}_{\epsilon})$. Hence $M(c + \epsilon t) = \mathrm{id} \otimes c \otimes \mathrm{id} + \epsilon \, \mathrm{id} \otimes t \otimes \mathrm{id} \in \mathrm{Mon}(\otimes_{\epsilon})$ and then $\mathrm{id} \otimes t \otimes \mathrm{id} \in \mathbf{T}_{\mathrm{id} \otimes c \otimes \mathrm{id}}\mathrm{Mon}(\otimes)$.
\\3. A preliminary remark is in order about our assumption on $\phi$: for an arbitrary choice of $\phi \in \mathrm{UMon}(\otimes)$ the statement would not make sense because $\mathrm{id} \otimes \mathbf{T}_{\phi}B(f) \otimes \mathrm{id}$ belongs to $\mathbf{T}_{\mathrm{id} \otimes B(\phi) \otimes \mathrm{id}}\mathrm{UMon}(\otimes)$ instead of $\mathbf{T}_{\phi}\mathrm{UMon}(\otimes)$. But for $\phi = \mathrm{id} \otimes c \otimes \mathrm{id}$ it holds $B(\phi) = c$ and thus the statement makes sense. Now let us prove it. By assumption, $\phi + \epsilon f$ and $\phi + \epsilon \, \mathrm{id} \otimes \mathbf{T}_{\phi}B(f) \otimes \mathrm{id}$ belong to $\mathrm{UMon}(\otimes_{\epsilon})$. Thus by the proof of item 3 in Proposition \ref{factoLaxMult} applied to $\mathcal{C}_{\epsilon}$ we know that the natural transformation
\[ u_{(X,Y)} = (\phi + \epsilon f)_{(X,\boldsymbol{1}),(\boldsymbol{1},Y)} = \mathrm{id}_{X \otimes Y} + \epsilon f_{(X,\boldsymbol{1}),(\boldsymbol{1},Y)} : X \otimes Y \to X \otimes Y \]
(where we used that $\phi_{(X,\boldsymbol{1}),(\boldsymbol{1},Y)} = \mathrm{id}_{X \otimes Y}$ due to the particular form of $\phi$) is a unital monoidal natural isomorphism $\bigl( \otimes_{\epsilon}, \phi + \epsilon \, \mathrm{id} \otimes \mathbf{T}_{\phi}B(f) \otimes \mathrm{id}\bigr) \Rightarrow \bigl( \otimes_{\epsilon}, \phi + \epsilon f \bigr)$. Hence  $\mathrm{id} \otimes \mathbf{T}_{\phi}B(f) \otimes \mathrm{id} \equiv^{\mathrm{U}}_{\phi} f$ by definition (see below \eqref{defTangentSpaceQuotientUMon}).
\\4. By item 3 in Proposition \ref{factoLaxMult} we have $[\phi]_{\mathrm{U}} = [\mathrm{id} \otimes B(\phi) \otimes \mathrm{id}]_{\mathrm{U}}$. Hence the spaces $\mathbf{T}_{[\phi]_{\mathrm{U}}}\bigl( \mathrm{UMon}(\otimes)/{\sim}_{\mathrm{U}} \bigr)$ and $\mathbf{T}_{[\mathrm{id} \otimes B(\phi) \otimes \mathrm{id}]_{\mathrm{U}}}\bigl( \mathrm{UMon}(\otimes)/{\sim}_{\mathrm{U}} \bigr)$ are canonically isomorphic by the same argument as in \eqref{canonicalIsoTangentSpaceQuotient}. Thus without loss of generality we can assume that $\phi = \mathrm{id} \otimes c \otimes \mathrm{id}$ for some $c \in \mathrm{Br}(\mathcal{C})$. In this case item 3 shows that the inverse to \eqref{infBraidingsFromUnitalCocycles} is given by $t \mapsto [ \mathrm{id} \otimes t \otimes \mathrm{id} ]_{\phi}^{\mathrm{U}}$.
\end{proof}

Combining Lemmas \ref{lemmaDecompositionTMon} and \ref{lemmaRelatingUMonAndInfBraidings} we obtain:

\begin{theorem}\label{thmDYOtimesAndInfBraidings}
For all $\phi \in \mathrm{Mon}(\otimes)$ there is an isomorphism of vector spaces
\begin{equation}\label{isoDecompositionH2DYOtimes}
\flecheIso{H^2_{\mathrm{DY}}(\otimes,\phi)}{H^2_{\mathrm{DY}}(\mathcal{C}) \oplus H^2_{\mathrm{DY}}(\mathcal{C}) \oplus \mathbf{T}_{B(\phi)}\mathrm{Br}(\mathcal{C})}{\,[f]\,}{\bigl( [a^f], [b^f], \mathbf{T}_{\phi}B(f) \bigr)}
\end{equation}
where $[\:\:]$ denotes cohomology classes, $a^f$ and $b^f$ are defined by $a^f_{X_1,X_2} = \phi_{(X_1,\boldsymbol{1}),(X_2, \boldsymbol{1})}^{-1} \circ f_{(X_1,\boldsymbol{1}),(X_2,\boldsymbol{1})}$ and $b^f_{Y_1,Y_2} = \phi_{(\boldsymbol{1},Y_1),(\boldsymbol{1},Y_2)}^{-1} \circ f_{(\boldsymbol{1},Y_1),(\boldsymbol{1},Y_2)}$ for all $X_i,Y_i \in \mathcal{C}$ and $\mathbf{T}_{\phi}B(f)$ is defined in \eqref{tangentMapBMonBrC}.
\end{theorem}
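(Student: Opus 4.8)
The plan is to assemble the claimed isomorphism \eqref{isoDecompositionH2DYOtimes} from the two lemmas that precede it, after translating everything from the ``tangent space'' picture to the ``$H^2_{\mathrm{DY}}$'' picture via \eqref{tangentSpaceAndCohom}. First I would invoke Lemma~\ref{lemmaDecompositionTMon}, which already gives
\[ H^2_{\mathrm{DY}}(\otimes,\phi) \cong H^2_{\mathrm{DY}}(\mathcal{C}) \oplus H^2_{\mathrm{DY}}(\mathcal{C}) \oplus \mathbf{T}_{[\mathcal{P}(\phi)]_{\mathrm{U}}}\bigl(\mathrm{UMon}(\otimes)/{\sim}_{\mathrm{U}} \bigr), \]
and then rewrite the third summand using item~4 of Lemma~\ref{lemmaRelatingUMonAndInfBraidings}, namely the isomorphism $\mathbf{T}_{[\mathcal{P}(\phi)]_{\mathrm{U}}}\bigl(\mathrm{UMon}(\otimes)/{\sim}_{\mathrm{U}}\bigr) \cong \mathbf{T}_{B(\mathcal{P}(\phi))}\mathrm{Br}(\mathcal{C})$; since $B(\mathcal{P}(\phi)) = B(\phi)$ (as noted in the proof of Corollary~\ref{coroGeneralizedJSBijection}), the target is $\mathbf{T}_{B(\phi)}\mathrm{Br}(\mathcal{C})$. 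Composing these two isomorphisms gives the stated direct sum decomposition on the level of vector spaces.

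The second, more delicate task is to check that the composite isomorphism is the \emph{explicit} map written in \eqref{isoDecompositionH2DYOtimes}, i.e.\ that $[f]$ is sent to $\bigl([a^f],[b^f],\mathbf{T}_{\phi}B(f)\bigr)$ with the stated formulas for $a^f, b^f$. For the first two components: tracing through $\mathbf{T}_{\phi}\mathcal{F}$ from \eqref{decompositionTangentMonOtimes} and then the translation isomorphism \eqref{translationDYIdentity} with $\alpha = p_1(\phi)$ (resp. $p_2(\phi)$), the component $\mathbf{T}_{\phi}p_1(f)_{X_1,X_2} = f_{(X_1,\boldsymbol{1}),(X_2,\boldsymbol{1})}$ gets composed with $(\alpha^{(2)})^{-1} = p_1(\phi)^{-1}$, giving precisely $a^f_{X_1,X_2} = \phi_{(X_1,\boldsymbol{1}),(X_2,\boldsymbol{1})}^{-1} \circ f_{(X_1,\boldsymbol{1}),(X_2,\boldsymbol{1})}$ since $p_1(\phi)_{X_1,X_2} = \phi_{(X_1,\boldsymbol{1}),(X_2,\boldsymbol{1})}$ by \eqref{defp1p2}; and symmetrically for $b^f$. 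For the third component one needs the subtle point that, although item~4 of Lemma~\ref{lemmaRelatingUMonAndInfBraidings} was proved under the reduction $\phi = \mathrm{id}\otimes c\otimes\mathrm{id}$, the map $[f]^{\mathrm{U}}_{\phi} \mapsto \mathbf{T}_{\phi}B(f)$ is well-defined and natural for \emph{any} unital $\phi$ (item~1 of that lemma guarantees invariance under $\equiv_{\phi}$, hence a fortiori under $\equiv_{\phi}^{\mathrm{U}}$), so that it agrees with the composite through the canonical isomorphisms; and one must verify that feeding $\mathbf{T}_{\phi}\mathcal{P}(f)$ (rather than $f$ itself) into $\mathbf{T}_{(-)}B$ does not change the output, which follows because $B$ factors through $\mathcal{P}$ at the infinitesimal level just as $B = B\circ\mathcal{P}$ at the finite level. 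Concretely $\mathbf{T}_{\phi}B(f)$ only depends on the components $f_{(\boldsymbol{1},X),(Y,\boldsymbol{1})}$ and $f_{(Y,\boldsymbol{1}),(\boldsymbol{1},X)}$ appearing in \eqref{tangentMapBMonBrC}, and those are unaffected by subtracting the ``$p_1,p_2$-parts'' in \eqref{defTphiP}.

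I expect the main obstacle to be precisely this bookkeeping in the third component: making sure that the concrete formula \eqref{tangentMapBMonBrC} for $\mathbf{T}_\phi B(f)$, which is written directly in terms of $f$ and the given (possibly non-unital) $\phi$, really is the image of $[f]$ under the abstractly-defined composite $H^2_{\mathrm{DY}}(\otimes,\phi) \to \mathbf{T}_{[\mathcal{P}(\phi)]_{\mathrm{U}}}(\mathrm{UMon}(\otimes)/{\sim}_{\mathrm{U}}) \to \mathbf{T}_{B(\phi)}\mathrm{Br}(\mathcal{C})$, which a priori passes through $\mathbf{T}_\phi\mathcal{P}(f)$ and a change-of-basepoint identification. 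The cleanest way to dispatch this is to observe that $B : \mathrm{Mon}(\otimes) \to \mathrm{Br}(\mathcal{C})$ depends on $\phi$ only through the matrix entries $\phi_{(\boldsymbol{1},X),(Y,\boldsymbol{1})}$ and $\phi_{(Y,\boldsymbol{1}),(\boldsymbol{1},X)}$ (by \eqref{braidingAssociatedToLaxMult}), differentiate this relation in $\mathcal{C}_\epsilon$ to get \eqref{tangentMapBMonBrC} directly, and then note that these particular entries of $f$ coincide with the corresponding entries of $\mathbf{T}_\phi\mathcal{P}(f)$ up to the basepoint correction already built into the canonical isomorphism of \eqref{canonicalIsoTangentSpaceQuotient}. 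Everything else is the routine diagram-chasing already carried out in Lemmas~\ref{lemmaDecompositionTMon} and~\ref{lemmaRelatingUMonAndInfBraidings}, and I would simply cite those proofs rather than repeat them.
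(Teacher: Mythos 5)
Your proposal is correct and follows essentially the same route as the paper: decompose via Lemma~\ref{lemmaDecompositionTMon}, convert the unital factor to $\mathbf{T}_{B(\phi)}\mathrm{Br}(\mathcal{C})$ via Lemma~\ref{lemmaRelatingUMonAndInfBraidings}(4), and identify the composite with the explicit formula by noting $B\circ\mathcal{P}=B$ and differentiating. The paper dispatches the third component purely by the chain rule $\mathbf{T}_{\mathcal{P}(\phi)}B\circ\mathbf{T}_\phi\mathcal{P}=\mathbf{T}_\phi(B\circ\mathcal{P})=\mathbf{T}_\phi B$; your additional component-level check (that the relevant entries of $\mathbf{T}_\phi\mathcal{P}(f)$ agree with those of $f$) is a valid spelling-out of the same fact.
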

\begin{proof}
Recall that the isomorphism \eqref{isoDecompositionH2Otimes} in Lemma \ref{lemmaDecompositionTMon} is built from the isomorphism \eqref{decompositionTangentMonOtimes} followed by translation isomorphisms of the form \eqref{translationDYIdentity} applied to the first two components, which explains the formulas for $a^f$ and $b^f$. Hence the composition of the isomorphisms \eqref{isoDecompositionH2Otimes} and \eqref{infBraidingsFromUnitalCocycles} is given by $[f] \mapsto \bigl( [a^f], [b^f], \mathbf{T}_{\mathcal{P}(\phi)}B(\mathbf{T}_{\phi}\mathcal{P}(f)) \bigr)$. But we have already noted in the proof of Corollary \ref{coroGeneralizedJSBijection} that $B \circ \mathcal{P} = B$. Hence $(\mathbf{T}_{\mathcal{P}(\phi)}B \circ \mathbf{T}_{\phi}\mathcal{P})(f) = \mathbf{T}_{\phi}(B \circ \mathcal{P})(f) = \mathbf{T}_{\phi} B(f)$.
\end{proof}
\noindent We spell out Theorem~\ref{thmDYOtimesAndInfBraidings} in the case where the monoidal structure $\phi$ of $\otimes$ comes from a braiding $c \in \mathrm{Br}(\mathcal{C})$ as defined in \eqref{MonStructFromBraiding}, \textit{i.e.} $\phi = \mathrm{id} \otimes c \otimes \mathrm{id}$, and deduce a dimension formula for the space of infinitesimal braidings tangent to $c$:
\begin{corollary}\label{coroDimFormulaTangentSpaceBraiding}
Let $c \in \mathrm{Br}(\mathcal{C})$ be a braiding in $\mathcal{C}$.
\\1. There is an isomorphism of vector spaces
\[ \flecheIso{H^2_{\mathrm{DY}}(\otimes,\mathrm{id} \otimes c \otimes \mathrm{id})}{H^2_{\mathrm{DY}}(\mathcal{C}) \oplus H^2_{\mathrm{DY}}(\mathcal{C}) \oplus \mathbf{T}_c\mathrm{Br}(\mathcal{C})}{\,[f]\,}{\bigl( [a^f], [b^f], t^f \bigr)} \]
where $a^f_{X_1,X_2} = f_{(X_1,\boldsymbol{1}),(X_2,\boldsymbol{1})}$, $b^f_{Y_1,Y_2} = f_{(\boldsymbol{1},Y_1),(\boldsymbol{1},Y_2)}$ and $t^f_{X,Y} = f_{(\boldsymbol{1},X),(Y,\boldsymbol{1})} - f_{(Y,\boldsymbol{1}),(\boldsymbol{1},X)} \circ c_{X,Y}$.
\\2. If $\mathcal{C}$ is finite as a $\Bbbk$-linear category we have
\begin{equation}\label{dimTangentSpaceBraidingInTermsDYCohom}
\dim \mathbf{T}_c\mathrm{Br}(\mathcal{C}) = \dim H^2_{\mathrm{DY}}(\otimes, \mathrm{id} \otimes c \otimes \mathrm{id}) - 2\dim H^2_{\mathrm{DY}}(\mathcal{C})
\end{equation}
\end{corollary}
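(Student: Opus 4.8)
\emph{Proof plan for Corollary~\ref{coroDimFormulaTangentSpaceBraiding}.} The plan is to obtain both statements as direct specializations of Theorem~\ref{thmDYOtimesAndInfBraidings} at the monoidal structure $\phi = \mathrm{id}\otimes c\otimes\mathrm{id}\in\mathrm{UMon}(\otimes)$ attached to $c$ in \eqref{MonStructFromBraiding}. First I would record that $B(\phi) = c$: this is immediate from the defining formula \eqref{braidingAssociatedToLaxMult} (or from the proof of Proposition~\ref{factoLaxMult}(4)), since $\phi_{(\boldsymbol{1},X),(Y,\boldsymbol{1})} = \mathrm{id}_{\boldsymbol{1}}\otimes c_{X,Y}\otimes\mathrm{id}_{\boldsymbol{1}} = c_{X,Y}$ and $\phi_{(Y,\boldsymbol{1}),(\boldsymbol{1},X)} = \mathrm{id}_Y\otimes c_{\boldsymbol{1},\boldsymbol{1}}\otimes\mathrm{id}_X = \mathrm{id}_{Y\otimes X}$ using $c_{X,\boldsymbol{1}} = c_{\boldsymbol{1},X} = \mathrm{id}_X$ (noted right after \eqref{axiomsBraiding}). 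Hence the codomain of the isomorphism \eqref{isoDecompositionH2DYOtimes} becomes exactly $H^2_{\mathrm{DY}}(\mathcal{C})\oplus H^2_{\mathrm{DY}}(\mathcal{C})\oplus\mathbf{T}_c\mathrm{Br}(\mathcal{C})$.

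Next I would simplify the three components of \eqref{isoDecompositionH2DYOtimes}. The same normalization identities force $\phi_{(X_1,\boldsymbol{1}),(X_2,\boldsymbol{1})} = \mathrm{id}_{X_1\otimes X_2}$, $\phi_{(\boldsymbol{1},Y_1),(\boldsymbol{1},Y_2)} = \mathrm{id}_{Y_1\otimes Y_2}$ and $\phi_{(Y,\boldsymbol{1}),(\boldsymbol{1},X)} = \mathrm{id}_{Y\otimes X}$. Substituting these into the expressions for $a^f,b^f$ in Theorem~\ref{thmDYOtimesAndInfBraidings} and into the definition \eqref{tangentMapBMonBrC} of $\mathbf{T}_{\phi}B(f)$ gives $a^f_{X_1,X_2} = f_{(X_1,\boldsymbol{1}),(X_2,\boldsymbol{1})}$, $b^f_{Y_1,Y_2} = f_{(\boldsymbol{1},Y_1),(\boldsymbol{1},Y_2)}$ and $\mathbf{T}_{\phi}B(f)_{X,Y} = f_{(\boldsymbol{1},X),(Y,\boldsymbol{1})} - f_{(Y,\boldsymbol{1}),(\boldsymbol{1},X)}\circ c_{X,Y} =: t^f_{X,Y}$, which are precisely the maps in the statement of item~1. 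This settles item~1. For item~2 I would simply take dimensions in the direct sum decomposition of item~1; the only thing to justify is finite-dimensionality of the spaces involved. I would invoke Theorem~\ref{thmDYCohomRelExt}: $\mathcal{C}\boxtimes\mathcal{C}$ is again finite and rigid, $\otimes_{\mathcal{C}}$ is biexact (hence right-exact in each variable), and $\mathrm{id}\otimes c\otimes\mathrm{id}$ is an iso-monoidal structure, so the assumptions \eqref{assumptionsFCD} hold both for $F = \otimes\colon\mathcal{C}\boxtimes\mathcal{C}\to\mathcal{C}$ and for $F = \mathrm{Id}_{\mathcal{C}}$; thus $H^2_{\mathrm{DY}}(\otimes,\mathrm{id}\otimes c\otimes\mathrm{id})\cong\Ext^2_{\mathcal{Z}(\otimes),\mathcal{C}}(\boldsymbol{1},\boldsymbol{1})$ and $H^2_{\mathrm{DY}}(\mathcal{C})\cong\Ext^2_{\mathcal{Z}(\mathcal{C}),\mathcal{C}}(\boldsymbol{1},\boldsymbol{1})$. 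These relative Ext groups are subquotients of $\Hom$-spaces in $\mathcal{Z}(\otimes)$ and $\mathcal{Z}(\mathcal{C})$, which embed into the finite-dimensional $\Hom$-spaces of $\mathcal{C}$, hence are finite-dimensional, and \eqref{dimTangentSpaceBraidingInTermsDYCohom} follows. (I would also note in passing that the DY cochain complexes of $\otimes\colon\mathcal{C}\times\mathcal{C}\to\mathcal{C}$ and of $\otimes\colon\mathcal{C}\boxtimes\mathcal{C}\to\mathcal{C}$ coincide, since the relevant natural transformations are $\Bbbk$-multilinear and factor through $\mathcal{C}\boxtimes\mathcal{C}$.)

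I do not expect a genuine obstacle: the corollary is essentially a dictionary translation of Theorem~\ref{thmDYOtimesAndInfBraidings}, and every computation above is a one-line check using $c_{X,\boldsymbol{1}} = c_{\boldsymbol{1},X} = \mathrm{id}_X$. The only mildly delicate point is the finiteness bookkeeping in item~2 — verifying that the braided monoidal product functor on the Deligne square meets the hypotheses \eqref{assumptionsFCD} of Theorem~\ref{thmDYCohomRelExt} so that the DY cohomology spaces are finite-dimensional — but this is routine for a finite tensor category.
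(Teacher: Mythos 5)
Your item~1 argument is essentially the paper's own proof: specialize Theorem~\ref{thmDYOtimesAndInfBraidings} at $\phi=\mathrm{id}\otimes c\otimes\mathrm{id}$, observe that $\phi_{(X_1,\boldsymbol{1}),(X_2,\boldsymbol{1})}$, $\phi_{(\boldsymbol{1},Y_1),(\boldsymbol{1},Y_2)}$ and $\phi_{(Y,\boldsymbol{1}),(\boldsymbol{1},X)}$ are identities (because $c_{\boldsymbol{1},W}=c_{W,\boldsymbol{1}}=\mathrm{id}_W$) and that $B(\phi)=c$, so the three components collapse to the $a^f$, $b^f$, $t^f$ in the statement.

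For item~2 you take a noticeably heavier and genuinely different route. You pass to the Deligne product and invoke Theorem~\ref{thmDYCohomRelExt} to identify $H^2_{\mathrm{DY}}(\otimes,\mathrm{id}\otimes c\otimes\mathrm{id})$ and $H^2_{\mathrm{DY}}(\mathcal{C})$ with relative $\Ext$ groups; this requires the full package \eqref{assumptionsFCD} — finiteness \emph{and} rigidity, hence biexactness of $\otimes_{\mathcal{C}}$, centralizer monads, coends — and you explicitly write ``$\mathcal{C}\boxtimes\mathcal{C}$ is again finite and rigid, $\otimes_{\mathcal{C}}$ is biexact''. But item~2 of the corollary assumes only that $\mathcal{C}$ is finite as a $\Bbbk$-linear category; rigidity enters the paper only later, in \S\ref{adjunctionThmMonProduct}. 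The paper's own proof of item~2 argues directly at the cochain level: item~1 of Lemma~\ref{lemNatTransfoProjGen} shows that natural transformations between $\Bbbk$-linear functors out of a finite $\Bbbk$-linear category are determined by their component at a projective generator and hence form finite-dimensional spaces, so the DY \emph{cochain} spaces themselves are already finite-dimensional, and the dimension formula \eqref{dimTangentSpaceBraidingInTermsDYCohom} is obtained just by taking dimensions in the decomposition of item~1. So the conclusion is the same, but the paper's route is considerably shorter, stays closer to the corollary's stated hypotheses, and avoids dragging the relative-homological-algebra apparatus of Section~3 into what is really a one-line finiteness observation.
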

\begin{proof}
1. Immediate from Theorem~\ref{thmDYOtimesAndInfBraidings}. Note that for $\phi = \mathrm{id} \otimes c \otimes \mathrm{id}$ we have $B(\phi) = c$ and the formulas for $a^f$, $b^f$ and $t^f = \mathbf{T}_{\mathrm{id} \otimes c \otimes \mathrm{id}}B(f)$ are simpler because $\phi_{(X_1,\boldsymbol{1}),(X_2, \boldsymbol{1})} = \mathrm{id}_{X_1 \otimes X_2}$, $\phi_{(\boldsymbol{1},Y_1),(\boldsymbol{1},Y_2)} = \mathrm{id}_{Y_1 \otimes Y_2}$ and $\phi_{(Y,\boldsymbol{1}),(\boldsymbol{1},X)} = \mathrm{id}_{Y \otimes X}$.
\\2. By item 1 in Lemma \ref{lemNatTransfoProjGen} the DY cochain spaces are finite-dimensional. Hence we can consider the dimensions of the vector spaces in item 1 and we get the result.
\end{proof}
\noindent In the next subsection we give a result for the computation of $\dim H^2_{\mathrm{DY}}(\otimes, \mathrm{id} \otimes c \otimes \mathrm{id})$ through relative homological algebra.

\subsection{Adjunction theorem for $\otimes$}\label{adjunctionThmMonProduct}
In this subsection we assume that $\mathcal{C}$ is a finite $\Bbbk$-linear tensor category. The monoidal product $\otimes = \otimes_{\mathcal{C}} : \mathcal{C} \times \mathcal{C} \to \mathcal{C}$ is $\Bbbk$-bilinear (by assumption) and exact in each variable (because of rigidity, \cite[\S 4.2]{EGNO}\footnote{In \cite{EGNO} the field $\Bbbk$ is always assumed to be algebraically closed but this assumption is not necessary for this property.}). Hence there exists a right-exact $\Bbbk$-linear functor $P : \mathcal{C} \boxtimes \mathcal{C} \to \mathcal{C}$ such that $\otimes = P \circ \boxtimes$, where $\boxtimes$ is the Deligne product of finite $\Bbbk$-linear categories \cite[\S 5]{deligne}.\footnote{We note as a side remark that if the ground field $\Bbbk$ is perfect then the factorization through the Deligne product of a bifunctor exact in each variable is an exact functor \cite[Prop.\,5.13]{deligne}. Thus if $\Bbbk$ is perfect then $P$ is actually exact.}

\smallskip

\indent The category $\mathcal{C} \boxtimes \mathcal{C}$ has a  monoidal product defined by
\[ (X_1 \boxtimes Y_1) \otimes (X_2 \boxtimes Y_2) = (X_1 \otimes X_2) \boxtimes (Y_1 \otimes Y_2) \]
and extended as a bifunctor right-exact in each variable. Using the universal property of the Deligne product, there is a contravariant endofunctor $(-)^{\vee}$ on $\mathcal{C} \boxtimes \mathcal{C}$ defined by
\[ (X \boxtimes Y)^{\vee} = X^{\vee} \boxtimes Y^{\vee}. \]
Thanks to item 2 in Lemma \ref{lemmaExtensionNatDeligneProd}, there are dinatural transformations $\bigl(\mathrm{ev}_M : M^{\vee} \otimes M \to \boldsymbol{1} \boxtimes \boldsymbol{1} \bigr)_{M \in \mathcal{C} \boxtimes \mathcal{C}}$ and $\bigl( \mathrm{coev}_M : \boldsymbol{1} \boxtimes \boldsymbol{1} \to M \otimes M^{\vee} \bigr)_{M \in \mathcal{C} \boxtimes \mathcal{C}}$ uniquely defined by $\mathrm{ev}_{X \boxtimes Y} = \mathrm{ev}_X \boxtimes \mathrm{ev}_Y$ and $\mathrm{coev}_{X \boxtimes Y} = \mathrm{coev}_X \boxtimes \mathrm{coev}_Y$. This endows $\mathcal{C} \boxtimes \mathcal{C}$ with a left duality. Similar remarks apply for right duality and hence $\mathcal{C} \boxtimes \mathcal{C}$ is rigid. This implies that the monoidal product $\otimes_{\mathcal{C} \boxtimes \mathcal{C}}$ is even exact in each variable \cite[\S 4.2]{EGNO}.

\smallskip

If $\phi$ is a monoidal structure for $\otimes_{\mathcal{C}}$ then item 1 in Lemma \ref{lemmaExtensionNatDeligneProd} (or rather its straightforward generalization for natural transformations with several components) gives a monoidal structure for $P$, which we still denote by $\phi$. It satisfies $\phi_{X_1 \boxtimes Y_1, X_2 \boxtimes Y_2} = \phi_{(X_1,Y_1),(X_2,Y_2)}$ by definition. A DY cochain $f \in C^n_{\mathrm{DY}}(P,\phi)$ is a natural transformation whose source and target look as follows on the factorized objects of $\mathcal{C} \boxtimes \mathcal{C}$
\begin{equation}\label{DYCochainsMonProduct}
f_{X_1 \boxtimes Y_1, \ldots, X_n \boxtimes Y_n} : X_1 \otimes Y_1 \otimes X_2 \otimes Y_2 \otimes \ldots \otimes X_n \otimes Y_n \to X_1 \otimes \ldots \otimes X_n \otimes Y_1 \otimes \ldots \otimes Y_n.
\end{equation}
By the straightforward generalization of item 1 in Lemma \ref{lemmaExtensionNatDeligneProd} for natural transformations with several components, these values determine $f$ uniquely. It follows that:
\begin{proposition}\label{propDYcohomOfMonProductAndLinearization}
The cochain complexes $C^{\bullet}_{\mathrm{DY}}(\otimes_{\mathcal{C}},\phi)$ and $C^{\bullet}_{\mathrm{DY}}(P,\phi)$ are isomorphic.
\end{proposition}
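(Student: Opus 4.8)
The plan is to exhibit an explicit isomorphism of cosimplicial vector spaces
\[ \mathrm{res}^{\bullet} : C^{\bullet}_{\mathrm{DY}}(P,\phi) \longrightarrow C^{\bullet}_{\mathrm{DY}}(\otimes_{\mathcal{C}},\phi) \]
obtained by restriction along the Deligne functor $\boxtimes : \mathcal{C} \times \mathcal{C} \to \mathcal{C} \boxtimes \mathcal{C}$, and then conclude by passing to the associated cochain complexes. Concretely, given $f \in C^n_{\mathrm{DY}}(P,\phi)$ — a natural transformation between the functors $(M_1,\ldots,M_n) \mapsto P(M_1) \otimes \ldots \otimes P(M_n)$ and $(M_1,\ldots,M_n) \mapsto P(M_1 \otimes \ldots \otimes M_n)$ on $(\mathcal{C} \boxtimes \mathcal{C})^{\times n}$ — I would set $\mathrm{res}^n(f)_{(X_1,Y_1),\ldots,(X_n,Y_n)} = f_{X_1 \boxtimes Y_1,\ldots,X_n \boxtimes Y_n}$, which by \eqref{DYCochainsMonProduct} has exactly the source and target of a degree-$n$ DY cochain for $\otimes_{\mathcal{C}}$. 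Since $\otimes_{\mathcal{C}} = P \circ \boxtimes$ and, by construction, the monoidal structure on $\otimes_{\mathcal{C}}$ satisfies $\phi_{X_1 \boxtimes Y_1, X_2 \boxtimes Y_2} = \phi_{(X_1,Y_1),(X_2,Y_2)}$, the collection $\mathrm{res}^n(f)$ is natural and lies in $C^n_{\mathrm{DY}}(\otimes_{\mathcal{C}},\phi)$; hence $\mathrm{res}^n$ is a well-defined linear map.

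I would then prove bijectivity of $\mathrm{res}^n$ via the universal property of the Deligne product. The two functors serving as source and target of a degree-$n$ DY cochain for $P$ are, in each of their $n$ arguments separately, right-exact: the source is $P(-)$ tensored in $\mathcal{C}$ with fixed objects, and $P$ is right-exact while $\otimes_{\mathcal{C}}$ is exact in each variable; the target is $P$ applied to an $\otimes_{\mathcal{C} \boxtimes \mathcal{C}}$-product, and $\otimes_{\mathcal{C} \boxtimes \mathcal{C}}$ is exact in each variable (as recorded just above the statement, thanks to rigidity of $\mathcal{C} \boxtimes \mathcal{C}$). Therefore the $2n$-variable form of the universal property of $\boxtimes$ — the multi-component generalization of item~1 of Lemma~\ref{lemmaExtensionNatDeligneProd} already invoked in the discussion preceding the proposition — applies and shows that a natural transformation between these two functors is uniquely determined by, and can be freely prescribed from, its components on factorized tuples $(X_1 \boxtimes Y_1,\ldots,X_n \boxtimes Y_n)$, naturally in $X_1,Y_1,\ldots,X_n,Y_n$ separately. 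The latter data is precisely an element of $C^n_{\mathrm{DY}}(\otimes_{\mathcal{C}},\phi)$, so $\mathrm{res}^n$ admits a two-sided inverse.

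Finally I would check that $\mathrm{res}^{\bullet}$ intertwines the cosimplicial structure maps — the coface maps $\partial^n_i$ of \eqref{cofaceDYNonStrict} and the codegeneracies $s^n_i$ of \eqref{cofaceDY}. This is immediate on factorized objects: each $\partial^n_i$ is built out of components of the cochain, identity morphisms, and instances of the monoidal structure, all evaluated on $\otimes$-products of the arguments; since $\otimes_{\mathcal{C} \boxtimes \mathcal{C}}$ restricts to $\otimes_{\mathcal{C}}$ on factorized objects and $\phi$ on $P$ restricts to $\phi$ on $\otimes_{\mathcal{C}}$, evaluating $\partial^n_i(\mathrm{res}^n(f))$ on a factorized tuple yields termwise the same morphism as $\mathrm{res}^{n+1}(\partial^n_i(f))$, and the codegeneracies, which merely insert the unit $\boldsymbol{1} = \boldsymbol{1} \boxtimes \boldsymbol{1}$, are handled identically. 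Thus $\mathrm{res}^{\bullet}$ is an isomorphism of cosimplicial vector spaces, in particular of the underlying cochain complexes, which is the claim. The only genuinely delicate point is the $2n$-variable universal property of $\boxtimes$: one must verify that every functor appearing as the source or target of a DY cochain for $P$ is right-exact in each separate variable so that the restriction/extension bijection for such functors applies — but this follows from right-exactness of $P$ and exactness of $\otimes_{\mathcal{C} \boxtimes \mathcal{C}}$ in each variable, both already available, and everything else is routine bookkeeping.
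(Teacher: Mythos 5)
Your proof is correct and takes essentially the same route as the paper: define the isomorphism by restriction to $\boxtimes$-factorized tuples, invoke the multi-variable universal property of the Deligne product (the generalization of item~1 of Lemma~\ref{lemmaExtensionNatDeligneProd} that the paper states just before the proposition) to get bijectivity, and observe that the cosimplicial structure maps are respected. The paper's proof is terser — it simply writes down $J^n(f)_{(X_1,Y_1),\ldots,(X_n,Y_n)} = f_{X_1 \boxtimes Y_1,\ldots,X_n \boxtimes Y_n}$ and declares compatibility with the DY differential immediate — and your added bookkeeping (checking right-exactness of both source and target functors in each variable so the extension lemma applies) is exactly the justification the paper leaves implicit.
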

\begin{proof}
There is an isomorphism of vector spaces $J^n : C^n_{\mathrm{DY}}(P,\phi) \to C^n_{\mathrm{DY}}(\otimes,\phi)$ defined by $J^n(f)_{(X_1,Y_1), \ldots, (X_n,Y_n)} = f_{X_1 \boxtimes Y_1, \ldots, X_n \boxtimes Y_n}$ for each $n \in \mathbb{N}$. The compatibility of $(J^n)_{n \in \mathbb{N}}$ with the DY differential is immediate.
\end{proof}

\indent From now on we fix a monoidal structure of the form $\phi = \mathrm{id} \otimes c \otimes \mathrm{id}$ as defined in \eqref{MonStructFromBraiding}, where $c$ is a braiding on $\mathcal{C}$. This fixes a monoidal structure for $P$ entirely defined by
\begin{equation}\label{monoidalStructPFromBraiding}
P^{(2)}_{X_1 \boxtimes Y_1,X_2 \boxtimes Y_2} = \mathrm{id}_{X_1} \otimes c_{Y_1,X_2} \otimes \mathrm{id}_{Y_2}.
\end{equation}
 We are in the situation \eqref{assumptionsFCD} with $F = P$ and $\mathcal{C}$ is replaced by $\mathcal{C} \boxtimes \mathcal{C}$ while $\mathcal{D}$ is replaced by $\mathcal{C}$. Our goal is to apply Adjunction Theorem (Thm.\,\ref{thmChangeOfCoeffDY}) to the functor $P$. The coend
\begin{equation}\label{defCoendBulkA}
\mathscr{A} = \int^{X \in \mathcal{C}} X^{\vee} \boxtimes X \in \mathcal{C} \boxtimes \mathcal{C}
\end{equation}
will play a key role; it exists by Corollary \ref{coendsAsCokerAndConsequence} because the $\Bbbk$-bilinear functor $\boxtimes : \mathcal{C} \times \mathcal{C} \to \mathcal{C} \boxtimes \mathcal{C}$ is exact in each variable \cite[Prop.\,5.13]{deligne}. Let $j_X : X^{\vee} \boxtimes X \to \mathscr{A}$ be the universal dinatural transformation. Define a half-braiding $\lambda^{(+),(-)} : \mathscr{A} \otimes - \Rightarrow - \otimes \mathscr{A}$ by the commutative diagram
\begin{equation}\label{halfBraidingOnBulk}
\xymatrix@C=6em{
{\begin{array}{l}
(X^{\vee} \otimes C_1) \boxtimes (X \otimes C_2)\\
=(X^{\vee} \boxtimes X) \otimes (C_1 \boxtimes C_2)
\end{array}} \ar[r]^{c_{X^{\vee},C_1} \boxtimes \, c^{-1}_{C_2,X}} \ar[d]_{j_X \otimes \mathrm{id}_{C_1 \boxtimes C_2}} & {\begin{array}{l}
(C_1 \otimes X^{\vee}) \boxtimes (C_2 \otimes X)\\
=(C_1 \boxtimes C_2) \otimes (X^{\vee} \boxtimes X)
\end{array}} \ar[d]_{\mathrm{id}_{C_1 \boxtimes C_2} \otimes j_X} \\
\mathscr{A} \otimes (C_1 \boxtimes C_2) \ar[r]_{\exists!\,\lambda^{(+),(-)}_{C_1 \boxtimes C_2}} & (C_1 \boxtimes C_2) \otimes \mathscr{A}
} \end{equation}
for all $X,C_1,C_2 \in \mathcal{C}$. Here we use that $\otimes_{\mathcal{C}\boxtimes\mathcal{C}}$ is exact in each variable, thus $j_X \otimes \mathrm{id}_{C_1 \boxtimes C_2}$ is a universal dinatural transformation for the functor $(X,Y) \mapsto (X^{\vee} \boxtimes Y) \otimes (C_1 \boxtimes C_2)$. The values $\lambda^{(+),(-)}_{C_1 \boxtimes C_2}$ uniquely define $\lambda^{(+),(-)}$ by item 1 in Lemma \ref{lemmaExtensionNatDeligneProd}.\footnote{In the notations of this lemma we take $\mathcal{A} = \mathcal{B} = \mathcal{C}$, $\mathcal{D} = \mathcal{C}$ and $F(C_1,C_2) = \mathscr{A} \otimes (C_1 \boxtimes C_2)$, $G(C_1,C_2) = (C_1 \boxtimes C_2) \otimes \mathscr{A}$, $\widetilde{F}(M) = \mathscr{A} \otimes M$, $\widetilde{G}(M) = M \otimes \mathscr{A}$.}
\begin{remark}\label{rem:L-functor}
Note that there is a functor
\begin{align}\label{embeddingCCinZCC}
L^{(+),(-)} : \mathcal{C} \boxtimes \mathcal{C} \to \mathcal{Z}(\mathcal{C} \boxtimes \mathcal{C}), \quad & C_1 \boxtimes C_2 \mapsto \bigl( C_1 \boxtimes C_2, c_{C_1,-} \boxtimes c^{-1}_{-,C_2} \bigr)\ ,\\
& f_1\boxtimes f_2 \mapsto  f_1\boxtimes f_2\ ,\notag
\end{align}
for any morphisms $f_i: C_i \to C_i'$ due to naturality of the braiding and its inverse in both components. Consider the forgetful functor $\mathcal{U} : \mathcal{Z}(\mathcal{C} \boxtimes \mathcal{C}) \to \mathcal{C} \boxtimes \mathcal{C}$ satisfying 
$$
\mathcal{U} \circ L^{(+),(-)} \circ \boxtimes = \mathrm{Id}_{\mathcal{C} \boxtimes \mathcal{C}} \circ \boxtimes\ .$$ 
We note that the functor $L^{(+),(-)}$ is right-exact, since it comes from the factorization through the Deligne product of a bifunctor right-exact in each variable. Thus $\mathcal{U} \circ L^{(+),(-)}$ is right-exact as well, because $\mathcal{U}(g) = g$ for any morphism $g$. The definition of the Deligne product in the form given in \cite[Def.\,1.11.1]{EGNO} then implies that $\mathcal{U} \circ L^{(+),(-)} = \mathrm{Id}_{\mathcal{C} \boxtimes \mathcal{C}}$, by uniqueness of the factorization through $\boxtimes$. It follows that $\mathcal{U}\bigl( L^{(+),(-)}(f) \bigr) = f$ for any morphism $f$ in $\mathcal{C} \boxtimes \mathcal{C}$, and hence $L^{(+),(-)}(f) = f$. In particular, $L^{(+),(-)}$ is exact and thus by item 2 of Corollary \ref{coendsAsCokerAndConsequence} we have $\bigl( \mathscr{A}, \lambda^{(+),(-)} \bigr) \cong L^{(+),(-)}(\mathscr{A})$.
\end{remark}

\begin{theorem}\label{thmChangeFunctorMonProduct}
Let $\mathcal{C}$ be a finite $\Bbbk$-linear tensor category with monoidal product $\otimes$ and $c$ be a braiding in $\mathcal{C}$. Then 
\[H^{\bullet}_{\mathrm{DY}}(\otimes, \mathrm{id} \otimes c \otimes \mathrm{id}) \cong H^{\bullet}_{\mathrm{DY}}\!\left( \mathcal{C} \boxtimes \mathcal{C}; \boldsymbol{1} \boxtimes \boldsymbol{1}, \bigl( \mathscr{A}, \lambda^{(+),(-)} \bigr) \right) \]
where $\mathrm{id} \otimes c \otimes \mathrm{id}$ is the monoidal structure of $\otimes = \otimes_{\mathcal{C}}$ defined in \eqref{MonStructFromBraiding} and $\bigl( \mathscr{A}, \lambda^{(+),(-)} \bigr) \in \mathcal{Z}(\mathcal{C} \boxtimes \mathcal{C})$ is the object $\mathscr{A} \in \mathcal{C} \boxtimes \mathcal{C}$ defined in \eqref{defCoendBulkA} endowed with the half-braiding \eqref{halfBraidingOnBulk}.
\end{theorem}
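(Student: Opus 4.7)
The plan is to combine Proposition \ref{propDYcohomOfMonProductAndLinearization} with the Adjunction Theorem \ref{thmChangeOfCoeffDY}. By Proposition \ref{propDYcohomOfMonProductAndLinearization} we may replace $\otimes_{\mathcal{C}}$ with the right-exact functor $P : \mathcal{C} \boxtimes \mathcal{C} \to \mathcal{C}$ endowed with the monoidal structure \eqref{monoidalStructPFromBraiding}. The assumptions \eqref{assumptionsFCD} are easily verified: $\mathcal{C} \boxtimes \mathcal{C}$ is a finite rigid $\Bbbk$-linear monoidal category (as recalled above \eqref{defCoendBulkA}), $\otimes_{\mathcal{C}}$ is exact in each variable, and $P$ is a right-exact $\Bbbk$-linear monoidal functor. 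Since $\mathcal{C}$ is finite as a $\Bbbk$-linear category, $P$ admits a right adjoint $R : \mathcal{C} \to \mathcal{C} \boxtimes \mathcal{C}$ by \cite[Cor.\,1.9]{DSPS}. Theorem \ref{thmChangeOfCoeffDY} then yields
\[ H^{\bullet}_{\mathrm{DY}}(P) \cong H^{\bullet}_{\mathrm{DY}}\!\left( \mathcal{C} \boxtimes \mathcal{C} ; \boldsymbol{1} \boxtimes \boldsymbol{1}, \widetilde{R}(\boldsymbol{1}) \right), \]
so the theorem is reduced to the identification $\widetilde{R}(\boldsymbol{1}) \cong \bigl( \mathscr{A}, \lambda^{(+),(-)} \bigr)$ in $\mathcal{Z}(\mathcal{C} \boxtimes \mathcal{C})$.

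For the underlying object, I would show $R(\boldsymbol{1}) \cong \mathscr{A}$ by a universal-property argument. The evaluation morphisms $\mathrm{ev}_X : X^{\vee} \otimes X \to \boldsymbol{1}$ are dinatural in $X \in \mathcal{C}$; under the adjunction $P \dashv R$ they correspond to a dinatural family $X^{\vee} \boxtimes X \to R(\boldsymbol{1})$, which by the universal property of the coend $\mathscr{A} = \int^{X} X^{\vee} \boxtimes X$ factors through a canonical morphism $\iota : \mathscr{A} \to R(\boldsymbol{1})$. That $\iota$ is an isomorphism can be checked by testing against factorized objects $X \boxtimes Y$: using rigidity of both $\mathcal{C}$ and $\mathcal{C} \boxtimes \mathcal{C}$ together with the adjunction, both $\Hom_{\mathcal{C} \boxtimes \mathcal{C}}(X \boxtimes Y, \mathscr{A})$ and $\Hom_{\mathcal{C} \boxtimes \mathcal{C}}(X \boxtimes Y, R(\boldsymbol{1})) \cong \Hom_{\mathcal{C}}(X \otimes Y, \boldsymbol{1})$ are identified with $\Hom_{\mathcal{C}}(Y, X^{\vee})$, and $\iota_{*}$ realises this identification; since such factorized objects are a generating class in $\mathcal{C} \boxtimes \mathcal{C}$, this suffices.

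To identify the half-braiding on $\widetilde{R}(\boldsymbol{1})$ I would apply the explicit formula for $\widetilde{R}$ on half-braidings given in the Proposition preceding Corollary \ref{coroTangentSpaceChangeOfFunctor} to the trivial half-braiding $\lambda_X = \mathrm{id}_{P(X)}$ on $\boldsymbol{1} \in \mathcal{Z}(P)$. The resulting half-braiding $\lambda^R$ on $R(\boldsymbol{1})$ is obtained as a composition of duality in $\mathcal{C}$, the counit $h : PR \Rightarrow \mathrm{Id}_{\mathcal{C}}$, the inverse of $P^{(3)}$ (\emph{i.e.}\ the monoidal structure of $P$, which is where the braiding $c$ enters), the $P \dashv R$ adjunction, and duality in $\mathcal{C} \boxtimes \mathcal{C}$. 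Since $P^{(2)}_{X_1 \boxtimes Y_1, X_2 \boxtimes Y_2} = \mathrm{id}_{X_1} \otimes c_{Y_1, X_2} \otimes \mathrm{id}_{Y_2}$, the iterated structure $P^{(3)}$ placed on the triple $(C_1 \boxtimes C_2)^{\vee} \otimes (X \boxtimes X)^{\vee}  \otimes (C_1 \boxtimes C_2) \otimes \cdots$ brings in $c_{X^{\vee}, C_1}$ in the first tensorand and $c^{-1}_{C_2, X}$ in the second, matching exactly the formula \eqref{halfBraidingOnBulk} for $\lambda^{(+),(-)}_{C_1 \boxtimes C_2}$.

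The main obstacle is this last step: one has to perform a careful diagram chase, on factorized test objects $C_1 \boxtimes C_2$, comparing the composite that defines $\lambda^R_{C_1 \boxtimes C_2}$ with the right-hand side of \eqref{halfBraidingOnBulk}. The key ingredients of this verification are the dinaturality of $j_X$, the identification \eqref{monoidalStructPFromBraiding} for $P^{(2)}$, the compatibility of duality with monoidal structures \eqref{dualityForImagesOfF}, and the naturality of $c$ and $c^{-1}$. Once the diagram commutes on all $C_1 \boxtimes C_2$, item 1 of Lemma \ref{lemmaExtensionNatDeligneProd} (invoked also in the definition \eqref{halfBraidingOnBulk}) extends the equality to all objects of $\mathcal{C} \boxtimes \mathcal{C}$, completing the proof.
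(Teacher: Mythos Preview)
Your overall strategy is exactly the paper's: reduce via Proposition~\ref{propDYcohomOfMonProductAndLinearization} and Theorem~\ref{thmChangeOfCoeffDY} to the identification $\widetilde{R}(\boldsymbol{1}) \cong (\mathscr{A},\lambda^{(+),(-)})$. Two comments on the execution.

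First, rather than a Yoneda argument for $R(\boldsymbol{1}) \cong \mathscr{A}$, the paper writes the right adjoint explicitly as $R(W)=(W\boxtimes\boldsymbol{1})\otimes\mathscr{A}$ with explicit unit $e$ and counit $h$ (Lemma~\ref{lemmaRightAdjointMonProduct}). This is not merely stylistic: the formula for $\lambda^R$ that you plan to use (or the equivalent monadic description via \eqref{rightAdjointLift}--\eqref{descriptionXiForCentralMonads}, which is what the paper actually uses) involves $e$ and $h$ concretely, so one way or another you must have them in hand.

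Second, your heuristic that $(P^{(3)})^{-1}$ ``brings in $c_{X^{\vee},C_1}$ in the first tensorand and $c^{-1}_{C_2,X}$ in the second'' is too quick. Expanding the composite actually produces a tangle of braidings --- in the paper's variables one sees $c^{-1}_{Y^{\vee},C^{\vee}}$, $c^{-1}_{Y^{\vee}\otimes C,\,X}$, and a further $c_{Y^{\vee},X}$ coming from the counit/evaluation piece --- together with a coevaluation for $Y^{\vee}\otimes C\otimes Y$; only after zig-zag identities and dinaturality of $j$ does this collapse to $j_C\circ(c_{C^{\vee},X}\boxtimes c^{-1}_{Y,C})$. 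You correctly flag this as the main obstacle and list the right ingredients, so there is no genuine gap, but the simplification is the substance of the proof rather than routine bookkeeping. (Minor: your ``$(X\boxtimes X)^{\vee}$'' should read $X^{\vee}\boxtimes X$, a component of $\mathscr{A}$.)
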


\indent Because of Proposition \ref{propDYcohomOfMonProductAndLinearization}, Theorem \ref{thmChangeFunctorMonProduct} is an application of Theorem \ref{thmChangeOfCoeffDY} in the following situation:
\begin{equation}\label{diagramLiftingMonProductOnCentralizers}
\xymatrix@C=4em@R=.7em{
\mathcal{Z}(\mathcal{C} \boxtimes \mathcal{C}) \ar@/^.7em/[dd]^{\mathcal{U}_{\mathcal{C} \boxtimes \mathcal{C}}} \ar@/^.7em/[r]^-{\widetilde{P}}_-{\text{\normalsize \rotatebox{270}{$\dashv$}}} & 
\ar@/^.7em/[l]^-{\widetilde{R}} \mathcal{Z}(P,\mathrm{id} \otimes c \otimes \mathrm{id}) \ar@/^.7em/[dd]^{\mathcal{U}_P}\\
\dashv & \dashv\\
\ar@/^.7em/[uu]^{\mathcal{F}_{\mathcal{C} \boxtimes \mathcal{C}}} \mathcal{C} \boxtimes \mathcal{C} \ar@/^.7em/[r]^{P}_{\text{\normalsize \rotatebox{270}{$\dashv$}}} & \ar@/^.7em/[l]^R \ar@/^.7em/[uu]^{\mathcal{F}_P} \mathcal{C}
} \end{equation}
where $\mathrm{id} \otimes c \otimes \mathrm{id}$ is the monoidal structure on $P$. Therefore we only need to prove that $\widetilde{R}(\boldsymbol{1}) = \bigl( \mathscr{A}, \lambda^{(+),(-)} \bigr)$. Let us first recall the formula for the right adjoint $R$. Note that $P(\mathscr{A}) = \int^X X^{\vee} \otimes X$ by right exactness of $P$, with universal dinatural transformation $P(j_X) : X^{\vee} \otimes X \to P(\mathscr{A})$ (Corollary \ref{coendsAsCokerAndConsequence}); $P(\mathscr{A})$ is known as the Lyubashenko coend of $\mathcal{C}$ \cite{lyu}.
\begin{lemma}\label{lemmaRightAdjointMonProduct}
The right adjoint $R : \mathcal{C} \to \mathcal{C} \boxtimes \mathcal{C}$ of $P$ is
\begin{equation}\label{rightAdjointMonoidalProd}
R(W) = (W \boxtimes \boldsymbol{1}) \otimes \mathscr{A} \cong \int^{X \in \mathcal{C}} (W \otimes X^{\vee}) \boxtimes X
\end{equation}
on objects and $R(f) = (f \boxtimes \mathrm{id}_{\boldsymbol{1}}) \otimes \mathrm{id}_{\mathscr{A}}$ on morphisms.
\end{lemma}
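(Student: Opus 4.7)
The proof has two parts: the coend isomorphism and the adjunction. For the isomorphism $(W \boxtimes \boldsymbol{1}) \otimes \mathscr{A} \cong \int^{X \in \mathcal{C}} (W \otimes X^{\vee}) \boxtimes X$, I would invoke the fact that $\otimes_{\mathcal{C} \boxtimes \mathcal{C}}$ is exact in each variable (established above from rigidity of $\mathcal{C} \boxtimes \mathcal{C}$), so that tensoring with $W \boxtimes \boldsymbol{1}$ on the left commutes with the coend defining $\mathscr{A}$; combined with the obvious identification $(W \boxtimes \boldsymbol{1}) \otimes (X^{\vee} \boxtimes X) = (W \otimes X^{\vee}) \boxtimes X$, this gives the claim, with universal dinatural transformation $\mathrm{id}_{W \boxtimes \boldsymbol{1}} \otimes j_X$.

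To establish the adjunction $P \dashv R$, I plan to construct explicit unit and counit natural transformations $\eta : \mathrm{Id}_{\mathcal{C} \boxtimes \mathcal{C}} \Rightarrow RP$ and $\varepsilon : PR \Rightarrow \mathrm{Id}_{\mathcal{C}}$ and check the triangle identities. Because both $P$ and $R$ are right-exact (the latter as a composition of right-exact functors) and because every object of $\mathcal{C} \boxtimes \mathcal{C}$ is a cokernel of morphisms between direct sums of objects of the form $X \boxtimes Y$, it suffices by Lemma~\ref{lemmaExtensionNatDeligneProd} to define $\eta$ on such factorized objects and to verify naturality and the triangle identities at that level.

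For $\eta$ on a generator $X \boxtimes Y$, I would compose
\[
X \boxtimes Y \xrightarrow{(\mathrm{id}_X \otimes \mathrm{coev}_Y) \,\boxtimes\, \mathrm{id}_Y} (X \otimes Y \otimes Y^{\vee}) \boxtimes Y = ((X \otimes Y) \boxtimes \boldsymbol{1}) \otimes (Y^{\vee} \boxtimes Y) \xrightarrow{\mathrm{id} \,\otimes\, j_Y} R(X \otimes Y).
\]
For the counit, I would observe that $\mathrm{ev}_X : X^{\vee} \otimes X \to \boldsymbol{1}$ is dinatural in $X$, so the universal property of $P(\mathscr{A}) = \int^{X} X^{\vee} \otimes X$ (right-exactness of $P$ yields this coend with dinatural transformation $P(j_X)$) produces a unique morphism $\overline{\mathrm{ev}} : P(\mathscr{A}) \to \boldsymbol{1}$, and I would set $\varepsilon_W = \mathrm{id}_W \otimes \overline{\mathrm{ev}}$.

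The triangle identities then reduce, on the generators $X \boxtimes Y$ and after applying the universal property of $\mathscr{A}$ and $P(\mathscr{A})$, to the zigzag axioms for the left duality $(X^{\vee}, \mathrm{ev}_X, \mathrm{coev}_X)$ in $\mathcal{C}$. The main obstacle will be handling the universal property of the coend $\mathscr{A}$ when verifying the identity $P(\eta_M) \circ \varepsilon_{P(M)} = \mathrm{id}_{P(M)}$ in the presence of the dinatural family $j_Y$: one must track carefully how $P(j_Y)$ interacts with $\overline{\mathrm{ev}}$ via the dinaturality of $\mathrm{ev}_Y$, but this is a routine coend manipulation. Naturality of $\eta$ in $X \boxtimes Y$ follows from naturality of $\mathrm{coev}$ and of $j$, and then extends uniquely to all objects via right-exactness.
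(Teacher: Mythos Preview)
Your proposal is correct and follows essentially the same route as the paper: the unit and counit are defined by the same formulas (the paper calls them $e$ and $h$), and the triangle identities are left as a reduction to the zig-zag axioms for duality. One small slip: the triangle identity you wrote as $P(\eta_M) \circ \varepsilon_{P(M)} = \mathrm{id}_{P(M)}$ should be $\varepsilon_{P(M)} \circ P(\eta_M) = \mathrm{id}_{P(M)}$ (the composite you wrote does not typecheck).
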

\begin{proof}
The unit $e : \mathrm{Id}_{\mathcal{C} \boxtimes \mathcal{C}} \Rightarrow RP$ is given by 
\begin{align}
\begin{split}\label{unitForRightAdjointMonProduct}
e_{X \boxtimes Y} : X \boxtimes Y &\xrightarrow{(\mathrm{id}_X \otimes \mathrm{coev}_Y)\boxtimes \mathrm{id}_Y} (X \otimes Y \otimes Y^{\vee}) \boxtimes Y = \bigl( (X \otimes Y) \boxtimes \boldsymbol{1} \bigr) \otimes (Y^{\vee} \boxtimes Y)\\
&\xrightarrow{\mathrm{id}_{(X \otimes Y) \boxtimes \boldsymbol{1}} \otimes j_Y} \bigl( (X \otimes Y) \boxtimes \boldsymbol{1} \bigr) \otimes \mathscr{A} = RP(X \boxtimes Y).
\end{split}
\end{align}
By item 1 in Lemma \ref{lemmaExtensionNatDeligneProd} these values entirely determine $e$. The counit $h : PR \Rightarrow \mathrm{Id}_{\mathcal{C}}$ is uniquely determined by the commutative diagram
\begin{equation}\label{counitForRightAdjointMonProduct}
\xymatrix@C=5em{
W \otimes C^{\vee} \otimes C \ar[r]^-{\mathrm{id}_W \otimes P(j_C)} \ar[dr]_{\mathrm{id}_W \otimes \mathrm{ev}_C}& W \otimes P(\mathscr{A}) \cong PR(W) \ar[d]^{\exists!\,h_W}\\
&W
} \end{equation}
for all $C, W \in \mathcal{C}$. Here we use that the dinatural transformation $\mathrm{id}_W \otimes P(j_X)$ is universal since $\otimes$ is exact in each variable. It is not difficult to check that $e$ and $h$ satisfy the zig-zag axioms of an adjunction, which proves that $P \dashv R$ by \cite[\S VI.1]{MLCat}.
\end{proof}

\begin{proof}[Proof of Theorem \ref{thmChangeFunctorMonProduct}]
By Proposition \ref{propDYcohomOfMonProductAndLinearization} we compute $H^{\bullet}_{\mathrm{DY}}(P)$ instead of $H^{\bullet}_{\mathrm{DY}}(\otimes)$. We recall that Lemma \ref{lemmaExtensionNatDeligneProd} ensures that a dinatural transformation labelled with objects in $\mathcal{C} \boxtimes \mathcal{C}$ is entirely determined by its values on the objects of the form $X \boxtimes Y$. Also recall that $P$ has the monoidal structure given by $P^{(2)}_{X_1 \boxtimes Y_1,X_2 \boxtimes Y_2} = \mathrm{id}_{X_1} \otimes c_{Y_1,X_2} \otimes \mathrm{id}_{Y_2}$.  For this proof it is more convenient to use the description of $\mathcal{Z}(\mathcal{C} \boxtimes \mathcal{C})$ and $\mathcal{Z}(P)$ as categories of modules over the monads $Z_{\mathcal{C}\boxtimes \mathcal{C}}$ and $Z_P$ defined as \eqref{defMonadZF}, see \eqref{halfBraidingFromModuleStructure}--\eqref{moduleStructureViaHalfBraiding} for this monadic description. In this proof, we will often omit the monoidal product symbol between objects in $\mathcal{C}$, that is we write $XY$ instead of $X \otimes Y$ for $X,Y \in \mathcal{C}$. We also allow ourselves to not indicate the category to which the variables in a coend notation belong to.
\\Recall that $\bigl( \mathscr{A}, \lambda^{(+),(-)} \bigr) = L^{(+),(-)}(\mathscr{A})$ with the functor $L^{(+),(-)}$ from \eqref{embeddingCCinZCC}. Now under the isomorphism $\mathcal{Z}(\mathcal{C} \boxtimes \mathcal{C}) \cong Z_{\mathcal{C}\boxtimes \mathcal{C}}\text{-}\mathrm{mod}$ the functor $L^{(+),(-)}$ becomes $C_1 \boxtimes C_2 \mapsto(C_1 \boxtimes C_2,q^{C_1 \boxtimes C_2})$ where $q^{C_1 \boxtimes C_2} : Z_{\mathcal{C} \boxtimes \mathcal{C}}(C_1 \boxtimes C_2) = \int^{X \boxtimes Y} (X^{\vee}C_1X) \boxtimes (Y^{\vee}C_2Y) \to C_1 \boxtimes C_2$ is determined by the commutative diagram (see \eqref{moduleStructureViaHalfBraiding})
\[ \xymatrix@C=11em{
\text{ \begin{tabular}{c} $(X^{\vee}C_1X) \boxtimes (Y^{\vee}C_2Y)$\\
{\footnotesize $=(X \boxtimes Y)^{\vee} \otimes (C_1 \boxtimes C_2) \otimes (X \boxtimes Y)$}\end{tabular}} \ar[r]^-{(\mathrm{id}_{X^{\vee}} \otimes c_{C_1,X}) \boxtimes (\mathrm{id}_{Y^{\vee}} \otimes c^{-1}_{Y,C_2})} \ar[d]_-{i^{\mathcal{C} \boxtimes \mathcal{C}}_{X \boxtimes Y}(C_1 \boxtimes C_2)} & (X^{\vee}XC_1) \boxtimes (Y^{\vee}YC_2) \ar[d]^{(\mathrm{ev}_X \otimes \mathrm{id}_{C_1}) \boxtimes (\mathrm{ev}_Y \otimes \mathrm{id}_{C_2})}\\
Z_{\mathcal{C} \boxtimes \mathcal{C}}(C_1 \boxtimes C_2) \ar[r]_-{\exists! \, q^{C_1 \boxtimes C_2}} & C_1 \boxtimes C_2
} \]
for all $X,Y \in \mathcal{C}$. Hence since
\begin{equation}\label{descriptionZCCACoend}
Z_{\mathcal{C} \boxtimes \mathcal{C}}(\mathscr{A}) = \int^{X \boxtimes Y} (X^{\vee} \boxtimes Y^{\vee} ) \otimes \mathscr{A} \otimes (X \boxtimes Y) = \int^{X \boxtimes Y, C} (X^{\vee} \, C^{\vee} \, X) \boxtimes (Y^{\vee} \, C \, Y)
\end{equation}
we have $L^{(+),(-)}(\mathscr{A}) = (\mathscr{A}, q^{\mathscr{A}})$ where $q^{\mathscr{A}} : Z_{\mathcal{C} \boxtimes \mathcal{C}}(\mathscr{A}) \to \mathscr{A}$ is defined by
the following commutative diagram for all $X,Y,C \in \mathcal{C}$
\begin{equation}\label{defActionqA}
\xymatrix@C=11em{
(X^{\vee}C^{\vee}X) \boxtimes (Y^{\vee}CY) \ar[r]^{(\mathrm{id}_{X^{\vee}} \otimes c_{C^{\vee},X}) \boxtimes (\mathrm{id}_{Y^{\vee}} \otimes c^{-1}_{Y,C})} \ar[d]_{i^{\mathcal{C} \boxtimes \mathcal{C}}_{X \boxtimes Y}(C^{\vee} \boxtimes C)} & (X^{\vee}XC^{\vee}) \boxtimes (Y^{\vee}YC) \ar[d]^{(\mathrm{ev}_X \otimes \mathrm{id}_{C^{\vee}}) \boxtimes (\mathrm{ev}_Y \otimes \mathrm{id}_{C})}\\
Z_{\mathcal{C} \boxtimes \mathcal{C}}(C^{\vee} \boxtimes C) \ar[r]_-{q^{C^{\vee} \boxtimes C}} \ar[d]_{Z_{\mathcal{C} \boxtimes \mathcal{C}}(j_C)} & C^{\vee} \boxtimes C \ar[d]^{j_C}\\
Z_{\mathcal{C} \boxtimes \mathcal{C}}(\mathscr{A}) \ar[r]_{\exists! \, q^{\mathscr{A}}} & \mathscr{A}
} \end{equation}
Indeed we recall that $j$ denotes the universal dinatural transformation of $\mathscr{A}$ and thus the left column in \eqref{defActionqA} is the universal dinatural transformation  of the right-hand coend in \eqref{descriptionZCCACoend}. For further use we note that \eqref{ZFonMorphisms} gives another expression for this dinatural transformation:
\begin{equation}\label{univDinatZCCA}
Z_{\mathcal{C} \boxtimes \mathcal{C}}(j_C) \circ i^{\mathcal{C} \boxtimes \mathcal{C}}_{X \boxtimes Y}(C^{\vee} \boxtimes C) = i^{\mathcal{C} \boxtimes \mathcal{C}}_{X \boxtimes Y}(\mathscr{A}) \circ \bigl( \mathrm{id}_{X^{\vee} \boxtimes Y^{\vee}} \otimes j_C \otimes \mathrm{id}_{X \boxtimes Y} \bigr).
\end{equation}
We are ready to compare $(\mathscr{A}, q^{\mathscr{A}})$ with $\widetilde{R}(\boldsymbol{1})$. Note first that the half-braiding on the unit object $\boldsymbol{1} \in \mathcal{Z}(P)$ is just the identity: $\boldsymbol{1} \otimes P(-) \xrightarrow{\:=\:} P(-) \otimes \boldsymbol{1}$. It follows from \eqref{moduleStructureViaHalfBraiding} that the corresponding $Z_P$-module structure $r_{\boldsymbol{1}} : Z_P(\boldsymbol{1}) \to \boldsymbol{1}$ is defined by $r_{\boldsymbol{1}} \circ i^P_M(\boldsymbol{1}) = \mathrm{ev}_{P(M)}$ for all $M \in \mathcal{C} \boxtimes \mathcal{C}$. Now using the definition of $\mathrm{ev}_{P(M)}$ in \eqref{dualityForImagesOfF}, which depends on the monoidal structure \eqref{monoidalStructPFromBraiding} of $P$, we see that $r_{\boldsymbol{1}}$ is uniquely determined by the commutative diagram
\begin{equation}\label{defActionZPonUnit}
\xymatrix@C=5em{
{\begin{array}{c} P(X^{\vee} \boxtimes Y^{\vee}) \otimes P(X \boxtimes Y)\\ =X^{\vee} \otimes Y^{\vee} \otimes X \otimes Y \end{array}} \ar[r]^-{\mathrm{id}_{X^{\vee}} \otimes c_{Y^{\vee},X} \otimes \mathrm{id}_Y}\ar[d]_-{i^P_{X \boxtimes Y}(\boldsymbol{1})} & 
{\begin{array}{c} P\bigl( (X^{\vee} \otimes X) \boxtimes (Y^{\vee} \otimes Y) \bigr)\\= X^{\vee} \otimes X \otimes Y^{\vee} \otimes Y \end{array}}\ar[d]^-{\mathrm{ev}_X \otimes \mathrm{ev}_Y}\\
Z_P(\boldsymbol{1})=\int^{X \boxtimes Y} P(X^{\vee} \boxtimes Y^{\vee}) \otimes P(X \boxtimes Y)  \ar[r]_-{r_ {\boldsymbol{1}}} & \boldsymbol{1}
} \end{equation}
for all $X,Y \in \mathcal{C}$. Then 
\[ \widetilde{R}(\boldsymbol{1},r_{\boldsymbol{1}}) = \bigl( R(\boldsymbol{1}), ((r_{\boldsymbol{1}} \boxtimes \mathrm{id}_{\boldsymbol{1}}) \otimes \mathrm{id}_{\mathscr{A}}) \circ \xi_{\boldsymbol{1}} \bigr) \]
by \eqref{rightAdjointLift} and by definition of $R$ on morphisms (Lemma \ref{lemmaRightAdjointMonProduct}), where $R(\boldsymbol{1}) = \mathscr{A}$ and
\[ \xi_{\boldsymbol{1}} : Z_{\mathcal{C} \boxtimes \mathcal{C}}R(\boldsymbol{1}) = Z_{\mathcal{C} \boxtimes \mathcal{C}}(\mathscr{A}) \to RZ_P(\boldsymbol{1}) \]
is defined in general in \eqref{defIsoNatXi}. We have
\[ RZ_P(\boldsymbol{1}) = \int^{X \boxtimes Y } \bigl( (X^{\vee} \, Y^{\vee} \, X \, Y) \boxtimes \boldsymbol{1} \bigr) \otimes \mathscr{A} = \int^{X \boxtimes Y, C} (X^{\vee} \, Y^{\vee} \, X \, Y \, C^{\vee}) \boxtimes C\ , \]
thus in order to describe $\xi_{\boldsymbol{1}}$ it suffices to compute $\xi_{\boldsymbol{1}} \circ i_{X \boxtimes Y}^{\mathcal{C} \boxtimes \mathcal{C}}(\mathscr{A}) \circ \bigl( \mathrm{id}_{X^{\vee} \boxtimes Y^{\vee}} \otimes j_C \otimes \mathrm{id}_{X \boxtimes Y} \bigr)$ for all $X,Y,C \in \mathcal{C}$ because $i_{X \boxtimes Y}^{\mathcal{C} \boxtimes \mathcal{C}}(\mathscr{A}) \circ \bigl( \mathrm{id}_{X^{\vee} \boxtimes Y^{\vee}} \otimes j_C \otimes \mathrm{id}_{X \boxtimes Y} \bigr)$ is the universal dinatural transformation of $Z_{\mathcal{C} \boxtimes \mathcal{C}}(\mathscr{A})$ as noted in \eqref{univDinatZCCA}. According to \eqref{descriptionXiForCentralMonads}, $\xi_{\boldsymbol{1}} \circ i_{X \boxtimes Y}^{\mathcal{C} \boxtimes \mathcal{C}}(\mathscr{A})$ is equal to
\[ R\bigl(i^P_{X \boxtimes Y}(\boldsymbol{1}) \bigr) \circ R\bigl( \mathrm{id}_{X^{\vee}  Y^{\vee}} \otimes h_{\boldsymbol{1}} \otimes \mathrm{id}_{X Y} \bigr) \circ R\bigl(P^{(3)}_{X^{\vee} \boxtimes Y^{\vee}, \mathscr{A}, X \boxtimes Y}\bigr)^{-1} \circ e_{(X^{\vee} \boxtimes Y^{\vee}) \otimes \mathscr{A} \otimes (X \boxtimes Y)} \]
where $e$ and $h$ are respectively the unit and counit of the adjunction $P \dashv R$ as defined in the proof of Lemma \ref{lemmaRightAdjointMonProduct}. Thus it remains to compute
\[ R\bigl( \mathrm{id}_{X^{\vee} Y^{\vee}} \otimes h_{\boldsymbol{1}} \otimes \mathrm{id}_{X Y} \bigr) \circ R\bigl(P^{(3)}_{X^{\vee} \boxtimes Y^{\vee}, \mathscr{A}, X \boxtimes Y}\bigr)^{-1} \circ e_{(X^{\vee} \boxtimes Y^{\vee}) \otimes \mathscr{A} \otimes (X \boxtimes Y)} \circ \bigl( \mathrm{id}_{X^{\vee} \boxtimes Y^{\vee}} \otimes j_C \otimes \mathrm{id}_{X \boxtimes Y} \bigr) \]
which we do in three steps. First by naturality of $e$, we have a commutative diagram
\[ \xymatrix@C=4.8em{
{\small \begin{array}{c} \bigl(X^{\vee} \boxtimes Y^{\vee} \bigr) \otimes (C^{\vee} \boxtimes C) \otimes (X \boxtimes Y) \\ =\bigl( X^{\vee} C^{\vee} X \bigr) \boxtimes \bigl( Y^{\vee} C Y \bigr) \end{array}} \!\!\ar[r]^-{\mathrm{id} \otimes j_C \otimes \mathrm{id}} \ar[d]_-{e_{( X^{\vee} C^{\vee} X) \boxtimes ( Y^{\vee} C Y)}} & {\small (X^{\vee} \boxtimes Y^{\vee}) \otimes \mathscr{A} \otimes (X \boxtimes Y) } \ar[d]^-{e_{(X^{\vee} \boxtimes Y^{\vee}) \otimes \mathscr{A} \otimes (X \boxtimes Y)}}\\
{\small \begin{array}{c} RP\bigl[(X^{\vee} \boxtimes Y^{\vee} ) \otimes (C^{\vee} \boxtimes C) \otimes (X \boxtimes Y)\bigr] \\ =\bigl(( X^{\vee} C^{\vee} X  Y^{\vee} C Y) \boxtimes \boldsymbol{1} \bigr) \otimes \mathscr{A} \end{array}} \!\!\ar[r]_-{RP(\mathrm{id} \otimes j_C \otimes \mathrm{id})} & {\small \begin{array}{c} RP\bigl[ (X^{\vee}  \boxtimes Y^{\vee} \bigr) \otimes \mathscr{A} \otimes (X \boxtimes Y) \bigr] \end{array}}
} \]
Hence by \eqref{unitForRightAdjointMonProduct} we find
\begin{align*}
&e_{(X^{\vee} \boxtimes Y^{\vee}) \otimes \mathscr{A} \otimes (X \boxtimes Y)} \circ \bigl( \mathrm{id}_{X^{\vee} \boxtimes Y^{\vee}} \otimes j_C \otimes \mathrm{id}_{X \boxtimes Y} \bigr)\\
=\:&RP\bigl( \mathrm{id}_{X^{\vee} \boxtimes Y^{\vee}} \otimes j_C \otimes \mathrm{id}_{X \boxtimes Y} \bigr)  \circ \bigl( \mathrm{id}_{(X^{\vee} C^{\vee} X Y^{\vee} C  Y) \boxtimes \boldsymbol{1}} \otimes j_{Y^{\vee}  C  Y} \bigr)\\
&\circ \bigl( (\mathrm{id}_{X^{\vee} C^{\vee} X} \otimes \mathrm{coev}_{Y^{\vee} C Y}) \boxtimes \mathrm{id}_{Y^{\vee} C Y} \bigr).
\end{align*}
Next by naturality of $P^{(3)}$ we have a commutative diagram
\[ \xymatrix@C=4.8em{
{\small \begin{array}{c} RP\bigl[(X^{\vee} \boxtimes Y^{\vee} ) \otimes (C^{\vee} \boxtimes C) \otimes (X \boxtimes Y)\bigr] \\ =\bigl(( X^{\vee} C^{\vee} X  Y^{\vee} C Y) \boxtimes \boldsymbol{1} \bigr) \otimes \mathscr{A} \end{array}} \!\!\ar[r]^-{RP(\mathrm{id} \otimes j_C \otimes \mathrm{id})} \ar[d]_{R\left(P^{(3)}_{X^{\vee} \boxtimes Y^{\vee}, C^{\vee} \boxtimes C, X \boxtimes Y}\right)^{-1}}& {\small \begin{array}{c} RP\bigl[ (X^{\vee} \boxtimes Y^{\vee}) \otimes \mathscr{A} \otimes (X \boxtimes Y) \bigr] \end{array}} \ar[d]^{R\left( P^{(3)}_{X^{\vee} \boxtimes Y^{\vee}, \mathscr{A}, X \boxtimes Y} \right)^{-1}}\\
{\small \begin{array}{c} R\bigl( X^{\vee}Y^{\vee}C^{\vee}CXY \bigr) \\ =\bigl(( X^{\vee}Y^{\vee}C^{\vee}CXY) \boxtimes \boldsymbol{1} \bigr) \otimes \mathscr{A} \end{array}} \ar[r]_-{R\left(\mathrm{id} \otimes P(j_C) \otimes \mathrm{id} \right)}& {\small \begin{array}{c} R\bigl( X^{\vee} Y^{\vee} P(\mathscr{A})XY \bigr)\\ =\bigl( (X^{\vee} Y^{\vee} P(\mathscr{A})XY) \boxtimes \boldsymbol{1} \bigr) \otimes \mathscr{A} \end{array}}
} \]
By definition of $P^{(3)}$ in \eqref{higherMonStruct}, by definition of the value of the functor $R$ on morphisms and since $P^{(2)} = \mathrm{id} \otimes c \otimes \mathrm{id}$ we thus obtain
\begin{align*}
&R\left( P^{(3)}_{X^{\vee} \boxtimes Y^{\vee}, \mathscr{A}, X \boxtimes Y} \right)^{-1} \circ RP\bigl( \mathrm{id}_{X^{\vee} \boxtimes Y^{\vee}} \otimes j_C \otimes \mathrm{id}_{X \boxtimes Y} \bigr)\\
=\:&R\bigl( \mathrm{id}_{X^{\vee} Y^{\vee}} \otimes P(j_C) \otimes \mathrm{id}_{X Y} \bigr) \circ \left[ \bigl( (\mathrm{id}_{X^{\vee}} \otimes c^{-1}_{Y^{\vee},C^{\vee}} \otimes \mathrm{id}_{C X Y}) \boxtimes \mathrm{id}_{\boldsymbol{1}} \bigr) \otimes \mathrm{id}_{\mathscr{A}} \right]\\
& \circ \bigl[ \bigl( (\mathrm{id}_{X^{\vee}  C^{\vee}} \otimes c^{-1}_{Y^{\vee} C,X} \otimes \mathrm{id}_Y) \boxtimes \mathrm{id}_{\boldsymbol{1}} \bigr) \otimes \mathrm{id}_{\mathscr{A}} \bigr].
\end{align*}
Finally if we apply the functor $R(X^{\vee}Y^{\vee} \otimes - \otimes XY)$ to \eqref{counitForRightAdjointMonProduct} with $W = \boldsymbol{1}$ we obtain the commutative diagram
\[ \xymatrix@C=4.5em{
R(X^{\vee}Y^{\vee}C^{\vee}CXY) \ar[r]^-{R(\mathrm{id} \otimes P(j_C) \otimes \mathrm{id})} \ar[dr]_{\!\!\!\!\!\!R(\mathrm{id}_{X^{\vee}Y^{\vee}} \otimes \mathrm{ev}_C \otimes \mathrm{id}_{XY})\qquad\qquad} & R\bigl( X^{\vee} Y^{\vee} P(\mathscr{A})XY \bigr)\ar[d]^-{R\left(\mathrm{id}_{X^{\vee}Y^{\vee}} \otimes h_{\boldsymbol{1}} \otimes \mathrm{id}_{XY}\right)} \\
& R\bigl( X^{\vee} Y^{\vee}XY \bigr) = \bigl( X^{\vee} Y^{\vee}XY \boxtimes \boldsymbol{1} \bigr) \otimes \mathscr{A}
} \]
so that
\[ R\bigl( \mathrm{id}_{X^{\vee} Y^{\vee}} \otimes h_{\boldsymbol{1}} \otimes \mathrm{id}_{X Y} \bigr) \circ R\bigl( \mathrm{id}_{X^{\vee} Y^{\vee}} \otimes P(j_C) \otimes \mathrm{id}_{X Y} \bigr) = \bigl[ \bigl( (\mathrm{id}_{X^{\vee} Y^{\vee}} \otimes \mathrm{ev}_C \otimes \mathrm{id}_{X Y}) \boxtimes \mathrm{id}_{\boldsymbol{1}} \bigr) \otimes \mathrm{id}_{\mathscr{A}} \bigr]. \]
Putting these computations altogether and using \eqref{defActionZPonUnit}, we conclude that 
\[ \bigl((r_{\boldsymbol{1}} \boxtimes \mathrm{id}_{\boldsymbol{1}}) \otimes \mathrm{id}_{\mathscr{A}} \bigr) \circ \xi_{\boldsymbol{1}} \circ i_{X \boxtimes Y}^{\mathcal{C} \boxtimes \mathcal{C}}(\mathscr{A}) \circ \bigl( \mathrm{id}_{X^{\vee} \boxtimes Y^{\vee}} \otimes j_C \otimes \mathrm{id}_{X \boxtimes Y} \bigr) : (X^{\vee} \, C^{\vee} \, X) \boxtimes (Y^{\vee} \, C \, Y) \to \mathscr{A} \]
is equal to the following composition
\begin{align*}
&\bigl[ \bigl( (\mathrm{ev}_X \otimes \mathrm{ev}_Y ) \boxtimes \mathrm{id}_{\boldsymbol{1}} \bigr) \otimes \mathrm{id}_{\mathscr{A}} \bigr] \circ \bigl[ \bigl( (\mathrm{id}_{X^{\vee}} \otimes c_{Y^{\vee},X} \otimes \mathrm{id}_Y) \boxtimes \mathrm{id}_{\boldsymbol{1}} \bigr) \otimes \mathrm{id}_{\mathscr{A}} \bigr]\\
&\circ \bigl[ \bigl( (\mathrm{id}_{X^{\vee} Y^{\vee}} \otimes \mathrm{ev}_C \otimes \mathrm{id}_{X Y}) \boxtimes \mathrm{id}_{\boldsymbol{1}} \bigr) \otimes \mathrm{id}_{\mathscr{A}} \bigr] \circ \left[ \bigl( (\mathrm{id}_{X^{\vee}} \otimes c^{-1}_{Y^{\vee},C^{\vee}} \otimes \mathrm{id}_{C X Y}) \boxtimes \mathrm{id}_{\boldsymbol{1}} \bigr) \otimes \mathrm{id}_{\mathscr{A}} \right]\\
&\circ \bigl[ \bigl( (\mathrm{id}_{X^{\vee} C^{\vee}} \otimes c^{-1}_{Y^{\vee} C,X} \otimes \mathrm{id}_Y) \boxtimes \mathrm{id}_{\boldsymbol{1}} \bigr) \otimes \mathrm{id}_{\mathscr{A}} \bigr] \circ \bigl[ \mathrm{id}_{(X^{\vee} C^{\vee} Y^{\vee} C  Y) \boxtimes \boldsymbol{1}} \otimes j_{Y^{\vee} C Y} \bigr]\\
&\circ \bigl[ (\mathrm{id}_{X^{\vee} C^{\vee} X} \otimes \mathrm{coev}_{Y^{\vee} C Y}) \boxtimes \mathrm{id}_{Y^{\vee}  C  Y} \bigr].
\end{align*}
By naturality of the monoidal product we can rewrite this as
\begin{align*}
&j_{Y^{\vee} C Y} \circ \biggl( \biggl[ \bigl( \mathrm{ev}_X \otimes \mathrm{ev}_Y \otimes \mathrm{id}_{Y^{\vee} C^{\vee}  Y^{\vee\vee}} \bigr) \circ \bigl(\mathrm{id}_{X^{\vee}} c_{Y^{\vee},X} \otimes \mathrm{id}_{Y Y^{\vee} C^{\vee} Y^{\vee\vee}}\bigr)\\
&\circ \bigl(\mathrm{id}_{X^{\vee} Y^{\vee}} \otimes \mathrm{ev}_C \otimes \mathrm{id}_{X Y Y^{\vee} C^{\vee} Y^{\vee\vee}} \bigr) \circ \bigl( \mathrm{id}_{X^{\vee}} \otimes c^{-1}_{Y^{\vee},C^{\vee}} \otimes \mathrm{id}_{C X Y Y^{\vee} C^{\vee} Y^{\vee\vee}} \bigr)\\
&\circ \bigl( \mathrm{id}_{X^{\vee} C^{\vee}} \otimes c^{-1}_{Y^{\vee} \otimes C,X} \otimes \mathrm{id}_{Y Y^{\vee} C^{\vee} Y^{\vee\vee}} \bigr)  \circ \bigl(\mathrm{id}_{X^{\vee}  C^{\vee}  X} \otimes \mathrm{coev}_{Y^{\vee} C Y} \bigr)  \biggr] \boxtimes \mathrm{id}_{Y^{\vee} C Y} \biggr).
\end{align*}
It is easy to simplify this formula if we represent it diagrammatically:
\begin{center}
\begingroup%
  \makeatletter%
  \providecommand\color[2][]{%
    \errmessage{(Inkscape) Color is used for the text in Inkscape, but the package 'color.sty' is not loaded}%
    \renewcommand\color[2][]{}%
  }%
  \providecommand\transparent[1]{%
    \errmessage{(Inkscape) Transparency is used (non-zero) for the text in Inkscape, but the package 'transparent.sty' is not loaded}%
    \renewcommand\transparent[1]{}%
  }%
  \providecommand\rotatebox[2]{#2}%
  \newcommand*\fsize{\dimexpr\f@size pt\relax}%
  \newcommand*\lineheight[1]{\fontsize{\fsize}{#1\fsize}\selectfont}%
  \ifx\svgwidth\undefined%
    \setlength{\unitlength}{371.25bp}%
    \ifx\svgscale\undefined%
      \relax%
    \else%
      \setlength{\unitlength}{\unitlength * \real{\svgscale}}%
    \fi%
  \else%
    \setlength{\unitlength}{\svgwidth}%
  \fi%
  \global\let\svgwidth\undefined%
  \global\let\svgscale\undefined%
  \makeatother%
  \begin{picture}(1,0.30738079)%
    \lineheight{1}%
    \setlength\tabcolsep{0pt}%
    \put(0,0){\includegraphics[width=\unitlength,page=1]{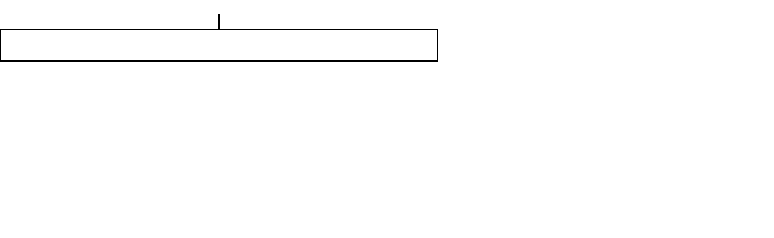}}%
    \put(0.21370556,0.24207336){\color[rgb]{0,0,0}\makebox(0,0)[lt]{\lineheight{1.25}\smash{\begin{tabular}[t]{l}$j_{Y^{\vee} \otimes C \otimes Y}$\end{tabular}}}}%
    \put(0,0){\includegraphics[width=\unitlength,page=2]{endProofThmMonProduct.pdf}}%
    \put(0.00254282,0.02641195){\color[rgb]{0,0,0}\makebox(0,0)[lt]{\lineheight{1.25}\smash{\begin{tabular}[t]{l}$_{X^{\vee}}$\end{tabular}}}}%
    \put(0.05306024,0.02641189){\color[rgb]{0,0,0}\makebox(0,0)[lt]{\lineheight{1.25}\smash{\begin{tabular}[t]{l}$_{C^{\vee}}$\end{tabular}}}}%
    \put(0.10696398,0.02623208){\color[rgb]{0,0,0}\makebox(0,0)[lt]{\lineheight{1.25}\smash{\begin{tabular}[t]{l}$_X$\end{tabular}}}}%
    \put(0,0){\includegraphics[width=\unitlength,page=3]{endProofThmMonProduct.pdf}}%
    \put(0.38214915,0.19804244){\color[rgb]{0,0,0}\makebox(0,0)[lt]{\lineheight{1.25}\smash{\begin{tabular}[t]{l}$\boxtimes$\end{tabular}}}}%
    \put(0.38243639,0.16344856){\color[rgb]{0,0,0}\makebox(0,0)[lt]{\lineheight{1.25}\smash{\begin{tabular}[t]{l}$\boxtimes$\end{tabular}}}}%
    \put(0.38220562,0.12816274){\color[rgb]{0,0,0}\makebox(0,0)[lt]{\lineheight{1.25}\smash{\begin{tabular}[t]{l}$\boxtimes$\end{tabular}}}}%
    \put(0.38206967,0.09243234){\color[rgb]{0,0,0}\makebox(0,0)[lt]{\lineheight{1.25}\smash{\begin{tabular}[t]{l}$\boxtimes$\end{tabular}}}}%
    \put(0.38198471,0.05723289){\color[rgb]{0,0,0}\makebox(0,0)[lt]{\lineheight{1.25}\smash{\begin{tabular}[t]{l}$\boxtimes$\end{tabular}}}}%
    \put(0.38158822,0.02075059){\color[rgb]{0,0,0}\makebox(0,0)[lt]{\lineheight{1.25}\smash{\begin{tabular}[t]{l}$\boxtimes$\end{tabular}}}}%
    \put(0,0){\includegraphics[width=\unitlength,page=4]{endProofThmMonProduct.pdf}}%
    \put(0.42797371,0.01881202){\color[rgb]{0,0,0}\makebox(0,0)[lt]{\lineheight{1.25}\smash{\begin{tabular}[t]{l}$_{Y^{\vee}}$\end{tabular}}}}%
    \put(0.47486154,0.01869155){\color[rgb]{0,0,0}\makebox(0,0)[lt]{\lineheight{1.25}\smash{\begin{tabular}[t]{l}$_C$\end{tabular}}}}%
    \put(0.52728794,0.01893093){\color[rgb]{0,0,0}\makebox(0,0)[lt]{\lineheight{1.25}\smash{\begin{tabular}[t]{l}$_Y$\end{tabular}}}}%
    \put(0.27245406,0.29611445){\color[rgb]{0,0,0}\makebox(0,0)[lt]{\lineheight{1.25}\smash{\begin{tabular}[t]{l}$_{\mathscr{A}}$\end{tabular}}}}%
    \put(0,0){\includegraphics[width=\unitlength,page=5]{endProofThmMonProduct.pdf}}%
    \put(0.81434964,0.19382252){\color[rgb]{0,0,0}\makebox(0,0)[lt]{\lineheight{1.25}\smash{\begin{tabular}[t]{l}$j_C$\end{tabular}}}}%
    \put(0.80682875,0.14687755){\color[rgb]{0,0,0}\makebox(0,0)[lt]{\lineheight{1.25}\smash{\begin{tabular}[t]{l}$\boxtimes$\end{tabular}}}}%
    \put(0.80711602,0.11228366){\color[rgb]{0,0,0}\makebox(0,0)[lt]{\lineheight{1.25}\smash{\begin{tabular}[t]{l}$\boxtimes$\end{tabular}}}}%
    \put(0.80688523,0.07699785){\color[rgb]{0,0,0}\makebox(0,0)[lt]{\lineheight{1.25}\smash{\begin{tabular}[t]{l}$\boxtimes$\end{tabular}}}}%
    \put(0.87241811,0.06931705){\color[rgb]{0,0,0}\makebox(0,0)[lt]{\lineheight{1.25}\smash{\begin{tabular}[t]{l}$_{Y^{\vee}}$\end{tabular}}}}%
    \put(0.91930596,0.06919661){\color[rgb]{0,0,0}\makebox(0,0)[lt]{\lineheight{1.25}\smash{\begin{tabular}[t]{l}$_C$\end{tabular}}}}%
    \put(0.97173234,0.06943596){\color[rgb]{0,0,0}\makebox(0,0)[lt]{\lineheight{1.25}\smash{\begin{tabular}[t]{l}$_Y$\end{tabular}}}}%
    \put(0.81790863,0.24560939){\color[rgb]{0,0,0}\makebox(0,0)[lt]{\lineheight{1.25}\smash{\begin{tabular}[t]{l}$_{\mathscr{A}}$\end{tabular}}}}%
    \put(0,0){\includegraphics[width=\unitlength,page=6]{endProofThmMonProduct.pdf}}%
    \put(0.66029696,0.06931708){\color[rgb]{0,0,0}\makebox(0,0)[lt]{\lineheight{1.25}\smash{\begin{tabular}[t]{l}$_{X^{\vee}}$\end{tabular}}}}%
    \put(0.70718479,0.06919664){\color[rgb]{0,0,0}\makebox(0,0)[lt]{\lineheight{1.25}\smash{\begin{tabular}[t]{l}$_{C^{\vee}}$\end{tabular}}}}%
    \put(0.75961118,0.06943599){\color[rgb]{0,0,0}\makebox(0,0)[lt]{\lineheight{1.25}\smash{\begin{tabular}[t]{l}$_X$\end{tabular}}}}%
    \put(0,0){\includegraphics[width=\unitlength,page=7]{endProofThmMonProduct.pdf}}%
    \put(0.58934388,0.17754906){\color[rgb]{0,0,0}\makebox(0,0)[lt]{\lineheight{1.25}\smash{\begin{tabular}[t]{l}$=$\end{tabular}}}}%
  \end{picture}%
\endgroup%

\end{center}
where we used the zig-zag axiom for ev/coev and dinaturality of $j$. Comparing this result with \eqref{defActionqA} and using \eqref{univDinatZCCA} we see that
\begin{align*}
&\bigl((r_{\boldsymbol{1}} \boxtimes \mathrm{id}_{\boldsymbol{1}}) \otimes \mathrm{id}_{\mathscr{A}} \bigr) \circ \xi_{\boldsymbol{1}} \circ i_{X \boxtimes Y}^{\mathcal{C} \boxtimes \mathcal{C}}(\mathscr{A}) \circ \bigl( \mathrm{id}_{X^{\vee} \boxtimes Y^{\vee}} \otimes j_C \otimes \mathrm{id}_{X \boxtimes Y} \bigr)\\
=\:&q^{\mathscr{A}} \circ i_{X \boxtimes Y}^{\mathcal{C} \boxtimes \mathcal{C}}(\mathscr{A}) \circ \bigl( \mathrm{id}_{X^{\vee} \boxtimes Y^{\vee}} \otimes j_C \otimes \mathrm{id}_{X \boxtimes Y} \bigr)
\end{align*}
for all $X \boxtimes Y \in \mathcal{C} \boxtimes \mathcal{C}$ and $C \in \mathcal{C}$. Since the dinatural transformation $\bigl( i_M^{\mathcal{C} \boxtimes \mathcal{C}}(\mathscr{A}) \circ \bigl( \mathrm{id}_{M^{\vee}} \otimes j_C \otimes \mathrm{id}_M \bigr)  \bigr)_{C \in \mathcal{C}, M \in \mathcal{C} \boxtimes \mathcal{C}}$ is universal we conclude that $\bigl((r_{\boldsymbol{1}} \boxtimes \mathrm{id}_{\boldsymbol{1}}) \otimes \mathrm{id}_{\mathscr{A}} \bigr) \circ \xi_{\boldsymbol{1}} = q^{\mathscr{A}}$.
\end{proof}

Let us explain why the isomorphism in Theorem \ref{thmChangeFunctorMonProduct} is useful in practice. {\em From now on and until the end of this subsection, assume that the ground field $\Bbbk$ is perfect.} Recall first a well-known fact:
\begin{lemma}\label{lemmaDrinfeldCenterOfDeligneProduct}
Let $\mathcal{C}, \mathcal{D}$ be finite $\Bbbk$-linear tensor categories. Then the functor
\begin{align*}
\mathcal{Z}(\mathcal{C}) \boxtimes \mathcal{Z}(\mathcal{D}) &\to \mathcal{Z}(\mathcal{C} \boxtimes \mathcal{D})\\
(V,\rho^V) \boxtimes (W,\rho^W) &\mapsto (V \boxtimes W, \rho^V \boxtimes \rho^W) \quad \text{with }\: (\rho^V \boxtimes \rho^W)_{X \boxtimes Y} = \rho^V_X \boxtimes \rho^W_Y
\end{align*}
is an equivalence of categories.
\end{lemma}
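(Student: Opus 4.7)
The plan is to leverage the monadic description of Drinfeld centers together with the Deligne product of resolvent pairs developed in Appendix \ref{subsectionDeligneResolventPairs}. First, I would exhibit the candidate functor. Since $\mathcal{Z}(\mathcal{C}) \boxtimes \mathcal{Z}(\mathcal{D})$ is the universal finite $\Bbbk$-linear category receiving a $\Bbbk$-bilinear right-exact functor out of $\mathcal{Z}(\mathcal{C}) \times \mathcal{Z}(\mathcal{D})$, it suffices to construct a $\Bbbk$-bilinear right-exact functor
\[ G : \mathcal{Z}(\mathcal{C}) \times \mathcal{Z}(\mathcal{D}) \to \mathcal{Z}(\mathcal{C} \boxtimes \mathcal{D}), \qquad \bigl((V,\rho^V),(W,\rho^W)\bigr) \mapsto (V \boxtimes W, \rho^V \boxtimes \rho^W) \]
where $(\rho^V \boxtimes \rho^W)_{X \boxtimes Y} = \rho^V_X \boxtimes \rho^W_Y$ is extended to all of $\mathcal{C} \boxtimes \mathcal{D}$ by Lemma \ref{lemmaExtensionNatDeligneProd}. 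The half-braiding axiom for $\rho^V \boxtimes \rho^W$ reduces on factorized objects to the half-braiding axioms for $\rho^V$ and $\rho^W$, and right-exactness is inherited from the right-exactness of $\boxtimes$ in each variable. This yields the desired functor $F : \mathcal{Z}(\mathcal{C}) \boxtimes \mathcal{Z}(\mathcal{D}) \to \mathcal{Z}(\mathcal{C} \boxtimes \mathcal{D})$.

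Next I would rewrite both sides as categories of modules over monads. By the isomorphism $\mathcal{Z}(\mathcal{C}) \cong Z_{\mathcal{C}}\text{-}\mathrm{mod}$ of \S\ref{relExtGroupsDYCohomology}, and analogously for $\mathcal{D}$ and $\mathcal{C} \boxtimes \mathcal{D}$, the target and source of $F$ become $Z_{\mathcal{C} \boxtimes \mathcal{D}}\text{-}\mathrm{mod}$ and $Z_{\mathcal{C}}\text{-}\mathrm{mod} \boxtimes Z_{\mathcal{D}}\text{-}\mathrm{mod}$ respectively. Using the Fubini theorem for coends together with the fact that Deligne products of coends are coends for perfect $\Bbbk$ (Corollary \ref{coendsAsCokerAndConsequence} combined with Lemma \ref{lemmaExtensionNatDeligneProd}), I would compute
\[ Z_{\mathcal{C} \boxtimes \mathcal{D}}(V \boxtimes W) = \int^{X \in \mathcal{C}, Y \in \mathcal{D}} (X^{\vee} \otimes V \otimes X) \boxtimes (Y^{\vee} \otimes W \otimes Y) \cong Z_{\mathcal{C}}(V) \boxtimes Z_{\mathcal{D}}(W) \]
and verify that this isomorphism of endofunctors of $\mathcal{C} \boxtimes \mathcal{D}$ upgrades to an isomorphism of monads $Z_{\mathcal{C} \boxtimes \mathcal{D}} \cong Z_{\mathcal{C}} \boxtimes Z_{\mathcal{D}}$ (comparing multiplications and units via their defining universal properties).

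Finally, I would invoke the results of Appendix \ref{subsectionDeligneResolventPairs} on Deligne products of resolvent pairs, which under the perfectness hypothesis identify $Z_{\mathcal{C}}\text{-}\mathrm{mod} \boxtimes Z_{\mathcal{D}}\text{-}\mathrm{mod}$ with $(Z_{\mathcal{C}} \boxtimes Z_{\mathcal{D}})\text{-}\mathrm{mod}$ via the canonical functor $(V,r) \boxtimes (W,s) \mapsto (V \boxtimes W, r \boxtimes s)$. Chaining these equivalences yields an equivalence $\mathcal{Z}(\mathcal{C}) \boxtimes \mathcal{Z}(\mathcal{D}) \simeq \mathcal{Z}(\mathcal{C} \boxtimes \mathcal{D})$, and one checks on factorized objects that it agrees with $F$, using the translations \eqref{halfBraidingFromModuleStructure}--\eqref{moduleStructureViaHalfBraiding} between half-braidings and module structures over the central monad.

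The main technical obstacle is justifying the identifications in the last two steps, namely that the Deligne product of the monads $Z_{\mathcal{C}}$ and $Z_{\mathcal{D}}$ recovers the central monad $Z_{\mathcal{C} \boxtimes \mathcal{D}}$ as a monad (not just as a functor), and that Eilenberg--Moore categories commute with Deligne products of resolvent pairs. Both rely crucially on perfectness of $\Bbbk$, which ensures that the Deligne product of right-exact bifunctors remains right-exact (hence the Deligne product of two resolvent pairs is again a resolvent pair, as stated in the introduction), so that the coend computations and the construction of Eilenberg--Moore categories behave well with respect to the universal property of~$\boxtimes$.
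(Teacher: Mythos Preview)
Your proposal is correct and follows essentially the same route as the paper: identify $Z_{\mathcal{C}\boxtimes\mathcal{D}}$ with $Z_{\mathcal{C}}\boxtimes Z_{\mathcal{D}}$ via Lemma~\ref{lemmaExtensionNatDeligneProd}, then invoke Corollary~\ref{coroDeligneProdOfMonads} to conclude $(Z_{\mathcal{C}}\text{-}\mathrm{mod})\boxtimes(Z_{\mathcal{D}}\text{-}\mathrm{mod})\simeq(Z_{\mathcal{C}}\boxtimes Z_{\mathcal{D}})\text{-}\mathrm{mod}$, and check that the resulting equivalence agrees with the stated functor. The paper is slightly terser (it does not separately construct $F$ before the monadic argument and leaves the monad-structure compatibility implicit), but the logical skeleton is the same.
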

\begin{proof}
We use the isomorphism $\mathcal{Z}(\mathcal{C}) \cong Z_{\mathcal{C}}\text{-}\mathrm{mod}$, see \S\ref{relExtGroupsDYCohomology}. By item 3 in Lemma \ref{lemmaExtensionNatDeligneProd}, see also~\eqref{eq:coend-Deligne}, and exactness of $\boxtimes$ in each variable, we have
\begin{align*}
&Z_{\mathcal{C}\boxtimes \mathcal{D}}(V \boxtimes W) = \int^{M \in \mathcal{C} \boxtimes \mathcal{D}} M^{\vee} \otimes (V \boxtimes W) \otimes M = \int^{X \in \mathcal{C}, Y \in \mathcal{D}} (X^{\vee} \otimes V \otimes X) \boxtimes (Y^{\vee} \otimes W \otimes Y)\\
&\cong\left( \int^{X \in \mathcal{C}} X^{\vee} \otimes V \otimes X \right) \boxtimes \left( \int^{Y \in \mathcal{D}} Y^{\vee} \otimes W \otimes Y\right) = Z_{\mathcal{C}}(V) \boxtimes Z_{\mathcal{D}}(W) = (Z_{\mathcal{C}} \boxtimes Z_{\mathcal{D}})(V \boxtimes W).
\end{align*}
It follows that $Z_{\mathcal{C}\boxtimes \mathcal{D}} = Z_{\mathcal{C}} \boxtimes Z_{\mathcal{D}}$. We conclude thanks to Corollary \ref{coroDeligneProdOfMonads} (which uses the assumption on the ground field $\Bbbk$ to be perfect):
\[ \mathcal{Z}(\mathcal{C}) \boxtimes \mathcal{Z}(\mathcal{D}) \cong (Z_{\mathcal{C}}\text{-}\mathrm{mod}) \boxtimes (Z_{\mathcal{D}}\text{-}\mathrm{mod}) \cong (Z_{\mathcal{C}} \boxtimes Z_{\mathcal{D}})\text{-}\mathrm{mod} = Z_{\mathcal{C} \boxtimes \mathcal{D}}\text{-}\mathrm{mod} \cong \mathcal{Z}(\mathcal{C} \boxtimes \mathcal{D}). \]
It is easy to see that the composition of these equivalences equals the proposed functor.
\end{proof}

\noindent Under the equivalence of Lemma \ref{lemmaDrinfeldCenterOfDeligneProduct}, the object from Theorem \ref{thmChangeFunctorMonProduct} becomes
\[ \mathcal{Z}(\mathcal{C} \boxtimes \mathcal{C}) \ni \bigl( \mathscr{A}, \lambda^{(+),(-)} \bigr) \mapsto \int^{X \in \mathcal{C}} (X^{\vee}, c_{X^{\vee},-}) \boxtimes (X,c_{-,X}^{-1}) \in \mathcal{Z}(\mathcal{C}) \boxtimes \mathcal{Z}(\mathcal{C}) \]
and then (recall Theorem \ref{thmDYCohomRelExt}):
\begin{equation}\label{DYCohomOtimesInTermsOfSeparatedRelExt}
H^{\bullet}_{\mathrm{DY}}(\otimes, \mathrm{id} \otimes c \otimes \mathrm{id}) \cong \mathrm{Ext}^{\bullet}_{\mathcal{Z}(\mathcal{C}) \boxtimes \mathcal{Z}(\mathcal{C}), \mathcal{C} \boxtimes \mathcal{C}}\left(\boldsymbol{1} \boxtimes \boldsymbol{1}, \int^{X \in \mathcal{C}} (X^{\vee}, c_{X^{\vee},-}) \boxtimes (X,c_{-,X}^{-1}) \right).
\end{equation}
These are relative Ext spaces associated to a Deligne product of adjunctions. Such adjunctions are discussed in Appendix \ref{subsectionDeligneResolventPairs}. In particular, by item 1 in Proposition \ref{propDeligneProductOfResolutions}, {\em one can deduce a relatively projective resolution of $\boldsymbol{1} \boxtimes \boldsymbol{1} \in \mathcal{Z}(\mathcal{C}) \boxtimes \mathcal{Z}(\mathcal{C})$ from a relatively projective resolution of $\boldsymbol{1} \in \mathcal{Z}(\mathcal{C})$}.

\begin{remark}
There is also a less straightforward and somewhat shorter proof of Lemma \ref{lemmaDrinfeldCenterOfDeligneProduct} using a result of Laugwitz-Walton \cite[Th.\,4.17]{LW} which states that a factorizable category $\mathcal{A}$ containing a factorizable topologizing (\textit{i.e.}\ closed under direct sums and taking subquotients) subcategory $\mathcal{E}$ is necessarily braided tensor equivalent to $\mathcal{E} \boxtimes \mathcal{X}$ where $\mathcal{X}$ is the Mueger’s centralizer of $\mathcal{E}$ and it is also factorizable. In their paper the ground field $\Bbbk$ is assumed to be algebraically closed (and thus in particular perfect). We apply this theorem to $\mathcal{A} = \mathcal{Z}(\mathcal{C} \boxtimes \mathcal{D})$ which contains mutually transparent copies of $\mathcal{Z}(\mathcal{C})$ and $\mathcal{Z}(\mathcal{D})$, therefore $\mathcal{Z}(\mathcal{C} \boxtimes \mathcal{D})$ is braided tensor equivalent to $\mathcal{Z}(\mathcal{C}) \boxtimes \mathcal{Z}(\mathcal{D}) \boxtimes \mathcal{X}$ for some factorizable $\mathcal{X}$. Comparing the Frobenius--Perron dimensions of both categories we conclude that $\mathcal{Z}(\mathcal{C} \boxtimes \mathcal{D})$ is braided tensor equivalent to $\mathcal{Z}(\mathcal{C}) \boxtimes \mathcal{Z}(\mathcal{D})$.
\end{remark}

Finally, by combining \eqref{dimTangentSpaceBraidingInTermsDYCohom}, \eqref{DYCohomOtimesInTermsOfSeparatedRelExt} and \eqref{isoDYRelExt} we find:
\begin{corollary}\label{coroDimTcBrCInTermsOfRelExt}
Let $\mathcal{C}$ be a finite $\Bbbk$-linear tensor category over a perfect field $\Bbbk$ and $c \in \mathrm{Br}(\mathcal{C})$ be a braiding on $\mathcal{C}$. Then with 
\begin{equation}\label{eq:Gamma-coef}
\Gamma = \int^{X \in \mathcal{C}} (X^{\vee}, c_{X^{\vee},-}) \boxtimes (X,c_{-,X}^{-1}) \in \mathcal{Z}(\mathcal{C}) \boxtimes \mathcal{Z}(\mathcal{C})
\end{equation}
we have
\begin{equation}
    \label{eq:dim-Tc-Br-Ext}
 \dim \mathbf{T}_c\mathrm{Br}(\mathcal{C}) = \dim \Ext^2_{\mathcal{Z}(\mathcal{C}) \boxtimes \mathcal{Z}(\mathcal{C}), \mathcal{C} \boxtimes \mathcal{C}}(\boldsymbol{1} \boxtimes \boldsymbol{1},\Gamma) - 2 \dim \Ext^2_{\mathcal{Z}(\mathcal{C}),\mathcal{C}}(\boldsymbol{1},\boldsymbol{1})\ .
\end{equation}
It is important in practice to note that both relative Ext spaces can be computed from a relatively projective resolution of $\boldsymbol{1} \in \mathcal{Z}(\mathcal{C})$, thanks to item 1 in Proposition \ref{propDeligneProductOfResolutions}.
\end{corollary}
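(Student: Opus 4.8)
The plan is to derive \eqref{eq:dim-Tc-Br-Ext} by chaining together results already proved in this section, so that the proof amounts to bookkeeping. First I would invoke the dimension formula \eqref{dimTangentSpaceBraidingInTermsDYCohom} from Corollary~\ref{coroDimFormulaTangentSpaceBraiding}(2),
\[ \dim \mathbf{T}_c\mathrm{Br}(\mathcal{C}) = \dim H^2_{\mathrm{DY}}(\otimes, \mathrm{id} \otimes c \otimes \mathrm{id}) - 2\dim H^2_{\mathrm{DY}}(\mathcal{C}), \]
which makes sense because $\mathcal{C}$ is a finite $\Bbbk$-linear category and hence the DY cochain spaces, and a fortiori the cohomology spaces, are finite-dimensional.

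Next I would rewrite the first summand on the right-hand side by unwinding the chain of isomorphisms recorded in \eqref{DYCohomOtimesInTermsOfSeparatedRelExt}. Concretely: Proposition~\ref{propDYcohomOfMonProductAndLinearization} identifies $H^{\bullet}_{\mathrm{DY}}(\otimes, \mathrm{id} \otimes c \otimes \mathrm{id})$ with $H^{\bullet}_{\mathrm{DY}}(P)$, where $P : \mathcal{C} \boxtimes \mathcal{C} \to \mathcal{C}$ carries the monoidal structure \eqref{monoidalStructPFromBraiding}; Theorem~\ref{thmChangeFunctorMonProduct}, which is the Adjunction Theorem~\ref{thmChangeOfCoeffDY} applied to $F = P$ together with the identification $\widetilde{R}(\boldsymbol{1}) = \bigl(\mathscr{A}, \lambda^{(+),(-)}\bigr)$ established there, turns this into $H^{\bullet}_{\mathrm{DY}}\bigl(\mathcal{C} \boxtimes \mathcal{C}; \boldsymbol{1} \boxtimes \boldsymbol{1}, (\mathscr{A}, \lambda^{(+),(-)})\bigr)$; Theorem~\ref{thmDYCohomRelExt} rewrites the latter as $\Ext^{\bullet}_{\mathcal{Z}(\mathcal{C} \boxtimes \mathcal{C}),\, \mathcal{C} \boxtimes \mathcal{C}}\bigl(\boldsymbol{1} \boxtimes \boldsymbol{1}, (\mathscr{A}, \lambda^{(+),(-)})\bigr)$; and finally the equivalence $\mathcal{Z}(\mathcal{C} \boxtimes \mathcal{C}) \simeq \mathcal{Z}(\mathcal{C}) \boxtimes \mathcal{Z}(\mathcal{C})$ of Lemma~\ref{lemmaDrinfeldCenterOfDeligneProduct} — the only step where perfectness of $\Bbbk$ is used — sends $(\mathscr{A}, \lambda^{(+),(-)})$ to the object $\Gamma$ of \eqref{eq:Gamma-coef}. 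Reading off degree $2$ gives $\dim H^2_{\mathrm{DY}}(\otimes, \mathrm{id} \otimes c \otimes \mathrm{id}) = \dim \Ext^2_{\mathcal{Z}(\mathcal{C}) \boxtimes \mathcal{Z}(\mathcal{C}),\, \mathcal{C} \boxtimes \mathcal{C}}(\boldsymbol{1} \boxtimes \boldsymbol{1}, \Gamma)$.

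For the second summand I would apply \eqref{isoDYRelExt} with $F = \mathrm{Id}_{\mathcal{C}}$ and trivial coefficients, so that $H^2_{\mathrm{DY}}(\mathcal{C}) \cong \Ext^2_{\mathcal{Z}(\mathcal{C}), \mathcal{C}}(\boldsymbol{1}, \boldsymbol{1})$; substituting both identifications into the formula of the first step then yields \eqref{eq:dim-Tc-Br-Ext}.

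For the closing computability remark, I would note that $\boldsymbol{1} \boxtimes \boldsymbol{1} \in \mathcal{Z}(\mathcal{C}) \boxtimes \mathcal{Z}(\mathcal{C})$ and the resolvent pair $\mathcal{F}_{\mathcal{C} \boxtimes \mathcal{C}} \dashv \mathcal{U}_{\mathcal{C} \boxtimes \mathcal{C}}$ are Deligne products of the corresponding data for $\mathcal{Z}(\mathcal{C})$, so that Proposition~\ref{propDeligneProductOfResolutions}(1) manufactures a relatively projective resolution of $\boldsymbol{1} \boxtimes \boldsymbol{1}$ out of one of $\boldsymbol{1} \in \mathcal{Z}(\mathcal{C})$; the relative Ext's on the right of \eqref{eq:dim-Tc-Br-Ext} are then obtained by applying $\Hom$ to these resolutions. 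Since all the inputs are already established, I do not expect a genuine obstacle; the one point meriting attention is the finiteness of every Ext and cohomology space in play — which holds because $\mathcal{C}$, and hence $\mathcal{C} \boxtimes \mathcal{C}$, is a finite $\Bbbk$-linear category, the Deligne product of finite categories being finite — so that the subtraction of dimensions in \eqref{eq:dim-Tc-Br-Ext} is legitimate.
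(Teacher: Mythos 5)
Your proposal is correct and takes essentially the same route as the paper: the paper itself states the corollary as an immediate consequence of combining \eqref{dimTangentSpaceBraidingInTermsDYCohom}, \eqref{DYCohomOtimesInTermsOfSeparatedRelExt} and \eqref{isoDYRelExt}, and you have simply unfolded \eqref{DYCohomOtimesInTermsOfSeparatedRelExt} back into its constituent steps (Proposition~\ref{propDYcohomOfMonProductAndLinearization}, Theorem~\ref{thmChangeFunctorMonProduct}, Theorem~\ref{thmDYCohomRelExt}, Lemma~\ref{lemmaDrinfeldCenterOfDeligneProduct}), which is exactly how the paper arrives at that equation in the two paragraphs preceding the corollary. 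The computability remark is likewise handled as in the paper, via Proposition~\ref{propDeligneProductOfResolutions}(1).
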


\subsection{The end formula for tangent space to a braiding}\label{sec:end-formula}
Here, we use  the K\"unneth formula in item 1 of Prop.~\ref{propDeligneProductOfResolutions} to
  rewrite the first term in~\eqref{eq:dim-Tc-Br-Ext} in terms of the `standard' adjunction between $\zcat$ and $\cat$, i.e.\ involving only relative $\Ext^n_{\zcat,\cat}$.
  
   We first recall that the  Nakayama functor $\Nak : \mathcal{C} \to \mathcal{C}$ on a finite $\Bbbk$-linear  category $\mathcal{C}=A\text{-}\mathrm{mod}$, for a finite dimensional $\Bbbk$-algebra $A$, is given by $\Nak = A^*\otimes _A -$ where $A^*$ is the co-regular $A$-bimodule.  We note that  up to isomorphism $\Nak$ does not depend on the algebra~$A$ realising $\mathcal{C}$ \cite{FSS}.
    Following~\cite[eq.\,(3.52)]{FSS}, we then have  a lemma turning a coend to an end:

\begin{lemma}[\cite{FSS}]
\label{lem:coend-to-end}
Let $\mathcal{C}$ be a finite $\Bbbk$-linear  category and  $\Nak : \mathcal{C} \to \mathcal{C}$ be the Nakayama functor. We have the following isomorphism in $\mathcal{C}^{\mathrm{op}} \boxtimes \mathcal{C}$:
\begin{equation}\label{eq:coend-to-end}
 \int^{X \in \mathcal{C}} \bar{X} \boxtimes X \cong  \int_{X \in \mathcal{C}} \bar{X} \boxtimes \Nak(X)\ ,
\end{equation}
where by $\bar{X}$ we denote the object $X$ considered in the opposite category $\mathcal{C}^{\mathrm{op}}$.
\end{lemma}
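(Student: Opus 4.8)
The plan is to reduce the statement to a concrete computation in $A\text{-}\mathrm{mod}$ and its enveloping algebra, using the description of coends and ends of the bifunctor $\bar{X}\boxtimes X$ as (co)invariants under the $A$-bimodule structure. Realize $\mathcal{C} = A\text{-}\mathrm{mod}$ for a finite-dimensional $\Bbbk$-algebra $A$, so that $\mathcal{C}^{\mathrm{op}}\boxtimes\mathcal{C} \cong (A^{\mathrm{op}}\otimes_\Bbbk A)\text{-}\mathrm{mod} = A^e\text{-}\mathrm{mod}$. The coend $\int^{X}\bar{X}\boxtimes X$ is then well known to be computed by the canonical $A^e$-module obtained as a quotient of $\bigoplus_X X^*\otimes_\Bbbk X$ by the dinaturality relations; concretely it is the algebra $A$ itself viewed as an $A^e$-module by left and right multiplication (this is the ``regular'' coend, essentially the statement that $A = \int^X \mathrm{Hom}_\Bbbk(X,X)^{*}$-type identity, or more precisely that $\int^{X\in A\text{-}\mathrm{mod}}\bar X\boxtimes X\cong A$ in $A^e\text{-}\mathrm{mod}$). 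Dually, the end $\int_{X}\bar{X}\boxtimes X$ is the $A^e$-module $A^*$ with its coregular bimodule structure. I would either quote these identifications from \cite{FSS} directly or prove them by the universal property: a dinatural family out of $\bar X\boxtimes X$ into an $A^e$-module $M$ is the same as an $A^e$-module map $A\to M$ (using that every $X$ is a quotient of a free module and naturality in $X$), and similarly for ends.

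Given these two identifications, the lemma becomes the statement that $A^* \cong A^*\otimes_A A$ with matching $A^e$-module structures, which is immediate — but that is the degenerate reading. The substance is that the \emph{end} side must be rewritten so that the Nakayama functor appears in the \emph{second} slot: one wants $\int_X \bar X\boxtimes X \cong \int^X \bar X\boxtimes \Nak^{-1}(X)$, or equivalently, moving the Nakayama functor to the coend side, $\int^X \bar X\boxtimes X \cong \int_X \bar X \boxtimes \Nak(X)$. Concretely, I would use the hom-tensor adjunction defining the Nakayama functor: $\Nak = A^*\otimes_A -$ has the property that $\mathrm{Hom}_A(\Nak(X), Y)\cong \mathrm{Hom}_\Bbbk(Y^*, X^*)^{*}$-type formulas, or more usefully $\Nak(X) = D\,\mathrm{Hom}_A(X, A)$ where $D=\mathrm{Hom}_\Bbbk(-,\Bbbk)$; this is precisely the "twist" that converts the coend (left adjoint, colimit) into an end (right adjoint, limit) because $\Nak$ implements the Serre functor / the failure of projectives to be injectives. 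So the key step is: apply $\Nak$ (equivalently $D\circ(-)^\vee$-type functor) in the second variable to the universal dinatural cone of the coend, use that $\Nak$ is exact on $\mathrm{Proj}(\mathcal{C})$ and sends projectives to injectives, and check that the resulting cone is universal \emph{among} cones into the end, i.e. that $\{j_X\boxtimes\Nak\}$ becomes a limiting cone. The precise mechanism is the natural isomorphism $\mathrm{Hom}_{\mathcal{C}}(X, \Nak(Y))\cong \mathrm{Hom}_{\mathcal{C}}(Y,X)^{*}$ (the defining property of $\Nak$ as an end-valued relative Serre functor), which turns the defining cowedge condition into the wedge condition after dualizing.

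The main obstacle I anticipate is purely bookkeeping: correctly tracking which category is opposite to which and making sure the $A^e$-module structures on both sides of \eqref{eq:coend-to-end} match under the identifications — in particular that $\Nak$ acting in the slot $X\in\mathcal{C}$ (not $\mathcal{C}^{\mathrm{op}}$) produces exactly the coregular $A^*$-bimodule rather than some twisted version. A secondary subtlety is that $\mathcal{C}$ need not be rigid here (the lemma is stated for a general finite $\Bbbk$-linear category, without a tensor structure), so one cannot use duals $X^\vee$ inside $\mathcal{C}$ and must work entirely with the linear duality $D:\mathcal{C}\to\mathcal{C}^{\mathrm{op}}$ and $\mathrm{Hom}_A(-,A):\mathcal{C}\to (A^{\mathrm{op}}\text{-}\mathrm{mod})^{\mathrm{op}}$; the composite of these is $\Nak$. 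Once the identifications of the unadorned coend with $A$ and the unadorned end with $A^*$ are in place, and the defining property of $\Nak$ as the relative Serre functor is invoked, the isomorphism \eqref{eq:coend-to-end} follows formally, and I would simply cite \cite[\S 3]{FSS} for the details of these standard facts rather than reproving them.
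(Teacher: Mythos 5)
The paper does not actually prove this lemma; it cites it verbatim from \cite[eq.\,(3.52)]{FSS}, so there is no internal argument to compare against. Your sketch selects a plausible strategy and correctly isolates the decisive input --- the relative Serre functor property of $\Nak$, i.e.\ $\mathrm{Hom}_{\mathcal{C}}(P,\Nak(X))\cong\mathrm{Hom}_{\mathcal{C}}(X,P)^{*}$ naturally in $X$ for projective $P$, equivalently that $\Nak$ restricts to an equivalence $\mathrm{Proj}(\mathcal{C})\to\mathrm{Inj}(\mathcal{C})$ --- but two of your intermediate claims are wrong and break the chain as written.

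First, the identification of $\int^{X}\bar X\boxtimes X$ with the regular bimodule $A$ is incorrect. Under $\mathcal{C}^{\mathrm{op}}\boxtimes\mathcal{C}\cong A^{e}\text{-}\mathrm{mod}$ (via $\bar X\boxtimes Y\mapsto X^{*}\otimes_{\Bbbk}Y$), the coend is computed at the projective generator $X=A$ modulo the balancing relation $(f\circ r_a)\otimes x\sim f\otimes xa$ for $r_a\in\mathrm{End}_A(A)$, $r_a(x)=xa$; this quotient is canonically $A^{*}\otimes_A A\cong A^{*}$ carrying the coregular bimodule structure $a\cdot g\cdot b=a\triangleright g\triangleleft b$ in the notation \eqref{defCoregular}, not $A$. (The Hopf version of this computation is exactly the identification $\mathscr{A}\cong H^{*}$ recalled around \eqref{defCoendBulkA}.) With this fixed the ``degenerate reading'' you flag disappears: the assertion is that the Nakayama-twisted end is also $A^{*}$. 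Second, the reformulation ``$\int_X\bar X\boxtimes X\cong\int^X\bar X\boxtimes\Nak^{-1}(X)$'' presupposes that $\Nak$ is invertible, which for a general finite linear category holds only when projectives are injective; the lemma has no such hypothesis, so $\Nak^{-1}$ is not available. The mechanism that actually works, and which \cite{FSS} uses, is the one you gesture at in the second half: the coend of a right-exact bifunctor can be computed over $\mathrm{Proj}(\mathcal{C})$ (as the paper itself notes in \S\ref{sec:end-formula}), $\Nak$ carries projectives to injectives, and the end of the twisted bifunctor is then computed over injectives, converting the universal cowedge into a universal wedge. Once the coend identification is corrected and the appeal to $\Nak^{-1}$ is replaced by this projective/injective interchange, your plan is sound; as stated it has genuine gaps at both points.
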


If the category $\cat$ is  furthermore rigid monoidal, we have a natural isomorphism~\cite[Lems.\,4.10\,\&\,4.11]{FSS}:
\begin{equation}\label{eq:Nak-object}
\Nak \cong \dD^\vee \otimes (-)^{\vee\vee}\ ,
\end{equation}
where $\dD$ is the \textit{distinguished invertible} object of $\cat$ defined as the socle of the projective cover $P_{\one}$ of $\one$.
 We call $\cat$ \textit{unimodular} if $\dD\cong \one$. Recall also that $\cat$ is pivotal if the double dual functor $(-)^{\vee\vee}$ is monoidally isomorphic to the identity functor. 
 Furthermore, using~\cite[Prop.\,3.24\,(ii) \& Thm.\,4.14]{FSS}, see also~\cite{Hu} in the Hopf algebra case, one can show that the Nakayama functor $\Nak$ is isomorphic to the identity functor if and only if the finite tensor category~$\mathcal{C}$ is pivotal and unimodular. 
 
By Lemma~\ref{lem:coend-to-end}, applying the equivalence functor $(-)^{\vee}\boxtimes \mathrm{Id}_{\mathcal{C}} :  \mathcal{C}^{\mathrm{op}} \boxtimes \mathcal{C} \to \mathcal{C} \boxtimes \mathcal{C}$ to both sides of~\eqref{eq:coend-to-end}, we see that the object $\mathscr{A}$  in~\eqref{defCoendBulkA}
can be rewritten as the following end
\begin{equation}\label{eq:A-end}
\mathscr{A}  \cong \int_{X \in \mathcal{C}} X^{\vee} \boxtimes \Nak(X)\ ,
\end{equation}
with $\Nak$ given in~\eqref{eq:Nak-object}.
In the case $\cat$ is pivotal and unimodular,  $\mathscr{A}  \cong \int_{X \in \mathcal{C}} X^{\vee} \boxtimes X$.

\begin{proposition}\label{prop:HDY-end}
Let $\mathcal{C}$ be a finite $\Bbbk$-linear tensor category over a perfect field $\Bbbk$ and with a braiding $c \in \mathrm{Br}(\mathcal{C})$, and   $\Nak : \mathcal{C} \to \mathcal{C}$ be the Nakayama functor. Then 
\begin{equation}\label{eq:HDY-end}
H^{n}_{\mathrm{DY}}(\otimes, \mathrm{id} \otimes c \otimes \mathrm{id})
 \cong   \bigoplus_{i+j=n} \int_{X \in \mathcal{C}}  \Ext^i_{\mathcal{Z}(\mathcal{C}),\mathcal{C}}\bigl(\boldsymbol{1},(X^{\vee}, c_{X^{\vee},-})\bigr) \otimes \Ext^j_{\mathcal{Z}(\mathcal{C}),\mathcal{C}}\Bigl(\boldsymbol{1},\bigl(\Nak(X),c_{-,\Nak(X)}^{-1}\bigr) \Bigr) \ ,
 \end{equation}
 with $\Nak$ from~\eqref{eq:Nak-object}.
In particular, if $\mathcal{C}$ is pivotal and unimodular, we have
\begin{align}
\begin{split}\label{eq:HDY2-end}
H^{2}_{\mathrm{DY}}(\otimes, \mathrm{id} \otimes c \otimes \mathrm{id})
 \cong   &\int_{X \in \mathcal{C}}  \Hom_{\mathcal{Z}(\mathcal{C})}\bigl(\boldsymbol{1},(X^{\vee}, c_{X^{\vee},-})\bigr) \otimes \Ext^2_{\mathcal{Z}(\mathcal{C}),\mathcal{C}}\bigl(\boldsymbol{1},(X,c_{-,X}^{-1}) \bigr) \\
 &\oplus \int_{X \in \mathcal{C}}  \Ext^2_{\mathcal{Z}(\mathcal{C}),\mathcal{C}}\bigl(\boldsymbol{1},(X^{\vee}, c_{X^{\vee},-})\bigr) 
 \otimes \Hom_{\mathcal{Z}(\mathcal{C})}\bigl(\boldsymbol{1},(X,c_{-,X}^{-1}) \bigr)\\
  &\oplus \int_{X \in \mathcal{C}}  \Ext^1_{\mathcal{Z}(\mathcal{C}),\mathcal{C}}\bigl(\boldsymbol{1},(X^{\vee}, c_{X^{\vee},-})\bigr) 
 \otimes \Ext^1_{\mathcal{Z}(\mathcal{C}),\mathcal{C}}\bigl(\boldsymbol{1},(X,c_{-,X}^{-1}) \bigr)\ .
 \end{split}
 \end{align}
\end{proposition}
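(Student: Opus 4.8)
The plan is to combine three ingredients already developed in the paper: the description of $H^{\bullet}_{\mathrm{DY}}(\otimes, \mathrm{id}\otimes c\otimes\mathrm{id})$ as the relative Ext in $\mathcal{Z}(\mathcal{C})\boxtimes\mathcal{Z}(\mathcal{C})$ with coefficient $\Gamma$ (equation~\eqref{DYCohomOtimesInTermsOfSeparatedRelExt} together with Lemma~\ref{lemmaDrinfeldCenterOfDeligneProduct}), the end-formula rewriting~\eqref{eq:A-end} of the coefficient $\mathscr{A}$, and the Künneth-type statement of Proposition~\ref{propDeligneProductOfResolutions}(1) for Deligne products of resolvent pairs. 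First I would recall from~\eqref{DYCohomOtimesInTermsOfSeparatedRelExt} that
\[
H^{n}_{\mathrm{DY}}(\otimes, \mathrm{id}\otimes c\otimes\mathrm{id}) \;\cong\; \Ext^{n}_{\mathcal{Z}(\mathcal{C})\boxtimes\mathcal{Z}(\mathcal{C}),\,\mathcal{C}\boxtimes\mathcal{C}}\Bigl(\boldsymbol{1}\boxtimes\boldsymbol{1},\ \Gamma\Bigr),
\]
and then rewrite $\Gamma$ via~\eqref{eq:A-end}: under the equivalence $\mathcal{Z}(\mathcal{C})\boxtimes\mathcal{Z}(\mathcal{C})\cong\mathcal{Z}(\mathcal{C}\boxtimes\mathcal{C})$ the object $(\mathscr{A},\lambda^{(+),(-)})$ corresponds to $\int_{X\in\mathcal{C}} (X^{\vee},c_{X^{\vee},-})\boxtimes (\Nak(X),c_{-,\Nak(X)}^{-1})$, where the half-braiding on the second leg is the one induced by functoriality of $L^{(+),(-)}$ on~\eqref{eq:A-end} (this is just the image of the isomorphism~\eqref{eq:A-end} under the exact functor $L^{(+),(-)}$, using Remark~\ref{rem:L-functor}, so the half-braidings are carried along automatically).

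Next I would invoke Proposition~\ref{propDeligneProductOfResolutions}(1): a relatively projective resolution of $\boldsymbol{1}\in\mathcal{Z}(\mathcal{C})$ tensored with itself gives a relatively projective resolution of $\boldsymbol{1}\boxtimes\boldsymbol{1}\in\mathcal{Z}(\mathcal{C})\boxtimes\mathcal{Z}(\mathcal{C})$, and applying $\Hom$ into the Deligne product of two coefficients one gets a Künneth isomorphism. Concretely, for coefficients of the form $\mathsf{M}\boxtimes\mathsf{N}$ with $\mathsf{M},\mathsf{N}\in\mathcal{Z}(\mathcal{C})$ one has
\[
\Ext^{n}_{\mathcal{Z}(\mathcal{C})\boxtimes\mathcal{Z}(\mathcal{C}),\,\mathcal{C}\boxtimes\mathcal{C}}\bigl(\boldsymbol{1}\boxtimes\boldsymbol{1},\ \mathsf{M}\boxtimes\mathsf{N}\bigr)\;\cong\;\bigoplus_{i+j=n}\Ext^{i}_{\mathcal{Z}(\mathcal{C}),\mathcal{C}}(\boldsymbol{1},\mathsf{M})\otimes\Ext^{j}_{\mathcal{Z}(\mathcal{C}),\mathcal{C}}(\boldsymbol{1},\mathsf{N}),
\]
the point being that over a field there are no $\mathrm{Tor}$ terms and the relative bar resolutions are degreewise finite-dimensional by Lemma~\ref{lemNatTransfoProjGen}. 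Since the coefficient $\Gamma$ is an \emph{end} (equivalently a coend, by Lemma~\ref{lem:coend-to-end}) of such factorized objects, and since $\Hom_{\mathcal{Z}(\mathcal{C})\boxtimes\mathcal{Z}(\mathcal{C})}(\boldsymbol{1}\boxtimes\boldsymbol{1},-)$ and more generally the relative Ext functors out of a fixed object with a fixed resolution are computed by applying $\Hom$ of the resolution, which commutes with the relevant (co)limits defining the end (the resolution terms are compact enough — this is where I must be slightly careful, see below), I can pull the end outside the $\Ext$. This yields
\[
H^{n}_{\mathrm{DY}}(\otimes,\mathrm{id}\otimes c\otimes\mathrm{id})\;\cong\;\bigoplus_{i+j=n}\int_{X\in\mathcal{C}}\Ext^{i}_{\mathcal{Z}(\mathcal{C}),\mathcal{C}}\bigl(\boldsymbol{1},(X^{\vee},c_{X^{\vee},-})\bigr)\otimes\Ext^{j}_{\mathcal{Z}(\mathcal{C}),\mathcal{C}}\Bigl(\boldsymbol{1},(\Nak(X),c_{-,\Nak(X)}^{-1})\Bigr),
\]
which is~\eqref{eq:HDY-end}.

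For the pivotal unimodular case, $\Nak\cong\mathrm{Id}_{\mathcal{C}}$ by the discussion around~\eqref{eq:Nak-object}, so $\mathscr{A}\cong\int_X X^{\vee}\boxtimes X$ and the coefficient on the second leg becomes $(X,c_{-,X}^{-1})$. Specializing $n=2$ in~\eqref{eq:HDY-end} gives the three summands with $(i,j)\in\{(0,2),(2,0),(1,1)\}$, using $\Ext^{0}_{\mathcal{Z}(\mathcal{C}),\mathcal{C}}(\boldsymbol{1},\mathsf{M})=\Hom_{\mathcal{Z}(\mathcal{C})}(\boldsymbol{1},\mathsf{M})$; this is exactly~\eqref{eq:HDY2-end}. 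The main obstacle I anticipate is the interchange of the end $\int_{X}$ with the relative $\Ext$ and the Künneth decomposition: one must check that the end in~\eqref{eq:A-end}, which a priori lives in $\mathcal{C}\boxtimes\mathcal{C}$, is actually a finite (co)limit — or can be replaced by one — after applying $\Hom_{\mathcal{Z}(\mathcal{C})\boxtimes\mathcal{Z}(\mathcal{C})}(\mathsf{P}_\bullet\boxtimes\mathsf{P}_\bullet,-)$ for a relatively projective resolution $\mathsf{P}_\bullet$; the right way to phrase this is that the bar resolution terms $G_{\mathcal{C}}^{k}(\boldsymbol{1})\boxtimes G_{\mathcal{C}}^{k}(\boldsymbol{1})$ corepresent functors that commute with ends over $\mathcal{C}$ of factorized objects (equivalently: $\Hom$ out of these objects turns the coend defining $\mathscr{A}$ into a limit, and all the vector spaces involved are finite-dimensional so the cohomology of a limit of complexes is the limit of cohomologies in the needed range), so that the spectral-sequence / Künneth collapse is harmless. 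Everything else is bookkeeping with the equivalences of Lemma~\ref{lemmaDrinfeldCenterOfDeligneProduct} and Remark~\ref{rem:L-functor} and with Theorem~\ref{thmDYCohomRelExt}.
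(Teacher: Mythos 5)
Your proposal follows essentially the same route as the paper's own proof: it combines (i) the identification of $H^\bullet_{\mathrm{DY}}(\otimes,\mathrm{id}\otimes c\otimes\mathrm{id})$ with $\Ext^\bullet_{\mathcal{Z}(\mathcal{C})\boxtimes\mathcal{Z}(\mathcal{C}),\mathcal{C}\boxtimes\mathcal{C}}(\boldsymbol{1}\boxtimes\boldsymbol{1},\Gamma)$ via~\eqref{DYCohomOtimesInTermsOfSeparatedRelExt} and Lemma~\ref{lemmaDrinfeldCenterOfDeligneProduct}, (ii) the end-rewriting~\eqref{eq:A-end} of $\mathscr{A}$ transported along the exact functor $L^{(+),(-)}$ of Remark~\ref{rem:L-functor}, (iii) the Künneth isomorphism of Proposition~\ref{propDeligneProductOfResolutions} (note that the precise statement you quote is its item~2, not item~1, though item~2 rests on item~1), and (iv) pulling the end through the second argument of $\Hom$ and hence of relative $\Ext$. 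The paper also treats step (iv) tersely — it states that the end can be pulled out of $\Hom$ ``and thus'' out of relative $\Ext$ — so your flagging of that step as the true crux of the argument is accurate and does not put your write-up behind the paper's; both rely on the same combination of finite-dimensionality, the cokernel/kernel presentations of (co)ends from Corollary~\ref{coendsAsCokerAndConsequence}, and the semisimplicity of $\mathrm{vect}_\Bbbk$ to justify commuting cohomology with the end. The pivotal-unimodular specialization to~\eqref{eq:HDY2-end} via $\Nak\cong\mathrm{Id}$ is handled identically in both.
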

\begin{proof}
We first recall Remark~\ref{rem:L-functor} stating that the DY cohomology coefficient $\Gamma$ from~\eqref{eq:Gamma-coef} is the image of $\mathscr{A}$ under the exact  functor $L^{(+),(-)}$ from 
$\mathcal{C}\boxtimes \mathcal{C}$ to
 $\mathcal{Z}(\mathcal{C})\boxtimes \mathcal{Z}(\mathcal{C})$
which sends the first $\boxtimes$ component using the braiding and the second component using the inverse braiding. It is furthermore easy to check that $L^{(+),(-)}$ is a strict tensor functor.
Then using~\eqref{eq:A-end}, $\Gamma$ can be rewritten as
\begin{equation}
\Gamma \cong \int_{X \in \mathcal{C}} (X^{\vee}, c_{X^{\vee},-}) \boxtimes \bigl(\Nak(X),c_{-,\Nak(X)}^{-1}\bigr)\ .
\end{equation}
 Combining with~\eqref{DYCohomOtimesInTermsOfSeparatedRelExt} and that an end can be pulled out of 2nd argument of the $\Hom$ functor and thus out of 2nd argument of the relative $\mathrm{Ext}$ functor, we get
 \begin{multline}
H^{n}_{\mathrm{DY}}(\otimes, \mathrm{id} \otimes c \otimes \mathrm{id})
 \cong  \int_{X \in \mathcal{C}} \mathrm{Ext}^{n}_{\mathcal{Z}(\mathcal{C}) \boxtimes \mathcal{Z}(\mathcal{C}), \mathcal{C} \boxtimes \mathcal{C}}\left(\boldsymbol{1} \boxtimes \boldsymbol{1}, 
 (X^{\vee}, c_{X^{\vee},-}) \boxtimes \bigl(\Nak(X),c_{-,\Nak(X)}^{-1}\bigr) \right)\\
 \cong   \bigoplus_{i+j=n} \int_{X \in \mathcal{C}}  \Ext^i_{\mathcal{Z}(\mathcal{C}),\mathcal{C}}\bigl(\boldsymbol{1},(X^{\vee}, c_{X^{\vee},-})\bigr) \otimes \Ext^j_{\mathcal{Z}(\mathcal{C}),\mathcal{C}}\Bigl(\boldsymbol{1},\bigl(\Nak(X),c_{-,\Nak(X)}^{-1}\bigr) \Bigr) \ ,
 \end{multline}
 where in the second line we used  the K\"unneth formula (item 2 in Prop.\,\ref{propDeligneProductOfResolutions}). This gives~\eqref{eq:HDY-end}, while~\eqref{eq:HDY2-end} is an immediate consequence of this result and that $\Nak$ is isomorphic to the identity when $\mathcal{C}$ is pivotal and unimodular, recall~\eqref{eq:Nak-object}.
\end{proof}

Let $\mathrm{Proj}(\mathcal{C})$ be the full subcategory of projective objects in $\mathcal{C}$, then by a version of the statement in item 2.\ of Lemma~\ref{lemNatTransfoProjGen} where the projective generator is replaced by $\mathrm{Proj}(\mathcal{C})$  we have an isomorphism
$$
\int^{X\in\mathcal{C}} K(X,X) \cong \int^{P\in\mathrm{Proj}(\mathcal{C})} K(P,P)
$$
where $K : \mathcal{C}^{\mathrm{op}} \times \mathcal{C} \to \mathcal{C}\boxtimes \mathcal{C}$ is any $\Bbbk$-bilinear functor right exact in the second variable. Now, for the choice $K = \boxtimes \circ \bigl((-)^{\vee}\times \mathrm{Id}_{\mathcal{C}}\bigr)$ we have that
\begin{equation}\label{eq:bulk-alg-end-proj}
\mathscr{A} \cong  \int^{P\in\mathrm{Proj}(\mathcal{C})} P^{\vee}\boxtimes P \cong
 \int_{P\in\mathrm{Proj}(\mathcal{C})} P^{\vee}\boxtimes \Nak(P)\ .
\end{equation}

 Let $P_X$ denote the projective cover of a simple object $X$ in $\mathcal{C}$. Recall  that the socle of the projective cover $\projone$ is  isomorphic to the distinguished invertible object $\dD$, or equivalently we have
\begin{equation}\label{eq:P1-dual-D}
\projone^{\vee} \cong P_{\dD^{\vee}}\ .
\end{equation}
It is clear that the socle of $\projone^{\vee}$ is given by $\one^{\vee}\cong \one$. Using that $\projone^{\vee\vee}\cong \projone$, no pivotality is needed here, we see from~\eqref{eq:Nak-object} that $\Nak(\projone)\cong \dD^{\vee}\otimes \projone \cong P_{\dD^{\vee}}\cong \projone^{\vee}$. Combining these facts, we get
\begin{equation}\label{eq:hom-P-NP}
\Hom_{\cat}(\one,P_X^{\vee}) \cong \Bbbk \, \delta_{X= \one} \cong  \Hom_{\cat}(\one,\Nak(P_X)) \ .
\end{equation}
Recall furthermore that for braided $\cat$ the functor $L^{\pm}: \cat \to \zcat$ associating for half-braiding the braiding
 in $\cat$ or its inverse  is faithful, while its image is a full subcategory. Using~\eqref{eq:hom-P-NP} we therefore get 
\begin{equation}\label{eq:homZC-P-NP}
\Hom_{\zcat}\bigl(\one,(P_X^{\vee},c_{P_X^{\vee},-})\bigr) \cong \Bbbk \, \delta_{X= \one} 
\cong  \Hom_{\zcat}\bigl(\one,(\Nak(P_X),c^{-1}_{-,\Nak(P_X)})\bigr) \ .
\end{equation}
Together with~\eqref{eq:bulk-alg-end-proj}, we thus get the following corollary of Prop.~\ref{prop:HDY-end}: 
\begin{corollary}\label{cor:HDY-end-proj}
Let $\mathcal{C}$ be a finite $\Bbbk$-linear tensor category over a perfect field $\Bbbk$  with a braiding $c \in \mathrm{Br}(\mathcal{C})$,
and $\dD$ is the distinguished invertible object of $\cat$.
Then $H^{n}_{\mathrm{DY}}(\otimes, \mathrm{id} \otimes c \otimes \mathrm{id})$ is given by replacing $\int_{X\in\cat}$ by $\int_{X\in \mathrm{Proj}(\mathcal{C})}$ in~\eqref{eq:HDY-end}. In particular, using~\eqref{eq:homZC-P-NP}, \eqref{eq:Nak-object} and~\eqref{eq:P1-dual-D}, we get
 \begin{multline}\label{eq:HDY2-end-proj}
H^{2}_{\mathrm{DY}}(\otimes, \mathrm{id} \otimes c \otimes \mathrm{id})
 \cong    \Ext^2_{\mathcal{Z}(\mathcal{C}),\mathcal{C}}\bigl(\boldsymbol{1},(P_{\dD^\vee},c_{P_{\dD^{\vee}},-})\oplus (P_{\dD^\vee},c^{-1}_{-,P_{\dD^{\vee}}}) \bigr) \\
\oplus \int_{P \in \mathrm{Proj}(\mathcal{C})}  \Ext^1_{\mathcal{Z}(\mathcal{C}),\mathcal{C}}\bigl(\boldsymbol{1},(P^{\vee}, c_{P^{\vee},-})\bigr) 
 \otimes \Ext^1_{\mathcal{Z}(\mathcal{C}),\mathcal{C}}\bigl(\boldsymbol{1},(\dD^\vee\otimes P^{\vee\vee},c^{-1}_{-,\dD^\vee\otimes P^{\vee\vee}}) \bigr)\ .
 \end{multline}
Finally for pivotal $\cat$, the end formula for dimension of the tangent space to the braiding $c$ is
\begin{multline}
    \label{eq:dim-Tc-Br-Ext-end}
\dim \mathbf{T}_c\mathrm{Br}(\mathcal{C}) =
\dim \int_{P \in \mathrm{Proj}(\mathcal{C})}  \Ext^1_{\mathcal{Z}(\mathcal{C}),\mathcal{C}}\bigl(\boldsymbol{1},(P^{\vee}, c_{P^{\vee},-})\bigr) 
 \otimes \Ext^1_{\mathcal{Z}(\mathcal{C}),\mathcal{C}}\bigl(\boldsymbol{1},(\dD^\vee \otimes P,c^{-1}_{-,\dD^\vee \otimes P}) \bigr)\\
+ 
\dim \Ext^2_{\mathcal{Z}(\mathcal{C}),\mathcal{C}}\bigl(\boldsymbol{1},(P_{\dD^\vee},c_{P_{\dD^{\vee}},-})\oplus (P_{\dD^\vee},c^{-1}_{-,P_{\dD^{\vee}}}) \bigr) \\
- 2 \dim \Ext^2_{\mathcal{Z}(\mathcal{C}),\mathcal{C}}(\boldsymbol{1},\one) \ .
\end{multline} 
\end{corollary}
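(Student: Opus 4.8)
The statement of Corollary~\ref{cor:HDY-end-proj} collects together three facts already established in the excerpt, so the proof will be short and will mostly amount to assembling them correctly. First I would invoke Proposition~\ref{prop:HDY-end}, which gives the end-formula \eqref{eq:HDY-end} for $H^{n}_{\mathrm{DY}}(\otimes, \mathrm{id} \otimes c \otimes \mathrm{id})$ with the integral taken over all of $\mathcal{C}$. The first claim --- that one may replace $\int_{X\in\mathcal{C}}$ by $\int_{P\in\mathrm{Proj}(\mathcal{C})}$ --- follows from the restriction-to-projectives principle stated just above the corollary: for any $\Bbbk$-bilinear functor $K:\mathcal{C}^{\mathrm{op}}\times\mathcal{C}\to\mathcal{C}\boxtimes\mathcal{C}$ right exact in the second variable, $\int^{X\in\mathcal{C}}K(X,X)\cong\int^{P\in\mathrm{Proj}(\mathcal{C})}K(P,P)$, which dualizes (via $\Nak$, as in \eqref{eq:bulk-alg-end-proj}) to the same statement for the relevant end. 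Concretely, in \eqref{DYCohomOtimesInTermsOfSeparatedRelExt} the coefficient $\Gamma$ is $L^{(+),(-)}(\mathscr{A})$ and $\mathscr{A}\cong\int_{P\in\mathrm{Proj}(\mathcal{C})}P^{\vee}\boxtimes\Nak(P)$ by \eqref{eq:bulk-alg-end-proj}; since $L^{(+),(-)}$ is exact (Remark~\ref{rem:L-functor}) and an end commutes with the second argument of a relative $\Ext$, the whole computation localizes to projectives.

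**Specializing to $n=2$.** For the second part I would run the $n=2$ case of \eqref{eq:HDY-end} (now over $\mathrm{Proj}(\mathcal{C})$), which has the three summands $i+j=2$: $(i,j)=(0,2),(2,0),(1,1)$. For the $(0,2)$ and $(2,0)$ terms the $\Ext^0=\Hom$ factor is pinned down by \eqref{eq:homZC-P-NP}: $\Hom_{\mathcal{Z}(\mathcal{C})}\bigl(\boldsymbol{1},(P^{\vee},c_{P^{\vee},-})\bigr)\cong\Bbbk\,\delta_{P=P_{\boldsymbol{1}}}$ and similarly $\Hom_{\mathcal{Z}(\mathcal{C})}\bigl(\boldsymbol{1},(\Nak(P),c^{-1}_{-,\Nak(P)})\bigr)\cong\Bbbk\,\delta_{P=P_{\boldsymbol{1}}}$, where I am using that the projective cover $P_X$ is indexed by simples and that $\Hom_{\mathcal{C}}(\boldsymbol{1},P_X^{\vee})\cong\Hom_{\mathcal{C}}(X,\boldsymbol{1})^{*}$ vanishes unless $X=\boldsymbol{1}$, plus $\Nak(P_{\boldsymbol{1}})\cong P_{\boldsymbol{1}}^{\vee}\cong P_{\dD^{\vee}}$ from \eqref{eq:P1-dual-D}. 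Hence the end over $\mathrm{Proj}(\mathcal{C})$ in each of these two summands collapses to the single contribution at $P=P_{\boldsymbol{1}}$, giving $\Ext^2_{\mathcal{Z}(\mathcal{C}),\mathcal{C}}\bigl(\boldsymbol{1},(P_{\dD^{\vee}},c_{P_{\dD^{\vee}},-})\bigr)$ from the $(0,2)$ term (here $P^{\vee}=P_{\boldsymbol{1}}^{\vee}\cong P_{\dD^{\vee}}$) and $\Ext^2_{\mathcal{Z}(\mathcal{C}),\mathcal{C}}\bigl(\boldsymbol{1},(\Nak(P_{\boldsymbol{1}}),c^{-1}_{-,\Nak(P_{\boldsymbol{1}})})\bigr)\cong\Ext^2_{\mathcal{Z}(\mathcal{C}),\mathcal{C}}\bigl(\boldsymbol{1},(P_{\dD^{\vee}},c^{-1}_{-,P_{\dD^{\vee}}})\bigr)$ from the $(2,0)$ term; together these are the first line of \eqref{eq:HDY2-end-proj}. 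The $(1,1)$ term, after substituting $\Nak(P)\cong\dD^{\vee}\otimes P^{\vee\vee}$ from \eqref{eq:Nak-object}, is precisely the second line of \eqref{eq:HDY2-end-proj}.

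**The dimension formula.** For \eqref{eq:dim-Tc-Br-Ext-end} I would feed \eqref{eq:HDY2-end-proj} into the dimension identity \eqref{dimTangentSpaceBraidingInTermsDYCohom} of Corollary~\ref{coroDimFormulaTangentSpaceBraiding}, namely $\dim\mathbf{T}_c\mathrm{Br}(\mathcal{C})=\dim H^2_{\mathrm{DY}}(\otimes,\mathrm{id}\otimes c\otimes\mathrm{id})-2\dim H^2_{\mathrm{DY}}(\mathcal{C})$, together with $H^2_{\mathrm{DY}}(\mathcal{C})\cong\Ext^2_{\mathcal{Z}(\mathcal{C}),\mathcal{C}}(\boldsymbol{1},\boldsymbol{1})$ from \eqref{isoDYRelExt}. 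In the pivotal case $(-)^{\vee\vee}\cong\mathrm{Id}$, so $\dD^{\vee}\otimes P^{\vee\vee}\cong\dD^{\vee}\otimes P$, which puts the $(1,1)$ summand into the exact shape appearing in \eqref{eq:dim-Tc-Br-Ext-end}; taking dimensions of all summands and subtracting $2\dim\Ext^2_{\mathcal{Z}(\mathcal{C}),\mathcal{C}}(\boldsymbol{1},\boldsymbol{1})$ yields the formula. I do not anticipate a genuine obstacle here since every ingredient is already proved; the one place requiring care is bookkeeping the dualities and the $\Nak$-substitutions consistently --- in particular tracking whether $P^{\vee}$ or $\Nak(P)$ carries the braiding versus the inverse braiding in each of the three $(i,j)$ summands, and making sure that the collapse of the $\Hom$-factors to $P=P_{\boldsymbol{1}}$ is applied in the correct $\boxtimes$-slot. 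I would also note explicitly that no pivotality is needed for the first statement or for \eqref{eq:HDY2-end-proj} (only $P_{\boldsymbol{1}}^{\vee\vee}\cong P_{\boldsymbol{1}}$ and unimodularity-free identities \eqref{eq:Nak-object}, \eqref{eq:P1-dual-D} are used there), and that pivotality enters solely in the final passage to \eqref{eq:dim-Tc-Br-Ext-end} via $\dD^{\vee}\otimes P^{\vee\vee}\cong\dD^{\vee}\otimes P$.
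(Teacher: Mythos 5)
Your proposal is correct and follows essentially the same route the paper takes (which is entirely contained in the remarks preceding the corollary): specialize \eqref{eq:HDY-end} to $n=2$, restrict the end to $\mathrm{Proj}(\mathcal{C})$ via \eqref{eq:bulk-alg-end-proj}, collapse the $\Hom$--factors using \eqref{eq:homZC-P-NP}, and feed the result into \eqref{dimTangentSpaceBraidingInTermsDYCohom} with $P^{\vee\vee}\cong P$ in the pivotal case. The one slip is a harmless swap of labels: in \eqref{eq:HDY-end} the $(i,j)=(0,2)$ summand has its $\Hom$ on the $(X^{\vee},c_{X^{\vee},-})$ slot, so after collapsing at $P_{\one}$ it is the \emph{inverse}-braiding factor $\Ext^2_{\mathcal{Z}(\mathcal{C}),\mathcal{C}}\bigl(\one,(P_{\dD^{\vee}},c^{-1}_{-,P_{\dD^{\vee}}})\bigr)$ that survives (and symmetrically the $(2,0)$ summand yields the $c$-factor), the reverse of what you wrote; the direct sum is of course the same.
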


 We note that while the $P$'s involved in the relative $\Ext$'s in~\eqref{eq:HDY2-end-proj}-\eqref{eq:dim-Tc-Br-Ext-end} are projective in $\mathcal{C}$, their images $(P,c_{P,-}) \in \mathcal{Z}(\mathcal{C})$ are in general not projective neither relative projective.

\section{Finite-dimensional Hopf algebras}\label{sectionFinDimHopf}
Let $H$ be a finite-dimensional Hopf $\Bbbk$-algebra where $\Bbbk$ is a field, with unit $1_H$, coproduct $\Delta : H \to H \otimes H$, counit $\varepsilon : H \to \Bbbk$ and antipode $S : H \to H$. Then the category $H\text{-}\mathrm{mod}$ of finite-dimensional $H$-modules is a finite tensor category.

\indent Our first goal is to specialize Theorem \ref{thmChangeFunctorMonProduct} to $\mathcal{C} = H\text{-}\mathrm{mod}$ by describing the object $\bigl(\mathscr{A},\lambda^{(+),(-)}\bigr)$ as a module over $D(H)^{\otimes 2}$ where $D(H)$ is the Drinfeld double, see Proposition~\ref{propAdjunctionTheoremForMonProductInAmod}. Moreover, spaces of infinitesimal braidings in $H\text{-}\mathrm{mod}$ are isomorphic to Zariski tangent spaces of the affine variety of $R$-matrices in $H^{\otimes 2}$. Therefore, the dimension formula~\eqref{eq:dim-Tc-Br-Ext} and the end formulas in \S\ref{sec:end-formula} give dimension formulas for these Zariski tangent spaces, see \eqref{dimensionFormulaForTRMat} and Remark \ref{rem:end-ingr-Hopf}. This allows us to easily compute the dimension of tangent spaces for the example $H = B_k := \Lambda\mathbb{C}^k \rtimes \mathbb{C}[\mathbb{Z}/2\mathbb{Z}]$, see  \S\ref{subsectionExampleBk}.

\indent Our second goal, achieved in \S\ref{subsectionDYTwistedFiber}, is to make explicit the Adjunction Theorem \ref{thmChangeOfCoeffDY} in the case of restriction functors $H\text{-}\mathrm{mod} \to K\text{-}\mathrm{mod}$ for a (possibly twisted) Hopf subalgebra $K$. As an example we compute in \S\ref{subsectionResFunctorBk} the DY cohomology of $B_{k+l}\text{-}\mathrm{mod} \to B_k\text{-}\mathrm{mod}$.

\smallskip

\indent  The following notations and facts will be heavily used in the sequel. First, it is well-known that the antipode is an anti-isomorphism of bialgebras. We use Sweedler's notation without summation sign: $\Delta(h) = h^{(1)} \otimes h^{(2)}$. As usual we write $h^{(1)} \otimes h^{(2)} \otimes h^{(3)}$ instead of $h^{(1)(1)} \otimes h^{(1)(2)} \otimes h^{(2)} = h^{(1)} \otimes h^{(2)(1)} \otimes h^{(2)(2)}$ {\it etc}. The {\em coregular actions} $\triangleright$, $\triangleleft$ of $H$ on the dual vector space $H^*$ are defined by
\begin{equation}\label{defCoregular}
\forall \, h,h' \in H, \:\: \forall\, f \in H^*, \quad (h \triangleright f)(h') = f(h'h), \quad (f \triangleleft h)(h') = f(hh').
\end{equation}

 We denote by $(H^*)^{\mathrm{op}}$ the vector space $H^*$ endowed with the product $\varphi \psi = (\psi \otimes \varphi) \circ \Delta$ and the coproduct defined by $\varphi_{(1)}(x)\varphi_{(2)}(y) = \varphi(xy)$. The {\em Drinfeld double} of $H$ is the vector space $D(H) = (H^*)^{\mathrm{op}} \otimes H$ endowed with the product defined by the following conditions. The subspaces $(H^*)^{\mathrm{op}} \otimes 1$ and $\varepsilon \otimes H$ are subalgebras of $D(H)$; hence we simply write $\varphi$ (resp. $h$) instead of $\varphi \otimes 1$ (resp. $\varepsilon \otimes h$). Then we have $\varphi h = \varphi \otimes h$ and
\begin{equation}\label{definingRelDrinfeldDouble}
\forall \, h \in H, \: \forall \varphi \in H^{*\mathrm{op}}, \quad h\varphi = \bigl(h^{(3)} \triangleright \varphi \triangleleft S(h^{(1)}) \bigr) \, h^{(2)}.
\end{equation}

 There is a well-known isomorphism of categories $\mathcal{Z}(H\text{-}\mathrm{mod}) \cong D(H)\text{-}\mathrm{mod}$: if $(V,\rho) \in \mathcal{Z}(H\text{-}\mathrm{mod})$ then the $H$-module $V$ can be promoted to a $D(H)$-module by letting $\varphi \cdot v = (\varphi \otimes \mathrm{id}_V) \circ \rho_H(v \otimes 1_H)$ for all $\varphi \in H^{*\mathrm{op}}$ and $v \in V$. Conversely if $V$ is a $D(H)$-module then a half-braiding $\rho : V \otimes - \Rightarrow - \otimes V$ is defined by $\rho_X(v \otimes x) = h_i \cdot x \otimes h^i \cdot v$ where $(h_i)$ is a basis of $H$ with dual basis $(h^i)$.

\indent If $A$ is an associative $\Bbbk$-algebra and $B \subset A$ is a subalgebra we have a restriction functor $\mathrm{Res}^A_B : A\text{-}\mathrm{mod} \to B\text{-}\mathrm{mod}$ and an induction functor $\mathrm{Ind}^A_B : B\text{-}\mathrm{mod} \to A\text{-}\mathrm{mod}$, $\mathrm{Ind}^A_B(X) = A \otimes_B X$, where $\text{-}\mathrm{mod}$ denotes the category of finite-dimensional modules. They are of course adjoint functors: $\mathrm{Ind}^A_B \dashv \mathrm{Res}^A_B$. Hence there is a monad $\mathbb{T}^A_B = (T^A_B, \mu^A_B,\eta^A_B)$ on $B\text{-}\mathrm{mod}$ with underlying functor $T^A_B(X) = \mathrm{Res}^A_B \, \mathrm{Ind}^A_B(X)$ which is $A \otimes_B X$ viewed in $B\text{-}\mathrm{mod}$, multiplication 
\[ (\mu^A_B)_X : T^A_BT^A_B(X) = A \otimes_B (A \otimes_B X) \cong (A \otimes_B A) \otimes_B X \xrightarrow{\mathrm{mult}_A \,\otimes_B\, \mathrm{id}_X} A \otimes_B X = T^A_B(X) \]
and unit $(\eta^A_B)_X : X \xrightarrow{1_A \,\otimes_B\,\mathrm{id}_X} A \otimes_B X$ for all $X \in B\text{-}\mathrm{mod}$.

\subsection{Tangent space to an $R$-matrix}\label{subsectionTangentRMatrix}
Recall that an $R$-matrix for the Hopf algebra $H$ is an invertible element $R \in H \otimes H$ such that
\begin{align}
&\forall \, h \in H, \quad R \Delta(h) = \Delta^{\mathrm{op}}(h)R\label{QuasiCocommCondition}\\
&(\Delta \otimes \mathrm{id})(R) = R_{13}R_{23}, \quad (\mathrm{id} \otimes \Delta)(R) = R_{13}R_{12} \label{quasitriangularConditions}
\end{align}
with usual notations (see e.g. \cite[\S VIII.2]{kassel}).

\begin{lemma}
Let $R \in H \otimes H$ be any element which satisfies \eqref{quasitriangularConditions}. Then
\begin{equation}\label{invertibilityRMatrices}
R \text{ is invertible} \quad \iff \quad (\varepsilon \otimes \mathrm{id})(R) = (\mathrm{id} \otimes \varepsilon)(R) = 1_H.
\end{equation}
\end{lemma}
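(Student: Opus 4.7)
The plan is to prove both implications by applying the counit and antipode axioms to the two identities in \eqref{quasitriangularConditions}, writing $R = \sum_i r_i \otimes s_i$ throughout.

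For the ``$\Rightarrow$'' direction, I would apply $\varepsilon \otimes \mathrm{id} \otimes \mathrm{id}$ to both sides of $(\Delta \otimes \mathrm{id})(R) = R_{13}R_{23}$. The counit axiom collapses the left-hand side back to $R$, while unpacking $R_{13}R_{23} = \sum_{i,j} r_i \otimes r_j \otimes s_is_j$ turns the right-hand side into $(1_H \otimes a) \cdot R$ with $a := (\varepsilon \otimes \mathrm{id})(R) \in H$. Cancelling the invertible $R$ then forces $a = 1_H$. The second equation $(\mathrm{id} \otimes \varepsilon)(R) = 1_H$ follows by the symmetric computation, applying $\mathrm{id} \otimes \mathrm{id} \otimes \varepsilon$ to $(\mathrm{id} \otimes \Delta)(R) = R_{13}R_{12}$.

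For the ``$\Leftarrow$'' direction, I set $\widetilde{R} := (S \otimes \mathrm{id})(R) \in H \otimes H$ and apply the map $\bigl(m \circ (S \otimes \mathrm{id})\bigr) \otimes \mathrm{id} : H^{\otimes 3} \to H^{\otimes 2}$ to $(\Delta \otimes \mathrm{id})(R) = R_{13}R_{23}$, where $m$ is the multiplication of $H$. The left-hand side yields $\sum_i S(r_i^{(1)})r_i^{(2)} \otimes s_i = 1_H \otimes (\varepsilon \otimes \mathrm{id})(R) = 1_H \otimes 1_H$ by the antipode axiom together with the counit hypothesis, whereas the right-hand side computes to $\sum_{i,j} S(r_i)r_j \otimes s_is_j = \widetilde{R} \cdot R$. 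Hence $\widetilde{R}$ is a left inverse of $R$ in $H \otimes H$, and since $H$ is finite-dimensional, $H \otimes H$ is as well, so a left inverse is automatically a two-sided inverse.

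I do not anticipate any real obstacle: each direction is a short bookkeeping computation deciding which tensor slot to feed to $\varepsilon$ or $S$. The only conceptual point worth flagging is the appeal to finite-dimensionality of $H$ (already in force in this section) in the converse; alternatively, a parallel calculation applied to the second equation of \eqref{quasitriangularConditions} with the antipode placed on the third slot produces a right inverse $R \cdot (\mathrm{id} \otimes S^{-1})(R) = 1_H \otimes 1_H$ directly and bypasses this appeal entirely.
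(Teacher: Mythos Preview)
Your proof is correct and essentially the same as the paper's: for ``$\Rightarrow$'' the paper simply cites \cite[Th.\,VIII.2.4]{kassel} (the argument you spell out is the standard one), and for ``$\Leftarrow$'' the paper computes $(S \otimes \mathrm{id})(R)\,R = 1 \otimes 1$ and $R\,(\mathrm{id} \otimes S^{-1})(R) = 1 \otimes 1$ exactly as you do, concluding via associativity rather than finite-dimensionality---which is precisely the alternative you flag at the end.
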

\begin{proof}
For ``$\Rightarrow$'' see e.g. \cite[Th.\,VIII.2.4]{kassel}. For the converse implication, write $R = R^1_i \otimes R^2_i$ with implicit summation. Then
\[ 1_H \otimes 1_H = S(R_i^{1(1)})R_i^{1(2)} \otimes R^2_i = S(R^1_i)R^1_j \otimes R^2_i R^2_j = (S \otimes \mathrm{id})(R)R \]
and thus $R$ has a left inverse. Similarly $1_H \otimes 1_H = R(\mathrm{id} \otimes S^{-1})(R)$ and thus $R$ has a right inverse. Since $H \otimes H$ is associative, the left and right inverses are equal.
\end{proof}

Let $\{h_i\}$ be a basis of $H$ and $R = \sum_{i,j} x_{i,j} h_i \otimes h_j$ with $x_{i,j} \in \Bbbk$. It is clear that the conditions \eqref{QuasiCocommCondition}, \eqref{quasitriangularConditions} and \eqref{invertibilityRMatrices} on $R$ are equivalent to polynomial equations among the $x_{i,j}$ (actually only linear and quadratic equations). Hence
\[ \mathrm{RMat}(H) = \bigl\{ R \in H \otimes H \, \big| \, R \text{ is an } R\text{-matrix} \bigr\} \]
can be seen as an affine algebraic variety in $\Bbbk^{\dim(H)^2}$. Moreover we have the well-known bijection (see e.g. \cite[Prop.\,XIII.1.4]{kassel})
\[ \mathrm{Br}(H\text{-}\mathrm{mod}) \overset{\sim}{\longrightarrow} \mathrm{RMat}(H), \qquad c \mapsto R_c = \tau\bigl( c_{H,H}(1_H \otimes 1_H) \bigr) \]
where $\tau(x \otimes y) = y \otimes x$ and $H$ is the regular module. Conversely given an $R$-matrix $R \in H^{\otimes 2}$ one has the braiding $c^R$ on $H\text{-}\mathrm{mod}$ defined by $c^R_{X,Y}(x \otimes y) = \tau\bigl( R \cdot (x \otimes y) \bigr)$. This bijection yields an identification of tangent spaces
\begin{equation}\label{isoInfBraidingsInfRMat}
\mathbf{T}_c\mathrm{Br}(H\text{-}\mathrm{mod}) \overset{\sim}{\longrightarrow} \mathbf{T}_{R_c}\mathrm{RMat}(H), \qquad t \mapsto \tau\bigl( t_{H,H}(1_H \otimes 1_H) \bigr)
\end{equation}
where $\mathbf{T}_c\mathrm{Br}(H\text{-}\mathrm{mod})$ is from Definition \ref{defTangentBraiding} and
\begin{equation}\label{tangentSpaceRMatrix}
\mathbf{T}_R\mathrm{RMat}(H) = \left\{ T \in H^{\otimes 2} \:\left|\, \begin{array}{l}
\forall \, h \in H, \quad T \Delta(h) = \Delta^{\mathrm{op}}(h)T\\
(\Delta \otimes \mathrm{id})(T) = T_{13}R_{23} + R_{13}T_{23}\\
(\mathrm{id} \otimes \Delta)(T) = T_{13}R_{12} + R_{13}T_{12}
\end{array} \right.\right\}
\end{equation}
is the Zariski tangent space (the conditions $(\varepsilon \otimes \mathrm{id})(T) = (\mathrm{id} \otimes \varepsilon)(T) = 0$ are automatically fulfilled).

\begin{remark}
The elements defined in \eqref{tangentSpaceRMatrix} are in isomorphism with the infinitesimal $R$-matrices considered in \cite{ABSW}: $T \in H^{\otimes 2}$ satisfies \cite[Def.\,2.1]{ABSW} if and only if $RT$ satisfies \eqref{tangentSpaceRMatrix}.
\end{remark}

\indent Let $P : (H\text{-}\mathrm{mod}) \boxtimes H\text{-}\mathrm{mod} = (H \otimes H)\text{-}\mathrm{mod} \to H\text{-}\mathrm{mod}$ be the usual monoidal product, which is the pullback by the coproduct $\Delta : H \to H \otimes H$. Fix $R = R^1_i \otimes R^2_i \in \mathrm{RMat}(H)$. By item 2 in Proposition \ref{factoLaxMult} the natural transformation
\begin{equation}\label{monStructOfPFromRMat}
\fonc{\phi^R_{X_1 \boxtimes Y_1, X_2 \boxtimes Y_2}}{X_1 \otimes Y_1 \otimes X_2 \otimes Y_2}{X_1 \otimes X_2 \otimes Y_1 \otimes Y_2}{x_1 \otimes y_1 \otimes x_2 \otimes y_2}{ x_1 \otimes R^2_i \cdot x_2 \otimes R^1_i \cdot y_1 \otimes y_2}
\end{equation}
is a monoidal structure for $P$. As for braidings on $H\text{-}\mathrm{mod}$, we can describe the DY complex of $(P,\phi^R)$ purely in terms of the algebra $H$. Recall the shape of DY cochains of $P$ in \eqref{DYCochainsMonProduct}. Let $\mathcal{V}^n$ be the subspace of elements $\mathbf{u} \in H^{\otimes 2n}$ such that
\begin{equation}\label{centralizerForDYMonProduct}
\textstyle \forall \, h \in H, \qquad \left(\bigotimes_{i=1}^n h^{(i)} \otimes h^{(n+i)} \right) \mathbf{u} = \mathbf{u} \left( \bigotimes_{i=1}^{2n} h^{(i)} \right)
\end{equation}
where the big tensor products go from left to right, \textit{i.e.} $h^{(1)} \otimes h^{(n+1)} \otimes \ldots \otimes h^{(n)} \otimes h^{(2n)}$. For $\mathbf{u} \in \mathcal{V}^n$ and $X_i,Y_i \in H\text{-}\mathrm{mod}$ we define
\[ \textstyle f^{\mathbf{u}}_{X_1 \boxtimes Y_1,\ldots,X_n \boxtimes Y_n} : \bigotimes_{i=1}^n X_i \otimes Y_i \textstyle \to \bigl(\bigotimes_{i=1}^n X_i \bigr) \otimes \bigl(\bigotimes_{j=1}^n Y_j \bigr), \quad w \mapsto \sigma_n(\mathbf{u} \cdot w) \]
where $\mathbf{u} \cdot w$ is the tensor-wise action of $\mathbf{u}$ on $w$ and
\[ \sigma_n\bigl( x_1 \otimes y_1 \otimes \ldots \otimes x_n \otimes y_n\bigr) = x_1 \otimes \ldots \otimes x_n \otimes y_1 \otimes \ldots \otimes y_n. \]
Condition \eqref{centralizerForDYMonProduct} is equivalent to the $H$-linearity of $f^{\mathbf{u}}_{X_1 \boxtimes Y_1,\ldots,X_n \boxtimes Y_n}$. Moreover the naturality of this collection of morphisms is immediate because tensor products of morphisms in $H\text{-}\mathrm{mod}$ commute with the action of $\mathbf{u}$. Hence we get $f^{\mathbf{u}} \in C^n_{\mathrm{DY}}(P,\phi^R)$. The map
\begin{equation}\label{isoDYCochainsPHmod}
\mathcal{V}^n \to C^n_{\mathrm{DY}}(P,\phi^R), \quad \mathbf{u} \mapsto f^{\mathbf{u}}
\end{equation}
 is an isomorphism of vector spaces whose inverse is $f \mapsto \sigma_n^{-1}\bigl( f_{H \boxtimes H, \ldots, H \boxtimes H}(1^{\otimes 2n}) \bigr)$, where $H$ is the regular module. Under this identification, the DY differentials in degree $1$ and $2$ are
\begin{align*}
\delta^1(\mathbf{u}) &= (1 \otimes R \otimes 1)\cdot(1 \otimes 1 \otimes \mathbf{u}) - \Delta_{H \otimes H}(\mathbf{u})\cdot(1 \otimes R \otimes 1) + (1 \otimes R \otimes 1)\cdot(\mathbf{u} \otimes 1 \otimes 1),\\
\delta^2(\mathbf{u}) &= \bigl( 1 \otimes R^1_i \otimes R_i^{2(1)} \otimes 1 \otimes R_i^{2(2)} \otimes 1 \bigr)\cdot(1 \otimes 1 \otimes \mathbf{u}) - \bigl(\Delta_{H \otimes H} \otimes \mathrm{id}_H^{\otimes 2}\bigr)(\mathbf{u})\cdot (1 \otimes R \otimes 1^{\otimes 3})\\
&\:\:\:\:\:+\bigl( \mathrm{id}_H^{\otimes 2} \otimes \Delta_{H \otimes H} \bigr)(\mathbf{u}) \cdot (1^{\otimes 3} \otimes R \otimes 1) - \bigl( 1 \otimes R_i^{1(1)} \otimes 1 \otimes R_i^{1(2)} \otimes R^2_i \otimes 1 \bigr) \cdot (\mathbf{u} \otimes 1 \otimes 1)
\end{align*}
where $\Delta_{H \otimes H}(x \otimes y) = x^{(1)} \otimes y^{(1)} \otimes x^{(2)} \otimes y^{(2)}$.

\smallskip

\indent Let $\mathcal{Z}\bigl( \Delta^{(n-1)}(H) \bigr)$ be the centralizer of the image of the iterated coproduct $\Delta^{(n-1)} : H \to H^{\otimes n}$. For $\mathbf{h} \in \mathcal{Z}\bigl( \Delta^{(n-1)}(H) \bigr)$ and any $X_1,\ldots,X_n \in H\text{-}\mathrm{mod}$ let $\mathrm{act}^{\mathbf{h}}_{X_1,\ldots,X_n}$ be the representation of $\mathbf{h} \in H^{\otimes n}$ on $X_1 \otimes \ldots \otimes X_n$. This defines a natural transformation $\mathrm{act}^{\mathbf{h}} : (-)^{\otimes n} \Rightarrow (-)^{\otimes n}$. It was observed in \cite[Prop.\,8]{davydov} (also explained in \cite[\S5.3]{FGS}) that the linear map
\begin{equation}\label{isoDYCochainsIdHmod}
\mathcal{Z}\bigl( \Delta^{(n-1)}(H) \bigr) \overset{\sim}{\longrightarrow} C^n_{\mathrm{DY}}(H\text{-}\mathrm{mod}), \quad \mathbf{h} \mapsto \mathrm{act}^{\mathbf{h}}
\end{equation}
is an isomorphism of vector spaces whose inverse is $g \mapsto g_{H,\ldots,H}(1^{\otimes n})$. Under the identifications \eqref{isoDYCochainsPHmod}, \eqref{isoDYCochainsIdHmod} and \eqref{isoInfBraidingsInfRMat} the isomorphism from Corollary \ref{coroDimFormulaTangentSpaceBraiding} becomes
\[ \flecheIso{H^2_{\mathrm{DY}}(P,\phi^R)}{H^2_{\mathrm{DY}}(H\text{-}\mathrm{mod})^{\oplus 2} \oplus \mathbf{T}_R\mathrm{RMat}(H)}{{[}\mathbf{u}{]}}{\bigl( \,\bigl[ (\mathrm{id}_H \otimes \varepsilon)^{\otimes 2}(\mathbf{u}) \bigr], \, \bigl[(\varepsilon \otimes \mathrm{id}_H)^{\otimes 2}(\mathbf{u}) \bigr], \, T^{\mathbf{u}} \bigr)} \]
where $T^{\mathbf{u}} = (\varepsilon \otimes \mathrm{id}_H^{\otimes 2} \otimes \varepsilon)(\mathbf{u}) - \tau\bigl((\mathrm{id}_H \otimes \varepsilon^{\otimes 2} \otimes \mathrm{id}_H)(\mathbf{u})\bigr) R$ with $\tau(x \otimes y) = y \otimes x$. In particular: 
\begin{equation}\label{dimensionFormulaForTRMat}
\dim \mathbf{T}_R\mathrm{RMat}(H) = \dim H^2_{\mathrm{DY}}(P,\phi^R) - 2 \dim H^2_{\mathrm{DY}}(H\text{-}\mathrm{mod}).
\end{equation}

\smallskip

\indent The next result is Theorem \ref{thmChangeFunctorMonProduct} for $\mathcal{C} = H\text{-}\mathrm{mod}$, \textit{i.e.} we describe the coefficient $\bigl( \mathscr{A},\lambda^{(+),(-)} \bigr)$ in this case. It gives an efficient way to compute the dimension of the vector space $H^2_{\mathrm{DY}}(P,\phi^R)$ thanks to homological algebra:
\begin{proposition}\label{propAdjunctionTheoremForMonProductInAmod}
Let $R \in H^{\otimes 2}$ be an $R$-matrix, $P$ be the monoidal product on $H\text{-}\mathrm{mod}$ and $\phi^R$ be the monoidal structure of $P$ defined in \eqref{monStructOfPFromRMat}. Then
\[ H^{\bullet}_{\mathrm{DY}}(P,\phi^R) \cong \mathrm{Ext}^{\bullet}_{D(H) \otimes D(H), H \otimes H}(\Bbbk \boxtimes \Bbbk, H^*) \]
where the vector space $H^*$ is endowed with the $D(H)^{\otimes 2}$-module structure given by
\begin{equation}\label{actionDAAOnADual}
\forall \, \alpha a, \,\beta b \in D(H), \:\: \forall \, \psi \in H^*, \quad (\alpha a \otimes \beta b) \cdot \psi = \ell^{(-)}(\beta)b \triangleright \psi \triangleleft S\bigl(\ell^{(+)}(\alpha) a \bigr)
\end{equation}
where we define 
\begin{align}\label{eq:l-pm-alpha}
\ell^{(+)} : (H^*)^{\mathrm{op}} \to H, \quad \alpha &\mapsto (\mathrm{id} \otimes \alpha)(R) \\
\ell^{(-)} : (H^*)^{\mathrm{op}} \to H, \quad \beta &\mapsto (\beta \otimes \mathrm{id})(R^{-1}) \label{eq:l-pm-beta}
\end{align}
and we use the coregular actions \eqref{defCoregular}.
\end{proposition}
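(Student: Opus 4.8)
The plan is to specialize Theorem~\ref{thmChangeFunctorMonProduct} to $\mathcal{C}=H\text{-}\mathrm{mod}$, where $\mathcal{C}\boxtimes\mathcal{C}\cong(H\otimes H)\text{-}\mathrm{mod}$ and the monoidal product $P$ is the pullback along $\Delta$. By Theorem~\ref{thmChangeFunctorMonProduct} and the isomorphism~\eqref{isoDYRelExt} we already have
\[ H^{\bullet}_{\mathrm{DY}}(P,\phi^R)\cong\mathrm{Ext}^{\bullet}_{\mathcal{Z}(\mathcal{C}\boxtimes\mathcal{C}),\mathcal{C}\boxtimes\mathcal{C}}\!\left(\boldsymbol{1}\boxtimes\boldsymbol{1},\bigl(\mathscr{A},\lambda^{(+),(-)}\bigr)\right), \]
and $\mathcal{Z}((H\otimes H)\text{-}\mathrm{mod})\cong D(H\otimes H)\text{-}\mathrm{mod}\cong(D(H)\otimes D(H))\text{-}\mathrm{mod}$. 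So the only thing to prove is that under this chain of identifications the object $\bigl(\mathscr{A},\lambda^{(+),(-)}\bigr)$ becomes the vector space $H^{*}$ with the $D(H)^{\otimes 2}$-module structure~\eqref{actionDAAOnADual}. First I would identify the underlying vector space: for $\mathcal{C}=H\text{-}\mathrm{mod}$ the coend $\mathscr{A}=\int^{X}X^{\vee}\boxtimes X$ is the well-known object $\mathscr{A}\cong H^{*}$ with the $H\otimes H$-action obtained from the left $H$-module structures on $X^{\vee}$ and $X$; concretely one uses the coevaluation/evaluation description, the universal dinatural transformation $j_{X}$, and the standard computation (see e.g.\ the realization of the canonical algebra in $\mathcal{C}\boxtimes\mathcal{C}$) to get that $\mathscr{A}$ is $H^{*}$ with $H\otimes H$ acting by the two coregular actions $\triangleright$, $\triangleleft$ of~\eqref{defCoregular} combined with the antipode.

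Next I would compute the $H^{*}$-part of the $D(H)^{\otimes 2}$-action coming from the half-braiding $\lambda^{(+),(-)}$ of~\eqref{halfBraidingOnBulk}. Recall from \S\ref{sectionFinDimHopf} that for $(V,\rho)\in\mathcal{Z}(H\text{-}\mathrm{mod})$ the action of $\varphi\in(H^{*})^{\mathrm{op}}$ is $\varphi\cdot v=(\varphi\otimes\mathrm{id})\circ\rho_{H}(v\otimes 1_{H})$. For the Deligne square, an element $\alpha\boxtimes\beta\in(H^{*})^{\mathrm{op}}\otimes(H^{*})^{\mathrm{op}}\subset D(H)\otimes D(H)$ acts via $\lambda^{(+),(-)}$ evaluated on the regular bimodule $H\boxtimes H$. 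The first component of $\lambda^{(+),(-)}$ uses the braiding $c$ on $\mathcal{C}$, whose associated $R$-matrix on the regular module is $R$; the second component uses $c^{-1}$, whose $R$-matrix is $R^{-1}$ (more precisely $R_{21}^{-1}$ read through the flip). Unwinding the commuting square~\eqref{halfBraidingOnBulk} on $H\boxtimes H$ and using that $c^{R}_{X,Y}(x\otimes y)=\tau(R\cdot(x\otimes y))$, one sees that $\alpha$ acts on $H^{*}$ through the element $\ell^{(+)}(\alpha)=(\mathrm{id}\otimes\alpha)(R)\in H$ and $\beta$ through $\ell^{(-)}(\beta)=(\beta\otimes\mathrm{id})(R^{-1})\in H$. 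Combining the genuine $H\otimes H$-action (which, transported to $D(H)^{\otimes 2}$, is $b\triangleright\psi\triangleleft S(a)$ for $a,b\in H$) with these half-braiding contributions yields exactly formula~\eqref{actionDAAOnADual}. I would also verify that the resulting linear maps $\ell^{(\pm)}$ are algebra maps $(H^{*})^{\mathrm{op}}\to H$ (this is immediate from the quasi-triangularity axioms~\eqref{quasitriangularConditions}), so that~\eqref{actionDAAOnADual} is indeed a well-defined $D(H)^{\otimes2}$-module structure.

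The main obstacle, as usual in this kind of statement, is the bookkeeping of the coend realizations and the two opposite braidings: one must carefully track how the universal dinatural transformation $j_{X}$, the duality morphisms $\mathrm{ev}/\mathrm{coev}$ in $H\text{-}\mathrm{mod}$, and the half-braiding square~\eqref{halfBraidingOnBulk} translate into concrete formulas on $H^{*}$, and make sure the antipode and the $\mathrm{op}$-conventions on $(H^{*})^{\mathrm{op}}$ and $D(H)$ (as in~\eqref{definingRelDrinfeldDouble}) are applied consistently, especially in the second Deligne-factor where $c^{-1}$ rather than $c$ enters. A clean way to organize this is to use the functor $L^{(+),(-)}$ from Remark~\ref{rem:L-functor}: by that remark $\bigl(\mathscr{A},\lambda^{(+),(-)}\bigr)\cong L^{(+),(-)}(\mathscr{A})$, so it suffices to describe, under $\mathcal{Z}(H\text{-}\mathrm{mod})\cong D(H)\text{-}\mathrm{mod}$, the two functors $L^{(+)},L^{(-)}:H\text{-}\mathrm{mod}\to\mathcal{Z}(H\text{-}\mathrm{mod})$ sending an $H$-module to itself equipped with the braiding $c$, resp.\ its inverse; these amount precisely to the two algebra maps $\ell^{(+)}$ and $\ell^{(-)}$ above (this is the standard fact that a quasi-triangular structure on $H$ gives a pair of Hopf maps $D(H)\to H$, or $(H^{*})^{\mathrm{op}}\to H$). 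Once $L^{(\pm)}$ are identified, tensoring the two factors gives the $D(H)\otimes D(H)$-structure on $\mathscr{A}\cong H^{*}$, and comparing with the genuine module structure on the coend finishes the proof; the final identification of $\mathrm{Ext}_{\mathcal{Z},\mathcal{C}}$ with $\mathrm{Ext}_{D(H)^{\otimes2},H^{\otimes2}}$ is just the transport along the category isomorphisms already recorded in~\eqref{isoRelExtCentralizerOrModulesCentralMonad} and Lemma~\ref{lemmaDrinfeldCenterOfDeligneProduct}.
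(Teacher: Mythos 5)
Your plan follows essentially the same route as the paper's proof: identify $\mathscr{A}\cong H^*$ with the $H\otimes H$-action $(a\otimes b)\cdot\psi=b\triangleright\psi\triangleleft S(a)$ via the explicit universal dinatural transformation $j_X$, unwind the half-braiding $\lambda^{(+),(-)}$ of~\eqref{halfBraidingOnBulk} on the regular $D(H)^{\otimes 2}$-module to extract the $(H^*)^{\mathrm{op}}\otimes(H^*)^{\mathrm{op}}$-part through $\ell^{(\pm)}$, and transport along $\mathcal{Z}((H\otimes H)\text{-}\mathrm{mod})\cong D(H\otimes H)\text{-}\mathrm{mod}\cong D(H)^{\otimes 2}\text{-}\mathrm{mod}$. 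The only cosmetic difference is that you propose to organize the half-braiding computation through the functor $L^{(+),(-)}$ of Remark~\ref{rem:L-functor}, whereas the paper computes $\lambda^{(+),(-)}_{V\boxtimes W}$ directly from~\eqref{halfBraidingOnBulk} and $j_H$; both packagings lead to the same calculation and to formula~\eqref{actionDAAOnADual}.
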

\noindent Recall that $D(H) = H^{*\mathrm{op}} \otimes H$ and we write $\alpha a$ instead of $\alpha \otimes a$, see \eqref{definingRelDrinfeldDouble} for the product. There is an obvious isomorphism $D(H \otimes H) \overset{\sim}{\to} D(H) \otimes D(H)$ given by $ (\alpha \otimes \beta)(a \otimes b) \mapsto \alpha a \otimes \beta b$ where the linear form $\alpha \otimes \beta \in (H \otimes H)^{*\mathrm{op}}$ is defined by $\langle \alpha \otimes \beta, x \otimes y \rangle = \langle \alpha, x \rangle \langle \beta, y\rangle$; in the sequel we use this identification. 
We also note that the maps $\ell^{(\pm)}$ defined in~\eqref{eq:l-pm-alpha}-\eqref{eq:l-pm-beta} are algebra maps which extend to algebra maps $D(H) \to H$ given by $\alpha a \mapsto \ell^{(+)}(\alpha)a$ and $\beta b \mapsto \ell^{(+)}(\beta)b$ respectively.
\begin{proof}
Write $R = R^1_i \otimes R^2_i \in H^{\otimes 2}$ and recall that the braiding in $H\text{-}\mathrm{mod}$ is $c_{X,Y}(x \otimes y) = R^2_i \cdot y \otimes R^1_i \cdot x$. Its inverse is $c_{X,Y}^{-1}(y \otimes x) = S(R^1_i) \cdot x \otimes R^2_i \cdot y$. Also recall that $X^{\vee}$ is the vector space $X^*$ endowed with the $H$-action defined by $(h \cdot f)(x) = f\bigl( S(h) \cdot x \bigr)$ for all $f \in X^*$ and $x \in X$. The coend \eqref{defCoendBulkA} for $\mathcal{C} = H\text{-}\mathrm{mod}$
\[ \mathscr{A} = \int^{X \in H\text{-}\mathrm{mod}} X^{\vee} \boxtimes X \in (H\text{-}\mathrm{mod}) \boxtimes (H\text{-}\mathrm{mod}) = (H \otimes H)\text{-}\mathrm{mod} \]
is the vector space $H^*$ endowed with the $(H \otimes H)$-action $(a \otimes b) \cdot \psi = b \triangleright \psi \triangleleft S(a)$, see e.g. \cite[App.\,A]{FSSmodular}. Its universal dinatural transformation $j_X : X^{\vee} \boxtimes X \to H^*$ is given by $\langle j_X(\chi \boxtimes x),h \rangle = \chi(h \cdot x)$. Note in particular that any $\psi \in H^*$ can be written as $\psi = j_H(\psi \boxtimes 1_H)$ where $H$ is the regular module. Let us compute the half-braiding $\lambda^{(+),(-)}_{V \boxtimes W} : H^* \otimes (V \boxtimes W) \to (V \boxtimes W) \otimes H^*$ defined in \eqref{halfBraidingOnBulk}:
\begin{align*}
\lambda^{(+),(-)}_{V \boxtimes W}(\psi \otimes (v \boxtimes w)) &= \lambda^{(+),(-)}_{V \boxtimes W} \circ (j_H \otimes \mathrm{id}_{V \boxtimes W})\bigl( (\psi \boxtimes 1_H) \otimes (v \boxtimes w) \bigr)\\
&=(\mathrm{id}_{V \boxtimes W} \otimes j_H) \circ (c_{H^{\vee},V} \boxtimes c_{W,H}^{-1})\bigl( (\psi \otimes v) \boxtimes (1_H \otimes w) \bigr)\\
&=(\mathrm{id}_{V \boxtimes W} \otimes j_H)\left[ \bigl( R^2_i \cdot v \otimes (\psi \triangleleft S(R^1_i)) \bigr) \boxtimes \bigl( S(R^1_j) \cdot w \otimes R^2_j \bigr) \right]\\
&= (\mathrm{id}_{V \boxtimes W} \otimes j_H)\left[ \bigl( R^2_i \cdot v \boxtimes S(R^1_j) \cdot w \bigr) \otimes \bigl( (\psi \triangleleft S(R^1_i)) \boxtimes R^2_j \bigr) \right]\\
&=\bigl( R^2_i \cdot v \boxtimes S(R^1_j) \cdot w \bigr) \otimes \bigl( R^2_j \triangleright \psi \triangleleft S(R^1_i) \bigr).
\end{align*}
Through the isomorphism $\mathcal{Z}\bigl( (H \otimes H)\text{-}\mathrm{mod} \bigr) \cong D(H)^{\otimes 2}\text{-}\mathrm{mod}$ recalled at the beginning of \S\ref{sectionFinDimHopf}, the half-braiding $\lambda^{(+),(-)}$ is equivalent to the action of $D(H)^{\otimes 2}$ on $H^*$ given by
\begin{align*}
&(\alpha a \otimes \beta b) \cdot \psi = (\alpha \otimes \beta \otimes \mathrm{id}_{H^*}) \circ \lambda^{(+),(-)}_{H \boxtimes H}\bigl( (a \otimes b) \cdot \psi \otimes (1_H \boxtimes 1_H) \bigr)\\
=\:&  (\alpha \otimes \beta \otimes \mathrm{id}_{H^*}) \circ \lambda^{(+),(-)}_{H \boxtimes H}\bigl( (b \triangleright \psi \triangleleft S(a)) \otimes (1_H \boxtimes 1_H) \bigr) = \alpha(R^2_i)\beta\bigl( S(R^1_j) \bigr) R^2_jb \triangleright \psi \triangleleft S(a)S(R^1_i)
\end{align*}
which is the claimed formula.
\end{proof}

\begin{remark}\label{rem:end-ingr-Hopf}
 Recall that $H^{n}_{\mathrm{DY}}(P,\phi^R)$ can be also computed via the K\"unneth type formula~\eqref{eq:HDY-end} or Corollary~\ref{cor:HDY-end-proj}  if the ground field $\Bbbk$ is perfect. Let us explain their ingredients in the Hopf case:
 \begin{enumerate}
     \item 
 The distinguished invertible object $D$ is the one-dimensional $H$-module given by the character $\alpha^{-1}: H \to \Bbbk$ where $\alpha$ is the so-called modulus defined by
 $$
 c^l \cdot h = \alpha(h)c^l 
 $$
 and $c^l\in H$ is the left co-integral of $H$, see~\cite[Prop.\,6.5.5]{EGNO}.\footnote{We note that our conventions on the distinguished invertible object $D$ are opposite to those in~\cite[Sec.\,6.4]{EGNO}.}
 \item 
 The Nakayama functor $\Nak$ given in~\eqref{eq:Nak-object} is then induced by the automorphism on $H$ that sends $h\in H$ to $\alpha(h^{(1)})S^2(h^{(2)})$.
 \item
 Let $X \in H\text{-}\mathrm{mod}$, then under the equivalence $\mathcal{Z}(H\text{-}\mathrm{mod}) \cong D(H)\text{-}\mathrm{mod}$ recalled just below \eqref{definingRelDrinfeldDouble}, the object $(X^{\vee},c_{X^{\vee},-})$ is the vector space $X^*$ endowed with the action defined by $\langle (\alpha a) \cdot f, x \rangle = \bigl\langle f, S\bigl( \ell^{(+)}(\alpha)a \bigr)x \bigr\rangle$ for all $f \in X^{\vee}$, $\alpha, a \in D(H)$ and $x \in X$, and with $\ell^{(+)}$ defined in~\eqref{eq:l-pm-alpha}. Similarly $(X,c^{-1}_{-,X})$ is the vector space $X$ endowed with the action $(\beta b) \cdot x = \ell^{(-)}(\beta)b \cdot x$ for all $\beta, b \in D(H)$ and $x \in X$, and with $\ell^{(-)}$ defined in~\eqref{eq:l-pm-beta}.
 \end{enumerate}
 The advantage of the formulas~\eqref{eq:HDY-end} or~\eqref{eq:dim-Tc-Br-Ext-end} is that the ends are taken over projective objects, recall Corollary~\ref{cor:HDY-end-proj}. When $H$ has only a few isomorphism classes of simple objects, one can expect that these ends are easy to calculate. This is demonstrated in the example $H=B_k$ in \S \ref{subsectionExampleBk} below.
\end{remark}

\subsection{Example: tangent $R$-matrices for $B_k = \Lambda \mathbb{C}^k \rtimes \mathbb{C}[\mathbb{Z}/2\mathbb{Z}]$}\label{subsectionExampleBk}
Let $k$ be a strictly positive integer and $B_k$ be the $\mathbb{C}$-algebra generated by $g$ and $x_i$ with $1 \leq i \leq k$ modulo the relations
\begin{equation}\label{presentationBk}
x_i x_j = -x_j x_i, \quad gx_i = -x_i g, \quad x_i^2 = 0, \quad g^2 = 1.
\end{equation}
The monomials $x_1^{e_1} \ldots x_k^{e_k}g^{e_{k+1}}$  with $e_i \in \{0,1\}$ form a basis, so that $\dim(B_k) = 2^{k+1}$. It is a Hopf algebra with coproduct $\Delta(x_i) = 1 \otimes x_i + x_i \otimes g$, $\Delta(g) = g \otimes g$, counit $\varepsilon(x_i) = 0$, $\varepsilon(g) = 1$ and antipode $S(x_i) = g x_i$, $S(g) = g$. 
This  algebra can be seen as a generalization of the 4-dimensional Sweedler's Hopf algebra.

It is clear that $B_k$ is pivotal with the pivot given by $g$, i.e.\ $S^2(-) = g(-)g^{-1}$. Furthermore, $B_k$ is unimodular for even $k$, i.e.\ the distinguished invertible object is $D=\boldsymbol{1}$, while for odd $k$ the object $D=D^\vee$ is described by the character $\alpha=\alpha^{-1}$, recall Remark~\ref{rem:end-ingr-Hopf}, where $\alpha$ is zero on all $x_i$'s and $-1$ on $g$. Let us denote the projective cover of such a one-dimensional $B_k$-module, for all $k$, by $P_-$.

The element
\begin{equation}\label{eq:R0-Bk}
R_0 = \boldsymbol{e}_+ \otimes 1 + \boldsymbol{e}_- \otimes g = 1 \otimes \boldsymbol{e}_+ + g \otimes \boldsymbol{e}_- \in B_k \otimes B_k 
\end{equation}
where $\boldsymbol{e}_{\pm} = \frac{1 \pm g}{2}$ satisfies $(R_0)^2 = 1 \otimes 1$ and is a triangular $R$-matrix.

\indent Our main goal is to describe the Zariski tangent space $\mathbf{T}_{R_0}\mathrm{RMat}(B_k)$ defined in~\eqref{tangentSpaceRMatrix}. We first compute its dimension using the end formula~\eqref{eq:dim-Tc-Br-Ext-end}.

\smallskip

Write $(x_1^{e_1} \ldots x_k^{e_k}g^{e_{k+1}})^*$ for the basis elements of $B_k^*$ which are dual to the monomial basis of $B_k$. Recall from \cite[\S 5]{GHS} that $B_k^{*\mathrm{op}}$ is generated by the elements $y_i = x_i^* - (x_ig)^*$ ($1 \leq i \leq k$) and $h = 1^* - g^*$ modulo the relations
\begin{equation}\label{presentationBkDual}
y_i y_j = -y_j y_i, \quad hy_i = -y_i h, \quad y_i^2 = 0, \quad h^2 = 1.
\end{equation}
Hence the Drinfeld double $D(B_k) = B_k^{*\mathrm{op}} \otimes B_k$ has the generators $x_i,g, y_i,h$ modulo the relations \eqref{presentationBk}, \eqref{presentationBkDual} and the mixed relations given in \cite[eq.\,(5.7)]{GHS} that we will not need here.

\smallskip

\indent We will use a `minimal' relatively projective resolution $\mathbf{P}$ of the trivial $D(B_k)$-module $\mathbb{C}$ constructed in~\cite[\S 5]{GHS}, see also~\cite[\S 6.1.2]{FGS}:
\begin{equation}\label{relProjResUnitBk}
\mathbf{P} = \left( 0 \longleftarrow \mathbb{C} \longleftarrow S^0(\mathbb{C}^k) \otimes \mathcal{C}_+ \longleftarrow S^1(\mathbb{C}^k) \otimes \mathcal{C}_- \longleftarrow S^2(\mathbb{C}^k) \otimes \mathcal{C}_+ \longleftarrow \ldots \right)
\end{equation}
where $\mathcal{C}_\pm= B_k^{*\mathrm{op}}\boldsymbol{f}_{\pm}$ are projective covers in $B_k^*\text{-}\mathrm{mod}$ corresponding to the idempotents $\boldsymbol{f}_{\pm} = \frac{1 \pm h}{2}$, equipped additionally with the trivial action of the generators $x_i$ and $g$ acting as $h$, which makes them also $D(B_k)$-modules.
 The spaces  $S^n(\mathbb{C}^k)$ of homogeneous polynomials of degree $n$ in $k$ commuting variables, of dimension $\binom{k+n-1}{n}$, are just multiplicity spaces in~\eqref{relProjResUnitBk}. For what follows, we do not use explicit form of the differentials of $\mathbf{P}$, so we omit them.
\begin{proposition}\label{propDYcohomologyMonProductForBk}
1. We have
\[ \dim \mathbf{T}_{R_0} \mathrm{RMat}(B_k) = k^2. \]
2. Recall that the $R$-matrix $R_0 \in B_k^{\otimes 2}$ gives the monoidal structure $\phi^{R_0}$ for $\otimes$ defined in \eqref{monStructOfPFromRMat}. Then we have 
\[\dim H^n_{\mathrm{DY}}(\otimes,\phi^{R_0}) = \begin{cases}
\binom{2k+n-1}{n} & \text{if } n \text{ is even}\\
0 & \text{if } n \text{ is odd}
\end{cases} \]
\end{proposition}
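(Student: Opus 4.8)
The strategy is to combine the end formula of Corollary~\ref{cor:HDY-end-proj} (or equivalently Proposition~\ref{prop:HDY-end}) with the explicit relatively projective resolution $\mathbf{P}$ of $\mathbb{C} \in D(B_k)\text{-}\mathrm{mod}$ recalled in \eqref{relProjResUnitBk}. Since $B_k$ is pivotal with pivot $g$ (so $S^2(-) = g(-)g^{-1}$), the end formulas apply; one must only keep track of the distinguished invertible object $D$, which is trivial for even $k$ and is the character $\alpha$ (zero on all $x_i$, equal to $-1$ on $g$) for odd $k$. The first step is to identify, for each indecomposable projective $P \in \mathrm{Proj}(B_k\text{-}\mathrm{mod})$, the two $D(B_k)$-modules $(P^{\vee}, c_{P^{\vee},-})$ and $(\Nak(P), c^{-1}_{-,\Nak(P)})$ appearing in \eqref{eq:HDY-end}--\eqref{eq:HDY2-end-proj}, using item 3 of Remark~\ref{rem:end-ingr-Hopf}: for the triangular $R_0$ of \eqref{eq:R0-Bk} the maps $\ell^{(\pm)}$ send $y_i \mapsto 0$ and $h \mapsto g$, so these $D(B_k)$-module structures are built from the $B_k$-action together with $y_i$ acting by $0$ and $h$ acting by $g$ — that is, precisely the kind of $D(B_k)$-module appearing as the summands $\mathcal{C}_\pm$ (up to the polynomial multiplicity spaces) in $\mathbf{P}$.

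Next, I would compute the relative Ext spaces $\Ext^i_{\mathcal{Z}(B_k\text{-}\mathrm{mod}),\,B_k\text{-}\mathrm{mod}}\bigl(\boldsymbol{1},(P^{\vee},c_{P^{\vee},-})\bigr)$ and the analogous ones with $\Nak(P)$, for $P = P_\pm$ the projective covers of the two one-dimensional simples, by applying $\Hom_{D(B_k)}(\mathbf{P}, -)$ to these coefficient modules. Because $\mathbf{P}$ has $n$-th term $S^n(\mathbb{C}^k) \otimes \mathcal{C}_{\sigma(n)}$ with $\sigma(n) = +$ for even $n$ and $\sigma(n) = -$ for odd $n$, and because $\mathcal{C}_\pm$ are projective covers of the two blocks of $B_k^*$, the Hom-complex has a very simple form: $\Hom_{D(B_k)}\bigl(S^n(\mathbb{C}^k)\otimes\mathcal{C}_{\sigma(n)}, \mathsf{M}\bigr) \cong S^n(\mathbb{C}^k)^* \otimes \Hom_{B_k^*}(\mathcal{C}_{\sigma(n)},\mathsf{M}) \cong S^n(\mathbb{C}^k)^* \otimes (\text{a fixed idempotent slice of } \mathsf{M})$, and one checks the differentials vanish — exactly as in the computation of $H^\bullet_{\mathrm{DY}}(B_k\text{-}\mathrm{mod})$ in \cite{GHS,FGS}, which gives $\dim \Ext^n_{D(B_k),B_k}(\mathbb{C},\mathbb{C}) = \binom{k+n-1}{n}$ (and $0$ in odd degree is not the case there — rather $\binom{k+n-1}{n}$ in all degrees; the alternation will come from the parity mismatch between the two tensor factors in the Künneth formula). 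Plugging these dimensions into the Künneth end formula \eqref{eq:HDY-end} and collecting, for each $n$, the contributions over $i+j=n$ of $S^i(\mathbb{C}^k)^*\otimes S^j(\mathbb{C}^k)^*$ with the parity bookkeeping forced by $\sigma$, one obtains $\dim H^n_{\mathrm{DY}}(\otimes,\phi^{R_0}) = \sum_{i+j=n} \binom{k+i-1}{i}\binom{k+j-1}{j}$ when $n$ is even and $0$ when $n$ is odd; the sum is the coefficient of $t^n$ in $(1-t)^{-2k}$, namely $\binom{2k+n-1}{n}$, which is part 2. Part 1 then follows from the dimension formula \eqref{dimensionFormulaForTRMat}: $\dim \mathbf{T}_{R_0}\mathrm{RMat}(B_k) = \dim H^2_{\mathrm{DY}}(\otimes,\phi^{R_0}) - 2\dim H^2_{\mathrm{DY}}(B_k\text{-}\mathrm{mod}) = \binom{2k+1}{2} - 2\binom{k+1}{2} = k(2k+1) - k(k+1) = k^2$.

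The main obstacle is the careful treatment of the two blocks and of the twist by $D$ for odd $k$: one must verify that $(P_\pm^{\vee}, c_{P_\pm^{\vee},-})$ and $(\Nak(P_\pm), c^{-1}_{-,\Nak(P_\pm)})$ land in the correct idempotent blocks $\boldsymbol{f}_\pm$ of $B_k^*\text{-}\mathrm{mod}$ so that $\Hom_{B_k^*}(\mathcal{C}_{\sigma(n)}, -)$ picks out a space whose dimension is independent of $n$ within a fixed parity, and that the relevant Hom-spaces $\Hom_{\mathcal{Z}}(\boldsymbol{1}, (P_X^{\vee}, \ldots))$ are one-dimensional exactly for $X = \boldsymbol{1}$ (for even $k$) or appropriately for $X = D^\vee$ (for odd $k$), as encoded in \eqref{eq:homZC-P-NP}. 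Once the block combinatorics and the parity of $\Nak$ (which for odd $k$ twists by $\alpha$ but, since $\alpha^2 = \varepsilon$, does not change the Ext-dimensions) are pinned down, the vanishing of the differentials in the Hom-complex — inherited from the known computation for $\mathrm{Id}_{B_k\text{-}\mathrm{mod}}$ — makes the rest a bookkeeping exercise with generating functions. A secondary point to check is that one may indeed replace $\int_{X\in B_k\text{-}\mathrm{mod}}$ by $\int_{P\in\mathrm{Proj}}$ and that only $P_+, P_-$ contribute nontrivially (since $B_k$ has exactly two simple modules), so the end reduces to a finite direct sum.
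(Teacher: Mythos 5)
You take the same route as the paper's first proof of this proposition: apply the end formula of Corollary~\ref{cor:HDY-end-proj} together with the explicit relatively projective resolution~\eqref{relProjResUnitBk}, then get part~1 from~\eqref{dimensionFormulaForTRMat}. The answers you quote are correct, but two steps in the Ext computation are wrong as written, and the key observation that makes the paper's argument work is missing.

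First, the claim that $\dim\Ext^n_{D(B_k),B_k}(\mathbb{C},\mathbb{C}) = \binom{k+n-1}{n}$ in all degrees (including odd) is false. The resolution has $n$-th term $S^n(\mathbb{C}^k)\otimes\mathcal{C}_{(-)^n}$, and $\Hom_{D(B_k)}(\mathcal{C}_s,\mathbb{C})$ is nonzero only when $s=+$, since $h$ acts by $+1$ on $\mathbb{C}$ but by $s$ on $\boldsymbol{f}_s$. Hence the Hom-complex vanishes in odd degree, and $\dim H^n_{\mathrm{DY}}(B_k\text{-}\mathrm{mod}) = \binom{k+n-1}{n}$ for even $n$ and $0$ for odd $n$; the parity alternation is not produced solely by the K\"unneth tensoring.

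Second, the identification $\Hom_{D(B_k)}\bigl(\mathcal{C}_{\sigma(n)},\mathsf{M}\bigr)\cong\Hom_{B_k^*}\bigl(\mathcal{C}_{\sigma(n)},\mathsf{M}\bigr)\cong(\text{an idempotent slice of }\mathsf{M})$ is not correct: a $D(B_k)$-map out of $\mathcal{C}_s$ is constrained by the full $D(B_k)$-action, not only the $B_k^{*\mathrm{op}}$-part. Since $x_i=0$ on $\mathcal{C}_s$ while $y_i=0$ on the coefficient $L(X)$, the image of $\boldsymbol{f}_s$ must in addition be annihilated by every $x_i$ in $\mathsf{M}$, i.e.\ it must lie in the socle of the $B_k$-module $X$. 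That is the step the paper isolates: $\Ext^n_{D(B_k),B_k}\bigl(\mathbb{C},L(X)\bigr)\cong\Ext^n_{D(B_k),B_k}\bigl(\mathbb{C},\mathrm{Soc}[L(X)]\bigr)$. Without this socle reduction your Hom-spaces are far too large (roughly half of $\dim X$ instead of dimension~$1$), the differentials no longer obviously vanish, and the degree-by-degree count collapses. With the socle reduction inserted, the rest of your bookkeeping is fine: only $P_+$ contributes (at both tensorands) in the even-even slots and only $P_-$ in the odd-odd slots, the K\"unneth sum assembles to $S^n(\mathbb{C}^{2k})$ for even $n$ and vanishes for odd $n$, and the generating-function identity gives $\binom{2k+n-1}{n}$.
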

\begin{proof}
Let
\[ L : B_k\text{-}\mathrm{mod} \to \mathcal{Z}(B_k\text{-}\mathrm{mod}) \cong D(B_k)\text{-}\mathrm{mod}, \quad X \mapsto \bigl(X, c^{R_0}_{X,-} \bigr) = \bigl( X, (c^{R_0}_{-,X})^{-1} \bigr) \]
which by item 3 of Remark~\ref{rem:end-ingr-Hopf} is the pullback along the morphism $\ell^{(+)} = \ell^{(-)} : D(B_k) \to B_k$ from \eqref{eq:l-pm-alpha}, where the equality is due to the symmetric R-matrix~\eqref{eq:R0-Bk}. In particular we see that the $y_i$'s act by $0$ on any $L(X)$ while the $x_i$'s act on $L(X)$ as they do on $X$. The situation is opposite for the modules $\mathcal{C}_{\pm}$ appearing in \eqref{relProjResUnitBk}: the $x_i$'s act by $0$ while the action of the $y_i$'s is free. This immediately shows that
$\Hom_{D(B_k)}(\mathcal{C}_{\pm},L(X)) \cong \Hom_{D(B_k)}(\mathcal{C}_{\pm},\mathrm{Soc}[L(X)])$
where $\mathrm{Soc}$ denotes the socle.
As a consequence, using the resolution~\eqref{relProjResUnitBk} we see that
the relative Ext spaces satisfy:
\[ \Ext_{D(B_k),B_k}^n\bigl(\mathbb{C},L(X)\bigr) \cong \Ext^n_{D(B_k),B_k}\bigl(\mathbb{C},\mathrm{Soc}[L(X)]\bigr)\ . \]
We thus see that the term in the 2nd line of~\eqref{eq:dim-Tc-Br-Ext-end} is canceled by the 3rd line because $\mathrm{Soc}[L(P_{D})]=\mathbb{C}$. For what concerns the end in the 1st line, only one term corresponding to $P=P_-$ gives a non-zero contribution, which is $S^1(\mathbb{C}^k)^{\otimes 2}$. Clearly there are no extra linear relations coming from dinaturality, and we are left with $S^1(\mathbb{C}^k)^{\otimes 2}$ whose dimension is $k^2$.
\\2. Recall Corollary~\ref{cor:HDY-end-proj} and denote by $B_k\text{-}\mathrm{pmod}$ the subcategory of projective $B_k$-modules, which has only two isoclasses of indecomposables (the projective covers of the characters $\varepsilon$ and~$\alpha$). By arguments similar to the previous item we immediately calculate
\begin{align}
H^{n}_{\mathrm{DY}}(\otimes, \mathrm{id} \otimes c \otimes \mathrm{id})
 &\cong   \bigoplus_{i+j=n} \int_{P \in B_k\text{-}\mathrm{pmod}}  \!\!\!\!\!\!\Ext^i_{D(B_k),B_k}\bigl(\mathbb{C}, L(P)\bigr) \otimes \Ext^j_{D(B_k),B_k}\bigl(\mathbb{C},L(D\otimes P) \bigr)\notag\\
&\cong \delta_{n,\mathrm{even}}\, \bigoplus_{i+j=n} S^i(\mathbb{C}^k) \otimes S^j(\mathbb{C}^k) \cong \delta_{n,\mathrm{even}} S^n(\mathbb{C}^{2k})\ ,\label{eq:HDY-Bk-end}
 \end{align}
where we observed that for even $n$, as for $n=2$ above, and each $(i,j)$ only one $P$ term (out of two) in the corresponding end gives a non-zero contribution equal to $S^i(\mathbb{C}^k) \otimes S^j(\mathbb{C}^k)$, while for odd $n$ there is always one tensorand in each end which is zero, so the total space is zero.
\end{proof}

\indent The use of the end formulas (\S\ref{sec:end-formula}) gave a fast proof of the previous proposition. But for completeness we still find it relevant to give another proof which uses the isomorphism $H^n_{\mathrm{DY}}(\otimes,\phi^{R_0}) \cong \mathrm{Ext}^n_{D(B_k)^{\otimes 2}, B_k^{\otimes 2}}(\mathbb{C} \boxtimes \mathbb{C}, B_k^*)$ from Proposition~\ref{propAdjunctionTheoremForMonProductInAmod}. Let us then describe the $D(B_k)^{\otimes 2}$-module structure on $B_k^*$:

\begin{lemma}\label{lemmaBicoregBkDual}
Let $ w_{\pm} = (x_1 \ldots x_k)^* \pm (x_1 \ldots x_kg)^*$. The elements $x_1^{e_1} \ldots x_k^{e_k} \triangleright w_{\pm}$ with $e_i \in \{0,1\}$ form a basis of $B_k^*$. In this basis, the action of $D(B_k)^{\otimes 2}$ on $B_k^*$ associated to $R_0$ (Prop. \ref{propAdjunctionTheoremForMonProductInAmod}) is given by
\begin{align*}
&(x_i \otimes 1) \cdot \bigl( x_1^{e_1} \ldots x_k^{e_k} \triangleright w_{\pm} \bigr) = \mp  x_1^{e_1} \ldots x_k^{e_k}x_i \triangleright w_{\mp},\\
&(1 \otimes x_i) \cdot \bigl( x_1^{e_1} \ldots x_k^{e_k} \triangleright w_{\pm} \bigr) = x_i x_1^{e_1} \ldots x_k^{e_k} \triangleright w_{\pm},\\
&(y_i \otimes 1) \cdot \bigl( x_1^{e_1} \ldots x_k^{e_k} \triangleright w_{\pm} \bigr) = (1 \otimes y_i) \cdot \bigl( x_1^{e_1} \ldots x_k^{e_k} \triangleright w_{\pm} \bigr) =0,\\
&(g \otimes 1) \cdot \bigl( x_1^{e_1} \ldots x_k^{e_k} \triangleright w_{\pm} \bigr) = (h \otimes 1) \cdot \bigl( x_1^{e_1} \ldots x_k^{e_k} \triangleright w_{\pm} \bigr) = \pm (-1)^k x_1^{e_1} \ldots x_k^{e_k} \triangleright w_{\pm},\\
&(1 \otimes g) \cdot \bigl( x_1^{e_1} \ldots x_k^{e_k} \triangleright w_{\pm} \bigr) = (1 \otimes h) \cdot \bigl( x_1^{e_1} \ldots x_k^{e_k} \triangleright w_{\pm} \bigr) = \pm (-1)^{|e|} x_1^{e_1} \ldots x_k^{e_k} \triangleright w_{\pm}
\end{align*}
where $|e| = \sum_{i=1}^k e_i$.
\end{lemma}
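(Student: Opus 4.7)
The strategy is a direct computation from the general formula $(\alpha a \otimes \beta b) \cdot \psi = \ell^{(-)}(\beta)b \triangleright \psi \triangleleft S(\ell^{(+)}(\alpha)a)$ of Proposition \ref{propAdjunctionTheoremForMonProductInAmod}, specialized to $R_0 = \boldsymbol{e}_+ \otimes 1 + \boldsymbol{e}_- \otimes g = 1 \otimes \boldsymbol{e}_+ + g \otimes \boldsymbol{e}_-$, which moreover satisfies $R_0^{-1} = R_0$. To verify the basis claim, set $\psi_\pm^e := x_1^{e_1}\cdots x_k^{e_k} \triangleright w_\pm$. From the definition of the coregular action, $\psi_\pm^e(x_1^{a_1}\cdots x_k^{a_k} g^{a_{k+1}}) = w_\pm(x_1^{a_1}\cdots x_k^{a_k} g^{a_{k+1}} \cdot x_1^{e_1}\cdots x_k^{e_k})$, which by $x_i^2 = 0$ and the anticommutation \eqref{presentationBk} is nonzero iff $a_i = 1 - e_i$ for all $i$. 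This shows the $2 \cdot 2^k$ elements $\psi_\pm^e$ are linearly independent and hence a basis of $B_k^*$.

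Evaluating $\ell^{(\pm)}$ from \eqref{eq:l-pm-alpha}--\eqref{eq:l-pm-beta} on the generators of $D(B_k)$ gives $\ell^{(\pm)}(\varepsilon) = 1$, $\ell^{(\pm)}(h) = g$, $\ell^{(\pm)}(y_i) = 0$, while the $B_k$-factor of $D(B_k)$ passes through unchanged. Together with $S(g) = g$ and $S(x_i) = gx_i$, the general action formula reduces on generators of $D(B_k)^{\otimes 2}$ to
\[ (x_i \otimes 1)\cdot \psi = \psi \triangleleft gx_i, \qquad (1 \otimes x_i)\cdot \psi = x_i \triangleright \psi, \]
\[ (h \otimes 1)\cdot \psi = (g \otimes 1)\cdot \psi = \psi \triangleleft g, \qquad (1 \otimes h)\cdot \psi = (1 \otimes g)\cdot \psi = g \triangleright \psi, \]
together with $(y_i \otimes 1)\cdot \psi = (1 \otimes y_i)\cdot \psi = 0$, settling the $y_i$-claims immediately.

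The remaining actions are computed on $\psi_\pm^e$ from two pairing identities, verified directly from the definition of $w_\pm$: namely $g \triangleright w_\pm = \pm w_\pm$ and $w_\pm \triangleleft g = \pm(-1)^k w_\pm$. Combined with the general facts that $\triangleright$ is a left action, $\triangleleft$ is a right action, and $\triangleright$ commutes with $\triangleleft$, they yield
\[ g \triangleright \psi_\pm^e = (g\, x_1^{e_1}\cdots x_k^{e_k}) \triangleright w_\pm = (-1)^{|e|}\, x_1^{e_1}\cdots x_k^{e_k} \triangleright (g \triangleright w_\pm) = \pm(-1)^{|e|}\psi_\pm^e, \]
\[ \psi_\pm^e \triangleleft g = x_1^{e_1}\cdots x_k^{e_k} \triangleright (w_\pm \triangleleft g) = \pm(-1)^k \psi_\pm^e, \]
where the first chain uses the anticommutation $gx_j = -x_j g$. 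These give the $(1 \otimes g)$, $(1\otimes h)$, $(g\otimes 1)$ and $(h\otimes 1)$ formulas. The $(1 \otimes x_i)$ case is immediate: $x_i \triangleright \psi_\pm^e = (x_i\, x_1^{e_1}\cdots x_k^{e_k}) \triangleright w_\pm$ by the left-action property.

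The main obstacle, and the only calculation requiring real care, is the $(x_i \otimes 1)$-action: showing $\psi_\pm^e \triangleleft gx_i = \mp\, x_1^{e_1}\cdots x_k^{e_k} x_i \triangleright w_\mp$. My plan is to evaluate both sides on a monomial basis element $h' = x_1^{a_1}\cdots x_k^{a_k} g^{a_{k+1}}$. On the left, using $S(x_i) = gx_i$ and $gx_j = -x_j g$ rewrites $gx_i h'$ as $-(-1)^{|a|}\, x_i\, x_1^{a_1}\cdots x_k^{a_k}\, g^{a_{k+1}+1}$; by $x_i^2 = 0$ the pairing vanishes unless $e_i = a_i = 0$ and $a_j = 1 - e_j$ for $j \neq i$, and under these constraints it reduces to a sign times $w_\pm(x_1\cdots x_k\, g^{a_{k+1}+1}) = (\pm 1)^{a_{k+1}+1}$. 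The right-hand side has the same vanishing locus; pushing $x_i$ leftward (costing $(-1)^{k-1}$) reduces its evaluation to $(\mp 1)^{a_{k+1}}$ times the same combinatorial sign. The sign flip $w_\pm \leftrightarrow w_\mp$ is exactly caused by the shift $a_{k+1}\mapsto a_{k+1}+1$ brought in by the extra $g$ in $S(x_i)$, and the identity $(\pm 1)^{a_{k+1}+1} = \pm\,(\pm 1)^{a_{k+1}}$ together with $-(-1)^{k-1} = (-1)^k$ matches the two sides exactly.
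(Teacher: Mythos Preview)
Your proposal is correct and follows essentially the same route as the paper: both compute $\ell^{(\pm)}$ on the generators to reduce to coregular actions, both use the identities $g \triangleright w_\pm = \pm w_\pm$ and $w_\pm \triangleleft g = \pm(-1)^k w_\pm$, and both handle the $(x_i \otimes 1)$ case by a direct sign computation.

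The one place where the paper is organized more efficiently is precisely that $(x_i \otimes 1)$ case. Rather than evaluating $\psi_\pm^e \triangleleft S(x_i)$ on a general monomial $h'$ and tracking all the anticommutation signs (your ``costing $(-1)^{k-1}$'' step is stated but not really justified, and the full sign match is only asserted), the paper first uses the commutativity of $\triangleright$ and $\triangleleft$ to reduce everything to the single identity $w_\pm \triangleleft S(x_i) = \mp\, x_i \triangleright w_\mp$, which it then checks by computing $(x_1\cdots x_k)^* \triangleleft S(x_i)$ and $(x_1\cdots x_k g)^* \triangleleft S(x_i)$ separately. This cuts the number of signs to track roughly in half and makes the verification mechanical. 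Your argument would become cleaner and fully rigorous if you inserted that reduction step instead of the direct monomial evaluation.
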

\begin{proof}
Note that
\[ x_i^{e_i} \triangleright (x_1^{d_1} \ldots x_i^1 \ldots x_k^{d_k}g^t)^* = (-1)^{e_i\sum_{j=i+1}^k d_j + te_i} (x_1^{d_1} \ldots x_i^{1-e_i} \ldots  x_k^{d_k}g^t)^*.  \]
for all $e_i, d_j, t \in \{0,1\}$. Hence 
\[ x_1^{e_1} \ldots x_k^{e_k} \triangleright (x_1 \ldots x_kg^t)^* = (-1)^{s(e) + t|e|}  (x_1^{1-e_1} \ldots  x_k^{1-e_k}g^t )^* \]
where $s(e) =  \sum_{i=1}^{k-1} e_i\sum_{j=i+1}^k (1- e_j)$ and $|e| = \sum_{i=1}^k e_i$. As a result
\[ x_1^{e_1} \ldots x_k^{e_k} \triangleright w_{\pm} = (-1)^{s(e)} \left( (x_1^{1-e_1} \ldots x_k^{1-e_k})^* \pm (-1)^{|e |} (x_1^{1-e_1} \ldots x_k^{1-e_k}g)^* \right) \]
whence the claim about the basis. The formula for the action of $1 \otimes x_i$ is obvious since $(1 \otimes a) \cdot \psi = a \triangleright \psi$. The formula for the action of $1 \otimes g$ follows from $g \triangleright w_{\pm} = \pm w_{\pm}$. Next we have
\begin{align*}
&(x_1 \ldots x_k)^* \triangleleft S(x_i) = (-1)^{k-i+1} (x_1 \ldots x_i^0 \ldots x_k g)^* = x_i \triangleright (x_1 \ldots x_k g)^*,\\
&(x_1 \ldots x_kg)^* \triangleleft S(x_i) = (-1)^{k-i+1} (x_1 \ldots x_i^0 \ldots x_k)^* = -x_i \triangleright (x_1 \ldots x_k)^*
\end{align*}
and thus $w_{\pm} \triangleleft S(x_i) = \mp x_i \triangleright w_{\mp}$. Since the actions $\triangleright$ and $\triangleleft$ commute, we get the formula for the action of $x_i \otimes 1$. It is easy to check that $g \triangleright w_{\pm} = \pm w_{\pm}$ and $w_{\pm} \triangleleft g = \pm (-1)^k w_{\pm}$, which imply the formulas for the actions of $1 \otimes g$ and $g \otimes 1$ respectively. Finally we have $\ell^{(\pm)}(h) = g$ and $\ell^{(\pm)}(y_i) = 0$, which imply the formulas for the actions of $1 \otimes y_i$, $1 \otimes h$, $y_i \otimes 1$ and $h \otimes 1$. 
\end{proof}

By item 1 in Proposition \ref{propDeligneProductOfResolutions}, the tensor product $\mathbf{P} \boxtimes \mathbf{P}$ of \eqref{relProjResUnitBk} with itself is a relatively projective resolution of $\mathbb{C} \boxtimes \mathbb{C} \in D(B_k)^{\otimes 2}\text{-}\mathrm{mod}$, with the chain spaces in degree $n$ given by
\[ (\mathbf{P} \boxtimes \mathbf{P})_n = \bigoplus_{i=0}^n S^i(\mathbb{C}^k) \otimes S^{n-i}(\mathbb{C}^k) \otimes \bigl(\mathcal{C}_{(-)^i} \boxtimes \mathcal{C}_{(-)^{n-i}}\bigr)\ . \]
We have to compute $\Hom_{D(B_k)^{\otimes 2}}\bigl( (\mathbf{P} \boxtimes \mathbf{P})_n, B_k^* \bigr)$. Any $\varphi \in \Hom_{D(B_k)^{\otimes 2}}(\mathcal{C}_s \boxtimes \mathcal{C}_t, B_k^*)$, for  $s,t =\pm$,  is entirely determined by its value on $\boldsymbol{f}_s \boxtimes \boldsymbol{f}_t$. Since $x_i \otimes x_j$ acts by $0$ on $\mathcal{C}_s \boxtimes \mathcal{C}_t$ for all $i,j$, these generators also act by $0$ on $\varphi(\boldsymbol{f}_s \boxtimes \boldsymbol{f}_t)$, which forces 
$$
\varphi(\boldsymbol{f}_s \boxtimes \boldsymbol{f}_t) = \lambda_+ \, x_1 \ldots x_k \triangleright w_+ +  \lambda_- \, x_1 \ldots x_k \triangleright w_-
$$
 for some $\lambda_{\pm} \in \mathbb{C}$.
  The actions of the generators $g \otimes 1$ and $1 \otimes g$ impose
\[ s \varphi(\boldsymbol{f}_s \boxtimes \boldsymbol{f}_t) = (-1)^k \lambda_+ \, x_1 \ldots x_k \triangleright w_+ -(-1)^k \lambda_- \, x_1 \ldots x_k \triangleright w_- = t \varphi(\boldsymbol{f}_s \boxtimes \boldsymbol{f}_t). \]
Thus $s=t$, $s\lambda_+ = (-1)^k\lambda_+$ and $s\lambda_- = (-1)^{k+1}\lambda_-$. The generators $1 \otimes y_i$ and $y_i \otimes 1$ act freely on $\mathcal{C}_s \boxtimes \mathcal{C}_t$ and they act by $0$ on $B_k^*$, while the generators $1 \otimes h$ and $h \otimes 1$ act as $1 \otimes g$ and $g \otimes 1$ both on $\mathcal{C}_s \boxtimes \mathcal{C}_t$ and on $B_k^*$. Hence these generators do not give further constraints on $\varphi$. As a result
\[ \Hom_{D(B_k)^{\otimes 2}}(\mathcal{C}_s \boxtimes \mathcal{C}_t, B_k^*) = \begin{cases}
\boldsymbol{f}_s \boxtimes \boldsymbol{f}_s \mapsto x_1 \ldots x_k \triangleright w_{(-)^ks} & \text{if } s=t\\
0 & \text{if } s \neq t
\end{cases} \]
where for $s = t$ we mean that the Hom space is one dimensional and spanned by this map. Hence
\[ \Hom_{D(B_k)^{\otimes 2}}\bigl( (\mathbf{P} \boxtimes \mathbf{P})_n, B_k^* \bigr) \cong \begin{cases}
\bigoplus_{i=0}^n S^i(\mathbb{C}^k) \otimes S^{n-i}(\mathbb{C}^k) \cong S^n(\mathbb{C}^{2k}) & \text{if } n \text{ is even}\\
0 & \text{if } n \text{ is odd}
\end{cases} \]
and item 2 in Proposition \ref{propDYcohomologyMonProductForBk} follows. Item 1 in Proposition \ref{propDYcohomologyMonProductForBk} can then be deduced from \eqref{dimensionFormulaForTRMat}, since by \cite[Th.\,5.1]{GHS} we know that $\dim H^2_{\mathrm{DY}}(B_k\text{-}\mathrm{mod}) = \frac{k(k+1)}{2}$.

\smallskip

\indent We note that the space of obstructions $H^3_{\mathrm{DY}}(P,\phi^{R_0})$ is $0$, hence any infinitesimal deformation of the monoidal structure of $(P,\phi^{R_0})$ can be lifted to arbitrary degrees in $h$. By applying Corollary \ref{coroGeneralizedJSBijection} to such deformations, one can expect to get deformations of $R_0$ in arbitrary degrees in $h$ as well. In the next proposition we describe the space of infinitesimal deformations of $R_0$ and their eventual analytic versions, which form a $k^2$-parameter family of genuine $R$-matrices. By \cite{PV} this is the complete list of $R$-matrices on $B_k$ (here we provide a more compact form of their expression).
\begin{proposition}
1. The following elements form a basis of $\mathbf{T}_{R_0}\mathrm{RMat}(B_k)$:
\[ R_0 (x_i \otimes x_j g), \qquad (1 \leq i, j \leq k). \]
2. For all $\boldsymbol{\lambda} = (\lambda_{i,j})_{1 \leq i,j \leq k} \in \mathbb{C}^{k^2}$ the element
\[ R_{\boldsymbol{\lambda}} = R_0 \exp\!\left( \sum_{i,j=1}^k \lambda_{i,j} \, x_i \otimes x_j g \right) = R_0 \prod_{i,j=1}^k \bigl( 1 \otimes 1 + \lambda_{i,j} \, x_i \otimes x_j g \bigr) \]
is an $R$-matrix in $B_k$. They satisfy $R_{\boldsymbol{\lambda} + \boldsymbol{\mu}} = R_{\boldsymbol{\lambda}} R_0 R_{\boldsymbol{\mu}}$.
\end{proposition}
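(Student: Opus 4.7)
\medskip

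\noindent\textbf{Strategy.} The plan is to prove item 2 first and then deduce item 1 at once. Indeed, once item 2 is established, differentiating $R_{\boldsymbol{\lambda}}$ at $\boldsymbol{\lambda}=0$ yields $\partial R_{\boldsymbol{\lambda}}/\partial\lambda_{ij}\big|_{\boldsymbol{\lambda}=0} = R_0(x_i\otimes x_jg) =: T_{ij}$, so each $T_{ij}$ lies in $\mathbf{T}_{R_0}\mathrm{RMat}(B_k)$. The $k^2$ elements $x_i\otimes x_jg$ are distinct elements of the monomial basis of $B_k^{\otimes 2}$ and hence linearly independent; invertibility of $R_0$ makes the $T_{ij}$ linearly independent as well. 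Since $\dim\mathbf{T}_{R_0}\mathrm{RMat}(B_k) = k^2$ by Proposition~\ref{propDYcohomologyMonProductForBk}(1), they form a basis. The bulk of the work therefore lies in item~2.

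\noindent\textbf{Preliminary commutation lemmas.} Set $u_{ij} := x_i\otimes x_jg$ and $F_{\boldsymbol{\lambda}} := \exp(\sum_{i,j}\lambda_{ij}u_{ij})$, so that $R_{\boldsymbol{\lambda}} = R_0 F_{\boldsymbol{\lambda}}$. I will first establish three facts by direct computation in $B_k\otimes B_k$:
\begin{enumerate}[topsep=.1em,itemsep=-.1em]
\item[(a)] $u_{ij}^2 = 0$, since $x_i^2 = 0$ and $(x_jg)^2 = -x_j^2 g^2 = 0$, so that $F_{\boldsymbol{\lambda}} = \prod_{i,j}(1+\lambda_{ij}u_{ij})$;
\item[(b)] $[u_{ij},u_{kl}] = 0$, the sign changes from $gx_l = -x_lg$ and $x_ix_k = -x_kx_i$ exactly cancelling;
\item[(c)] $u_{ij}$ commutes with $\Delta(h)$ for every generator $h\in\{g,x_1,\dots,x_k\}$, hence with all of $\Delta(B_k)$.
\end{enumerate}
From (b), $F_{\boldsymbol{\lambda}}F_{\boldsymbol{\mu}} = F_{\boldsymbol{\lambda}+\boldsymbol{\mu}}$, and combined with $R_0^2 = 1\otimes 1$ this gives $R_{\boldsymbol{\lambda}}R_0R_{\boldsymbol{\mu}} = R_0F_{\boldsymbol{\lambda}}F_{\boldsymbol{\mu}} = R_{\boldsymbol{\lambda}+\boldsymbol{\mu}}$, which is the last formula in the statement.

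\noindent\textbf{Verification of the $R$-matrix axioms.} Invertibility of $R_{\boldsymbol{\lambda}}$ will follow from \eqref{invertibilityRMatrices}: since $(\varepsilon\otimes\mathrm{id})(u_{ij}) = (\mathrm{id}\otimes\varepsilon)(u_{ij}) = 0$, the same holds for $F_{\boldsymbol{\lambda}}-1$, so $(\varepsilon\otimes\mathrm{id})(R_{\boldsymbol{\lambda}}) = (\mathrm{id}\otimes\varepsilon)(R_{\boldsymbol{\lambda}}) = 1_H$. Quasi-cocommutativity $R_{\boldsymbol{\lambda}}\Delta(h) = \Delta^{\mathrm{op}}(h)R_{\boldsymbol{\lambda}}$ reduces, using the same property of $R_0$ together with $R_0^2 = 1\otimes 1$, to $[F_{\boldsymbol{\lambda}},\Delta(h)] = 0$, which holds by (c). The heart of the proof is the first hexagon relation $(\Delta\otimes\mathrm{id})(R_{\boldsymbol{\lambda}}) = R_{\boldsymbol{\lambda},13}R_{\boldsymbol{\lambda},23}$. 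The key identity, to be checked by a short $\boldsymbol{e}_{\pm}$-calculation using $\boldsymbol{e}_+-\boldsymbol{e}_- = g$, $\boldsymbol{e}_{\pm}^2 = \boldsymbol{e}_{\pm}$, $\boldsymbol{e}_+\boldsymbol{e}_- = 0$, is
\[ (\Delta\otimes\mathrm{id})(u_{ij}) \;=\; u_{ij,23} + X_{ij}, \qquad X_{ij} := R_{0,23}\,u_{ij,13}\,R_{0,23} \;=\; x_i\otimes g\otimes x_jg. \]
A direct check in $B_k^{\otimes 3}$ shows that $\{u_{ij,23}\}$ and $\{X_{ij}\}$ commute pairwise within and between each family, whence, by exponentiation,
\[ (\Delta\otimes\mathrm{id})(F_{\boldsymbol{\lambda}}) \;=\; F_{\boldsymbol{\lambda},23}\cdot R_{0,23}F_{\boldsymbol{\lambda},13}R_{0,23}. \]
Combined with $(\Delta\otimes\mathrm{id})(R_0) = R_{0,13}R_{0,23}$ and $R_{0,23}^2 = 1$, the hexagon then reduces to the single commutation $[R_{0,23}F_{\boldsymbol{\lambda},23}R_{0,23},\,F_{\boldsymbol{\lambda},13}] = 0$. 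Computing $v_{ij} := R_0u_{ij}R_0 = gx_i\otimes x_j$ (again via the $\boldsymbol{e}_{\pm}$-identities), this becomes $[v_{ij,23},u_{kl,13}] = 0$, verified in one line from $x_lg\cdot x_j = x_jx_lg$. The second hexagon $(\mathrm{id}\otimes\Delta)(R_{\boldsymbol{\lambda}}) = R_{\boldsymbol{\lambda},13}R_{\boldsymbol{\lambda},12}$ follows by the symmetric argument. The main technical obstacle is keeping track of the numerous sign changes in $B_k^{\otimes 3}$ required to establish these commutations; every step, however, is a finite check on monomials in the generators.
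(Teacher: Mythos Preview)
Your proposal is correct. The core computations are the same as the paper's: both proofs hinge on the identity $(\Delta\otimes\mathrm{id})(x_i\otimes x_jg) = (x_i\otimes x_jg)_{23} + x_i\otimes g\otimes x_jg$ together with the observation that $x_i\otimes g\otimes x_jg$ can be passed through $(R_0)_{23}$, and on the pairwise commutation of the elements $x_i\otimes x_jg$.

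The organization differs in two respects. First, the paper proves item~1 directly (checking that each $T_{ij}=R_0(x_i\otimes x_jg)$ satisfies the linear tangent conditions in \eqref{tangentSpaceRMatrix}) and then reuses those identities for item~2, whereas you prove item~2 first and obtain item~1 by differentiation at $\boldsymbol\lambda=0$; both routes appeal to the dimension count $\dim\mathbf{T}_{R_0}\mathrm{RMat}(B_k)=k^2$ from Proposition~\ref{propDYcohomologyMonProductForBk}. Second, in the hexagon for item~2 the paper uses the identity in the ``slide-through'' form $(R_0)_{23}\,g_2(X_{i,j})_{13} = (X_{i,j})_{13}(R_0)_{23}$ to move $(R_0)_{23}$ past the product $\prod(1+\lambda_{ij}g_2(X_{i,j})_{13})$ in one step, arriving at $(R_{\boldsymbol\lambda})_{13}(R_{\boldsymbol\lambda})_{23}$ directly. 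Your conjugation reformulation $X_{ij}=R_{0,23}u_{ij,13}R_{0,23}$ is the same identity, but leads you to an extra (correct) commutation check $[v_{ij,23},u_{kl,13}]=0$. The paper's path is marginally shorter here; yours has the minor advantage that item~1 falls out for free once item~2 is done.
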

\begin{proof}
1. Let $X_{i,j} = x_i \otimes x_jg$ and $T_{i,j} = R_0X_{i,j}$. It is easily seen that $X_{i,j} \in \mathcal{Z}\bigl( \Delta(A) \bigr)$, and hence that $T_{i,j}\Delta(a) = \Delta^{\mathrm{op}}(a)T_{i,j}$ for all $a \in B_k$. Note that
\begin{equation}\label{easyPropsXij}
(\Delta \otimes \mathrm{id})(X_{i,j}) = g_2 (X_{i,j})_{13} + (X_{i,j})_{23}, \qquad (R_0)_{23} g_2 (X_{i,j})_{13} = (X_{i,j})_{13}(R_0)_{23}
\end{equation}
where $g_2 = 1 \otimes g \otimes 1$ and for the second equality we use $g\boldsymbol{e}_{\pm} = \pm \boldsymbol{e}_{\pm}$. Thus
\[ (\Delta \otimes \mathrm{id})(T_{i,j}) = (R_0)_{13}(R_0)_{23} g_2 (X_{i,j})_{13} + (R_0)_{13}(R_0)_{23} (X_{i,j})_{23} = (T_{i,j})_{13}(R_0)_{23} + (R_0)_{13}(T_{i,j})_{23}. \]
The equality $(\mathrm{id} \otimes \Delta)(T_{i,j}) = (T_{i,j})_{13}(R_0)_{12} + (R_0)_{13}(T_{i,j})_{12}$ is obtained similarly. Hence the $k^2$ elements $T_{i,j}$ satisfy the linear conditions in \eqref{tangentSpaceRMatrix} which define $\mathbf{T}_{R_0} \mathrm{RMat}(B_k)$. Moreover they are linearly independent (because the elements $x_i \otimes x_j g$ are linearly independent and $R_0$ is invertible), and since $\dim \mathbf{T}_{R_0} \mathrm{RMat}(B_k) = k^2$ they form a basis. 
\\2. The elements $X_{i,j} = x_i \otimes x_jg$ commute with each other and are nilpotent of order $2$, which gives the second equality for $R_{\boldsymbol{\lambda}}$. The last claim is also due to the commutation of the elements $X_{i,j}$. Now using \eqref{easyPropsXij} and the fact that $g_2 (X_{i,j})_{13}$ commutes with $(X_{i,j})_{23}$ we get
\begin{align*}
&(\Delta \otimes \mathrm{id})(R_{\boldsymbol{\lambda}}) = (R_0)_{13} (R_0)_{23} \prod_{i,j=1}^k \bigl( 1^{\otimes 3} + \lambda_{i,j}g_2 (X_{i,j})_{13} + \lambda_{i,j}(X_{i,j})_{23} \bigr)\\
=\:&(R_0)_{13} (R_0)_{23} \prod_{i,j=1}^k \bigl( 1^{\otimes 3} + \lambda_{i,j}g_2 (X_{i,j})_{13}\bigr) \bigl( 1^{\otimes 3} + \lambda_{i,j}(X_{i,j})_{23} \bigr)\\
=\:&(R_0)_{13} (R_0)_{23} \prod_{i,j=1}^k \bigl( 1^{\otimes 3} + \lambda_{i,j}g_2 (X_{i,j})_{13}\bigr) \prod_{i,j=1}^k\bigl( 1^{\otimes 3} + \lambda_{i,j}(X_{i,j})_{23} \bigr)\\
=\:&(R_0)_{13} \prod_{i,j=1}^k \bigl( 1^{\otimes 3} + \lambda_{i,j} (X_{i,j})_{13}\bigr) (R_0)_{23}\prod_{i,j=1}^k\bigl( 1^{\otimes 3} + \lambda_{i,j}(X_{i,j})_{23} \bigr) = (R_{\boldsymbol{\lambda}})_{13}(R_{\boldsymbol{\lambda}})_{23}.
\end{align*}
The computation for $(\mathrm{id} \otimes \Delta)(R_{\boldsymbol{\lambda}})$ is similar.
\end{proof}

\begin{remark}
1. We have $\dim H^n_{\mathrm{DY}}(P,\phi^{R_{\boldsymbol{\lambda}}}) = \dim H^n_{\mathrm{DY}}(P,\phi^{R_0})$ for all $\boldsymbol{\lambda} \in \mathbb{C}^{k^2}$. The proof of this equality uses Proposition \ref{propAdjunctionTheoremForMonProductInAmod} exactly as we did above for $\boldsymbol{\lambda}=0$. Let $(B_k^*)_{\boldsymbol{\lambda}}$ be the coefficient for $R_{\boldsymbol{\lambda}}$, which is $B_k^*$ as a vector space for all $\boldsymbol{\lambda}$. The action of $B_k^{\otimes 2}$ on $(B_k^*)_{\boldsymbol{\lambda}}$ equals that on $(B_k^*)_0$ as it does not depend on the $R$-matrix. Note that  $y_m(x_m\boldsymbol{e}_-) = 1$ and $y_m$ is equal to $0$ on all the other elements $x^{e_1} \ldots x_k^{e_k}\boldsymbol{e}_{\pm}$. Hence
$\ell^{(+)}(y_i)  = -\sum_{m=1}^k \lambda_{m,i} x_m$ and $\ell^{(-)}(y_i) = \sum_{m=1}^k \lambda_{m,i} x_m$ where we used the formula $R_{\boldsymbol{\lambda}}^{-1} = (S \otimes \mathrm{id})(R_{\boldsymbol{\lambda}})$. Also $\ell^{(\pm)}(h) = g$. As a result the action of $y_i \otimes 1$ (resp. $1 \otimes y_i$) on $(B_k^*)_{\boldsymbol{\lambda}}$ is equal to the action of $-\sum_{m=1}^k \lambda_{m,i} x_mg \otimes 1$ (resp. $\sum_{m=1}^k \lambda_{m,i}1 \otimes x_m$) and the action of $h \otimes 1$, $1 \otimes h$ are equal to the actions of $g \otimes 1$, $1 \otimes g$. Repeating the arguments which were used for $\boldsymbol{\lambda} = 0$, we find that $\Hom_{D(B_k)^{\otimes 2}}\bigl(\mathcal{C}_s \boxtimes \mathcal{C}_t, (B_k^*)_{\boldsymbol{\lambda}} \bigr)$ is the same for all $\boldsymbol{\lambda}$'s.
\\2. We deduce from the previous item and \eqref{dimensionFormulaForTRMat} that $\dim \mathbf{T}_{R_{\boldsymbol{\lambda}}}\mathrm{RMat}(B_k) =k^2$ for all $\boldsymbol{\lambda} \in \mathbb{C}^{k^2}$. This also immediately follows from \cite{PV} who proved that $\mathrm{RMat}(B_k) \cong M_k(\mathbb{C})$. Nevertheless note that our arguments do not require the knowledge of the whole $\mathrm{RMat}(H)$ to get the dimension of tangent spaces.
\end{remark}

\subsection{DY cohomology of twisted restriction functors}\label{subsectionDYTwistedFiber}
\indent Recall the notations introduced at the beginning of \S\ref{sectionFinDimHopf}. A {\em Drinfeld twist} for $H$ is an invertible element $J = s_i \otimes t_i \in H^{\otimes 2}$ (with implicit summation) such that
\[ s_i s_j^{(1)} \otimes t_i s_j^{(2)} \otimes t_j = s_j \otimes s_i t_j^{(1)} \otimes t_i t_j^{(2)} \quad \text{ and } \quad \varepsilon(s_i)t_i = s_i\varepsilon(t_i) = 1. \] 
Then there is a new Hopf algebra $H_J$, called the {\em twist of $H$}, which is $H$ as an algebra with the same counit but with new coproduct $\Delta_J$ and antipode $S_J$ given by
\[ \Delta_J(h) = J \Delta(h) J^{-1}, \qquad S_J(h) = u_J S(h) u_J^{-1} \]
where
\begin{equation}\label{drinfeldElementTwistJ}
u_J = s_iS(t_i), \qquad u_J^{-1} = S(\overline{s}_i)\overline{t}_i
\end{equation}
with the notation $J^{-1} = \overline{s}_i \otimes \overline{t}_i$. We write
\[ \Delta_J(h) = h^{(1)_J} \otimes h^{(2)_J} \]
to distinguish between Sweedler's notations for $\Delta$ and for $\Delta_J$.

\smallskip

\indent Let $K$ be a Hopf subalgebra of $H_J$; it means in particular that $K$ is endowed with the coproduct $\Delta_J$. As an associative algebra $K$ is a subalgebra of $H$ and thus we have the restriction functor $F = \mathrm{Res}^H_K : H\text{-}\mathrm{mod} \to K\text{-}\mathrm{mod}$. The twist $J$ gives a monoidal structure on $F$:
\begin{equation}\label{monoidalStructFiberFunctor}
F(X) \otimes F(Y) \overset{\sim}{\longrightarrow} F(X \otimes Y), \qquad x \otimes y \mapsto (\overline{s}_i \cdot x) \otimes (\overline{t}_i \cdot y).
\end{equation}
Let $F_J$ be the restriction functor $F$ endowed with this monoidal structure, which we will denote by $F^{(2)}_J$. Our goal is to describe all the arrows appearing in the diagram \eqref{diagramAdjunctionsBetweenCentralizers} for $F_J$ and give the statement of the adjunction theorem (Thm.\,\ref{thmChangeOfCoeffDY}) for the DY cohomology of $F_J$.

\begin{remark}\label{remarkDualsResFunctor} Note that $F_J(X^{\vee})$ and $F_J(X)^{\vee}$ are both $X^*$ as a vector space but they are endowed with different actions of $K$. Indeed, if $\varphi \in F_J(X^{\vee})$ then $\langle k \cdot \varphi, x \rangle = \langle \varphi, S(k) \cdot x \rangle$ while if $\varphi \in F_J(X)^{\vee}$ then $\langle k \cdot \varphi, x \rangle = \langle \varphi, S_J(k) \cdot x \rangle$. It follows from the definition of $S_J$ that there is an isomorphism of $K$-modules
\begin{equation}\label{commentNonTrivialIsoDuals}
\mathsf{d}_X : F_J(X)^{\vee} \overset{\sim}{\longrightarrow} F_J(X^{\vee}), \qquad \varphi \mapsto \varphi(u_J \,\cdot\, ?)
\end{equation}
where $\varphi(u_J \,\cdot\, ?) \in X^*$ is defined by $x \mapsto \varphi(u_J \cdot x)$ with $u_J$ from \eqref{drinfeldElementTwistJ}. The collection $\mathsf{d} = (\mathsf{d}_X)_{X \in H\text{-}\mathrm{mod}}$ is a natural isomorphism. The element $u_J$ satisfies
\begin{equation}\label{monoidalDuality}
s_i u_J^{(1)} \otimes t_i u_J^{(2)} = u_JS(\overline{t}_i) \otimes u_J S(\overline{s}_i)
\end{equation}
which is equivalent to
the commutation of the following diagram for all $X,Y \in H\text{-}\mathrm{mod}$:
\begin{equation*}
\xymatrix@C=5em{
F_J(X)^{\vee} \otimes F_J(Y)^{\vee} \ar[d]_-{\mathsf{d}_X \otimes \mathsf{d}_Y} \ar[r]^-{\sim} & \bigl( F_J(Y) \otimes F_J(X) \bigr)^{\vee} \ar[r]^-{\left((F^{(2)}_J)_{Y,X}\right)^{\vee-1}} & F_J(Y \otimes X)^{\vee}\ar[d]^-{\mathsf{d}_{Y \otimes X}}\\
F_J(X^{\vee}) \otimes F_J(Y^{\vee}) \ar[r]^-{(F^{(2)}_J)_{X^{\vee},Y^{\vee}}} & F_J(X^{\vee} \otimes Y^{\vee}) \ar[r]^-{\sim} & F_J\bigl( (Y \otimes X)^{\vee}) \bigr)
} \end{equation*}
\end{remark}

\indent We first give a representation-theoretic description of the monad $Z_{F_J}$, defined in general in \S\ref{relExtGroupsDYCohomology}. Let $H_J^{*\mathrm{op}}$ be the dual vector space $H^*$ endowed with the product $\varphi\psi = (\psi \otimes \varphi) \circ \Delta_J$ and with the usual coproduct $\langle \Delta(\varphi), h \otimes h' \rangle = \varphi(hh')$. Then the restricted duality pairing $\sigma : K \otimes H_J^{*\mathrm{op}} \to \Bbbk$, $k \otimes \varphi \mapsto \varphi(k)$
is a skew-pairing:
\begin{align*}
&\sigma(kk',\varphi) = \varphi(kk') = \varphi^{(1)}(k) \varphi^{(2)}(k') = \sigma\bigl(k,\varphi^{(1)}\bigr) \sigma\bigl(k',\varphi^{(2)}\bigr),\\
&\sigma(k,\varphi\psi) = (\varphi\psi)(b) = \psi(k^{(1)_J})\varphi(k^{(2)_J}) = \sigma\bigl(\varphi,k^{(2)_J}\bigr) \sigma\bigl( \psi, k^{(1)_J} \bigr).
\end{align*}
Thus we can form the quantum double $D(H_J^{*\mathrm{op}},K)$ of the skew-paired Hopf algebras $K$ and $H_J^{*\mathrm{op}}$, see e.g. \cite[\S 8.2]{KS}. As a vector space it is $H_J^{*\mathrm{op}} \otimes K$. Its product is such that $H_J^{*\mathrm{op}}$ and $K$ are subalgebras; thus we write $\varphi k$ instead of $\varphi \otimes k$. Moreover
\begin{equation}\label{commutationRelationGeneralizedDouble}
k \varphi = \left(k^{(3)_J} \triangleright \varphi \triangleleft S_J(k^{(1)_J})\right) k^{(2)_J}
\end{equation}
where $\triangleright$ and $\triangleleft$ are the coregular actions defined in \eqref{defCoregular}. The algebra $D(H_J^{*\mathrm{op}},K)$ is actually a Hopf algebra with coproduct $\Delta_{D(H_J^{*\mathrm{op}},K)}(\varphi k) = \varphi^{(1)} k^{(1)_J} \otimes \varphi^{(2)} k^{(2)_J}$.

\smallskip

\indent Since $K$ is a subalgebra of $D(H_J^{*\mathrm{op}},K)$ we have the induction-restriction monad $\mathbb{T}^{D(H_J^{*\mathrm{op}},K)}_K$ on $K\text{-}\mathrm{mod}$ as defined in general in the introduction of \S\ref{sectionFinDimHopf}. Note that
\[ T^{D(H_J^{*\mathrm{op}},K)}_K(X) = D(H_J^{*\mathrm{op}},K) \otimes_K X = (H_J^{*\mathrm{op}} \otimes K) \otimes_K X \cong H_J^{*\mathrm{op}} \otimes X \]
where the action of $K$ on the vector space $H^* \otimes X$ is
\begin{equation}\label{actionBOnTDV}
k \cdot (\varphi \otimes x) = \bigl(k^{(3)_J} \triangleright \varphi \triangleleft S_J(k^{(1)_J}) \bigr) \otimes k^{(2)_J} \cdot x
\end{equation}
because of the formula \eqref{commutationRelationGeneralizedDouble}, while the action of $H_J^{*\mathrm{op}}$ is by left multiplication on the first tensorand. The multiplication $H_J^{*\mathrm{op}} \otimes H_J^{*\mathrm{op}} \otimes X \to H_J^{*\mathrm{op}} \otimes X$ of this monad simply reduces to the multiplication in $H_J^{*\mathrm{op}}$, and thus we write $\mu^{H_J^{*\mathrm{op}}}$ instead of $\mu^{D(H_J^{*\mathrm{op}},K)}_K$. Similarly the unit of $\mathbb{T}^{D(H_J^{*\mathrm{op}},K)}_K$ is given by $x \mapsto \varepsilon \otimes x$, so we denote it by $\eta^{H_J^{*\mathrm{op}}}$ instead of $\eta^{D(H_J^{*\mathrm{op}},K)}_K$.

\smallskip

\indent We introduce a notation which will be used in the proof of the next proposition. If $X$ is a (finite-dimensional) $H$-module then for any $x \in X$ and $\varphi \in X^*$ we can form the {\em matrix coefficient} $\overset{X}{M}{^{\varphi}_x} \in H^*$ defined by
\begin{equation}\label{defMatCoeff}
\forall \, h \in H, \quad \overset{X}{M}{^{\varphi}_x}(h) = \varphi(h \cdot x).
\end{equation}
They satisfy $\overset{X}{M}{^{h \cdot \varphi}_x} = \overset{X}{M}{^{\varphi}_x} \triangleleft S(h)$ and $\overset{X}{M}{^{\varphi}_{h \cdot x}} = h \triangleright \overset{X}{M}{^{\varphi}_x}$ because the action of $H$ on $X^{\vee}$ is defined by $(h \cdot \varphi)(x) = \varphi\bigl( S(h)x \bigr)$.
\begin{proposition}\label{propDescriptionMonadSkewDouble}
The monads $Z_{F_J}$ and $\mathbb{T}^{D(H_J^{*\mathrm{op}},K)}_K$ are equal.
\end{proposition}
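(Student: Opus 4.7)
The proof will compare the two monads stepwise: underlying functor, then monad unit, then monad multiplication.

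First, I would exhibit a natural isomorphism $Z_{F_J}(W) \cong H^*\otimes W$ via the dinatural family
\[
i_X^{F_J}(W)\colon F_J(X^\vee)\otimes W\otimes F_J(X)\longrightarrow H^*\otimes W,\qquad \varphi\otimes w\otimes x\;\mapsto\;\overset{X}{M}{^{\mathsf{d}_X^{-1}(\varphi)}_x}\otimes w,
\]
where $\mathsf{d}_X$ is the $K$-linear isomorphism from \eqref{commentNonTrivialIsoDuals}; equivalently, this sends $\varphi\otimes w\otimes x$ to $\bigl(h\mapsto \varphi(u_J^{-1} h\cdot x)\bigr)\otimes w$. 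Dinaturality in $X$ is immediate from the standard relation $\overset{Y}{M}{^\psi_{f(x)}}=\overset{X}{M}{^{f^\vee(\psi)}_x}$. Universality is proved as in the untwisted case: matrix coefficients of the regular $H$-module fill $H^*$, and all dinatural relations are consequences of $H$-module maps between the $X$'s.

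Next I would verify $K$-linearity of $i_X^{F_J}(W)$, which is the crucial consistency check. On the source, $K$ acts via $\Delta_J$ (since $K$ is a Hopf subalgebra of $H_J$), while on $F_J(X^\vee)$ and $F_J(X)$ the action is the restriction of the original $H$-action, using the antipode $S$ of $H$. A direct computation shows that $i_X^{F_J}(W)$ applied to $k\cdot(\varphi\otimes w\otimes x)$ evaluates $\varphi$ at $S(k^{(1)_J})\,u_J^{-1}\,h\,k^{(3)_J}\cdot x$, whereas the action \eqref{actionBOnTDV} on the target evaluates $\varphi$ at $u_J^{-1}\,S_J(k^{(1)_J})\,h\,k^{(3)_J}\cdot x$. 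These coincide because $u_J^{-1}S_J(k)=S(k)u_J^{-1}$, which follows immediately from $S_J(k)=u_JS(k)u_J^{-1}$. This is precisely why the twist by $u_J^{-1}$ must be inserted in the definition of $i_X^{F_J}(W)$.

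The unit compatibility is straightforward: $\eta^{F_J}_W=i_{\boldsymbol{1}}^{F_J}(W)$ sends $w$ to $\varepsilon\otimes w$ (since $u_J^{-1}$ acts as $1$ on $\boldsymbol{1}$), which matches $\eta^{H_J^{*\mathrm{op}}}$. For the multiplication $\mu^{F_J}$, I would evaluate both paths around diagram \eqref{defMuMonadZF} on an element of $F_J(Y^\vee)\otimes F_J(X^\vee)\otimes W\otimes F_J(X)\otimes F_J(Y)$ and rewrite everything as a matrix coefficient on $X\otimes Y$. After expansion, compatibility with the product $(\alpha\beta)(h)=\beta(h^{(1)_J})\alpha(h^{(2)_J})$ of $H_J^{*\mathrm{op}}$ reduces to the identity in $H\otimes H$:
\[
\bigl(S(\overline{t}_i)\otimes S(\overline{s}_i)\bigr)\,\Delta(u_J^{-1})\;=\;(u_J^{-1}\otimes u_J^{-1})\,J,
\]
which is exactly the inverse form of the compatibility \eqref{monoidalDuality} (recall $(S\otimes S)(\tau J^{-1})=S(\overline{t}_i)\otimes S(\overline{s}_i)$). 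Combined with $\Delta_J(h)=J\,\Delta(h)\,J^{-1}$, this causes the three separate twists present in the computation—namely $J^{-1}$ from $F_J^{(2)}$, $u_J^{-1}$ from $\mathsf{d}^{-1}$, and $J$ from $\Delta_J$—to cancel precisely.

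The main obstacle is this multiplication compatibility: all three twists enter in a genuinely intertwined way, and verifying their cancellation amounts to turning \eqref{monoidalDuality} into its inverse form and inserting it at the right spot in a fairly long matrix-coefficient manipulation. Once this identity is identified as the heart of the computation, both monads share the same underlying functor, unit, and multiplication, and are therefore equal.
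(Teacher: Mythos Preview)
Your proposal is correct and follows essentially the same approach as the paper: the same universal dinatural transformation $i_X^{F_J}(W)(\varphi\otimes w\otimes x)=\bigl(\overset{X}{M}{^{\varphi}_x}\triangleleft u_J^{-1}\bigr)\otimes w$ (your $\mathsf{d}_X^{-1}$-form is exactly this), the same $K$-linearity check via $S(k)u_J^{-1}=u_J^{-1}S_J(k)$, universality via the regular module, and the multiplication compatibility reduced to \eqref{monoidalDuality}. The only difference is expository: you phrase the key cancellation as the inverse form of \eqref{monoidalDuality}, whereas the paper applies \eqref{monoidalDuality} directly inside the matrix-coefficient computation.
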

\begin{proof}
Recall that $Z_{F_J}(V) = \int^{X \in H\text{-}\mathrm{mod}} F_J(X^{\vee}) \otimes V \otimes F_J(X)$ for all $V \in K\text{-}\mathrm{mod}$. Consider the dinatural transformation
\begin{align}
\begin{split}\label{univDinatTransfoZFJ}
i_X^{F_J}(V) : F_J(X^{\vee}) \otimes V \otimes F_J(X) &\to T^{D(H_J^{*\mathrm{op}},K)}_K(V) = H_J^{*\mathrm{op}} \otimes V\\
\varphi \otimes v \otimes x &\mapsto \bigl( \overset{X}{M}{^{\varphi}_x} \triangleleft u_J^{-1}\bigr) \otimes v
\end{split}
\end{align}
with notations from \eqref{defMatCoeff} and \eqref{drinfeldElementTwistJ}. Note by \eqref{actionBOnTDV} that $i_X^{F_J}(V)$ is a $K$-linear morphism:
\begin{align*}
&i_X^{F_J}(V)\bigl( k \cdot (\varphi \otimes v \otimes x) \bigr) = i^{F_J}_X(V)\bigl( k^{(1)}_J \cdot \varphi \otimes k^{(2)_J} \cdot v \otimes k^{(3)_J} \cdot x \bigr)\\
=\:& \bigl( \overset{X}{M}{^{k^{(1)_J} \cdot \varphi}_{k^{(3)_J} \cdot x}} \triangleleft u_J^{-1} \bigr) \otimes k^{(2)_J} \cdot v = \bigl( k^{(3)_J} \triangleright \overset{X}{M}{^{\varphi}_x} \triangleleft S(k^{(1)_J}) u_J^{-1} \bigr) \otimes k^{(2)_J} \cdot v \\
=\:&\bigl( k^{(3)_J} \triangleright \overset{X}{M}{^{\varphi}_x} \triangleleft u_J^{-1} S_J(k^{(1)_J}) \bigr) \otimes k^{(2)_J} \cdot v = k \cdot i_X^{F_J}(V)(\varphi \otimes v \otimes x).
\end{align*}
We claim that $i^{F_J}(V)$ is universal. Indeed take any dinatural transformation $g_X :  F_J(X^{\vee}) \otimes V \otimes F_J(X) \to W$. Let $H$ be the regular $H$-module and define $\overline{g} : H_J^{*\mathrm{op}} \otimes V \to V$ by $\overline{g}(\alpha \otimes v) = g_H\bigl( (\alpha \triangleleft u_J) \otimes v \otimes 1_H \bigr)$. For all $h \in H$ the right multplication by $h$ gives a $H$-linear map $r_h : H \to H$. Then by dinaturality of $g$
\begin{align*}
g_H(\alpha \otimes v \otimes h) &= g_H\bigl(\alpha \otimes v \otimes r_h(1_H)\bigr) = g_H\bigl(r_h^*(\alpha) \otimes v \otimes 1_H\bigr) = g_H\bigl( (h \triangleright \alpha) \otimes v \otimes 1_H \bigr)\\
&= \overline{g}\bigl( (h \triangleright \alpha \triangleleft u_J^{-1}) \otimes v \bigr) = \overline{g}\bigl( (\overset{H}{M}{^{\alpha}_h} \triangleleft u_J^{-1}) \otimes v \bigr) = \overline{g} \circ i^{F_J}_H(V)(\alpha \otimes v \otimes h).
\end{align*}
Hence $g_H = \overline{g} \circ i^{F_J}_H(V)$ and since $H$ is a projective generator of $H\text{-}\mathrm{mod}$ we conclude that $g_X = \overline{g} \circ i^{F_J}_X(V)$ for all $X \in H\text{-}\mathrm{mod}$ by item 2 in Lemma \ref{lemNatTransfoProjGen}. This proves that $Z_{F_J} = T^{D(H_J^{*\mathrm{op}},K)}_K$ as functors. To compute the multiplication of $Z_{F_J}$ note that the general definition of $i^{(2)}$ in  \eqref{defUniversalDinatDoubleCoend} applied to \eqref{univDinatTransfoZFJ} reads
\[ i^{F_J,(2)}_{X,Y}(V)(\varphi \otimes \psi \otimes v \otimes x \otimes y) = (\overset{Y}{M}{^{\varphi}_y} \triangleleft u_J^{-1}) \otimes (\overset{X}{M}{^{\psi}_x} \triangleleft u_J^{-1}) \otimes v. \]
Thus by the general definition of $\mu^{Z_{F_J}}$ in \eqref{defMuMonadZF} we have
\begin{align*}
&\mu^{Z_{F_J}}_V\left(\overset{Y}{M}{^{\varphi}_y} \otimes \overset{X}{M}{^{\psi}_x} \otimes v \right) = \mu^{Z_{F_J}}_V \circ i^{F_J,(2)}_{X,Y}(V)\bigl( \varphi(u_J\cdot ?) \otimes \psi(u_J\cdot ?) \otimes v \otimes x \otimes y\bigr)\\
=\:&i_{X \otimes Y}^{F_J}(V) \circ \left( (F^{(2)}_J)_{Y^{\vee},X^{\vee}} \otimes \mathrm{id}_V \otimes (F^{(2)}_J)_{X,Y} \right)\bigl( \varphi(u_J\cdot ?) \otimes \psi(u_J\cdot ?) \otimes v \otimes x \otimes y\bigr)\\
=\:& i_{X \otimes Y}^{F_J}(V) \bigl( \varphi(u_JS(\overline{s}_i)\cdot ?) \otimes \psi(u_JS(\overline{t}_i)\cdot ?) \otimes v \otimes \overline{s}_j \cdot x \otimes \overline{t}_j \cdot y\bigr)\\
=\:& i_{X \otimes Y}^{F_J}(V) \bigl( \varphi(t_iu_J^{(2)}\cdot ?) \otimes \psi(s_iu_J^{(1)}\cdot ?) \otimes v \otimes \overline{s}_j \cdot x \otimes \overline{t}_j \cdot y\bigr)\\
=\:& \overset{X \otimes Y}{M}{^{\psi(s_i \cdot ?) \otimes \varphi(t_i\cdot ?)}_{\overline{s}_j\cdot x \otimes \overline{t}_j \cdot y}} \otimes v = \left( \bigl( \overset{X}{M}{^{\psi}_x} \otimes \overset{Y}{M}{^{\varphi}_y} \bigr) \circ \Delta_J \right) \otimes v = \bigl( \overset{Y}{M}{^{\varphi}_y}\overset{X}{M}{^{\psi}_x} \bigr) \otimes v
\end{align*}
where the third equality uses \eqref{monoidalStructFiberFunctor} and the fourth equality uses \eqref{monoidalDuality}. If we take $X=Y=H$ and $x=y=1_H$ this yields $\mu^{Z_{F_J}}(\varphi \otimes \psi \otimes v) = \varphi\psi \otimes v = \mu^{H_J^{*\mathrm{op}}}(\varphi \otimes \psi \otimes v)$, as desired. Finally it is readily seen that the units of $Z_{F_J}$ and $\mathbb{T}^{D(H_J^{*\mathrm{op}},K)}_K$ coincide.
\end{proof}

As a byproduct, we have an isomorphism of $\Bbbk$-linear categories
\[ \mathcal{Z}(F_J) \cong Z_{F_J}\text{-}\mathrm{mod} = D(H_J^{*\mathrm{op}},K)\text{-}\mathrm{mod}. \]
Explicitly, for $(V,\rho) \in \mathcal{Z}(F_J)$ then in particular $V$ is a $K$-module and it becomes a $D(H_J^{*\mathrm{op}},K)$-module if we define the action of $H_J^{*\mathrm{op}}$ by $\varphi \cdot v = (\varphi \otimes \mathrm{id}_V)\bigl( \rho_H(v \otimes 1_H)\bigr)$ where $H$ is the regular module. Conversely for $V \in D(H_J^{*\mathrm{op}},K)\text{-}\mathrm{mod}$ then $V$ is a $K$-module by restriction and the formula $\rho_X(v \otimes x) = h_i \cdot x \otimes h^i \cdot v$ for all $X \in H\text{-}\mathrm{mod}$, $v \in V$ and $x \in X$ gives a half-braiding for $V$ relative to $F_J$, where $\{h_i\}$ and $\{h^i\}$ are dual bases of $H$ and $H^*$.

\smallskip

\indent For $K=H$, $F = \mathrm{Id}$ and $J=1$, Proposition \ref{propDescriptionMonadSkewDouble} gives $Z_{H\text{-}\mathrm{mod}} = \mathbb{T}^{D(H)}_H$ where $D(H)$ is the Drinfeld double of $H$ and we simply recover the well-known isomorphism $\mathcal{Z}(H\text{-}\mathrm{mod}) \cong Z_{H\text{-}\mathrm{mod}}\text{-}\mathrm{mod} = D(H)\text{-}\mathrm{mod}$. Hence in the present situation the diagram \eqref{diagramAdjunctionsBetweenCentralizers} becomes
\begin{equation}\label{diagramAdjunctionsSkewDouble}
\xymatrix@C=5em@R=.7em{
\:\:D(H)\text{-}\mathrm{mod}\ar@/^.7em/[dd]^{\mathrm{Res}} \ar@/^.7em/[r]^-{\widetilde{F_J}}_-{\text{\normalsize \rotatebox{270}{$\dashv$}}} & \ar@/^.7em/[l]^-{\widetilde{R}} D(H_J^{*\mathrm{op}},K)\text{-}\mathrm{mod} \ar@/^.7em/[dd]^{\mathrm{Res}}\\
\dashv & \dashv\\
\ar@/^.7em/[uu]^{\mathrm{Ind}}H\text{-}\mathrm{mod} \ar@/^.7em/[r]^{F_J}_{\text{\normalsize \rotatebox{270}{$\dashv$}}} & \ar@/^.7em/[l]^R \ar@/^.7em/[uu]^{\mathrm{Ind}}K\text{-}\mathrm{mod}
} \end{equation}
where Res and Ind are the restriction and induction functors. Recall a well-known easy fact:
\begin{lemma}\label{lemmaRightAdjointPullback}
Let $A,B$ be finite-dimensional $\Bbbk$-algebras and $f : A \to B$ be an algebra morphism. The right adjoint of the pullback functor $f^* : B\text{-}\mathrm{mod} \to A\text{-}\mathrm{mod}$ is $W \mapsto \Hom_A\bigl( f^*(B),W \bigr)$ equipped with the action defined by $(b \cdot \gamma)(b') = \gamma(b'b)$ for all $b \in B$ and $b' \in f^*(B)$.
\end{lemma}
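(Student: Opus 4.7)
The statement is a classical instance of the hom-tensor adjunction (coinduction is right adjoint to restriction of scalars), so the plan is simply to spell out the relevant bimodule structure and apply hom-tensor duality.

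First I would observe that $B$ is naturally an $(A,B)$-bimodule: the right $B$-action is multiplication on the right, and the left $A$-action is $a \cdot b = f(a)b$, which is precisely the structure making $B$ into the $A$-module $f^*(B)$. These two actions commute by associativity in $B$. Consequently, for any $W \in A\text{-}\mathrm{mod}$, the space $\Hom_A(f^*(B), W)$ inherits a left $B$-module structure from the right $B$-action on its source, given by $(b\cdot \gamma)(b') = \gamma(b'b)$; this is exactly the formula in the statement.

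Next, I would construct the adjunction isomorphism directly. For $V \in B\text{-}\mathrm{mod}$ and $W \in A\text{-}\mathrm{mod}$, I would define mutually inverse maps
\[
\Phi : \Hom_B\bigl(V, \Hom_A(f^*(B),W)\bigr) \rightleftarrows \Hom_A\bigl(f^*(V), W\bigr) : \Psi
\]
by $\Phi(\phi)(v) = \phi(v)(1_B)$ and $\Psi(\psi)(v)(b) = \psi(bv)$. The routine verifications are: (i) $\Phi(\phi)$ is $A$-linear, which follows because $\phi(v)$ is $A$-linear so $\Phi(\phi)(a \cdot v) = \phi(f(a)v)(1_B) = (f(a)\cdot \phi(v))(1_B) = \phi(v)(f(a)) = a \cdot \Phi(\phi)(v)$; (ii) $\Psi(\psi)(v)$ is $A$-linear in $b$, by $A$-linearity of $\psi$; (iii) $\Psi(\psi)$ is $B$-linear in $v$, directly from the formula; (iv) $\Phi \circ \Psi = \mathrm{id}$ and $\Psi \circ \Phi = \mathrm{id}$, both reducing to the identity $\phi(v)(b) = \phi(v)(b\cdot 1_B) = (b \cdot \phi(v))(1_B) = \phi(bv)(1_B)$ coming from $B$-linearity of $\phi$.

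Finally, naturality in $V$ and $W$ is immediate from the formulas for $\Phi$ and $\Psi$, since both are defined by evaluation and composition. No step is a real obstacle here; the only thing to be careful about is keeping track on which side $A$ and $B$ act, so that the formula $(b\cdot \gamma)(b') = \gamma(b'b)$ indeed defines a \emph{left} $B$-action (which requires using right multiplication on $B$). The finite-dimensionality hypothesis is not actually used in the argument.
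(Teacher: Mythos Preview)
Your proof is correct. The paper itself does not give a proof of this lemma at all; it is stated as ``a well-known easy fact'' and left without argument. Your write-up is the standard coinduction-as-right-adjoint verification and all the checks go through (including your remark that finite-dimensionality plays no role).
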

\noindent Hence the right adjoint $R$ of $F_J$ is the coinduction functor
\[ R(W) = \mathrm{Hom}_K(H,W) \]
where $H$ is endowed with the left multiplication of $K$ and the action of $h \in H$ on $g \in R(W)$ is defined by $(h \cdot g)(h') = g(h'h)$. On morphisms $R$  is given by pushforward. The functors $\widetilde{F}_J$ and $\widetilde{R}$ are defined in general in \S\ref{sectionChangeOfFunctor}; in the present situation they take the following form:

\begin{proposition}\label{propLiftAdjunctionForFiberFunctor}
Recall diagram \eqref{diagramAdjunctionsSkewDouble} and the notations $J = s_i \otimes t_i$ and $u_J = s_iS(t_i)$.
\\1. There is a morphism of algebras
\begin{equation}\label{defMorphismGammaBetweenDoubles}
\Upsilon : D(H_J^{*\mathrm{op}},K) \to D(H), \quad \varphi k \mapsto t_i \bigl(\overline{t}_j \triangleright \varphi \triangleleft s_i \bigr) \overline{s}_jk.
\end{equation}
The pullback functor $\Upsilon^* : D(H)\text{-}\mathrm{mod} \to D(H_J^{*\mathrm{op}},K)\text{-}\mathrm{mod}$ is equal to $\widetilde{F_J}$.
\\2. Let $(\mathsf{W},\smallsquare) \in D(H_J^{*\mathrm{op}},K)\text{-}\mathrm{mod}$, where $\smallsquare$ denotes the action. We have $\widetilde{R}(\mathsf{W}) = \mathrm{Hom}_K(H,\mathsf{W})$ equipped with the action of $D(H)$ given by
\begin{equation}\label{actionDAOnHomBAW}
\forall \, h'\in H, \quad \bigl( (\varphi h) \cdot f \bigr)(h') =\left(  t_jt_i^{(2)}h'^{(3)} \triangleright \varphi \triangleleft S\bigl(s_ih'^{(1)}\bigr)u_J^{-1} \right) \!\smallsquare f\!\left(s_jt_i^{(1)}h'^{(2)}h\right)
\end{equation}
for all $\varphi h \in D(H)$ and $f \in \mathrm{Hom}_K(H,\mathsf{W})$. On morphisms $\widetilde{R}$ is given by pushforward.
\end{proposition}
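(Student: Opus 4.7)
For Part 1 the plan is to compute the lifted functor $\widetilde{F_J}$ explicitly at the level of modules and extract $\Upsilon$ as the algebra morphism realising it as a pullback. Using the description given below~\eqref{diagramAdjunctionsBetweenCentralizers}, $\widetilde{F_J}(V,\rho)=(F_J(V),\rho^{F_J})$ with $\rho^{F_J}_X=(F_J^{(2)}_{X,V})^{-1}\circ F_J(\rho_X)\circ F_J^{(2)}_{V,X}$. Under the identifications $\mathcal{Z}(\mathcal{C})\cong D(H)\text{-}\mathrm{mod}$ and $\mathcal{Z}(F_J)\cong D(H_J^{*\mathrm{op}},K)\text{-}\mathrm{mod}$ from Proposition~\ref{propDescriptionMonadSkewDouble}, the $H_J^{*\mathrm{op}}$-action on $F_J(V)$ is recovered as $\varphi\cdot v=(\varphi\otimes\mathrm{id})\circ\rho^{F_J}_H(v\otimes 1)$, while the $K$-action is the restriction of the $H$-action (forcing $\Upsilon(k)=k$ for $k\in K$). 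Writing $\rho_H(v\otimes 1)=h_j\otimes h^j\cdot v$ in dual bases $\{h_j\}\subset H$ and $\{h^j\}\subset H^{*\mathrm{op}}$, together with $F_J^{(2)}$ from~\eqref{monoidalStructFiberFunctor} and the inverse $(F_J^{(2)})^{-1}(x\otimes y)=s_k x\otimes t_k y$ (derived from $J^{-1}J=1\otimes 1$), a direct calculation gives $\rho^{F_J}_H(v\otimes 1)=s_k h_j\overline{t}_i\otimes t_k h^j\overline{s}_i\cdot v$. Pairing the first tensorand with $\varphi$ and applying the identity $\chi(h_j)h^j=\chi$ for $\chi=\overline{t}_i\triangleright\varphi\triangleleft s_k\in H^{*\mathrm{op}}$ converts the action of $\varphi$ into left multiplication by $t_k(\overline{t}_i\triangleright\varphi\triangleleft s_k)\overline{s}_i\in D(H)$, which after relabeling is~\eqref{defMorphismGammaBetweenDoubles}. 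The algebra-morphism property of $\Upsilon$ is then automatic: functoriality of $\widetilde{F_J}$ ensures $(ab)\cdot_{\Upsilon}v=a\cdot_{\Upsilon}(b\cdot_{\Upsilon}v)$ on every $F_J(V)$, and testing on the regular $D(H)$-module forces $\Upsilon(ab)=\Upsilon(a)\Upsilon(b)$.

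For Part 2, Lemma~\ref{liftRightAdjoint} gives that $\widetilde{R}$ is right adjoint to $\widetilde{F_J}=\Upsilon^*$, and Lemma~\ref{lemmaRightAdjointPullback} then identifies $\widetilde{R}(\mathsf{W})=\mathrm{Hom}_{D(H_J^{*\mathrm{op}},K)}(D(H),\mathsf{W})$ with $D(H)$-action $(d\cdot g)(d')=g(d'd)$. To match this with $\mathrm{Hom}_K(H,\mathsf{W})$ I will establish an isomorphism of left $D(H_J^{*\mathrm{op}},K)$-modules $\iota:D(H_J^{*\mathrm{op}},K)\otimes_K H\to D(H)$, $d\otimes h\mapsto\Upsilon(d)h$. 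It is well-defined because $\Upsilon|_K=\mathrm{id}_K$ (a short check using the counit conditions $\varepsilon(s_i)t_i=\varepsilon(\overline{t}_j)\overline{s}_j=1$ together with $t_i\varepsilon=\varepsilon t_i$ in $D(H)$), and both sides have dimension $(\dim H^*)(\dim H)$ since $D(H_J^{*\mathrm{op}},K)$ is free of rank $\dim H^*$ as a right $K$-module. Surjectivity follows by applying the commutation relation~\eqref{definingRelDrinfeldDouble} to move the factor $t_i$ in $\Upsilon(\psi)$ past the coregular middle term, which rewrites $\Upsilon(\psi)=\Lambda(\psi)\,\mu$ with $\mu\in H$ and $\Lambda$ an endomorphism of $H^{*\mathrm{op}}$ whose invertibility reduces to that of $J$; then $\{\Upsilon(\psi)h\}$ spans $H^{*\mathrm{op}}H=D(H)$. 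Frobenius reciprocity then supplies the natural isomorphism $g\mapsto f:=g|_{1\otimes H}$ from $\widetilde{R}(\mathsf{W})$ to $\mathrm{Hom}_K(H,\mathsf{W})$.

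Finally, to obtain~\eqref{actionDAOnHomBAW} I transport the right-multiplication $D(H)$-action across this identification. For $\varphi h\in D(H)$ and $f=g|_{1\otimes H}$ one has $((\varphi h)\cdot g)(h')=g(h'\varphi h)$; the commutation~\eqref{definingRelDrinfeldDouble} rewrites $h'\varphi=(h'^{(3)}\triangleright\varphi\triangleleft S(h'^{(1)}))h'^{(2)}$, transferring the problem to evaluating $g$ on $\chi\cdot h''$ with $\chi\in H^{*\mathrm{op}}$ and $h''\in H$. To re-express $\chi$ through the $H_J^{*\mathrm{op}}$-action on $\mathsf{W}$ one applies $\iota^{-1}$: unwinding the defining formula of $\Upsilon$ and using~\eqref{monoidalDuality}, which relates $u_JS(\overline{s}_i)\otimes u_JS(\overline{t}_i)$ to $s_iu_J^{(1)}\otimes t_iu_J^{(2)}$, produces exactly the twist correction $\triangleleft u_J^{-1}$ and the $J$- and Sweedler-summations appearing in~\eqref{actionDAOnHomBAW}. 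The main obstacle will be the bookkeeping of these interleaved summations; as sanity checks the formula reduces to $(h\cdot f)(h')=f(h'h)$ at $\varphi=\varepsilon$ via the counit conditions on $J$ and $u_J^{-1}$, and specializes to the $u_J$-twisted coregular action of $H^{*\mathrm{op}}$ when $h=1$.
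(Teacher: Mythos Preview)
Your Part~1 is correct and in fact cleaner than the paper's argument. The paper computes $\widetilde{F_J}$ via the coend morphism $\zeta_V:Z_{F_J}F_J(V)\to F_JZ_{H\text{-}\mathrm{mod}}(V)$ from~\eqref{diagramDefiningZetaForCentralMonads}, using the explicit dinatural transformation~\eqref{univDinatTransfoZFJ} involving matrix coefficients and a $u_J$-shift; this requires a further simplification using the identity $S(\overline{s}_i)\overline{t}_i^{(1)}\otimes\overline{t}_i^{(2)}=u_J^{-1}s_i\otimes t_i$ before $\Upsilon$ emerges. Your route through the half-braiding formula $\rho^{F_J}_X=(F_J^{(2)}_{X,V})^{-1}\circ F_J(\rho_X)\circ F_J^{(2)}_{V,X}$ avoids all of this and reaches~\eqref{defMorphismGammaBetweenDoubles} in one line. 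The regular-module trick for multiplicativity of $\Upsilon$ is the same in both.

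Part~2 has a genuine gap in the surjectivity of $\iota:D(H_J^{*\mathrm{op}},K)\otimes_K H\to D(H)$. After moving $t_i$ past the coregular factor using~\eqref{definingRelDrinfeldDouble} you obtain
\[
\Upsilon(\psi)=\sum_{i,j}\bigl(t_i^{(3)}\overline{t}_j\triangleright\psi\triangleleft s_iS(t_i^{(1)})\bigr)\,t_i^{(2)}\overline{s}_j,
\]
which is \emph{not} of the form $\Lambda(\psi)\mu$ with a single $\mu\in H$ and a single endomorphism $\Lambda$ of $H^{*\mathrm{op}}$: the indices $i,j$ entangle the $H^{*\mathrm{op}}$-part and the $H$-part. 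So ``invertibility of $\Lambda$ reduces to that of $J$'' is not a valid step, and without it you have neither injectivity nor surjectivity of $\iota$. The paper bypasses this by producing the inverse directly: it verifies the identity
\[
\varphi h=\Upsilon\!\bigl[t_jt_i^{(2)}\triangleright\varphi\triangleleft S(s_i)u_J^{-1}\bigr]\,s_jt_i^{(1)}h
\]
in $D(H)$, which simultaneously shows that $g\mapsto g|_H$ is an isomorphism $\Hom_{D(H_J^{*\mathrm{op}},K)}(\Upsilon^*(D(H)),\mathsf{W})\to\Hom_K(H,\mathsf{W})$ and, after inserting~\eqref{definingRelDrinfeldDouble} for $h'\varphi$, yields formula~\eqref{actionDAOnHomBAW} by a two-line substitution. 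Your reference to~\eqref{monoidalDuality} for this last step is misplaced; that identity is used in the paper's Part~1 (inside the coend computation), not here. If you want to keep your induced-module framing, the cleanest fix is precisely to establish the displayed identity above, which is the explicit $\iota^{-1}$ you need anyway.
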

\begin{proof}
1. Recall the general definition of lifted functor in \eqref{liftFunctorAlongMonads}. To describe $\widetilde{F_J}$ we must compute for all $V \in H\text{-}\mathrm{mod}$ the map $\zeta_V : Z_{F_J}F_J(V) \to F_J Z_{H\text{-}\mathrm{mod}}(V)$ defined in \eqref{diagramDefiningZetaForCentralMonads}. Denote by
\[ i_X(V) : X^{\vee} \otimes V \otimes X \to H^{*\mathrm{op}} \otimes V, \quad \varphi \otimes v \otimes x \mapsto \overset{X}{M}{^{\varphi}_x} \otimes v \]
the universal dinatural transformation for $Z_{H\text{-}\mathrm{mod}}(V) = H^{*\mathrm{op}} \otimes V$. Note that $Z_{F_J}F_J(V) = T^{D(H_J^{*\mathrm{op}},K)}_K F_J(V) = H_J^{*\mathrm{op}} \otimes V$ and $F_J Z_{H\text{-}\mathrm{mod}}(V) = F_J T^{D(H)}_H(V) = H^{*\mathrm{op}} \otimes V$ as vector spaces. We have
\begin{align*}
\zeta_V(\varphi \otimes v) &= \zeta_V \circ i_H^{F_J}\bigl( F_J(V) \bigr)\bigl( (\varphi \triangleleft u_J) \otimes v \otimes 1_H \bigr) = F_J\bigl( i_H(V) \bigr) \circ (F_J^{(3)})_{X^{\vee},V,X}\bigl( \mathsf{d}_H(\varphi) \otimes v \otimes 1_H \bigr)\\
&=F_J\bigl( i_H(V) \bigr) \left( \bigl(\varphi \triangleleft u_J S(\overline{s}_i)\bigr) \otimes \overline{t}_i^{(1)}\overline{s}_j\cdot v \otimes \overline{t}_i^{(2)}\overline{t}_j \right) = \bigl( \overline{t}_i^{(2)}\overline{t}_j \triangleright \varphi \triangleleft u_JS(\overline{s}_i) \bigr) \otimes \overline{t}_i^{(1)}\overline{s}_j\cdot v
\end{align*}
where the first equality is a trick based on the regular representation and \eqref{univDinatTransfoZFJ} while the second uses the definition of $\zeta_V$. Hence if $(\mathsf{V},\cdot) \in D(H)\text{-}\mathrm{mod}$ (where $\cdot$ is the action of $D(H)$ on $\mathsf{V}$) then $\widetilde{F_J}(\mathsf{V},\cdot) = (\mathsf{V},\smallsquare)$ where the action $\smallsquare$ of $\varphi k \in D(H_J^{*\mathrm{op}},K)$ on $\mathsf{v} \in \mathsf{V}$ is
\begin{align*}
\varphi k \smallsquare \mathsf{v} &= \left(\overline{t}_i^{(2)}\overline{t}_j \triangleright \varphi \triangleleft u_J S(\overline{s}_i) \right) \overline{t}_i^{(1)}\overline{s}_jk\cdot \mathsf{v} = \overline{t}_i^{(2)}\left( \overline{t}_j \triangleright \varphi \triangleleft u_J S(\overline{s}_i)\overline{t}_i^{(1)} \right) \overline{s}_jk\cdot \mathsf{v}\\
&= t_i \bigl(\overline{t}_j \triangleright \varphi \triangleleft s_i \bigr) \overline{s}_jk \cdot \mathsf{v} = \Upsilon(\varphi k) \cdot \mathsf{v}
\end{align*}
where in the second equality we used \eqref{definingRelDrinfeldDouble} while in the third equality we used that $S(\overline{s}_i)\overline{t}_i^{(1)} \otimes \overline{t}_i^{(2)} = u_J^{-1}s_i \otimes t_i$ which easily follows from the defining properties of $J = s_i \otimes t_i$ and the definition of $u_J$ in \eqref{drinfeldElementTwistJ}. As a result $x \smallsquare \mathsf{v} = \Upsilon(x) \cdot \mathsf{v}$ for all $x \in D(H_J^{*\mathrm{op}},K)$. In particular if $\mathsf{V} = D(H)$ is the regular representation we obtain that $\Upsilon$ is a morphism of algebras:
\begin{align*}
&\Upsilon(xy) = \Upsilon(xy) \cdot 1_{D(H)} = xy \smallsquare 1_{D(H)} = x \smallsquare (y \smallsquare 1_{D(H)})\\
=\:& \Upsilon(x) \cdot (\Upsilon(y) \cdot 1_{D(H)})= \Upsilon(x)\Upsilon(y) \cdot 1_{D(H)} = \Upsilon(x)\Upsilon(y).
\end{align*}
2. One could use \eqref{rightAdjointLift} and \eqref{descriptionXiForCentralMonads} to compute $\widetilde{R}$. But thanks to Lemma \ref{lemmaRightAdjointPullback} we know directly from the previous item that the right adjoint of $\widetilde{F_J}$ is given by
\[ \widetilde{R}(\mathsf{W}) = \Hom_{D(H_J^{*\mathrm{op}},K)}\bigl( \Upsilon^*(D(H)), \mathsf{W} \bigr) \]
where the $D(H)$-module structure on $\widetilde{R}(\mathsf{W})$ is $(x \cdot g)(y) = g(yx)$ for all $g \in \widetilde{R}(\mathsf{W})$ and $x,y \in D(H)$. A straightforward computation shows that $\varphi h = \Upsilon\bigl[ t_jt_i^{(2)} \triangleright \varphi \triangleleft S(s_i)u_J^{-1} \bigr] s_jt_i^{(1)}h$ for any $\varphi h \in D(H)$. Hence if $g \in \widetilde{R}(\mathsf{W})$ we have $g(\varphi h) = \bigl(t_jt_i^{(2)} \triangleright \varphi \triangleleft S(s_i)u_J^{-1} \bigr) \smallsquare g\bigl(s_jt_i^{(1)}h\bigr)$ by $D(H_J^{*\mathrm{op}},K)$-linearity. This shows that $g$ is entirely determined by its values on $H \subset \Upsilon^*(D(H))$ and gives an isomorphism of vector spaces
\[ \Hom_{D(H_J^{*\mathrm{op}},K)}\bigl( \Upsilon^*(D(H)), \mathsf{W} \bigr) \overset{\sim}{\longrightarrow} \mathrm{Hom}_K(H,\mathsf{W}), \quad g \mapsto g|_H. \]
Let $f \in \mathrm{Hom}_K(H,\mathsf{W})$ and write $f = g|_H$; then
\begin{align*}
\bigl( (\varphi h) \cdot f \bigr)(h') &= \bigl( (\varphi h) \cdot g \bigr)(h') = g(h'\varphi h) \overset{\eqref{definingRelDrinfeldDouble}}{=} g\bigl[ \bigl(h'^{(3)} \triangleright \varphi \triangleleft S(h'^{(1)}) \bigr) h'^{(2)}h \bigr]\\
&= \left( t_jt_i^{(2)}h'^{(3)} \triangleright \varphi \triangleleft S(h'^{(1)}) S(s_i)u_J^{-1} \right) \smallsquare g\bigl(s_jt_i^{(1)}h'^{(2)}h\bigr)\\
&= \left(  t_jt_i^{(2)}h'^{(3)} \triangleright \varphi \triangleleft S(s_ih'^{(1)})u_J^{-1} \bigr) \smallsquare f\bigl(s_jt_i^{(1)}h'^{(2)}h\right)
\end{align*}
which is the announced formula.
\end{proof}
\noindent Note by definition of $\Delta_J$ that 
\begin{align*}
&s_ih'^{(1)} \otimes s_j t_i^{(1)}h'^{(2)} \otimes t_j t_i^{(2)}h'^{(3)} = (1 \otimes J) \, (\mathrm{id} \otimes \Delta)(J) \, (\mathrm{id} \otimes \Delta)\bigl( \Delta(h') \bigr)\\
=\:& (\mathrm{id} \otimes \Delta_J)\bigl( \Delta_J(h') \bigr) \, (1 \otimes J) \, (\mathrm{id} \otimes \Delta)(J) = h'^{(1)_J} s_i \otimes h'^{(2)_J} s_j t_i^{(1)} \otimes h'^{(3)_J} t_j t_i^{(2)}.
\end{align*}
Hence using moreover the definition of $S_J$ we can rewrite formula \eqref{actionDAOnHomBAW} as follows:
\[ \bigl( (\varphi h) \cdot f \bigr)(h')  = \left( h'^{(3)_J} t_j t_i^{(2)} \triangleright \varphi \triangleleft S(s_i) u_J^{-1} S_J(h'^{(1)_J} ) \right) \smallsquare f\!\left( h'^{(2)_J} s_j t_i^{(1)} h\right). \]
In this form it is readily seen that $\mathrm{Hom}_K(H,\mathsf{W})$ is stable by the action of $D(H)$.

\begin{remark}
It is non-trivial to check that $\Upsilon$ is a morphism of algebras directly from its definition in \eqref{defMorphismGammaBetweenDoubles}. In the case $K=H$ this is done in \cite[\S 2.2]{MajidOeckl}; they moreover show that, as a Hopf algebra, $D(H_J)$ is isomorphic to a twist of $D(H)$.
\end{remark}

By Theorem \ref{thmChangeOfCoeffDY} (or more precisely \eqref{isosProofThmAdjunction}) and Proposition \ref{propLiftAdjunctionForFiberFunctor}:
\begin{corollary}\label{coroChangeFunctorThmForResFunctor} We have
\[ H^{\bullet}_{\mathrm{DY}}(F_J) \cong \mathrm{Ext}^{\bullet}_{D(H),H}\bigl( \Bbbk, \mathrm{Hom}_K(H,\Bbbk) \bigr) \]
where in its first instance the ground field $\Bbbk$ is endowed with the trivial $D(H)$-module structure while $\mathrm{Hom}_K(H,\Bbbk) = \bigl\{ f \in H^* \,\big|\, \forall\, k \in K, \: f \triangleleft k = \varepsilon(k)f \bigr\}$ has the $D(H)$-module structure given by
\begin{equation}\label{actionDHCoeffResFunctor}
\bigl\langle (\varphi h) \cdot f, h' \bigr\rangle = \varphi\bigl( S(s_ih'^{(1)})u_J^{-1}t_jt_i^{(2)}h'^{(3)} \bigr) \, f\bigl(s_jt_i^{(1)}h'^{(2)}h\bigr)
\end{equation}
for all $f \in H^*$, $\varphi h \in D(H)$ and $h' \in H$.
\end{corollary}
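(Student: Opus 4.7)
The plan is to specialise the general adjunction theorem (Theorem~\ref{thmChangeOfCoeffDY}) to the restriction functor $F_J$ and then read off the coefficient using Proposition~\ref{propLiftAdjunctionForFiberFunctor}. First, note that the assumptions \eqref{assumptionsFCD} are satisfied: $H\text{-}\mathrm{mod}$ and $K\text{-}\mathrm{mod}$ are finite rigid $\Bbbk$-linear tensor categories, and $F_J$ is a right-exact (in fact exact, being a restriction functor) $\Bbbk$-linear monoidal functor. It admits the right adjoint $R(W) = \mathrm{Hom}_K(H,W)$ by Lemma~\ref{lemmaRightAdjointPullback}. Hence Theorem~\ref{thmChangeOfCoeffDY} (in the form of the chain of isomorphisms~\eqref{isosProofThmAdjunction}) applies with $\mathsf{V} = \mathsf{W} = \boldsymbol{1}$, yielding
\[
H^{\bullet}_{\mathrm{DY}}(F_J) \;\cong\; \mathrm{Ext}^{\bullet}_{\mathcal{Z}(H\text{-}\mathrm{mod}),\,H\text{-}\mathrm{mod}}\!\bigl(\boldsymbol{1},\,\widetilde{R}(\boldsymbol{1})\bigr).
\]

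Second, under the standard isomorphism $\mathcal{Z}(H\text{-}\mathrm{mod}) \cong D(H)\text{-}\mathrm{mod}$ recalled at the start of \S\ref{sectionFinDimHopf}, the unit object $\boldsymbol{1} \in \mathcal{Z}(H\text{-}\mathrm{mod})$ corresponds to the ground field $\Bbbk$ with trivial $D(H)$-action (the half-braiding being the identity). This converts the right-hand side into $\mathrm{Ext}^{\bullet}_{D(H),H}\bigl(\Bbbk,\widetilde{R}(\Bbbk)\bigr)$.

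Third, I compute $\widetilde{R}(\Bbbk)$ by specialising Proposition~\ref{propLiftAdjunctionForFiberFunctor}(2) to the trivial module $\mathsf{W} = \Bbbk$ in $D(H_J^{*\mathrm{op}},K)\text{-}\mathrm{mod}$. The underlying vector space is $\mathrm{Hom}_K(H,\Bbbk)$, which as indicated equals $\{f \in H^* \mid f \triangleleft k = \varepsilon(k)f\}$. The action $\smallsquare$ of $D(H_J^{*\mathrm{op}},K)$ on $\Bbbk$ is the counit, namely $(\varphi k)\smallsquare \lambda = \varphi(1_H)\varepsilon(k)\lambda$. Substituting this into formula~\eqref{actionDAOnHomBAW}, the only term of $D(H_J^{*\mathrm{op}},K)$ that acts non-trivially is the $H_J^{*\mathrm{op}}$-part evaluated at $1_H$, and we get
\[
\bigl((\varphi h)\cdot f\bigr)(h') \;=\; \bigl\langle t_jt_i^{(2)}h'^{(3)}\triangleright \varphi \triangleleft S(s_ih'^{(1)})u_J^{-1},\,1_H\bigr\rangle\; f\bigl(s_jt_i^{(1)}h'^{(2)}h\bigr),
\]
which by the definition of the coregular actions~\eqref{defCoregular} equals $\varphi\bigl(S(s_ih'^{(1)})u_J^{-1}t_jt_i^{(2)}h'^{(3)}\bigr)\,f\bigl(s_jt_i^{(1)}h'^{(2)}h\bigr)$, precisely formula~\eqref{actionDHCoeffResFunctor}.

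This is essentially a concatenation of previously proved results, so there is no substantial obstacle; the only point requiring a small verification is the specialisation of the $\smallsquare$-action to the trivial representation, but this is immediate once one recalls that the counit of $D(H_J^{*\mathrm{op}},K)$ sends $\varphi k$ to $\varphi(1_H)\varepsilon(k)$. No intermediate computation is needed to check that the resulting formula does indeed preserve the $K$-invariance condition defining $\mathrm{Hom}_K(H,\Bbbk)$, since this is guaranteed by the fact that $\widetilde{R}(\Bbbk)$ is well-defined as a $D(H)$-module by Proposition~\ref{propLiftAdjunctionForFiberFunctor}.
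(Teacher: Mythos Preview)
Your proof is correct and follows exactly the paper's approach: the paper's proof is the single sentence ``By Theorem~\ref{thmChangeOfCoeffDY} (or more precisely \eqref{isosProofThmAdjunction}) and Proposition~\ref{propLiftAdjunctionForFiberFunctor}'', and you have simply spelled out the specialisation of \eqref{actionDAOnHomBAW} to $\mathsf{W}=\Bbbk$ that this entails.
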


\begin{example}
Assume that $K$ is a {\em Hopf} subalgebra of $H$, so that we can choose $J = 1 \otimes 1$. Then the action of $D(H) = (H^*)^{\mathrm{op}} \otimes H$ on $\Hom_K(H,\Bbbk)$ in Corollary \ref{coroChangeFunctorThmForResFunctor} reduces to
\begin{equation}\label{coeffResFunctorForTrivialTwist}
(\varepsilon \otimes h) \cdot f = h \triangleright f, \qquad (\varphi \otimes 1) \cdot f = \varphi_{(1)} f S(\varphi_{(2)})
\end{equation}
where $\varphi_{(1)} f S(\varphi_{(2)})$ is the product of $\varphi_{(1)}$, $f$ and $S(\varphi_{(2)}) = \varphi_{(2)} \circ S$ in $(H^*)^{\mathrm{op}}$. In particular for $K = \Bbbk$ (the ground field) then $F_J = U : H\text{-}\mathrm{mod} \to \mathrm{vect}_{\Bbbk}$ is the fiber functor and $H^{\bullet}_{\mathrm{DY}}(U) \cong H^{\bullet}_{\mathrm{DY}}\bigl(H\text{-}\mathrm{mod}; \Bbbk, H^* \bigr)$ where the vector space $H^*$ has the $D(H)$-module structure \eqref{coeffResFunctorForTrivialTwist}. This last case was obtained in \cite[\S 4.3]{GHS} by a different method.
\end{example}

\begin{example}\label{exampleComputingCoeffPullbackCoproduct}
Let $R = R^1_i \otimes R^2_i \in H^{\otimes 2}$ be an $R$-matrix for $H$, with implicit summation on $i$. Recall that $R^{-1} = (S \otimes \mathrm{id})(R)$ and consider
\[ J = \underbrace{\bigl( 1 \otimes S(R^1_i) \bigr)}_{s_i} \otimes \underbrace{(R^2_i \otimes 1)}_{t_i}, \qquad J^{-1} = \underbrace{(1 \otimes R^1_i)}_{\overline{s}_i} \otimes \underbrace{(R^2_i \otimes 1)}_{\overline{t}_i}. \]
The element $J$ is easily seen to be a Drinfeld twist for $H \otimes H$, so we have the twisted Hopf algebra $(H \otimes H)_J$. The map $\iota : H \overset{\sim}{\longrightarrow} \Delta(H) \subset (H \otimes H)_J$ identifies $H$ as a Hopf subalgebra of $(H \otimes H)_J$. Indeed $\iota$ is obviously an algebra morphism and is a coalgebra morphism thanks to the twist:
\begin{align*}
\Delta_J\bigl( \iota(h) \bigr) &= J \Delta_{H \otimes H}(h^{(1)} \otimes h^{(2)})J^{-1} = J\bigl( h^{(1)(1)} \otimes h^{(2)(1)} \otimes h^{(1)(2)} \otimes h^{(2)(2)} \bigr) J^{-1} \\
&=J \bigl( h^{(1)} \otimes h^{(3)} \otimes h^{(2)} \otimes h^{(4)} \bigr) J^{-1} = h^{(1)} \otimes h^{(2)} \otimes h^{(3)} \otimes h^{(4)} = \iota(h^{(1)}) \otimes \iota(h^{(2)})
\end{align*}
where the second equality uses that $\Delta_{H \otimes H}(x \otimes y) = x^{(1)} \otimes y^{(1)} \otimes x^{(2)} \otimes y^{(2)}$ by definition of a tensor product of Hopf algebras, the third is by coassociativity and the fourth uses that $R \Delta = \Delta^{\mathrm{op}}R$. This yields a restriction functor $F_J : (H \otimes H)\text{-}\mathrm{mod} \to H\text{-}\mathrm{mod}$ which has the monoidal structure 
\[ \foncIso{(F_J^{(2)})_{X_1 \boxtimes Y_1,X_2 \boxtimes Y_2}}{\begin{array}{c}F_J(X_1 \boxtimes Y_1) \otimes F_J(X_2 \boxtimes Y_2)\\=X_1 \otimes Y_1 \otimes X_2 \otimes Y_2 \end{array}}{\begin{array}{c}F_J\bigl( (X_1 \boxtimes Y_1) \otimes (X_2 \boxtimes Y_2) \bigr)\\=X_1 \otimes X_2 \otimes Y_1 \otimes Y_2 \end{array}}{x_1 \otimes y_1 \otimes x_2 \otimes y_2}{x_1 \otimes (R^2_i \cdot x_2) \otimes (R^1_i \cdot y_1) \otimes y_2}. \]
Let us apply Corollary \ref{coroChangeFunctorThmForResFunctor} to this monoidal functor:
\[ H^{\bullet}_{\mathrm{DY}}(F_J) \cong \mathrm{Ext}^{\bullet}_{D(H) \otimes D(H),H \otimes H}\bigl( \Bbbk \boxtimes \Bbbk,  \mathrm{Hom}_H(H^{\otimes 2},\Bbbk) \bigr) \]
where we identify $D(H \otimes H)$ with $D(H) \otimes D(H)$ and $g \in \mathrm{Hom}_H(H^{\otimes 2},\Bbbk)$ if and only if $g(h^{(1)}x \otimes h^{(2)}y) = \varepsilon(h)g(x \otimes y)$ for all $x,y,h \in H$. The action \eqref{actionDHCoeffResFunctor} of $\alpha a \otimes \beta b \in D(H)^{\otimes 2}$ on $g \in \mathrm{Hom}_H(H^{\otimes 2},\Bbbk)$ is given by
\begin{align}
\begin{split}\label{actionDA2OnAAdual}
\!\!&\bigl\langle (\alpha a \otimes \beta b) \cdot g, x \otimes y \bigr\rangle \\
\!\!=\:& \left\langle \alpha \otimes \beta, S\bigl( s_i(x^{(1)} \otimes y^{(1)}) \bigr) \bigl( R_k^2 \otimes S(R_k^1)  \bigr) t_j t_i^{(2)} (x^{(3)} \otimes y^{(3)}) \right\rangle \left\langle g, s_jt_i^{(1)}(x^{(2)}a \otimes y^{(2)}b) \right\rangle\\
\!\!= \:&\left\langle \alpha, S(x^{(1)})R^2_kR^2_jR^{2\,(2)}_i x^{(3)} \right\rangle  \left\langle \beta, S(y^{(1)}) S\bigl( R^1_k S(R^1_i) \bigr) y^{(3)} \vphantom{R^{2\,(2)}_i} \right\rangle
\left\langle g, R^{2\,(1)}_i x^{(2)} a \otimes S(R^1_j)y^{(2)}b \right\rangle
\end{split}
\end{align}
where in the second equality we used that the element \eqref{drinfeldElementTwistJ} is $u_J = s_i S(t_i) = R^2_i \otimes R^1_i$ with our choice of $J$. For any $g \in \mathrm{Hom}_H(H^{\otimes 2},\Bbbk)$ we can write $g(x \otimes y) = g\bigl(x^{(1)} \otimes x^{(2)}S(x^{(3)})y\bigr) = g(1 \otimes S(x)y)$, whence an isomorphism of vector spaces
\[ \mathrm{Hom}_H(H^{\otimes 2},\Bbbk) \overset{\sim}{\to} \mathrm{Hom}_{\Bbbk}(H,\Bbbk) = H^*, \qquad g \mapsto g(1 \otimes ?). \]
Let us compute the resulting $D(H)^{\otimes 2}$-action on $H^*$. Take $\psi \in H^*$; then $\psi = g(1 \otimes ?)$ for some $g \in \mathrm{Hom}_H(H^{\otimes 2},\Bbbk)$ and from \eqref{actionDA2OnAAdual} we find
\[ \bigl\langle (\alpha a \otimes \beta b) \cdot g, 1 \otimes y \bigr\rangle = \left\langle \alpha, R^2_kR^2_jR^{2\,(2)}_i \right\rangle  \left\langle \beta, S(y^{(1)}) S\bigl( R^1_k S(R^1_i) \bigr) y^{(3)} \vphantom{R^{2\,(2)}_i} \right\rangle
\left\langle \psi, S\bigl(R^1_jR^{2\,(1)}_i a\bigr)y^{(2)}b \right\rangle \]
because $g(x \otimes y) = \psi\bigl(  S(x)y\bigr)$. Elementary properties of $R$-matrices yield
\begin{align*}
&R^2_kR^2_jR^{2\,(2)}_i \otimes R^1_k S(R^1_i) \otimes R^1_jR^{2\,(1)}_i = R^2_k R^{2\,(1)}_i R^2_j \otimes R^1_k S(R^1_i) \otimes R^{2\,(2)}_i R^1_j\\ 
=\:&R^2_k R^2_l R^2_j \otimes R^1_k S(R^1_l) S(R^1_i) \otimes R^2_i R^1_j =R^2_j \otimes S(R^1_i) \otimes R^2_i R^1_j
\end{align*}
and it follows that
\begin{align*}
&\bigl\langle (\alpha a \otimes \beta b) \cdot g, 1 \otimes y \bigr\rangle = \alpha(R^2_j) \bigl\langle \beta, S(y^{(1)})S^2(R^1_i)y^{(3)} \bigr\rangle \left\langle \psi,  S\bigl(R^2_i R^1_j a\bigr)y^{(2)}b \right\rangle\\
=\:&\alpha( R^2_j ) \bigl\langle \beta, S(y^{(1)})S(R^1_i)y^{(3)} \bigr\rangle \left\langle \psi,  S(a) S(R^1_j) R^2_i y^{(2)}b \right\rangle = \alpha(R^2_j) \beta\bigl( S(R^1_i) \bigr) \left\langle \psi,  S(a) S(R^1_j) y R^2_i b \right\rangle.
\end{align*}
As a result $H^{\bullet}_{\mathrm{DY}}(F_J) \cong \mathrm{Ext}^{\bullet}_{D(H)^{\otimes 2},H^{\otimes 2}}\bigl( \Bbbk \boxtimes \Bbbk, H^* \bigr)$ where $H^*$ is endowed with the $D(H)^{\otimes 2}$-module structure given by $\bigl\langle (\alpha a \otimes \beta b) \cdot \psi, y \bigr\rangle = \alpha(R^2_j) \beta\bigl( S(R^1_i) \bigr) \left\langle \psi,  S(a) S(R^1_j) y R^2_i b \right\rangle$. The monoidal functor $F_J$ is equal to $\bigl(P,\phi^R \bigr)$ defined in \eqref{monStructOfPFromRMat}; hence this example gives another proof of Proposition \ref{propAdjunctionTheoremForMonProductInAmod}.
\end{example}

\subsection{Example: restriction functor $B_{l+k}\text{-}\mathrm{mod} \to B_k\text{-}\mathrm{mod}$}\label{subsectionResFunctorBk}
Recall the Hopf algebra $B_k$ from \S\ref{subsectionExampleBk}. Take two strictly positive integers $l,k$ and consider the Hopf algebra $B_{l+k}$, generated by variables $g, x_1, \ldots, x_{l+k}$ modulo the relations \eqref{presentationBk}. The subalgebra generated by $g, x_{l+1}, \ldots, x_{l+k}$ is a Hopf subalgebra isomorphic to $B_k$. Hence the restriction functor
\[ F = \mathrm{Res}^{B_{l+k}}_{B_k} : B_{l+k}\text{-}\mathrm{mod} \to B_k\text{-}\mathrm{mod} \]
is monoidal. Let us use Corollary \ref{coroChangeFunctorThmForResFunctor} and the relatively projective resolution \eqref{relProjResUnitBk} to compute $H^{\bullet}_{\mathrm{DY}}(F)$. Note that here the Drinfeld twist $J = s_i \otimes t_i$ is trivial: $J = 1 \otimes 1$.

\smallskip

\indent If $f \in \Hom_{B_k}(B_{l+k},\mathbb{C})$ then the commutation relations in $B_{l+k}$ and the $B_k$-linearity yield
\begin{align*}
f\bigl( x_1^{e_1} \ldots x_l^{e_l} x_{l+1}^{p_1} \ldots x_{l+k}^{p_k} g^t\bigr) &= (-1)^{(|p| + t) |e|} \varepsilon\bigl( x_{l+1}^{p_1} \ldots x_{l+k}^{p_k} g^t \bigr) f\bigl(x_1^{e_1} \ldots x_l^{e_l}\bigr)\\
&= \delta_{p_1,0}\ldots\delta_{p_k,0}(-1)^{t|e|}f\bigl(x_1^{e_1} \ldots x_l^{e_l}\bigr)
\end{align*}
for all $e_i,p_j,t \in \{0,1\}$, where $|e| = \sum_{i=1}^{l} e_i$, $|p| = \sum_{j=1}^k p_j$ and $\delta$ is the Kronecker symbol. Hence
\[ \Hom_{B_k}(B_{l+k},\mathbb{C}) = \mathrm{span}_{\mathbb{C}}\bigl\{ \bigl(x_1^{e_1} \ldots x_l^{e_l}\bigr)^* + (-1)^{|e|}\bigl(x_1^{e_1} \ldots x_l^{e_l}g\bigr)^* \,\big|\, e_1, \ldots, e_l \in \{0,1\} \bigr\} \]
where $^*$ denotes dual vectors. Recall that the Drinfeld double $D(B_{l+k})$ is generated by $x_i,g, y_i,h$ with $1 \leq i \leq l+k$ modulo the relations given in \cite[\S 5]{GHS}. Write $|e_1,\ldots,e_l\rangle = \bigl(x_1^{e_1} \ldots x_l^{e_l}\bigr)^* + (-1)^{|e|}\bigl(x_1^{e_1} \ldots x_l^{e_l}g\bigr)^*$. According to the computations in the proof of \cite[Lem.\,5.9]{GHS}, the action \eqref{coeffResFunctorForTrivialTwist} of $D(B_{l+k})$ on these basis vectors is given by
\begin{align*}
g \cdot |e_1,\ldots,e_l\rangle &= (-1)^{|e|} \, |e_1,\ldots,e_l\rangle, \qquad h \cdot |e_1,\ldots,e_l\rangle = (-1)^{|e|} \, |e_1,\ldots,e_l\rangle, \\
x_i \cdot |e_1,\ldots,e_l\rangle &= \begin{cases} (-1)^{\sum_{j=i+1}^l e_j} \, |e_1,\ldots,e_{i-1},0,e_{i+1},\ldots, e_l\rangle & \text{if } e_i = 1 \text{ and } 1 \leq i \leq l\\
0 & \text{otherwise}
\end{cases}\\
y_i \cdot |e_1,\ldots,e_l\rangle &= 0.
\end{align*}
In particular $\Hom_{B_k}(B_{l+k},\mathbb{C})$ is generated by $|1,\ldots,1\rangle$. From Corollary \ref{coroChangeFunctorThmForResFunctor} and Theorem \ref{thmDYCohomRelExt} we have
\[ H^{\bullet}_{\mathrm{DY}}\!\left(\mathrm{Res}^{B_{l+k}}_{B_k}\right) \cong \mathrm{Ext}_{D(B_{l+k}),B_{l+k}}^{\bullet}\bigl(\mathbb{C}, \Hom_{B_k}(B_{l+k},\mathbb{C}). \bigr)\]
Consider the relatively projective resolution \eqref{relProjResUnitBk} for $\mathbb{C} \in D(B_{l+k})\text{-}\mathrm{mod}$ and the $D(B_{l+k})$-modules $\mathcal{C}_s = B_{l+k}^{*\mathrm{op}} \boldsymbol{f}_s$ with $s = \pm$. If $\varphi \in \Hom_{D(B_{l+k})}\bigl(\mathcal{C}_s, \Hom_{B_k}(B_{l+k},\mathbb{C})\bigr)$ then since $x_i$ acts by $0$ on $\mathcal{C}_s$ for all $i$ we necessarily have $\varphi(\boldsymbol{f}_s) \in \mathbb{C} \, | 0,\ldots,0 \rangle$. Moreover $h\boldsymbol{f}_s = s\boldsymbol{f}_s$ while $h \cdot | 0,\ldots,0 \rangle = | 0,\ldots,0 \rangle$. Hence 
\[\Hom_{D(B_{l+k})}\bigl(\mathcal{C}_+,\Hom_{B_k}(B_{l+k},\mathbb{C})\bigr) \cong \mathbb{C}, \qquad \Hom_{D(B_{l+k})}\bigl(\mathcal{C}_-,\Hom_{B_k}(B_{l+k},\mathbb{C})\bigr)=0. \] 
We deduce that
\begin{equation}\label{DYCohomResBlk}
\dim H^n_{\mathrm{DY}}\!\left(\mathrm{Res}^{B_{l+k}}_{B_k}\right) = \begin{cases}
\binom{k+l+n-1}{n} &\text{if } n \text{ is even}\\
0 & \text{if } n \text{ is odd}.
\end{cases}
\end{equation}

\appendix

\section{(Di)natural transformations in finite categories}\label{subsectionNatAndCoendsFiniteCat}
\indent Let $\Bbbk$ be a field. This appendix collects some facts about (di)natural transformations, coends and Deligne product in finite $\Bbbk$-linear categories. We include proofs for convenience.

\smallskip

\indent Let $\mathcal{C}$ be a {\em finite} $\Bbbk$-linear category. Recall that any such category is equivalent to the category $A\text{-}\mathrm{mod}$ of finite-dimensional modules over a finite-dimensional $\Bbbk$-algebra \cite[\S 1.8]{EGNO}. A {\em projective generator} is a projective object $P \in \mathcal{C}$ such that for any $X \in \mathcal{C}$ there exists an epimorphism $e : P^n \twoheadrightarrow X$ for some $n \in \mathbb{N}^*$ \cite[\S 1.8]{EGNO}. Such a projective object is not unique but a minimal choice is obtained by taking $P = \bigoplus_{i=1}^l P_i$ where $P_1, \ldots, P_l$ are the indecomposable projective objects of $\mathcal{C}$ up to isomorphism. Note that any $X \in \mathcal{C}$ can be written as a cokernel of a morphism $\alpha : P^m \to P^n$ for some $m,n \in \mathbb{N}$. Indeed since $\mathcal{C}$ is abelian the epimorphism $e : P^n \twoheadrightarrow X$ is the cokernel of some morphism $\beta : Y \to P^n$. Now we cover $Y$ by $P^m \twoheadrightarrow Y$ and let $\alpha : P^m \twoheadrightarrow Y \overset{\beta}{\to} P^n$. Then $\mathrm{coker}(\alpha) = \mathrm{coker}(\beta) = (X,e)$. We call the exact sequence $P^m \overset{\alpha}{\longrightarrow} P^n \overset{e}{\longrightarrow} X \longrightarrow 0$ a {\em projective presentation of $X$}.

\smallskip

\indent A functor is called {\em right exact} if it preserves cokernels. Item 2 in the next lemma is \cite[Prop.\,5.1.7]{KL} where it is stated for bifunctors exact in each variable; the only novelty here is to note that their proof works under a much weaker assumption on the bifunctor.

\begin{lemma}\label{lemNatTransfoProjGen}
Let $\mathcal{C}$ be a finite $\Bbbk$-linear category. Denote by $P$ a projective generator of $\mathcal{C}$ and let $\mathcal{D}$ be a $\Bbbk$-linear abelian category.
\\1. Let $K,L : \mathcal{C} \to \mathcal{D}$ be $\Bbbk$-linear functors and assume that $K$ is right exact. Then the linear map
\begin{align}
\begin{split}\label{natTransfoAndProjGenerator}
\mathrm{Nat}(K,L) &\to \bigl\{  g : K(P) \to L(P) \,\big|\, \forall \, \varphi \in \mathrm{End}_{\mathcal{C}}(P), \:\: g \circ K(\varphi) = L(\varphi) \circ g \bigr\}\\
f &\mapsto f_P
\end{split}
\end{align}
is an isomorphism of vector spaces.
\\2. Let $K : \mathcal{C}^{\mathrm{op}} \times \mathcal{C} \to \mathcal{D}$ be a $\Bbbk$-bilinear functor right exact in the second variable\footnote{Meaning that the functor $K(C,-) : \mathcal{C} \to \mathcal{C}$ is right-exact for all $C \in \mathcal{C}$.}. Then for any $D \in \mathcal{D}$ the linear map
\begin{align}
\begin{split}\label{dinatTransfoAndProjGenerator}
\mathrm{Dinat}(K,D) &\to \bigl\{ g : K(P,P) \to D \,\big|\, \forall \, \varphi \in \mathrm{End}_{\mathcal{C}}(P), \:\: g \circ K(\varphi^{\mathrm{op}},\mathrm{id}_P) = g \circ K(\mathrm{id}_P,\varphi) \bigr\}\\
d &\mapsto d_P
\end{split}
\end{align}
is an isomorphism of vector spaces, where $\varphi^{\mathrm{op}}$ reminds that the morphism $\varphi$ is viewed in $\mathcal{C}^{\mathrm{op}}$.
\end{lemma}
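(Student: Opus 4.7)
Both parts rest on the same backbone: every $X \in \mathcal{C}$ fits into a projective presentation $P^m \xrightarrow{\alpha} P^n \xrightarrow{e} X \to 0$, and the identification $\mathrm{Hom}_{\mathcal{C}}(P^a, P^b) \cong M_{b,a}(\mathrm{End}_{\mathcal{C}}(P))$ encodes $\alpha$ as a matrix $(\varphi_{ij})$ with entries in $\mathrm{End}(P)$. The restriction map is manifestly well-defined and linear in both cases: for part 1, naturality of $f$ at $\varphi \in \mathrm{End}(P)$ is precisely $f_P \circ K(\varphi) = L(\varphi) \circ f_P$, and for part 2, dinaturality of $d$ applied to $\varphi : P \to P$ is exactly the stated commutation. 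Only bijectivity requires work.

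For part 1, injectivity is immediate: if $f_P = 0$ then additivity of $K, L$ yields $f_{P^n} = 0$, and naturality at $e$ combined with the epimorphism $K(e)$ (from right-exactness of $K$) forces $f_X = 0$. For surjectivity, given $g$, let $g^{(n)} := g^{\oplus n} : K(P)^n \to L(P)^n$. Writing $\alpha = (\varphi_{ij})$ and applying the commutation hypothesis entry-by-entry gives $g^{(n)} \circ K(\alpha) = L(\alpha) \circ g^{(m)}$, so $L(e) \circ g^{(n)} \circ K(\alpha) = L(e \circ \alpha) \circ g^{(m)} = 0$; the cokernel property of $K(e) : K(P)^n \twoheadrightarrow K(X)$ then produces a unique $f_X : K(X) \to L(X)$ with $f_X \circ K(e) = L(e) \circ g^{(n)}$. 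Independence of the chosen presentation follows by lifting one presentation to another (projectivity of $P^m, P^n$), and naturality of the resulting $f$ at an arbitrary morphism $X \to Y$ follows by the same matrix trick applied to a lift between presentations.

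For part 2, the strategy is parallel but requires extending $g$ in stages. First, the dinaturality constraints imposed by the inclusions and projections $P \rightleftarrows P^n$, combined with additivity of $K$ in each variable, force a unique $g^{(n)} : K(P^n, P^n) \to D$ extending $g$ along the diagonal blocks and annihilating the off-diagonal blocks; this $g^{(n)}$ automatically inherits the analogous commutation property with respect to all of $\mathrm{End}(P^n)$. Second, given a presentation $P^n \xrightarrow{e} X \to 0$, successive application of the right-exactness of $K$ in the second variable (and the contravariant right-exactness manifested through the matrix description of $e$) yields cokernel sequences $K(P^n,P^n) \twoheadrightarrow K(X,P^n) \twoheadrightarrow K(X,X)$ along which $g^{(n)}$ descends, because the commutation $g^{(n)} \circ K(e^{\mathrm{op}},\mathrm{id}) = g^{(n)} \circ K(\mathrm{id},e)$ (derived from the hypothesis on $g$ via the matrix form of $e$) exactly produces the vanishing required by the cokernel universal property at each stage.

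The main obstacle, and the step I expect to be most delicate, is verifying that the resulting $d_X$ is independent of the chosen presentation and that the family $(d_X)_{X \in \mathcal{C}}$ satisfies dinaturality at an arbitrary morphism $f : X \to Y$ of $\mathcal{C}$. Both independence and dinaturality reduce, by lifting presentations of $X$ and $Y$ along $f$, to the entry-wise commutation lemma of part 1 applied inside diagrams built from iterated cokernel universal properties; this is routine but bookkeeping-heavy, and is where the assumption that $K$ is right-exact in the second variable is used in full force.
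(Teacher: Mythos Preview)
Your overall strategy matches the paper's: extend $g$ to the full subcategory $\{P^n\}$ via the matrix identification, then descend to arbitrary $X$ through a projective presentation and cokernel universality, checking independence and (di)naturality by lifting presentations. Part~1 is essentially identical to the paper's argument.

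In part~2, however, your descent mechanism is stated incorrectly. The sequence you write, $K(P^n,P^n) \twoheadrightarrow K(X,P^n) \twoheadrightarrow K(X,X)$, has the first arrow pointing the wrong way: since the first variable is contravariant, $e : P^n \to X$ induces $K(e,\mathrm{id}_{P^n}) : K(X,P^n) \to K(P^n,P^n)$, not an epimorphism out of $K(P^n,P^n)$. There is no ``contravariant right-exactness'' available, and $e$ has no matrix description over $\mathrm{End}(P)$ since its target is $X$, not some $P^k$. Relatedly, the equation $g^{(n)} \circ K(e^{\mathrm{op}},\mathrm{id}) = g^{(n)} \circ K(\mathrm{id},e)$ does not even typecheck: the two composites have different domains.

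The paper's fix is simple and stays within your framework. One composes $d_{P^n} \circ K(e,\mathrm{id}_{P^n}) : K(X,P^n) \to D$ and checks that this vanishes after precomposition with $K(\mathrm{id}_X,\alpha)$. The vanishing comes not from any property of $e$ alone, but from bifunctoriality together with the dinaturality of $(d_{P^k})$ applied to $\alpha$ (which \emph{is} a matrix over $\mathrm{End}(P)$) and the relation $e \circ \alpha = 0$. Then the single cokernel $K(\mathrm{id}_X,e) : K(X,P^n) \twoheadrightarrow K(X,X)$, supplied by right-exactness in the second variable only, yields $d_X$. So the presentation map $\alpha$, not $e$, carries the commutation that makes the descent work.
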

\begin{proof}
1. The inverse map to \eqref{natTransfoAndProjGenerator} is constructed as follows. Let $g : K(P) \to L(P)$ which satisfies the condition above. We first set $f_P = g$. More generally for each $n \geq 1$ the object $P^n$ comes equipped with morphisms $j_i : P \to P^n$ and $\pi_i : P^n \to P$ for all $1 \leq i \leq n$ and we define $f_{P^n} = \sum_{i=1}^n j_i \circ g \circ \pi_i$. The family $(f_{P^n})_{n \in \mathbb{N}^*}$ is natural in the full subcategory $\bigl\{ P^n \, |\, n \in \mathbb{N}^* \bigr\}$ by the required property of $g$ and by the matrix description of morphisms $P^m \to P^n$. Now let $X \in \mathcal{C}$ and take a presentation $P^m \overset{\alpha}{\longrightarrow} P^n \overset{e}{\longrightarrow} X \longrightarrow 0$, so that $(X,e) = \mathrm{coker}(\alpha)$. By the universal property of a cokernel we obtain
\[ \xymatrix@R=.6em{
K(P^m) \ar[rr]^-{K(\alpha)} \ar[dd]_-{f_{P^m}} & & K(P^n)\ar[rr]^-{K(e)} \ar[dd]_-{f_{P^n}} & & K(X) \ar[dd]^{\exists! \, f_X}\\
 & \circlearrowright & & \circlearrowright &\\
L(P^m) \ar[rr]_-{L(\alpha)} && L(P^n) \ar[rr]_-{L(e)} && L(X)
} \]
We must check that $f_X$ does not depend on the presentation of $X$. Let $P^{m'} \overset{\alpha'}{\longrightarrow} P^{n'} \overset{e'}{\longrightarrow} X \longrightarrow 0$ be exact. Then as above this gives a morphism $f'_X : K(X) \to L(X)$ defined by $f'_X \circ K(e') = L(e') \circ f_{P^{n'}}$.  Since $P^n$ is projective there exists $\gamma : P^n \to P^{n'}$ such that $e = e' \circ \gamma$ and a short computation reveals that $f'_X \circ K(e) = f_X \circ K(e)$. But note that $K(e)$ is an epimorphism because $K$ is right exact, and thus $f_X = f'_X$. As a result we have constructed a family of morphisms $(f_X)_{X \in \mathcal{C}}$. Naturality is proven by the same kind of arguments.
\\2. This is completely similar to the previous item; but for convenience of the reader we give full details. The inverse map to \eqref{dinatTransfoAndProjGenerator} is constructed as follows. Let $g : K(P,P) \to D$ which satisfies the condition above. We first set $d_P = g$. More generally for each $n \geq 1$ the object $P^n$ comes equipped with morphisms $j_i : P \to P^n$ and $\pi_i : P^n \to P$ for all $1 \leq i \leq n$ and we define $d_{P^n} = \sum_{i=1}^n g \circ K(j_i,\pi_i)$. The family $(d_{P^n})_{n \in \mathbb{N}^*}$ is dinatural in the full subcategory $\bigl\{ P^n \, |\, n \in \mathbb{N}^* \bigr\}$ by the required property of $g$ and by the matrix description of morphisms $P^m \to P^n$. Now let $X \in \mathcal{C}$ and take a presentation $P^m \overset{\alpha}{\longrightarrow} P^n \overset{e}{\longrightarrow} X \longrightarrow 0$, so that $(X,e) = \mathrm{coker}(\alpha)$. Then $K(\mathrm{id}_X,e) = \mathrm{coker} \bigl( K(\mathrm{id}_X,\alpha) \bigr)$ by right-exactness of $K(X,-)$ and the universal property of a cokernel gives
\[ \xymatrix@C=4em{
K(X,P^m) \ar[d]_{K(e,\mathrm{id}_{P^m})}^{\qquad\qquad\text{\normalsize$\circlearrowright$}} \ar[r]^{K(\mathrm{id}_X,\alpha)} & K(X,P^n) \ar[r]^{K(\mathrm{id}_X,e)} \ar[d]^{K(e,\mathrm{id}_{P^n})} & K(X,X)\ar[dd]^{\exists! \, d_X}_{\text{\normalsize$\circlearrowright$}\qquad}\\
K(P^n,P^m) \ar[d]_{K(\alpha,\mathrm{id}_{P^m})}^{\qquad\qquad\qquad\qquad\text{\normalsize$\circlearrowright$}} \ar[r]_{K(\mathrm{id}_{P^n},\alpha)} & K(P^n,P^n) \ar[dr]^{d_{P^n}} &\\
K(P^m,P^m) \ar[rr]_{d_{P^m}}& & D
} \]
because the commutation of the left squares implies that $d_{P^n} \circ K(e,\mathrm{id}_{P^n})$ vanishes on $K(\mathrm{id}_X,\alpha)$. The morphism $d_X$ does not depend on the projective presentation of $X$. For if $P^{m'} \overset{\alpha'}{\longrightarrow} P^{n'} \overset{e'}{\longrightarrow} X \longrightarrow 0$ is another presentation then we have $d'_X : K(X,X) \to D$ defined by $d'_X \circ K(\mathrm{id}_X,e') = d_{P^{n'}} \circ K(e',\mathrm{id}_{P^{n'}})$. Since $P^n$ is projective there exists $\gamma : P^n \to P^{n'}$ such that $e = e' \circ \gamma$ and we find
\begin{align*}
d'_X \circ K(\mathrm{id}_X,e) &= d'_X \circ K(\mathrm{id}_X,e') \circ K(\mathrm{id}_X,\gamma) = d_{P^{n'}} \circ K(e',\mathrm{id}_{P^{n'}}) \circ K(\mathrm{id}_X,\gamma)\\
&= d_{P^{n'}} \circ K(\mathrm{id}_{P^{n'}},\gamma) \circ K(e',\mathrm{id}_{P^n}) = d_{P^n} \circ K(\gamma,\mathrm{id}_{P^n}) \circ K(e',\mathrm{id}_{P^n})\\
&=d_{P^n} \circ K(e,\mathrm{id}_{P^n}) = d_X \circ K(\mathrm{id}_X,e).
\end{align*}
But $K(\mathrm{id}_X,e)$ is a cokernel, whence an epimorphism and it follows that $d_X = d'_X$. Having constructed a family of morphisms $\bigl( d_X \bigr)_{X \in \mathcal{C}}$, we now prove its dinaturality. Let $f : X_1 \to X_2$ and take projective resolutions $P^{m_i} \overset{\alpha_i}{\longrightarrow} P^{n_i} \overset{e_i}{\longrightarrow} X_i \longrightarrow 0$. By definition $d_{X_i} \circ K(\mathrm{id}_X,e_i) = d_{P^{n_i}} \circ K(e_i,\mathrm{id}_{P^{n_i}})$. Since $P^{n_1}$ is projective, there exists $\omega : P^{n_1} \to P_{n_2}$ such that $f \circ e_1 = e_2 \circ \omega$. As a result
\begin{align*}
&d_{X_2} \circ K(\mathrm{id}_{X_2},f) \circ K(\mathrm{id}_{X_2},e_1) = d_{X_2} \circ K(\mathrm{id}_{X_2},e_2) \circ K(\mathrm{id}_{X_2},\omega)\\
=\:& d_{P^{n_2}} \circ K(e_2,\mathrm{id}_{P^{n_2}}) \circ K(\mathrm{id}_{X_2},\omega) = d_{P^{n_2}} \circ K(\mathrm{id}_{P^{n_2}},\omega) \circ K(e_2,\mathrm{id}_{P^{n_1}})\\
=\:&d_{P^{n_1}} \circ K(\omega, \mathrm{id}_{P^{n_1}}) \circ K(e_2,\mathrm{id}_{P^{n_1}}) = d_{P^{n_1}} \circ K(e_1, \mathrm{id}_{P^{n_1}}) \circ K(f,\mathrm{id}_{P^{n_1}})\\
=\:& d_{X_1} \circ K(\mathrm{id}_{X_1},e_1) \circ K(f,\mathrm{id}_{P^{n_1}}) = d_{X_1} \circ K(f,\mathrm{id}_{X_1})  \circ K(\mathrm{id}_{X_2},e_1)
\end{align*}
and we have the result because $K(\mathrm{id}_{X_2},e_1)$ is a cokernel and hence an epimorphism.
\end{proof}

\begin{corollary}\label{coendsAsCokerAndConsequence}
Let $K : \mathcal{C}^{\mathrm{op}} \times \mathcal{C} \to \mathcal{D}$ be a $\Bbbk$-bilinear functor right exact in the second variable. Under the assumptions of Lemma \ref{lemNatTransfoProjGen}, we have:
\\1. The coend $\int^{X \in \mathcal{C}} K(X,X)$ exists in $\mathcal{C}$.
\\2. Let $F : \mathcal{D} \to \mathcal{E}$ be a $\Bbbk$-linear right exact functor. Denote by $i$ (resp. $l$) the universal dinatural transformation of $K$ (resp. $FK$). Then the comparison morphism $\zeta$ defined by
\[ \xymatrix{
&FK(X,X) \ar[dl]_-{l_X}^{\text{\normalsize $\quad\qquad\qquad\circlearrowleft$}} \ar[dr]^-{F(i_X)}& \\
\int^X FK(X,X) \ar[rr]_-{\exists! \, \zeta} && F\!\left( \int^X K(X,X) \right) } \]
for all $X \in \mathcal{C}$ is an isomorphism. Equivalently, $F(i)$ is the universal dinatural transformation of $FK$.
\end{corollary}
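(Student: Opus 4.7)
The plan is to realize the coend concretely as a cokernel using Lemma \ref{lemNatTransfoProjGen}(2), and then deduce preservation under right-exact functors from the fact that right-exact functors preserve cokernels.

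For item 1, since $\mathcal{C}$ is finite, the endomorphism algebra $E := \mathrm{End}_{\mathcal{C}}(P)$ is finite-dimensional; fix a $\Bbbk$-basis $\varphi_1, \ldots, \varphi_n$ of $E$. By $\Bbbk$-bilinearity of $K$, the condition appearing in \eqref{dinatTransfoAndProjGenerator}, namely $g \circ K(\varphi^{\mathrm{op}}, \mathrm{id}_P) = g \circ K(\mathrm{id}_P, \varphi)$ for all $\varphi \in E$, holds iff it holds for each $\varphi_i$. Define $\delta : K(P,P)^{\oplus n} \to K(P,P)$ whose restriction to the $i$-th summand is $K(\varphi_i^{\mathrm{op}}, \mathrm{id}_P) - K(\mathrm{id}_P, \varphi_i)$. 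Since $\mathcal{D}$ is abelian, $C := \mathrm{coker}(\delta)$ exists, and its universal property combined with Lemma \ref{lemNatTransfoProjGen}(2) yields a natural (in $D$) bijection
\[
\mathrm{Dinat}(K, D) \;\cong\; \Bigl\{ g : K(P,P) \to D \;\Big|\; g \circ \delta = 0 \Bigr\} \;\cong\; \mathrm{Hom}_{\mathcal{D}}(C, D).
\]
Hence $C$ represents $\mathrm{Dinat}(K, -)$, so $C = \int^{X} K(X,X)$; the universal dinatural transformation $i$ is obtained by extending the canonical quotient $K(P,P) \twoheadrightarrow C$ to all of $\mathcal{C}$ via the procedure from the proof of Lemma \ref{lemNatTransfoProjGen}(2).

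For item 2, observe that $FK : \mathcal{C}^{\mathrm{op}} \times \mathcal{C} \to \mathcal{E}$ is again $\Bbbk$-bilinear and right-exact in the second variable, since the composition of right-exact $\Bbbk$-linear functors is right-exact. Therefore item 1 applies to $FK$: we have $\int^{X} FK(X,X) = \mathrm{coker}(\delta')$, where $\delta' : F(K(P,P))^{\oplus n} \to F(K(P,P))$ is constructed from $FK$ by the same recipe. Because $F$ is $\Bbbk$-linear (hence additive and commuting with finite direct sums), $\delta' = F(\delta)$. Since $F$ is right-exact it preserves cokernels, so
\[
F\!\left( \int^{X} K(X,X) \right) = F(\mathrm{coker}(\delta)) \cong \mathrm{coker}(F(\delta)) = \mathrm{coker}(\delta') = \int^{X} FK(X,X).
\]
Tracing the isomorphism through its construction shows it is induced by $F(i_P) : FK(P,P) \to F(C)$, which satisfies the defining property of the universal dinatural transformation of $FK$, and therefore coincides with the comparison morphism~$\zeta$.

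\textbf{Expected main difficulty.} The conceptual content is packaged in Lemma \ref{lemNatTransfoProjGen}(2); the remaining work is bookkeeping to make sure that: (a) the equalizer/cokernel duality is applied coherently in the $\Bbbk$-linear setting (this is where finite-dimensionality of $E$ is essential to replace the a priori infinite family of conditions by a finite direct sum and obtain an honest cokernel in $\mathcal{D}$), and (b) the comparison morphism $\zeta$, which is defined abstractly via universality, genuinely matches the isomorphism produced by commuting $F$ past the cokernel. This last identification is the only non-routine verification: it amounts to checking $\zeta \circ l_P = F(i_P)$ and invoking the case $X = P$ of Lemma \ref{lemNatTransfoProjGen}(2) applied to $FK$.
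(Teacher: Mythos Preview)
Your proof is correct and follows essentially the same approach as the paper: both construct the coend as the cokernel of the map $K(P,P)^{\oplus n} \to K(P,P)$ built from a basis of $\mathrm{End}_{\mathcal{C}}(P)$, invoke Lemma~\ref{lemNatTransfoProjGen}(2) for universality, and then use that right-exact functors preserve cokernels to get item~2. The paper's write-up is slightly more explicit about verifying universality directly (rather than via representability of $\mathrm{Dinat}(K,-)$), but the content is the same.
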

\begin{proof}
1. This is the construction of \cite[Cor.\,5.1.8]{KL}. Recall that $P$ denotes a projective generator and let $\{\alpha_j\}_{1 \leq j \leq n}$ be a basis of $\mathrm{End}_{\mathcal{C}}(P)$. For each $j$ consider $u_j = K(\alpha_j,\mathrm{id}_{P_j}) - K(\mathrm{id}_{P_j},\alpha_j)$ and define
\[ E = \mathrm{coker}\left[ K(P,P)^n \xrightarrow{\sum_{j=1}^n u_j \circ \pi_j} K(P,P) \right] \]
where $\pi_j : K(P,P)^n \to K(P,P)$ are the canonical projections. By construction, the object $E$ comes equipped with an epimorphism $g : K(P,P) \to E$ satisfying $g \circ K(\varphi,\mathrm{id}_P) = g \circ K(\mathrm{id}_P,\varphi)$ for all $\varphi \in \mathrm{End}_{\mathcal{C}}(P)$. Item 2 in Lemma \ref{lemNatTransfoProjGen} then gives a dinatural transformation $i_X : K(X,X) \to E$ such that $i_P = g$. Let $d_X : K(X,X) \to D$ be any other dinatural transformation. Then by dinaturality we have $d_P \circ (u_1,\ldots, u_n) = 0$ and the universal property of a cokernel gives $\delta : E \to D$ such that $d_P = \delta \circ i_P$. By item 2 in Lemma \ref{lemNatTransfoProjGen} we deduce that $d_X = \delta \circ i_X$ for all $X \in \mathcal{C}$, proving that the pair $(E,i)$ is the coend of $K$.
\\2. The functor $F$ preserves cokernels. Thus it immediately follows from the proof of the previous item (where we saw that a coend can be constructed as a cokernel) that $F(i)$ is a universal dinatural transformation. We can define a morphism $\omega : F\left( \int^X K(X,X) \right) \to \int^X FK(X,X)$ using universality of $F(i)$: $\omega_X \circ F(i_X) = l_X$ for all $X \in \mathcal{C}$. Then $\omega = \zeta^{-1}$.
\end{proof}

\indent Now let $\mathcal{A}, \mathcal{B}$ be {\em finite} $\Bbbk$-linear categories and $\mathcal{D}$ be any $\Bbbk$-linear abelian category. We denote by $P$ (resp. $Q$) a projective generator for $\mathcal{A}$ (resp. $\mathcal{B}$).

\begin{corollary}\label{coroNatMultiTransfoProjGen}
1. Let $K,L : \mathcal{A} \times \mathcal{B} \to \mathcal{D}$ be $\Bbbk$-bilinear functors and assume that $K$ is right exact in each variable. Then $f \mapsto f_{P,Q}$ gives an isomorphism of vector spaces between $\mathrm{Nat}(K,L)$ and
\[ \bigl\{  g : K(P,Q) \to L(P,Q) \,\big|\, \forall \, \varphi \in \mathrm{End}_{\mathcal{A}}(P), \: \forall \, \psi \in \mathrm{End}_{\mathcal{B}}(Q), \:\: g \circ K(\varphi,\psi) = L(\varphi,\psi) \circ g \bigr\}. \]
2. Let $K : \mathcal{A}^{\mathrm{op}} \times \mathcal{B}^{\mathrm{op}} \times \mathcal{A} \times \mathcal{B} \to \mathcal{D}$ be a $\Bbbk$-multilinear functor right-exact in each variable and let $D \in \mathcal{D}$. Then $d \mapsto d_{P,Q}$ gives an isomorphism of vector spaces between $\mathrm{Dinat}(K,D)$ and
\[
\left\{ d : K(P, Q, P, Q) \to D \: \left|\!\!\!\!\!\!
\begin{array}{c}
\forall \, \varphi \in \mathrm{End}_{\mathcal{A}}(P), \: \forall\, \psi \in \mathrm{End}_{\mathcal{B}}(Q),\\
\quad d \circ K(\varphi^{\mathrm{op}}, \psi^{\mathrm{op}}, \mathrm{id}_P, \mathrm{id}_Q) = d \circ K(\mathrm{id}_P, \mathrm{id}_Q, \varphi, \psi)
\end{array} \right. \right\} \]
where $\varphi^{\mathrm{op}}$ and $\psi^{\mathrm{op}}$ remind that the morphisms $\varphi$ and $\psi$ are viewed in $\mathcal{C}^{\mathrm{op}}$.
\end{corollary}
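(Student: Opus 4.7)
\bigskip

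\noindent \textbf{Proof proposal.} Both items follow by iterating Lemma~\ref{lemNatTransfoProjGen} one variable at a time, using the bifunctoriality of $K, L$ and the fact that fixing one variable in a bilinear (or multilinear) functor right exact in each variable yields a linear functor which is right exact. For item 1, given a natural transformation $f : K \Rightarrow L$, for each fixed $B \in \mathcal{B}$ the family $(f_{A, B})_{A \in \mathcal{A}}$ is a natural transformation $K(-, B) \Rightarrow L(-, B)$ between right-exact $\Bbbk$-linear functors $\mathcal{A} \to \mathcal{D}$. By Lemma~\ref{lemNatTransfoProjGen}(1), it is determined by $f_{P, B}$ subject to commuting with $\mathrm{End}_{\mathcal{A}}(P)$. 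Likewise, $(f_{P, B})_{B \in \mathcal{B}}$ is a natural transformation $K(P, -) \Rightarrow L(P, -)$, so Lemma~\ref{lemNatTransfoProjGen}(1) applied to $\mathcal{B}$ reduces it to $f_{P, Q}$ subject to commuting with $\mathrm{End}_{\mathcal{B}}(Q)$. This shows that the restriction map $f \mapsto f_{P,Q}$ is injective and that the image lands in the prescribed subspace, since by bifunctoriality $K(\varphi, \psi) = K(\varphi, \mathrm{id}_Q) \circ K(\mathrm{id}_P, \psi)$.

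For surjectivity, starting from $g : K(P, Q) \to L(P, Q)$ commuting with $K(\varphi, \psi)$ for all $\varphi \in \mathrm{End}_{\mathcal{A}}(P)$ and $\psi \in \mathrm{End}_{\mathcal{B}}(Q)$, I would first use Lemma~\ref{lemNatTransfoProjGen}(1) in $\mathcal{B}$ to extend $g$ to a natural transformation $\tilde f : K(P, -) \Rightarrow L(P, -)$, and then for each $B \in \mathcal{B}$ use Lemma~\ref{lemNatTransfoProjGen}(1) in $\mathcal{A}$ applied to $\tilde f_B : K(P, B) \to L(P, B)$ to extend it to $f^B : K(-, B) \Rightarrow L(-, B)$. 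Set $f_{A, B} = f^B_A$. Naturality in $A$ is built in. Naturality in $B$: for any $\beta : B_1 \to B_2$ in $\mathcal{B}$, both families $A \mapsto L(\mathrm{id}_A, \beta) \circ f_{A, B_1}$ and $A \mapsto f_{A, B_2} \circ K(\mathrm{id}_A, \beta)$ are natural transformations $K(-, B_1) \Rightarrow L(-, B_2)$ which agree at $A = P$ (by naturality of $\tilde f$), hence agree everywhere by the uniqueness in Lemma~\ref{lemNatTransfoProjGen}(1).

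For item 2, the same strategy works, with Lemma~\ref{lemNatTransfoProjGen}(2) replacing Lemma~\ref{lemNatTransfoProjGen}(1). A dinatural transformation $d : K \Rightarrow D$ on the product is equivalent to the data of, for each $B$, a dinatural transformation $d^{(B)} : K(-, B, -, B) \Rightarrow D$ in $\mathcal{A}$, plus, for each $A$, a dinatural transformation $d^{(A)} : K(A, -, A, -) \Rightarrow D$ in $\mathcal{B}$, these being compatible (agreeing on diagonal components). Since right-exactness in each variable is preserved under fixing variables, Lemma~\ref{lemNatTransfoProjGen}(2) applies in $\mathcal{A}$ for each fixed $B$ and in $\mathcal{B}$ for each fixed $A$, and iterating gives injectivity and the claimed commutation property. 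For reconstruction, I would extend $g : K(P, Q, P, Q) \to D$ first dinaturally in $\mathcal{B}$ (at fixed $A = P$), then for each $B$ dinaturally in $\mathcal{A}$; the main obstacle is verifying that the resulting $(d_{A,B})$ satisfies the full joint dinaturality hexagon for an arbitrary morphism $(f, g) : (A_1, B_1) \to (A_2, B_2)$ in $\mathcal{A} \times \mathcal{B}$. I expect to handle this by decomposing the hexagon through the intermediate object $K(A_2, B_2, A_1, B_1)$ via the morphisms $(f, \mathrm{id}, \mathrm{id}, \mathrm{id})$ and $(\mathrm{id}, g, \mathrm{id}, \mathrm{id})$ (and their covariant partners), so that each partial hexagon reduces to single-variable dinaturality; the required cross-compatibility at generic $A$ will then follow, as in the proof of item 1, from the uniqueness part of Lemma~\ref{lemNatTransfoProjGen}(2) applied to two dinatural transformations in $\mathcal{A}$ agreeing at $P$.
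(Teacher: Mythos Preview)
Your proposal is correct and follows essentially the same approach as the paper: iterate Lemma~\ref{lemNatTransfoProjGen} one variable at a time, using the uniqueness part to transport the remaining compatibility conditions. The only step you leave implicit is that your $\tilde f_B$ must commute with $\mathrm{End}_{\mathcal{A}}(P)$ for \emph{every} $B$ (needed to apply Lemma~\ref{lemNatTransfoProjGen}(1) in $\mathcal{A}$), but this follows from exactly the same uniqueness argument you use for naturality in $B$; the paper's proof, which organises the iteration via nested subspaces $\mathcal{S}$ and $\mathcal{S}'$ of $\prod_{X}\mathrm{Nat}\bigl(K(X,-),L(X,-)\bigr)$, handles this point in the same implicit way.
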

\begin{proof}
1. $\mathrm{Nat}(K,L)$ is isomorphic to the subspace $\mathcal{S} \subset \prod_{X \in \mathcal{A}} \mathrm{Nat}\bigl(K(X,-),L(X,-)\bigr)$ consisting of sequences $(h_X)_ {X \in \mathcal{A}}$ such that
\[ \forall \, Y \in \mathcal{B}, \: \forall \, X,X' \in \mathcal{A}, \: \forall \, \alpha \in \mathrm{Hom}_{\mathcal{A}}(X,X'), \qquad h_{X',Y} \circ K(\alpha,\mathrm{id}_Y) = L(\alpha,\mathrm{id}_Y) \circ h_{X,Y}. \]
By Lemma \ref{lemNatTransfoProjGen}, $\mathcal{S}$ is isomorphic to the subspace $\mathcal{S}' \subset \prod_{X \in \mathcal{A}} \mathrm{Hom}_{\mathcal{D}}\bigl( K(X,Q), L(X,Q) \bigr)$ consisting of sequences $(\bar h_X)_ {X \in \mathcal{A}}$ such that
\begin{align*}
&\forall \, X,X' \in \mathcal{A},\: \forall \, \alpha \in \mathrm{Hom}_{\mathcal{A}}(X,X'), \qquad \bar h_{X'} \circ K(\alpha,\mathrm{id}_Q) = L(\alpha,\mathrm{id}_Q) \circ \bar h_{X}\\
&\qquad\text{and } \forall \, X \in \mathcal{A}, \: \forall \, \psi \in \mathrm{End}_{\mathcal{B}}(Q), \qquad \bar h_{X} \circ K(\mathrm{id}_X,\psi) = L(\mathrm{id}_X,\psi) \circ \bar h_{X}.
\end{align*}
Said differently $\mathcal{S}' \subset \mathrm{Nat}\bigl( K(-,Q), L(-,Q) \bigr)$ consists of natural transformations $\bar h$ such that $\bar h_{X} \circ K(\mathrm{id}_X,\psi) = L(\mathrm{id}_X,\psi) \circ \bar h_{X}$ for all $X \in \mathcal{A}$ and $\psi \in \mathrm{End}_{\mathcal{B}}(Q)$. Another application of Lemma \ref{lemNatTransfoProjGen} gives the result.
\\2. Same argument.
\end{proof}
\noindent Of course the statements of Corollary \ref{coroNatMultiTransfoProjGen} generalize to (di)natural transformations with an arbitrary finite number of variables.

\smallskip

\indent Denote by $\boxtimes$ the Deligne product of finite $\Bbbk$-linear categories \cite[\S 5]{deligne}, \cite[\S 1.11]{EGNO}. The indecomposable projective objects in $\mathcal{A} \boxtimes \mathcal{B}$ are the $P_i \boxtimes Q_j$ where $P_i$ (resp. $Q_j$) is an indecomposable projective object in $\mathcal{A}$ (resp. $\mathcal{B}$). This is easily seen using that $\mathcal{A} \cong A\text{-}\mathrm{mod}$ and $\mathcal{B} \cong B\text{-}\mathrm{mod}$, where $A$ and $B$ are finite-dimensional $\Bbbk$-algebras, so that $\mathcal{A} \boxtimes \mathcal{B} \cong (A \otimes B)\text{-}\mathrm{mod}$. Hence if $P = \bigoplus_i P_i$ (resp. $Q = \bigoplus_j Q_j$) is the minimal projective generator of $\mathcal{A}$ (resp. $\mathcal{B}$) then $P \boxtimes Q$ is the minimal projective generator of $\mathcal{A} \boxtimes \mathcal{B}$.

\begin{lemma}\label{lemmaExtensionNatDeligneProd}
1. Let $F,G : \mathcal{A} \times \mathcal{B} \to \mathcal{D}$ be $\Bbbk$-bilinear functors right exact in each variable and let $\widetilde{F}, \widetilde{G} : \mathcal{A} \boxtimes \mathcal{B} \to \mathcal{D}$ be such that $F = \widetilde{F} \circ \boxtimes$ and $G = \widetilde{G} \circ \boxtimes$. The map
\[ J : \mathrm{Nat}(\widetilde{F}, \widetilde{G}) \to \mathrm{Nat}(F,G), \qquad J(f)_{(X,Y)} = f_{X \boxtimes Y} \]
is an isomorphism of vector spaces.
\\2. Let $H : \mathcal{A}^{\mathrm{op}} \times \mathcal{B}^{\mathrm{op}} \times (\mathcal{A} \times \mathcal{B}) \to \mathcal{D}$ be a $\Bbbk$-multilinear functor right-exact in each variable. Let $\widetilde{H} : (\mathcal{A} \boxtimes \mathcal{B})^{\mathrm{op}} \times (\mathcal{A} \boxtimes \mathcal{B}) \to \mathcal{D}$ be the $\Bbbk$-bilinear functor right-exact in each variable such that $H = \widetilde{H} \circ (\boxtimes \times \boxtimes)$. For any $D \in \mathcal{D}$, the map
\[ I : \mathrm{Dinat}(\widetilde{H},D) \to \mathrm{Dinat}(H,D), \qquad I(d)_{(X,Y)} = d_{X \boxtimes Y} \]
is an isomorphism of vector spaces.
\\3. Let $H$ and $\widetilde{H}$ be as in the previous item and let $U = \int^{(X,Y) \in \mathcal{A} \times \mathcal{B}} H\bigl( (X,Y), (X,Y) \bigr)$ with the universal dinatural transformation $u \in \mathrm{Dinat}(H,U)$. Then the object $U$ endowed with the dinatural transformation $I^{-1}(u)$ is the coend of $\widetilde{H}$.
\end{lemma}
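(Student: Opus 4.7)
The plan is to reduce all three items to the projective-generator descriptions of (di)natural transformations already proved in Lemma~\ref{lemNatTransfoProjGen} and Corollary~\ref{coroNatMultiTransfoProjGen}. The central observation is that, when $\mathcal{A} \cong A\text{-}\mathrm{mod}$ and $\mathcal{B} \cong B\text{-}\mathrm{mod}$ with $P$ corresponding to the regular module over $A$ and $Q$ to the one over $B$, the isomorphism $\mathcal{A} \boxtimes \mathcal{B} \cong (A \otimes_{\Bbbk} B)\text{-}\mathrm{mod}$ of \cite[\S 1.11]{EGNO} identifies $P \boxtimes Q$ with the regular module over $A \otimes_{\Bbbk} B$ and produces a canonical isomorphism
\[
\mathrm{End}_{\mathcal{A} \boxtimes \mathcal{B}}(P \boxtimes Q) \; \cong \; \mathrm{End}_{\mathcal{A}}(P) \otimes_{\Bbbk} \mathrm{End}_{\mathcal{B}}(Q),
\]
under which an elementary tensor $\varphi \otimes \psi$ corresponds to $\varphi \boxtimes \psi$. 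In particular $\mathrm{End}_{\mathcal{A} \boxtimes \mathcal{B}}(P \boxtimes Q)$ is $\Bbbk$-linearly spanned by the elements of the form $\varphi \boxtimes \psi$.

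For item~1, by the universal property of the Deligne product the functors $\widetilde{F}$ and $\widetilde{G}$ are $\Bbbk$-linear and right exact, so Lemma~\ref{lemNatTransfoProjGen}(1) identifies $\mathrm{Nat}(\widetilde{F},\widetilde{G})$ with the set of morphisms $g : \widetilde{F}(P \boxtimes Q) \to \widetilde{G}(P \boxtimes Q)$ satisfying $g \circ \widetilde{F}(\omega) = \widetilde{G}(\omega) \circ g$ for every $\omega \in \mathrm{End}_{\mathcal{A} \boxtimes \mathcal{B}}(P \boxtimes Q)$. On the other hand, Corollary~\ref{coroNatMultiTransfoProjGen}(1) identifies $\mathrm{Nat}(F,G)$ with the set of $g : F(P,Q) \to G(P,Q)$ satisfying $g \circ F(\varphi,\psi) = G(\varphi,\psi) \circ g$ for all $\varphi \in \mathrm{End}_{\mathcal{A}}(P)$ and $\psi \in \mathrm{End}_{\mathcal{B}}(Q)$. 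Since $F(P,Q) = \widetilde{F}(P \boxtimes Q)$ and $G(P,Q) = \widetilde{G}(P \boxtimes Q)$, and since $\widetilde{F}(\varphi \boxtimes \psi) = F(\varphi,\psi)$ and similarly for $G$, the spanning remark above shows that the two commutation conditions are equivalent by linearity. Chasing the constructions one checks that the resulting identification of the two subspaces of $\Hom_{\mathcal{D}}\!\bigl(F(P,Q),G(P,Q)\bigr)$ is precisely induced by $J$.

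Item~2 follows by the formally identical argument, now invoking the item 2 of each of Lemma~\ref{lemNatTransfoProjGen} and Corollary~\ref{coroNatMultiTransfoProjGen}. The only extra verification is that the factorization $\widetilde{H}$, obtained by applying the universal property of $\boxtimes$ separately in the $\mathcal{A}^{\mathrm{op}} \times \mathcal{B}^{\mathrm{op}}$ and $\mathcal{A} \times \mathcal{B}$ arguments, is right exact in each of its two variables; this is again the universal property of the Deligne product. The translation of the two commutation conditions uses exactly the same spanning remark about $\mathrm{End}_{\mathcal{A} \boxtimes \mathcal{B}}(P \boxtimes Q)$.

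Item~3 is then a formal consequence of item~2. By item~2, the map $I^{-1} : \mathrm{Dinat}(H,D) \overset{\sim}{\to} \mathrm{Dinat}(\widetilde{H},D)$ is a natural isomorphism in $D \in \mathcal{D}$ (naturality is immediate from the formula $I(d)_{(X,Y)} = d_{X \boxtimes Y}$). Hence the representing object of the functor $D \mapsto \mathrm{Dinat}(H,D)$, namely $(U,u)$, also represents $D \mapsto \mathrm{Dinat}(\widetilde{H},D)$ through the pair $(U, I^{-1}(u))$; by definition this means that $(U, I^{-1}(u))$ is the coend of $\widetilde{H}$. The main technical step in the whole proof is really the identification of $\mathrm{End}_{\mathcal{A} \boxtimes \mathcal{B}}(P \boxtimes Q)$ with $\mathrm{End}_{\mathcal{A}}(P) \otimes \mathrm{End}_{\mathcal{B}}(Q)$, after which everything reduces to a routine bookkeeping between the conditions of Lemma~\ref{lemNatTransfoProjGen} and Corollary~\ref{coroNatMultiTransfoProjGen}.
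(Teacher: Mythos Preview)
Your proof is correct and follows essentially the same approach as the paper: both reduce items~1 and~2 to the projective-generator descriptions of Lemma~\ref{lemNatTransfoProjGen} and Corollary~\ref{coroNatMultiTransfoProjGen}, using that $P \boxtimes Q$ is a projective generator of $\mathcal{A} \boxtimes \mathcal{B}$. Your write-up is in fact more explicit than the paper's at the key point: the paper simply notes that both $\mathrm{Nat}(\widetilde{F},\widetilde{G})$ and $\mathrm{Nat}(F,G)$ embed into $\Hom_{\mathcal{D}}\bigl(F(P,Q),G(P,Q)\bigr)$ and declares ``the result follows'', whereas you spell out why the two commutation conditions cut out the same subspace, via $\mathrm{End}_{\mathcal{A} \boxtimes \mathcal{B}}(P \boxtimes Q) \cong \mathrm{End}_{\mathcal{A}}(P) \otimes_{\Bbbk} \mathrm{End}_{\mathcal{B}}(Q)$. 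For item~3, the paper verifies directly that $I^{-1}(u)$ is universal, while you phrase it as transport of a representing object along the natural isomorphism $I^{-1}$; these are equivalent and both valid.
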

\begin{proof}
1. The functor $\widetilde{F}$ is right exact by definition. By item 1 in Lemma \ref{lemNatTransfoProjGen}, any $f \in \mathrm{Nat}(\widetilde{F}, \widetilde{G})$ is uniquely determined by $f_{P \boxtimes Q} \in \Hom_{\mathcal{D}}\bigl( \widetilde{F}(P \boxtimes Q), \widetilde{G}(P \boxtimes Q) \bigr) = \Hom_{\mathcal{D}}\bigl( F(P,Q), G(P,Q) \bigr)$. By item 1 in Corollary \ref{coroNatMultiTransfoProjGen} for natural transformations with two components, any $g \in \mathrm{Nat}(F,G)$ is uniquely determined by $g_{(P,Q)} \in \Hom_{\mathcal{D}}\bigl( F(P,Q), G(P,Q) \bigr)$. The result follows.
\\2. Same argument, but now using item 2 in Lemma \ref{lemNatTransfoProjGen}.
\\3. Let $d \in \mathrm{Dinat}(\widetilde{H},D)$ for some $D \in \mathcal{D}$. Then $I(d) \in \mathrm{Dinat}(H,D)$ and by universality of $u$ there exists $\varphi : U \to D$ such that $I(d)_{(X,Y)} = \varphi \circ u_{(X,Y)}$ for all $X,Y$. Note that $I^{-1}(\varphi \circ u)_{X \boxtimes Y} = \varphi \circ u_{X \boxtimes Y}$, which by the previous item is sufficient to conclude that $I^{-1}(\varphi \circ u) = \varphi \circ I^{-1}(u)$. Hence $d = \varphi \circ I^{-1}(u)$, proving that $I^{-1}(u)$ is the universal dinatural transformation.
\end{proof}

We allow ourselves to write the last item in Lemma \ref{lemmaExtensionNatDeligneProd} as 
\begin{equation}
    \label{eq:coend-Deligne}
 \int^{M \in \mathcal{A} \boxtimes \mathcal{B}} \widetilde{H}(M,M) = \int^{X \in \mathcal{A}, Y \in \mathcal{B}} \widetilde{H}(X \boxtimes Y,X \boxtimes Y)
 \end{equation}
to mean that it is enough to know the value of the universal dinatural transformation on the ``pure tensors'' $X \boxtimes Y \in \mathcal{A} \boxtimes \mathcal{B}$. This property was also proven in \cite[\S 3.4]{FSS} with different arguments.

\section{Normalization of cosimplicial complexes}\label{appendixNormalization}
Let $X$ be a cosimplicial abelian group (or vector space), which we spell out explicitly for the convenience of the reader:
\begin{itemize}[itemsep=-.2em]
\item For each $n \in \mathbb{N}$ we have an abelian group (or vector space) $X^n$.
\item For each $n \geq 0$ and $0 \leq i \leq n+1$ there are morphisms $\partial^n_i : X^n \to X^{n+1}$, called {\em coface maps}, which satisfy $\partial^{n+1}_j \partial^n_i = \partial^{n+1}_i \partial^n_{j-1}$ for all $0 \leq i < j \leq n+2$.
\item For each $n \geq 1$ and $0 \leq i \leq n-1$ there are morphisms $s^n_i : X^n \to X^{n-1}$, called {\em codegeneracy maps}, which satisfy $s^{n-1}_j s^n_i = s^{n-1}_i s^n_{j+1}$ for all $0 \leq i \leq j \leq n-2$.
\item The following equalities are satisfied for all $0 \leq i \leq n+1$ and $0 \leq j \leq n$:
\[ s^{n+1}_j \partial^n_i = \begin{cases}
\partial^{n-1}_i s^n_{j-1} & \text{if } i < j\\
\mathrm{id}_{X_n} &\text{if } i=j \text{ or } i=j+1\\
\partial_{i-1}^{n-1} s_j^n & \text{if } i > j+1
\end{cases} \]
\end{itemize}
The various relations between the maps $\partial$ and $s$ are called the {\em cosimplicial identities}. For readability we often do not write the superscripts on $\partial$ and $s$.

\smallskip

\indent Let $\delta^n = \sum_{i=0}^{n+1} (-1)^i \partial^n_i : X^n \to X^{n+1}$; then $C(X) = \bigl( X^0 \overset{\delta_0}{\longrightarrow} X^1 \overset{\delta_1}{\longrightarrow} X^2 \overset{\delta_2}{\longrightarrow} \ldots \bigr)$ is  the {\em cochain complex of $X$}. Define
\[ N(X^0) = X^0, \qquad N(X^n) = \bigcap_{i=0}^{n-1} \ker(s^n_i) \subset X^n \quad \text{for all } n \geq 1. \]
One checks easily using the cosimplicial identities that $\delta^n\bigl(N(X^n)\bigr) \subset N(X^{n+1})$. Hence $N(X)$ is a subcomplex of $C(X)$, called the {\em normalized cochain complex of $X$}.

\smallskip

\indent It is well-known that $C(X)$ and $N(X)$ have the same cohomology. More precisely there exists a morphism $\mathcal{N} : C(X) \to C(X)$ of complexes which is the projection onto $N(X)$ and induces an isomorphism between the cohomology groups, see e.g. \cite[Th.\,8.3.8]{weibel}, \cite[Th.\,III.2.4]{GJ}. Actually the operation $X \mapsto N(X)$ is part of a famous result called Dold-Kan correspondence. This is mainly explained for simplicial chain complexes in the literature; in this appendix we review the construction of the projector $\mathcal{N}$ for cosimplicial cochain complexes. This is an adaptation of the discussion after Cor.\,III.2.3 in \cite{GJ}. Note that in deformation theory the subcomplex $N(X)$ corresponds to unit constraints on infinitesimal deformations (like unitality of an infinitesimal product on an associative algebra). Applying $\mathcal{N}$ to a cocycle does not change its equivalence class but gives an infinitesimal deformation which satisfies the unit constraints.

\smallskip

\indent For $n \geq 1$ and $0 \leq i \leq n-1$ let
\[ \pi_i^n = \mathrm{id}_{X_n} - \partial^{n-1}_is^n_i. \]
Here are easy consequences of the cosimplicial identities:
\[ 
s_j\pi_i^n = \begin{cases}
\pi_i^{n-1} s_j & \text{if } j > i\\
0 & \text{if } j=i
\end{cases}
\qquad \pi_i^{n+1} \partial_j = \begin{cases}
\partial_j \pi_{i-1}^n & \text{if } i > j\\
0 & \text{if } j=i
\end{cases} \]
They readily imply the following key properties:
\begin{equation}\label{basicPropPi}
s_j \pi_0^n \ldots \pi_i^n = 0 \quad \text{and} \quad \pi_0^n \ldots \pi_i^n \partial_j = 0 \qquad \text{for all } 0 \leq j \leq i \leq n-1.
\end{equation}

\begin{lemma}\label{lemmaCommutationPiDelta}
For all $0 \leq i \leq n-1$ we have $\delta^n \pi^n_0 \ldots \pi^n_i = \pi_0^{n+1} \ldots \pi_i^{n+1} \delta^n$.
\end{lemma}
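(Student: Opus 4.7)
The proof proceeds by induction on $i$. For the base case $i=0$, one directly computes
\[\delta^n \pi^n_0 - \pi^{n+1}_0 \delta^n = -\delta^n \partial^{n-1}_0 s^n_0 + \partial^n_0 s^{n+1}_0 \delta^n.\]
Splitting $\delta^n \partial^{n-1}_0$ via $\partial^n_j \partial^{n-1}_0 = \partial^n_0 \partial^{n-1}_{j-1}$ (for $j \geq 1$) and $s^{n+1}_0 \delta^n$ via the four regimes $j < 0$, $j = 0$, $j = 1$, $j > 1$ of the identity $s_b \partial_a$, a telescoping of signs (the terms $j=0$ and $j=1$ cancel in the second) shows that both sides agree.

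For the inductive step, assume the identity for $i-1$. Then
\[\delta^n \pi^n_0 \cdots \pi^n_i = \bigl(\delta^n \pi^n_0 \cdots \pi^n_{i-1}\bigr)\pi^n_i = \pi^{n+1}_0 \cdots \pi^{n+1}_{i-1}\,\delta^n \pi^n_i,\]
so it suffices to prove that
\[\pi^{n+1}_0 \cdots \pi^{n+1}_{i-1}\bigl(\delta^n \pi^n_i - \pi^{n+1}_i \delta^n\bigr) = 0.\]
The strategy is to expand the bracket explicitly and show that each surviving summand factors on the left through $\partial^n_j$ for some $j \leq i-1$; then the vanishing follows from the key identity $\pi^{n+1}_0 \cdots \pi^{n+1}_{i-1}\partial^n_j = 0$ in \eqref{basicPropPi}.

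To perform the expansion, write $A_{i-1} = \sum_{j=0}^{i-1}(-1)^j \partial^{n-1}_j$ and split $\delta^n \partial^{n-1}_i = \sum_{j=0}^{i}(-1)^j \partial^n_{i+1}\partial^{n-1}_j + \sum_{j=i+1}^{n+1}(-1)^j \partial^n_i \partial^{n-1}_{j-1}$ via the two forms of the cosimplicial identity, and compute $s^{n+1}_i \delta^n$ by the four regimes of $s_b\partial_a$ (the $j=i, i+1$ terms cancel). After combining, the difference collapses to
\[\delta^n\pi^n_i - \pi^{n+1}_i\delta^n = \partial^n_i A_{i-1} s^n_{i-1} - \partial^n_{i+1} A_{i-1} s^n_i.\]
Using $\partial^n_i \partial^{n-1}_j = \partial^n_j \partial^{n-1}_{i-1}$ for $j < i$ and $\partial^n_{i+1}\partial^{n-1}_j = \partial^n_j \partial^{n-1}_i$ for $j \leq i$, this rewrites as
\[\sum_{j=0}^{i-1}(-1)^j \partial^n_j\bigl(\partial^{n-1}_{i-1} s^n_{i-1} - \partial^{n-1}_i s^n_i\bigr),\]
so every summand begins with $\partial^n_j$ for some $j \leq i-1$ and an application of \eqref{basicPropPi} concludes. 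The main obstacle is purely bookkeeping: tracking signs and index ranges through the several instances of the cosimplicial identities needed to factor out $\partial^n_j$; once that factorization is achieved the vanishing is immediate.
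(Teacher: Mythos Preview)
Your proof is correct and follows essentially the same approach as the paper: induction on $i$, reduction to showing that $\pi^{n+1}_0\cdots\pi^{n+1}_{i-1}(\delta^n\pi^n_i-\pi^{n+1}_i\delta^n)=0$, and killing the surviving terms with \eqref{basicPropPi}. The only organizational difference is that you first derive the closed-form identity $\delta^n\pi^n_i-\pi^{n+1}_i\delta^n=\sum_{j=0}^{i-1}(-1)^j\partial^n_j(\partial^{n-1}_{i-1}s^n_{i-1}-\partial^{n-1}_is^n_i)$ and then apply \eqref{basicPropPi} once, whereas the paper works modulo the prefix $\pi^{n+1}_0\cdots\pi^{n+1}_{i-1}$ from the start and invokes \eqref{basicPropPi} twice along the way; both routes involve the same cosimplicial manipulations. (Minor typo: in your base case the ``regime $j<0$'' is vacuous for $i=0$.)
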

\begin{proof}
By induction on $i$. The case $i=0$ follows from the cosimplicial identities. Assume that the property is true for some $i-1$, with $i \geq 1$. Then
\[ \delta^n \pi^n_0 \ldots \pi^n_i = \pi^{n+1}_0 \ldots \pi^{n+1}_{i-1}\delta^n \pi^n_i = \pi^{n+1}_0 \ldots \pi^{n+1}_{i-1}\delta^n - \pi^{n+1}_0 \ldots \pi^{n+1}_{i-1} \sum_{j=i}^{n+1} (-1)^j \partial_j \partial_i s_i \]
where we used the definition of $\pi_i^n$ and \eqref{basicPropPi} for the second equality. Using the cosimplicial identities we find
\begin{align*}
\sum_{j=i}^{n+1} (-1)^j \partial_j \partial_i s_i &= \sum_{j=i+2}^{n+1} (-1)^j \partial_j \partial_i s_i = \partial_i s_i \sum_{j=i+2}^{n+1} (-1)^j \partial_j = \partial_i s_i \left(\delta^n - \sum_{j=0}^{i+1} (-1)^j \partial_j \right)\\
& = \partial_i s_i \delta^n - \sum_{j=0}^{i-1} (-1)^j \partial_i s_i\partial_j = \partial_i s_i \delta^n - \sum_{j=0}^{i-1} (-1)^j \partial_j \partial_{i-1} s_{i-1}.
\end{align*}
But note that $\pi^{n+1}_0 \ldots \pi^{n+1}_{i-1} \sum_{j=0}^{i-1} (-1)^j \partial_j = 0$ due to \eqref{basicPropPi}. Hence we obtain $\delta^n \pi^n_0 \ldots \pi^n_i = \pi^{n+1}_0 \ldots \pi^{n+1}_{i-1}\delta^n - \pi^{n+1}_0 \ldots \pi^{n+1}_{i-1}\partial_i s_i \delta^n = \pi^{n+1}_0 \ldots \pi^{n+1}_{i-1} \pi^{n+1}_i \delta^n$.
\end{proof}
\indent Define
\[ \mathcal{N}_0 = \mathrm{id}_{X_0} \quad \text{and} \quad \mathcal{N}^n = \pi_0^n \ldots \pi_{n-1}^n : X^n \to X^n. \]

\begin{proposition}\label{propPropertiesN}
The morphism $\mathcal{N}^n$ satisfy the following properties:
\begin{enumerate}[itemsep=-.2em, topsep=.2em]
\item $\mathcal{N}^n$ is an idempotent, $\mathcal{N}^n \mathcal{N}^n = \mathcal{N}^n$, and is the projection onto $N(X^n)$.
\item The family of maps $\mathcal{N} = (\mathcal{N}^n)_{n \in \mathbb{N}}$ is a morphism of cochain complexes $C(X) \to N(X)$.
\item Let $x \in X^n$ be a cocycle. Then $\mathcal{N}^n(x) = x + \delta^{n-1}(v)$ for some $v \in X^{n-1}$. 
\end{enumerate}
\end{proposition}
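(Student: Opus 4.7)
The plan is to address the three items in order, using the identities on $\pi_i$ already established.

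For Part 1, the property \eqref{basicPropPi} applied with $i = n-1$ gives $s_j^n \mathcal{N}^n = 0$ for every $0 \le j \le n-1$, so $\mathrm{im}(\mathcal{N}^n) \subseteq N(X^n)$. Conversely, any $x \in N(X^n)$ satisfies $s_i(x) = 0$ for all $i$, hence $\pi_i(x) = x - \partial_i s_i(x) = x$, and by iteration $\mathcal{N}^n(x) = x$. Together these show that $\mathcal{N}^n$ is an idempotent whose image is exactly $N(X^n)$, i.e.\ the projection onto $N(X^n)$.

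For Part 2, I would reduce the identity $\delta^n \mathcal{N}^n = \mathcal{N}^{n+1} \delta^n$ as follows. Lemma~\ref{lemmaCommutationPiDelta} applied at $i = n-1$ already yields $\delta^n \mathcal{N}^n = \pi_0^{n+1} \cdots \pi_{n-1}^{n+1} \delta^n$, so it suffices to show that the additional factor $\pi_n^{n+1}$ appearing in $\mathcal{N}^{n+1}$ may be dropped once composed with $\pi_0^{n+1} \cdots \pi_{n-1}^{n+1}$, i.e.\ that $\pi_0^{n+1} \cdots \pi_{n-1}^{n+1} \, \partial_n s_n^{n+1} \delta^n = 0$. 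A direct calculation with the cosimplicial relations shows $s_n^{n+1} \delta^n = \sum_{j=0}^{n-1}(-1)^j \partial_j s_{n-1}^n$, the two middle identity terms at $j = n, n+1$ cancelling; then $\partial_n s_n^{n+1} \delta^n = \sum_{j<n}(-1)^j \partial_j \partial_{n-1} s_{n-1}^n$ after using $\partial_n \partial_j = \partial_j \partial_{n-1}$ for $j < n$. Each summand starts with $\partial_j$ for $j \le n-1$ and is therefore annihilated by $\pi_0^{n+1} \cdots \pi_{n-1}^{n+1}$ thanks to the second half of \eqref{basicPropPi}.

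Part 3 is the deepest statement, and I would handle it conceptually by invoking the classical normalization theorem for cosimplicial abelian groups, which asserts that the inclusion $N(X) \hookrightarrow C(X)$ is a quasi-isomorphism (see e.g.\ \cite[Th.\,8.3.8]{weibel} or \cite[Th.\,III.2.4]{GJ}). Together with Parts 1 and 2, this exhibits $\mathcal{N}$ as a chain retraction onto a subcomplex with the same cohomology as $C(X)$, so $\mathcal{N}$ induces the identity on cohomology; in particular $\mathcal{N}^n(x) - x$ is a coboundary for every cocycle $x$. The main obstacle hidden in this conceptual argument is the construction of the underlying null-homotopy of $\mathcal{N} - \mathrm{id}$; if an explicit primitive $v$ is desired, one can induct on the product length $k$ in $\mathcal{N}^n_k = \pi_0 \cdots \pi_{k-1}$, starting from the base identity $\partial_0 s_0 x = \delta^{n-1}(s_0 x)$ (obtained by applying $s_0$ to $\delta^n x = 0$) and iterating via the analogous relations $\sum_{i>k}(-1)^i \partial_i s_k x = \sum_{j<k}(-1)^j \partial_j s_{k-1} x$ derived from $s_k \delta^n x = 0$. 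The real combinatorial work lies in collecting all these successive corrections into a single primitive.
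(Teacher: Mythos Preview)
Your arguments for Parts 1 and 2 are correct and essentially match the paper. In Part 2 the paper takes a slightly different shortcut: after reaching $\delta^n\mathcal{N}^n=\pi_0^{n+1}\cdots\pi_{n-1}^{n+1}\delta^n$ via Lemma~\ref{lemmaCommutationPiDelta}, it kills all terms of $\delta^n$ except $\partial_n$ and $\partial_{n+1}$ by \eqref{basicPropPi}, then uses $\pi_n\partial_{n+1}=\partial_{n+1}-\partial_n$ and one final application of \eqref{basicPropPi}. Your route (computing $s_n\delta^n$ explicitly and showing $\pi_0\cdots\pi_{n-1}\partial_n s_n\delta^n=0$) is an equivalent variant.

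For Part 3 your conceptual argument is logically sound: granting that $N(X)\hookrightarrow C(X)$ is a quasi-isomorphism, Parts 1--2 force $\mathcal{N}$ to induce the identity on cohomology, so $\mathcal{N}^n(x)-x$ is a coboundary. However, this is mildly circular in the context of this appendix, whose stated purpose is precisely to review the construction of $\mathcal{N}$ and its properties; indeed the paper deduces the quasi-isomorphism (Corollary~\ref{coroProjQuasiIso}) \emph{from} item 3 rather than the other way around. More importantly, the paper needs an explicit recursion for the primitive $v$ in order to write down the low-degree formulas \eqref{normalizationLowDegrees}, which are used at the end of \S\ref{subsectionDefMonStruct}. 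The paper obtains this directly: it first establishes the identity
\[
\delta^{n-1}s_i+s_i\delta^n=(-1)^i\partial_is_i+\sum_{j=0}^{i-1}(-1)^j\partial_j(s_i+s_{i-1})
\]
and combines it with \eqref{basicPropPi} and Lemma~\ref{lemmaCommutationPiDelta} to get
\[
\pi_0^n\cdots\pi_i^n=\pi_0^n\cdots\pi_{i-1}^n-(-1)^i\delta^{n-1}\pi_0^{n-1}\cdots\pi_{i-1}^{n-1}s_i-(-1)^i\pi_0^n\cdots\pi_{i-1}^n s_i\delta^n.
\]
Applied to a cocycle $x$, this immediately gives the induction step $\pi_0\cdots\pi_i(x)=\pi_0\cdots\pi_{i-1}(x)-(-1)^i\delta^{n-1}\bigl(\pi_0\cdots\pi_{i-1}s_i(x)\bigr)$, hence the recursion $v_i=v_{i-1}-(-1)^i\pi_0^{n-1}\cdots\pi_{i-1}^{n-1}s_i(x)$ with $v_0=-s_0(x)$. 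Your closing sketch (``induct on the product length $k$'') is pointing at exactly this, but the key identity above is the piece you would still need to supply.
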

\begin{proof}
1. It follows from \eqref{basicPropPi} that $s_i^n \mathcal{N}^n = 0$ for all $0 \leq i \leq n-1$ and hence
\[ \mathcal{N}^n \mathcal{N}^n = \pi_0^n \ldots \pi_{n-1}^n \mathcal{N}^n = \pi_0^n \ldots \pi_{n-2}^n (\mathrm{id} - \partial_{n-1} s_{n-1}) \mathcal{N}^n = \pi_0^n \ldots \pi_{n-2}^n \mathcal{N}^n = \ldots = \mathcal{N}^n. \]
For the same reason we see that $\mathcal{N}^n(x) = x$ if $x \in N(X^n)$.
\\2. Note from the cosimplicial identities that $\pi_{i-1} \partial_i = \partial_i - \partial_{i-1}$. Combining this with Lemma \ref{lemmaCommutationPiDelta} we get
\begin{align*}
\delta^n \mathcal{N}^n = \pi_0^{n+1} \ldots \pi_{n-1}^{n+1} \delta_n &= (-1)^{n+1}\pi_0^{n+1} \ldots \pi_{n-1}^{n+1} (\partial_n - \partial_{n+1})\\
&= (-1)^{n+1}\pi_0^{n+1} \ldots \pi_{n-1}^{n+1} \pi_n^{n+1}\partial_{n+1} = \mathcal{N}^{n+1} \delta^n
\end{align*}
where in the second and fourth equalities we used \eqref{basicPropPi}.
\\3. Note first the following formula, which is easily obtained from the cosimplicial identities:
\[ \delta^{n-1} s_i + s_i \delta^n = (-1)^i \partial_i s_i + \sum_{j=0}^{i-1} (-1)^j \partial_j(s_i + s_{i-1}). \]
Combining it with \eqref{basicPropPi} and Lemma \ref{lemmaCommutationPiDelta}, we find
\begin{align}\label{homotopyProp}
\begin{split}
\pi_0^n \ldots \pi_i^n &= \pi_0^n \ldots \pi_{i-1}^n (\mathrm{id} - \partial_is_i) = \pi_0^n \ldots \pi_{i-1}^n\bigl( \mathrm{id} - (-1)^i(\delta^{n-1} s_i + s_i \delta^n) \bigr)\\
&=\pi_0^n \ldots \pi_{i-1}^n - (-1)^i\delta^{n-1}\pi_0^{n-1} \ldots \pi_{i-1}^{n-1}s_i - (-1)^i \pi_0^n \ldots \pi_{i-1}^ns_i\delta^n.
\end{split}
\end{align}
Now we can prove by induction on $i$ the following property: $\pi_0^n \ldots \pi_i^n(x) = x + \delta^{n-1}(v_i)$ for some $v_i \in X^{n-1}$. The desired result is for $i=n-1$. The case $i=0$ follows directly from \eqref{homotopyProp}: $\pi_0^n(x) = x - \delta^ns_0(x)$. Assume that the property is true for $i-1$ with $i \geq 1$. Then we get from \eqref{homotopyProp}
\begin{align*}
\pi_0^n \ldots \pi_i^n(x) =& \pi_0^n \ldots \pi_{i-1}^n(x) - (-1)^i \delta^{n-1}\pi_0^{n-1} \ldots \pi_{i-1}^{n-1}s_i(x)\\
&= x + \delta^{n-1}\bigl( v_{i-1} - (-1)^i \pi_0^{n-1} \ldots \pi_{i-1}^{n-1}s_i(x) \bigr).
\end{align*}
This gives a recursion formula for $v_i$, with $v_0 = -s_0(x)$.
\end{proof}

\begin{corollary}\label{coroProjQuasiIso}
The morphism of cochain complexes $\mathcal{N}$ is a quasi-isomorphism, i.e. it induces an isomorphism between the cohomology groups: $H^{\bullet}\bigl( C(X) \bigr) \overset{\sim}{\to} H^{\bullet}\bigl( N(X) \bigr)$.
\end{corollary}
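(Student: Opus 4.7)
\medskip

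\noindent \textbf{Proof proposal.} The plan is to show that the inclusion $\iota : N(X) \hookrightarrow C(X)$ is a chain homotopy inverse of $\mathcal{N}$ at the level of cohomology, using only the three properties already established in Proposition~\ref{propPropertiesN}. Since $\mathcal{N}$ is a morphism of complexes by item~2, it induces well-defined maps $H^{\bullet}(\mathcal{N}) : H^{\bullet}(C(X)) \to H^{\bullet}(N(X))$ (and $\iota$ is obviously a morphism of complexes as $N(X)$ is a subcomplex).

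First I would check that $\mathcal{N} \circ \iota = \mathrm{id}_{N(X)}$: by item~1, $\mathcal{N}^n$ is an idempotent whose image is $N(X^n)$, so $\mathcal{N}^n$ restricts to the identity on $N(X^n)$. Passing to cohomology this yields $H^{\bullet}(\mathcal{N}) \circ H^{\bullet}(\iota) = \mathrm{id}_{H^{\bullet}(N(X))}$, hence $H^{\bullet}(\mathcal{N})$ is surjective and $H^{\bullet}(\iota)$ is injective.

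For the reverse composition, I would use item~3 of Proposition~\ref{propPropertiesN}: for any cocycle $x \in X^n$ we have $\mathcal{N}^n(x) = x + \delta^{n-1}(v)$ for some $v \in X^{n-1}$, so $\iota^n \bigl( \mathcal{N}^n(x) \bigr)$ and $x$ are cohomologous in $C(X)$. This gives $H^{\bullet}(\iota) \circ H^{\bullet}(\mathcal{N}) = \mathrm{id}_{H^{\bullet}(C(X))}$. Combined with the previous step, $H^{\bullet}(\mathcal{N})$ and $H^{\bullet}(\iota)$ are mutually inverse isomorphisms, which is the desired conclusion.

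There is no real obstacle here: the entire work has been carried out in Proposition~\ref{propPropertiesN}. The genuinely delicate steps were the construction of the projection $\mathcal{N}$ via the operators $\pi_i^n$ and verifying the two identities in Lemma~\ref{lemmaCommutationPiDelta} and the inductive homotopy formula~\eqref{homotopyProp}, both of which rely on careful manipulation of the cosimplicial identities. Once those are in hand, the corollary is a formal consequence.
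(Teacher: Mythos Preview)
Your proof is correct and follows exactly the same approach as the paper: use the inclusion (the paper calls it $I^n$, you call it $\iota$), observe $\mathcal{N}\circ\iota=\mathrm{id}$ from item~1 of Proposition~\ref{propPropertiesN}, and use item~3 to show $\iota\circ\mathcal{N}$ is the identity on cohomology. The paper's version is just more terse.
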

\begin{proof}
Let $I^n : N(X^n) \to X^n$ be the canonical embedding. We have $\mathcal{N}^n I^n = \mathrm{id}$ since $\mathcal{N}^n$ is the projection onto $N(X^n)$. By item 3 in Proposition \ref{propPropertiesN} $I^n \mathcal{N}^n(x)$ is cohomologous to $x$. Hence $I^n$ is the inverse of $\mathcal{N}^n$ in cohomology.
\end{proof}

For low-degree cases we get (using the cosimplicial identities):
\begin{align}\label{normalizationProjLowDegrees}
\begin{split}
\mathcal{N}^1 &= \mathrm{id} - \partial_0s_0, \qquad \mathcal{N}^2 = \mathrm{id} - \partial_0 s_0 - \partial_1 s_1 + \partial_0s_1,\\
\mathcal{N}^3 &= \mathrm{id} - \partial_0s_0 - \partial_1s_1 - \partial_2s_2 + \partial_0s_1 + \partial_1s_2 + \partial_0\partial_1s_0s_2 - \partial_0s_2.
\end{split}
\end{align}
A straightforward computation using the cosimplicial identities and \eqref{basicPropPi} reveals that $\mathcal{N}^n = \pi_0^n \ldots \pi_{n-2}^n\bigl( \mathrm{id} + (-1)^ns_n\delta^n \bigr)$. Hence if $x \in X^n$ is a cocycle then $\mathcal{N}^n(x) = \pi_0^n \ldots \pi_{n-2}^n(x)$. In particular, thanks to the recursion formula at the end of the proof of Proposition \ref{propPropertiesN}, we find that if $x \in X^1$, $y \in X^2$, $z \in X^3$ are cocycles then
\begin{equation}\label{normalizationLowDegrees}
\mathcal{N}^1(x) = x, \quad \mathcal{N}^2(y) = y - \delta^1s_0(y), \quad \mathcal{N}^3(z) = z + \delta^2\bigl( -s_0(z) + s_1(z) - \partial_0s_0s_0(z) \bigr).
\end{equation}

\section{Deligne product of resolvent pairs}\label{subsectionDeligneResolventPairs}
\textbf{Assumption:} {\em In this appendix, $\Bbbk$ is a perfect field.} This holds for instance if $\mathrm{char}(\Bbbk) = 0$ or if $\Bbbk$ is algebraically closed.

\smallskip

\indent Recall the definition of a resolvent pair below \eqref{adjunction} and denote by $\boxtimes$ the Deligne product of finite $\Bbbk$-linear categories \cite{deligne}, \cite[\S 1.11]{EGNO}. Let $\mathcal{A}, \mathcal{A}', \mathcal{B}, \mathcal{B}'$ be finite $\Bbbk$-linear categories and 
\[ \xymatrix@R=.7em{
\mathcal{A}\ar@/^.7em/[dd]^{\mathcal{U}}\\
\dashv\\
\ar@/^.7em/[uu]^{\mathcal{F}}\mathcal{B}
} \qquad\quad \xymatrix@R=.7em{
\mathcal{A}'\ar@/^.7em/[dd]^{\mathcal{U}'}\\
\dashv\\
\ar@/^.7em/[uu]^{\mathcal{F}'}\mathcal{B}'
} \]
be resolvent pairs such that $\mathcal{U}$ and $\mathcal{U}'$ are $\Bbbk$-linear. The functors $\mathcal{U},\mathcal{U}'$ are exact by definition of a resolvent pair. Thus the functor $(X,X') \mapsto \mathcal{U}(X) \boxtimes \mathcal{U}'(X')$ is exact in each variable. In particular it is right exact and by universal property of the Deligne product we get a right exact $\Bbbk$-linear functor $\mathcal{U} \boxtimes \mathcal{U}' : \mathcal{A} \boxtimes \mathcal{A}' \to \mathcal{B} \boxtimes \mathcal{B}'$ uniquely defined by $(\mathcal{U} \boxtimes \mathcal{U}')(X \boxtimes X') = \mathcal{U}(X) \boxtimes \mathcal{U}'(X')$. The functor $\mathcal{U} \boxtimes \mathcal{U}'$ is actually {\em exact} due to our assumption on the ground field \cite[Prop.\,5.13]{deligne}. Since $\mathcal{F}$ and $\mathcal{F}'$ have right adjoints, they preserve colimits and in particular they are right exact. Hence there exists a right-exact functor $\mathcal{F} \boxtimes\mathcal{F}' : \mathcal{B} \boxtimes \mathcal{B}' \to \mathcal{A} \boxtimes \mathcal{A}'$ defined in the same way. Let us show that these two functors form a resolvent pair. In particular we must check that $\mathcal{U} \boxtimes \mathcal{U}'$ is faithful, which will follow from this lemma:

\begin{lemma}\label{lemmaCriterionExactFaithful}
Let $\mathcal{X}, \mathcal{Y}$ be abelian categories such that every object in $\mathcal{X}$ has finite length. Let $F : \mathcal{X} \to \mathcal{Y}$ be an exact functor such that $F(S) \neq 0$ for all simple objects $S \in \mathcal{X}$. Then $F$ is faithful.
\end{lemma}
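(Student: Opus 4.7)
The plan is to reduce faithfulness to the statement that $F$ kills no nonzero object, and then use the finite length hypothesis to reduce that statement to the assumption on simple objects.

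First I would recall the standard criterion: an exact functor $F : \mathcal{X} \to \mathcal{Y}$ between abelian categories is faithful if and only if $F(X) = 0$ implies $X = 0$ for every $X \in \mathcal{X}$. Indeed, if $F$ is faithful and $F(X)=0$ then $F(\mathrm{id}_X) = \mathrm{id}_{F(X)} = 0$, so $\mathrm{id}_X = 0$ and $X = 0$. Conversely, suppose $F(X)=0$ implies $X=0$, and let $f : X \to Y$ satisfy $F(f) = 0$. Writing $f = \iota \circ \pi$ with $\pi : X \twoheadrightarrow \mathrm{im}(f)$ and $\iota : \mathrm{im}(f) \hookrightarrow Y$, exactness of $F$ gives $F(\mathrm{im}(f)) = \mathrm{im}(F(f)) = 0$, hence $\mathrm{im}(f) = 0$, i.e.\ $f = 0$.

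Thus it suffices to show that $F(X) = 0$ forces $X = 0$ when $X$ has finite length. I would prove this by induction on the length $\ell(X)$. The case $\ell(X) = 0$ is trivial. For $\ell(X) \geq 1$, pick a simple subobject $S \hookrightarrow X$ (which exists since $X$ has finite length) and consider the short exact sequence $0 \to S \to X \to X/S \to 0$. Exactness of $F$ yields an exact sequence $0 \to F(S) \to F(X) \to F(X/S) \to 0$. If $F(X) = 0$, then $F(S) = 0$, contradicting the hypothesis on simple objects.

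There is no real obstacle here; the only subtle point is making sure the standard equivalence between faithfulness and ``reflecting zero objects'' for an exact functor is correctly stated and used, which I have done above. The finite-length hypothesis is used precisely to guarantee the existence of a simple subobject in any nonzero object, which is the key geometric input.
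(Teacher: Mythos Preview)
Your proof is correct. You take a cleaner route than the paper: you first invoke the standard criterion that an exact functor between abelian categories is faithful if and only if it reflects zero objects, and then reduce the latter to the hypothesis on simples via the existence of a simple subobject in any nonzero finite-length object. The paper instead argues directly with morphisms, doing an induction on the length of the \emph{codomain} $W$: for $f:V\to W$ with $F(f)=0$, it composes with the quotient $W\twoheadrightarrow W/S$ by a simple subobject to push $\mathrm{im}(f)$ into $S$, then rules out $\mathrm{im}(f)=S$. Your approach is more conceptual and isolates the key fact (reflecting zero objects) as a separate lemma; the paper's approach avoids naming that criterion but is slightly more ad hoc. One minor remark: your ``induction on $\ell(X)$'' is not really an induction---you never use the inductive hypothesis, since the contradiction from $F(S)=0$ is immediate. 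The argument is simply: any nonzero finite-length object contains a simple, and $F$ preserves monomorphisms.
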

\begin{proof}
Let $V \in \mathcal{X}$ be an arbitrary object. We prove that
\[ \forall \, W \in \mathcal{X}, \:\: \forall \, f \in \Hom_{\mathcal{X}}(V,W), \quad F(f) = 0 \: \implies \: f=0 \]
by induction on the length of $W$. Recall that the image of a morphism $f$ is $\mathrm{im}(f) = \ker\bigl(\mathrm{coker}(f)\bigr)$ and thus $F$ preserves images by exactness. First assume that $W$ has length $1$ and let $f : V \to W$ with $F(f)=0$. Then $W$ is a simple object and $\mathrm{im}(f)$ is a subobject, which by definition forces $\mathrm{im}(f) = 0$ or $\mathrm{im}(f) = W$. In the latter case we get $F(W) = F\bigl(\mathrm{im}(f) \bigr) = \mathrm{im}\bigl(F(f) \bigr) = 0$, contradicting the assumption on $F$. Hence $f=0$. Now assume that $W$ has length $n$ and let $f : V \to W$ such that $F(f) = 0$. Take a simple subobject $S \subset W$ and consider $\overline{f} : V \overset{f}{\to} W \twoheadrightarrow W/S$. Then $F(\overline{f}) = 0$ and since $W/S$ has length $n-1$ the induction hypothesis gives $\overline{f} = 0$. It follows that $\mathrm{im}(f) \subset S$, which forces $\mathrm{im}(f) = 0$ because again $\mathrm{im}(f) = S$ would contradict the assumption $F(S) \neq 0$.
\end{proof}

\begin{proposition}\label{propDeligneProductResPairs}
1. $\mathcal{F} \boxtimes\mathcal{F}'$ is the left adjoint of $\mathcal{U} \boxtimes \mathcal{U}'$ and this adjunction forms a resolvent pair.
\\2. If objects $P \in \mathcal{A}$, $P' \in \mathcal{A}'$ are relatively projective with respect to the adjunctions $\mathcal{F} \dashv \mathcal{U}$ and $\mathcal{F}' \dashv \mathcal{U}'$, then $P \boxtimes P'$ is relatively projective with respect to the adjunction $(\mathcal{F} \boxtimes\mathcal{F}') \dashv (\mathcal{U} \boxtimes \mathcal{U}')$.
\\3. If morphisms $f$ and $f'$ are allowable with respect to the adjunctions $\mathcal{F} \dashv \mathcal{U}$ and $\mathcal{F}' \dashv \mathcal{U}'$, then $f \boxtimes f'$ is allowable with respect to the adjunction $(\mathcal{F} \boxtimes\mathcal{F}') \dashv (\mathcal{U} \boxtimes \mathcal{U}')$.
\end{proposition}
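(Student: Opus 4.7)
The plan is to treat the three parts in order, relying on general properties of the Deligne product from Appendix~\ref{subsectionNatAndCoendsFiniteCat} and the standard characterization of relatively projective objects.

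For part~1, I would first construct the claimed adjunction by lifting units and counits. Starting from the units $\eta, \eta'$ and counits $\varepsilon, \varepsilon'$ of the factor adjunctions, the formulas $Y \boxtimes Y' \mapsto \eta_Y \boxtimes \eta'_{Y'}$ and $X \boxtimes X' \mapsto \varepsilon_X \boxtimes \varepsilon'_{X'}$ define natural transformations on pure tensors. Since $\mathcal{F},\mathcal{F}'$ (being left adjoints) and $\mathcal{U},\mathcal{U}'$ (being exact) are right exact, the bilinear functors $(X,X') \mapsto \mathcal{U}(X) \boxtimes \mathcal{U}'(X')$ and $\mathcal{F}\mathcal{U} \times \mathcal{F}'\mathcal{U}'$ are right exact in each variable, so item~1 of Lemma~\ref{lemmaExtensionNatDeligneProd} applies and extends both formulas uniquely to natural transformations $\eta \boxtimes \eta' : \mathrm{Id} \Rightarrow (\mathcal{U} \boxtimes \mathcal{U}')(\mathcal{F} \boxtimes \mathcal{F}')$ and $\varepsilon \boxtimes \varepsilon' : (\mathcal{F} \boxtimes \mathcal{F}')(\mathcal{U} \boxtimes \mathcal{U}') \Rightarrow \mathrm{Id}$. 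The zig-zag identities reduce on pure tensors to the $\boxtimes$-product of the zig-zag identities for the factor adjunctions; since both sides of each zig-zag identity are natural transformations that agree on pure tensors, they agree everywhere by the uniqueness clause of the same lemma, yielding the adjunction.

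For the resolvent pair property of $(\mathcal{F} \boxtimes \mathcal{F}') \dashv (\mathcal{U} \boxtimes \mathcal{U}')$, additivity of $\mathcal{U} \boxtimes \mathcal{U}'$ is immediate; exactness follows from \cite[Prop.\,5.13]{deligne} applied to the bifunctor $(X,X') \mapsto \mathcal{U}(X) \boxtimes \mathcal{U}'(X')$, and it is here that the assumption that $\Bbbk$ is perfect enters. For faithfulness I would use Lemma~\ref{lemmaCriterionExactFaithful}: since $\mathcal{A} \boxtimes \mathcal{A}'$ is finite and hence Artinian, it suffices to verify $(\mathcal{U} \boxtimes \mathcal{U}')(S) \neq 0$ for every simple $S$. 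Over a perfect field the radicals of finite-dimensional algebras tensorize, so every simple object of $\mathcal{A} \boxtimes \mathcal{A}'$ is a direct summand of some $S \boxtimes S'$ for simples $S \in \mathcal{A}$, $S' \in \mathcal{A}'$; faithfulness of $\mathcal{U}$ and $\mathcal{U}'$ gives $\mathcal{U}(S), \mathcal{U}'(S') \neq 0$, so $\mathcal{U}(S) \boxtimes \mathcal{U}'(S') \neq 0$, and exactness transmits non-vanishing to each simple subquotient. As a backup, one can avoid this structural input altogether by observing that each $\varepsilon_X, \varepsilon'_{X'}$ is an epimorphism (a standard consequence of faithful exactness of $\mathcal{U}, \mathcal{U}'$), that $\boxtimes$ preserves epimorphisms in each variable, and that a right adjoint whose counit is epi is automatically faithful.

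Parts~2 and~3 are then direct. For part~2, I would use the characterization that a relatively projective object is exactly a direct summand of some $\mathcal{F}\mathcal{U}(V)$ (\cite[Prop.\,2.17(1)]{FGS}): decompositions $\mathcal{F}\mathcal{U}(V) = P \oplus Q$ and $\mathcal{F}'\mathcal{U}'(V') = P' \oplus Q'$ yield $P \boxtimes P'$ as a direct summand of $\mathcal{F}\mathcal{U}(V) \boxtimes \mathcal{F}'\mathcal{U}'(V') = (\mathcal{F} \boxtimes \mathcal{F}')(\mathcal{U} \boxtimes \mathcal{U}')(V \boxtimes V')$, so $P \boxtimes P'$ is relatively projective. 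For part~3, if $s$ and $s'$ witness allowability of $f$ and $f'$, bifunctoriality gives
\[
(\mathcal{U} \boxtimes \mathcal{U}')(f \boxtimes f') \circ (s \boxtimes s') \circ (\mathcal{U} \boxtimes \mathcal{U}')(f \boxtimes f') = \bigl(\mathcal{U}(f) s \mathcal{U}(f)\bigr) \boxtimes \bigl(\mathcal{U}'(f') s' \mathcal{U}'(f')\bigr) = (\mathcal{U} \boxtimes \mathcal{U}')(f \boxtimes f').
\]
The main obstacle is the faithfulness step in part~1: the cleanest route via Lemma~\ref{lemmaCriterionExactFaithful} requires the structural fact about simples in a Deligne product over a perfect field, but if this becomes awkward I would fall back on the counit-is-epi argument, which uses only the factor-adjunction data and the exactness of $\boxtimes$ in each variable.
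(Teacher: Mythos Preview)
Your proposal is correct and follows essentially the same approach as the paper: the adjunction is built by extending $\eta\boxtimes\eta'$ and $\varepsilon\boxtimes\varepsilon'$ from pure tensors via Lemma~\ref{lemmaExtensionNatDeligneProd}, faithfulness is obtained from Lemma~\ref{lemmaCriterionExactFaithful} together with the structure of simples in a Deligne product over a perfect field (the paper cites \cite[Lem.\,5.9]{deligne} directly, asserting that simples are of the form $S\boxtimes S'$, which is slightly sharper than your direct-summand formulation but leads to the same conclusion), and parts~2 and~3 are handled exactly as you propose. Your backup faithfulness argument via the counit being epi is a nice alternative that the paper does not mention.
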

\begin{proof}
1. Let $\eta : \mathrm{Id}_{\mathcal{B}} \Rightarrow \mathcal{U} \mathcal{F}$ and $\varepsilon : \mathcal{F} \mathcal{U} \Rightarrow \mathrm{Id}_{\mathcal{A}}$  be the unit and counit of $\mathcal{F} \dashv \mathcal{U}$, and $\eta', \varepsilon'$ be the unit and counit of $\mathcal{F}' \dashv \mathcal{U}'$. Define
\begin{align*}
&(\eta \boxtimes \eta')_{X \boxtimes X'} : X \boxtimes X' \xrightarrow{\eta_X \boxtimes \eta'_{X'}} (\mathcal{U}\mathcal{F})(X) \boxtimes (\mathcal{U}' \mathcal{F}')(X') = \bigl((\mathcal{U} \boxtimes \mathcal{U}')(\mathcal{F} \boxtimes \mathcal{F}')\bigr)(X \boxtimes X'),\\
&(\varepsilon \boxtimes \varepsilon')_{Y \boxtimes Y'} : \bigl((\mathcal{F} \boxtimes \mathcal{F}')(\mathcal{U} \boxtimes \mathcal{U}')\bigr)(Y \boxtimes Y') = (\mathcal{F}\mathcal{U})(Y) \boxtimes (\mathcal{F}' \mathcal{U}')(Y') \xrightarrow{\varepsilon_Y \boxtimes \varepsilon'_{Y'}} Y \boxtimes Y'.
\end{align*}
By item 1 in Lemma \ref{lemmaExtensionNatDeligneProd} these values define natural transformations $\eta \boxtimes \eta' : \mathrm{Id}_{\mathcal{B} \boxtimes \mathcal{B}'} \Rightarrow (\mathcal{U} \boxtimes \mathcal{U}')(\mathcal{F} \boxtimes \mathcal{F}')$ and $\varepsilon \boxtimes \varepsilon' : (\mathcal{F} \boxtimes \mathcal{F}')(\mathcal{U} \boxtimes \mathcal{U}') \Rightarrow \mathrm{Id}_{\mathcal{A} \boxtimes \mathcal{A}'}$. They are easily seen to satisfy the unit/counit equations of an adjunction, proving that $(\mathcal{F} \boxtimes\mathcal{F}') \dashv (\mathcal{U} \boxtimes \mathcal{U}')$.
\\We already know that $\mathcal{U} \boxtimes \mathcal{U}'$ is $\Bbbk$-linear and exact. It remains to show that it is faithful. The simple objects in $\mathcal{A} \boxtimes \mathcal{A}'$ are $S \boxtimes S'$ where $S$ and $S'$ are a simple objects in $\mathcal{A}$ and $\mathcal{A}'$ (due to the assumption that $\Bbbk$ is perfect, \cite[Lem.\,5.9]{deligne}). Since $\mathcal{U}$ and $\mathcal{U}'$ are faithful, we have $\mathcal{U}(S) \neq 0$ and $\mathcal{U}'(S') \neq 0$. Hence $(\mathcal{U} \boxtimes \mathcal{U}')(S \boxtimes S') = \mathcal{U}(S) \boxtimes \mathcal{U}'(S') \neq 0$ and Lemma \ref{lemmaCriterionExactFaithful} applies.
\\2. By assumption $P$ and $P'$ are direct summands of $\mathcal{F}(Y)$ and $\mathcal{F}'(Y')$ for some $Y \in \mathcal{B}$ and $Y' \in \mathcal{B}'$ respectively.  Hence $P \boxtimes P'$ is a direct summand of $\mathcal{F}(Y) \boxtimes \mathcal{F}'(Y') = (\mathcal{F} \boxtimes \mathcal{F}')(Y \boxtimes Y')$.
\\3. $f : V \to W$ is allowable means that there exists $s : \mathcal{U}(W) \to \mathcal{U}(V)$ such that $\mathcal{U}(f) \circ s \circ \mathcal{U}(f) = \mathcal{U}(f)$. Similarly there is $s'$ such that $\mathcal{U}(f') \circ s' \circ \mathcal{U}(f') = \mathcal{U}(f')$. The morphism $s \boxtimes s'$ satisfies
\[ \bigl( (\mathcal{U} \boxtimes \mathcal{U}')(f \boxtimes f') \bigr) \circ (s \boxtimes s') \circ \bigl( (\mathcal{U} \boxtimes \mathcal{U}')(f \boxtimes f') \bigr) = (\mathcal{U} \boxtimes \mathcal{U}')(f \boxtimes f') \]
which proves that $f \boxtimes f'$ is allowable.
\end{proof}
Recall from \S\ref{sectionLiftingAdjunctions} that we denote by $\mathbb{T}\text{-}\mathrm{mod}$ the category of modules over a monad $\mathbb{T}$. From Proposition \ref{propResolventPairsAreMonadic} about the monadicity of resolvent pairs we deduce the following fact, which for monads defined by algebras in monoidal categories is proven in \cite[Prop.\,3.8]{DSPS} also by a monadicity argument:
\begin{corollary}\label{coroDeligneProdOfMonads}
Let $\mathbb{T}$ and $\mathbb{T}'$ be monads on $\mathcal{B}, \mathcal{B}'$ such that the underlying functors $T,T'$ are $\Bbbk$-linear. Then the functor
\[ (\mathbb{T}\text{-}\mathrm{mod}) \boxtimes (\mathbb{T}'\text{-}\mathrm{mod}) \to (\mathbb{T} \boxtimes \mathbb{T}')\text{-}\mathrm{mod}, \quad (V,r) \boxtimes (V',r') \mapsto (V \boxtimes V', r \boxtimes r')
\]
is an equivalence.
\end{corollary}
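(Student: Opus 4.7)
The plan is to deduce this via a direct application of monadicity (Proposition \ref{propResolventPairsAreMonadic}) to the Deligne product of the two standard resolvent pairs coming from $\mathbb{T}$ and $\mathbb{T}'$, once we identify the resulting associated monad.

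First, by Proposition \ref{propResolventPairsAreMonadic}(2), the standard adjunctions $\mathcal{F}_{\mathbb{T}} \dashv \mathcal{U}_{\mathbb{T}}$ on $\mathcal{B}$ and $\mathcal{F}_{\mathbb{T}'} \dashv \mathcal{U}_{\mathbb{T}'}$ on $\mathcal{B}'$ are resolvent pairs (note that $\mathbb{T}\text{-}\mathrm{mod}$ is again finite $\Bbbk$-linear: realising $\mathcal{B} \cong A\text{-}\mathrm{mod}$ for a finite-dimensional $\Bbbk$-algebra $A$, the $\Bbbk$-linear monad $\mathbb{T}$ corresponds to an algebra in $A$-bimodules, whose modules form a finite $\Bbbk$-linear category). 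Apply Proposition \ref{propDeligneProductResPairs}(1) to obtain a resolvent pair
\[ (\mathcal{F}_{\mathbb{T}} \boxtimes \mathcal{F}_{\mathbb{T}'}) \dashv (\mathcal{U}_{\mathbb{T}} \boxtimes \mathcal{U}_{\mathbb{T}'}) \colon (\mathbb{T}\text{-}\mathrm{mod}) \boxtimes (\mathbb{T}'\text{-}\mathrm{mod}) \to \mathcal{B} \boxtimes \mathcal{B}' \]
with unit $\eta \boxtimes \eta'$ and counit $\varepsilon \boxtimes \varepsilon'$ as defined in the proof of that proposition.

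Second, I identify the associated monad of this Deligne product adjunction with $\mathbb{T} \boxtimes \mathbb{T}'$. Its underlying functor is the composite $(\mathcal{U}_{\mathbb{T}} \boxtimes \mathcal{U}_{\mathbb{T}'})(\mathcal{F}_{\mathbb{T}} \boxtimes \mathcal{F}_{\mathbb{T}'})$, which on pure tensors sends $X \boxtimes X' \mapsto T(X) \boxtimes T'(X')$; by item 1 of Lemma \ref{lemmaExtensionNatDeligneProd} this uniquely equals $T \boxtimes T'$. The unit is $\eta \boxtimes \eta'$ by construction, and the multiplication, defined by $(\mathcal{U}_{\mathbb{T}} \boxtimes \mathcal{U}_{\mathbb{T}'})((\varepsilon \boxtimes \varepsilon')_{(\mathcal{F}_{\mathbb{T}} \boxtimes \mathcal{F}_{\mathbb{T}'})(-)})$, evaluates on $X \boxtimes X'$ to $\mathcal{U}_{\mathbb{T}}(\varepsilon_{\mathcal{F}_{\mathbb{T}}(X)}) \boxtimes \mathcal{U}_{\mathbb{T}'}(\varepsilon'_{\mathcal{F}_{\mathbb{T}'}(X')}) = \mu_X \boxtimes \mu'_{X'}$, so it coincides with the Deligne extension $\mu \boxtimes \mu'$ again by Lemma \ref{lemmaExtensionNatDeligneProd}(1). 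Hence the associated monad is exactly $\mathbb{T} \boxtimes \mathbb{T}'$.

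Third, apply Proposition \ref{propResolventPairsAreMonadic}(1) to conclude that the comparison functor
\[ K \colon (\mathbb{T}\text{-}\mathrm{mod}) \boxtimes (\mathbb{T}'\text{-}\mathrm{mod}) \overset{\sim}{\longrightarrow} (\mathbb{T} \boxtimes \mathbb{T}')\text{-}\mathrm{mod} \]
is an equivalence of categories. It remains to check that $K$ agrees with the functor in the statement. By the defining formula \eqref{comparisonFunctor} for the comparison functor, on pure tensors
\[ K\bigl((V,r) \boxtimes (V',r')\bigr) = \Bigl( (\mathcal{U}_{\mathbb{T}} \boxtimes \mathcal{U}_{\mathbb{T}'})\bigl((V,r) \boxtimes (V',r')\bigr),\; (\mathcal{U}_{\mathbb{T}} \boxtimes \mathcal{U}_{\mathbb{T}'})\bigl((\varepsilon \boxtimes \varepsilon')_{(V,r) \boxtimes (V',r')}\bigr) \Bigr)\ . \]
Using \eqref{counitGivesTheAction} for both standard adjunctions, the counit $(\varepsilon \boxtimes \varepsilon')_{(V,r) \boxtimes (V',r')}$ equals $r \boxtimes r'$, so $K((V,r) \boxtimes (V',r')) = (V \boxtimes V', r \boxtimes r')$ as required.

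The main obstacle is the bookkeeping in the second step: verifying that the monad structure inherited by the Deligne product adjunction genuinely coincides with the naive Deligne-product monad $\mathbb{T} \boxtimes \mathbb{T}'$. This relies on the uniqueness statement in Lemma \ref{lemmaExtensionNatDeligneProd}(1) together with the assumption that $\Bbbk$ is perfect (so $\boxtimes$ extends right-exact bifunctors correctly), and is the only nontrivial ingredient beyond the direct application of Propositions \ref{propResolventPairsAreMonadic} and \ref{propDeligneProductResPairs}.
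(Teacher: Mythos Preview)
Your proof is correct and follows essentially the same approach as the paper: form the Deligne product resolvent pair via Proposition~\ref{propDeligneProductResPairs}(1), identify its associated monad as $\mathbb{T}\boxtimes\mathbb{T}'$, and invoke monadicity from Proposition~\ref{propResolventPairsAreMonadic}(1) to conclude that the comparison functor is an equivalence. Your version is in fact more careful than the paper's, spelling out the monad identification via Lemma~\ref{lemmaExtensionNatDeligneProd}(1) and verifying the comparison functor formula explicitly, and you rightly flag the finiteness of $\mathbb{T}\text{-}\mathrm{mod}$ needed to apply Proposition~\ref{propDeligneProductResPairs}.
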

\begin{proof}
By item 2 in Proposition \ref{propResolventPairsAreMonadic} and item 1 in Proposition \ref{propDeligneProductResPairs} we have resolvent pairs
\[
\xymatrix@R=.7em@C=4em{
\mathbb{T}\text{-}\mathrm{mod}\ar@/^.7em/[dd]^{\mathcal{U}_{\mathbb{T}}} & \mathbb{T}'\text{-}\mathrm{mod}\ar@/^.7em/[dd]^{\mathcal{U}_{\mathbb{T}'}} & (\mathbb{T}\text{-}\mathrm{mod}) \boxtimes (\mathbb{T}'\text{-}\mathrm{mod})\ar@/^.7em/[dd]^{\mathcal{U}_{\mathbb{T}} \boxtimes \mathcal{U}_{\mathbb{T}'}}\\
\dashv & \dashv & \dashv\\
\ar@/^.7em/[uu]^{\mathcal{F}_{\mathbb{T}}}\mathcal{B} & \ar@/^.7em/[uu]^{\mathcal{F}_{\mathbb{T}'}}\mathcal{B} & \ar@/^.7em/[uu]^{\mathcal{F}_{\mathbb{T}} \boxtimes \mathcal{F}_{\mathbb{T}'}}\mathcal{B} \boxtimes \mathcal{B}'
} \]
It is clear that the monad associated to $(\mathcal{F}_{\mathbb{T}} \boxtimes \mathcal{F}_{\mathbb{T}'}) \dashv (\mathcal{U}_{\mathbb{T}} \boxtimes \mathcal{U}_{\mathbb{T}'})$ is $\mathbb{T} \boxtimes \mathbb{T}'$. Since any resolvent pair is monadic (item 1 in Proposition \ref{propResolventPairsAreMonadic}) we have the result. The proposed functor is just the comparison functor \eqref{comparisonFunctor} in the case under consideration.
\end{proof}
We now discuss the resolutions and relative Ext groups associated to a Deligne product of resolvent pairs. Let $\mathbf{C} = \bigl(C_0 \xleftarrow{d^{\mathbf{C}}_1} C_1 \xleftarrow{d^{\mathbf{C}}_2} \ldots \bigr)$ and $\mathbf{D} = \bigl(D_0 \xleftarrow{d^{\mathbf{D}}_1} D_1 \xleftarrow{d^{\mathbf{D}}_2} \ldots \bigr)$ be chain complexes of $\Bbbk$-vector spaces. Then $\mathbf{C} \otimes \mathbf{D}$ is defined as usual to be the chain complex
\[ C_0 \otimes D_0 \xleftarrow{d^{\mathbf{C} \otimes \mathbf{D}}_1} \ldots \xleftarrow{d^{\mathbf{C} \otimes \mathbf{D}}_n} \bigoplus_{i+j=n} C_i \otimes D_j \xleftarrow{d^{\mathbf{C} \otimes \mathbf{D}}_{n+1}} \ldots \]
where $d_n^{\mathbf{C} \otimes \mathbf{D}}(v \otimes w) = d^{\mathbf{C}}_i(v) \otimes w + (-1)^i v \otimes d^{\mathbf{D}}_j(w)$ for all $v \in C_i$ and $w \in D_j$ and all $i,j \geq 0$ such that $i+j=n$, with the convention that $d_0^{\mathbf{C}} = d_0^{\mathbf{D}} = 0$. The {\em K\"unneth formula} computes the homology of this tensor product from the homologies of the two factors:
\begin{equation}\label{KunnethFormula}
\forall \, n \geq 0, \quad H_n(\mathbf{C} \otimes \mathbf{D}) \cong \bigoplus_{i+j=n} H_i(\mathbf{C}) \otimes H_j(\mathbf{D})
\end{equation}
where $H_0(\mathbf{C}) = \mathrm{coker}(d_0^{\mathbf{C}})$ and similarly for $\mathbf{D}$ and $\mathbf{C} \otimes \mathbf{D}$. For a short proof, see e.g. Problem 7.8.7 in \cite{introRepTheory}. There is of course a completely similar K\"unneth formula for cochain complexes and their cohomology.

\smallskip

\indent If $\mathbf{C}$ and $\mathbf{D}$ are chain complexes in $\mathcal{A}$ and $\mathcal{A}'$ respectively, we can define the chain complex $\mathbf{C} \boxtimes \mathbf{D}$ in $\mathcal{A} \boxtimes \mathcal{A}'$ as above, replacing $\otimes$ by $\boxtimes$ in the definition of the chain spaces and of the differential.

\begin{proposition}\label{propDeligneProductOfResolutions}
1. If $0 \leftarrow V \overset{\delta}{\longleftarrow} \mathbf{P}$ and $0 \leftarrow V' \overset{\,\delta'}{\longleftarrow} \mathbf{P}'$ are relatively projective resolutions of $V \in \mathcal{A}$ and $V' \in \mathcal{A}'$, then $0 \leftarrow V \boxtimes V' \xleftarrow{\:\delta \boxtimes \delta'\:} \mathbf{P} \boxtimes \mathbf{P}'$ is a relatively projective resolution.
\\2. For all $V,W \in \mathcal{A}$ and $V',W' \in \mathcal{A}'$ we have
\[ \Ext^n_{\mathcal{A} \boxtimes \mathcal{A}', \mathcal{B} \boxtimes \mathcal{B}'}(V \boxtimes V', W \boxtimes W') \cong \bigoplus_{i+j=n} \Ext^i_{\mathcal{A},\mathcal{B}}(V,W) \otimes \Ext^j_{\mathcal{A}',\mathcal{B}'}(V',W'). \]
\end{proposition}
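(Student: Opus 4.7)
My plan for this proposition is to prove (1) first and then deduce (2) via a Künneth argument applied to the Hom cochain complex of the Deligne product of the two resolutions.

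For (1), I need to verify three things about the complex $0 \leftarrow V \boxtimes V' \xleftarrow{\,\delta \boxtimes \delta'\,} \mathbf{P} \boxtimes \mathbf{P}'$: that each chain object $\bigoplus_{i+j=n} P_i \boxtimes P'_j$ is relatively projective, that the augmentation and the differentials are allowable, and that the augmented complex is exact. The first point is immediate from item 2 of Proposition~\ref{propDeligneProductResPairs}, which tells us $P_i \boxtimes P'_j$ is relatively projective, combined with the standard fact (easy from the characterization $P$ relatively projective $\iff$ direct summand of some $\mathcal{F}(X)$) that finite direct sums of relatively projectives are relatively projective.

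For exactness and allowability, my strategy is to pass through $\mathcal{U} \boxtimes \mathcal{U}'$, which is exact and faithful by item 1 of Proposition~\ref{propDeligneProductResPairs}. Since $\mathbf{P}$ is an \emph{allowable} resolution, the augmented complex $0 \leftarrow \mathcal{U}(V) \leftarrow \mathcal{U}(\mathbf{P})$ is contractible in $\mathcal{B}$: equivalently, there exist morphisms realizing $\mathcal{U}(\mathbf{P}) \to \mathcal{U}(V)$ (the augmentation) and $\mathcal{U}(V) \to \mathcal{U}(\mathbf{P}_0)$ as mutually inverse chain homotopy equivalences in the additive category $\mathcal{B}$, and similarly for primes. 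Tensoring these homotopy equivalences via $\boxtimes$ (which, like any additive bifunctor, preserves chain homotopies) produces a chain homotopy equivalence $(\mathcal{U} \boxtimes \mathcal{U}')(\mathbf{P} \boxtimes \mathbf{P}') \simeq \mathcal{U}(V) \boxtimes \mathcal{U}'(V') = (\mathcal{U} \boxtimes \mathcal{U}')(V \boxtimes V')$ in $\mathcal{B} \boxtimes \mathcal{B}'$. This simultaneously gives exactness of the augmented complex after $\mathcal{U} \boxtimes \mathcal{U}'$ (hence in $\mathcal{A} \boxtimes \mathcal{A}'$ by exactness and faithfulness of $\mathcal{U} \boxtimes \mathcal{U}'$) and an explicit contracting homotopy in $\mathcal{B} \boxtimes \mathcal{B}'$ establishing allowability. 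The main technical point to verify carefully is the Eilenberg--Zilber style bookkeeping of signs showing that the tensor of the two contracting homotopies really does contract the total complex; this is the only part of the argument that is not immediately formal.

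For (2), using the resolution from (1) we have
\[ \Ext^n_{\mathcal{A} \boxtimes \mathcal{A}', \mathcal{B} \boxtimes \mathcal{B}'}(V \boxtimes V', W \boxtimes W') = H^n\!\left( \Hom_{\mathcal{A} \boxtimes \mathcal{A}'}(\mathbf{P} \boxtimes \mathbf{P}', W \boxtimes W') \right). \]
The defining property of the Deligne product (see \cite[\S 1.11]{EGNO}) gives a natural isomorphism $\Hom_{\mathcal{A} \boxtimes \mathcal{A}'}(P_i \boxtimes P'_j, W \boxtimes W') \cong \Hom_{\mathcal{A}}(P_i, W) \otimes_{\Bbbk} \Hom_{\mathcal{A}'}(P'_j, W')$, and under this identification the cochain complex above is literally the tensor product over $\Bbbk$ of the cochain complexes $\Hom_{\mathcal{A}}(\mathbf{P}, W)$ and $\Hom_{\mathcal{A}'}(\mathbf{P}', W')$ (the sign rule in the differential on the Deligne product matches the Koszul sign of the tensor product of complexes). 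Since we are working over a field, the classical Künneth theorem has no Tor contribution and yields
\[ H^n\!\left( \Hom_{\mathcal{A}}(\mathbf{P}, W) \otimes_{\Bbbk} \Hom_{\mathcal{A}'}(\mathbf{P}', W') \right) \cong \bigoplus_{i+j=n} \Ext^i_{\mathcal{A},\mathcal{B}}(V,W) \otimes \Ext^j_{\mathcal{A}',\mathcal{B}'}(V',W'), \]
which is the claim.
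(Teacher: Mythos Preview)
Your proof is correct. For item~2 your argument is essentially identical to the paper's: both use the Hom--tensor identity $\Hom_{\mathcal{A}\boxtimes\mathcal{A}'}(P_i\boxtimes P'_j,W\boxtimes W')\cong\Hom_{\mathcal{A}}(P_i,W)\otimes\Hom_{\mathcal{A}'}(P'_j,W')$ and then the K\"unneth formula over a field.

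For item~1 you take a genuinely different route. The paper realizes $\mathcal{A}\cong A\text{-}\mathrm{mod}$, $\mathcal{A}'\cong A'\text{-}\mathrm{mod}$, reduces exactness to a statement about underlying vector spaces, and then applies the K\"unneth formula (with a separate check at the bottom of the complex). Allowability is asserted by reference to Proposition~\ref{propDeligneProductResPairs}. Your approach instead exploits the fact that an allowable exact resolution becomes \emph{contractible} after the forgetful functor; tensoring the two chain homotopy equivalences $\mathcal{U}(\mathbf{P})\simeq\mathcal{U}(V)$ and $\mathcal{U}'(\mathbf{P}')\simeq\mathcal{U}'(V')$ via $\boxtimes$ yields a contracting homotopy for $(\mathcal{U}\boxtimes\mathcal{U}')(\mathbf{P}\boxtimes\mathbf{P}')$ in $\mathcal{B}\boxtimes\mathcal{B}'$, from which both exactness (using that $\mathcal{U}\boxtimes\mathcal{U}'$ is exact and faithful) and allowability drop out at once. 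This is more intrinsic---it never leaves the abstract categories or chooses algebra models---and it is arguably more complete on the allowability side, since the differentials of the total complex $\mathbf{P}\boxtimes\mathbf{P}'$ are sums $d\boxtimes\mathrm{id}\pm\mathrm{id}\boxtimes d'$ rather than pure tensors, so item~3 of Proposition~\ref{propDeligneProductResPairs} does not apply to them directly. The paper's approach, on the other hand, is quicker to write down once one accepts the reduction to $\mathrm{vect}_{\Bbbk}$.
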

\begin{proof}
1. By Proposition \ref{propDeligneProductResPairs}, the sequence $0 \leftarrow V \boxtimes V' \leftarrow \mathbf{P} \boxtimes \mathbf{P}'$ is allowable and the chain objects in $\mathbf{P} \boxtimes \mathbf{P}'$ are relatively projective. For exactness, we can assume that $\mathcal{A} \cong A\text{-}\mathrm{mod}$ and $\mathcal{A}' \cong A'\text{-}\mathrm{mod}$ for some $\Bbbk$-algebras $A$ and $A'$ \cite[\S 1.8]{EGNO}. Then $\mathcal{A} \boxtimes \mathcal{A}' \cong (A \otimes A')\text{-}\mathrm{mod}$. Objects (resp. morphisms) in $\mathcal{A}$, $\mathcal{A}'$ and $\mathcal{A} \boxtimes \mathcal{A}'$ are in particular $\Bbbk$-vector spaces (resp. linear maps), so it suffices to check exactness in $\mathrm{vect}_{\Bbbk}$. This is classical: K\"unneth formula \eqref{KunnethFormula} ensures that $\mathbf{P} \boxtimes \mathbf{P}'$ is exact at strictly positive degrees. The first terms sequence
\begin{equation}\label{firstTermsDeligneProductResolutions}
0 \leftarrow V \boxtimes V' \xleftarrow{d_0 \boxtimes d'_0} P_0 \boxtimes P'_0 \xleftarrow{\left(d_1 \boxtimes \mathrm{id}_{P'_0}\right) \oplus \left(\mathrm{id}_{P_0} \boxtimes d'_1 \right)} (P_1 \boxtimes P'_0) \oplus (P_0 \boxtimes P'_1).
\end{equation}
is exact as well by the formulas for images and kernels of tensor products of linear maps.
\\2. We use the notations from item 1. Recall that $\Ext^{\bullet}_{\mathcal{A},\mathcal{B}}(V,W)$ is the cohomology of the cochain complex $\Hom_{\mathcal{A}}(\mathbf{P},W)$. The Hom's are vector spaces because we work with linear categories. By \cite[Prop.\,1.11.2]{EGNO} we have
\[ \Hom_{\mathcal{A} \boxtimes \mathcal{A}'}(\mathbf{P} \boxtimes \mathbf{P}',W \boxtimes W') \cong \Hom_{\mathcal{A}}(\mathbf{P},W) \otimes \Hom_{\mathcal{A}'}(\mathbf{P}',W') \]
and K\"unneth formula gives the result.
\end{proof}


\begin{thebibliography}{99}

\bibitem[ABSW24]{ABSW} A. Ardizzoni, L. Bottegoni, A. Sciandra, T. Weber, {\em Infinitesimal braidings and pre-Cartier bialgebras}, Commun. Contemp. Math., https://doi.org/10.1142/S0219199724500299 (2024).

\bibitem[BFSV03]{BFSV} C. Balteanu, Z. Fiedorowicz, R. Schw\"anzl, R. Vogt, {\em Iterated monoidal categories}, Adv. Math. \textbf{176}(2), 277--349 (2003).

\bibitem[Bec67]{beck} J. M. Beck, {\em Triples, Algebras and Cohomology}, Ph.D. thesis, Columbia University, 1967. Reprints in Theory and Applications of Categories \textbf{2}, 1--59 (2003).

\bibitem[BLV11]{BLV} A. Brugui\`eres, S. Lack, A. Virelizier, {\em Hopf monads on monoidal categories}, Advances in Math. \textbf{227}(2), 745-800 (2011).

\bibitem[BV12]{BV} A. Bruguières, A. Virelizier, {\em Quantum double of Hopf monads and categorical centers}, Trans. Amer. Math. Soc. \textbf{364}(3), 1225--1279 (2012).

\bibitem[Car93]{cartier} P. Cartier, {\em Construction combinatoire des invariants de Vassiliev--Kontsevich des noeuds}, Rencontres physiciens-math\'ematiciens de Strasbourg - RCP25, tome 45, expos\'e $\text{n}^{\circ}$1, 1--10 (1993).

\bibitem[CY98]{CY} L. Crane, D. N. Yetter, {\em Deformations of (bi)tensor categories}, Cahiers de Topologie et G\'eom\'etrie Diff\'erentielle Cat\'egoriques, 1998.

\bibitem[Dav97]{davydov} A. Davydov, {\em Twisting of monoidal structures}, arXiv:q-alg/9703001.

\bibitem[Del90]{deligne} P. Deligne, {\em Cat\'egories tannakiennes}, The Grothendieck Festschrift, Volume II, 111--195, P. Cartier et al., eds., Birk\-h\"au\-ser, Boston 1990.

\bibitem[DSPS19]{DSPS} C.L. Douglas, C. Schommer-Pries, N. Snyder, {\em The balanced tensor product of module categories}, Kyoto J. Math. \textbf{59}(1), 167--179 (2019). 

\bibitem[EM65]{EM} S. Eilenberg, J.C. Moore, {\em Adjoint functors and triples}, Illinois J. Math. \textbf{9}, 381--398 (1965).

\bibitem[Eps66]{epstein} D.B.A. Epstein, {\em Functors between tensored categories}, Invent. Math. \textbf{1}, 221--228 (1966).

\bibitem[EGNO]{EGNO} P. Etingof, S. Gelaki, D. Nikshych, V. Ostrik, {\em Tensor categories}, Mathematical surveys and monographs \textbf{205}, American Mathematical Society, 2015.

\bibitem[E+11]{introRepTheory} P. Etingof, O. Goldberg, S. Hensel, T. Liu, A. Schwendner, D. Vaintrob, E. Yudovina, {\em Introduction to representation theory} (with historical interludes by S. Gerovitch), Student Mathematical Library vol. \textbf{59}, American Mathematical Society, 2011.

\bibitem[ENO05]{ENO} P. Etingof, D. Nikshych, V. Ostrik, {\em On fusion categories}, Ann. Math. \textbf{162}, 581--642, 2005.

\bibitem[EO04]{EO} P. Etingof and V. Ostrik, {\em Finite tensor categories}, Mosc. Math. J., \textbf{4}(3), 627--654, 2004.

\bibitem[FGS24]{FGS} M. Faitg, A.M. Gainutdinov, C. Schweigert, {\em Davydov--Yetter cohomology and relative homological algebra}, Sel. Math. New Ser. \textbf{30}, 26 (2024). 

\bibitem[FSS20]{FSS} J. Fuchs, G. Schaumann, C. Schweigert, {\em  Eilenberg-Watts calculus for finite categories and a bimodule Radford $S^4$ theorem}, Trans. Amer. Math. Soc. \textbf{373}(1), 1--40 (2020).

\bibitem[FSS12]{FSSmodular} J. Fuchs, C. Schweigert, C. Stigner, {\em Modular invariant Frobenius algebras from ribbon Hopf algebra automorphisms}, J. Algebra \textbf{363}, 29--72 (2012).

\bibitem[GHS23]{GHS} A.M. Gainutdinov, J. Haferkamp, C. Schweigert, {\em Davydov-Yetter cohomology, comonads and Ocneanu rigidity}, Advances in Math. \textbf{414}, 108853 (2023).

\bibitem[GKP11]{GKP} N. Geer, J. Kujawa, B. Patureau--Mirand, {\em Generalized trace and modified dimension functions on ribbon categories}, Sel. Math. New Ser. \textbf{17}(2), 453--504 (2011).

\bibitem[GJ99]{GJ} P.G. Goerss, J.F. Jardine, {\em Simplicial Homotopy Theory}, Birkh\"auser Verlag, 1999.

\bibitem[Hu78]{Hu}
J. E. Humphreys, {\em Symmetry for finite dimensional Hopf algebras}, Proc. Amer. Math. Soc. \textbf{68}, no. 2, 143--146 (1978). DOI 10.2307/2041758. MR0485965

\bibitem[Joh75]{johnstone} P.T. Johnstone, {\em Adjoint lifting theorems for categories of algebras}, Bull. London Math. Soc. \textbf{7}, 294--297 (1975).

\bibitem[JS93]{JS} A. Joyal, R. Street, {\em Braided tensor categories}, Adv. Math. \textbf{102}, 20--78 (1993).

\bibitem[Kas95]{kassel} C. Kassel, {\em Quantum Groups}, Graduate Texts in Mathematics \textbf{155}, Springer, 1995.

\bibitem[KL01]{KL} T. Kerler, V.V. Lyubashenko, {\em Non-semisimple topological quantum field theories for 3-manifolds with corners}, Lecture Notes in Mathematics \textbf{1765}, Springer-Verlag, Berlin, 2001.

\bibitem[KS97]{KS} A. Klimyk, K. Schm\"udgen, {\em Quantum Groups and Their Representations}, Texts and Monographs in Physics, Springer, 1997.

\bibitem[LW22]{LW} R. Laugwitz, C. Walton, {\em Constructing non-semisimple modular categories with relative monoidal centers}, Int. Math. Res. Not. \textbf{20}, 15826--15868 (2022).

\bibitem[Lee13]{lee} J.M. Lee, {\em Introduction to smooth manifolds}, 2nd edition, Graduate Texts in Mathematics \textbf{218}, Springer, 2013.

\bibitem[Lyu95]{lyu} V. Lyubashenko, {\em Modular transformations for tensor categories}, Journal Pure Appl. Algebra \textbf{98}(3), 279--327 (1995).

\bibitem[Lyu95]{lyuMCG} V. Lyubashenko, {\em Invariants of 3-manifolds and projective representations of mapping class groups via quantum groups at roots of unity}, Commun. Math. Phys. \textbf{172}(3), 467--516 (1995).

\bibitem[ML75]{macLane} S. Mac Lane, {\em Homology}, Classics in Mathematics, Springer, 1975.

\bibitem[ML98]{MLCat} S. Mac Lane, {\em Categories for the working mathematician (2nd edition)}, Graduate Texts in Mathematics \textbf{5}, Springer, 1998.

\bibitem[Maj95]{majid} S. Majid, {\em Foundations of Quantum Group Theory}, Cambridge University Press, 1995.

\bibitem[MO99]{MajidOeckl} S. Majid, R. Oeckl, {\em Twisting of quantum differentials and the Planck scale Hopf algebra}, Comm. Math. Phys. \textbf{205}, 617--655 (1999).

\bibitem[PV99]{PV} F. Panaite, F. Van Oystaeyen, {\em Quasitriangular structures for some pointed Hopf algebras of dimension $2^n$}, Comm. Algebra \textbf{27}(10), 4929--4942 (1999).

\bibitem[SS86]{SS} S. Schanuel, R. Street, {\em The free adjunction}, Cah. Topol. G\'eom. Diff\'er. Cat\'eg. \textbf{27}(1), 81--83 (1986).

\bibitem[Shi17]{shimizu} K. Shimizu, {\em On unimodular finite tensor categories}, Int. Math. Res. Not. \textbf{2017}, 277--322 (2017).

\bibitem[Shi23]{shimizuRibbon} K. Shimizu, {\em Ribbon structures of the Drinfeld center of a finite tensor category}, Kodai Math. J. \textbf{46}(1), 75--114 (2023).

\bibitem[Str72]{street} R. Street, {\em The formal theory of monads}, J. Pure Appl. Algebra \textbf{2}, 149--168 (1972).

\bibitem[Wei94]{weibel} C.A. Weibel, {\em An Introduction to Homological Algebra}, Cambridge University Press, 1994.

\bibitem[Yet98]{yetter1} D. N. Yetter, {\em Braided deformations of monoidal categories and Vassiliev invariants}, Higher category theory, Contemp. Math. \textbf{230}, AMS, 117--134 (1998).

\bibitem[Zag17]{zaganidis} D. Zaganidis, {\em Classifiers for monad morphisms and adjunction morphisms}, Theory Appl. Categ. \textbf{32}(31), 1050--1097 (2017).
\end{thebibliography}
\end{document}